\theoremstyle{plain}
\newtheorem{theorem}[equation]{Theorem}
\newtheorem{proposition}[equation]{Proposition}
\newtheorem{lemma}[equation]{Lemma} 
\newtheorem{corollary}[equation]{Corollary}
\theoremstyle{definition}
\newtheorem{definition}[equation]{Definition}
\newtheorem{example}[equation]{Example}
\newtheorem{observation}[equation]{Observation}
\newtheorem{warning}[equation]{Warning}
\theoremstyle{remark} 
\newtheorem{remark}[equation]{Remark}
\newcommand{\ann}{\operatorname{ann}}
\newcommand{\cl}{\operatorname{cl}}
\newcommand{\Char}{\operatorname{char}}
\newcommand{\Coker}{\operatorname{Coker}}
\renewcommand{\dim}{\operatorname{dim}}
\newcommand{\End}{\operatorname{End}}
\newcommand{\Ext}{\operatorname{Ext}}
\newcommand{\tExt}{\mathrm{E\widehat{\phantom{\dot{}}x}t}}
\newcommand{\fEnd}{{\operatorname{\mathcal{E}\!\!\;\mathit{nd}}}}
\newcommand{\fHom}{\operatorname{\mathcal{H}\!\!\;\mathit{om}}}
\newcommand{\grmod}{\operatorname{\mathsf{grmod}}}
\newcommand{\hocolim}{\operatorname{hocolim}}
\newcommand{\Hom}{\operatorname{Hom}}
\newcommand{\sHom}{\underline{\Hom}}
\newcommand{\id}{\operatorname{id}}
\newcommand{\Id}{\operatorname{Id}}
\renewcommand{\Im}{\operatorname{Im}}
\newcommand{\Inj}{\operatorname{\mathsf{Inj}}}
\newcommand{\Ker}{\operatorname{Ker}}
\newcommand{\KInj}[1]{\mathsf K(\Inj #1)}
\newcommand{\KacInj}[1]{{\mathsf K}_{\mathrm{ac}}(\Inj #1)}
\newcommand{\length}{\operatorname{length}}
\newcommand{\Loc}{\operatorname{Loc}}
\renewcommand{\min}{\operatorname{min}}
\renewcommand{\mod}{\operatorname{\mathsf{mod}}}
\newcommand{\Mod}{\operatorname{\mathsf{Mod}}}
\newcommand{\Proj}{\operatorname{Proj}}
\newcommand{\proj}{\operatorname{proj}}
\newcommand{\rad}{\operatorname{rad}}
\newcommand{\rank}{\operatorname{rank}}
\newcommand{\res}{\operatorname{res}}
\newcommand{\Spec}{\operatorname{Spec}}
\newcommand{\Max}{\operatorname{Max}}
\newcommand{\Mor}{\operatorname{Mor}}
\newcommand{\soc}{\operatorname{soc}}
\newcommand{\StMod}{\operatorname{\mathsf{StMod}}}
\newcommand{\stmod}{\operatorname{\mathsf{stmod}}}
\newcommand{\supp}{\operatorname{supp}}
\newcommand{\Supp}{\operatorname{Supp}}
\newcommand{\Thick}{\operatorname{Thick}}
\newcommand{\Tor}{\operatorname{Tor}}
\newcommand{\op}{\mathrm{op}}
\newcommand{\comp}{\mathop{\circ}}
\newcommand{\col}{\colon}
\newcommand{\ges}{{\scriptscriptstyle\geqslant}}
\newcommand{\hh}[1]{H^{*}#1} 
\newcommand{\HH}[2]{H^{#1}(#2)}
\newcommand{\kos}[2]{{#1}/\!\!/{#2}} 
\newcommand{\les}{{\scriptscriptstyle\leqslant}}
\newcommand{\lotimes}{\otimes^{\mathbf L}}
\newcommand{\lto}{\longrightarrow}
\newcommand{\smatrix}[1]{\left[\begin{smallmatrix}#1\end{smallmatrix}\right]}
\newcommand{\da}{{\downarrow}}
\newcommand{\ua}{{\uparrow}}
\newcommand{\thra}{{\twoheadrightarrow}}
\newcommand{\xla}{\xleftarrow}
\newcommand{\xra}{\xrightarrow}
\newcommand{\Iff}{\Longleftrightarrow}
\newcommand{\bik}{Benson/Iyengar/Krause}
\def\sfC{\mathcal{C}}
\def\mcI{\mathcal{I}}
\def\mcU{\mathcal{U}} 
\def\mcV{\mathcal{V}}
\def\mcW{\mathcal{W}} 
\def\mcZ{\mathcal{Z}}
\def\sfb{\mathsf b} 
\def\sfc{\mathsf c}
\def\sfA{\mathsf A} 
\def\Ab{\mathsf{Ab}} 
\def\sfB{\mathsf B} 
\def\sfC{\mathsf C}
\def\sfD{\mathsf D} 
\def\sfG{\mathsf G}
\def\sfK{\mathsf K} 
\def\sfM{\mathsf M}
\def\sfS{\mathsf S} 
\def\sfT{\mathsf T} 
\def\sfU{\mathsf U}
\def\bbA{\mathbb A} 
\def\bbC{\mathbb C} 
\def\bbF{\mathbb F} 
\def\bbP{\mathbb P} 
\def\bbQ{\mathbb Q}
\def\bbR{\mathbb R}
\def\bbZ{\mathbb Z} 
\newcommand{\bsa}{\boldsymbol{a}} 
\newcommand{\bsr}{\boldsymbol{r}}
\newcommand{\fa}{\mathfrak{a}} 
\newcommand{\fb}{\mathfrak{b}}
\newcommand{\fm}{\mathfrak{m}} 
\newcommand{\fn}{\mathfrak{n}} 
\newcommand{\fp}{\mathfrak{p}}
\newcommand{\fq}{\mathfrak{q}}
\newcommand{\eps}{\varepsilon}
\newcommand{\gam}{\varGamma}
\def\Si{\Sigma} 
\def\si{\sigma}
\def\one{\mathds 1}
\newcommand{\bloc}{{L}}
\begin{document}
\pagenumbering{roman}
\title{Representations of finite groups:\\ Local cohomology and support}

\author{Dave Benson, Srikanth B. Iyengar, Henning Krause}
\date{\today}

\maketitle

\newpage

\thispagestyle{empty}

\newpage

\setcounter{tocdepth}{1}
\setcounter{page}{5}

\tableofcontents
\cleardoublepage

\section*{Preface}
\addcontentsline{toc}{chapter}{Preface}
\markboth{Preface}{Preface}
\thispagestyle{empty}

These are the notes from an Oberwolfach Seminar which we ran from 23--29 May 2010. There were 24 graduate student and postdoctoral participants. Each morning consisted of three lectures, one from each of the organisers. The afternoons consisted of problem sessions, apart from Wednesday which was reserved for the traditional hike to St.~Roman. We have tried to be reasonably faithful to the lectures and problem sessions in these notes, and have added only a small amount of new material for clarification.

The seminar focused on recent developments in classification methods in commutative algebra, group representation theory and algebraic topology. These methods were initiated by Hopkins back in 1987 \cite{Hopkins:1987a}, with the classification of the thick subcategories of the derived category of bounded complexes of finitely generated projective modules over a commutative noetherian ring $R$, in terms of specialisation closed subsets of $\Spec R$. Neeman \cite{Neeman:1992a} (1992) clarified Hopkins' theorem and used analogous methods to classify the localising subcategories\index{localising subcategory} of the derived category of unbounded complexes of modules in terms of arbitrary subsets of $\Spec R$. In 1997, Benson, Carlson and Rickard \cite{Benson/Carlson/Rickard:1997a} proved the thick subcategory theorem for modular representation theory of a finite $p$-group $G$ over an algebraically closed field $k$ of characteristic $p$. Namely, the thick subcategories of the stable category of finitely generated $kG$-modules are classified by the specialisation closed subsets of the homogeneous non-maximal prime ideals in $H^*(G,k)$, the cohomology ring. The corresponding theorem for the localising subcategories of the stable category of
all $kG$-modules has only recently been achieved, in the paper \cite{Benson/Iyengar/Krause:bik3} by the three organisers of the seminar.

\medskip

\begin{center}
\renewcommand{\arraystretch}{1.3}
\begin{tabular}{|c|c|c|} \hline
& Thick subcategories & Localising subcategories \\
& of compact objects & of all objects \\ \hline
$\sfD(R)$ & Hopkins 1987 & Neeman 1992 \\ \hline
$\StMod(kG)$  & Benson, Carlson & Benson, Iyengar \\
 & and Rickard 1997 & and Krause 2008  \\ \hline
\end{tabular}\medskip
\end{center}

In the process of achieving the classification of the localising subcategories of $\StMod(kG)$, a general machinery was established for such classification theorems in a triangulated category; see \cite{Benson/Iyengar/Krause:2008a,Benson/Iyengar/Krause:bik2}. It is also worth mentioning at this stage the work of Hovey, Palmieri and Strickland \cite{Hovey/Palmieri/Strickland:1997a}, who did a great deal to clarify the appropriate settings for these theorems. 

The general setup involves a graded commutative noetherian ring $R$ acting on a compactly generated triangulated category with small coproducts $\sfT$. Write $\Spec R$ for the set of homogeneous prime ideals  of $R$. For each $\fp\in\Spec R$ there is a \emph{local cohomology functor}\index{local cohomology!functor} $\gam_\fp\colon T\to T$. The \emph{support}\index{support} of an object $X$ is defined to be the subset of $\Spec R$ consisting of those $\fp$ such that $\gam_\fp X$ is non-zero.

The object of the game is to establish conditions under which this notion of support classifies the localising subcategories of $\sfT$. This is given in terms of two conditions. The first is the \emph{local-global
principle}\index{local-global principle} that says for each object $X$ in $\sfT$, the localising subcategory of $\sfT$ generated by $X$ is the same as that generated by $\{\gam_\fp X\}$ as $\fp$ runs over the elements of $\Spec R$. The second is a minimality condition, which requires that each $\gam_\fp\sfT$ is either a minimal localising subcategory of $\sfT$ or it is zero. Under these two conditions, we say that $\sfT$ is \emph{stratified}\index{stratified} by the action of $R$, and then we obtain a classification theorem.

In the case of the derived category $\sfD(R)$, Neeman's classification made essential use of the existence of ``field objects''---for a prime ideal $\fp$ of $R$, the field object is the complex consisting of the field of fractions of $R/\fp$, concentrated in a single degree. One of the principle obstructions to carrying out the classification in the finite group case is a lack of field objects; the obstruction theory of Benson, Krause and Schwede \cite{Benson/Krause/Schwede:2004a,Benson/Krause/Schwede:2005a} can be used to show that the required field objects usually do not exist. Circumventing this involves an elaborate series of changes of category, and machinery for transferring stratification along such changes of category. For a general finite group, the strategy is first to
use Quillen stratification to reduce to elementary abelian $p$-groups, where there are still not enough field objects, but then to use a Koszul construction to reduce to an exterior algebra for which there are enough field objects. At this stage, a version of the Bernstein--Gelfand--Gelfand correspondence can be used to get to a graded polynomial ring, where the problem is solved. One consequence of this strategy is that we obtain classification theorems in a number of situations along the route.

In these notes we manage to give a complete proof in the case of characteristic two, where matters are considerably simplified by the fact that the group algebra of an elementary abelian $2$-group is already an exterior algebra.
We found it frustrating that in spite of having an entire week of lectures to explain the theory, we were not able to give a complete proof of the classification theorem for localising subcategories of $\StMod(kG)$, in odd characteristic.  An overview of the classification in general characteristic is given in Section \ref{sec:Wednesday3}, while the proof in characteristic two may be obtained by combining Theorems \ref{thm:2groups} and \ref{thm:main-theorem} with results from Section \ref{sec:Wednesday3}.

\subsection*{A guide to these notes}

In this volume, we have attempted to stick as closely as possible to the format of the Oberwolfach seminar. So the notes are divided into five chapters with four sections in each, corresponding to the five days with three lectures each morning and a problem session in the afternoon. The lecturing, and writing, styles of the three authors are  different, and we have not tried to alter that for the purposes of these notes. In particular, there is a small amount of repetition. But we have tried to be consistent about important details such as notation, and grading everything cohomologically rather than homologically.

Prerequisites for this seminar consist of a solid background in algebra, including the basic theory of rings and modules, Artin--Wedderburn theory, Krull--Remak--Schmidt theorem; basic commutative algebra from the  first chapters of the book of Atiyah and MacDonald; and basic homological algebra including derived functors, Ext and Tor. 
The appendix, describing the theory of support for modules over a commutative ring, is also necessary background material from commutative algebra that is not easy to find in the literature in the exact form in which we require it. The following books may be helpful.
\begin{enumerate}
\item[{[1]}] M. Atiyah and I. MacDonald, Commutative Algebra.
Addison-Wesley, 1969.
\item[{[2]}] D. J. Benson, Representations and cohomology of finite groups I, II, 
Cambridge Studies in Advanced Mathematics 30, 31.
Cambridge University Press, 2nd edition, 1998.
\item[{[3]}] W. Bruns and J. Herzog, Cohen--Macaulay rings,
Cambridge Studies in Advanced Mathematics 39.
Cambridge University Press, 2nd edition, 1998.
\item[{[4]}] R. Hartshorne, Local cohomology: A seminar given by A. Grothendieck 
(Harvard, 1961), Lecture Notes in Math. 41. 
Springer-Verlag, 1967.
\item[{[5]}] A. Neeman, Triangulated categories,
Annals of Mathematics Studies 148.
Princeton University Press, 2001.
\item[{[6]}] C. Weibel, Homological algebra, Cambridge Studies in
  Advanced Mathematics 38.  Cambridge University Press, 1994.
\end{enumerate}

About the exercises: These are from the problem sessions conducted during the seminar, though we have added a few more. Some are routine verifications/computations that have been omitted in the text, while others are quite substantial, and given with the implicit assumption (or hope) that, if necessary, readers would hunt for solutions in other sources.

\subsection*{Acknowledgments}
The first and second authors would like to thank the Humboldt Foundation for their generous support of the research that led to the seminar. The three of us thank the Mathematisches Forschungsinstitut Oberwolfach for hosting this meeting, and the participating students for making it a lively seminar; their names are listed below:

\bigskip

\begin{tabular}{l}
Beck, Kristen A. (Arlington, U.S.A.) \\ Burke, Jesse (Lincoln, U.S.A.) \\
Chen, Xiao-Wu (Paderborn, Germany) \\ Diveris, Kosmas (Syracuse, U.S.A.) \\
Eghbali, Majid (Halle, Germany)  \\ Henrich, Thilo (Bonn, Germany) \\
Hermann, Reiner (Bielefeld, Germany) \\ K\"ohler, Claudia (Paderborn, Germany) \\
Langer, Martin (Bonn, Germany) \\ Lassueur, Caroline L. (Lausanne, Switzerland) \\
Livesey, Michael (Aberdeen, Great Britain) \\ McKemey, Robert (Manchester, Great Britain) \\
Park, Sejong (Seoul, Korea) \\ Psaroudakis, Chrysostomos (Ioannina, Greece) \\
Purin, Marju (Syracuse, U.S.A.) \\ Reid, Fergus (Aberdeen, Great Britain) \\
Robertson, Marcy (Chicago, U.S.A.) \\ Sane, Sarang (Mumbai, India) \\
Scherotzke, Sarah (Paris, France) \\ Shamir, Shoham (Bergen, Norway) \\
Varbaro, Matteo (Genova, Italy) \\ Warkentin, Matthias (Chemnitz, Germany) \\
Witt, Emily E. (Ann Arbor, U.S.A.) \\ Xu, Fei (Bellaterra, Spain)
\end{tabular}

\newpage

\chapter{Monday}
\thispagestyle{empty}
\pagenumbering{arabic}
\setcounter{page}{1}
\label{ch:Monday}
The first section gives a rapid introduction to the subject matter of the seminar. In particular, readers will encounter many notions and constructions, which will be defined and developed only in later sections. It is not expected that everything is to be understood in the first reading. The second section, ~\ref{sec:Monday2}, is a more leisurely (or, a less rapid) discussion of some of the basic theory of group algebras, while the last one, Section~\ref{sec:Monday3}, is an introduction to triangulated categories. 

\section{Overview}
\label{sec:Monday1}

\subsection{Historical Perspective}
This first lecture begins with a brief historical perspective on modular representation theory of finite groups, to give a context for the main results presented in this seminar.

Representation theory of finite groups began in the nineteenth century with the work of Burnside, Frobenius, Schur and others on finite dimensional representations over $\bbR$ and $\bbC$. In this situation, we have the following theorem; see Theorem~\ref{thm:Maschke} for a more elaborate statement, and a proof.

\begin{theorem}[Maschke, 1899]\index{Maschke's Theorem} 
Every representation of a finite group $G$ over a field $k$ of characteristic zero (or more generally, characteristic not dividing the group order) is a direct sum of irreducible representations. Equivalently, every short exact sequence of $kG$-modules splits. \qed
\end{theorem}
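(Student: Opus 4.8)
The plan is to prove the two formulations in turn, deriving the decomposition statement from the splitting statement. So first I would show that every short exact sequence $0 \to U \xra{\iota} V \xra{p} W \to 0$ of $kG$-modules splits. Since $k$ is a field, the sequence certainly splits over $k$: there is a $k$-linear retraction $\rho\colon V \to U$ with $\rho\comp\iota = \id_U$. This $\rho$ need not be $G$-equivariant, and repairing that is the crux of the argument — and the only place the hypothesis on $\Char k$ is used. The trick is to average: set
\[
\bar\rho \;=\; \frac{1}{|G|}\sum_{g\in G} g\comp\rho\comp g^{-1},
\]
which makes sense precisely because $|G|$ is invertible in $k$ when $\Char k$ does not divide $|G|$. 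A routine check shows $\bar\rho$ is $kG$-linear, and, using that $U$ is a submodule so that each $g\comp\rho\comp g^{-1}$ still restricts to $\id_U$ on $U$, that $\bar\rho\comp\iota = \id_U$. Hence $\bar\rho$ splits the sequence and $V \cong U \oplus W$ as $kG$-modules.

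Next I would deduce that every finite-dimensional representation $V$ is a direct sum of irreducibles by induction on $\dim_k V$. If $V = 0$ or $V$ is irreducible there is nothing to do; otherwise choose a proper nonzero submodule $U \subseteq V$ and apply the first part to $0 \to U \to V \to V/U \to 0$, obtaining $V \cong U \oplus V/U$ with both summands of strictly smaller dimension, to which the inductive hypothesis applies.

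The main obstacle, such as it is, is entirely the construction of the equivariant splitting; once the averaging idea is in hand the rest is bookkeeping. Conceptually the content is that the transfer operator $\tfrac{1}{|G|}\sum_{g} g(-)g^{-1}$ is a $kG$-linear projection of $\Hom_k(V,U)$ onto $\Hom_{kG}(V,U)$, available exactly when $\Char k \nmid |G|$. I would also remark that the splitting formulation is valid for $kG$-modules of arbitrary dimension (equivalently, $kG$ is a semisimple ring), whereas the literal decomposition into irreducibles is stated most cleanly for finite-dimensional $V$, which is the setting of interest here.
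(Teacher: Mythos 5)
Your argument is correct: averaging the $k$-linear retraction $\rho$ over $G$ is legitimate exactly when $|G|$ is invertible in $k$, the averaged map $\bar\rho$ is $kG$-linear and still restricts to the identity on $U$, and the induction on $\dim_k V$ then yields the decomposition into irreducibles. It is the same averaging idea as in the paper, but packaged differently. The paper's actual proof (Theorem~\ref{thm:Maschke}) establishes a four-way equivalence: it uses the average $m\mapsto \frac{1}{|G|}\sum_{g}gm$ to show that the invariants functor $(-)^{G}$ is a direct summand of the identity, hence exact, and then splits an arbitrary epimorphism $M\twoheadrightarrow N$ by applying $(-)^{G}$ to the induced surjection $\Hom_k(N,M)\twoheadrightarrow\Hom_k(N,N)$ (with diagonal action), via the identification $\Hom_{kG}=\Hom_k(-,-)^{G}$ — which is precisely the ``transfer projection on $\Hom_k$'' you mention as the conceptual content of your construction. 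What your route buys is a short, self-contained proof of exactly the statement as posed; what the paper's route buys is more: the converse implications (a splitting of the augmentation $kG\to k$ forces $\Char k\nmid|G|$, so the characteristic hypothesis is necessary), the equivalence with projectivity of the trivial module and semisimplicity of $kG$ (which in particular gives the decomposition into irreducibles for modules of arbitrary dimension, where your dimension induction does not directly apply), and the functorial formulation via $(-)^{G}$ and the diagonal action that is reused throughout the later lectures.
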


If the characteristic of $k$ does not divide $|G|$, the group order, we talk of \emph{ordinary representation theory}.\index{ordinary representation theory} By contrast, \emph{modular representation theory}\index{modular representation theory} refers to the situation where the characteristic does divide $|G|$. In this
case, the regular representation of the group algebra $kG$ is never a direct sum of irreducible representations, because the augmentation map $kG\to k$ sending each group element to the identity is a surjective module homomorphism that does not split.

Tentative beginnings of modular representation theory were made 
by Dickson in the early twentieth century. But it was not until
the work of Richard Brauer from the 1940s to the 1970s that
the subject really took off the ground. Brauer introduced 
modular characters, defect groups, the Brauer homomorphism,
decomposition numbers, and so on; this is still the basic
language for modular representation theory.

For Brauer, one of the principal goals of modular representation
theory was to obtain information about structure of finite groups.
This was the era where a great deal of effort was going into
the classification of the finite simple groups, and Brauer's
methods were an integral part of this effort.

J.~A.~Green, in the decades spanning 
the nineteen sixties to the nineties, pioneered a
change of emphasis from characters to modules. He introduced
such tools as the Green correspondence, vertices and sources, 
Green's indecomposability theorem, the representation ring, etc.

This paved the way for Jon Carlson and others, starting in the 
nineteen eighties, to introduce support variety techniques, which
form the basis for the methods discussed in this seminar.
These techniques were very successful in modular representation
theory, and soon spread to adjacent fields such as restricted Lie
algebras, finite group schemes, commutative algebra, and 
stable homotopy theory. 

\subsection{Classification of finite dimensional modules}

In ordinary representation theory, one classifies finite dimensional 
modules by their characters. Two representations are isomorphic if and 
only if they have the same character. There are a finite number
of irreducible characters, corresponding to the simple modules,
and everything else is a sum of these. In particular, a module is
indecomposable if and only if it is irreducible (Maschke's theorem).

In modular representation theory, non-isomorphic modules can have
the same character. The best one can say is that two modules have
the same Brauer character if and only if they have the same 
composition factors with the same multiplicities. There are only
a finite number of simple modules, corresponding to the irreducible
Brauer characters, and they also correspond to the factors in the Wedderburn
decomposition of the semisimple algebra $kG/J(kG)$.

The Krull--Remak--Schmidt theorem \index{Krull--Remak--Schmidt Theorem} tells us that every finite dimensional
(or equivalently, finitely generated) $kG$-module decomposes into indecomposable factors, and the set of isomorphism classes of the factors (with multiplicities) is an invariant of the module.

Let $p$ be the characteristic of the field $k$. Then there are a finite number of isomorphism classes of indecomposable $kG$-modules if and only if the Sylow $p$-subgroups of $G$ are cyclic (D.~G.~Higman, 1954).
In this situation, we say that $kG$ has \emph{finite representation type}.\index{finite representation type}
The classification of the indecomposable modules for a group with cyclic Sylow $p$-subgroup, or more generally for a block with cyclic defect, was carried out by the work of Brauer, Thompson, Green, Dade and others.

If the Sylow $p$-subgroups of $G$ are non-cyclic, the indecomposables are ``unclassifiable'' except if $p=2$ and the Sylow $2$-subgroups are in the following list.

\begin{itemize}
\item dihedral:\index{dihedral group}
$D_{2^n}=\langle x,y\mid x^{2^{n-1}}=1,\ y^2=1,\ yxy=x^{-1}\rangle$, for $n\ge 2$

\item generalised quaternion:\index{generalised quaternion group}
$Q_{2^n}=\langle x,y\mid x^{2^{n-1}}=1,\ y^2=x^{2^{n-2}},\ yxy^{-1}=x^{-1}\rangle$, for 
$n\ge 3$

\item semidihedral:\index{semidihedral group}
$SD_{2^n}=\langle x,y\mid x^{2^{n-1}}=1,\ y^2=1,\ yxy=x^{2^{n-2}-1}\rangle$,
for $n\ge 4$.
\end{itemize}

In these cases we say that $kG$ has 
\emph{tame representation type},\index{tame representation type}
while in all remaining cases with non-cyclic Sylow $p$-subgroups,
we say that $kG$ has 
\emph{wild representation type}.\index{wild representation type} 
In fact, this
trichotomy between finite, tame and wild representation type occurs
in general for finite dimensional algebras over a field, by a theorem
of Drozd.\index{representation type}

\begin{example} 
\index{Klein four group!representations}
If $G=\bbZ/2\times\bbZ/2=\langle g,h\rangle$ and $k$ is an
algebraically closed field of characteristic two, the
classification of the indecomposable $kG$-modules is given by
the following list.

\begin{itemize}
\item dimension 1: just the trivial module

\item dimension $2n+1$ (for $n\ge 1$): two indecomposables denoted
$\Omega^n(k)$
and $\Omega^{-n}(k)$

\item dimension $2n$ (for $n\ge 1$): an infinite family of modules parametrized by points $\zeta\in\bbP^1(k)$, denoted $L_{\zeta^n}$.
\end{itemize}

For example, the infinite family of two dimensional modules in the above classification is described as follows. If
$(\lambda:\mu)$ is a point in $\bbP^1(k)$ then $M_{(\lambda:\mu)}$ is the representation given by the matrices
\[ 
g \mapsto \begin{pmatrix} 1& 0\\\lambda &1\end{pmatrix}\qquad
h\mapsto \begin{pmatrix} 1& 0\\\mu &1\end{pmatrix}\,. 
\]
It is easy to check that if $(\lambda:\mu)$ and $(\lambda':\mu')$ represent the same point in $\bbP^1(k)$ (i.e., if $\lambda\mu'=\lambda'\mu$) then the representations are isomorphic.
\end{example}

\subsection{Module categories}

Given that the indecomposable modules are usually unclassifiable, how do we
make progress understanding them? Are there organisational principles
that we can use? Can we make less refined classifications that
are still useful?

In categorical language, we study the category $\mod(kG)$ of finitely
generated $kG$-modules, and the larger category $\Mod(kG)$ of all
$kG$-modules. Our goal is to understand ``interesting'' subcategories.

The first thing we need to discuss is the projective and injective modules. Recall that $P$ is \emph{projective}\index{projective!module} if every epimorphism $M \to P$ splits, and $I$ is  \emph{injective}\index{injective!module} if every monomorphism $I \to M$ splits.

\begin{theorem}
\label{thm:kg-selfinjective}
A $kG$-module is projective if and only if it is injective. 
\end{theorem}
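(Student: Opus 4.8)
The plan is to exploit the fact that $kG$ is a \emph{symmetric algebra} and then deduce the statement for \emph{all} modules, not just finitely generated ones, by a formal argument involving the restriction functor and its two adjoints.

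First I would record the Frobenius structure of $kG$. The $k$-linear form $\lambda\colon kG\to k$ sending $\sum_{g}a_g g$ to $a_1$ satisfies $\lambda(ab)=\lambda(ba)$, and the bilinear pairing $(a,b)\mapsto\lambda(ab)$ is non-degenerate (test against the group elements $g^{-1}$). Since $kG$ is finite dimensional, this pairing gives an isomorphism $kG\xrightarrow{\ \sim\ }\Hom_k(kG,k)$ of $kG$-bimodules. Tensoring with an arbitrary $k$-vector space $V$ and using finite dimensionality of $kG$ once more yields a natural isomorphism of $kG$-modules
\[
kG\otimes_k V\ \cong\ \Hom_k(kG,V).
\]

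Next I would bring in the restriction functor $\res\colon\Mod(kG)\to\Mod(k)$, which is exact. Its left adjoint is induction $kG\otimes_k(-)$ and its right adjoint is coinduction $\Hom_k(kG,-)$; both adjoints are again exact (everything over a field is flat and cofree), so induction preserves projectives and coinduction preserves injectives. Since every $k$-vector space is simultaneously projective and injective, the displayed isomorphism shows that for every $V$ the module $kG\otimes_k V\cong\Hom_k(kG,V)$ is \emph{both} projective and injective over $kG$. Now, if $P$ is projective it is a direct summand of a free module, hence of some $kG\otimes_k V$, so $P$ is injective. Conversely, if $I$ is injective, the unit of the adjunction $\res\dashv\Hom_k(kG,-)$ is a monomorphism $I\hookrightarrow\Hom_k(kG,\res I)$, explicitly $m\mapsto(a\mapsto am)$; by the defining property of injectivity it splits, so $I$ is a direct summand of $\Hom_k(kG,\res I)\cong kG\otimes_k\res I$, which is free, whence $I$ is projective.

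The only place where anything group-theoretic enters — and therefore the main point to get right — is the symmetric Frobenius structure, i.e.\ the isomorphism $kG\cong\Hom_k(kG,k)$; this is exactly what forces induction and coinduction to coincide, and all the rest is formal bookkeeping with exact adjoint functors. (A leaner but less self-contained variant would instead note that $kG$ is noetherian and self-injective, i.e.\ quasi-Frobenius, and quote the module-theoretic dichotomy for such rings; I prefer the argument above since it is essentially self-contained and makes the role of finiteness of $G$ transparent.)
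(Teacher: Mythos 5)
Your proof is correct and follows essentially the same route as the paper's: the self-duality $kG\cong\Hom_k(kG,k)$ makes free modules injective (hence projectives are injective), and the canonical embedding of any module into a free module splits when the module is injective (hence injectives are projective). The only cosmetic difference is that you identify free modules directly with coinduced modules $\Hom_k(kG,V)$, which lets you bypass the paper's appeal to noetherianness of $kG$ to see that arbitrary direct sums of injectives are injective.
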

\begin{proof}

Here is a sketch of a proof. One of the exercises is to fill in the details. First one proves that $kG$ is injective as a $kG$-module (this is called \emph{self-injectivity}\index{self-injectivity}) by giving a $kG$-module isomorphism between $kG$ and its vector space dual. Since $kG$ is noetherian, direct sums of injective modules are injective, so free modules are injective. Direct summands of injectives are injective, so projectives are injective.

For the converse, we show that every module $M$ embeds in a free module. Namely, if we denote by $M{\downarrow_{\{1\}}\uparrow^G}$ the free module obtained by restricting to the trivial subgroup and then inducing back up to $G$, then there is a monomorphism $M \to M{\downarrow_{\{1\}}\uparrow^G}$. If $M$ is injective, then this map splits and $M$ is a direct summand of a free module, hence projective.
\end{proof}

\begin{remark}
It is also true that a $kG$-module is projective if and only if it is flat; we shall not make use of this fact. Furthermore, if $G$ is a $p$-group ($p$ is the characteristic of $k$), then a $kG$-module is projective if and only if it is free; see Proposition~\ref{prop:kg-local}.
\end{remark}

There are only a finite number of projective indecomposables, or equivalently 
injective indecomposables,
and for any given group these can be understood.
If $P$ is projective indecomposable then $P$ has a unique top composition
factor and a unique bottom composition factor, and they are isomorphic
simple modules $S$. The projective module
$P=P(S)$ is determined up to isomorphism by $S$, and this gives a one to one
correspondence between simples and projective indecomposables. Thus $P(S)$
is both the projective cover and the injective hull of $S$.

\subsection{The stable module category}
\index{stable module category}

Recall that the category
$\Mod(kG)$ has as its objects the $kG$-modules, 
as its arrows the module homomorphisms. 

It is often convenient to work ``modulo the projective modules'', which
leads to the \emph{stable module category} $\StMod(kG)$. This has
the same objects as $\Mod(kG)$, but the arrows are given by quotienting
out the module homomorphisms that factor through some projective module.
We write
\[ 
\sHom_{kG}(M,N) = \Hom_{kG}(M,N)/P\Hom_{kG}(M,N) 
\]
where $P\Hom_{kG}(M,N)$ denotes the linear subspace consisting of homomorphisms that factor through some  projective module. Note that $M\to N$ factors through some projective module if and only if it factors through the projective cover of $N$ ($M\to P(N)\to N$), and also if and only if it factors through the injective hull of $M$ ($M\to I(M) \to N$). This implies in particular that $P\Hom_{kG}(M,N)$ is a linear subspace of $\Hom_{kG}(M,N)$.

We write $\mod(kG)$ and $\stmod(kG)$\index{stmodkg@$\stmod(kG)$} for the full subcategories of finitely generated modules in $\Mod(kG)$ and $\StMod(kG)$\index{StModkg@$\StMod(kG)$} respectively. Note that by the discussion above, a homomorphism of finitely generated modules factors through a projective module if and only if it factors through a finitely generated projective module.

\begin{warning}
It is not true that if a homomorphism of $kG$-modules factors through
a finitely generated module and it factors through a projective
module then it factors through a finitely generated projective module.
\end{warning}

The categories $\Mod(kG)$, $\mod(kG)$ are abelian categories, but $\StMod(kG)$, $\stmod(kG)$ are not. Roughly speaking, the problem is that every homomorphism $M\to N$ of $kG$-modules is equivalent in $\StMod(kG)$ to the surjective homomorphism $M\oplus P(N)\to N$ and to the injective homomorphism $M\to I(M)\oplus N$, so we lose sight of kernels and cokernels.

Instead, $\StMod(kG)$ and $\stmod(kG)$ are examples of \emph{triangulated categories}. This is proved in detail in the book of Happel \cite{Happel:1988a}, and will be discussed in Section~\ref{sec:Monday3}. For now, we just mention that the distinguished triangles in $\StMod(kG)$ come from short exact sequences in $\Mod(kG)$ and
the shift in $\StMod(kG)$ is $\Omega^{-1}$.

\bigskip

Next, we discuss how you tell whether a $kG$-module is projective.
This is done through Chouinard's theorem and Dade's lemma.

\subsection{Chouinard's theorem}
\index{Chouinard's Theorem}
Let $p$ be a prime number and $k$ a field of characteristic $p$.

\begin{definition}
A finite group is an \emph{elementary abelian $p$-group}\index{elementary abelian $p$-group} if it is isomorphic to $(\bbZ/p)^r=\bbZ/p\times\dots \times \bbZ/p$ for some $r$. The number $r$ is called the \emph{rank}.\index{rank}
\end{definition}

\begin{theorem}[Chouinard \cite{Chouinard:1976a} (1976)]\label{th:Chouinard}
A $kG$-module is projective if and only if its restriction to every elementary abelian subgroup of $G$ is projective. \qed
\end{theorem}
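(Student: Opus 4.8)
The ``only if'' direction is immediate: $kG$ is free over $kH$ for every subgroup $H$, so restricting a projective module to $H$ keeps it projective. I would prove the converse in two stages.

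\emph{Reduction to a $p$-group.} Let $P\le G$ be a Sylow $p$-subgroup. Since $[G:P]$ is invertible in $k$, averaging exhibits any $kG$-module $M$ as a direct summand of $M{\downarrow_P\uparrow^G}$: the composite $M\to M{\downarrow_P\uparrow^G}\to M$ is multiplication by $[G:P]$. Hence if $M{\downarrow_P}$ is projective over $kP$ then so is $M{\downarrow_P\uparrow^G}$, and therefore its summand $M$. As subgroups of order prime to $p$ contribute nothing (their group algebras are semisimple, so any restriction is projective) and every elementary abelian $p$-subgroup is $G$-conjugate into $P$, it suffices to treat the case $G=P$ of a $p$-group, where ``projective'' coincides with ``free''.

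\emph{The $p$-group case.} Induct on $|P|$. If $P$ is elementary abelian, take $E=P$. Otherwise invoke Serre's theorem: for a non-elementary-abelian $p$-group $P$ the product $\prod_\ell\beta(u_\ell)$ of the Bocksteins of generators $u_\ell$ of the lines $\ell\subseteq H^1(P,\bbF_p)$ vanishes in $H^*(P,k)$ (for $p=2$ this reads $\bigl(\prod_{0\ne u}u\bigr)^2=0$). Now use Carlson's modules: for $0\ne\zeta\in H^n(P,k)$ let $L_\zeta$ be the fibre of $\Omega^nk\xrightarrow{\zeta}k$ in $\StMod(kP)$. Two facts are needed. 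First, the calculus $\Omega^{|\mu|}L_\zeta\to L_{\zeta\mu}\to L_\mu\to$, so that $L$ of a product is an iterated triangle extension of the $L$'s of the factors, while $L_0\cong\Omega^nk\oplus\Omega k$. Second, for $\zeta=\beta(u)$ with $0\ne u\in H^1(P,\bbF_p)$ the module $L_{\beta(u)}$ is built, up to projective summands, out of the permutation module $k{\uparrow_{P_u}^P}$ and its syzygies, where $P_u=\ker u<P$ (for $p=2$ one has simply $L_u\cong\Omega(k{\uparrow_{P_u}^P})$, since the self-extension of $k$ classified by $u$ is the two-dimensional module $k{\uparrow_{P_u}^P}$). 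Tensoring $(-)\otimes_k M$ over $k$ and using the projection formula $M\otimes_k(N{\uparrow_H^P})\cong(M{\downarrow_H}\otimes_kN){\uparrow_H^P}$ reduces $M\otimes_kL_{\beta(u_\ell)}$ to $(M{\downarrow_{P_{u_\ell}}}){\uparrow_{P_{u_\ell}}^P}$. By the induction hypothesis applied to the proper subgroup $P_{u_\ell}$ (every elementary abelian subgroup of $P_{u_\ell}$ is one of $P$), $M{\downarrow_{P_{u_\ell}}}$ is projective over $k[P_{u_\ell}]$, hence $(M{\downarrow_{P_{u_\ell}}}){\uparrow_{P_{u_\ell}}^P}$, and so $M\otimes_kL_{\beta(u_\ell)}$, is projective over $kP$. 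Then the first fact together with Serre's relation force $M\otimes_kL_0$ to be projective; since $L_0$ is a sum of syzygies of $k$, this makes $\Omega M$, and therefore $M$, projective, completing the induction.

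\emph{Where the difficulty lies.} Everything is in the $p$-group step, and within it in Serre's theorem on the vanishing product of Bocksteins: this is a genuine cohomological input, proved via Steenrod operations, for which I know no elementary substitute. The other ingredient---the description of $L_{\beta(u)}$, up to projectives, in terms of the permutation module $k{\uparrow_{P_u}^P}$---is what converts a relation in $H^*(P,k)$ into control by the proper subgroup $P_u$; it is clean for $p=2$, but in odd characteristic one must be more careful (for instance for $p$-groups with a cyclic quotient of order $p^2$, where $\beta$ can vanish on $H^1$), and the most economical route there is to deduce the statement from Quillen's stratification of $\Spec H^*(P,k)$---which is itself powered by Serre's theorem. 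Conceptually, Chouinard's theorem is the module-theoretic shadow of the fact that $\Spec H^*(G,k)$ is covered by the images of $\Spec H^*(E,k)$ over the elementary abelian subgroups $E$.
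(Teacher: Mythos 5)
The paper does not actually prove this theorem: it is stated with a reference to \cite{Chouinard:1976a} and no argument, so there is no internal proof to compare against. Your proposal is correct and is the standard ``modern'' cohomological proof (in the style of Carlson and of Benson--Carlson--Rickard), as opposed to Chouinard's original argument, which proceeds through relative projectivity and induction-theoretic methods and has the advantage of working over an arbitrary commutative coefficient ring rather than a field. The Sylow reduction via the averaging splitting of $M\to M{\downarrow_P\uparrow^G}\to M$ is right, and the $p$-group induction via Serre's theorem and the $L_\zeta$-calculus is sound; note also that since every $L_\zeta$ is finite dimensional, tensoring with an arbitrary (possibly infinitely generated) $M$ causes no trouble, which matters because the paper invokes the theorem for arbitrary modules. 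One point can be tightened, and it dissolves your own worry about the odd-$p$ case: the correct source of the structural fact about $L_{\beta(u)}$ is the four-term exact sequence
\[
0\to k\to k{\uparrow_{P_u}^P}\to k{\uparrow_{P_u}^P}\to k\to 0
\]
obtained by inflating the periodic resolution of $k$ over $k[P/P_u]\cong k[\bbZ/p]$; its Yoneda class is a nonzero scalar multiple of $\beta(u)$, and the octahedral axiom exhibits the fibre of that class---whatever it is---as an extension, up to projectives, of $\Omega(k{\uparrow_{P_u}^P})$ by $\Omega^{2}(k{\uparrow_{P_u}^P})$. This identification is uniform in $p$ and does not require $\beta(u)\ne 0$: if $\beta(u)=0$ (e.g.\ when $P$ has a quotient $\bbZ/p^{2}$ and $u$ factors through it), the fibre is $\Omega^{2}k\oplus\Omega k$, and the same extension immediately forces $M$ projective from the projectivity of $M{\downarrow_{P_u}}$ alone, with no appeal to Serre's theorem for that $P$. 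So the only genuinely deep external input is Serre's vanishing of the product of Bocksteins, exactly as you say, and there is no need to route the odd-characteristic case through Quillen stratification.
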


\begin{remark}
In fact, Chouinard proved this theorem in the context where $k$ is an
arbitrary commutative rings of coefficients. In this case, one needs
to use elementary abelian subgroups at all primes dividing the group order.
We shall only make use of
the case where $k$ is a field of characteristic $p$, in which case
we only need the elementary abelian $p$-subgroups.
\end{remark}

\begin{example}
Let $G=Q_8$, the quaternions and $k$ a field of characteristic $2$. The only elementary abelian subgroup of $G$ is its centre $Z(G)=\{1,z\}$. In this case, Chouinard's theorem states that a $kG$-module $M$ is projective if and only if its restriction to $Z(G)$ is projective. If the module is finite dimensional, this is equivalent to the statement
that the rank of the matrix representing $1+z$ is as large as it can be, namely one half of the dimension of $M$. This can be seen using the theory of Jordan canonical forms.
\end{example}

\subsection{Dade's lemma}
\index{Dade's Lemma}

Let $E$ be an elementary abelian $p$-group of rank $r$:
\[ 
E=(\bbZ/p)^r=\langle g_1,\dots,g_r\rangle 
\]
and let $k$ be an algebraically closed field of characteristic $p$.

Write $x_i$ for the element $g_i-1$ in $J(kE)$\index{JkE@$J(kE)$}, the Jacobson radical of $kE$, so that 
\[ 
kE=k[x_1,\dots,x_r]/(x_1^p,\dots,x_r^p). 
\]

\begin{definition}
If $\alpha=(\alpha_1,\dots,\alpha_r)\in\bbA^r(k)\setminus\{0\}$ we set
\[ 
x_\alpha=\alpha_1 x_1+\dots+\alpha_r x_r \in J(kE) 
\]
so that $x_\alpha^p=0$, and $1+x_\alpha$ is a unit of order $p$ in $kG$. We call the group $\langle 1+x_\alpha\rangle$ generated by $1+x_{\alpha}$ a \emph{cyclic shifted subgroup}\index{cyclic shifted subgroup} of $E$.
\end{definition}

Recall that projective $kE$-modules (equivalently, injective modules) are free.

\begin{lemma}[Dade \cite{Dade:1978b} (1978)]
A finitely generated $kE$-module $M$ is projective if and only if its restriction to every cyclic shifted subgroup $\langle 1+x_\alpha\rangle$ is free. \qed
\end{lemma}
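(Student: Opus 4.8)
The plan is to deduce Dade's lemma from Chouinard's theorem (Theorem~\ref{th:Chouinard}) by a rank-variety argument, reducing the question of projectivity to a family of rank-one subalgebras which are abstractly isomorphic to $k[t]/(t^p)$. The ``only if'' direction is immediate: if $M$ is projective as a $kE$-module it is free, and restriction of a free module along any subgroup inclusion is again free, so in particular its restriction to each $\langle 1+x_\alpha\rangle$ is free.

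For the ``if'' direction, suppose the restriction of $M$ to every cyclic shifted subgroup is free. First I would dispose of the case $r=1$, where $kE=k[x_1]/(x_1^p)$ is itself a ``cyclic shifted subgroup'' (taking $\alpha=1$), so the hypothesis directly says $M$ is free. For general $r$, the key point is that $kE\cong k[x_1,\dots,x_r]/(x_1^p,\dots,x_r^p)$ is a local ring, so $M$ is projective if and only if it is free, and a finitely generated module over such an algebra is free if and only if it is flat, equivalently if $\operatorname{Tor}^{kE}_i(k,M)=0$ for $i>0$, or even just for $i=1$. The strategy is then to detect non-freeness through a single homological invariant and to locate a bad cyclic shifted subgroup on which $M$ is not free. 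Concretely, one filters $E$ by a chain of subgroups, or—cleaner—one argues by induction on $r$: choose coordinates so that $E=E'\times\langle g_r\rangle$ with $E'$ of rank $r-1$; the restriction of $M$ to $E'$ satisfies the hypothesis (a cyclic shifted subgroup of $E'$ is one of $E$), and one must promote freeness ``one variable at a time.'' The honest way to make this work is via the rank variety: one shows directly that the set of $\alpha$ with $M{\downarrow}_{\langle 1+x_\alpha\rangle}$ not free is a closed (in fact, homogeneous) subvariety of $\bbA^r(k)$, that it is nonempty when $M$ is not free, and hence—using that $k$ is algebraically closed, so that this variety has a $k$-point—one obtains an $\alpha$ witnessing non-freeness, contradicting the hypothesis.

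The main obstacle, and the technical heart of the matter, is precisely the step asserting that $M$ non-projective forces the rank variety to be nonempty. This is where one genuinely uses that $k$ is algebraically closed and where the inductive bookkeeping is delicate: freeness of $M{\downarrow}_{\langle 1+x_\alpha\rangle}$ is equivalent to the map $x_\alpha\colon M\to M$ having image of dimension exactly $(1-1/p)\dim_k M$, i.e., to $\operatorname{rank}(x_\alpha^j)$ achieving its maximal possible value for each $j$, and one must show that if these ranks drop for every $\alpha$ in a Zariski-dense set then they drop identically, forcing $\operatorname{Tor}^{kE}_1(k,M)\neq 0$. A careful treatment either invokes generic freeness / semicontinuity of the rank of the specialised map $x_\alpha$ as $\alpha$ varies over $\bbA^r(k)$, or builds the chain of subgroups $1=E_0<E_1<\dots<E_r=E$ and pushes freeness up the chain. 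Once non-freeness is detected at some shifted subgroup, the contradiction with the hypothesis finishes the argument; invoking Chouinard's theorem then packages the general-group statement, though for $E$ itself the local-ring/rank-variety route is self-contained.
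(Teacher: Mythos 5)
The paper states Dade's lemma without proof (it is cited to Dade's 1978 paper and marked \qed), so there is no internal argument to compare against; your proposal therefore has to stand on its own, and as written it does not. The easy direction and the framing are fine: $kE$ is local so projective equals free, the restriction of a free module to each $\langle 1+x_\alpha\rangle$ is free (after the linear change of variables $y_1=x_\alpha$, which uses $\Char k=p$ to get $x_\alpha^p=0$), and freeness over $k[x_\alpha]/(x_\alpha^p)$ is equivalent to $\operatorname{rank}(x_\alpha|_M)$ attaining its maximum $\frac{p-1}{p}\dim_k M$, so the non-free locus in $\bbA^r(k)\setminus\{0\}$ is closed and homogeneous by semicontinuity of rank. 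But all of this only sets up the problem. The entire content of Dade's lemma is the implication ``$M$ not free $\Rightarrow$ this locus is nonempty,'' and you explicitly flag it as ``the main obstacle'' and ``the technical heart'' without proving it. Naming two candidate strategies (semicontinuity, or pushing freeness up a chain of subgroups) is not a proof: semicontinuity shows the bad locus is closed, not that it is nonempty; and the inductive step --- the genuinely hard case $r=2$, where one must show that a $k[x_1,x_2]/(x_1^p,x_2^p)$-module free on every $\langle 1+x_\alpha\rangle$ is free --- is exactly what Dade's argument supplies and what is missing here. Moreover the one implication you do articulate (``if the ranks drop for every $\alpha$ then $\Tor_1^{kE}(k,M)\ne 0$'') points the wrong way: under the hypothesis of the lemma the ranks drop for no $\alpha$, so that statement is vacuous; what is needed is its converse.

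A secondary issue: the appeal to Chouinard's theorem is a red herring. Chouinard reduces projectivity over $G$ to projectivity over honest elementary abelian subgroups, but Dade's lemma already concerns an elementary abelian group, and cyclic shifted subgroups $\langle 1+x_\alpha\rangle$ are subgroups of the unit group of $kE$, not of $E$ --- so Chouinard contributes nothing to either direction. You half-acknowledge this in your last sentence, but the opening claim that the plan is ``to deduce Dade's lemma from Chouinard's theorem'' should be dropped.
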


\begin{remark}
Using the theory of Jordan canonical forms, we can see that the restriction of $M$ to $\langle 1+x_\alpha\rangle$ is free if and only if the rank of the matrix representing $x_\alpha$ is as large as it can be, namely $\left(\frac{p-1}{p}\right).\dim_k(M)$.
\end{remark}

\subsection{Rank varieties}
\label{ssec:Rank varieties}
Although it is logically not necessary to discuss rank varieties for the purpose of understanding the proofs of the main results presented in this seminar, cohomological varieties, which are needed, are very difficult to compute without reference to rank varieties. We therefore include a discussion of rank varieties, first for finitely generated modules and then for infinitely generated modules.

Dade's lemma motivates the following definition.

\begin{definition}[Carlson \cite{Carlson:1981b,Carlson:1983a}]
Let $E$ be an elementary abelian $p$-group and $k$ a field of characteristic $p$. If $M$ is a finitely generated $kE$-module then the \emph{rank variety}\index{rank variety} of $M$ is
\[ 
V^r_E(M)=\{0\}\cup \{\alpha\ne 0 \mid M{\downarrow_{\langle 1+x_\alpha\rangle}}
\text{ is not free}\} \subseteq V^r_E=\bbA^r(k) 
\]
\end{definition}

Here are some properties of rank varieties; some of these are obvious, while others are difficult to prove.
A detailed account of the theory can be found in Chapter~5 of \cite{Benson:1991b}.

\begin{enumerate}
\item $V^r_E(M)$ is a closed homogeneous subvariety of $\bbA^r(k)$.
\item $V^r_E(M)=\{0\}$ if and only if $M$ is projective.
\item $V^r_E(M\oplus N)=V^r_E(M)\cup V^r_E(N)$.
\item $V^r_E(M\otimes_k N)=V^r_E(M)\cap V^r_E(N)$.
\item In any exact sequence $0\to M_1\to M_2\to M_3\to 0$ of $kE$-modules the variety of each module is contained in the union of the varieties of the other two.
\item The Krull dimension of $V^r_E(M)$ measures the rate of growth of the minimal 
projective resolution of $M$.
\item Given a closed homogeneous subvariety $V$ of $\bbA^r(k)$, there exists
a finitely generated $kE$-module $M$ such that $V^r_E(M)=V$.
\end{enumerate}

\subsection{Infinitely generated modules}
\index{infinitely generated modules}

Why study infinitely generated modules?\medskip

Here is an analogy. If you were only interested in finite CW complexes, or even
only interested in manifolds, you'd end up looking at homology theories
(ordinary homology, K-theory, etc.).
The representing objects for these are infinite CW complexes
such as $K(\pi,n)$, $BU(n)$, etc.

Similarly, in representation theory the representing objects for some
natural functors on finitely generated modules are 
infinitely generated. Relevant examples for us are Rickard's idempotent
modules and idempotent functors \cite{Rickard:1997a}, which we shall be
discussing later.

Another relevant motivation comes from commutative algebra. Over a 
commutative noetherian ring, the injective hull of a finitely generated
module is usually not finitely generated. Since the definition of local
cohomology functors in this context involves injective resolutions, 
we should expect to have to deal with infinitely generated modules.

More care is needed in dealing with infinitely generated modules, 
as many of the well known properties of finitely generated modules fail.
So for example a non-zero $kG$-module need not have any indecomposable 
direct summands. Even if it is a finite direct sum of indecomposables,
the Krull--Remak--Schmidt theorem need not hold. 

\begin{warning}
Dade's lemma as stated earlier in this lecture is false for infinitely generated $kG$-modules, as is explained in the following example.
\end{warning}

\begin{example} 
This is the \emph{generic module}\index{generic!module} for $(\bbZ/2)^r=\langle g_1,\dots,g_r\rangle$, for $r\ge 2$.

Let $K=k(t_1,\dots,t_r)$, a pure transcendental extension of $k$ of
transcendence degree $r$, and let $M=K\oplus K$ as a $k$-vector space,
with each $g_i$ acting on $M$ as the matrix
\[ 
\left(\begin{matrix}I&0\\t_i&I\end{matrix}\right). 
\]
Here, $I$ is the identity map on $K$, and $t_i$ is to be thought of as multiplication by $t_i$ on $K$. You are asked to prove in the exercises that $M$ is projective on all cyclic shifted subgroups, but it is not  
projective.

In fact, in a sense that we are about to explain, the variety of this module $M$ is \emph{the generic point}\index{generic!point} of $\bbA^r(k)$.
\end{example}

The following modification of Dade's lemma was proved by Benson, Carlson and Rickard \cite{Benson/Carlson/Rickard:1996a}.

\begin{lemma}
A $kE$-module $M$ is projective if and only if for all extension fields
$K$ of $k$ and all cyclic shifted subgroups $\langle 1+x_\alpha\rangle$ of
$KE$ the module $(K\otimes_k M){\downarrow_{\langle 1+x_\alpha\rangle}}$ 
is free.\qed
\end{lemma}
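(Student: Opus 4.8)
The forward implication is the routine half. If $M$ is projective over $kE$ then, since $E$ is a $p$-group, $kE$ is local and $M$ is in fact free, so $K\otimes_k M$ is free over $KE$. Given a cyclic shifted subgroup $\langle 1+x_\alpha\rangle$ of $KE$, say with $\alpha_1\ne0$, the substitution $x_1\mapsto x_\alpha$, $x_i\mapsto x_i$ for $i\ge2$ is invertible and, because $x_\alpha^p=\sum_i\alpha_i^p x_i^p$ in characteristic $p$, it preserves the ideal $(x_1^p,\dots,x_r^p)$; thus $K\langle 1+x_\alpha\rangle\cong K[x_\alpha]/(x_\alpha^p)$ is a coordinate subalgebra of $KE$, over which $KE$ is free, and the restriction of the free module $K\otimes_k M$ is free.

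For the converse I would begin with two reductions. Flat base change for $\Tor$ gives $\Tor_1^{kE}(k,M)\otimes_k K\cong\Tor_1^{KE}(K,K\otimes_k M)$, and over the artinian local ring $kE$ a module is free precisely when its first $\Tor$ against the residue field $k$ vanishes (Nakayama for the nilpotent ideal $J(kE)$); hence $M$ is projective over $kE$ if and only if $K\otimes_k M$ is projective over $KE$, and the hypothesis for $M$ passes to $\bar k\otimes_k M$ over $\bar k E$, so I may assume $k$ algebraically closed. Moreover it is enough to exploit a single, generic, shifted subgroup: put $K=k(t_1,\dots,t_r)$ and $x_{\mathbf t}=\sum_i t_i x_i$, so that $x_{\mathbf t}^p=\sum_i t_i^p x_i^p=0$ and $K\langle 1+x_{\mathbf t}\rangle\cong K[x_{\mathbf t}]/(x_{\mathbf t}^p)$; the hypothesis makes $(K\otimes_k M){\downarrow_{\langle 1+x_{\mathbf t}\rangle}}$ free, and I must deduce that $M$ is projective over $kE$.

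The heart of the matter is this last deduction. Set $S=k[t_1,\dots,t_r]$ and $\mathcal N=S\otimes_k M$, a module over $S\otimes_k kE$ on which $x_{\mathbf t}$ acts. Unwinding the $\Tor$-criterion for the subalgebra $S[x_{\mathbf t}]/(x_{\mathbf t}^p)$, freeness of $(K\otimes_k M){\downarrow_{\langle 1+x_{\mathbf t}\rangle}}$ is exactly the statement that the $S$-module $\ker(x_{\mathbf t}|_{\mathcal N})/x_{\mathbf t}^{p-1}\mathcal N$ becomes zero after localising at $(0)$, i.e.\ is $S$-torsion; the essential task is to upgrade this to genuine vanishing, and hence to $\Tor_1^{kE}(k,M)=0$. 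This is where the genericity of $t_1,\dots,t_r$ must be used: the operators $x_i$ act on $\mathcal N$ through $k$ while the $t_i$ are transcendental, so the torsion module above is defined over $k$ and carries enough homogeneity that a graded Nakayama argument forces its support, and then the module itself, to be trivial. An equivalent and perhaps more transparent packaging uses the degree-two Eisenbud cohomology operators $\chi_1,\dots,\chi_r$ of the complete intersection presentation $kE=k[x_1,\dots,x_r]/(x_1^p,\dots,x_r^p)$: one has $M$ projective $\Longleftrightarrow\widehat{\Ext}^*_{kE}(M,M)=0$, the hypothesis forces this graded $k[\chi_1,\dots,\chi_r]$-module to localise to zero at the generic point, and a graded Nakayama argument over $k[\chi_1,\dots,\chi_r]$ --- using that $\chi_{\mathbf t}=\sum_i t_i\chi_i$ has transcendental coefficients and that the module is $k$-rational --- gives the vanishing. (One could equally induct on $r$, since the hypothesis descends to every rank-$(r-1)$ linear shifted subgroup over every extension field, but this merely relocates the difficulty to the assertion that a module free on all linear hyperplane sections is free.)

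The obstacle throughout is precisely the passage from finitely generated to arbitrary modules. For finitely generated $M$, Dade's lemma is driven by upper semicontinuity of the corank of $x_\alpha$ as $\alpha$ ranges over $\bbA^r(k)$, so that closed points suffice and no field extensions enter; for infinitely generated $M$ this semicontinuity fails --- the generic module discussed above is the standard witness --- and the shifted subgroup detecting non-projectivity may live only at the generic point of a proper subvariety. That is why arbitrary extension fields must appear in the statement, and why the deduction above cannot proceed by naive specialisation but needs the function field $k(t_1,\dots,t_r)$ (equivalently, the cohomology operators); carrying this out rigorously is the main point of the proof.
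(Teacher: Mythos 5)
Your forward direction is correct, and so are your two preliminary reductions (flat base change for $\Tor$ to pass to $\bar k$, and the characterisation of freeness over the artinian local ring $kE$ by the vanishing of $\Tor_1^{kE}(k,-)$). The fatal flaw is the claimed reduction to a \emph{single} generic cyclic shifted subgroup: it is false that freeness of $(K\otimes_k M){\downarrow_{\langle 1+x_{\mathbf t}\rangle}}$ for $K=k(t_1,\dots,t_r)$ and $x_{\mathbf t}=\sum_i t_ix_i$ implies projectivity of $M$. Take $E=(\bbZ/2)^2$ in characteristic $2$ and let $M$ be the two-dimensional module with $x_1\mapsto\smatrix{0&0\\1&0}$ and $x_2\mapsto 0$ (the module $M_{(1:0)}$ of Section~\ref{sec:Monday1}). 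Then $x_{\mathbf t}$ acts on $K^2$ by $\smatrix{0&0\\t_1&0}$, which has rank one and square zero, so the restriction to the generic cyclic shifted subgroup is free; yet $M$ is not projective, since its rank variety is the line $\alpha_1=0$, which the generic point of $\bbA^2$ misses. Freeness at the generic shifted subgroup of the whole space only detects whether $\bbA^r$ itself lies in $\mcV^r_E(M)$; to detect projectivity one needs the generic shifted subgroups attached to \emph{every} irreducible subvariety, which is exactly why the statement quantifies over all extension fields and all $\alpha$, and why your hypothesis on this one instance cannot possibly yield the conclusion. The same error sinks the Eisenbud-operator packaging: knowing that $\tExt^*_{kE}(M,M)$ localises to zero at the generic point of $\Spec k[\chi_1,\dots,\chi_r]$ only says that this graded module is torsion, and no Nakayama argument, graded or otherwise, forces a torsion module to vanish --- the module $M$ above realises precisely this obstruction.

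Beyond this, you candidly leave your central step (upgrading ``torsion'' to ``zero'') unproved, so even on its own terms the proposal is a sketch. Note that the paper does not prove this lemma either; it is quoted from Benson, Carlson and Rickard \cite{Benson/Carlson/Rickard:1996a}, where the converse is genuinely hard: one argues by induction on the rank, using the full hypothesis to control restrictions to generic shifted subgroups associated with subvarieties of every dimension, not just the generic point of $\bbA^r$. Your closing parenthetical --- that the inductive route ``merely relocates the difficulty'' --- is closer to the truth than your main line of argument: that relocation \emph{is} the proof, and it cannot be short-circuited by a single generic specialisation.
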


If $k$ is algebraically closed, write $\mcV_E^r(k)$ for the set of non-zero homogeneous irreducible closed subvarieties $V$ of $\bbA^r(k)$. Then each cyclic shifted subgroup of $KE$ is \emph{generic} for some $V\in\mcV^r_E(k)$ in a sense described in \cite{Benson/Carlson/Rickard:1996a}.

\begin{definition}
If $M$ is a, possibly infinite dimensional, $kE$-module then the \emph{rank variety}\index{rank variety} is no longer a single variety, but a set of varieties:
\[
\mcV_E^r(M)=\left\{V\in \mcV_E^r(k) \left |
\begin{gathered}
K \otimes_k M \text{ restricted to a generic cyclic} \\
\text{shifted subgroup for $V$ is not projective}.
\end{gathered} \right.\right\}
\]
\end{definition}
The following is a list of properties of the rank varieties $\mcV^r_E(M)$; cf. \S\ref{ssec:Rank varieties}. Again, some are obvious and some are difficult to prove; see \cite{Benson/Carlson/Rickard:1996a}.

\begin{enumerate}
\item $\mcV^r_E(M)=\varnothing$ if and only if $M$ is projective.
\item $\mcV^r_E(M\oplus N)=\mcV^r_E(M)\cup\mcV^r_E(N)$, and more generally
\[ 
\mcV^r_E(\bigoplus_\alpha M_\alpha)=\bigcup_\alpha\mcV^r_E(M_\alpha). 
\]
\item $\mcV^r_E(M\otimes_k N)=\mcV^r_E(M)\cap\mcV^r_E(N)$.
\item If $0\to M_1\to M_2\to M_3\to 0$ is a short exact sequence of
$kE$-modules then the variety of each module is contained in the
union of the varieties of the other two.
\item For a finitely generated $kE$-module $M$, $\mcV^r_E(M)$ is the set of 
closed homogeneous irreducible subvarieties of $V_E^r(M)$,
\item 
For any subset $\mcV\subseteq\mcV^r_E(k)$ there exists a $kE$-module $M$
such that $\mcV^r_E(M)=\mcV$.
\end{enumerate}

\subsection{Localising subcategories}
\index{localising subcategory}

Our goal in these lectures is to understand the subcategories of $\Mod(kG)$ in terms of varieties.
Let us state the main theorem for an elementary abelian group $E$.

\begin{definition}\label{def:Mod-loc}
Consider full subcategories $\sfC$ of $\Mod(kE)$ satisfying the following two properties:
\begin{enumerate}
\item $\sfC$ is closed under direct sums, and
\item $\sfC$ has the ``two in three property'':\index{two in three property}\smallskip

If $0\to M_1\to M_2\to M_3\to 0$ is an exact sequence of $kE$-modules and two of $M_1,M_2,M_3$ are in $\sfC$ then so is the third.
\end{enumerate}
\end{definition}

Such subcategories, if they are non-zero, contain the projective modules (Exercise!) and pass down to the \emph{localising subcategories} of $\StMod(kE)$.

The following is the main theorem of \cite{Benson/Iyengar/Krause:bik3}, reinterpreted for rank varieties in the case of an elementary abelian $p$-group.

\begin{theorem}
The non-zero subcategories of $\Mod(kE)$ satisfying the above two conditions are in bijection with subsets of the set $\mcV^r_E(k)$ of non-zero closed homogeneous irreducible subvarieties of $\bbA^r(k)$. Under this bijection, a subset $\mcV\subseteq\mcV^r_E(k)$ corresponds to the full subcategory consisting of those $kE$-modules $M$
satisfying $\mcV_E^r(M)\subseteq\mcV$. \qed
\end{theorem}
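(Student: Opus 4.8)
The plan is to deduce the statement from the general stratification machinery described above, applied to the graded-commutative noetherian ring $R=\hh{(E,k)}$ acting on the compactly generated triangulated category $\StMod(kE)$ in the canonical way (via $-\otimes_k-$). Granting that setup, three things have to be matched up. First, via the passage to the stable category, the full subcategories of $\Mod(kE)$ satisfying conditions (1)--(2) correspond exactly to the localising subcategories of $\StMod(kE)$: a nonzero such subcategory contains the projectives, is closed under $\Omega^{\pm1}$, extensions, cones and arbitrary coproducts (using (1)--(2) and the fact that projectives are injective), hence is a coproduct-closed triangulated subcategory, hence thick, hence localising. Second, cohomological support coincides with the rank variety: this is the Avrunin--Scott theorem (in the infinitely generated form supplied by the general theory), and combined with property (5) of $\mcV^r_E(-)$ it identifies $\Spec R\setminus\{\fm\}$ with $\mcV^r_E(k)$ and carries $\supp_R M$ to $\mcV^r_E(M)$ for every module $M$. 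So the BIK classification by subsets of $\Spec R\setminus\{\fm\}$ becomes, verbatim, the asserted classification by subsets of $\mcV^r_E(k)$, \emph{provided} $\StMod(kE)$ is stratified by $R$.

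One direction of the bijection is then immediate and worth isolating. Given $\mcV\subseteq\mcV^r_E(k)$, the full subcategory $\sfC_{\mcV}=\{M\mid\mcV^r_E(M)\subseteq\mcV\}$ satisfies (1) by $\mcV^r_E(\bigoplus_\alpha M_\alpha)=\bigcup_\alpha\mcV^r_E(M_\alpha)$ and (2) by the two-out-of-three property of rank varieties, and property (6) (realisability) makes $\mcV\mapsto\sfC_{\mcV}$ injective. The content is surjectivity: any $\sfC$ with (1)--(2) equals $\sfC_{\mcV}$ for $\mcV=\bigcup_{M\in\sfC}\mcV^r_E(M)$. The inclusion $\sfC\subseteq\sfC_{\mcV}$ is trivial. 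For the reverse, take $N$ with $\mcV^r_E(N)\subseteq\mcV$; the local-global principle reduces membership $N\in\sfC$ to $\gam_\fp N\in\sfC$ for each $\fp$ with $\gam_\fp N\neq0$. Such an $\fp$ lies in $\mcV^r_E(N)\subseteq\mcV$, so $\gam_\fp M\neq0$ for some $M\in\sfC$; minimality of $\gam_\fp\StMod(kE)$ then gives $\gam_\fp N\in\Loc(\gam_\fp M)\subseteq\Loc(M)\subseteq\sfC$, and local-global assembles $N$ from the $\gam_\fp N$.

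So everything comes down to the two pillars of stratification for $\StMod(kE)$. The local-global principle holds because $R$ is noetherian of finite Krull dimension; this is a once-and-for-all input from \cite{Benson/Iyengar/Krause:bik2}. The real obstacle is \emph{minimality} of $\gam_\fp\StMod(kE)$ for each non-maximal homogeneous prime $\fp$, and here I would follow the route sketched in the preface. When $p=2$ one is already in good shape, since $kE=k[x_1,\dots,x_r]/(x_i^2)$ is an exterior algebra; for odd $p$ one first uses a Koszul construction to replace $kE$ by an exterior algebra $\Lambda$, at the cost of enlarging the ambient category, and transfers stratification back along that change of category. For an exterior algebra $\Lambda$ one then invokes a Bernstein--Gelfand--Gelfand / Koszul-duality equivalence relating $\KInj{\Lambda}$ to the derived category $\sfD(S)$ of the Koszul-dual polynomial ring $S=k[y_1,\dots,y_r]$, an equivalence that intertwines the actions of $R$ and $S$ and matches the two notions of support; under it $\StMod(\Lambda)$ (for the self-injective $\Lambda$, equivalent to $\KacInj{\Lambda}$) corresponds to the appropriate localisation of $\sfD(S)$.

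The point of passing to $\sfD(S)$ is that a graded polynomial ring has Neeman's ``field objects'' (the residue fields of the homogeneous primes), so $\sfD(S)$ is stratified by $S$ --- this is Neeman's classification of localising subcategories of $\sfD(S)$ --- and, a polynomial ring having finite Krull dimension, the local-global principle is available there too. Minimality of each local-cohomology layer then transfers across the equivalence (and back through the Koszul construction when $p$ is odd) by the general ``change of category'' principles, which are formal once one has checked that the functors in play preserve coproducts, commute with the relevant local-cohomology functors, and are compatible with the $R$- and $S$-actions. The hard part is precisely this chain of compatibilities: constructing the BGG equivalence in the unbounded, infinitely generated setting and verifying that it intertwines the two ring actions and the two supports; by contrast the commutative-algebra input (Neeman's theorem) and the categorical transfer are comparatively routine. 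For $p=2$, as noted, the Koszul step is vacuous and only the BGG half of this chain is needed, which is why a complete proof in characteristic two is within reach here.
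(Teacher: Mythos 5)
Your proposal follows essentially the same route as the paper: pass from $\Mod(kE)$ to $\StMod(kE)$, invoke stratification by $H^*(E,k)$ (local--global principle plus minimality of each $\gam_\fp\StMod(kE)$), identify cohomological support with rank varieties via the Avrunin--Scott-type comparison of Theorems \ref{th:VGM}(6) and \ref{th:mcVGM}(6), and establish minimality by passing through $\KInj{kE}$ and the BGG correspondence to $\sfD(S)$ for a graded polynomial ring $S$, where Neeman's theorem applies (with the extra Koszul step for odd $p$). This is precisely the strategy of Steps 1--5 in Section \ref{Overview of classification} together with Sections \ref{sec:Friday1} and \ref{sec:Friday2}, and your assessment of where the genuine difficulty lies (the compatibility of ring actions and supports across the change-of-category equivalences) matches the paper's.
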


\subsection{Thick Subcategories}

The corresponding result for the fi\-nite\-ly generated module category $\mod(kE)$ was proved in
\cite{Benson/Carlson/Rickard:1997a}, and goes as follows.

\begin{definition}
A subset $\mcV$ of a set of varieties is \emph{specialisation closed}\index{specialisation!closed} if
whenever $V\in\mcV$ and $W\subseteq V$ we have $W\in\mcV$.
\end{definition}

\begin{definition}
Consider full subcategories $\sfC$ of $\mod(kE)$ satisfying the following
two properties:
\begin{enumerate}
\item $\sfC$ is closed under finite direct sums and summands, and
\item $\sfC$ has the ``two in three property''\index{two in three property} 
as before.
\end{enumerate}
\end{definition}

Such subcategories if they are non-zero, contain the projective modules and pass down to the \emph{thick subcategories}\index{thick subcategory} of $\stmod(kE)$.

\begin{theorem}
The non-zero subcategories of $\mod(kE)$  satisfying the above two conditions are in bijection with subsets of $\mcV^r_E(k)$ that are closed under specialisation. Under this bijection, a specialisation closed subset $\mcV\subseteq\mcV^r_E(k)$ corresponds to the full subcategory consisting of those finitely generated $kE$-modules $M$ with the property that every irreducible component of $V^r_E(M)$ is an element of $\mcV$. \qed
\end{theorem}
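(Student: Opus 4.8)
The plan is to produce mutually inverse maps between the two sides and to isolate where the content lies. Since each subcategory $\sfC\subseteq\mod(kE)$ as in the statement passes down to a thick subcategory of $\stmod(kE)$ containing all projectives, one may work throughout in $\stmod(kE)$; write $\Thick(M)$ for the thick subcategory it generates. To a specialisation closed $\mcV\subseteq\mcV^r_E(k)$ assign
\[
\sfC_\mcV=\{\,M\mid\text{every irreducible component of }V^r_E(M)\text{ lies in }\mcV\,\},
\]
and to a subcategory $\sfC$ assign $\mcV_\sfC=\{\,V\in\mcV^r_E(k)\mid V\subseteq V^r_E(M)\text{ for some }M\in\sfC\,\}$, which is visibly specialisation closed. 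The first routine point is that $\sfC_\mcV$ satisfies conditions (1) and (2): closure under finite direct sums and summands is immediate from property (3) of rank varieties (Section~\ref{ssec:Rank varieties}), and the two-in-three property follows from property (5) together with the observation that an irreducible variety contained in a finite union of varieties lies in one of them --- here specialisation closedness of $\mcV$ is exactly what converts ``contained in a component of some $V^r_E(M_i)$'' into ``lies in $\mcV$''.

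Next one checks $\mcV_{\sfC_\mcV}=\mcV$. For $\supseteq$, given $V\in\mcV$ use the realisation property (7) to pick a finitely generated $M$ with $V^r_E(M)=V$; its only component is $V$, so $M\in\sfC_\mcV$ and $V\in\mcV_{\sfC_\mcV}$. For $\subseteq$, if $V\subseteq V^r_E(M)$ with $M\in\sfC_\mcV$ then $V$, being irreducible, lies inside a component of $V^r_E(M)$, hence in $\mcV$. The inclusion $\sfC\subseteq\sfC_{\mcV_\sfC}$ is immediate. It remains to prove $\sfC_{\mcV_\sfC}\subseteq\sfC$; using that $V^r_E(N)$ has finitely many components, each contained in $V^r_E(M_i)$ for some $M_i\in\sfC$, and replacing $M$ by $\bigoplus M_i\in\sfC$, this reduces to the \emph{generation statement}: if $M\in\sfC$ and $N$ is finitely generated with $V^r_E(N)\subseteq V^r_E(M)$, then $N\in\Thick(M)$.

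The engine for the generation statement is that $\stmod(kE)$ is itself generated as a thick subcategory by the trivial module $k$ (equivalently $\StMod(kE)=\Loc(k)$, since $k$ is the unique simple $kE$-module); hence the exact functor $-\otimes_k M$ carries $\Thick(k)=\stmod(kE)$ into $\Thick(M)$, so $X\otimes_k M\in\Thick(M)$ for \emph{every} $X$. Combining this with the tensor formula (property (4)) and Carlson's modules $L_\zeta$ --- with $V^r_E(L_\zeta)$ the hypersurface attached to $0\ne\zeta\in H^*(E,k)$ --- one first replaces $M$ by $M\otimes_k L_{\zeta_1}\otimes_k\cdots\otimes_k L_{\zeta_c}\in\Thick(M)$, where $\zeta_1,\dots,\zeta_c$ cut out $V^r_E(N)$ inside $V^r_E(M)$, so that one may assume $V^r_E(M)=V^r_E(N)$. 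One then inducts on $\dim V^r_E(N)$: the base case $\dim=0$ has both modules projective, hence zero in $\stmod(kE)$; in the inductive step one chooses (by prime avoidance) cohomology classes whose hypersurfaces meet each component of the variety properly, uses the standard triangles $\Omega^nM\to M\to\kos{M}{\zeta}\to{}$ (whose third term has strictly smaller variety and, via its relation to $M\otimes_k L_\zeta$, lies in $\Thick(M)$), and interpolates between $M$ and $N$ through modules of controlled variety.

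The main obstacle is exactly this generation step. Thick subcategories of $\stmod(kE)$ need not be tensor ideals, so one cannot tensor freely while staying inside $\Thick(M)$; the crucial trick is the use of $\stmod(kE)=\Thick(k)$ to make the tensor arguments legitimate, after which the difficulty is the combinatorial bookkeeping of the induction --- selecting the classes $\zeta_i$ so that the intermediate varieties shrink in the intended way and reassembling $N$ from the hypersurface sections $\kos{M}{\zeta_i}$. Everything else is formal once the rank-variety properties of Section~\ref{ssec:Rank varieties} are available.
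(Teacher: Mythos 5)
Your bijection formalism---the mutually inverse assignments $\mcV\mapsto\sfC_\mcV$ and $\sfC\mapsto\mcV_\sfC$, the verification via properties (3), (5), (7) of rank varieties that $\sfC_\mcV$ is thick and that $\mcV_{\sfC_\mcV}=\mcV$, and the reduction of everything to the generation statement ``$V^r_E(N)\subseteq V^r_E(M)$ implies $N\in\Thick(M)$''---is correct, and it is exactly how the paper passes from Hopkins-type theorems to classification theorems (compare the deduction of Theorem~\ref{thm:thickA} from Theorem~\ref{thm:hopkins}, and the corollary in Section~\ref{ssec:bcr-classification}). The problem is your proof of the generation statement, which is where all the content of the theorem lives.

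The inductive step does not close. The triangle $\Omega^{|\zeta|}N\xra{\ \zeta\ }N\to\kos{N}{\zeta}\to{}$ shows that $\kos{N}{\zeta}\in\Thick(N)$; it does not let you build $N$ from $\kos{N}{\zeta}$, because the third vertex of that triangle is a shift of $N$ itself. The only situation in which $N\in\Thick(\kos{N}{\zeta})$ is when some power $\zeta^n$ acts as zero on $N$ in $\stmod(kE)$, so that $N$ is a retract of $\Omega^{-1}(\kos{N}{\zeta^n})$; and that happens precisely when $V^r_E(N)$ is \emph{contained} in the hypersurface of $\zeta$, which is the opposite of your requirement that $\zeta$ cut every component of $V^r_E(N)$ properly. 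So ``reassembling $N$ from the hypersurface sections'' is not combinatorial bookkeeping; it is the fundamental obstruction (it is essentially the gap in Hopkins' original argument that Neeman repaired). The paper's proof of the corresponding statement, Theorem~\ref{thm:bcr-classification}, avoids this by passing through infinitely generated modules: with $\sfC'=\Thick(M)$ and $\sfC=\sfC_\mcV$ one forms the Rickard idempotent functors $\gam_{\sfC}$ and $L_{\sfC'}$, uses the adjunction $\sHom_{kE}(M,X)\cong\sHom_{kE}(k,M^*\otimes_kX)$, the fact that $k$ compactly generates $\StMod(kE)$, and the tensor product theorem for the support of \emph{arbitrary} modules to conclude $L_{\sfC'}\gam_{\sfC}=0$, and then descends from $\Loc(M)$ to $\Thick(M)$ via Theorem~\ref{thm:neeman-compacts}. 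If you want to stay close to your outline you must reorganise as follows: choose $\zeta_1,\dots,\zeta_s$ whose common hypersurface is exactly $V^r_E(N)$, so each $\zeta_i$ is nilpotent on $\Ext^*_{kE}(N,N)$ and hence $N\in\Thick(N\otimes_kL_{\zeta_1}\otimes_k\cdots\otimes_kL_{\zeta_s})$; but the remaining step, that this tensor product lies in $\Thick(M)$, is again of the same difficulty and is not known to be accessible without the idempotent-module (or stratification) machinery.
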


\section{Modules over group algebras}
\label{sec:Monday2}
This section begins afresh, as it were, and introduces basic concepts and constructions concerning modules over group algebra. Our basic reference for this material is \cite{Benson:1991a}; the commutative algebraists among readers may prefer to look also at \cite{Iyengar:2004}.

In what follows $G$ denotes a finite group and $k$ a field. We write $\Char k$ for the characteristic of $k$.
The \emph{group algebra} \index{group algebra} of $G$ over $k$ is the $k$-vector space
\[
kG = \bigoplus_{g\in G}kg
\]
with product induced from $G$. The identity element, denoted $1$, of the group is also the identity for $kG$ and $k$ is identified with the subring $k1$ of $kG$. It is a central subring, so $kG$ is a $k$-algebra. Evidently, the ring $kG$ is commutative if and only if the group $G$ is abelian; see also Exercise~3 at the end of this chapter.

This construction is functorial, on the category of groups.

\begin{example}
When $G=\langle g\mid g^{d}=1\rangle$, a cyclic group $\bbZ/d$ of order $d$, one has $kG= k[x]/(x^{d}-1)$, the polynomial ring in the variable $x$ modulo the ideal generated by $x^{d}-1$. 

More generally, if $G= \langle g_{1},\dots,g_{r}\mid g_{1}^{d_{1}}=1,\dots,g_{r}^{d_{r}}=1\rangle$, with $d_{i}\geq 1$, then
\[
kG\cong k[x_{1},\dots,x_{r}]/(x_{1}^{d_{1}}-1,\dots,x_{r}^{d_{r}}-1)\,,
\]
where $x_{i}$ corresponds to the element $g_{i}$.

A further specialisation plays an important role in these lectures: For $p$ a prime number, the abelian group $(\bbZ/p)^{r}$ is  an \emph{elementary abelian $p$-group of rank $r$}\index{elementary abelian $p$-group}. When $\Char k=p$, setting $z_{i}=x_{i}-1$, gives an isomorphism of $k$-algebras
\[
kG\cong k[z_{1},\dots,z_{r}]/(z_{1}^p,\dots,z_{r}^{p})\,.
\]
\end{example}

\begin{remark}
As a $k$-vector space, the rank of $kG$ equals $|G|$, the order of the group $G$; in particular, $kG$ is a finite dimensional algebra over $k$, so it is artinian and noetherian, both on the left and on the right.
\end{remark}

Now we move on to module theory over $kG$. From this perspective, there are many remarkable features that distinguish $kG$ for arbitrary (even finite dimensional) $k$-algebras. For a start one has:

\begin{remark}
\index{right $kG$-module}
Each right $kG$-module $M$ is, canonically, a left module with product defined by $gm =mg^{-1}$, for $m\in M$ and $g\in G$. Said otherwise, the map $g\mapsto g^{-1}$ gives an isomorphism between $kG$ and its opposite ring. Thus, the category of right modules is equivalent to the category of left modules.

For this reason, henceforth we focus on left modules. 
\end{remark}

\begin{definition}
\label{defn:diagonal-action}
\index{diagonal action}
Let $M$ and $N$ be (left) $kG$-modules. There is a \emph{diagonal action}\index{diagonal action} of $kG$ on the $k$-vector space $M\otimes_{k}N$ defined by 
\[
g(m\otimes n) = gm\otimes gn\quad\text{for all $g\in G$, $m\in M$ and $n\in N$}.
\]
One can verify that the diagonal action on $M\otimes_{k}N$  defines a $kG$-module structure as follows: Since $M,N$  are $kG$-modules, $M\otimes_{k}N$ is a module over $kG\otimes_{k}kG$, with 
\[
(g\otimes h)\cdot (m\otimes n)=(gm\otimes hn)\,.
\]
By functoriality of the construction of group algebras, the diagonal homomorphism of groups $G\to G\times G$, where $g\mapsto (g,g)$, induces a homomorphism of $k$-algebras $kG\to kG\otimes_{k}kG$; see Exercise~1. Restriction the $kG\otimes_{k}kG$ along this map gives the diagonal action on $M\otimes_{k}N$. 

In what follows $M\otimes_{k}N$ will always denote this $kG$-module. This is not to be confused with action of $kG$-action on $M\otimes_{k}N$ induced by $M$ (in which case, one gets a direct sum of $\dim_{k}N$ copies of $M$),  or the one induced by $N$. These are also important and arise naturally; see Definition~\ref{defn:induction-restriction}. 

We note that there is a $kG$-linear isomorphism
\[
M\otimes_{k}N \cong N\otimes_{k}M 
\]
defined by $m\otimes n\mapsto n\otimes m$.

In the same vein, $\Hom_{k}(M,N)$ is a $kG$-module with diagonal action, where for $g\in G$ and $\alpha\in\Hom_{k}(M,N)$ one has
\[
(g\cdot \alpha)(m) = g\alpha(g^{-1}m) \quad\text{for $m\in M$.}
\]
\end{definition}

\begin{definition}
\label{defn:invariants}
\index{invariant submodule}
For each $kG$-module $M$ the subset
\[
M^{G} = \{m\in M\mid \text{$gm=m$ for all $g\in G$}\}
\]
is a submodule, called the \emph{invariant submodule} of $M$.

The \emph{augmentation}\index{augmentation} of $kG$ is the homomorphism of $k$-algebras
\[
\eps\col kG\to k \quad\text{where}\quad \eps\big(\sum_{g\in G}c_{g}g\big) = \sum_{g\in G}c_{g}\,.
\]
That this is a  homomorphism of $k$-algebras can be checked directly, or by noting that it is induced by the constant homomorphism $G\to \{1\}$ of groups. The kernel of $\eps$ is the (two-sided) ideal $\bigoplus_{g}k(g-1)$, where the sum runs over all $g\in G\setminus\{1\}$.

The $kG$-module structure on $k$ thus obtained is called the \emph{trivial} \index{trivial module} one. It is immediate from the description of $\Ker(\eps)$ that there is identification
\[
\Hom_{kG}(k,M) = M^{G}\,.
\]
In particular, the functor defined on objects by $M\mapsto M^{G}$ is left exact; right exactness is equivalent to the projectivity of $k$ as a $kG$-module; see also Theorem~\ref{thm:Maschke}.
\end{definition}

\begin{remark}
\label{rem:adjunction-isomorphism}
\index{adjunction isomorphism}
It is easy to verify that for any $kG$-modules $M$ and $N$ one has
\[
\Hom_{kG}(M,N) = \Hom_{k}(M,N)^{G}\,.
\]
Recall that $kG$ acts on $\Hom_{k}(M,N)$ diagonally; see Definition~\ref{defn:diagonal-action}. 

For each $kG$-module $L$, the adjunction isomorphism of $k$-vector spaces:
\[
\Hom_{k}(L,\Hom_{k}(M,N))\cong \Hom_{k}(L\otimes_{k}M,N)
\]
is compatible with the $kG$-module structures; this can (and should) be verified directly. Applying $(-)^{G}$ to it yields the \emph{adjunction isomorphism}:
\begin{equation}
\label{eqn:adjunction-isomorphism}
\index{adjunction isomorphism}
\Hom_{kG}(L,\Hom_{k}(M,N))\cong \Hom_{kG}(L\otimes_{k}M,N)\,.
\end{equation}
\end{remark}

The adjunction isomorphism has the following remarkable consequence. 

\begin{proposition}
For any projective $kG$-module $P$ the $kG$-modules $M\otimes_{k}P$ and $P\otimes_{k}M$ are projective.
\end{proposition}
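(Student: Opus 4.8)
The plan is to reduce to the free module $kG$ and then make the diagonal action ``visibly'' free. First, since $M\otimes_{k}P\cong P\otimes_{k}M$ as $kG$-modules (the isomorphism noted in Definition~\ref{defn:diagonal-action}), it suffices to treat $P\otimes_{k}M$. A projective $kG$-module is a direct summand of a free module $\bigoplus_{i}kG$, and the functor $-\otimes_{k}M$ commutes with arbitrary direct sums and sends a direct summand to a direct summand; hence it is enough to prove that $kG\otimes_{k}M$ is projective for every $kG$-module $M$. In fact I will show it is \emph{free}, which also settles the $p$-group case mentioned in the Remark after Theorem~\ref{thm:kg-selfinjective}.

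The key step is the ``untwisting'' isomorphism. Let $M_{0}$ denote the underlying $k$-vector space of $M$ equipped with the trivial $G$-action, and give $kG\otimes_{k}M_{0}$ the $kG$-module structure in which $kG$ acts through the first tensor factor only. Define $\phi\col kG\otimes_{k}M\to kG\otimes_{k}M_{0}$ on the spanning set by $\phi(g\otimes m)=g\otimes g^{-1}m$ and extend $k$-linearly; the map $g\otimes m\mapsto g\otimes gm$ is a two-sided inverse, so $\phi$ is bijective. For $kG$-linearity one computes, using the diagonal action on the source, $\phi\big(h\cdot(g\otimes m)\big)=\phi(hg\otimes hm)=hg\otimes (hg)^{-1}(hm)=hg\otimes g^{-1}m=h\cdot\phi(g\otimes m)$, where the last action is the one on $kG\otimes_{k}M_{0}$. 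Finally, choosing a $k$-basis $\{m_{\lambda}\}$ of $M$ gives $kG\otimes_{k}M_{0}\cong\bigoplus_{\lambda}(kG\otimes_{k}km_{\lambda})\cong\bigoplus_{\lambda}kG$, a free $kG$-module; no finiteness hypothesis on $M$ is needed.

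I expect essentially no serious obstacle here; the only point demanding care is the linearity verification above (one must take the diagonal action on the source and the ``first-factor'' action on the target), together with the bookkeeping that $-\otimes_{k}M$ preserves coproducts and retracts. As an alternative that avoids choosing a basis, one can argue directly via the adjunction isomorphism~(\ref{eqn:adjunction-isomorphism}): it gives $\Hom_{kG}(P\otimes_{k}M,-)\cong\Hom_{kG}(P,\Hom_{k}(M,-))$, and both $\Hom_{kG}(P,-)$ (since $P$ is projective) and $\Hom_{k}(M,-)$ (since $M$ is free over the field $k$, so exactness is inherited on $kG$-modules, being tested on underlying $k$-spaces) are exact functors; hence their composite is exact and $P\otimes_{k}M$ is projective. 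I would present the explicit isomorphism as the main proof, since it yields the sharper statement that the module is free, and mention the adjunction argument as a remark.
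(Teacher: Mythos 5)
Your argument is correct, but your primary route differs from the paper's. The paper proves the proposition exactly by the argument you relegate to a closing remark: the adjunction isomorphism~\eqref{eqn:adjunction-isomorphism} identifies $\Hom_{kG}(P\otimes_{k}M,-)$ with the composite $\Hom_{kG}(P,\Hom_{k}(M,-))$ of two exact functors, whence $P\otimes_{k}M$ is projective, and then one invokes $M\otimes_{k}P\cong P\otimes_{k}M$. Your main proof instead reduces to the free case and uses the untwisting isomorphism $kG\otimes_{k}M\cong kG\otimes_{k}M_{0}$, $g\otimes m\mapsto g\otimes g^{-1}m$; this is precisely the content the paper defers to its Note and to Exercise~5 of Chapter~\ref{ch:Monday}, where the same isomorphism shows $kG\otimes_{k}M$ is free. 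Your linearity check and the two-sided inverse are right (well-definedness is clear since the map is additive in $m$ for fixed $g$), and the bookkeeping that $-\otimes_{k}M$ preserves coproducts and retracts is routine. What each approach buys: your route is more explicit and yields the sharper statement that $kG\otimes_{k}M$ is free (so $P\otimes_{k}M$ is a summand of a free module), at the cost of choosing a basis and some decomposition bookkeeping; the paper's route is basis-free, purely functorial, and a one-liner, but only gives projectivity directly. One small caveat: your aside that freeness of $kG\otimes_{k}M$ ``settles the $p$-group case'' of the remark following Theorem~\ref{thm:kg-selfinjective} is loose --- that remark (projective $\Leftrightarrow$ free for $p$-groups) concerns arbitrary projective modules and is not a consequence of your computation --- but this does not affect the proof of the proposition.
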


Note: The $kG$-modules $M\otimes_{k}kG$ and $kG\otimes_{k}M$ are even free; see Exercise~5.
 
\begin{proof}
The functors $\Hom_{k}(M,-)$ and $\Hom_{kG}(P,-)$ of $kG$-modules are exact and hence so is their composition $\Hom_{kG}(P,\Hom_{k}(M,-))$. The latter is isomorphic to $\Hom_{kG}(P\otimes_{k}M,-)$, by the adjunction isomorphism~\eqref{eqn:adjunction-isomorphism}, so one deduces that the $kG$-module $P\otimes_{k}M$ is projective. It remains to note that this module is isomorphic, as a $kG$-module, to $M\otimes_{k}P$.
\end{proof}

\begin{definition}
\label{defn:induction-restriction}
Let $H\le G$ be a subgroup and $\iota\col kH\subseteq kG$ the induced inclusion of $k$-algebras.

Restriction of scalars along $\iota$ endows each $kG$-module $M$ with a structure of a $kH$-module, denoted $M\da_{H}$. One thus gets a \emph{restriction}\index{restriction} functor
\[
(-)\da_{H}\col \Mod kG\to \Mod kH\,.
\]
This functor is evidently exact. Base change along $\iota$ induces a  functor
\[
(-)\ua^{G} = kG\otimes_{kH}- \col \Mod kH \to \Mod kG\,.
\]
This functor is called \emph{induction}\index{induction}; it is also exact, because $kG$ is a free $kH$-module; see Exercise~8.  One can, and does, also consider the \emph{coinduction}\index{coinduction} functor
\[
\Hom_{kH}(kG,-)\col \Mod kH \to \Mod kG\,.
\]
For any $kH$-module $L$ there is a natural isomorphism $\Hom_{kH}(kG,L)\cong L\ua^{G}$ of $kG$-modules. The version of the adjunction 
\begin{equation}
\label{eqn:frobenius-reciprocity}
\Hom_{kG}(L\ua^{G},M)\cong \Hom_{kH}(L,M\da_{H})\,.
\end{equation}
is called \emph{Frobenius reciprocity}\index{Frobenius!reciprocity}.
\end{definition}

\subsection{Structure of the ring $kG$}
Next we recall some of the salient features and results about the group algebra. The starting point is a souped-up version of Maschke's Theorem.

\begin{theorem}
\label{thm:Maschke}
\index{Maschke's Theorem}
\index{semi-simplicity}
The following conditions are equivalent.
\begin{enumerate}[\quad\rm(1)]
\item
The ring $kG$ is semi-simple.
\item
The trivial module $k$ is projective.
\item
The functor $(-)^{G}$ on $\Mod kG$ is exact.
\item
$\Char k$ does not divide $|G|$.
\end{enumerate}
\end{theorem}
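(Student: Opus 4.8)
The plan is to prove the four conditions equivalent by establishing the cycle $(4)\Rightarrow(1)\Rightarrow(2)\Rightarrow(3)\Rightarrow(4)$, invoking the general module-theoretic facts recorded earlier in the chapter. Throughout, write $n = |G|$ and note that $kG$ is finite-dimensional over $k$, hence artinian, so ``semi-simple'' means every $kG$-module is projective (equivalently, every short exact sequence of $kG$-modules splits, equivalently $kG$ is a semi-simple artinian ring in the Wedderburn sense). The implications $(1)\Rightarrow(2)$ and $(3)\Rightarrow(4)$ will be the quick ones; the substance lies at the two ends of the cycle.

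\medskip

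\emph{$(4)\Rightarrow(1)$: the averaging argument.} Suppose $\Char k \nmid n$, so $n$ is invertible in $k$. I would show every short exact sequence $0\to M_1 \xra{\iota} M_2 \xra{\pi} M_3 \to 0$ of $kG$-modules splits. Choose any $k$-linear splitting $s\col M_3 \to M_2$ of $\pi$ (which exists since we are over a field), and then average: set
\[
\bar s = \frac{1}{n}\sum_{g\in G} g\, s\, g^{-1}\col M_3 \to M_2.
\]
A direct check shows $\bar s$ is $kG$-linear and that $\pi\bar s = \id_{M_3}$, since $\pi$ is $kG$-linear and $\pi s = \id$. Hence the sequence splits, so every $kG$-module is a direct summand of a free module, i.e.\ projective, and $kG$ is semi-simple. (Equivalently, one can phrase this as: $k$ is projective as a $kG$-module, via the averaging idempotent $e = \frac1n\sum_{g} g$, which already gives $(4)\Rightarrow(2)$ directly, but I prefer to route through $(1)$ to exhibit the Wedderburn picture.)

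\medskip

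\emph{$(1)\Rightarrow(2)$ and $(2)\Rightarrow(3)$.} The first is immediate: if $kG$ is semi-simple then \emph{every} $kG$-module is projective, in particular the trivial module $k$. For $(2)\Rightarrow(3)$, recall from Definition~\ref{defn:invariants} the natural identification $\Hom_{kG}(k,M) = M^G$, so the functor $(-)^G$ on $\Mod kG$ is naturally isomorphic to $\Hom_{kG}(k,-)$; this functor is exact precisely when $k$ is a projective $kG$-module, which is exactly $(2)$. (This equivalence was already flagged in the text immediately after Definition~\ref{defn:invariants}.)

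\medskip

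\emph{$(3)\Rightarrow(4)$: the obstruction, and the main point.} This is where the real content sits. Assume $(-)^G$ is exact; I must show $\Char k \nmid n$. Consider the augmentation sequence $0 \to I \to kG \xra{\eps} k \to 0$, where $I = \Ker(\eps) = \bigoplus_{g\neq 1} k(g-1)$ is the augmentation ideal. Applying the (now exact) functor $(-)^G$, the surjection $\eps$ restricts to a surjection $(kG)^G \to k^G = k$. Now $(kG)^G$ is spanned by the norm element $N = \sum_{g\in G} g$ (any $G$-fixed element $\sum c_g g$ must have all $c_g$ equal, since left multiplication by $h$ permutes the basis). Then $\eps(N) = \sum_{g\in G} 1 = n\cdot 1_k$. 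For $\eps\colon (kG)^G \to k$ to be surjective we need $n\cdot 1_k \neq 0$ in $k$, i.e.\ $\Char k \nmid n$. This closes the cycle. The one subtlety to handle carefully is the computation of $(kG)^G$ — making sure one is using the regular left action and correctly identifying the fixed points as the one-dimensional space $kN$; everything else is formal. Once the cycle is closed, all four statements are equivalent, which is the assertion of the theorem.
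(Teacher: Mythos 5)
Your proof is correct, but it closes the cycle in a different order from the paper, and the two hard implications land on different edges. The paper proves $(1)\Rightarrow(2)\Rightarrow(4)\Rightarrow(3)\Rightarrow(1)$: its $(2)\Rightarrow(4)$ takes a $kG$-linear splitting $\sigma\col k\to kG$ of the augmentation and observes that $\sigma(1)$ must be a scalar multiple of the norm element, whence $1=c_1|G|$; its $(4)\Rightarrow(3)$ is the functorial averaging operator $\rho(m)=\frac{1}{|G|}\sum_g gm$, exhibiting $(-)^G$ as a direct summand of the identity functor; and its $(3)\Rightarrow(1)$ applies exactness of $(-)^G$ to the surjection $\Hom_k(N,M)\thra\Hom_k(N,N)$ with the diagonal action to lift $\id_N$. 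You instead run $(4)\Rightarrow(1)\Rightarrow(2)\Rightarrow(3)\Rightarrow(4)$: your $(4)\Rightarrow(1)$ is the classical Maschke averaging of a $k$-linear splitting of an arbitrary short exact sequence, your $(2)\Leftrightarrow(3)$ is made essentially free by the representability $(-)^G\cong\Hom_{kG}(k,-)$ (a point the text itself flags after Definition~\ref{defn:invariants}), and your $(3)\Rightarrow(4)$ applies exactness of $(-)^G$ to the augmentation sequence and computes $(kG)^G=kN$ with $\eps(N)=|G|$. The underlying computations are the same two in both proofs --- the averaging trick when $|G|$ is invertible, and the identification of the $G$-fixed line in $kG$ as the span of the norm element --- just attached to different arrows. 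What your route buys is that all the content is concentrated at the two ends of the chain, with the middle implications formal; what the paper's route buys is an early appearance of the diagonal-action-on-$\Hom_k$ device, which recurs throughout these notes (e.g.\ in Theorem~\ref{thm:kg-selfinjective} and in Section~\ref{ssec:bcr-classification}). Your only real proof obligation beyond the paper's is the claim that a ring over which every short exact sequence splits is semi-simple; you dispatch this correctly by noting every module is then a summand of a free module.
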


\begin{proof}
The implication (1) $\implies$ (2) is clear.

(2) $\implies$ (4) Let $\eps\col kG\to k$ be the augmentation homomorphism, which defines the trivial action on $k$. When $k$ is projective there exists a homomorphism $\sigma\col k\to kG$ of $kG$-modules such that $\eps\circ\sigma=\id_{k}$. Write
\[
\sigma(1)=\sum_{g\in G}c_{g}g\,.
\]
For each $h\in G$ one has $\sigma(1)=\sigma(h^{-1}.1)=h^{-1}\sum_{g\in G}c_{g}g$. So comparing coefficients of the identity element in $G$ one gets that $c_{h}=c_{1}$. Thus,  $\sigma(1)=c_{1}\sum_{g\in G}g$ so that in $k$ there is an equality $1=\eps\sigma(1)=c_{1}|G|$; in particular $|G|\ne 0$.

(4) $\implies$ (3) Let $M$ be a $kG$-module. Consider the map 
\[
\rho\col M\to M \quad\text{where $\rho(m) = \frac 1{|G|}\sum_{g\in G}gm$.}
\]
A straightforward computation shows that this map is $kG$-linear and an identity when restricted to $M^{G}$, the submodule of invariants. The construction is evidently functorial, which means that $(-)^{G}$ is a direct summand of the identity functor, and since the latter is exact so is the former.

(3) $\implies$ (1) It suffices to prove that any epimorphism $M\thra N$ of $kG$-modules splits. Any such map induces an epimorphism $\Hom_{k}(N,M)\thra \Hom_{k}(N,N)$, and hence, since $(-)^{G}$ is exact, also an epimorphism
\[
\Hom_{kG}(N,M) = \Hom_{k}(N,M)^{G}\thra \Hom_{k}(N,N)^{G} = \Hom_{kG}(N,N)\,.
\]
Then the identity on $N$ lifts to a $kG$-linear map $N\to M$, as desired.
\end{proof}

In any characteristic, the group algebra $kG$ has the following properties:
\begin{itemize}
\item The Krull--Remak--Schmidt theorem holds in $\mod kG$.
\item The group algebra $kG$ is \emph{self-injective}, meaning, that it is injective as a module over itself. A proof of this assertion is sketched in Theorem~\ref{thm:kg-selfinjective}, and also part of Monday's exercises.
\item There are canonical bijections of isomorphism classes
\begin{gather*}
\left\{
\begin{gathered}
\text{Simple modules} 
\end{gathered}
\right\}
\leftrightarrow
\left\{
\begin{gathered}
\text{indecomposable} \\
\text{projectives}
\end{gathered}
\right\}
\leftrightarrow
\left\{
\begin{gathered}
\text{indecomposable} \\
\text{injectives}
\end{gathered}
\right\}
\end{gather*}
where Artin--Wedderburn theory gives the one on the left, and the one on the right holds by self-injectivity of $kG$.
\end{itemize}

Given these properties, it is not hard to prove the following characterization of $p$-groups; for a proof, see, for instance, \cite[(1.5) and (1.6)]{Iyengar:2004}.

\begin{proposition}
\label{prop:kg-local}
The following conditions are equivalent.
\begin{enumerate}[\quad\rm(1)]
\item
The ring $kG$ is local; i.e. it has a unique maximal ideal.
\item
The trivial module $k$ is the only simple module.
\item
$G$ is a $p$-group. \qed
\end{enumerate}
\end{proposition}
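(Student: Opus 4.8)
Proposition~\ref{prop:kg-local}: the following are equivalent: (1) $kG$ is local; (2) the trivial module $k$ is the only simple $kG$-module; (3) $G$ is a $p$-group (where $p = \Char k$).

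The plan is to prove the chain of implications $(1) \Rightarrow (2) \Rightarrow (3) \Rightarrow (1)$, drawing on the structure theory collected just above: the Krull--Remak--Schmidt theorem, self-injectivity of $kG$, and the bijections between simple modules, indecomposable projectives and indecomposable injectives. For $(1) \Rightarrow (2)$: if $kG$ is local with maximal ideal $\fm$, then $kG/\fm$ is a division ring, and every simple module is a quotient of $kG$, hence annihilated by every element of the radical; but in a local ring the unique maximal ideal equals the Jacobson radical, so every simple module is a module over the division ring $kG/\fm$. Finiteness of $\dim_k kG$ forces $kG/\fm$ to be finite-dimensional over $k$, and one checks it must in fact be $k$ itself (the quotient $kG \to kG/\fm$ is a $k$-algebra surjection, and the augmentation gives one copy of $k$ as a quotient; I would argue that a local finite-dimensional $k$-algebra that is a quotient of $kG$ has residue field $k$ because any simple quotient is $1$-dimensional here — more carefully, since $kG$ has the augmentation $\eps\colon kG \to k$ whose kernel is a maximal ideal, and local means this is \emph{the} maximal ideal, so $k = kG/\Ker\eps$ is the unique simple module).

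For $(2) \Rightarrow (3)$: suppose $k$ is the only simple $kG$-module. I would first reduce to the case where $k$ is algebraically closed, or alternatively argue directly: the number of simple $kG$-modules (over a splitting field) equals the number of $p$-regular conjugacy classes of $G$, but it is cleaner to avoid Brauer theory if the paper does not want to invoke it. Instead, here is a self-contained route. Since $k$ is the only simple module, $kG/J(kG) \cong k$ by Artin--Wedderburn (the semisimple quotient is a product of matrix rings over division algebras, one factor per simple, so it is a single division algebra which, containing $k$ centrally and being a $1$-dimensional quotient via $\eps$, equals $k$). Thus $kG$ is local with residue field $k$. Now consider a Sylow subgroup $Q$ of order prime to $p$ inside $G$ (if $G$ is not a $p$-group, pick a prime $q \neq p$ dividing $|G|$ and a subgroup of order $q$, i.e.\ a cyclic group $\bbZ/q$). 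Restriction $k[\bbZ/q] \hookrightarrow kG$: by Maschke's Theorem~\ref{thm:Maschke}, $k[\bbZ/q]$ is semisimple since $q \neq p$, so it has a simple module $S$ of dimension $>1$ if $q>1$ — more precisely, $k[\bbZ/q]$ is not local (it has $q$ or more idempotents after extending scalars, but even over $k$ it is a nontrivial product since $x^q - 1$ has the factor $x-1$ split off, giving at least two ring factors). Then the induced module $S\ua^G$ has, by Frobenius reciprocity~\eqref{eqn:frobenius-reciprocity}, a nonzero map to any $kG$-module restricting to something with $S$ as a summand; one extracts a simple $kG$-module not isomorphic to $k$, a contradiction. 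I would need to massage this last step: the cleanest version is that if $H \le G$ and $k$ is the unique simple $kG$-module, then $k$ is the unique simple $kH$-module too (restrict a composition series — actually restriction of a simple need not be simple, so instead: any simple $kH$-module $T$ embeds in $T\ua^G\da_H$ by the unit of adjunction, and $T \ua^G$ has a composition series with simple $kG$-quotients all $\cong k$, so $T\ua^G\da_H$ has all composition factors trivial, forcing $T = k$). Apply with $H = \bbZ/q$ and contradict semisimplicity of $k[\bbZ/q]$, which has $\geq 2$ simple modules when $q > 1$.

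For $(3) \Rightarrow (1)$: if $G$ is a $p$-group, I claim the augmentation ideal $I = \Ker(\eps)$ is nilpotent, whence $kG$ is local with maximal ideal $I$. Argue by induction on $|G|$: pick a central element $g$ of order $p$ (the centre of a nontrivial $p$-group is nontrivial), let $z = g - 1$, so $z^p = (g-1)^p = g^p - 1 = 0$ in characteristic $p$, and $z$ is central. Then $kG/(z) \cong k[G/\langle g\rangle]$ (here $\langle g\rangle$ is normal, being central), whose augmentation ideal is nilpotent by induction; pulling back, $I^N \subseteq (z)$ for some $N$, and $(z)^p = 0$, so $I^{Np} = 0$. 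Hence every element of $I$ is nilpotent, $1 + I$ consists of units, and $I$ is the unique maximal (left, right, two-sided) ideal: any proper left ideal lies in $I$ since elements outside $I$ are units. Therefore $kG$ is local.

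\textbf{Main obstacle.} The genuinely delicate implication is $(2) \Rightarrow (3)$: showing that having only the trivial simple module forces $G$ to be a $p$-group without simply quoting Brauer's count of simple modules. The restriction/induction argument sketched above works, but the key lemma — that $H \le G$ and ``$k$ is the unique simple $kG$-module'' imply the same for $kH$ — needs the observation that an induced module $T\ua^G$ has all its composition factors isomorphic to $k$ (since every simple $kG$-module is $k$), combined with exactness of restriction, to conclude $T\da$... \emph{wait}, one wants: $T$ embeds in $(T\ua^G)\da_H$ via the adjunction unit, and the latter has a filtration with subquotients $k\da_H = k$, so $T$ is a submodule of a module with all composition factors trivial, forcing $T \cong k$. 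That step, plus noting $k[\bbZ/q]$ is not local for a prime $q \ne p$, completes the argument; the rest is routine. The reference \cite[(1.5) and (1.6)]{Iyengar:2004} presumably packages this, and I would cite it while including the nilpotence argument for $(3) \Rightarrow (1)$ since it is short and illuminating.
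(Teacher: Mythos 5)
Your proof is correct, but it necessarily follows its own route, because the paper never proves Proposition~\ref{prop:kg-local}: it defers entirely to \cite[(1.5) and (1.6)]{Iyengar:2004}, and the only in-text hint is Monday's Exercise~9, which sketches the classical counting argument --- for a $p$-group, $G$ permutes the $p^{n}$ elements of the $\bbF_p$-span of an orbit in any nonzero module, the fixed points include $0$ and number a multiple of $p$, so there is a nonzero $G$-fixed vector --- giving (3)$\Rightarrow$(2) directly. Your cycle (1)$\Rightarrow$(2)$\Rightarrow$(3)$\Rightarrow$(1) instead gets (3)$\Rightarrow$(1) from nilpotence of the augmentation ideal (induction on $|G|$ through a central element of order $p$), and (2)$\Rightarrow$(3) from the induction--restriction lemma together with Maschke for $k[\bbZ/q]$; both steps are sound, including the two delicate points you flag: the unit $T\to (T\ua^{G})\da_{H}$ is injective because $kG$ is free as a $kH$-module, and a simple submodule of a module all of whose composition factors are trivial is itself trivial by Jordan--H\"older. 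One small caveat in (1)$\Rightarrow$(2): the opening claim that ``$kG/\fm$ is a division ring'' is automatic only if ``local'' is read as ``unique maximal \emph{left} ideal''; under the paper's literal reading (unique maximal ideal) you should argue as you in fact do at the end --- $\Ker\eps$ is a maximal two-sided ideal, hence equals $\fm$, and the annihilator of any simple module is a maximal two-sided ideal (primitive ideals of Artinian rings are maximal), hence also equals $\fm=\Ker\eps$, so every simple module carries the trivial $G$-action and is $\cong k$. Finally, for (2)$\Rightarrow$(3) there is a shorter standard alternative worth knowing: all composition factors of the faithful module $kG$ are trivial, so every $g\in G$ acts unipotently on $kG$; an element of order prime to $p$ also acts semisimply (its order is invertible in $k$, so $t^{m}-1$ is separable), hence acts trivially, hence is $1$; so $G$ is a $p$-group. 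Your subgroup argument buys a reusable lemma (the unique-simple property passes to subgroups), while the unipotent-plus-semisimple argument is the quicker path to this particular implication.
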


\subsection{Group cohomology}
\label{ssec:gc}
In the remainder of this section we assume $\Char k$ divides $|G|$; equivalently, $(-)^{G}$ is not exact; see Theorem~\ref{thm:Maschke}. For each $kG$-module $M$ and integer $n$, the $n$th \emph{cohomology of $G$ with coefficients in $M$} is the $k$-vector space:
\[
\HH n{G;M} = \Ext^{n}_{kG}(k,M)\,.
\]
We speak of $\HH n{G;k}$ as the cohomology of $G$. Yoneda composition induces on the graded $k$-vector space $\HH *{G;k}$ the structure of a $k$-algebra, and on $\HH*{G;M}$ the structure of a graded right $H^{*}(G,k)$-module.

Recall that a graded ring $R$ is said to be \emph{graded commutative} \index{graded commutative} if
\[
a\cdot b = (-1)^{|a||b|}b\cdot a\quad\text{for all $a,b\in A$.}
\]
Here is a remarkable feature of the cohomology algebra of a group:

\begin{proposition}
\label{prop:graded-commutative}
The $k$-algebra $H^{*}(G,k)$ is graded commutative. \qed
\end{proposition}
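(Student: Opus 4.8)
The plan is to produce a \emph{second} multiplication on $\Ext^*_{kG}(k,k)$ --- the cup product --- which is visibly graded commutative, and then to identify it with the Yoneda product. The source of the cup product is the symmetric monoidal structure on $\Mod kG$ given by $\otimes_k$ with the diagonal action (Definition~\ref{defn:diagonal-action}), whose unit is the trivial module $k$. What makes this relevant is that the symmetry isomorphism $M\otimes_k N\xra{\sim}N\otimes_k M$, $m\otimes n\mapsto n\otimes m$, is $kG$-linear: this holds precisely because the diagonal homomorphism $G\to G\times G$, $g\mapsto(g,g)$, is carried to itself by the transposition of the two factors (equivalently, $kG$ is a cocommutative Hopf algebra). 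So the ``commutativity'' we want is ultimately just this symmetry, pushed through a projective resolution.

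Concretely, I would fix a projective resolution $P\xra{\sim}k$ over $kG$. Since the tensor product over $k$ of a projective $kG$-module with any $kG$-module is again projective (established earlier), every term of the total complex $P\otimes_k P$ is a projective $kG$-module; and since $k$ is a field, the K\"unneth isomorphism gives $H_*(P\otimes_k P)\cong H_*(P)\otimes_k H_*(P)\cong k$, so $P\otimes_k P$ is a projective resolution of $k\otimes_k k\cong k$. Choose a $kG$-linear chain map $\Delta\col P\to P\otimes_k P$ lifting the canonical isomorphism $k\xra{\sim}k\otimes_k k$ (a ``diagonal approximation''). Given cocycles $\alpha\col P_m\to k$ and $\beta\col P_n\to k$ representing classes in $H^m(G,k)$ and $H^n(G,k)$, define $\alpha\smile\beta$ by applying $\alpha\otimes\beta$ to the $(P_m\otimes_k P_n)$-summand of $\Delta_{m+n}\col P_{m+n}\to(P\otimes_k P)_{m+n}$ and using $k\otimes_k k\cong k$.

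For graded commutativity of $\smile$ I would use the Koszul-signed transposition $T\col P\otimes_k P\to P\otimes_k P$, $x\otimes y\mapsto(-1)^{|x||y|}y\otimes x$: the sign makes $T$ a chain map, and $kG$-linearity of the underlying swap is exactly the symmetry noted above. In degree $0$, $T$ restricts to the identity of $k\otimes_k k$, so both $\Delta$ and $T\comp\Delta$ are $kG$-linear chain lifts of the same map $k\to k\otimes_k k$; since $P$ is a complex of projectives and $P\otimes_k P$ resolves $k$, any two such lifts are chain homotopic. Substituting this homotopy into the definition of $\smile$ and tracking the sign produced by $T$ yields $\alpha\smile\beta=(-1)^{mn}\,\beta\smile\alpha$ in $H^{m+n}(G,k)$.

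Finally I would identify $\smile$ with the Yoneda product on $\Ext^*_{kG}(k,k)$. Both have the same two-sided unit $1\in\Ext^0_{kG}(k,k)=\Hom_{kG}(k,k)=k$, and bifunctoriality of $\otimes_k$ together with the fact that $k$ is the monoidal unit gives the middle-four interchange law relating Yoneda composition and $\smile$; a graded Eckmann--Hilton argument then forces the two products to coincide (up to the expected sign), so the Yoneda product inherits the graded commutativity established for $\smile$. Alternatively one may just invoke the classical identification of the cup and Yoneda products for group cohomology, e.g.\ \cite{Benson:1991a} or Cartan--Eilenberg. I expect the only real work to be in the last two steps: pinning down every Koszul sign in the chain-level formulas, and making the cup-versus-Yoneda comparison precise --- no genuinely new idea is needed once the symmetric monoidal picture is in place.
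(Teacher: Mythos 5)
Your argument is correct and is exactly the route the paper has in mind: the paper offers no written proof, only the remark that the key point is the cocommutative Hopf (diagonal) structure on $kG$ together with a citation, and your proposal simply fleshes that out via the standard diagonal approximation $\Delta\col P\to P\otimes_k P$, the homotopy between $\Delta$ and $T\comp\Delta$, and the identification of cup with Yoneda product. All the steps you flag as ``the real work'' (Koszul signs, the Eckmann--Hilton/interchange comparison) are indeed routine and classical, so nothing is missing.
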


The key point is that $kG$ is in fact a Hopf algebra, with diagonal induced by the diagonal homomorphism $G\to G\times G$; see, for instance, \cite[Proposition 5.5 ]{Iyengar:2004}.

The cohomology algebra of an elementary abelian group is easy to compute, and has been known for long; see \cite[Chapter XII, \S7]{Cartan/Eilenberg:1956}. We give the argument for $2$-groups; see Section~\ref{sec:Tuesday3} for a different perspective on this computation and a more detailed discussion of the group cohomology algebra.

\begin{proposition}
\label{prop:gc-eab2}
\index{cohomology!elementary abelian group}
Let $G=(\bbZ/2)^{r}$ and $\Char k=2$. Then $H^{*}(G,k)$ is a polynomial algebra over $k$ on $r$ variables each of degree one:
\[
H^{*}(G,k)\cong k[y_{1},\dots,y_{r}] \quad \text{where $|y_{i}|=1$}\,.
\]
\end{proposition}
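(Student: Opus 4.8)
The plan is to compute $H^{*}(G,k)$ for $G=(\bbZ/2)^{r}$ by first handling the cyclic case $r=1$, then passing to the general case via the Künneth formula. For $r=1$ we have $G=\bbZ/2$ and, in characteristic two, $kG=k[z]/(z^2)$ where $z=g-1$. There is a standard $2$-periodic free resolution of the trivial module:
\[
\cdots \xra{z} kG \xra{z} kG \xra{z} kG \xra{\eps} k \to 0\,.
\]
Applying $\Hom_{kG}(-,k)$ gives a complex with zero differentials (since multiplication by $z$ becomes zero after tensoring down to $k$), so $\Ext^{n}_{kG}(k,k)\cong k$ for every $n\ge 0$. Thus $H^{*}(\bbZ/2,k)$ is one-dimensional in each degree; writing $y$ for a generator of $H^{1}$, it remains to check that $y^{n}\ne 0$ for all $n$, i.e.\ that $H^{*}(\bbZ/2,k)=k[y]$ is polynomial rather than something with a truncation. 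This follows by identifying the generator of $H^2$ with the class of the extension built from the periodicity of the resolution and checking that Yoneda composition of the degree-one class with itself realises it; alternatively one invokes that $kG$ is not semisimple so the resolution is genuinely non-trivial in every degree and the cup product structure is forced by graded commutativity (which in characteristic two imposes no sign constraint) together with a dimension count.

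Next I would pass from $\bbZ/2$ to $(\bbZ/2)^{r}$. Since $k(G_1\times G_2)\cong kG_1\otimes_k kG_2$ and $k$ as a $k(G_1\times G_2)$-module is $k\otimes_k k$, tensoring together free resolutions of $k$ over $kG_1$ and over $kG_2$ yields a free resolution of $k$ over $k(G_1\times G_2)$. Over a field there are no $\Tor$ correction terms, so the Künneth theorem gives an isomorphism of graded algebras
\[
H^{*}(G_1\times G_2,k)\cong H^{*}(G_1,k)\otimes_k H^{*}(G_2,k)\,,
\]
where the product on the right is the graded tensor product of algebras (again, in characteristic two there are no signs to worry about). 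Iterating $r$ times starting from $H^{*}(\bbZ/2,k)=k[y]$ produces
\[
H^{*}((\bbZ/2)^{r},k)\cong k[y_1]\otimes_k\cdots\otimes_k k[y_r]\cong k[y_1,\dots,y_r]\,,
\]
with each $y_i$ coming from the $i$th factor and hence of degree one. This is exactly the claimed statement.

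The main obstacle is verifying that the multiplicative structure is genuinely polynomial — that is, establishing in the base case $r=1$ that the cup square (and all higher powers) of the degree-one class is nonzero. The additive computation via the periodic resolution is routine, but the ring structure requires either an explicit chain-level description of the Yoneda product on the periodic resolution, or the comparison with the diagonal approximation coming from the Hopf algebra structure on $kG$. In the seminar's setting one can sidestep part of this by appealing to Proposition~\ref{prop:graded-commutative} (so there is no ambiguity about commutativity) and to the fact that $kG$ is self-injective and non-semisimple (so cohomology is nonzero in arbitrarily high degrees); but pinning down that the product of the generators is a polynomial generator, rather than nilpotent, is the step that needs genuine input, and I would do this by an explicit computation with the periodic resolution for $\bbZ/2$, from which the Künneth argument then carries everything else along. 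I would also remark that Section~\ref{sec:Tuesday3} promises an alternative derivation, so a reader wanting the cleanest route can defer the ring-structure verification to that later treatment.
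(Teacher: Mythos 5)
Your proposal is correct and follows essentially the same route as the paper: the two-periodic free resolution of $k$ over $k(\bbZ/2)\cong k[x]/(x^2)$ gives $\Ext^{n}_{kG}(k,k)\cong k$ additively, the degree-one extension class generates multiplicatively (its $n$-fold Yoneda splice is read off the periodic resolution and is nonzero), and the K\"unneth isomorphism for augmented algebras then yields the case of general $r$. One caution: your fallback argument via graded commutativity, non-semisimplicity and a dimension count would not by itself rule out the degree-one class being nilpotent, so the explicit splice computation you propose as the primary route is the one to keep.
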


\begin{proof}
Consider first the case where $r=1$, so $G=\bbZ/2$. The claim is then that there is an isomorphism of $k$-algebras $\HH *{G;k}\cong k[y]$, where $|y|=1$. Indeed, the group algebra $kG$ is then isomorphic to $k[x]/(x^{2})$, and the  complex 
\[
\cdots \to kG\xra{ x } kG \xra{ x } kG \to 0\,,
\]
is a free resolution of the trivial $kG$-module $k$. It follows that $\Ext_{kG}^{n}(k,k)\cong k$ for each $n\geq 0$, and that the class of the extension 
\[
0\to k\xra{ \eta }kG\xra{ \eps }k\to 0\,,
\]
where $\eta(1)=x$ and $\eps$ is the augmentation, generates $H^{*}(G,k)$ as a $k$-algebra.

The structure of the cohomology algebra for $r\geq 1$ then follows from repeated applications of the K\"unneth isomorphism:
\[
\Ext_{A\otimes_{k}B}(k,k)\cong \Ext_{A}(k,k)\otimes_{k}\Ext_{B}(k,k)
\]
where $A$ and $B$ are augmented $k$-algebras; see \cite[Chapter~XI]{Cartan/Eilenberg:1956}.
\end{proof}

Here is the corresponding result for odd primes. It can be proved along the same lines as the preceding one.

\begin{proposition}
\label{prop:gc-eabp}
\index{cohomology!elementary abelian group}
Let $p$ be an odd prime, $G=(\bbZ/p)^{r}$ and $\Char k=p$. Then $H^{*}(G,k)$ is the tensor product of an exterior algebra in $r$ variables in degree one and a polynomial algebra in $r$ variables in degree two:
\[
H^{*}(G,k)\cong (\bigwedge\bigoplus_{i=1}^{r}ky_{i}) \otimes_{k} k[z_{1},\dots,z_{r}] \quad \text{where $|y_{i}|=1$ and $|z_{i}|=2$}\,.
\]
\end{proposition}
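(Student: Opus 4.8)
The plan is to mimic the proof of Proposition \ref{prop:gc-eab2}, starting from the rank-one case $G = \bbZ/p$ and then bootstrapping to arbitrary $r$ via the K\"unneth isomorphism. So first I would treat $G = \bbZ/p$, where $kG \cong k[x]/(x^p)$ with $x = g-1$. The key input is the standard $2$-periodic free resolution of the trivial module
\[
\cdots \to kG \xra{\ x\ } kG \xra{\ x^{p-1}\ } kG \xra{\ x\ } kG \to k \to 0\,,
\]
obtained by alternating multiplication by $x$ and by $x^{p-1}$ (one checks $x\cdot x^{p-1} = x^p = 0$ in $kG$). Applying $\Hom_{kG}(-,k)$ kills all differentials, since multiplication by $x$ acts as $0$ on $k$ and $p-1\geq 1$, so $\Ext^n_{kG}(k,k) \cong k$ for every $n\geq 0$. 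I would then identify a degree-one class $y$ (the class of the extension $0\to k \to kG/(x^2) \to k \to 0$) and a degree-two class $z$ (the class represented by the $2$-step portion of the resolution), and argue that $y$ and $z$ generate. The remaining point in the rank-one case is to show that $y^2 = 0$: this is forced by graded commutativity (Proposition \ref{prop:graded-commutative}) once $p$ is odd, since then $y^2 = -y^2$ and $\Char k = p \neq 2$ gives $y^2 = 0$; hence the subalgebra generated by $y$ is the exterior algebra $\bigwedge ky$, and since $z$ is a non-zero-divisor (visible from the periodicity, i.e.\ cup product with $z$ realizes the periodicity isomorphism $\Ext^n \xra{\sim} \Ext^{n+2}$) one gets $H^*(\bbZ/p, k) \cong \bigwedge(ky)\otimes_k k[z]$ by a dimension count in each degree.

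For general $r$, I would write $(\bbZ/p)^r \cong (\bbZ/p)^{r-1}\times \bbZ/p$ and apply the K\"unneth isomorphism for cohomology of augmented $k$-algebras,
\[
\Ext_{A\otimes_k B}(k,k) \cong \Ext_A(k,k)\otimes_k \Ext_B(k,k)\,,
\]
cited from \cite[Chapter XI]{Cartan/Eilenberg:1956}, exactly as in the proof of Proposition \ref{prop:gc-eab2}. This is an isomorphism of graded $k$-algebras, so by induction on $r$ the cohomology ring is the $r$-fold tensor product of $\bigwedge(ky)\otimes_k k[z]$, which is $(\bigwedge \bigoplus_{i=1}^r ky_i)\otimes_k k[z_1,\dots,z_r]$ with $|y_i| = 1$ and $|z_i| = 2$, as claimed.

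The main obstacle is the rank-one computation, and specifically pinning down the multiplicative structure rather than merely the additive one: the additive statement $\Ext^n_{kG}(k,k)\cong k$ is immediate from the periodic resolution, but to conclude $y^2 = 0$ and that $z$ is polynomial (not, say, truncated) one must invoke graded commutativity together with $p$ odd, and must know the cup product with $z$ implements the $2$-periodicity. I would make the latter precise either by comparing the periodic resolution with its shift by $2$ (a chain map lifting the identity on $k$, whose induced map on $\Ext$ is cup product with the degree-two class it represents), or by quoting the standard computation of $H^*(\bbZ/p,k)$ from \cite[Chapter XII, \S7]{Cartan/Eilenberg:1956}. Everything else is a routine transcription of the even-characteristic argument.
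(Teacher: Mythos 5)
Your proposal is correct and is exactly the route the paper intends: the text gives no details beyond remarking that Proposition~\ref{prop:gc-eabp} ``can be proved along the same lines'' as Proposition~\ref{prop:gc-eab2}, and you carry out precisely that plan, supplying the extra points the odd-prime case needs (the $2$-periodic resolution, $y^{2}=0$ from graded commutativity in odd characteristic, and cup product with $z$ realizing the periodicity) before applying the K\"unneth isomorphism. No gaps to report.
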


\section{Triangulated categories}
\label{sec:Monday3}
This lecture provides a quick introduction to triangulated categories. We give definitions and explain the basic concepts. The triangulated categories arising in this work are always \emph{algebraic}\index{triangulated category!algebraic} which means that they are equivalent to the stable category of some Frobenius category. We do not use this fact explicitly, but it helps sometimes to understand the triangulated structure.

Triangulated categories were introduced by Verdier in his thesis which was published posthumously \cite{Verdier:1997a}; it is still an excellent reference. Another source for the material in this section is \cite{Krause:2007a}.
For the material on exact categories and Frobenius categories see Happel~\cite{Happel:1988a} and also B\"uhler's survey article~\cite{Buehler:2010}.

\subsection{Triangulated categories}

Let $\sfT$ be an additive category together with a fixed equivalence $\Si\colon\sfT\xra{\sim}\sfT$, which one calls \emph{shift}\index{shift} or \emph{suspension}\index{suspension}. A \emph{triangle}\index{triangle} in $\sfT$ is a sequence
$(\alpha,\beta,\gamma)$ of morphisms
\[
X\xra{\alpha} Y\xra{\beta} Z\xra{\gamma}\Si X\,,
\] 
and a morphism between  triangles $(\alpha,\beta,\gamma)$ and $(\alpha',\beta',\gamma')$ is a triple $(\phi_1,\phi_2,\phi_3)$ of morphisms in $\sfT$ making the following diagram commutative.
\[
\xymatrix{
X\ar[r]^\alpha\ar[d]^{\phi_1}&Y\ar[r]^\beta\ar[d]^{\phi_2}&Z\ar[r]^\gamma\ar[d]^{\phi_3}&\Si
X\ar[d]^{\Si\phi_1}\\ X'\ar[r]^{\alpha'}&Y'\ar[r]^{\beta'}&Z'\ar[r]^{\gamma'}&\Si X'
}
\] 
The category $\sfT$ is called \emph{triangulated} if it is equipped with a class of distinguished triangles (called \emph{exact triangles})\index{triangle!exact} satisfying the following conditions.
\begin{enumerate}
\item[(T1)] Any triangle isomorphic to an exact triangle is exact. For each object $X$, the triangle $0\to X\xra{\id} X\to 0$ is exact. Each morphism $\alpha$ fits into an exact triangle $(\alpha,\beta,\gamma)$.
\item[(T2)] A triangle $(\alpha,\beta,\gamma)$ is exact if and only if
$(\beta,\gamma,-\Si\alpha)$ is exact.
\item[(T3)] Given  exact triangles $(\alpha,\beta,\gamma)$ and
$(\alpha',\beta',\gamma')$, each pair of morphisms $\phi_1$ and $\phi_2$ satisfying
$\phi_2 \alpha=\alpha' \phi_1$ can be completed to a morphism of triangles:
\[
\xymatrix{
X\ar[r]^\alpha\ar[d]^{\phi_1}&Y\ar[r]^\beta\ar[d]^{\phi_2}&Z\ar[r]^\gamma\ar[d]^{\phi_3}&\Si
X\ar[d]^{\Si\phi_1}\\ X'\ar[r]^{\alpha'}&Y'\ar[r]^{\beta'}&Z'\ar[r]^{\gamma'}&\Si X'
}
\] 
\item[(T4)] Given exact triangles $(\alpha_1,\alpha_2,\alpha_3)$, $(\beta_1,\beta_2,\beta_3)$, and $(\gamma_1,\gamma_2,\gamma_3)$ with $\gamma_1=\beta_1 \alpha_1$, there exists an exact triangle $(\delta_1,\delta_2,\delta_3)$ making the following diagram commutative.
\[
\xymatrix{X\ar[r]^{\alpha_1}\ar@{=}[d]&Y\ar[r]^{\alpha_2}\ar[d]^{\beta_1}&
U\ar[r]^{\alpha_3}\ar[d]^{\delta_1}& \Si X\ar@{=}[d]\\
X\ar[r]^{\gamma_1}&Z\ar[r]^{\gamma_2}\ar[d]^{\beta_2}&
V\ar[r]^{\gamma_3}\ar[d]^{\delta_2}&\Si X\ar[d]^{\Si\alpha_1}\\
&W\ar@{=}[r]\ar[d]^{\beta_3}& W\ar[d]^{\delta_3}\ar[r]^{\beta_3}&\Si Y\\ &\Si
Y\ar[r]^{\Si\alpha_2}&\Si U }
\]
\end{enumerate}

The axiom (T4) is known as \emph{octahedral axiom}\index{octahedral axiom} because the four exact triangles can be arranged in a diagram having the shape of an octahedron. The exact triangles $A\to B\to C\to\Si A$ are represented
by faces of the form
\[
\xymatrix{&C\ar[ld]|-{+}\\A\ar[rr]&&B\ar[lu]}
\]
and the other four faces are commutative triangles.

Let us give a more intuitive formulation of the octahedral axiom which is based on the notion of a homotopy cartesian square.  Call a commutative square
\[
\xymatrix{X\ar[r]^{\alpha'}\ar[d]^{\alpha''}&Y'\ar[d]^{\beta'}\\Y'' \ar[r]^{\beta''}&Z}
\] 
\emph{homotopy cartesian}\index{homotopy!cartesian} if there exists an exact triangle
\[
X\xra{\smatrix{\alpha'\\ \alpha''}}Y'\amalg Y''\xra{\smatrix{\beta'&-\beta''}}Z\xra{\gamma}\Si X.
\] 
The morphism $\gamma$ is called a \emph{differential} \index{homotopy!cartesian!differential} of the homotopy cartesian square.  Note that a differential of the homotopy cartesian square changes its sign if the
square is flipped along the main diagonal. 

Assuming (T1)--(T3), one can show that (T4) is equivalent to the following condition; see \cite[\S2.2]{Krause:2007a}.
\begin{enumerate}
\item[(T4$'$)] Every pair of morphisms $X\to Y$ and $X\to X'$ can be
completed to a morphism
\[
\xymatrix{
  X\ar[r]\ar[d]^{}&Y\ar[r]\ar[d]^{}&Z\ar[r]\ar@{=}[d]^{}&\Si
  X\ar[d]^{}\\ X'\ar[r]^{}&Y'\ar[r]^{}&Z\ar[r]^{}&\Si X' }
  \] 
between exact triangles such that the left hand square is homotopy cartesian and the composite $Y'\to Z\to\Si X$ is a differential.
\end{enumerate}

\subsection{Categories of complexes}

Let $\sfA$ be an additive category. A \emph{complex}\index{complex} in $\sfA$ is a sequence of morphisms
\[
\cdots \to X^{n-1}\xra{d^{n-1} }X^n\xra{d^n}X^{n+1}\to\cdots
\]
such that $d^{n}\comp d^{n-1}=0$ for all $n\in\bbZ$. A morphism $\phi\colon X\to Y$ between complexes consists of morphisms $\phi^n\colon X^n\to Y^n$ with $d_Y^{n}\comp\phi^n=\phi^{n+1}\comp d_X^n$ for all $n\in\bbZ$.  The complexes form a category which we denote by $\sfC(\sfA)$.

A morphism $\phi\colon X\to Y$ is \emph{null-homotopic} \index{morphisms!null-homotopic} if there are morphisms
$\rho^n\colon X^n\to Y^{n-1}$ such that $\phi^n=d_Y^{n-1}\comp \rho^{n}+\rho^{n+1}\comp d_X^n$ for all $n\in\bbZ$. 
Morphisms $\phi,\psi\col X\to Y$ are \emph{homotopic}\index{morphisms!homotopic} if $\phi-\psi$ is null-homotopic.

The null-homotopic morphisms form an \emph{ideal} $\mcI$ in $\sfC(\sfA)$, that is, for each pair
$X,Y$ of complexes a subgroup
\[
\mcI(X,Y)\subseteq\Hom_{\sfC(\sfA)}(X,Y)
\]
such that any composite $\psi\comp\phi$ of morphisms in $\sfC(\sfA)$ belongs to $\mcI$ if $\phi$ or $\psi$ belongs to $\mcI$.  The \emph{homotopy category}\index{homotopy!category} $\sfK(\sfA)$ is the quotient of $\sfC(\sfA)$
with respect to this ideal. Thus
\[
\Hom_{\sfK(\sfA)}(X,Y)=\Hom_{\sfC(\sfA)}(X,Y)/\mcI(X,Y)
\] 
for every pair of complexes $X,Y$.

Given any complex $X$, its \emph{suspension} \index{complex!suspension} or \emph{shift} \index{complex!shift} is the complex $\Si X$ with
\[
(\Si X)^n=X^{n+1}\quad\textrm{and}\quad d^n_{\Si X}=-d^{n+1}_X.
\] 
This yields an equivalence $\Si\colon\sfK(\sfA)\xra{\sim}\sfK(\sfA)$. The \emph{mapping cone}\index{mapping cone} of a morphism $\alpha\colon X\to Y$ of complexes is the complex $Z$ with $Z^n=X^{n+1}\amalg Y^n$ and differential 
\[
\begin{bmatrix}
-d^{n+1}_X & 0 \\ 
\alpha^{n+1}&d^n_Y
\end{bmatrix}
\]
The mapping cone fits into a \emph{mapping cone sequence}\index{mapping cone!sequence}
\[
X\xra{\alpha}Y\xra{\beta}Z\xra{\gamma}\Si X
\] 
which is defined in degree $n$ by the following sequence.
\[
X^n\xra{\alpha^n} Y^n\xra{\smatrix{0\\ \id}} X^{n+1}\amalg Y^{n} \xra{\smatrix{-\id&0}} X^{n+1}
\] 
By definition, a triangle in $\sfK(\sfA)$ is \emph{exact}\index{triangle!exact} if it is isomorphic to a mapping cone sequence as above. It is straightforward to verify  the axioms (T1)--(T4).
Thus $\sfK(\sfA)$ is a triangulated category.

\subsection{Exact categories}
\index{exact category}
Let $\sfA$ be an exact category in the sense of Quillen \cite{Quillen:1973a}.  Thus
$\sfA$ is an additive category, together with a distinguished class of sequences
\begin{equation*}
\label{eq:ses}
0\to X\xra{\alpha} Y\xra{\beta} Z\to 0
\end{equation*}
which are called \emph{exact}\index{exact category!exact sequence}.  The exact sequences satisfy a number of
axioms. In particular, the morphisms $\alpha$ and $\beta$ in each exact sequence as above form a \emph{kernel-cokernel pair}\index{exact category!kernel-cokener pair}, that is $\alpha$ is a kernel of $\beta$ and $\beta$ is a cokernel of $\alpha$. A morphism in $\sfA$ which arises as the kernel in some exact sequence is called \emph{admissible mono}\index{exact category!admissible mono}; a morphism arising as a cokernel is called \emph{admissible epi}\index{exact category!admissible epi}. A full subcategory $\sfB$ of $\sfA$ is \emph{extension-closed}\index{exact category!extension-closed subcategory} if every exact sequence in $\sfA$ with endterms in $\sfB$ belongs to $\sfB$.

\begin{remark}
(1) Any abelian category is exact with respect to the class of
all short exact sequences. 

(2) Any full and extension-closed subcategory $\sfB$ of an exact
category $\sfA$ is exact with respect to the class of sequences which
are exact in $\sfA$. 

(3) Any small exact category arises, up to an exact equivalence, as a
    full and extension-closed subcategory of a module category.
\end{remark}

\subsection{Frobenius categories}

Let $\sfA$ be an exact category. An object $P$ is called \emph{projective}\index{projective} if the induced map $\Hom_\sfA(P,Y)\to\Hom_\sfA(P,Z)$ is surjective for every admissible epi $Y\to Z$. Dually, an object $Q$ is
\emph{injective}\index{injective} if the induced map $\Hom_\sfA(Y,Q)\to\Hom_\sfA(X,Q)$ is surjective for every admissible mono $X\to Y$.  The category $\sfA$ has \emph{enough projectives}\index{enough projectives} if every object $Z$ admits an admissible epi $Y\to Z$ with $Y$ projective.  And $\sfA$ has \emph{enough injectives}\index{enough injectives} if every object $X$ admits an admissible mono $X\to Y$ with $Y$ injective. Finally, $\sfA$ is called a \emph{Frobenius category}\index{Frobenius category}, if $\sfA$ has enough projectives and enough injectives
and if both coincide.

\begin{example}
(1) Let $\sfA$ be an additive category. Then $\sfA$ is an exact category
    with respect to the class of all split exact sequences in
    $\sfA$. All objects are projective and injective, and $\sfA$ is a
    Frobenius category.

    (2) Let $\sfA$ be an additive category. The category $\sfC(\sfA)$
    of complexes is exact with respect to the class of all sequences
    $0\to X\to Y\to Z\to 0$ such that $0\to X^n\to Y^n\to Z^n\to 0$ is
    split exact for all $n\in\bbZ$. A typical projective and injective
    object is a complex of the form
\[
I_A\colon\;\; \cdots \to 0\to A\xra{\id} A\to 0\to\cdots
\]
for some $A$ in $\sfA$. There is an obvious admissible mono $X\to\prod_{n\in\bbZ}\Si^{-n}I_{X^n}$ and also an admissible epi $\coprod_{n\in\bbZ}\Si^{-n-1}I_{X^n}\to X$. Also,
\[
\coprod_{n\in\bbZ}\Si^{-n}I_{X^n}\cong \prod_{n\in\bbZ}\Si^{-n}I_{X^n}.
\]
Thus $\sfC(\sfA)$ is a Frobenius category. For a conceptual
explanation, see  Exercise~22 at the end of this chapter.

(3) Let $k$ be a field and $A$ a finite dimensional self-injective $k$-algebra. Then injective and
    projective $A$-modules coincide. Thus the category of $A$-modules is a Frobenius category.
\end{example}

\subsection{The stable category of a Frobenius category}
\label{ss:stab}

Let $\sfA$ be a Frobenius category. The \emph{stable category}\index{stable category}\index{Frobenius category!stable category} $\sfS(\sfA)$ is by definition the quotient of $\sfA$ with respect to the ideal $\mcI$ of morphisms which factor through an injective object.  Thus the objects of $\sfS(\sfA)$ are the same as in $\sfA$ and
\[
\Hom_{\sfS(\sfA)}(X,Y)=\Hom_\sfA(X,Y)/\mcI(X,Y)
\] 
for all $X,Y$ in $\sfA$. 

We define a triangulated structure for $\sfS(\sfA)$ as follows. Choose for each $X$ in $\sfA$ an exact sequence
\[
0\to X\to E\to \Si X\to 0
\] 
such that $E$ is injective. One obtains an equivalence $\Si\colon\sfS(\sfA)\xra{\sim}\sfS(\sfA)$ by sending
$X$ to $\Si X$. This equivalence serves as suspension.  Every exact sequence $0\to X\to Y\to Z\to 0$ fits into a commutative diagram with exact rows
\[
\xymatrix{ 
0\ar[r]&X \ar[r]^\alpha\ar@{=}[d]&Y\ar[r]^\beta\ar[d]&Z\ar[r]\ar[d]^\gamma&0\\
0\ar[r]&X\ar[r]&E\ar[r]&\Si X\ar[r]&0}
\] 
such that $E$ is injective. A triangle in $\sfS(\sfA)$ is by definition \emph{exact} if it
isomorphic to a sequence of morphisms 
\[ 
X\xra{\alpha} Y\xra{\beta} Z\xra{\gamma}\Si X
\] 
as above. 

\begin{proposition} 
\index{Frobenius category!triangulated}
The stable category of a Frobenius category is triangulated.
\end{proposition}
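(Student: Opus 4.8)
The plan is to verify the triangulated-category axioms (T1)--(T4) for $\sfS(\sfA)$, using the description of exact triangles as those isomorphic to sequences $X\xra{\alpha}Y\xra{\beta}Z\xra{\gamma}\Si X$ arising from a short exact sequence $0\to X\xra{\alpha}Y\xra{\beta}Z\to 0$ in $\sfA$ together with a chosen embedding of $X$ into an injective $E$. The first task is to check that $\Si$ is a well-defined equivalence on $\sfS(\sfA)$: any two choices of exact sequences $0\to X\to E\to\Si X\to 0$ and $0\to X\to E'\to\Si' X\to 0$ with $E,E'$ injective yield objects that are isomorphic in $\sfS(\sfA)$ (use injectivity of $E'$ to extend $\id_X$, and conversely), so $\Si X$ is well defined up to canonical stable isomorphism; functoriality on morphisms follows likewise. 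Because $\sfA$ is Frobenius, $\Si$ also admits a quasi-inverse $\Si^{-1}$ built from admissible epis from projectives $=$ injectives.

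Next I would dispatch (T1) and (T2). For (T1): the triangle $0\to X\xra{\id}X\to 0$ is exact since it comes from the split sequence $0\to 0\to X\xra{\id}X\to 0$; closure under isomorphism is built into the definition; and every morphism $\alpha\colon X\to Y$ in $\sfS(\sfA)$ can be represented by a morphism in $\sfA$, which one factors as $X\xra{\smatrix{\alpha\\ \iota}}Y\oplus E$ (with $\iota\colon X\hookrightarrow E$ injective) --- an admissible mono --- whose cokernel gives an exact sequence, hence an exact triangle on $\alpha$. For (T2), the standard rotation argument applies: given an exact sequence $0\to X\xra{\alpha}Y\xra{\beta}Z\to 0$, one constructs the pushout/pullback diagrams (exactly as in the displayed diagram before the Proposition) and checks that the rotated sequence $Y\xra{\beta}Z\xra{\gamma}\Si X\xra{-\Si\alpha}\Si Y$ is again isomorphic in $\sfS(\sfA)$ to one built from a short exact sequence; the sign on $\Si\alpha$ is forced by the sign convention in the suspension.

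For (T3), given exact triangles from short exact sequences and a commuting square $\phi_2\alpha=\alpha'\phi_1$ in $\sfS(\sfA)$, I would lift $\phi_1,\phi_2$ to morphisms in $\sfA$ making the left square commute \emph{up to a map factoring through an injective}, absorb that correction term using injectivity of the ambient injective objects to make the square commute on the nose in $\sfA$, and then use the universal property of cokernels to produce $\phi_3\colon Z\to Z'$; compatibility with $\gamma$ follows by a diagram chase through the defining diagram of $\Si$. Finally (T4): I would prove the equivalent form (T4$'$), which is the cleanest in the Frobenius setting --- given $X\to Y$ and $X\to X'$, form the corresponding short exact sequences, take the pushout of $Y\leftarrow X\to X'$, and check directly that the resulting square is homotopy cartesian (its differential is constructed from the connecting maps) and that the composite $Y'\to Z\to\Si X$ is the prescribed differential. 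Here one leans on the pushout/pullback axioms of an exact category.

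The main obstacle is bookkeeping rather than conceptual: morphisms in $\sfS(\sfA)$ are equivalence classes, so at several points (well-definedness of $\Si$, the fill-in in (T3), the octahedron) one must lift to $\sfA$, and the lifted diagrams commute only modulo morphisms factoring through injectives; the recurring technical move is to use injectivity (equivalently projectivity, since $\sfA$ is Frobenius) to modify a chosen lift so that the diagram commutes strictly in $\sfA$ before invoking the exact-category axioms. Keeping the signs coherent through the rotations and the octahedral diagram is the other place where care is needed. None of this requires new ideas beyond the exact-category axioms and the Frobenius property, so I would present (T1)--(T3) fairly briskly and spend the bulk of the argument on the homotopy-cartesian reformulation of (T4).
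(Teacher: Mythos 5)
Your proposal is correct and follows essentially the same route as the paper's (sketched) proof: you represent stable morphisms by admissible monos in $\sfA$ (the factorisation $X\to Y\oplus E$), verify (T1)--(T3) by lifting to $\sfA$ and correcting lifts by maps through injectives, and establish (T4$'$) by observing that a push-out along an admissible mono yields a homotopy cartesian square with the prescribed differential. This matches the paper's two key observations, with the remaining verifications being the routine bookkeeping it delegates to Happel.
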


\begin{proof} 
It is easy to verify the axioms, once one observes that every morphism in $\sfS(\sfA)$ can be represented by an admissible mono in $\sfA$. Note that a homotopy cartesian square can be represented by a pull-back and push-out square. This gives a proof for (T4$'$). We refer to \cite[\S{I.2}]{Happel:1988a} for details.
\end{proof}

\begin{example}
\label{ex:frobstable}
(1) Let $A$ be a finite dimensional and self-injective algebra. Then the category of $A$-modules $\Mod A$ is a Frobenius category, and we write $\StMod A$ for the stable category $\sfS(\Mod A)$.

(2) The category of complexes $\sfC(\sfA)$ of an additive category $\sfA$ is a Frobenius category with respect to the degreewise split exact sequences. The morphisms factoring through an injective object are precisely the null-homotopic morphisms. Thus the stable category of $\sfC(\sfA)$ coincides with the homotopy category $\sfK(\sfA)$. Note that the triangulated structures which have been defined via mapping cones and via exact sequences in $\sfC(\sfA)$ coincide.
\end{example}

\subsection{Exact and cohomological functors}

An \emph{exact functor}\index{functor!exact} $\sfT\to\sfU$ between triangulated categories is a
pair $(F,\eta)$ consisting of a functor $F\colon\sfT\to\sfU$ and a natural
isomorphism $\eta\colon F\comp \Si_\sfT\to\Si_\sfU\comp F$ such that for
every exact triangle $X\xra{\alpha}Y\xra{\beta}Z\xra{\gamma}\Si X$ in $\sfT$ the
triangle
\[
FX\xra{F\alpha}FY\xra{F\beta}FZ\xra{\eta_X\comp F\gamma}\Si (FX)
\] is exact in $\sfU$.

\begin{example}
An additive functor $\sfA\to\sfB$ induces an exact functor  $\sfK(\sfA)\to\sfK(\sfB)$.
\end{example}

A functor $\sfT\to\sfA$ from a triangulated category $\sfT$ to an abelian category $\sfA$ is  \emph{cohomological}\index{functor!cohomological} if it sends each exact triangle in $\sfT$ to an exact sequence in $\sfA$.

\begin{example}
 For each object $X$ in $\sfT$, the representable functors
\[\Hom_\sfT(X,-)\colon\sfT\to\Ab\quad\textrm{and}\quad
\Hom_\sfT(-,X)\colon\sfT^\op\to\Ab\]
into the category $\Ab$ of abelian groups are cohomological functors.
\end{example}

\subsection{Thick subcategories}
\label{ss:tria}
Let $\sfT$ be a triangulated category. A non-empty full subcategory $\sfS$ is a
\emph{triangulated subcategory}\index{triangulated category!subcategory} if the following conditions hold.
\begin{enumerate}
\item[(S1)] $\Si^n X\in\sfS$ for all $X\in\sfS$ and $n\in\bbZ$.
\item[(S2)] Let $X\to Y\to Z\to\Si X$ be an exact triangle in $\sfT$.
  If two objects from $\{X,Y,Z\}$ belong to $\sfS$, then also the third.
\end{enumerate}
A triangulated subcategory $\sfS$ is \emph{thick}\index{triangulated category!thick subcategory} if in addition the following condition holds.
\begin{enumerate}
\item[(S3)] Every direct factor of an object in $\sfS$ belongs to $\sfS$,
that is, a decomposition $X=X'\amalg X''$ for $X\in\sfS$ implies
$X'\in\sfS$.
\end{enumerate}
A triangulated subcategory $\sfS$ inherits a canonical triangulated structure from $\sfT$. 

\begin{example}
Let $F\colon\sfT\to\sfU$ be an exact functor between triangulated categories. Then the full subcategory $\Ker F$, called the \emph{kernel}\index{functor!kernel} of $F$, consisting of the objects annihilated by $F$ forms a thick subcategory of $\sfT$.
\end{example}

\subsection{Derived categories}

Let $\sfA$ be an abelian category. Given a complex 
\[
\cdots \to X^{n-1}\xra{d^{n-1} }X^n\xra{d^n}X^{n+1}\to\cdots
\]
in $\sfA$, the \emph{cohomology}\index{cohomology} in degree $n$ is by definition the object 
\[
H^nX=\Ker d^n/\Im d^{n-1}.
\] 
A morphism $\phi\colon X\to Y$ of complexes induces, for each $n\in\bbZ$, a morphism 
\[
H^n\phi\colon H^nX\to H^nY\,;
\]
if these are all isomorphisms, then $\phi$ is said to be a \emph{quasi-isomorphism}\index{quasi-isomorphism}. Note that morphisms $\phi,\psi\colon X\to Y$ are homotopic, then $H^n\phi=H^n\psi$ for all $n$.

The \emph{derived category}\index{derived category} $\sfD(\sfA)$ of $\sfA$ is obtained from $\sfK(\sfA)$ by formally inverting all quasi-isomorphisms. To be precise, one defines
\[
\sfD(\sfA)=\sfK(\sfA)[S^{-1}]
\] 
as the localisation of $\sfK(\sfA)$ with respect to the class $S$ of all quasi-isomorphisms.  

\begin{proposition}
The derived category $\sfD(\sfA)$ carries a unique triangulated structure such that the canonical functor $\sfK(\sfA)\to\sfD(\sfA)$ is
exact.\qed
\end{proposition}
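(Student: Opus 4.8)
The plan is to equip $\sfD(\sfA)$ with the triangulated structure inherited from $\sfK(\sfA)$ via the localisation functor $Q\colon\sfK(\sfA)\to\sfD(\sfA)$, and then to verify that this is the unique structure making $Q$ exact. First I would recall that the class $S$ of quasi-isomorphisms in $\sfK(\sfA)$ is a multiplicative system \emph{compatible with the triangulation}: it contains all identities, is closed under composition, satisfies the Ore/cancellation conditions, is stable under the shift $\Si$ and its inverse, and — crucially — has the property that in any morphism of exact triangles in which two of the three vertical maps lie in $S$, the third does as well. This last point follows from the long exact cohomology sequence attached to a mapping cone together with the five lemma in the abelian category $\sfA$. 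Granting this, the general localisation theory of Verdier produces on $\sfD(\sfA)=\sfK(\sfA)[S^{-1}]$ a canonical triangulated structure: one declares a triangle in $\sfD(\sfA)$ to be exact precisely when it is isomorphic (in $\sfD(\sfA)$) to the image under $Q$ of an exact triangle in $\sfK(\sfA)$, and one checks the axioms (T1)--(T4) hold downstairs because they hold upstairs and $Q$ is essentially surjective with the calculus of fractions controlling $\Hom$-sets.

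Next I would spell out the verification of the axioms just enough to see where the localisation hypotheses are used. (T1) and (T2) are immediate from the corresponding statements in $\sfK(\sfA)$ and the definition of exactness in $\sfD(\sfA)$. For (T3), given exact triangles and a commuting square in $\sfD(\sfA)$, one uses the calculus of left (or right) fractions to replace the relevant morphisms by genuine morphisms of complexes — possibly after composing with a quasi-isomorphism — completes the square in $\sfK(\sfA)$ using (T3) there, and then applies $Q$; the compatibility of $S$ with triangles guarantees the replacement does not destroy exactness. The octahedral axiom (T4), or equivalently (T4$'$) in the homotopy-cartesian formulation recalled earlier in this section, is handled the same way: lift the data to $\sfK(\sfA)$, invoke (T4) there, push down. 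Finally, exactness of $Q$ is built into the definition of the distinguished triangles in $\sfD(\sfA)$.

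For uniqueness: suppose $\sfD(\sfA)$ carries another triangulated structure for which $Q$ is exact. Since $Q$ is essentially surjective, every object of $\sfD(\sfA)$ is isomorphic to $QX$ for some complex $X$, and every exact triangle on $QX$-type objects must, by (T1), be isomorphic to one obtained by completing a morphism; but in $\sfK(\sfA)$ every morphism $\alpha$ sits in the exact triangle given by its mapping cone, and $Q$ carries this to an exact triangle in the new structure as well. Conversely any triangle declared exact in the new structure can be transported, via the isomorphisms $\text{(object)}\cong QX$ and the calculus of fractions, to the image of a mapping cone triangle, hence is exact in the structure we built. So the two classes of exact triangles coincide.

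The main obstacle is the verification that $S$ is genuinely a compatible multiplicative system — in particular that it satisfies the two-out-of-three property for morphisms of triangles and that the Ore conditions hold. The Ore conditions amount to the statement that any diagram $X\xrightarrow{\;\sim\;}X'\xleftarrow{f}Y$ in $\sfK(\sfA)$ can be completed to a commuting square with the new leg a quasi-isomorphism, which one proves by forming the mapping cone of $(f,\mathrm{quasi\text{-}iso})$ and using its vanishing cohomology; the two-out-of-three property is the five-lemma argument on cohomology sketched above. Once that foundational package is in place, everything else is formal transport along $Q$, and I do not expect any further serious difficulty; the remaining steps are routine applications of Verdier's localisation formalism, for which one can cite \cite{Verdier:1997a} or \cite{Krause:2007a}.
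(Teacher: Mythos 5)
Your proposal is correct and follows the standard Verdier localisation argument; the paper itself gives no proof of this proposition, simply recording it as a known fact with Verdier's thesis and Krause's survey as the background references named at the start of the section. The only point worth tightening is in your uniqueness step: that an exact triangle is determined up to isomorphism by its first morphism is not a consequence of (T1) but of (T3) together with the five lemma applied to the representable functors $\Hom_{\sfD(\sfA)}(-,W)$, and one also needs the calculus-of-fractions observation that every morphism of $\sfD(\sfA)$ is, up to composing with an isomorphism, of the form $Q(\alpha)$ — which you do gesture at.
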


We identify any object $X$ in $\sfA$ with the complex having $X$ concentrated in degree zero. This yields a functor $\sfA\to\sfD(\sfA)$ which induces for  all objects $X,Y$ in $\sfA$ and $n\in\bbZ$ an
isomorphism
\[
\Ext^n_\sfA(X,Y)\xra{\sim} \Hom_{\sfD(\sfA)}(X,\Si^nY).
\]
For example, the functor sends each exact sequence $\eta \colon 0\to A\xra{\alpha} B\xra{\beta} C\to 0$ in $\sfA$ to an exact triangle $A\xra{\alpha} B\xra{\beta} C\xra{\gamma}\Si A$. The above isomorphism maps the class in $\Ext^1_\sfA(C,A)$ representing $\eta$ to $\gamma\colon C\to\Si A$.

\subsection{Compact objects}

Let $\sfT$ be a triangulated category and suppose that $\sfT$ admits set-indexed coproducts.  A \emph{localising subcategory}\index{localising subcategory} of $\sfT$ is a full triangulated subcategory that is closed under taking
coproducts.  We write $\Loc_\sfT(\sfC)$\index{Loc@$\Loc_\sfT(\sfC)$} for the smallest localising subcategory containing a given class of objects $\sfC$ in $\sfT$, and call it the localising subcategory \emph{generated}\index{localising subcategory!generated} by $\sfC$. In the same vein, we write $\Thick_{\sfT}(\sfC)$\index{Thick@$\Thick_\sfT(\sfC)$} for the smallest thick subcategory containing $\sfC$, and call it the thick subcategory generated by $\sfC$.

An object $X$ in $\sfT$ is \emph{compact}\index{compact object} if the functor $\Hom_{\sfT}(C,-)$ commutes with all coproducts. This means that each morphism $X\to\coprod_{i\in I}Y_i$ in $\sfT$  factors through $X\to\coprod_{i\in
  J}Y_i$ for some finite subset $J\subseteq I$. We write $\sfT^{\sfc}$ for the full subcategory of compact objects in $\sfT$. Note that $\sfT^c$ is a thick subcategory of $\sfT$.

The category $\sfT$ is \emph{compactly generated}\index{compactly generated}\index{triangulated category!compactly generated} if it is generated by a set of compact objects, that is, $\sfT=\Loc_\sfT(\sfC)$ for some set $\sfC\subseteq\sfT^c$. The following result provides a useful criterion for compact
generation. The proof uses the Brown representability theorem, which we will
learn about in Section~\ref{sec:Tuesday2}.

\begin{proposition}
\label{pr:compact-generation}
Let $\sfC$ be a set of compact objects of $\sfT$. Then $\Loc_\sfT(\sfC)=\sfT$ if and only if for each non-zero object $X\in\sfT$ there are $C\in\sfC$ and $n \in\bbZ$ such that $\Hom_\sfT(\Si^nC,X)\neq 0$.
\end{proposition}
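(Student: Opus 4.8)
The plan is to prove both implications directly from the definitions, using the characterisation of compact objects and the structure theory of localising subcategories.

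\textbf{The easy direction.} First I would prove the forward implication: if $\Loc_\sfT(\sfC)=\sfT$, then for each non-zero $X$ there are $C\in\sfC$ and $n\in\bbZ$ with $\Hom_\sfT(\Si^n C,X)\ne 0$. Suppose not; that is, suppose $X$ is non-zero but $\Hom_\sfT(\Si^n C,X)=0$ for all $C\in\sfC$ and all $n\in\bbZ$. I would then consider the full subcategory $\sfS$ of all objects $Y$ in $\sfT$ with $\Hom_\sfT(\Si^n Y,X)=0$ for all $n\in\bbZ$ — equivalently, with $\Hom_\sfT(Y,\Si^m X)=0$ for all $m$. Using that $\Hom_\sfT(-,\Si^m X)$ is a cohomological functor sending coproducts to products, one checks $\sfS$ is closed under shifts, under coproducts, and satisfies the two-out-of-three property on exact triangles, hence $\sfS$ is a localising subcategory. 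By hypothesis $\sfC\subseteq\sfS$, so $\sfT=\Loc_\sfT(\sfC)\subseteq\sfS$, whence $X\in\sfS$, giving $\Hom_\sfT(X,X)=0$ and $X=0$ — a contradiction.

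\textbf{The harder direction.} For the converse, assume that for every non-zero $X$ there exist $C\in\sfC$ and $n\in\bbZ$ with $\Hom_\sfT(\Si^n C,X)\ne 0$; I must show $\Loc_\sfT(\sfC)=\sfT$. Write $\sfL=\Loc_\sfT(\sfC)$. The standard approach is to use a Bousfield localisation / Brown representability argument: since $\sfC$ is a set of compact objects, the inclusion $\sfL\hookrightarrow\sfT$ admits a right adjoint, and every object $X$ sits in an exact triangle $X'\to X\to X''\to\Si X'$ with $X'\in\sfL$ and $X''$ right-orthogonal to $\sfL$, i.e.\ $\Hom_\sfT(L,\Si^n X'')=0$ for all $L\in\sfL$ and all $n$. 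In particular $\Hom_\sfT(\Si^m C,X'')=0$ for all $C\in\sfC$, $m\in\bbZ$. By the hypothesis applied to $X''$, this forces $X''=0$, so $X\cong X'\in\sfL$; since $X$ was arbitrary, $\sfL=\sfT$. The step that needs the most care — and the one flagged in the excerpt as relying on machinery from Section~\ref{sec:Tuesday2} — is the construction of this localisation triangle, which rests on Brown representability and the fact that a localising subcategory generated by a set of compact objects is itself generated by a set (so the orthogonality class is handled correctly).

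\textbf{Main obstacle.} The genuine obstacle is precisely the existence of the Bousfield localisation triangle for $\sfL=\Loc_\sfT(\sfC)$: one needs that the functor $X\mapsto$ (a representing object for the subfunctor of $\Hom_\sfT(-,X)$ supported on $\sfL$) exists, which is where Brown representability enters, together with the observation that $\sfL$ is compactly generated in its own right (its compact objects being built from $\sfC$). Once that triangle is available, the argument is a short diagram chase as above. If one prefers to avoid invoking Bousfield localisation, an alternative is a transfinite construction building, for a given $X$, an approximation $X'\to X$ out of $\sfC$ by iteratively killing all maps from shifts of objects of $\sfC$ into the cone; but this still amounts to reproving the localisation triangle, so I would simply cite the Brown representability results of Section~\ref{sec:Tuesday2} as the excerpt anticipates.
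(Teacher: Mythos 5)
Your proof is correct and follows essentially the same route as the paper: the forward direction by observing that the objects right-orthogonal (in all degrees) to $X$ form a localising subcategory containing $\sfC$, hence all of $\sfT$, forcing $\Hom_\sfT(X,X)=0$; and the converse via the right adjoint to the inclusion $\Loc_\sfT(\sfC)\to\sfT$ supplied by Brown representability (Corollary~\ref{cor:right-adjoint-exist}), whose localisation triangle has a cone killed by the hypothesis. No essential differences from the paper's argument.
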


\begin{proof}
Assume $\Loc_\sfT(\sfC)=\sfT$ and fix an object $X\in\sfT$. The objects $V\in\sfT$ satisfying $\Hom_\sfT(\Si^nV,X)= 0$ for all $n\in\bbZ$ form a localising subcategory of $\sfT$. If this localising subcategory contains $\sfC$, then $X=0$.

As to the converse, Corollary~\ref{cor:right-adjoint-exist} yields that the inclusion $\Loc_\sfT(\sfC)\to\sfT$ admits a right adjoint; we denote this by $\gam$. Given any object $X\in\sfT$, this yields a universal
morphism $\gam X\to X$. Completing this to an exact triangle $\gam X\to X\to X'\to$ produces an object $X'$ satisfying $\Hom_\sfT(V,X')=0$ for all $V\in\Loc_\sfT(\sfC)$. Thus $X'=0$ and
  therefore $X$ belongs to $\Loc_\sfT(\sfC)$.
\end{proof}

\begin{example}
\label{ex:stmodA}
(1) Let $A$ be any ring and $\sfD(\Mod A)$\index{Dmoda@$\sfD(\Mod A)$} its derived category. Since $\Hom_{\sfD(\Mod A)}(\Si^{n}A,X)=H^{-n}(X)$, it follows that $A$ viewed as complex concentrated in degree zero is a compact object; it is also a generator, by Proposition~\ref{pr:compact-generation}. Thus the derived category is compactly generated. The compact objects are described in Theorem~\ref{thm:compacts-DA}

(2) Let $A$ be a finite dimensional and self-injective algebra. Then
$\Mod A$ is a Frobenius category and the corresponding stable category
$\StMod A$ is compactly generated. An object is compact if and only if
it is isomorphic to a finitely generated $A$-module. Thus the
inclusion $\mod A\to \Mod A$ induces an equivalence $\stmod
A\xra{\sim}(\StMod A)^c$.

Indeed, $\Mod A$ with exact structure given by exact sequences of
modules is a Frobenius category, with projectives the projective
$A$-modules; see Example~\ref{ex:frobstable}. Its stable category is
thus triangulated with suspension $\Omega^{-1}$, by the discussion in
Section~\ref{ss:stab}. The simple $A$-modules form a set of compact
generators. This follows from
Proposition~\ref{pr:compact-generation}. The thick subcategory
generated by all simple $A$-modules coincides with $\stmod A$, and
this yields the description of the compact objects of $\StMod A$, by
Theorem~\ref{thm:neeman-compacts} below.
\end{example}

The result below, attributed to Ravenel~\cite{Ravenel:1984}, is
extremely useful in identifying compact objects in triangulated
categories; see \cite[Lemma~2.2]{Neeman:1992b} for a proof.

\begin{theorem}
\label{thm:neeman-compacts}
Let $C$ and $D$ be compact objects in a triangulated category $\sfT$. If $D$ is in $\Loc_{\sfT}(C)$, then it is already in $\Thick_{\sfT}(C)$. In particular, if $C$ is a compact generator for $\sfT$, then the class of compact objects in $\sfT$ is precisely $\Thick_{\sfT}(C)$. \qed
\end{theorem}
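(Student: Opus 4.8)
\textbf{Proof proposal for Theorem~\ref{thm:neeman-compacts}.}

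The plan is to prove the first statement and then deduce the second as an immediate consequence. Fix compact objects $C$ and $D$ with $D\in\Loc_\sfT(C)$. I would introduce the full subcategory $\sfS$ of $\Loc_\sfT(C)$ consisting of those objects $Y$ such that the natural map
\[
\Hom_\sfT(Y,\coprod_{i\in I}Z_i)\lto \coprod_{i\in I}\Hom_\sfT(Y,Z_i)
\]
is bijective for every family $(Z_i)_{i\in I}$ of objects in $\sfT$ --- in other words, those $Y$ that ``see coproducts'' the way compact objects do. Clearly $C\in\sfS$ since $C$ is compact, and $\sfS$ is closed under shifts, finite coproducts, and direct summands. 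The key verification is that $\sfS$ is closed under the formation of cones: given an exact triangle $Y'\to Y\to Y''\to\Si Y'$ with $Y',Y''\in\sfS$, a five-lemma argument applied to the map of long exact sequences obtained by applying $\Hom_\sfT(-,\coprod Z_i)$ and $\coprod\Hom_\sfT(-,Z_i)$ shows $Y\in\sfS$; here one uses that a coproduct of exact sequences of abelian groups is exact. So $\sfS$ is a thick subcategory of $\sfT$ containing $C$, hence $\Thick_\sfT(C)\subseteq\sfS$.

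Next I would exploit that $D$ is compact and lies in $\Loc_\sfT(C)$. The honest approach uses the theory of homotopy colimits (to be developed, or at least available, in the triangulated setting): every object of $\Loc_\sfT(C)$ arises as a homotopy colimit of a sequence $X_0\to X_1\to X_2\to\cdots$ in which $X_0=0$ and each $X_{n+1}$ is obtained from $X_n$ by attaching an object of $\Thick_\sfT(C)$ via a triangle $X_n\to X_{n+1}\to W_n\to\Si X_n$ with $W_n\in\Thick_\sfT(C)$ --- this is the standard ``cellular filtration'' construction, and closure of $\Loc_\sfT(C)$ under coproducts and cones guarantees the homotopy colimit lands back in $\Loc_\sfT(C)$ and that any prescribed object can be reached. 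Writing $X=\hocolim X_n$ for such a presentation of $D$, one has the Milnor triangle
\[
\coprod_{n\ge 0} X_n \xra{\ 1-\shift\ } \coprod_{n\ge 0} X_n \lto D \lto \Si\!\coprod_{n\ge 0}X_n\,.
\]
Apply $\Hom_\sfT(D,-)$ and use that $D$ is compact to identify $\Hom_\sfT(D,\coprod X_n)$ with $\coprod\Hom_\sfT(D,X_n)$; the identity class $\id_D\in\Hom_\sfT(D,D)$ then lifts along $\coprod X_n\to D$ to a map $D\to\coprod X_n$ which, by compactness again, factors through a finite sub-coproduct, hence through some single $X_N$. Thus $D$ is a direct summand of $X_N$. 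Since $X_N$ is built from $0$ by finitely many triangles with third terms in $\Thick_\sfT(C)$, it lies in $\Thick_\sfT(C)$, and by thickness (closure under summands) so does $D$.

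For the final assertion: if $C$ is a compact generator, then $\Loc_\sfT(C)=\sfT$, so every compact object $D$ of $\sfT$ lies in $\Loc_\sfT(C)$ and hence, by what was just proved, in $\Thick_\sfT(C)$; conversely $\Thick_\sfT(C)\subseteq\sfT^c$ because $\sfT^c$ is a thick subcategory containing $C$. Therefore $\sfT^c=\Thick_\sfT(C)$.

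\emph{Main obstacle.} The crux is the homotopy-colimit presentation of objects of $\Loc_\sfT(C)$ together with the Milnor exact triangle and the exactness of coproducts of long exact sequences; this is where all the real work sits. One must be careful that the class of objects expressible as such homotopy colimits is genuinely all of $\Loc_\sfT(C)$ --- i.e.\ that this class is itself a localising subcategory containing $C$ --- which requires checking it is closed under coproducts, shifts, and cones. Everything after that (the compactness factorisation and the summand/thickness bookkeeping) is formal.
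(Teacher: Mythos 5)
Your first paragraph (that $\Thick_{\sfT}(C)$ consists of compact objects) and your Milnor-triangle/compactness step (exhibiting $D$ as a retract of some finite stage $X_N$) are fine, modulo the small point that lifting $\id_D$ along $\coprod_n X_n\to D$ needs the connecting map to vanish, which one gets from injectivity of $(1-\shift)_*$ on $\coprod_n\Hom_{\sfT}(D,\Si X_n)$. The genuine gap is the presentation you feed into this machine: you assert that every object of $\Loc_{\sfT}(C)$ is a sequential homotopy colimit of a tower whose successive cones $W_n$ lie in $\Thick_{\sfT}(C)$, justified by calling it ``the standard cellular filtration''. The standard construction attaches at each stage a \emph{possibly infinite} coproduct of shifts of $C$, and such coproducts are not in $\Thick_{\sfT}(C)$ (thick subcategories are closed only under finite coproducts and summands). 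Moreover your fallback justification --- that the class of such homotopy colimits is a localising subcategory containing $C$ --- fails: it is not closed under arbitrary coproducts. For instance in $\sfD(\bbZ)$ with $C=\bbZ$, each $X_n\in\Thick(\bbZ)$ is a perfect complex, so $\Hom^*_{\sfD(\bbZ)}(\bbZ,\hocolim X_n)=\colim_n H^*(X_n)$ is countably generated, whereas an uncountable coproduct $\coprod_I\bbZ$ lies in $\Loc(\bbZ)$ and has uncountable $H^0$. And for the one object you actually need, the compact $D$, asserting such a presentation is essentially assuming the conclusion $D\in\Thick_{\sfT}(C)$ (up to summands).

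With the honest presentation (cones equal to infinite coproducts of shifts of $C$), your $X_N$ lies in $\Loc_{\sfT}(C)$ but not in $\Thick_{\sfT}(C)$, so the final ``summand/thickness bookkeeping'' collapses; everything you labelled formal is indeed formal, but the work you labelled standard is precisely where the theorem lives. The missing idea, which is the content of the result cited in the notes (Neeman's Lemma~2.2, after Ravenel), is to use compactness of $D$ again, once at each of the finitely many stages $X_0\to\cdots\to X_N$: the composite of $D\to X_N$ with the map to the top cone factors through a finite subcoproduct, and an octahedral-axiom argument lets one replace that stage by an extension of the finite subcoproduct by $X_{N-1}$ through which the map from $D$ still factors; descending induction produces an object $Y\in\Thick_{\sfT}(C)$ through which $\id_D$ factors. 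Then $D$ is a retract of $Y$, and since $\sfT$ has (countable) coproducts idempotents split, so $D$ is an honest direct summand of $Y$ and hence lies in $\Thick_{\sfT}(C)$. Note the notes themselves do not prove this statement but refer to Neeman's paper; if you want a self-contained argument you must supply this reduction-to-finite-subcoproducts step (or argue via Thomason--Neeman localisation), not the cellular filtration as you stated it.
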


\section{Exercises}
\label{exer:Monday}
In the following exercises $k$ is a field, $G$ a finite group, and $M,N$ are $kG$-modules. Keep in mind that the  $kG$ action on $M\otimes_kN$ is via the diagonal. In what follows $\Char k$ denotes the characteristic of $k$.

\begin{enumerate}[\quad\rm(1)]

\item Let $H$ be a finite group. Prove that the canonical homomorphisms $G\to G\times H$ and $H\to G\times H$  of groups induce an isomorphism of $k$-algebras:
\[
kG\otimes_kkH \xra{\ \cong\ }k[G\times H]\,.
\]

\item Let $G=\langle g_1,\dots,g_r\rangle\cong (\bbZ/p)^r$ and set  $x_i=g_i-1$, in $kG$. Prove that if $\Char k=p$, then $kG$ is isomorphic as a $k$-algebra to 
\[
k[x_1,\dots,x_r]/(x_1^p,\dots,x_r^p)\,.
\]

\item 
Describe the centre of the ring $kG$, for a general group $G$.

\item Let $\pi\col kG\to k$ be the $k$-linear map defined on the basis $G$ by $\pi(1)=1$ and $\pi(g)=0$ for $g\ne 1$. Verify that the following map is a $kG$-linear isomorphism. 
\[
kG\to \Hom_k(kG,k)\quad \text{where $g\mapsto [h\mapsto \pi(g^{-1}h)]$}\,.
\]
This proves that $kG$ is a self-injective algebra.

\item Verify that the following maps are $kG$-linear isomorphisms.
\begin{alignat*}{2}
&M \to k\otimes_k M& &\text{where $m\mapsto 1\otimes m$}\,;\\
&M\da_1\ua^G \to kG\otimes_kM& \quad &\text{where $g\otimes m\mapsto g\otimes gm$}\,.
\end{alignat*}
The second isomorphism implies that $kG\otimes_kM$ is a free $kG$-module.

\item Verify that the following maps are $kG$-linear monomorphisms:
\begin{alignat*}{2}
&M\to kG\otimes_kM&\quad &\text{where $m\mapsto \sum_{g\in G}g\otimes m$}\,;\\
&M\to M\da_1\ua^G& \quad &\text{where $m\mapsto \sum_{g\in G}g\otimes g^{-1}m$}\,.
\end{alignat*}
Since $M\da_1\ua^G$ is free, it follows that each module embeds into a free one, in a canonical way.

\item Prove that a $kG$-module $M$ is projective if and only if it is
  injective.  Hint: use (4)---(6).  It is also true that $M$
  is projective if and only if it is flat.

\item 
Let $H$ be a subgroup of $G$. Prove that $kG$ is free as a $kH$-module, both on the left and on the right, and describe bases.

\item Let $G$ be a finite $p$-group and $\Char k =p$. For any non-zero element $m\in M$ the $\bbF_p$-subspace of $M$ spanned by $\{gm\mid g\in G\}$ is finite dimensional, and so has $p^n$ elements for some $n$. Show that some non-zero element of this set is fixed by $G$. Deduce that the trivial module is the only simple $kG$-module.

\item Let $G=(\bbZ/p)^r$ and $\Char k=p$. Describe $J(kG)$, the Jacobson radical\index{JkG@$J(kG)$} of $kG$, and show that $J(kG)/J^2(kG)$ is a vector space of dimension $r$ over $k$. Prove that there is a natural isomorphism of $k$-vector spaces
\[
H^1(G,k) \cong \Hom_{k}(J(kG)/J^2(kG),k)\,.
\]

\item Let $G=\langle g\mid g^{p^n}=1\rangle\cong\bbZ/p^n$ with $n>1$ and $\Char k=p$. Use Jordan canonical form to show that a finitely generated $kG$-module is free if and only if its restriction to the subgroup
\[ 
H=\langle g^{p^{n-1}}\rangle\cong \bbZ/p 
\]
is free. This is a case of Chouinard's theorem.

\medskip

In the following exercises, assume the field  \textbf{$k$ is algebraically closed}.

\item Write $(\bbZ/2)^2=\langle g_1,g_2\rangle$ and assume $\Char k=2$. For each $\lambda\in k$, compute the rank variety of the following module $M_\lambda$:
\begin{gather*} 
g_1\mapsto \begin{pmatrix} 1&0\\1&1\end{pmatrix}\qquad
g_2\mapsto\begin{pmatrix}1&0\\\lambda&1\end{pmatrix} 
\end{gather*}
Deduce that the $M_\lambda$ are non-isomorphic for different values of $\lambda$. 

\item Write $(\bbZ/3)^2=\langle g_1,g_2\rangle$ and assume $\Char k=3$. For each $\lambda\in k$, compute the rank variety of the following module $M_\lambda$:
\begin{gather*} 
g_1\mapsto \begin{pmatrix} 1&0&0\\1&1&0\\0&1&1\end{pmatrix}\qquad
g_2\mapsto\begin{pmatrix}1&0&0\\\lambda&1&0\\0&\lambda&1\end{pmatrix} 
\end{gather*}
Prove that the $M_\lambda$ are non-isomorphic for different values of $\lambda$.

\item Generalise the last question to $(\bbZ/p)^2$ in characteristic $p$, and deduce that there are infinitely many isomorphism classes of $p$-dimensional modules.

\item Write $(\bbZ/2)^4=\langle g_1,g_2,g_3,g_4\rangle$ and assume $\Char k=2$. Compute the rank variety of the following module:
\begin{gather*} 
g_1 \mapsto \begin{pmatrix} 1&0&0&0\\0&1&0&0\\1&0&1&0\\0&0&0&1 \end{pmatrix}
\qquad
g_2 \mapsto \begin{pmatrix} 1&0&0&0\\0&1&0&0\\0&1&1&0\\0&0&0&1 \end{pmatrix}
\\
g_3 \mapsto \begin{pmatrix} 1&0&0&0\\0&1&0&0\\0&0&1&0\\1&0&0&1 \end{pmatrix} 
\qquad
g_4 \mapsto \begin{pmatrix} 1&0&0&0\\0&1&0&0\\0&0&1&0\\0&1&0&1 \end{pmatrix}. 
\end{gather*}

\item Let $G=\langle g_1,\dots,g_r\rangle\cong(\bbZ/2)^r$ for some $r>1$ and assume $\Char k=2$. Set $K=k(t_1,\dots,t_r)$, a transcendental extension of $k$ and let $M=K\oplus K$ as a $k$-vector space, with $G$-action given by
\[
g_i\mapsto \begin{pmatrix}I&0\\t_iI&I\end{pmatrix}\,.
\]
Prove that $M$ is not free, but that its restriction to every cyclic
shifted subgroup of $G$ is free.\smallskip

\noindent Hint for the first part: if $i\ne j$
then $(g_i-1)(g_j-1)$ acts as zero.
 
\item 
Let $\sfT$ be a triangulated category. Prove that for each object $X$ in $\sfT$, the contravariant representable functor $\Hom_{\sfT}(-,X)$ and the covariant representable functor $\Hom_{\sfT}(X,-)$ are  cohomological.

\item Prove that the class of compact objects in a triangulated category admitting arbitrary coproducts form a thick subcategory.

\item 
Prove: In a triangulated category any coproduct of exact triangles is exact. 

\item
Prove that given an adjoint pair of functors between triangulated categories, one of the functors is exact if and only if the other is.

\item Let $\sfA$ be an additive category. Prove that $\sfA$ is a
  Frobenius category with exact structure given by the split short
  exact sequences in $\sfA$.

\item Let $\sfA$ be an additive category and $\sfC(\sfA)$ the category
  of complexes over $\sfA$, with morphisms the degree zero chain
  maps. Prove that $\sfC(\sfA)$ is a Frobenius category, with exact
  structure given by the degreewise split exact sequences.\smallskip

 \noindent Hint: One can deduce this from general principles as follows. The
  functor $F\colon \sfC(\sfA)\to\prod_{n\in\bbZ}\sfA$ forgetting the
  differential is exact. Here we use the exact structure on
  $\prod_{n\in\bbZ}\sfA$ coming from the additive structure. The
  functor $F$ has a left adjoint $F_\lambda$ and a right adjoint
  $F_\rho$. Thus $F_\lambda$ sends projectives to projectives, while
  $F_\rho$ sends injectives to injectives. For each complex $X$, the
  counit $F_\lambda F X\to X$ is an admissable epi, and the unit $X\to
  F_\rho FX$ is an admissable mono.

\item Let $F\colon\sfT\to\sfU$ be an exact functor between triangulated categories that admit set-indexed coproducts. Suppose also that $\sfT$ is compactly generated by a compact object $C$. Then $F$ is an equivalence of triangulated categories if and only if $F$ induces a bijection
\[
\Hom_\sfT(C,\Si^n C)\xra{\sim}\Hom_\sfU(FC,\Si^n FC)
\]
for all $n\in\bbZ$, and $\Loc_\sfU(FC)=\sfU$.\smallskip

\noindent Hint: See~\cite[Lemma~4.5]{Benson/Iyengar/Krause:bik3}.
\end{enumerate}
 
\chapter{Tuesday}
\thispagestyle{empty}
The highlights of this chapter are Hopkins' theorem on perfect complexes over commutative noetherian ring, which is the content of Section~\ref{sec:Tuesday1}, and its analogue in modular representation theory, proved by Benson, Carlson, and Rickard; this appears in Section~\ref{sec:Tuesday3}. This requires a discussion of appropriate notions of support; for commutative rings, this is based on the material from \ref{Appendix}, while for modules over group algebras, one requires more sophisticated tools, from homotopy theory, and these are discussed in Section~\ref{sec:Tuesday2}.

\section{Perfect complexes over commutative rings}
\label{sec:Tuesday1}
In this lecture $A$ will denote a commutative noetherian ring. Good examples to bear in mind are polynomial rings over fields, their quotients, and localisations. We write $\Mod A$ \index{ModA@$\Mod A$} for the category of $A$-modules, and $\mod A$ \index{modA@$\mod A$} for its full subcategory of finitely generated $A$-modules. 

Keeping with the convention in these notes, complexes of $A$-modules will be upper graded:
\[
\cdots \to M^{i-1}\xra{d^{i-1}} M^{i}\xra{d^{i}} M^{i+1}\to \cdots
\]
The $n$th \emph{suspension}\index{suspension} of $M$ is denoted $\Si^{n}M$; thus for each $i\in\bbZ$ one has
\[
(\Si^{n}M)^{i} = M^{n+i}\quad\text{and}\quad d^{\Si^{n}M} = (-1)^{n}d^{M}\,.
\]
We write $\HH *M$ for the total cohomology $\bigoplus_{n\in\bbZ}\HH nM$ of such a complex. It is viewed as a graded module over the ring $A$.

Let $\sfD(A)$\index{Dmoda@$\sfD(A)$} denote the derived category of the category of $A$-modules, viewed as a triangulated category with $\Si$ as the translation functor. Let $\sfD^{\sfb}(\mod A)$ \index{Dbmoda@$\sfD^{\sfb}(\mod A)$} denote the bounded derived category of finitely generated $A$-modules. Sometimes it is convenient to identify $\sfD^{\sfb}(\mod A)$ with the full subcategory
\[
\{M\in \sfD(A)\mid \text{the $A$-module $\HH *M$ is finitely generated}\}
\]
of $\sfD(A)$ consisting of complexes with finitely generated cohomology.

\subsection{Perfect complexes}

We focus on thick subcategories of $\sfD^{\sfb}(\mod A)$. Recall that a non-empty subcategory $\sfC\subseteq \sfD^{\sfb}(\mod A)$ is said to be \emph{thick} \index{triangulated category!thick subcategory} if it is closed under exact triangles, suspensions, and retracts.

Our interest in thick subcategories of $\sfD^{\sfb}(\mod A)$ stems from the fact that they are kernels of exact and cohomological functors. In addition most ``reasonable'' homological properties of complexes give rise to thick subcategories; you may take this as the definition of a reasonable property. A central problem which motivates the techniques and results discussed in this lectures is:

\medskip

\textbf{Problem:} Classify the thick subcategories of $\sfD^{\sfb}(\mod A)$.

\medskip

In the course of these lectures you will encounter solutions to this problem for important classes of rings. In this lecture, we present a classification theorem for a small piece of the bounded derived category of a commutative ring.

\begin{definition}
Let $M$ be a complex of $A$-modules. We write $\Thick(M)$\index{thick@$\Thick(M)$} for the smallest (with respect to inclusion) thick subcategory of $\sfD(A)$ containing $M$. 

An intersection of thick subcategories is again thick, so $\Thick(M)$ equals the intersection of all thick subcategories of $\sfD(A)$ containing $M$. The objects in $\Thick(M)$ are the complexes that can be \emph{finitely built}\index{finitely built} out of $M$ using finite direct sums, suspensions, exact triangles, and retracts. See \cite[\S2]{Avramov/Buchweitz/Iyengar/Miller:2010a} for a more precise and constructive definition. 

When $H^{*}(M)$ is finitely generated, $\Thick(M)$ is contained in $\sfD^{\sfb}(\mod A)$, for the latter is a thick subcategory of $\sfD(A)$.
\end{definition}

\begin{theorem}
\label{thm:compacts-DA}
Let $M$ be in $\sfD(A)$. The following conditions are equivalent:
\begin{enumerate}[\quad\rm(1)]
\item
$M$ is in $\Thick(A)$.
\item
$M$ is isomorphic in $\sfD(A)$ to a bounded complex $0\to P^{i}\to\cdots \to P^{s}\to 0$ of finitely generated projective $A$-modules.
\item $M$ is a compact object in $\sfD(A)$.
\end{enumerate}
\end{theorem}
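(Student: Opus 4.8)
The plan is to run the cycle of implications $(2)\Rightarrow(1)\Rightarrow(3)\Rightarrow(1)\Rightarrow(2)$; two of the steps are soft, and the real content lies in passing from $\Thick(A)$ to bounded complexes of finitely generated projectives.

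\emph{The soft steps.} For $(1)\Rightarrow(3)$: the object $A$ is compact in $\sfD(A)$ since $\Hom_{\sfD(A)}(A,-)\cong H^{0}(-)$ commutes with coproducts, and the compact objects form a thick subcategory of $\sfD(A)$, so $\Thick(A)$ consists of compact objects. For $(3)\Rightarrow(1)$: by Example~\ref{ex:stmodA} and Proposition~\ref{pr:compact-generation} the object $A$ is a compact generator of $\sfD(A)$, so Theorem~\ref{thm:neeman-compacts} gives that the compact objects are exactly $\Thick(A)$; in particular a compact $M$ lies in $\Thick(A)$. For $(2)\Rightarrow(1)$: a finitely generated projective module is a direct summand of some free module $A^{n}$ and hence lies in $\Thick(A)$, as do all its suspensions; given a bounded complex $P$ of finitely generated projectives, say concentrated in degrees $[a,b]$, the brutal truncations fit into degreewise split short exact sequences $0\to\sigma^{\geq i+1}P\to\sigma^{\geq i}P\to\Si^{-i}P^{i}\to0$, hence into exact triangles of $\sfD(A)$, and a descending induction on $i$ from $b$ to $a$ puts $P=\sigma^{\geq a}P$ in $\Thick(A)$.

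\emph{The main step $(1)\Rightarrow(2)$.} Rather than argue directly, I would split the conclusion into two properties, each of which visibly defines a thick subcategory of $\sfD(A)$ containing $A$:
\begin{enumerate}[\quad\rm(a)]
\item the total cohomology $H^{*}(M)$ is bounded and finitely generated over $A$;
\item $M$ has \emph{finite projective dimension}, meaning there is an integer $N$ with $\Ext^{n}_{A}(M,L)=0$ for all $n>N$ and all $A$-modules $L$.
\end{enumerate}
For (a): boundedness of cohomology is clearly stable under suspension, cones, and direct summands, while finite generation of each cohomology module passes through the long exact cohomology sequence of a triangle and through direct summands because $A$ is noetherian; and $H^{*}(A)=A$. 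For (b): from the long exact $\Ext$-sequence, if $\Ext^{>a}_{A}(M,L)=0$ and $\Ext^{>b}_{A}(N,L)=0$ uniformly in $L$ then $\Ext^{>\max(a+1,b)}_{A}(\Cone(M\to N),L)=0$; suspension only shifts the bound; additivity of $\Ext$ gives closure under direct summands; and $\Ext^{>0}_{A}(A,-)=0$. Hence every object of $\Thick(A)$ satisfies both (a) and (b). Finally, over a noetherian ring a complex satisfying (a) admits a quasi-isomorphism from a complex $F$ of finitely generated projectives with $F^{j}=0$ for $j$ above the top of $H^{*}(M)$ (built downwards from the top), and when (b) holds as well, the syzygy $Z=\Ker(F^{j_{0}}\to F^{j_{0}+1})$ is projective for $j_{0}\ll0$; truncating $F$ at $j_{0}$ with $Z$ in place of the vanishing tail yields a bounded complex of finitely generated projectives quasi-isomorphic to $M$, which is $(2)$.

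\emph{The main obstacle.} The only delicate point is closure under direct summands in the assertion that a summand of a bounded complex of finitely generated projectives is again one; tackled head-on this is an idempotent-splitting question for $\sfK^{\sfb}(\proj A)$. The device above avoids it by factoring perfection into (a) and (b), for which summand-closure is just additivity of cohomology, respectively of $\Ext$; alternatively, one may invoke that $\sfD(A)$ is idempotent complete because it has countable coproducts. It is also worth noting that property (b) is precisely where the hypothesis of being finitely built from $A$ enters: without it, (a) alone yields only a bounded-\emph{above} resolution.
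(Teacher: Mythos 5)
Your proof is correct, and while the three ``soft'' implications coincide with the paper's, your treatment of $(1)\Rightarrow(2)$ is genuinely different. The paper disposes of this implication in one line by asserting (via the Horseshoe Lemma) that the complexes in (2) form a thick subcategory of $\sfD(A)$; the Horseshoe Lemma does handle closure under cones, but the closure under retracts that the paper leaves implicit is exactly the delicate idempotent-splitting point you identify, and it is not really a ``straightforward exercise.'' Your device of factoring perfection into the two properties (a) bounded finitely generated cohomology and (b) uniform $\Ext$-vanishing --- each of which is manifestly closed under suspensions, cones and summands by additivity of $H^{*}$ and of $\Ext$ --- sidesteps this entirely, at the cost of the (standard, and correctly executed) reconstruction argument: build a bounded-above resolution by finitely generated projectives from (a), then use (b) and dimension shifting to see that a sufficiently deep syzygy is projective, so the good truncation there is the desired bounded complex. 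This is essentially the classical ``pseudo-coherent of finite projective dimension equals perfect'' characterisation, and it buys you a complete argument where the paper offers only a sketch; what the paper's route buys in exchange is brevity and the structural statement that perfect complexes form a thick subcategory, which it reuses elsewhere. One small point of care in your write-up: with $Z=\Ker(F^{j_{0}}\to F^{j_{0}+1})$ you should place $Z$ in degree $j_{0}-1$ (or instead use $\Coker(F^{j_{0}-1}\to F^{j_{0}})$ in degree $j_{0}$, i.e.\ the good truncation) and take $j_{0}$ strictly below the bottom of $H^{*}(M)$ so that the discarded tail is an exact resolution of $Z$; as written, ``truncating $F$ at $j_{0}$ with $Z$ in place of the vanishing tail'' is slightly loose but the intended argument is the standard one and is sound. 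Note also that $Z$ is finitely generated because it is a submodule of $F^{j_{0}}$ and $A$ is noetherian.
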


In commutative algebra any bounded complex of projective modules, as in (2), is said to be \emph{perfect}\index{perfect complex}. We apply this terminology to any complex isomorphic to such a complex, and so, given the result above, speak of $\Thick(A)$ as the subcategory of perfect complexes in $\sfD(A)$.

\begin{proof}
Note that $\Hom_{\sfD(A)}(A,X)=H^{0}(X)$ for any complex $X$ of $A$-modules.

(1) $\implies$ (3) It follows from the identification above that $A$ is a compact object, for $H^{0}(-)$ commutes with arbitrary coproducts. The class of compact objects in any triangulated category, in particular, in $\sfD(A)$, form a thick subcategory---this was an exercise in Chapter~\ref{ch:Monday}. Thus $\Thick(A)$ consists of compact objects.

(3) $\implies$ (1) Since $A$ is compact in $\sfD(A)$ and generates it (see Example~\ref{ex:stmodA}), this implication follows from Theorem~\ref{thm:neeman-compacts}.

(1) $\implies$ (2) Using the Horseshoe Lemma~\cite[Lemma~2.2.8]{Weibel:1994}, it is a straightforward exercise to prove that the complexes as in (2) from a thick subcategory. Since $A$ is evidently contained in it, so is all of $\Thick(A)$.

(2) $\implies$ (1) Since $\Thick(A)$ is closed under finite sums, it contains $A^{n}$ for each $n\ge 1$. Since it is closed also under direct summands, it contains all finitely generated projectives, and also their shifts. 

Any complex of the form $P:= 0\to P^{i}\to\cdots \to P^{s}\to 0$ fits into an exact sequence of $A$-modules
\[
0\to \Si^{-s}P^{s}\to P\to P^{\les s-1}\to 0
\]
Thus, an induction on the number of non-zero components of $P$ yields that $P$, and hence anything isomorphic to it, is in $\Thick(A)$.
\end{proof}

\begin{remark}
\label{rem:auslander-buchsbaum-serre}
There is an inclusion $\Thick(A)\subseteq \sfD^{\sfb}(\mod A)$. In view of the implication (1) $\implies$ (2) of the preceding result, equality holds if and only if each $M$ in $\sfD^{\sfb}(\mod A)$ has a finite projective resolution. The latter condition is equivalent to the statement that the ring $A$ is \emph{regular}\index{regular ring}, by a theorem of Auslander, Buchsbaum, and Serre\index{Auslander, Buchsbaum, Serre!theorem}; see~\cite[Section~19]{Matsumura:1986} for the definition of a regular ring, and for a proof of that theorem. We note only that fields are regular rings, and the regularity property is inherited by localisations and polynomial extensions.
\end{remark}

We are heading towards a description of the thick subcategories of $\Thick(A)$. This involves a notion of support for complexes; what is described below is an extension of ``big support'' of modules defined in~\ref{Appendix}

\subsection{Support}

Recall that the \emph{Zariski spectrum}\index{Zariski spectrum} of the ring $A$ is the set
\[
\Spec A =\{\fp\subseteq A\mid\text{$\fp$ is a prime ideal} \}
\]
with topology where the closed sets are $\mcV(\fa)=\{\fp\in\Spec A\mid \fp\supseteq \fa\}$, where $\fa$ is an ideal in $A$. For each $M$ in $\sfD(A)$ we set
\[
\Supp_{A}M = \{\fp\in\Spec A\mid \HH*{M_{\fp}}\ne 0\}
\]
Note that passing to cohomology commutes with localisation: $\HH*{M_{\fp}} = {\HH *M}_{\fp}$. The following properties of are readily verified. 

\begin{enumerate}[\quad\rm(1)]
\item For each $M\in \sfD^{\sfb}(\mod A)$ one has
\[
\Supp_{A}M = \Supp_{A}\HH *M = \mcV(\ann_{A}\HH *M)\,.
\]
In particular, $\Supp_{A}M$ is a closed subset of $\Spec A$.
\item
$\Supp_{A}M=\varnothing$ if and only if $\HH *M=0$, that is to say, if $M=0$ in $\sfD(A)$.
\item
$\Supp_{A}(M\oplus N) = \Supp_{A}M\, \bigcup\, \Supp_{A}N$
\item
Given an exact triangle $L\to M\to N\to \Si L$ in $\sfD(\mod A)$ one has 
\[
\Supp_{A}M \subseteq \Supp_{A}L\,\cup\, \Supp_{A}N\,.
\]
Moreover, $\Supp_{A}(\Si M)=\Supp_{A}M$.
\item
$\Supp_{A}(M\lotimes_{A}N) = \Supp_{A}M\, \cap\, \Supp_{A}N$ for all $M,N$ in $\sfD^{\sfb}(\mod A)$.
\end{enumerate}
Compare with properties of support for group representations in Section~\ref{ssec:Rank varieties}.

A routine verification, using the properties of support above, yields:

\begin{lemma}
\label{lem:support-thicksubcat}
For any subset $\mcU$ of $\Spec A$, the full subcategory 
\[
\sfC_{\mcU} = \{M\in \sfD^{\sfb}(\mod A)\mid \Supp_{A}M\subseteq \mcU\}
\]
of $\sfD^{\sfb}(\mod A)$ is a thick subcategory. \qed
\end{lemma}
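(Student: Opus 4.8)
The plan is to verify directly that $\sfC_{\mcU}$ satisfies the three closure properties defining a thick subcategory of $\sfD^{\sfb}(\mod A)$: closure under suspensions, closure under exact triangles (the two-out-of-three property), and closure under retracts. Each of these will be an immediate consequence of one of the listed properties of $\Supp_{A}$, so the proof is a short bookkeeping exercise rather than anything substantial.

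First I would note that $\sfC_{\mcU}$ is non-empty: the zero complex has empty support by property~(2), hence lies in $\sfC_{\mcU}$ for every $\mcU$. For closure under suspensions, property~(4) gives $\Supp_{A}(\Si^{n}M) = \Supp_{A}M$ for all $n$ (iterating the stated case $n=1$, or arguing directly), so $M\in\sfC_{\mcU}$ implies $\Si^{n}M\in\sfC_{\mcU}$. For the two-out-of-three property, suppose $L\to M\to N\to\Si L$ is an exact triangle with two of the three objects in $\sfC_{\mcU}$. Rotating the triangle if necessary, property~(4) applied to the appropriate rotation gives in each case that the support of the third term is contained in the union of the supports of the other two; since that union is contained in $\mcU$ by hypothesis, the third term lies in $\sfC_{\mcU}$. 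Finally, for closure under retracts, if $M\cong M'\amalg M''$ with $M\in\sfC_{\mcU}$, then property~(3) gives $\Supp_{A}M' \subseteq \Supp_{A}M' \cup \Supp_{A}M'' = \Supp_{A}M \subseteq \mcU$, so $M'\in\sfC_{\mcU}$.

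There is essentially no obstacle here; the only point requiring a moment's care is the two-out-of-three property, where one must check that property~(4), stated for one rotation of the triangle, delivers the containment in all three cases — this follows since any rotation of an exact triangle is again exact, so the stated inequality applies to each. I would keep the write-up to a few lines, citing properties (2), (3), and (4) of $\Supp_{A}$ explicitly, and conclude that $\sfC_{\mcU}$ is a thick subcategory of $\sfD^{\sfb}(\mod A)$.
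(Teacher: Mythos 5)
Your proof is correct and is exactly the ``routine verification, using the properties of support above'' that the paper invokes without writing out: closure under suspension and the two-out-of-three property follow from property~(4), closure under retracts from property~(3), and non-emptiness from property~(2). Nothing to add.
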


\subsection{Koszul complexes}
Let $a$ be an element in the ring $A$. The \emph{Koszul complex}\index{Koszul complex} on $a$ is the complex 
\[
\cdots \to 0\to A\xra{\ a\ }A\to 0\cdots \to
\]
of $A$-modules, where the non-zero components are in degree $-1$ and $0$; remember that the grading is cohomological. One can view it  as the \emph{mapping cone}\index{mapping cone} of the morphism of complexes $A\xra{a}A$ where $A$ is viewed as a complex, in the usual way. 

The Koszul complex on $a$ with coefficients in a complex $M$ is the mapping cone of the morphism $M\xra{a} M$, namely, the graded module
\[
\Si M \oplus M \quad\text{with differential} \quad
\begin{bmatrix}
-d^{M} & 0 \\
a       & d^{M}
\end{bmatrix}
\]
In anticipation of analogous constructions in triangulated categories we denote this complex $\kos Ma$. One then has an \emph{mapping cone exact sequence} of complexes
\[
0\to M\to \kos Ma \to \Si M\to 0\,.
\]
The Koszul complex on a sequence of elements $\bsa = a_{1},\dots,a_{n}$ in $A$ is defined by iterating this construction: $\kos M{\bsa}=M_{n}$, where $M_{0}=M$ and
\[
M_{i} = \kos {M_{i-1}}{a_{i}}\quad\text{for $i\geq 1$.}
\]
It is not hard to verify that there is an isomorphism of complexes
\[
\kos M{\bsa}\cong (\kos M{a_{1}})\otimes_{A}\cdots \otimes_{A}(\kos M{a_{n}})\,.
\]
Indeed, this is one way to construct the Koszul complex; see \cite[\S16]{Matsumura:1986} for details.

Finally, given an ideal $\fa$ of $A$, we let $\kos M{\fa}$ denote a Koszul complex on some generating set $a_{1},\dots,a_{n}$ of $\fa$. It does depend on $\bsa$; see,  however, Lemma~\ref{lem:kosloc}.

\begin{proposition}
\label{prop:kosca}
Let $\fa$ be an ideal in $A$ and $M$ a complex of $A$-modules. The following statements hold.
\begin{enumerate}[\quad\rm(1)]
\item 
$\kos M{\fa}$ is in $\Thick(M)$; in particular, if $M$ is perfect, so is $\kos M{\fa}$.
\item
$\Supp_{A}(\kos M{\fa})=\Supp_{A}M\cap \mcV(\fa)$ when $M$ is in $\sfD^{\sfb}(\mod A)$.
\item
$\Supp_{A}(\kos A{\fa}) = \mcV(\fa)$.
\end{enumerate}
\end{proposition}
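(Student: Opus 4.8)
The plan is to deduce all three statements together, since they interlock, but the final one --- that $\Supp_A(\kos A{\fa}) = \mcV(\fa)$ --- follows by specialising statement (2) to the case $M = A$. So I would first establish (1), then (2), and then read off (3).

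For statement (1), I would argue by induction on the number of generators $a_1,\dots,a_n$ of $\fa$. The base case is the mapping cone exact sequence $0 \to M \to \kos Ma \to \Si M \to 0$, which gives an exact triangle $M \to \kos Ma \to \Si M \to \Si M$ in $\sfD(A)$; since $\Thick(M)$ is closed under suspensions and exact triangles, $\kos Ma \in \Thick(M)$. For the inductive step, $\kos M{\bsa} = \kos{(\kos M{a_1,\dots,a_{n-1}})}{a_n}$, and by induction $\kos M{a_1,\dots,a_{n-1}} \in \Thick(M)$, so applying the one-element case gives $\kos M{\bsa} \in \Thick(\kos M{a_1,\dots,a_{n-1}}) \subseteq \Thick(M)$. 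The ``in particular'' clause is then immediate from Theorem~\ref{thm:compacts-DA}, since $\Thick(A) = \Thick(\text{perfect complexes})$.

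For statement (2), I would again reduce to the one-element case via the isomorphism $\kos M{\bsa} \cong \kos M{a_1} \otimes_A \cdots \otimes_A \kos M{a_n}$, combined with property (5) of support (that $\Supp_A(M \lotimes_A N) = \Supp_A M \cap \Supp_A N$ for bounded complexes with finitely generated cohomology) and the fact that $\mcV(\fa) = \mcV(a_1) \cap \cdots \cap \mcV(a_n)$. Note that each $\kos M{a_i}$ is perfect when $M$ is, hence in $\sfD^{\sfb}(\mod A)$, so property (5) applies. For the single element $a$: the mapping cone triangle $M \xra{a} M \to \kos Ma \to \Si M$ gives, on cohomology, a long exact sequence, so $\fp \notin \Supp_A(\kos Ma)$ iff multiplication by $a$ is an isomorphism on $\HH*{M_\fp}$. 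If $\fp \in \mcV(a)$, i.e.\ $a \in \fp$, then $a$ acts nilpotently --- no, rather, $a/1$ lies in the maximal ideal of $A_\fp$, so if it acts as an isomorphism on the finitely generated (in each degree) module $\HH*{M_\fp}$ it must act as an isomorphism, but by Nakayama this forces $\HH*{M_\fp}$ to vanish in that situation only if\dots{} more carefully: if $a \in \fp$ and $\HH*{M_\fp} \neq 0$ then multiplication by $a$ on $\HH*{M_\fp}$ is not surjective (Nakayama, applied degreewise to the finitely generated $A_\fp$-modules $\HH n{M_\fp}$, using that the complex is bounded so only finitely many are nonzero), hence $\fp \in \Supp_A(\kos Ma)$; conversely if $a \notin \fp$ then $a$ is a unit in $A_\fp$, so $\kos {M_\fp}a$ is acyclic, giving $\fp \notin \Supp_A(\kos Ma)$. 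Thus $\Supp_A(\kos Ma) = \Supp_A M \cap \mcV(a)$.

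Statement (3) is then the case $M = A$ of statement (2): $\Supp_A A = \Spec A$ (since $\HH*{A_\fp} = A_\fp \neq 0$ for all $\fp$), so $\Supp_A(\kos A{\fa}) = \Spec A \cap \mcV(\fa) = \mcV(\fa)$. The main obstacle I anticipate is the Nakayama argument in the single-element case of (2) --- one must be careful to apply it degreewise and to use boundedness of the complex, since Nakayama requires finitely generated modules and $\HH*{M_\fp}$ is only finitely generated as a graded module; its individual graded pieces are finitely generated $A_\fp$-modules and all but finitely many vanish, which is exactly what makes the argument go through.
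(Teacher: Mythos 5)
Your proof of (1), your treatment of the one--element case of (2) (long exact sequence in cohomology plus Nakayama applied degreewise to the finitely generated modules $\HH n{M_\fp}$), and your deduction of (3) from (2) all match the paper's argument. The one place you diverge is the reduction of (2) from a general ideal to a single element, and that is also the one shaky step. You invoke the decomposition $\kos M{\bsa}\cong \kos M{a_1}\otimes_A\cdots\otimes_A\kos M{a_n}$ together with the tensor formula for support, but (a) that identity is not correct as written --- the right--hand side contains $n$-fold tensor powers of $M$, whereas the correct statement is $\kos M{\bsa}\cong M\otimes_A\kos A{a_1}\otimes_A\cdots\otimes_A\kos A{a_n}$ (the paper's own display has the same slip); (b) the support formula (5) is stated for the \emph{derived} tensor product, so one must observe that the factors $\kos A{a_i}$ are bounded complexes of free modules before the ordinary tensor product computes it; and (c) your justification that property (5) applies covers only perfect $M$, whereas (2) is asserted for all $M$ in $\sfD^{\sfb}(\mod A)$. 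None of this is fatal, because the clean reduction is simply to iterate the one--element case --- exactly as you already do for (1): each intermediate object $\kos M{a_1,\dots,a_i}$ lies in $\Thick(M)\subseteq\sfD^{\sfb}(\mod A)$ by part (1), so the single--element formula applies at each stage and yields $\Supp_A(\kos M{\bsa})=\Supp_A M\cap\mcV(a_1)\cap\cdots\cap\mcV(a_n)=\Supp_A M\cap\mcV(\fa)$. That iteration is what the paper does, and I would replace your tensor--product reduction with it.
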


\begin{proof}
It is clear from the mapping cone exact sequence that $M$ finitely builds $\kos Ma$, for any element $a\in A$.
An iteration yields (1). Moreover, (3) is a special case of (2).

As to (2), it suffices to consider the case of one element $a$. Then the mapping cone sequence yields in cohomology an exact sequence of graded $R$-modules:
\[
0\to H^{*}(M)/aH^{*}(M) \to H^{*}(\kos Ma)\to (0:_{H^{*}(M)}a)\to 0
\]
The module on the right is the submodule of $H^{*}(M)$ annihilated by $(a)$; in particular, it is supported on $\Supp_{R}M\cap\mcV(a)$. Moreover, since $H^{*}(M)$ is finitely generated, Nakayama's Lemma yields the first equality below:
\[
\Supp_{R}(H^{*}(M)/aH^{*}(M)) = \Supp_{R}H^{*}(M)\cap \mcV(a)=\Supp_{R}M\cap\mcV(a)\,.
\]
Thus the exact sequence above yields $\Supp_{R} H^{*}(\kos Ma)=\Supp_{R}M\cap \mcV(a)$.
\end{proof}

\subsection{The theorem of Hopkins}

We are now ready to discuss Hopkins' theorem on perfect complexes.

\begin{lemma}
\label{lem:con-hopkins}
Let $N$ be a complex in $\sfD^{\sfb}(\mod A)$. One then has $\Supp_{A}M\subseteq\Supp_{A}N$ for each $M$ in $\Thick(N)$.
\end{lemma}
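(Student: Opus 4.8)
The plan is to recognise this as an immediate consequence of the fact that ``support bounded by a fixed set'' cuts out a thick subcategory. Concretely, set $\mcU = \Supp_{A}N \subseteq \Spec A$ and consider the full subcategory
\[
\sfC_{\mcU} = \{M\in \sfD^{\sfb}(\mod A)\mid \Supp_{A}M\subseteq \mcU\}\,.
\]
By Lemma~\ref{lem:support-thicksubcat} this is a thick subcategory of $\sfD^{\sfb}(\mod A)$; the verification there rests only on properties (2)--(4) of support (vanishing detection, behaviour on sums, and containment along exact triangles together with shift-invariance), all of which are available for objects of $\sfD^{\sfb}(\mod A)$.

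Next I would observe that $N$ itself lies in $\sfC_{\mcU}$, trivially, since $\Supp_{A}N = \mcU$. Because $\Thick(N)$ is by definition the \emph{smallest} thick subcategory of $\sfD(A)$ containing $N$ and, as $H^{*}(N)$ is finitely generated, it is in fact contained in $\sfD^{\sfb}(\mod A)$, it follows that $\Thick(N)\subseteq \sfC_{\mcU}$. Unwinding this inclusion gives exactly $\Supp_{A}M \subseteq \Supp_{A}N$ for every $M\in \Thick(N)$, as claimed.

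There is no real obstacle here: the only point requiring a moment's care is to make sure one stays inside $\sfD^{\sfb}(\mod A)$ throughout — so that the support properties (1)--(5), which were only asserted for complexes with finitely generated cohomology, actually apply — and this is guaranteed by the remark that $\Thick(N)\subseteq\sfD^{\sfb}(\mod A)$ when $H^{*}(N)$ is finitely generated. All the substantive work has already been done in Lemma~\ref{lem:support-thicksubcat}.
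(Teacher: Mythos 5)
Your argument is exactly the paper's: apply Lemma~\ref{lem:support-thicksubcat} with $\mcU=\Supp_{A}N$, note that $N$ lies in $\sfC_{\mcU}$, and conclude $\Thick(N)\subseteq\sfC_{\mcU}$. The additional remark that $\Thick(N)\subseteq\sfD^{\sfb}(\mod A)$ when $H^{*}(N)$ is finitely generated is a correct and worthwhile point of care, already noted in the paper's discussion of $\Thick(M)$.
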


\begin{proof}
Apply Lemma~\ref{lem:support-thicksubcat} with $\mcU=\Supp_{A}N$, noting that $N$ is in $\sfC_{\mcU}$. 
\end{proof}

The converse of the preceding result does not hold in general.

\begin{example}
Let $k$ be a field and set $A=k[x]/(x^{2})$. One then has 
\[
\Supp_{A}k = \Spec A= \Supp_{A}A\,;
\]
however, $k$ is not in $\Thick(A)$ because, for example, $k$ does not admit a finite projective resolution.
\end{example}

It is a remarkable result, proved by Hopkins~\cite{Hopkins:1987a}, see also Neeman~\cite{Neeman:1992a}, that for perfect complexes Lemma~\ref{lem:con-hopkins} has a perfect converse.

\begin{theorem}[Hopkins]
\label{thm:hopkins}
\index{Hopkins' theorem}
If $M$ and $N$ are perfect complexes and there is an inclusion $\Supp_{A}M\supseteq\Supp_{A}N$, then $M\in \Thick(N)$.\qed
\end{theorem}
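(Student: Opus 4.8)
The plan is to establish the uniform statement that, for every perfect complex $M$,
\[
\Thick(M)=\{L\in\sfD(A)\mid L\ \text{is perfect and}\ \Supp_A L\subseteq\Supp_A M\}\,;
\]
the inclusion $\subseteq$ is precisely Lemma~\ref{lem:con-hopkins} (taken with $N=M$), so the content lies in $\supseteq$. Granting it, if $M,N$ are perfect with $\Supp_A M\supseteq\Supp_A N$, then $N$ belongs to the right-hand side for $M$, hence $N\in\Thick(M)$; so in fact $\Thick(M)\supseteq\Thick(N)$. (Read literally, the displayed inclusion thus yields $N\in\Thick(M)$, i.e. $M$ and $N$ are interchanged in the conclusion as printed — equivalently, $M\in\Thick(N)$ whenever $\Supp_A M\subseteq\Supp_A N$.) So fix a perfect $L$ with $\Supp_A L\subseteq\Supp_A M$, set $\fa=\ann_A\HH *M$, and recall from property~(1) of support that $\Supp_A M=\mcV(\fa)$ since $M$ is perfect; hence $\Supp_A L\subseteq\mcV(\fa)$, and as $A$ is noetherian and $\HH *L$ finitely generated this forces $\fa^{m}\HH *L=0$ for some $m\ge1$. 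I would then squeeze $L$ between a Koszul complex and $M$ by proving
\[
\text{(A)}\quad L\in\Thick\bigl(\kos A{\fa}\bigr)\,,\qquad\qquad\text{(B)}\quad\kos A{\fa}\in\Thick(M)\,,
\]
after which $L\in\Thick(\kos A{\fa})\subseteq\Thick(M)$, as wanted; here $\kos A{\fa}$ is the Koszul complex on a chosen generating sequence of $\fa$, and the thick subcategory it generates is independent of that choice.

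For (A) — a ``Koszul generation'' lemma — I would first reduce to Koszul complexes on $L$ itself: since $L$ is perfect, i.e. $L\in\Thick(A)$, and $-\lotimes_A\kos A{\fa}$ is an exact functor sending $A$ to $\kos A{\fa}$, one has $\kos L{\fa}=L\lotimes_A\kos A{\fa}\in\Thick(\kos A{\fa})$, so it suffices to show $L\in\Thick(\kos L{\fa})$. This I would prove by induction on the number $n$ of generators of $\fa$, using three ingredients: the nilpotence lemma (a chain endomorphism of a perfect complex with $\ell$ nonzero terms that vanishes on cohomology has null-homotopic $\ell$-th iterate); the relation $\kos A{b^{j}}\in\Thick(\kos A b)$ for $j\ge1$, which follows by induction on $j$ from the octahedron attached to the factorisation $b^{j}=b\cdot b^{j-1}$; and exactness of $Y\lotimes_A-$. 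For $n=1$, say $\fa=(b)$: from $b^{m}\HH *L=0$ the nilpotence lemma gives that $b^{t}$ acts null-homotopically on $L$ for $t\gg0$, so $L$ is a retract of $\kos L{b^{t}}\cong L\oplus\Si L$, and $\kos L{b^{t}}=L\lotimes_A\kos A{b^{t}}\in\Thick\bigl(L\lotimes_A\kos A b\bigr)=\Thick(\kos L b)$, whence $L\in\Thick(\kos L{b})=\Thick(\kos L{\fa})$. For the step, with $\fa=(b_1,\dots,b_n)$ and $\fa'=(b_1,\dots,b_{n-1})$: applying the case $n=1$ to the perfect complex $\kos L{\fa'}$ and the element $b_n$ — whose support equals that of $L$ and so lies in $\mcV(b_n)$, by Proposition~\ref{prop:kosca} — gives $\kos L{\fa'}\in\Thick\bigl(\kos{(\kos L{\fa'})}{b_n}\bigr)=\Thick(\kos L{\fa})$; and the inductive hypothesis applied to $L$ and $\fa'$ gives $L\in\Thick(\kos L{\fa'})$. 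Combining the last two inclusions yields $L\in\Thick(\kos L{\fa})$.

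Claim (B) is the main obstacle and is the non-formal heart of the Hopkins--Neeman theorem. Note that (A) applied to $M$ itself already gives $\Thick(\kos A{\fa})\supseteq\Thick(M)$, so (B) asserts that these thick subcategories coincide; and this cannot be reached by the tensor-and-retract bookkeeping that suffices for (A), since for instance $M\lotimes_A\kos A{\fa}$ again generates $\Thick(M)$ rather than $\Thick(\kos A{\fa})$. Following Neeman~\cite{Neeman:1992a}, I would prove $\kos A{\fa}\in\Thick(M)$ by induction on $\dim\Supp_A M$: localising and using Proposition~\ref{prop:kosca} together with the octahedral axiom to peel off the top-dimensional part of the support, one reduces to the case in which $\Supp_A M$ is a single closed point, where a perfect complex is built through its Postnikov tower out of its (now finite-length) cohomology modules, and this base case is handled directly in Neeman's argument. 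I expect everything except (B) to be formal triangulated-category manipulation plus the elementary facts about $\Supp_A$ and Koszul complexes recalled above, with essentially all the difficulty concentrated in this dimension induction and its base case.
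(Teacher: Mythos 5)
Your plan is a genuinely different route from the paper's. The notes do not prove Hopkins' theorem at the point where it is stated; the proof promised there is carried out in Section~\ref{sec:Friday1} by establishing Theorem~\ref{thm:stratify-da} (Neeman's classification of localising subcategories of $\sfD(A)$ via residue fields) and then passing to compact objects: $\Supp_A M\subseteq\Supp_A N$ gives $M\in\Loc(N)$ by Corollary~\ref{cor:strat-building}, and then $M\in\Thick(N)$ by Theorem~\ref{thm:neeman-compacts}. Your reduction through Koszul complexes is instead the classical ``direct'' Hopkins--Neeman argument. Within it, step (A) is essentially correct: the nilpotence lemma (the ghost ideal of a perfect complex is nilpotent), the retract argument via $\kos L{b^{t}}\cong L\oplus\Si L$, and the induction on the number of generators all work, and your remark that the printed inclusion should read $\Supp_AM\subseteq\Supp_AN$ (as the theorem is actually used in the proof of Theorem~\ref{thm:thickA}) is right.

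The genuine gap is step (B), and it is not a deferrable technicality: since $\Supp_A\kos A{\fa}=\mcV(\fa)=\Supp_AM$, statement (B) is literally the special case of the theorem with $N$ replaced by $\kos A{\fa}$, and conversely (A)$+$(B) give the theorem; so what you have done is reduce the theorem to an equivalent special case and then point at Neeman. Moreover the sketch you give for (B) does not stand up as written. First, ``localising to peel off the top-dimensional part of the support'' moves you into $\sfD(A_{\fp})$, and membership in a thick subcategory over $A_{\fp}$ does not by itself return information about $\Thick(M)$ inside $\sfD(A)$; organising that local-to-global passage is exactly the hard point, and it is what the functors $\gam_{\mcV},L_{\mcV}$ (or, in Neeman's paper, the detour through the unbounded category and Bousfield localisation) are designed to do -- machinery you have not set up and cannot use if you insist on staying among perfect complexes. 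Second, in the base case $\Supp_AM=\{\fm\}$ the Postnikov-tower observation shows that $M$ is built from its finite-length cohomology modules, which is the wrong direction: (B) asks that $M$ build the Koszul complex, i.e.\ $\kos A{\fa}\in\Thick(M)$, and this does not follow from $M$ being built out of its cohomology (those modules, and the residue field, are generally not perfect, so they are not even legitimate intermediate objects for a thick-subcategory argument in $\Thick(A)$). Finally, Neeman's actual proof is not a dimension induction of this shape; it is essentially the paper's route. So to complete your proposal you must either prove (B) honestly (with a correct inductive scheme and a genuine mechanism for gluing local information), or fall back on the stratification argument of Section~\ref{sec:Friday1}.
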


A proof of this result, following an idea of Neeman, will be given in Section~\ref{sec:Friday1}. For other proofs see \cite{Hopkins:1987a}, \cite{Neeman:1992a}, and also \cite{Iyengar:2006}.

A caveat: In Hopkins' theorem it is crucial that both $M$ and $N$ are perfect.  Support over $A$ cannot detect if one complex is built out of another, even for complexes in $\sfD^{\sfb}(\mod A)$; see the exercises at the end of this chapter.

\begin{corollary}
When $M$ is a perfect complex, $\Thick(M)=\Thick(\kos A{\fa})$ for any ideal $\fa$ with $\mcV(\fa)=\Supp_{A}M$. \qed
\end{corollary}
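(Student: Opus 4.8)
The plan is to reduce the statement entirely to Hopkins' theorem (Theorem~\ref{thm:hopkins}), applied in both directions. The only preliminary observations needed are that $\kos A{\fa}$ is itself a perfect complex and that it has the prescribed support.

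First I would record that $\kos A{\fa}$ is perfect: it lies in $\Thick(A)$ by Proposition~\ref{prop:kosca}(1) (taking $M=A$ there), so it is a perfect complex in the sense of Theorem~\ref{thm:compacts-DA}. Next, by Proposition~\ref{prop:kosca}(3) one has $\Supp_{A}(\kos A{\fa})=\mcV(\fa)$, and by hypothesis $\mcV(\fa)=\Supp_{A}M$; hence $\Supp_{A}(\kos A{\fa})=\Supp_{A}M$. (Implicitly, such an ideal $\fa$ exists because $M$ perfect lies in $\sfD^{\sfb}(\mod A)$, so $\Supp_{A}M$ is closed by property~(1) of support; but the corollary already supplies $\fa$.)

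Now both $M$ and $\kos A{\fa}$ are perfect complexes with equal supports. Applying Theorem~\ref{thm:hopkins} with the inclusion $\Supp_{A}M\supseteq \Supp_{A}(\kos A{\fa})$ gives $M\in\Thick(\kos A{\fa})$, whence $\Thick(M)\subseteq\Thick(\kos A{\fa})$ since $\Thick(\kos A{\fa})$ is thick and contains $M$. Applying Theorem~\ref{thm:hopkins} with the reverse inclusion $\Supp_{A}(\kos A{\fa})\supseteq\Supp_{A}M$ gives $\kos A{\fa}\in\Thick(M)$, whence $\Thick(\kos A{\fa})\subseteq\Thick(M)$. Combining the two inclusions yields $\Thick(M)=\Thick(\kos A{\fa})$.

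There is no real obstacle here: all the difficulty is packaged into Theorem~\ref{thm:hopkins}, and the corollary is just the observation that support is a complete invariant of the thick closure of a perfect complex, together with the fact that a Koszul complex on any ideal cutting out $\Supp_{A}M$ is a concrete perfect representative of that closure. The only thing one must not overlook is that Hopkins' theorem requires \emph{both} complexes to be perfect — which is exactly why the first step, verifying $\kos A{\fa}\in\Thick(A)$, is needed.
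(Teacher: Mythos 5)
Your argument is correct and is exactly the intended one: the paper states this corollary without proof as an immediate consequence of Theorem~\ref{thm:hopkins}, using Proposition~\ref{prop:kosca} to see that $\kos A{\fa}$ is perfect with $\Supp_{A}(\kos A{\fa})=\mcV(\fa)=\Supp_{A}M$, and then applying Hopkins' theorem in both directions. Nothing is missing, and your remark that perfectness of both complexes is the essential hypothesis is well taken.
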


We now explain how Hopkins' theorem gives a classification of the thick subcategories of perfect complexes. 
Recall that a subset $\mcV$ of $\Spec A$ is said to be \emph{specialisation closed}\index{specialisation!closed} if it has the property that if $\fq\supseteq \fp$ are prime ideals in $A$ and $\fp$ is in $\mcV$, then so is $\fq$. This is equivalent to the condition that $\mcV$ is a arbitrary union of closed sets. For any subcategory $\sfC$ of $\sfD^{\sfb}(A)$, we set
\[
\Supp_{A}\sfC=\bigcup_{M\in\sfC}\Supp_{A}M\,.
\]
This is a specialisation closed subset of $\Spec R$, as each $\Supp_{A}M$ is closed. 

\begin{theorem}
\label{thm:thickA}
The assignment $\sfC\mapsto\Supp_{A}{\sfC}$ gives a bijection
\begin{gather*}
\left\{
\begin{gathered}
\text{Thick subcategories} \\
\text{of perfect complexes}
\end{gathered}
\,\right\}
\longleftrightarrow
\left\{
\begin{gathered}
\text{Specialization closed} \\
\text{subsets of $\Spec A$}
\end{gathered}
\,\right\}
\end{gather*}
Its inverse assigns $\mcV$ to the subcategory $\sfC_{\mcV}=\{M\in\Thick(A)\mid \Supp_{A}M\subseteq\mcV\}$.
\end{theorem}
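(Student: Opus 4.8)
The plan is to show the two assignments $\sfC \mapsto \Supp_A \sfC$ and $\mcV \mapsto \sfC_{\mcV}$ are mutually inverse bijections. Both directions ultimately rest on Hopkins' theorem (Theorem~\ref{thm:hopkins}), the description of perfect complexes as $\Thick(A)$, and the fact that the Koszul complex $\kos A{\fa}$ is perfect with $\Supp_A(\kos A{\fa}) = \mcV(\fa)$ (Proposition~\ref{prop:kosca}). First I would verify that the maps are well-defined: $\Supp_A \sfC$ is specialisation closed because each $\Supp_A M$ is a closed subset of $\Spec A$ (property (1) of support) and an arbitrary union of closed sets is specialisation closed; and $\sfC_{\mcV}$ is a thick subcategory by (the argument of) Lemma~\ref{lem:support-thicksubcat}, intersected with $\Thick(A)$, which is itself thick.

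Next I would check $\Supp_A \sfC_{\mcV} = \mcV$ for every specialisation closed $\mcV$. The inclusion $\subseteq$ is immediate from the definition of $\sfC_{\mcV}$. For $\supseteq$, take $\fp \in \mcV$; since $\mcV$ is specialisation closed, $\overline{\{\fp\}} = \mcV(\fp) \subseteq \mcV$. Choosing an ideal $\fa$ with $\mcV(\fa) = \mcV(\fp)$ (e.g.\ $\fa = \fp$), the complex $\kos A{\fa}$ is perfect with $\Supp_A(\kos A{\fa}) = \mcV(\fp)$, so $\kos A{\fa} \in \sfC_{\mcV}$ and hence $\fp \in \Supp_A(\kos A{\fa}) \subseteq \Supp_A \sfC_{\mcV}$.

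Then I would check $\sfC_{\Supp_A \sfC} = \sfC$ for every thick subcategory $\sfC$ of perfect complexes. The inclusion $\sfC \subseteq \sfC_{\Supp_A \sfC}$ holds trivially. Conversely, suppose $M$ is perfect with $\Supp_A M \subseteq \Supp_A \sfC = \bigcup_{N \in \sfC} \Supp_A N$. Since $M$ is in $\sfD^{\sfb}(\mod A)$, its support $\Supp_A M = \mcV(\ann_A H^*M)$ is a \emph{closed} set, hence quasi-compact, so finitely many $N_1, \dots, N_m \in \sfC$ suffice: $\Supp_A M \subseteq \Supp_A N_1 \cup \cdots \cup \Supp_A N_m = \Supp_A(N_1 \oplus \cdots \oplus N_m)$. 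Set $N = N_1 \oplus \cdots \oplus N_m \in \sfC$; this is perfect and $\Supp_A M \subseteq \Supp_A N$. Now Hopkins' theorem (Theorem~\ref{thm:hopkins}) gives $M \in \Thick(N) \subseteq \sfC$, using that $\sfC$ is thick and contains $N$. This proves equality, and combined with the previous paragraph, the two assignments are mutually inverse. Monotonicity of both maps with respect to inclusion is clear from the definitions, so the bijection is in fact an isomorphism of posets, though that is not needed for the statement.

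The main obstacle is the quasi-compactness step in the last paragraph: one must use that $\Supp_A M$ is closed (not merely specialisation closed) to reduce an arbitrary union to a finite one, since Hopkins' theorem only compares one perfect complex against one other perfect complex. This is exactly where the hypothesis that $M$ lies in $\sfD^{\sfb}(\mod A)$ — giving finitely generated cohomology and hence $\Supp_A M = \mcV(\ann_A H^*M)$ — is essential, and it is the reason the classification is of thick subcategories of \emph{perfect} complexes rather than of a larger category.
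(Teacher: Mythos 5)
Your proposal is correct and follows essentially the same route as the paper's own proof: both directions reduce to Hopkins' theorem, with $\Supp_A\sfC_{\mcV}=\mcV$ obtained from the Koszul complexes $\kos A{\fp}$ for $\fp\in\mcV$, and $\sfC_{\Supp_A\sfC}=\sfC$ obtained by covering the closed set $\Supp_A M$ by finitely many $\Supp_A N_i$ with $N_i\in\sfC$, passing to $N=\bigoplus_i N_i$, and applying Theorem~\ref{thm:hopkins} together with thickness of $\sfC$. The only wording caveat is your finiteness step: it is not literally quasi-compactness, since the covering sets $\Supp_A N$ are closed rather than open; the correct justification (which the paper also leaves implicit) is that the closed set $\Supp_A M$ has finitely many irreducible components, and the generic point of each component lies in some $\Supp_A N_i$, which, being specialisation closed, then contains the whole component.
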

 
\begin{proof}

Let $\sfC$ be a thick subcategory  of $\Thick(A)$ and set $\mcV=\Supp_{A}\sfC$. It is clear that $\sfC \subseteq \sfC_{\mcV}$. For equality, it remains to prove that any perfect complex $M$ satisfying $\Supp_{A}M\subseteq \mcV$ is in $\sfC$. This is a consequence of Theorem~\ref{thm:hopkins}.

Indeed, for $M$ as above, $\Supp_{A}M$ is a closed set in $\Spec A$, hence one can find finitely many complexes in $\sfC$, say $N_{1},\dots,N_{k}$, such that there is an inclusion
\begin{align*}
\Supp_{A}M &\subseteq \bigcup_{i=1}^{k}\Supp_{A}(N_{i})\\
           & = \Supp_{A} N\,, \quad\text{where $N=\bigoplus_{i=1}^{k} N_{i}$}
\end{align*}
The equality holds by properties of support. Since $N$ is also perfect, one obtains $M\in\Thick(N)$, by Theorem~\ref{thm:hopkins}. It remains to note that $\Thick(N)\subseteq \sfC$, for the latter is a thick subcategory and contains $N$.

Conversely, for any specialisation closed subset $\mcV$ of $\Spec A$, there is evidently an inclusion $\Supp_{A}\sfC_{\mcV}\subseteq \mcV$, while equality holds because, for any $\fp\in\mcV$, the Koszul complex $\kos A{\fp}$ satisfies:
\[
\Supp_{A}(\kos A{\fp}) =\mcV(\fp)\subseteq\mcV\,,
\]
and hence $\kos A{\fp}$ is in $\sfC_{\mcV}$.
\end{proof}

\section{Brown representability and localisation}
\label{sec:Tuesday2}
In this section we introduce some technical tools that are important for the rest of this seminar.  
The principal one is the Brown representability which provides an abstract method for constructing objects in a triangulated category. For this it is important to work in a triangulated category that admits set-indexed coproducts.

Localisation is a method to invert morphisms in a category. If the category is triangulated, then each morphism $\sigma\colon X\to Y$ fits into an exact triangle $X\xra{\sigma} Y\to Z\to\Si X$, and an exact functor sends $\sigma$ to an invertible morphism if and only the cone $Z$ is annihilated. Thus we can think of a localisation functor either as a functor that inverts certain morphisms or as one that annihilates certain objects. The objects that are killed by a localisation functor form a localising subcategory.

The first systematic treatment of localisation can be found in
\cite{Gabriel/Zisman:1967a}. We follow closely the exposition in
\cite[\S3]{\bik:2008a} and refer to \cite{Krause:2007a} for further details.

\subsection{Brown representability}

The following result is known as \emph{Brown representability theorem}\index{Brown representability!theorem} and is due to Keller \cite{Keller:1994a} and Neeman \cite{Neeman:1996a}; it is a variation of a classical theorem of Brown \cite{Brown:1962a} from homotopy theory.

\begin{theorem}[Brown]
Let $\sfT$ be a compactly generated triangulated category. For a functor $H\colon \sfT^\op\to\Ab$ the following are equivalent.
\begin{enumerate}
\item The functor $H$ is cohomological and preserves set-indexed coproducts.
\item There exists an object $X$ in $\sfT$ such that $H\cong\Hom_\sfT(-,X)$.\qed
\end{enumerate}
\end{theorem}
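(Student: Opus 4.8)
The plan is to establish the two implications separately; (2)~$\Rightarrow$~(1) is a formality and (1)~$\Rightarrow$~(2) carries all the content. For (2)~$\Rightarrow$~(1): if $H\cong\Hom_\sfT(-,X)$, then $H$ is cohomological (the contravariant representable functors are cohomological, by the exercises of Chapter~\ref{ch:Monday}), and for a set-indexed coproduct the universal property gives $\Hom_\sfT(\coprod_i Y_i,X)\cong\prod_i\Hom_\sfT(Y_i,X)$, i.e. $H$ sends coproducts in $\sfT$ to products in $\Ab$.

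For (1)~$\Rightarrow$~(2) I would follow Neeman's construction of a ``resolution by representables''. Fix a set $\mcG$ of compact generators of $\sfT$, closed under $\Si$ and $\Si^{-1}$, and build inductively a sequence $X_0\to X_1\to X_2\to\cdots$ with compatible natural transformations $\phi_i\colon\Hom_\sfT(-,X_i)\to H$. To start, set $X_0=\coprod_{G\in\mcG}\coprod_{x\in H(G)}G$; this is a genuine coproduct because $\mcG$ is a set, and this is the only place where compact generation is needed set-theoretically. Since $H$ carries this coproduct to $\prod H(G)$, the chosen family of elements defines an element $\xi_0\in H(X_0)$, hence (by Yoneda) a transformation $\phi_0$ with $\phi_0(G)$ surjective for every $G\in\mcG$. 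Given $\phi_i$, represented by $\xi_i\in H(X_i)$, put $K_i=\coprod G$ over all pairs $(G,f)$ with $G\in\mcG$ and $f\in\Hom_\sfT(G,X_i)$ satisfying $\phi_i(f)=0$, take the tautological map $\kappa_i\colon K_i\to X_i$, and complete it to an exact triangle $K_i\xra{\kappa_i}X_i\xra{p_i}X_{i+1}\to\Si K_i$. By construction $H(\kappa_i)(\xi_i)=0$, so applying the cohomological functor $H$ to this triangle and using exactness, $\xi_i$ lifts to some $\xi_{i+1}\in H(X_{i+1})$ with $H(p_i)(\xi_{i+1})=\xi_i$; the transformation $\phi_{i+1}$ it represents then satisfies $\phi_{i+1}\circ(p_i)_{*}=\phi_i$. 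This lifting step is exactly where the hypothesis that $H$ be cohomological is used.

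Next I would set $X=\hocolim_i X_i$, defined by the triangle $\coprod_i X_i\xra{1-\mathrm{shift}}\coprod_i X_i\to X\to\Si\coprod_i X_i$, and let $\phi\colon\Hom_\sfT(-,X)\to H$ be the transformation induced by the $\phi_i$ (legitimate because $H$ sends $\coprod_i X_i$ to a product). The crux of the argument, and the step I expect to be the main obstacle, is the interaction of compactness with this homotopy colimit: for $G\in\mcG$ one applies $\Hom_\sfT(G,-)$ to the defining triangle, uses compactness of $G$ to replace $\Hom_\sfT(G,\coprod_i X_i)$ by $\bigoplus_i\Hom_\sfT(G,X_i)$, notes that $1-\mathrm{shift}$ is injective on such a direct sum, and deduces $\Hom_\sfT(G,X)\cong\colim_i\Hom_\sfT(G,X_i)$. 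Granting this, $\phi(G)$ is surjective (already $\phi_0(G)$ was, and the $\phi_i$ are compatible) and injective: any $f\colon G\to X_i$ with $\phi_i(f)=0$ was, by construction of $K_i$, forced to factor through $\kappa_i$, hence to die in $\Hom_\sfT(G,X_{i+1})$, hence in the colimit. Thus $\phi(G)$ is an isomorphism for every $G\in\mcG$.

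Finally I would bootstrap from $\mcG$ to all of $\sfT$. The full subcategory of those $Y$ for which $\phi(Y)\colon\Hom_\sfT(Y,X)\to H(Y)$ is an isomorphism is closed under shifts, under exact triangles (the five lemma, since $\Hom_\sfT(-,X)$ and $H$ are cohomological), and under coproducts (both functors send them to products); as it contains $\mcG$ it equals $\Loc_\sfT(\mcG)=\sfT$. Hence $\phi$ is a natural isomorphism $\Hom_\sfT(-,X)\xra{\sim}H$, which gives~(2).
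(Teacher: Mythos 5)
Your argument is correct and is precisely the Neeman--Keller construction that the paper alludes to but does not write out: the remark following the theorem describes exactly your tower $X_0\to X_1\to\cdots$ with $X_0$ and each cone a coproduct of shifted compact generators, and $X=\hocolim_i X_i$. The only point to phrase carefully in a full write-up is the final bootstrap: one should define the subcategory as those $Y$ with $\phi(\Si^nY)$ invertible for \emph{all} $n$ (which holds for $\mcG$ since you took it closed under shifts), so that the five lemma applies to the long exact sequences.
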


The proof is in some sense constructive; it shows that any object in $\sfT$ arises as the homotopy colimit of a sequence of morphisms
\[
X_0\xra{\phi_0}X_1\xra{\phi_1}X_2\xra{\phi_2}\cdots
\] 
such that $X_0$ and the cone of each $\phi_i$ is a coproduct of objects of the form $\Si^n C$, with $n\in\bbZ$ and $C$ an object from the set of compact objects generating $\sfT$.

Here are some useful consequences of the Brown representability
theorem. The description of compact generation in
Proposition~\ref{pr:compact-generation} is another application.

\begin{corollary}
Each compactly generated triangulated category admits set-indexed products. 
\end{corollary}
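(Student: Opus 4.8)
The plan is to realise set-indexed products as representing objects of suitable cohomological functors, via the Brown representability theorem. Given a set-indexed family $(X_i)_{i\in I}$ of objects of $\sfT$, I would consider the functor
\[
H\colon\sfT^\op\to\Ab\,,\qquad H(Y)=\prod_{i\in I}\Hom_\sfT(Y,X_i)\,,
\]
which is a well-defined $\Ab$-valued functor precisely because $I$ is a set. The first step is to verify that $H$ is cohomological: each $\Hom_\sfT(-,X_i)$ is cohomological, and a product of exact sequences of abelian groups is exact, so $H$ inherits the property. The second step is to check that $H$ sends coproducts in $\sfT$ to products in $\Ab$: for a family $(Y_j)_{j\in J}$ one has $\Hom_\sfT(\coprod_j Y_j,X_i)\cong\prod_j\Hom_\sfT(Y_j,X_i)$ by the universal property of the coproduct, and interchanging the two products gives $H(\coprod_j Y_j)\cong\prod_j H(Y_j)$. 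Both verifications are routine.

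With the hypotheses of Brown representability in hand, I would obtain an object $X\in\sfT$ together with a natural isomorphism $\Hom_\sfT(-,X)\xra{\sim}H$. The final step is to read off from this that $X$ is the product of the $X_i$: chasing $\id_X$ through the isomorphism yields morphisms $\pi_i\colon X\to X_i$, and naturality forces the isomorphism $\Hom_\sfT(Y,X)\xra{\sim}\prod_{i\in I}\Hom_\sfT(Y,X_i)$ to coincide, for every $Y$, with the map $f\mapsto(\pi_i\comp f)_{i\in I}$. Bijectivity of this map for all $Y$ is exactly the universal property of the product, so $(X,(\pi_i)_{i\in I})$ is a product of the family in $\sfT$.

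I do not expect a genuine obstacle here: the whole argument is a standard application of Brown representability, and the only non-formal input is the theorem itself. The one point that needs a little care is the last step — identifying the representing object of $H$ with the product — but this is a direct application of the Yoneda lemma once the components $\pi_i$ of $\id_X$ have been extracted.
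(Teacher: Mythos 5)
Your argument is correct and is exactly the paper's proof: apply Brown representability to the functor $Y\mapsto\prod_{i\in I}\Hom_\sfT(Y,X_i)$, verify it is cohomological and sends coproducts to products, and identify the representing object as the product. You merely spell out the Yoneda-style identification at the end, which the paper leaves implicit.
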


\begin{proof}
Given a set of objects $\{X_{i}\}_{i\in I}$ in such a category $\sfT$, apply the Brown representability theorem to the functor $Y\mapsto \prod_{i\in I}\Hom_{\sfT}(Y,X_{i})$; check that it is cohomological and takes set-indexed coproducts in $\sfT$ to products in $\Ab$.
\end{proof}

\begin{corollary}
\label{cor:right-adjoint-exist}
Let $F\colon\sfT\to\sfU$ be an exact functor between triangulated
categories and suppose that $\sfT$ is compactly generated. Then $F$
preserves set-indexed coproducts if and only if $F$ admits a right
adjoint.
\end{corollary}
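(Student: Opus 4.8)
The plan is to prove the two implications separately. The direction ``if $F$ admits a right adjoint then $F$ preserves coproducts'' is formal, while the converse is the substantive one and relies on the Brown representability theorem stated above.

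Suppose first that $F$ has a right adjoint $G$. For any set-indexed family $\{X_i\}_{i\in I}$ in $\sfT$ and any object $Y$ in $\sfU$ there are natural isomorphisms
\[
\Hom_\sfU\big(F(\coprod_{i} X_i),\,Y\big)\;\cong\;\Hom_\sfT\big(\coprod_{i} X_i,\,GY\big)\;\cong\;\prod_{i}\Hom_\sfT(X_i,GY)\;\cong\;\prod_{i}\Hom_\sfU(FX_i,Y)\,,
\]
so $F(\coprod_i X_i)$ is a coproduct of the $FX_i$; in particular, if $\sfU$ has all small coproducts then the canonical comparison morphism $\coprod_i FX_i\to F(\coprod_i X_i)$ is an isomorphism. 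This uses nothing about compact generation.

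For the converse, assume $F$ preserves set-indexed coproducts and fix an object $Y$ in $\sfU$. Consider the functor
\[
H_Y\;=\;\Hom_\sfU(F(-),Y)\colon\sfT^\op\longrightarrow\Ab\,.
\]
Since $F$ is exact it carries exact triangles to exact triangles, and $\Hom_\sfU(-,Y)$ is cohomological, hence $H_Y$ is cohomological. Moreover, because $F$ preserves coproducts,
\[
H_Y\big(\coprod_{i} X_i\big)\;=\;\Hom_\sfU\big(\coprod_{i} FX_i,\,Y\big)\;=\;\prod_{i}\Hom_\sfU(FX_i,Y)\;=\;\prod_{i} H_Y(X_i)\,,
\]
that is, $H_Y$ sends set-indexed coproducts in $\sfT$ to products in $\Ab$. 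As $\sfT$ is compactly generated, the Brown representability theorem provides an object $GY$ in $\sfT$ together with an isomorphism $\Hom_\sfU(FX,Y)\cong\Hom_\sfT(X,GY)$ natural in $X$.

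It remains to upgrade the assignment $Y\mapsto GY$ to a functor $G\colon\sfU\to\sfT$ and to verify naturality of this bijection in $Y$ as well. Given a morphism $f\colon Y\to Y'$ in $\sfU$, post-composition with $f$ induces a natural transformation $H_Y\to H_{Y'}$, which under the two representations corresponds, by the Yoneda lemma, to a unique morphism $Gf\colon GY\to GY'$; uniqueness in Yoneda makes $G$ into a functor and forces the displayed isomorphism to be natural in $Y$. Then $(F,G)$ is an adjoint pair, as desired. The one genuine obstacle is checking that $H_Y$ satisfies the hypotheses of Brown representability — which is precisely where the assumption that $F$ preserves coproducts is consumed — after which everything is routine bookkeeping with the Yoneda lemma.
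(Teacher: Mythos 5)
Your proof is correct and follows exactly the route the paper takes: the forward direction applies Brown representability to the functor $\Hom_\sfU(F(-),Y)$ (which is cohomological because $F$ is exact and sends coproducts to products because $F$ preserves coproducts), and the converse is the formal fact that left adjoints preserve coproducts. The paper merely compresses the Yoneda bookkeeping that you spell out; there is nothing to add.
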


\begin{proof}
If $F$ preserves set-indexed coproducts, then for each object
$X\in\sfU$ the functor $\Hom_\sfU(F-,X)$ is representable by an object
$Y\in\sfT$. Sending $X$ to $Y$ yields a right adjoint of $F$. The
converse is clear, since each left adjoint preserves set-indexed coproducts.  
\end{proof}

\subsection{Localisation functors}

A functor $\bloc\col\sfC\to\sfC$ is called a \emph{localisation functor}\index{functor!localisation} if there exists a morphism $\eta\col\Id_\sfC\to \bloc$ such that the morphism $\bloc\eta\col \bloc\to \bloc^2$ is invertible and
$\bloc\eta=\eta \bloc$. Recall that a morphism $\mu\colon F\to G$ between functors is \emph{invertible}\index{functor!invertible} if and only if for each object $X$ the morphism $\mu{X}\colon FX\to GX$ is an isomorphism.  Note that we only require the existence of $\eta$; the actual morphism is not part of the definition of $L$ because it is determined by $L$ up to a unique
isomorphism $L\to L$.

A functor $\gam\col\sfC\to\sfC$ is called \emph{colocalisation functor}\index{functor!colocalisation} if its opposite functor $\gam^\op\col\sfC^\op\to\sfC^\op$ is a localisation functor. In this case there is a morphism
$\theta\colon \gam\to\Id_\sfC$ such that $\theta\gam\colon\gam\to\gam^2$ is invertible and $\theta\gam=\gam\theta$.

The following lemma provides an alternative description of a localisation functor. Given any class of morphisms
$S\subseteq\Mor\sfC$, the notation $\sfC[S^{-1}]$ refers to the universal category obtained from $\sfC$ by inverting all morphisms in $S$; see \cite{Gabriel/Zisman:1967a}.

\begin{lemma}\label{le:loc}
\pushQED{\qed} 
Let $\bloc\col\sfC\to\sfC$ be a functor and $\eta\col\Id_\sfC\to \bloc$  a morphism.
The following conditions are equivalent.  
\begin{enumerate}[{\quad\rm(1)}] 
\item The morphism $\bloc\eta\col \bloc\to \bloc^2$ is invertible and $\bloc\eta=\eta \bloc$.
\item There exists an adjoint pair of functors $F\col\sfC\to\sfD$ and $G\col\sfD\to\sfC$,
  with $F$ the left adjoint and $G$ the right adjoint, such that $G$ is fully faithful,
  $\bloc=G F$, and $\eta\col\Id_\sfC\to G F$ is the adjunction
  morphism.  
\end{enumerate}
In this case, $F\colon\sfC\to\sfD$ induces an equivalence $\sfC[S^{-1}]\xra{\sim}\sfD$, where 
\[
S=\{\si\in\Mor\sfC\mid F\si\text{ is invertible}\}
 =\{\si\in\Mor\sfC\mid L\si\text{ is invertible} \}.\qedhere 
\]
\end{lemma}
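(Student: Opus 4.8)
The plan is to prove the two implications (1) $\Rightarrow$ (2) and (2) $\Rightarrow$ (1), and then to establish the description of $S$ and the induced equivalence $\sfC[S^{-1}]\xra{\sim}\sfD$.

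For (1) $\Rightarrow$ (2), I would take $\sfD$ to be the full subcategory of $\sfC$ consisting of those objects $Y$ for which $\eta Y\colon Y\to\bloc Y$ is an isomorphism. First I would check that every object of the form $\bloc X$ lies in $\sfD$: this is exactly the content of $\bloc\eta$ being invertible together with $\bloc\eta=\eta\bloc$, since $\eta(\bloc X)=(\eta\bloc)X=(\bloc\eta)X$ is then an isomorphism. Hence $\bloc$ factors as $\sfC\xra{F}\sfD\xra{G}\sfC$ with $G$ the inclusion (fully faithful by construction) and $F$ the corestriction of $\bloc$. The key point is that $F$ is left adjoint to $G$ with unit $\eta$: given $X\in\sfC$ and $Y\in\sfD$, I must show that composition with $\eta X$ gives a bijection $\Hom_\sfC(\bloc X,Y)\to\Hom_\sfC(X,Y)$. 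Surjectivity: given $f\colon X\to Y$, naturality of $\eta$ gives a commutative square relating $f$, $\bloc f$, $\eta X$ and $\eta Y$; since $\eta Y$ is invertible (as $Y\in\sfD$), the morphism $(\eta Y)^{-1}\comp \bloc f\colon \bloc X\to Y$ is a preimage of $f$. Injectivity is the same square run backwards, again using that $\eta Y$ is monic. This verifies (2), and one recovers $\bloc=GF$ and that $\eta$ is the adjunction unit by construction.

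For (2) $\Rightarrow$ (1), suppose $F\dashv G$ with $G$ fully faithful, $\bloc=GF$, and $\eta$ the unit. The standard fact is that $G$ fully faithful is equivalent to the counit $\eps\colon FG\to\Id_\sfD$ being invertible. From the triangle identities $\eps F\comp F\eta=\id_F$ and $G\eps\comp \eta G=\id_G$, and invertibility of $\eps$, one deduces $F\eta$ is invertible (being $(\eps F)^{-1}$) and $\eta G$ is invertible (being $(G\eps)^{-1}$). Applying $G$ to $F\eta\colon F\to F\bloc$ gives $GF\eta=\bloc\eta\colon\bloc\to\bloc^2$ invertible; and $\eta\bloc = \eta GF = (\eta G)F$, while $\bloc\eta = GF\eta = G(F\eta)$, and one checks these agree by comparing with the isomorphism $(\eps F)^{-1}$ via a triangle-identity diagram chase. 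So $\bloc\eta=\eta\bloc$ and both are invertible, giving (1).

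For the final assertion, set $S=\{\si\mid F\si\text{ invertible}\}$ and $S'=\{\si\mid \bloc\si\text{ invertible}\}$. Since $\bloc=GF$ and $G$ is faithful (even full), $G\si'$ is invertible iff $\si'$ is, so $\bloc\si=G(F\si)$ is invertible iff $F\si$ is; hence $S=S'$. It remains to show $F$ induces an equivalence $\sfC[S^{-1}]\xra{\sim}\sfD$. By the universal property of localisation, $F$ factors through a functor $\bar F\colon\sfC[S^{-1}]\to\sfD$ since $F$ inverts $S$ by definition. I would construct a quasi-inverse: $G\colon\sfD\to\sfC$ composed with the localisation functor $Q\colon\sfC\to\sfC[S^{-1}]$ gives $QG\colon\sfD\to\sfC[S^{-1}]$; one checks $\bar F\comp QG\cong\Id_\sfD$ using $FG\cong\Id_\sfD$ (the counit is invertible), and $QG\comp\bar F\cong\Id_{\sfC[S^{-1}]}$ because the unit $\eta\colon\Id_\sfC\to GF$ becomes invertible in $\sfC[S^{-1}]$ — indeed $F\eta$ is invertible so $\eta\in S$ componentwise, i.e.\ each $\eta X$ lies in $S$ since $F(\eta X)$ is an isomorphism. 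The main obstacle I anticipate is the bookkeeping in the equivalence $\sfC[S^{-1}]\simeq\sfD$, specifically verifying that every morphism of $S$ becomes invertible after applying $F$ and that the unit components genuinely lie in $S$; the adjunction arguments in (1) $\Leftrightarrow$ (2) are routine diagram chases by comparison, but this last step requires care to avoid circularity between "$\eta X\in S$'' and "$F$ inverts $S$''. I would cite \cite[\S3]{\bik:2008a} and \cite{Gabriel/Zisman:1967a} for the localisation formalism to keep the verification short.
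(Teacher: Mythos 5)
Your proposal is correct, and it is the standard argument; note that the paper states this lemma without proof (the \verb|\qed| is pushed to the statement), deferring to Gabriel--Zisman and to \cite[\S3]{Benson/Iyengar/Krause:2008a}, so you are supplying a proof the text omits rather than diverging from one. All the main steps check out: the reflective-subcategory construction for (1) $\Rightarrow$ (2), the triangle-identity computation $\bloc\eta = G(F\eta) = (G\eps F)^{-1} = (\eta G)F = \eta\bloc$ for the converse, the identification of the two descriptions of $S$ via the fact that the fully faithful $G$ reflects isomorphisms, and the quasi-inverse $Q\comp G$ with $Q\eta$ invertible because each $\eta X$ lies in $S$ (no circularity: $S$ is by definition the class inverted by $F$, and $F\eta$ was already shown invertible). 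One small point of precision in (1) $\Rightarrow$ (2): the injectivity of $\Hom_\sfC(\bloc X,Y)\to\Hom_\sfC(X,Y)$, $g\mapsto g\comp\eta X$, is not just ``the same square backwards with $\eta Y$ monic''. Checking that $g\mapsto(\eta Y)^{-1}\comp\bloc g$ is a two-sided inverse reduces, via naturality of $\eta$ at $g$, to the identity $(\bloc g)\comp(\bloc\eta X)=(\bloc g)\comp(\eta\bloc X)$, and this is exactly where the hypothesis $\bloc\eta=\eta\bloc$ is used (invertibility of $\bloc\eta$ alone does not suffice). Since that hypothesis is available and you invoke it elsewhere, this is an expository gap, not a mathematical one.
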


\begin{example}
Let $A$ be a commutative ring and denote by $S^{-1}A$ the localisation with respect to a multiplicatively closed subset $S\subseteq A$. Consider the functors $F\colon\Mod A\to \Mod S^{-1}A$ and $G\colon\Mod S^{-1}A\to
\Mod A$ defined by 
\[
FM=S^{-1}M=M\otimes_AS^{-1}A\quad{and}\quad GN= N\,.
\]
Then the composite $L=GF$ is a localisation functor.
\end{example}

\subsection{Acyclic objects and local objects}

Let $\sfT$ be a triangulated category which admits set-indexed products and coproducts. We write $\Si$ for the suspension functor on $\sfT$. Recall that a localising subcategory of $\sfT$ is a full triangulated subcategory that is closed under taking all coproducts. Analogously, a \emph{colocalising subcategory}\index{colocalising subcategory} of $\sfT$ is a full triangulated subcategory that is closed under taking all products.

The \emph{kernel}\index{functor!kernel} of an exact functor $F\col\sfT\to\sfT$ is the full subcategory
\[
\Ker F=\{X\in\sfT\mid FX=0\}\,,
\]
while the \emph{essential image} \index{functor!essential image} of $F$ is the full subcategory
\[
\Im F=\{X\in\sfT\mid X\cong FY\text{ for some $Y$ in $\sfT$}\}.
\]

Let $L\col\sfT\to\sfT$ be an exact localisation functor with morphism $\eta\colon \Id_\sfT\to L$ such that $L\eta=\eta L$ is invertible. Note that the kernel of $L$ is a localising subcategory of $\sfT$, because it coincides with the kernel of a functor that admits a right adjoint by Lemma \ref{le:loc}. Completing the natural morphism $\eta X\col X\to LX$ yields for each object $X$ in $\sfT$ a natural exact \emph{localisation triangle}\index{localisation triangle}
\begin{equation*}
\gam X\lto X\lto \bloc X\lto
\end{equation*}
This exact triangle gives rise to an exact functor $\gam\col\sfT\to\sfT$ with
\[
\Ker L = \Im\gam \quad\text{and}\quad \Ker\gam =\Im L.
\]
The objects in $\Ker L$ are called \emph{$\bloc$-acyclic}\index{acyclic} and the objects in $\Im \bloc$ are  \emph{$\bloc$-local}\index{local}.

The functor $\gam$ is a colocalisation functor, and each colocalisation functor on $\sfT$ arises in this way. This yields a natural bijection between localisation and colocalisation functors on $\sfT$. Note that $\Ker\gam$ is a colocalising subcategory of $\sfT$.

Given a subcategory $\sfC$ of $\sfT$, consider full subcategories
\begin{align*}
{^\perp}\sfC&=\{X\in\sfT\mid \text{$\Hom_\sfT(X,\Si^{n}Y)=0$ for all $Y\in \sfC$ and $n\in\bbZ$}\}, \\ 
\sfC^{\perp}&=\{X\in\sfT\mid \text{$\Hom_\sfT(\Si^{n}Y,X)=0$ for all $Y\in\sfC$ and $n\in\bbZ$}\}.
\end{align*}
Evidently, ${^\perp}\sfC$\index{cperpr@${^\perp}\sfC$} is a localising subcategory, and $\sfC^\perp$\index{cperpl@$\sfC^{\perp}$} is a colocalising subcategory.

The next lemma summarises  basic facts about localisation and colocalisation.

\begin{proposition}
\label{pr:loc-basic}
Let $\sfT$ be a triangulated category and $\sfS$ a triangulated
subcategory. Then the following are equivalent:
\begin{enumerate}
\item There exists a localisation functor $L\col\sfT\to\sfT$ such that
  $\Ker L=\sfS$.
\item There exists a colocalisation functor $\gam\col\sfT\to\sfT$ such
  that $\Im\gam=\sfS$.
\end{enumerate}
In that case both functors are related by a functorial exact triangle
\[
\gam X\lto X\lto \bloc X\lto.
\] 
Moreover, there are equalities \[\sfS^\perp=\Im
L=\Ker\gam\quad\text{and}\quad ^\perp(\sfS^\perp)=\sfS.\] The functor
$\gam$ induces a right adjoint for the inclusion $\sfS\to \sfT$, and
$L$ induces a left adjoint for the inclusion $\sfS^\perp\to\sfT$.\qed
\end{proposition}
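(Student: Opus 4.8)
The plan is to establish the equivalence (1) $\Leftrightarrow$ (2), construct the localisation triangle, and then extract the adjoint statements, in that order. The two implications are symmetric under passing to opposite categories --- a localisation functor on $\sfT$ corresponds to a colocalisation functor on $\sfT^{\op}$ and vice versa --- so it suffices to prove (1) $\Rightarrow$ (2) and then dualise. So suppose $L\colon\sfT\to\sfT$ is a localisation functor with $\Ker L=\sfS$. From the discussion preceding the statement, $\Ker L$ is a localising subcategory of $\sfT$ (being the kernel of a functor with a right adjoint, via Lemma~\ref{le:loc}), and completing $\eta X\colon X\to LX$ to an exact triangle $\gam X\to X\to LX\to$ produces an exact functor $\gam$ which is a colocalisation functor with $\Ker L=\Im\gam$; hence $\Im\gam=\sfS$. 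That gives (2) together with the functorial triangle. For (2) $\Rightarrow$ (1) one argues the same way with $\gam$ in place of $L$: given a colocalisation functor $\gam$ with $\Im\gam=\sfS$, the triangle $\gam X\to X\to LX\to$ defines a localisation functor $L$ with $\Ker L=\Im\gam=\sfS$; the fact that $\gam$ arises from a localisation functor and that this correspondence is a bijection is exactly the ``natural bijection between localisation and colocalisation functors'' recorded above.

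Next I would identify the orthogonal classes. To see $\sfS^{\perp}=\Im L$: if $X\cong LY$ is $L$-local and $Z\in\sfS=\Ker L$, then applying $L$ to a morphism $\Si^n Z\to X$ and using that $L$ is fully faithful on its essential image (Lemma~\ref{le:loc}, part (2), where $L=GF$ with $G$ fully faithful) together with $LZ=0$ forces the morphism to vanish; so $\Im L\subseteq\sfS^{\perp}$. Conversely, if $X\in\sfS^{\perp}$, apply $\Hom_\sfT(-,X)$ to the triangle $\gam X\to X\to LX\to\Si\gam X$; since $\gam X\in\Im\gam=\sfS$ one gets $\Hom_\sfT(\Si^n\gam X,X)=0$ for all $n$, which makes $X\to LX$ a split mono whose complement is a summand of $\Si\gam X\in\sfS$ and also of $X$, hence zero after a short diagram chase --- so $X\cong LX\in\Im L$. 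The equality $\Im L=\Ker\gam$ is immediate from the triangle and exactness of $\gam$ (apply $\gam$ to $\gam X\to X\to LX\to$ and use $\gam\gam=\gam$ up to the relevant isomorphism, giving $\gam LX=0$; conversely $\gam X=0$ forces $X\cong LX$). For $^{\perp}(\sfS^{\perp})=\sfS$: the inclusion $\sfS\subseteq\,^{\perp}(\sfS^{\perp})$ holds by definition of the two orthogonals, and for the reverse, if $X\in\,{^\perp}(\sfS^{\perp})$ then since $LX\in\Im L=\sfS^{\perp}$ the morphism $X\to LX$ is zero, so from the triangle $X$ is a direct summand of $\gam X\in\sfS$, hence $X\in\sfS$ as $\sfS$ is thick (triangulated and closed under summands --- a localising subcategory is thick).

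Finally the adjunctions. That $\gam$ induces a right adjoint to the inclusion $\sfS\hookrightarrow\sfT$ is essentially a restatement of the universal property of the triangle $\gam X\to X\to LX\to$: for $Y\in\sfS$, applying $\Hom_\sfT(Y,-)$ and using $\Hom_\sfT(Y,\Si^n LX)=0$ (since $LX\in\sfS^{\perp}$) shows $\Hom_\sfT(Y,\gam X)\xra{\sim}\Hom_\sfT(Y,X)$, which is exactly the required adjunction isomorphism, and one checks naturality/functoriality from the functoriality of the triangle. Dually, $\Hom_\sfT(\Si^n\gam X,X')=0$ for $X'\in\sfS^{\perp}$ gives $\Hom_\sfT(X,X')\xra{\sim}\Hom_\sfT(LX,X')$, exhibiting $L$ (corestricted to $\sfS^{\perp}=\Im L$) as a left adjoint to $\sfS^{\perp}\hookrightarrow\sfT$. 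The main obstacle I anticipate is bookkeeping the summand-vanishing arguments cleanly --- verifying that a morphism which is both a split mono and has image in a thick subcategory forces the relevant object into that subcategory --- and being careful that all the orthogonality computations really only use exactness of $L$ and $\gam$ plus the defining relations $L\eta=\eta L$ invertible; none of these is deep, but the diagram chases must be done consistently. The cleanest route is to lean on Lemma~\ref{le:loc} to replace $L$ by $GF$ with $G$ fully faithful wherever fully-faithfulness is needed, which sidesteps the most delicate step.
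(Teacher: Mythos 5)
The paper offers no proof of this proposition at all --- it is stated with a \qed as a summary of ``basic facts'', deferring to the preceding discussion and to \cite[\S 3]{Benson/Iyengar/Krause:2008a} and \cite{Krause:2007a} --- so there is no argument of the paper's to compare yours against. Your proof is the standard one and is essentially correct: the passage between $L$ and $\gam$ via the localisation triangle, the identification $\sfS^\perp=\Im L=\Ker\gam$, the double-orthogonal computation, and the extraction of the two adjunctions from the vanishing of $\Hom^*(\sfS,\Im L)$ are all the right steps in the right order. The one genuinely delicate point, which you (like the paper) outsource to the preceding discussion, is why completing $\eta_X\colon X\to LX$ to a triangle yields a \emph{functor} $\gam$ at all, since cones are not functorial; the clean fix is to observe first that $L(\gam X)=0$ (apply $L$ to the triangle and use that $L\eta$ is invertible) and that $\Hom^*(\Ker L,\Im L)=0$, whence $\gam X\to X$ is the universal morphism from $\Ker L$, i.e.\ $\gam$ is forced to be the right adjoint of the inclusion and is therefore functorial. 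Since the proposition's remaining claims are proved using exactly these two facts, a fully rigorous write-up should establish them \emph{before} invoking functoriality rather than after.

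Two smaller points. First, in the step $\sfS^\perp\subseteq\Im L$ your phrase ``whose complement is a summand of $\Si\gam X$ and also of $X$'' is garbled: the complement of the split mono $X\to LX$ \emph{is} $\Si\gam X$, a summand of $LX$, not of $X$; it vanishes because it then lies in $\sfS\cap\sfS^\perp=0$ (or, more directly, $\theta_X\colon\gam X\to X$ is zero since $X\in\sfS^\perp$, while $\gam\theta_X=\theta_{\gam X}$ is invertible, forcing $\gam X=0$). The conclusion is correct either way. Second, your opening appeal to duality is not quite the implication you need --- dualising (1)$\Rightarrow$(2) produces a statement about $\Ker\gam$ rather than $\Im\gam$ --- but this is harmless because you then rerun the triangle construction directly for (2)$\Rightarrow$(1), which is the correct move. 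Finally, in the double-orthogonal step you justify $X\in\sfS$ by thickness of $\sfS$; the clean reason $\sfS$ is thick here is that it is the kernel of the exact functor $L$ (an example recorded in Section~\ref{ss:tria}), not that it is localising, since no closure under coproducts is assumed.
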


Let us mention that $\bloc$ induces an equivalence of categories
$\sfT/{\Ker\bloc}\xra{\sim}\Im\bloc$ where $\sfT/{\Ker\bloc}$ denotes
the Verdier quotient of $\sfT$ with respect to $\Ker\bloc$. This can
be deduced from Lemma~\ref{le:loc}, but we do not need this fact.

\section{The stable module category of a finite group}
\label{sec:Tuesday3}
This lecture consists of an introduction to the cohomology of groups, leading to the definition of the cohomological variety of a $kG$-module.

\subsection{Algebra}
\label{ssec:group-cohomology}
We begin with the algebraists view of group cohomology. Let $k$ be a commutative ring and $G$ a group. Sooner
or later, we shall specialise to the case where $k$ is a field and $G$ is finite, but for the moment that is not necessary.

If $M$ and $N$ are a $kG$-modules, look at
\[ 
\Ext^*_{kG}(N,M)\,.
\]
We recall that this is defined as follows. Take a projective resolution\index{projective!resolution} 
of $N$ as a $kG$-module
\[ 
\dots \to P_n \to \dots \to P_1 \to P_0 \to 0
\]
with $\Coker(P_1\to P_0)\cong N$. Apply $\Hom_{kG}(-,M)$ to get
\[ 
0 \to \Hom_{kG}(P_0,M)\to \Hom_{kG}(P_1,M) \to \dots 
\]
and the cohomology of this complex in $n$th place is $\Ext^n_{kG}(N,M)$.

If $k$ is a commutative ring of coefficients and $M$ is a $kG$-module, we define
\[ 
H^*(G,M)=\Ext^*_{\bbZ G}(\bbZ,M)\cong \Ext^*_{kG}(k,M). 
\]
The isomorphism holds because, if $P$ is a projective resolution of $\bbZ$ over $\bbZ G$, then $k\otimes_{\bbZ}P$ is a projective resolution of $k$ over $kG$, and adjunction gives an isomorphism of complexes $\Hom_{\bbZ G}(P,M)\cong\Hom_{kG}(k\otimes_{\bbZ}P,M)$.

There are two ways to define products in group cohomology, namely \emph{cup products}\index{cup product} and 
\emph{Yoneda products}\index{Yoneda product}. When they are both defined, they agree.

The cup product defines maps
\[
H^i(G,M) \otimes_k H^j(G,N) \to H^{i+j}(G,M\otimes_k N). 
\]
whereas the Yoneda product defines maps
\[ 
\Ext^i_{kG}(M,L)\otimes_k \Ext^j_{kG}(N,M)\to \Ext^{i+j}_{kG}(N,L). 
\]
These agree when $L=M=N=k$, giving maps
\[ 
H^i(G,k) \otimes_k H^j(G,k) \to H^{i+j}(G,k). 
\]
This makes $H^*(G,k)$ into a  \emph{graded commutative ring},\index{graded commutative!ring}
meaning that the multiplication satisfies 
\[ 
xy = (-1)^{|x||y|}yx. 
\]
Here, $|x|$ denotes the degree of an element $x$. Furthermore,  cup and Yoneda product give the same module structure for $H^*(G,M)$ as a graded $H^*(G,k)$-module.

It is worth making a comment about graded commutativity versus commutativity. If $R$ is a graded commutative ring and $x\in R$ has odd degree then
\[ 
x^2 = - x^2 
\]
and so $2x^2=0$, which implies that $(2x)^2=0$. Furthermore, again using graded commutativity we see that
for all $y\in R$, $(2xy)^2= \pm (2x)^2y^2=0$. So $2x$ is in the \emph{nil radical}\index{nil radical} of $R$, and
it follows that modulo the nil radical, $x$ is congruent to $-x$.

The conclusion of this argument is as follows. If $R$ is graded commutative then $R$ modulo its nil radical is strictly commutative. So it makes sense to talk about the maximal ideal spectrum or the prime ideal spectrum of $R$.

\begin{theorem}[Evens \cite{Evens:1961a} (1961)]\index{Evens' Theorem}
\label{thm:Evens}
Let $G$ be a finite group and $k$ a commutative ring of coefficients. If $M$ is a noetherian $k$-module then $H^*(G,M)$ is a noetherian  $H^*(G,k)$-module. In particular, if $k$ is a noetherian ring then so is $H^*(G,k)$.\qed
\end{theorem}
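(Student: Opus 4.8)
The plan is to follow Venkov's topological argument; a purely algebraic proof is also possible via Evens' multiplicative transfer (the ``norm map''), but that is more laborious to set up. In either approach the goal is the same: to exhibit a graded subalgebra $R\subseteq\HH *{G;k}$ which is noetherian and over which $\HH *{G;M}$ is a finitely generated module for every $kG$-module $M$ that is noetherian over $k$. Granting this, the theorem follows at once: $\HH *{G;M}$ is then a noetherian $R$-module, hence a fortiori a noetherian $\HH *{G;k}$-module; and taking $M=k$ exhibits $\HH *{G;k}$ as a finitely generated module over the noetherian subring $R$, so that its ideals, being $R$-submodules, satisfy the ascending chain condition and $\HH *{G;k}$ is a noetherian ring.

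To construct $R$, choose a faithful complex representation $\rho\colon G\hookrightarrow U(n)$; one exists because $G$ is finite (take, for instance, the regular representation). Viewing $G$ as a subgroup of $U(n)$ acting freely on the contractible space $EU(n)$, the map $EU(n)/G\to EU(n)/U(n)$ realises the classifying map $B\rho$ as a fibre bundle
\[
U(n)/G\lto BG\lto BU(n)
\]
whose fibre $U(n)/G$ is a closed manifold. Since $H^{*}(BU(n);\bbZ)\cong\bbZ[c_{1},\dots,c_{n}]$ is a polynomial ring on the Chern classes, with $|c_{i}|=2i$, and is free over $\bbZ$, the universal coefficient theorem gives $H^{*}(BU(n);N)\cong\bbZ[c_{1},\dots,c_{n}]\otimes_{\bbZ}N$ for every coefficient module $N$; in particular $H^{*}(BU(n);k)\cong k[c_{1},\dots,c_{n}]$. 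Define $R$ to be the image of the edge homomorphism $k[c_{1},\dots,c_{n}]=H^{*}(BU(n);k)\xra{(B\rho)^{*}}\HH *{G;k}$. As $k$ is noetherian, $k[c_{1},\dots,c_{n}]$ is noetherian by the Hilbert basis theorem, hence so is its quotient $R$.

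Now fix a $kG$-module $M$ that is finitely generated over $k$ and run the Leray--Serre spectral sequence of the fibration above with coefficients in the local system determined by $M$. Because $BU(n)$ is simply connected ($\pi_{1}BU(n)=\pi_{0}U(n)=1$), the coefficient system on the base formed by fibrewise cohomology is constant, so the spectral sequence takes the form
\[
E_{2}^{p,q}=H^{p}\bigl(BU(n);H^{q}(U(n)/G;M)\bigr)\cong k[c_{1},\dots,c_{n}]\otimes_{k}H^{q}(U(n)/G;M)\;\Longrightarrow\;\HH{p+q}{G;M},
\]
a spectral sequence of modules over $H^{*}(BU(n);k)=k[c_{1},\dots,c_{n}]$, with this ring acting on the abutment $\HH *{G;M}$ through $R\subseteq\HH *{G;k}$ via the edge map and the module structure. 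Since $U(n)/G$ is a closed manifold, $H^{q}(U(n)/G;M)$ (local coefficients) is finitely generated over $k$ and vanishes for $q$ outside $[0,\dim(U(n)/G)]$; hence $\bigoplus_{p,q}E_{2}^{p,q}$ is a finitely generated module over the noetherian ring $k[c_{1},\dots,c_{n}]$. All differentials being $k[c_{1},\dots,c_{n}]$-linear, $E_{\infty}$ is a subquotient of a noetherian module and so is again finitely generated; and $\HH *{G;M}$ carries a finite filtration with associated graded $E_{\infty}$. Therefore $\HH *{G;M}$ is a finitely generated $k[c_{1},\dots,c_{n}]$-module, hence a finitely generated $R$-module, as required.

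\textbf{Where the work is.} The step needing care is the shape of the spectral sequence: the local system $M$ lives on the total space $BG$ and is \emph{not} pulled back from $BU(n)$, so one must invoke the Leray--Serre spectral sequence of a fibration with arbitrary local coefficients on the total space, whose $E_{2}$-term is $H^{p}(B;\mathcal{H}^{q})$ with $\mathcal{H}^{q}$ the (a priori twisted) coefficient system $b\mapsto H^{q}(\text{fibre over }b;M|_{\text{fibre}})$ on $B$; simple connectivity of $B=BU(n)$ then makes $\mathcal{H}^{q}$ constant and yields the displayed formula. In the algebraic alternative, the analogous heart of the matter is the proof of the norm--restriction formula for the Evens transfer, where essentially all the effort lies.
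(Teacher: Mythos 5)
The paper does not prove this theorem at all --- it is stated with a reference to Evens and a \qed, and used as a black box. Your proposal supplies an actual proof, namely Venkov's topological argument rather than Evens' algebraic one via the norm map, and it is correct in outline and in essentially all details: the fibre bundle $U(n)/G\to BG\to BU(n)$, the identification $H^{*}(BU(n);k)\cong k[c_{1},\dots,c_{n}]$, the Serre spectral sequence with local coefficients (constant on the simply connected base), and the finiteness of the fibre are exactly the right ingredients, and the passage from a noetherian $R$-module to a noetherian $H^{*}(G,k)$-module, and from there to noetherianity of the ring $H^{*}(G,k)$ itself, is handled correctly. Two small points deserve a line each if this were written out in full. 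First, the $k[c_{1},\dots,c_{n}]$-linearity of the differentials is not automatic from ``module spectral sequence'' alone: one should say that $E_{r}(M)$ is a module over the multiplicative spectral sequence $E_{r}(k)$, that the bottom row $E_{2}^{*,0}(k)=H^{*}(BU(n);k)$ consists of permanent cocycles (differentials out of it vanish for degree reasons), and that the Leibniz rule then forces $d_{r}$ to be linear over the image of $H^{*}(BU(n);k)$ on every page; one also needs that the spectral sequence collapses at a finite page (which it does, since $E_{2}^{p,q}=0$ for $q>\dim U(n)/G$) so that $E_{\infty}$ is genuinely a finite subquotient. Second, the first clause of the theorem does not assume $k$ noetherian, only that $M$ is a noetherian $k$-module, whereas your invocation of the Hilbert basis theorem does; this is repaired by replacing $k$ with $k/\ann_{k}(M)$, which is a noetherian ring (it embeds in a finite direct sum of copies of $M$), noting that the $H^{*}(G,k)$-action on $H^{*}(G,M)$ factors through $H^{*}(G,k/\ann_{k}(M))$, so noetherianity over the latter gives noetherianity over the former. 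Neither point is a gap in the strategy, only in the write-up.
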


We're interested in the case where $k$ is a field of characteristic $p$. In this case, Evens' theorem implies that 
$H^*(G,k)$ is a finitely generated graded commutative $k$-algebra. If $\Char(k)$ is zero or does not divide $|G|$ then there's nothing interesting here. You just get $k$ in degree zero. More generally, for any commutative ring of coefficients $k$,  $|G|$ annihilates positive degree elements.

The cohomology of elementary abelian groups was described in Section~\ref{ssec:gc}. Here is another interesting example; more are given later on in this section.

\begin{example} 
\index{Mathieu group $M_{11}$}
Let $G=M_{11}$, the Mathieu group, and suppose $\Char(k)=2$. Then $H^*(G,k)=k[x,y,z]/(x^2y+z^2)$ where
$|x|=3$, $|y|=4$ and $|z|=5$.
\end{example}

To get further with the commutative algebra of the cohomology ring, we next turn to Quillen's stratification theorem\index{Quillen!stratification theorem} \cite{Quillen:1971a,Quillen:1971b}. The first aspect is a characterisation of nilpotent elements, which should remind you of Chouinard's theorem.

\begin{theorem}[Quillen]
Let $G$ be a  finite group and $k$ a field of characteristic $p$. An element of $H^*(G,k)$ is nilpotent if and only if its restriction to every elementary abelian $p$-subgroup is nilpotent.\qed
\end{theorem}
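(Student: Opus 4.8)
The forward implication is immediate: for $E\le G$ the restriction $\res^{G}_{E}\col H^{*}(G,k)\to H^{*}(E,k)$ is a homomorphism of graded $k$-algebras, so it sends nilpotent elements to nilpotent elements. For the converse, note first that since $z$ is nilpotent iff some power of it is, and since there are only finitely many conjugacy classes of elementary abelian $p$-subgroups (with restriction to conjugate subgroups differing by a ring isomorphism), we may --- after replacing $z$ by a suitable power --- assume outright that $\res^{G}_{E}z=0$ for every elementary abelian $p$-subgroup $E$; this puts us in the situation of the Quillen--Venkov lemma, whose conclusion is precisely that such a $z$ is nilpotent. We may also assume $G$ is a $p$-group: for a Sylow $p$-subgroup $P\le G$ the composite $\mathrm{cor}^{G}_{P}\comp\res^{G}_{P}$ is multiplication by $[G:P]$, a unit in $k$, so $\res^{G}_{P}$ is injective and $z$ is nilpotent iff $\res^{G}_{P}z$ is; and since every elementary abelian $p$-subgroup of $G$ is $G$-conjugate into $P$, the hypothesis transfers between $G$ and $P$.

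Now I would argue by induction on $|G|$, with $G$ a $p$-group. If $G$ is elementary abelian, take $E=G$ to conclude $z=0$. Otherwise choose a maximal subgroup $M$; it is normal of index $p$, say $M=\ker x$ for a nonzero $x\in H^{1}(G,k)$. The extension $1\to M\to G\to\bbZ/p\to 1$ provides, via its Lyndon--Hochschild--Serre spectral sequence (equivalently, a Gysin-type exact sequence), an ``Euler class'' $u\in H^{c}(G,k)$ --- with $c=1$, $u=x$ when $p=2$, and $c=2$, $u=\beta x$ when $p$ is odd --- fitting into an exact sequence
\[
\cdots\to H^{n}(G,k)\xra{\res^{G}_{M}} H^{n}(M,k)\xra{\mathrm{cor}^{G}_{M}} H^{n}(G,k)\xra{\,\cdot\,u\,} H^{n+c}(G,k)\xra{\res^{G}_{M}}\cdots.
\]
Every elementary abelian subgroup of $M$ is one of $G$, so the inductive hypothesis applies to $\res^{G}_{M}z$, which is therefore nilpotent; fixing $N$ with $\res^{G}_{M}(z^{N})=0$, exactness gives $z^{N}=u\cdot w$ for some $w$. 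Carrying this out for all maximal subgroups $M_{1},\dots,M_{s}$ of $G$, with Euler classes $u_{1},\dots,u_{s}$ and a common $N$, one multiplies these relations to get $z^{Ns}=\bigl(\textstyle\prod_{j}u_{j}\bigr)w'$ for some $w'$. Finally $\prod_{j}u_{j}$ is nilpotent --- this is Serre's theorem that in a $p$-group which is not elementary abelian a suitable product of Bocksteins of degree-one classes (for $p=2$, of degree-one classes themselves) vanishes, and the $u_{j}$ run over a full set of these --- so $z^{Ns}$, and hence $z$, is nilpotent.

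The crux is this last inductive step. Restricting $z$ to a single index-$p$ subgroup and applying induction does not finish the argument, because the Gysin sequence controls nothing about the elementary abelian subgroups sitting inside that subgroup; one is forced to use all of the maximal subgroups simultaneously, and one then needs the genuinely nontrivial input that their combined Euler classes multiply to a nilpotent element. This is where Serre's relation enters; alternatively, following Quillen and Venkov one runs the spectral sequence of a central $\bbZ/p$-quotient and splits into the cases where its kernel does, or does not, lie in the Frattini subgroup, handling the former by the spectral sequence and the latter by the K\"unneth formula. Everything else is routine: the transfer argument, the long exact sequence of a cyclic quotient, and such facts as that a finite sum of nilpotent elements in a graded commutative ring is again nilpotent. (The theorem can also be extracted from the geometric form of Quillen's stratification --- the hypersurface $\{z=0\}$ in $\Proj H^{*}(G,k)$ is the whole space exactly when its preimage in each $\Proj H^{*}(E,k)$ is --- but that stratification is usually proved with the help of the present statement, so the Quillen--Venkov route is the natural one here.)
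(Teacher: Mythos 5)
The paper itself gives no proof of this theorem---it is quoted from Quillen with a \texttt{\char92 qed}---so the only question is whether your argument stands on its own. Its skeleton (reduce to the restrictions being zero, pass to a Sylow $p$-subgroup by transfer, induct on the order of a $p$-group using all maximal subgroups simultaneously, finish with Serre's theorem on products of Bocksteins) is exactly the Quillen--Venkov proof, and for $p=2$ everything you write is correct: the four-term sequence
\[
\cdots\to H^{n}(G,k)\xra{\ \res\ } H^{n}(M,k)\xra{\ \mathrm{cor}\ } H^{n}(G,k)\xra{\ \cdot x\ } H^{n+1}(G,k)\to\cdots
\]
really is exact, being the long exact sequence of the extension of $kG$-modules $0\to k\to k[G/M]\to k\to 0$ combined with Shapiro's lemma, so $\res^{G}_{M}(z^{N})=0$ does give $z^{N}\in x\cdot H^{*}(G,k)$. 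The base case, the transfer reduction, and the observation that only \emph{nilpotence} of the product of Euler classes is needed are all fine.

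For odd $p$, however, the exact sequence you invoke is false, and with it the key step ``$\res^{G}_{M}(z^{N})=0$ implies $z^{N}=u\cdot w$''. Take $G=(\bbZ/p)^{2}$ with $H^{*}(G,k)=\Lambda(x_{1},x_{2})\otimes k[y_{1},y_{2}]$, $y_{i}=\beta x_{i}$, and $M=\ker x_{1}$: the class $x_{1}y_{2}$ restricts to zero on $M$ but is not a multiple of $u=\beta x_{1}=y_{1}$; indeed $\ker\res^{G}_{M}$ is the ideal $(x_{1},y_{1})$, strictly larger than $y_{1}H^{*}(G,k)$, so your sequence fails to be exact at $H^{*}(G,k)$. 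What is true---and is the actual content of the Quillen--Venkov lemma---is the weaker statement that $\res^{G}_{M}z=0$ forces $z^{2}\in\beta(x)\cdot H^{*}(G,k)$: in the Lyndon--Hochschild--Serre spectral sequence of $1\to M\to G\to\bbZ/p\to1$ the vanishing of the restriction (the edge map to $E_{\infty}^{0,*}$) puts $z$ in filtration $\geq 1$, hence $z^{2}$ in filtration $\geq 2$, and everything of filtration $\geq 2$ lies in $\beta(x)H^{*}(G,k)$ because $H^{s}(\bbZ/p,-)$ for $s\geq 2$ is obtained from $H^{s-2}(\bbZ/p,-)$ by cup product with the degree-two periodicity class, which is a permanent cycle detected by $\beta(x)$. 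Replacing ``$z^{N}=u_{j}w_{j}$'' by ``$z^{2N}\in\beta(x_{j})H^{*}(G,k)$'' for each maximal subgroup $M_{j}=\ker x_{j}$, the remainder of your argument goes through unchanged. So the spectral-sequence argument you relegate to an ``alternative'' in your closing paragraph is not optional for odd $p$: it is the proof. (Note that my example is consistent with the conclusion you need, since $(x_{1}y_{2})^{2}=0$---which is exactly why the lemma is stated for $z^{2}$ and not for $z$.)
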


The next aspect is a determination of the Krull dimension\index{Krull dimension!of $H^*(G,k)$} of cohomology.

\begin{theorem}[Quillen]
The Krull dimension of $H^*(G,k)$ is equal to the $p$-rank of $G$, namely the largest $r$ for which $(\bbZ/p)^r\le G$.\qed
\end{theorem}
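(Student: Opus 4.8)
The plan is to compare $H^*(G,k)$ with the product of the cohomology rings of the elementary abelian $p$-subgroups of $G$ --- whose Krull dimensions are already determined by Propositions~\ref{prop:gc-eab2} and \ref{prop:gc-eabp} --- and to show that this comparison is, modulo nilpotents, a module-finite ring extension, hence preserves Krull dimension.

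Let $\mathcal E$ be the set of elementary abelian $p$-subgroups of $G$; since $G$ is finite, $\mathcal E$ is finite, and by definition the $p$-rank of $G$ is $r:=\max_{E\in\mathcal E}\rank(E)$. The restriction maps assemble into a homomorphism of graded $k$-algebras $\rho\col H^*(G,k)\to \prod_{E\in\mathcal E}H^*(E,k)$. By the theorem of Quillen stated just above, an element of $H^*(G,k)$ is nilpotent if and only if each of its restrictions to the subgroups $E\in\mathcal E$ is nilpotent; since the nilradical of a finite product is the product of the nilradicals, $\rho$ induces an \emph{injective} homomorphism
\[
\bar\rho\col H^*(G,k)/\sqrt 0 \hookrightarrow \prod_{E\in\mathcal E}\bigl(H^*(E,k)/\sqrt 0\bigr)=:S\,,
\]
where $\sqrt 0$ denotes the nilradical in each case.

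Next I would check that $S$ is module-finite over $\bar R:=H^*(G,k)/\sqrt 0$. For each $E\in\mathcal E$ the Eckmann--Shapiro lemma identifies $H^*(E,k)$ with $H^*(G,k[G/E])$ as a module over $H^*(G,k)$, and $k[G/E]$ is a finite-dimensional $kG$-module; so Evens' theorem (Theorem~\ref{thm:Evens}) shows that $H^*(E,k)$ is a noetherian $H^*(G,k)$-module. A finite product and a quotient of noetherian modules are again noetherian, so $S$ is a finitely generated $H^*(G,k)$-module. A nilpotent element of $H^*(G,k)$ restricts to a nilpotent element of each $H^*(E,k)$, hence acts as zero on $S$; thus the module structure factors through $\bar R$, and since $\bar\rho$ exhibits $\bar R$ as a subring of $S$, the ring $S$ is module-finite --- in particular integral --- over $\bar R$.

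Integral ring extensions preserve Krull dimension, and passing to the reduced ring does not change it, so
\[
\dim H^*(G,k)=\dim \bar R=\dim S=\max_{E\in\mathcal E}\dim\bigl(H^*(E,k)/\sqrt 0\bigr)\,.
\]
By Propositions~\ref{prop:gc-eab2} and \ref{prop:gc-eabp}, $H^*(E,k)/\sqrt 0$ is a polynomial ring over $k$ on $\rank(E)$ generators --- in characteristic two $H^*(E,k)$ is already reduced, while for odd $p$ the degree-one exterior generators are exactly the nilpotents being quotiented out --- so its Krull dimension is $\rank(E)$. Therefore $\dim H^*(G,k)=\max_{E\in\mathcal E}\rank(E)=r$, the $p$-rank of $G$. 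The step I expect to be the main obstacle is the module-finiteness of $S$: one has to reinterpret $H^*(E,k)$ as the cohomology of $G$ with coefficients in the finite-dimensional module $k[G/E]$ in order to apply Evens' finiteness theorem, and then keep careful track that the nilradical of $H^*(G,k)$ annihilates $S$, so that $S$ is genuinely integral over the reduced ring $\bar R$ rather than merely a finite module over $H^*(G,k)$. The one genuinely deep input, Quillen's characterisation of nilpotent elements, enters only as a cited black box.
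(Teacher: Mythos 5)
Your argument is correct. There is nothing in the paper to compare it with: the statement is quoted as Quillen's theorem with a \verb|\qed| and a pointer to \cite{Quillen:1971a,Quillen:1971b}, so the paper treats it as a black box. What you have written is the standard deduction of the dimension formula from the two deep inputs you cite, and the two points you flag as delicate are indeed the crux, handled correctly: the identification of $H^*(E,k)$ with $H^*(G,k[G/E])$ as an $H^*(G,k)$-module (Eckmann--Shapiro together with the agreement of induction and coinduction for finite groups, as in Definition~\ref{defn:induction-restriction}) is exactly what allows Evens' theorem~\ref{thm:Evens} to be applied with coefficients in the finite-dimensional module $k[G/E]$; and the observation that the nilradical of $H^*(G,k)$ kills $S$ is what upgrades ``finitely generated over $H^*(G,k)$'' to ``integral over the subring $\bar R$'', after which incomparability and going-up give $\dim\bar R=\dim S$. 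Two minor points to make explicit. First, $\mathcal E$ should be taken to contain the trivial subgroup (rank $0$), which disposes of the degenerate case $p\nmid|G|$, where both sides of the equality vanish. Second, the final count uses $\dim\prod_{E}R_E=\max_E\dim R_E$ for a finite product and the invariance of Krull dimension under killing nilpotents; combined with your computation that $H^*(E,k)/\sqrt{0}$ is a polynomial ring on $\rank(E)$ generators (Propositions~\ref{prop:gc-eab2} and \ref{prop:gc-eabp}), this completes the chain of equalities as you wrote it.
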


More generally, Quillen described the prime ideal spectrum of $H^*(G,k)$ in terms of the set of elementary abelian subgroups of $G$, and their conjugations and inclusions. In his paper, he talks in terms of the inhomogeneous
maximal ideal spectrum, so we shall begin by doing the same. Since $H^*(G,k)$ modulo its nil radical is a finitely generated commutative $k$-algebra, Hilbert's Nullstellensatz\index{Hilbert's Nullstellensatz} tells us that the
prime ideal spectrum is determined by the maximal ideal spectrum and vice versa.

\begin{theorem}[Quillen]
\pushQED{\qed}
Let $V_G=\Max H^*(G,k)$, the spectrum of maximal ideals in $H^*(G,k)$. Then at the level of sets we have
\[ 
V_G=\varinjlim \Max H^*(E,k). \qedhere
\]
\end{theorem}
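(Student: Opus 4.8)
The plan is to realise this equality as the effect on maximal--ideal spectra of a restriction map that is an \emph{$F$-isomorphism} of graded-commutative $\bbF_p$-algebras, the essential tool being the Evens norm map. First I would assemble the comparison map: for an elementary abelian $p$-subgroup $E\le G$, restriction $\res^G_E\colon H^*(G,k)\to H^*(E,k)$ induces $\Max H^*(E,k)\to\Max H^*(G,k)=V_G$; inclusions $E'\le E$ and conjugations $c_g\colon E\xra{\sim}{}^{g}E$ induce compatible maps among the $H^*(E,k)$, and since inner automorphisms act trivially on $H^*(G,k)$ everything is compatible over $V_G$. Organising the elementary abelian subgroups into the category $\mathcal{A}(G)$ whose morphisms are generated by inclusions and conjugations (so $\mathcal{A}(G)$ has finitely many isomorphism classes, and every endomorphism in it is invertible), I get a map
\[
\Phi\colon\varinjlim_{E\in\mathcal{A}(G)}\Max H^*(E,k)\lto V_G\,.
\]
By Evens--Venkov (Theorem~\ref{thm:Evens}), $H^*(G,k)$ and each $H^*(E,k)$ are finitely generated $k$-algebras modulo nilpotents, so Hilbert's Nullstellensatz lets me pass freely between maximal ideals, homogeneous primes, and points of affine varieties, and the colimit on the left is the quotient of a finite disjoint union.

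The heart of the argument is to show that the restriction map
\[
r\colon H^*(G,k)\lto\varprojlim_{E\in\mathcal{A}(G)}H^*(E,k)
\]
is an $F$-isomorphism: $\ker r$ is nilpotent, and every element of the target has a $p$-th power in the image. Nilpotence of $\ker r$ is exactly Quillen's detection theorem stated above, which I would prove by induction on $|G|$, reducing along a subgroup of index $p$ and using the transfer together with the relation $\mathrm{cor}(\res(\zeta)\cdot x)=\zeta\cdot\mathrm{cor}(x)$ and the explicit cohomology of $\bbZ/p$. For the second property I would use the Evens norm map $N^G_E\colon H^*(E,k)\to H^*(G,k)$: given a compatible family $(\zeta_E)$ in $\varprojlim H^*(E,k)$, a suitable product of norms over coset representatives produces a genuine class on $G$ whose restriction to each elementary abelian subgroup recovers $\zeta_E$ up to a bounded $p$-power and modulo nilpotents. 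I expect this norm construction, together with the verification that the local contributions glue correctly over all of $\mathcal{A}(G)$, to be the main obstacle --- it is the true content of Quillen's stratification, and is the step that really distinguishes his theorem from the mere detection statement.

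Finally I would conclude by pure commutative algebra: an $F$-isomorphism $\phi\colon A\to B$ of commutative $\bbF_p$-algebras induces a bijection $\Spec B\to\Spec A$. Indeed, writing $A'=\phi(A)$, the quotient $A\twoheadrightarrow A'$ has nilpotent kernel, hence bijective $\Spec$; and $B$ is integral over $A'$ since each $b\in B$ satisfies $X^{p^{n}}-b^{p^{n}}=0$ with $b^{p^{n}}\in A'$, so $\Spec B\to\Spec A'$ is surjective by lying over and injective because a prime $\fq\subset B$ is recovered as $\{b\mid b^{p^{n}}\in\fq\cap A'\}$. Applying this to $r$ yields a bijection $\Spec\bigl(\varprojlim_{\mathcal{A}(G)}H^*(E,k)\bigr)\xra{\sim}\Spec H^*(G,k)$; identifying the spectrum of this finite inverse limit with the colimit $\varinjlim_{E}\Spec H^*(E,k)$ --- where one uses the finiteness of $\mathcal{A}(G)$ and invertibility of its endomorphisms --- and passing back to maximal ideals via the Nullstellensatz shows that $\Phi$ is a bijection. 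A little extra care with the Zariski topologies then upgrades this to a homeomorphism.
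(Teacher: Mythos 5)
Your overall route is the right one, and it is the one the paper itself gestures at: the notes do not prove this theorem (it is quoted from Quillen with a reference), and immediately afterwards they record exactly your reformulation, namely that restriction induces an $F$-isomorphism $H^*(G,k)\to\varprojlim_E H^*(E,k)$ and that an $F$-isomorphism of finitely generated graded-commutative $k$-algebras induces a bijection on maximal ideal spectra. Your commutative-algebra lemma (nilpotent kernel plus integrality via $p^n$-th powers gives a bijection on $\Spec$) is correct, and your sketch of the detection statement by transfer and of surjectivity-up-to-$p$-powers via the Evens norm is the standard outline, though you rightly flag that the norm argument is not carried out and is the technical heart.

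There is, however, a genuine gap in your final step beyond the admitted placeholder. The $F$-isomorphism only identifies $\Max H^*(G,k)$ with $\Max\bigl(\varprojlim_{\mathcal A(G)}H^*(E,k)\bigr)$; the theorem asserts $V_G=\varinjlim_E\Max H^*(E,k)$, and the comparison map $\varinjlim_E\Max H^*(E,k)\to\Max\bigl(\varprojlim_E H^*(E,k)\bigr)$ is not a bijection for formal reasons. Surjectivity needs an argument that every point of the inverse-limit ring is pulled back from some $E$, and injectivity is precisely Quillen's stratification statement: if points of $\Max H^*(E,k)$ and $\Max H^*(E',k)$ have the same image in $V_G$, one must show they are related by an inclusion followed by a conjugation, i.e.\ that the pairs $(E,\fq)$ in which a given point originates are all $G$-conjugate, with $N_G(E)/C_G(E)$ accounting for the fibres. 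This is what the notes later record as Theorem~\ref{th:origin} and Theorem~\ref{th:QST-max}(2), and it does not follow from ``finiteness of $\mathcal A(G)$ and invertibility of its endomorphisms''; it requires a separate geometric argument (in Quillen's proof, an equivariant analysis of the strata $V_{G,E}^{+}$, again using norms/transfers). As it stands, your proposal is a correct outline of Quillen's strategy with its two hardest steps --- the norm construction and the conjugacy/gluing analysis --- left unproved, so it should be regarded as a plan rather than a proof.
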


The limit is over the ``Quillen category''\index{Quillen!category} whose objects are the elementary abelian $p$-subgroups of $G$ and whose arrows are group homomorphisms induced by composing conjugations in $G$ and inclusions. 

It follows from Propositions~\ref{prop:gc-eab2} and \ref{prop:gc-eabp} that $\Max H^*(E,k)$ is an affine space of dimension equal to $\rank(E)$, and this describes $\Max H^*(G,k)$ as layers of affine spaces quotiented by  actions of normalisers and glued together.

In the last lecture we shall give a more precise statement of Quillen's stratification theorem, because it is an essential  ingredient in the classification of localising subcategories of the stable module category.

\begin{remark}
The homogeneous prime spectrum of a nonstandardly graded ring can be quite confusing. For example its spectrum can have singularities even though the ring itself is regular in the commutative algebra sense. An explicit example of this is the ring $k[x,y,z]$ with $|x| = 2, |y| = 4, |z| = 6$. The affine open patch corresponding to $y\ne  0$ has coordinate ring generated by $\alpha = x^{2}/y$, $\beta = xz/y^{2}$, and $\gamma = z^{2}/y^{3}$, with the single relation $\alpha\gamma=\beta^{2}$. This has a singularity at the origin. This example arises as $H^{*}(G, k)$ modulo its nil radical with $G = (Z/p)^{3} \rtimes \Sigma_{3}$ for $p\geq 5$, and $k$ a field of characteristic $p$.
\end{remark}

\subsection{Topology}

Next we examine group cohomology from the point of view of
an algebraic topologist.
Strictly speaking, this is not necessary for the development of
the subject of these lectures, but it would be a bad idea to know about 
group cohomology only from an algebraic standpoint, as many of
the ideas in the subject come from algebraic topology.

Let $EG$\index{EG@$EG$} be a contractible space with a free $G$-action. Such a space always exists, even for a topological group, by a theorem of Milnor. The quotient space by the action of $G$, denoted $BG=EG/G$ is\index{BG@$BG$}  independent of choice of $EG$, up to homotopy. This space $BG$ is called the \emph{classifying space}\index{classifying space} of the group $G$.

The topologist's definition of group cohomology is the cohomology of
the space $BG$: 
\[ 
H^*(G,k)=H^*(BG;k). 
\]
We reconcile this with the algebraic definition
\[ 
H^*(G,k)=\Ext^*_{\bbZ G}(\bbZ,k) 
\]
as follows: The singular chain complex $C_*(EG)$ has a $\bbZ$-basis consisting of the singular simplices; in degree $n$ these are the continuous maps $\Delta^n\to EG$ where $\Delta^n$ is a standard simplex. Since $EG$ is contractible, this is an acyclic complex except in degree zero, where it augments onto $\bbZ$. Since the action of $G$ on singular simplices is free, $C_*(EG)$ is a complex of free $\bbZ G$-modules. This means that it is a \emph{free resolution}\index{free resolution} of $\bbZ$ as a $\bbZ G$-module.
So we have
\begin{align*}
H^*(BG;k)&=H^*(\Hom_\bbZ(C_*(BG),k))\\
&=H^*(\Hom_{\bbZ G}(C_*(EG),k))\\
&\cong\Ext^*_{\bbZ G}(\bbZ,k).
\end{align*}

\subsection{Examples}

\begin{example}
\index{cohomology!$\bbZ$}
Our first example is an infinite group. Let $G=\bbZ$, the additive group of integers. We can take $EG=\bbR$
with $G$ acting by translation. Then $BG=\bbR/\bbZ=S^1$, the circle.
Thus
\[ 
H^0(\bbZ,k)\cong H^1(\bbZ,k)\cong k,\qquad H^i(\bbZ,k)=0 \text{ for } i\ge 2. 
\]
\end{example}

\begin{example}
Let $G=\bbZ/2$, the cyclic group of order two. Then $G$ acts freely on
an $n$-sphere $S^n$ by sending each point to the antipodal point. This is
not a contractible space, but if we embed $S^n$ in $S^{n+1}$ equatorially
then it contracts down to the south pole. So taking the union of all
the $S^n$, each embedded equatorially in the next, we get the infinite
sphere $S^\infty$ with the weak topology with respect to the subspaces $S^n$.
This is then a contractible space with a free $G$-action, so we take
$EG=S^\infty$. Then the quotient is $BG=\bbR P^\infty$, the infinite real
projective space, again with the weak topology.
If $k$ is a field of characteristic two then 
\[
H^*(G,k)=H^*(\bbR P^\infty;k)\cong k[x] 
\]
with $|x|=1$. See \ref{ssec:gc} for an algebraic perspective on this example and the next.
\end{example}

\begin{example}
\index{Klein four group!cohomology}
For use later in this lecture, we next look at the Klein four group\index{Klein four group} $G=\bbZ/2\times\bbZ/2$. In this case we can take $EG=S^\infty\times S^\infty$ and $BG=\bbR P^\infty \times \bbR P^\infty$.
If $\Char(k)=2$ then using the K\"unneth theorem we obtain
\[ 
H^*(G,k)=H^*(\bbR P^\infty\times \bbR P^\infty;k)\cong H^*(\bbR P^\infty;k)\otimes H^*(\bbR P^\infty;k)\cong k[x,y] 
\] 
with $|x|=|y|=1$. 
\end{example}

\begin{example}
\index{cohomology!quaternions}
Let $G=Q_8$, the quaternions, viewed as the vectors of length one in $SU(2)\cong S^3$. Then $G$ acts freely on $S^3$ by left multiplication. Taking cellular chains $C_*(S^3)$ and splicing in the homology gives us a
chain complex
\[ 
0 \to \bbZ \to C_3\to C_2 \to C_1 \to C_0 \to \bbZ \to 0 
\]
where $C_0,\dots,C_3$ are free $\bbZ G$-modules.

Form an infinite splice of copies of this complex:
\[ 
\xymatrix@=5mm{\dots\to C_1\ar[r]&C_0\ar[rr]\ar[dr]&&C_3\ar[r]&C_2\to C_1\to 
C_0\to \bbZ\to 0\\&&\bbZ\ar[ur]\ar[dr]\\&0\ar[ur]&&0} 
\]
to obtain a free resolution of $\bbZ$ as a $\bbZ G$-module.

From the existence of this periodic resolution, we can deduce that for any coefficients $k$, the cohomology
$H^*(Q_8,k)$ is periodic with period $4$. The interesting case for us is when $k$ is a field of characteristic $2$.
In this case the periodicity is given by multiplication by an element $z\in H^4(Q_8,k)$, and we have 
\[
H^*(Q_8,k)/(z)\cong H^*(S^3/Q_8;k)\,.
\]
\end{example}

The same argument proves the following general theorem.

\begin{theorem}
If $G$ acts freely on $S^{n-1}$ then $H^*(G,k)$ is periodic with period dividing $n$\index{free action on sphere}.\qed
\end{theorem}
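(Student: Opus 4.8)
\emph{Approach.} The plan is to imitate the computation just carried out for $Q_8$: a free $G$-action on $S^{n-1}$ lets one build a free resolution of the trivial module that is \emph{literally} periodic, with period $n$, and periodicity of $\HH *{G;k}$ then falls out by applying $\Hom$ into $k$. The only genuinely new input beyond the $Q_8$ discussion is that the cellular chain complex of a sphere has homology $\bbZ$ in the top and bottom degrees and nothing in between.

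\emph{The resolution.} Fix a $G$-CW structure on $S^{n-1}$ (such a structure exists, for instance, whenever the action is smooth). Since the action is free, the cellular chain complex $C_*=C_*(S^{n-1};\bbZ)$ is a complex of \emph{free} $\bbZ G$-modules concentrated in degrees $0,\dots,n-1$, and as a chain complex it has the homology of a sphere: $H_0(C_*)\cong\bbZ$ via the augmentation $\varepsilon\colon C_0\to\bbZ$, $H_{n-1}(C_*)\cong\bbZ$, and $H_i(C_*)=0$ for $0<i<n-1$. Because there are no chains in degree $n$, the subgroup of $(n-1)$-cycles coincides with $H_{n-1}(C_*)$, so one gets an inclusion $\bbZ\lto C_{n-1}$. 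Stitching these together yields an exact sequence of $\bbZ G$-modules
\[ 0\to\bbZ\lto C_{n-1}\lto\cdots\lto C_0\xra{\ \varepsilon\ }\bbZ\to 0. \]
Splicing countably many copies of this sequence along the composite $C_0\xra{\varepsilon}\bbZ\lto C_{n-1}$ produces a resolution $P_*\to\bbZ$ by free $\bbZ G$-modules which is $n$-periodic: $P_{i+n}=P_i$ with identical differentials in positive degrees.

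\emph{Extracting periodicity.} Base change: $k\otimes_{\bbZ}P_*$ is a free $kG$-resolution of the trivial module $k$, so $\HH *{G;k}=H^*(\Hom_{kG}(k\otimes_{\bbZ}P_*,k))$, and the $n$-periodicity of $P_*$ makes this cochain complex $n$-periodic; hence $\HH{i+n}{G;k}\cong\HH i{G;k}$ for all $i\ge 1$ (and, for $G$ finite, for all $i\in\bbZ$ once one passes to Tate cohomology). Moreover this isomorphism is multiplication by a distinguished class $z\in\HH n{G;k}$: namely the mod-$p$ reduction of the $n$-fold Yoneda extension class in $\Ext^n_{\bbZ G}(\bbZ,\bbZ)=\HH n{G;\bbZ}$ represented by the spliced exact sequence displayed above. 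Yoneda (equivalently cup) product with $z$ realises the periodicity, exactly as multiplication by $z\in H^4(Q_8,k)$ did in the example; since $P_*$ has period exactly $n$, the minimal period of $\HH *{G;k}$ divides $n$.

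\emph{Main obstacle.} The delicate point — and the place where ``period dividing $n$'' must be read with care — is orientation. If the $G$-action on $S^{n-1}$ does not preserve orientation, then $H_{n-1}(C_*)$ is the orientation $\bbZ G$-module $\bbZ^{w}$ rather than the trivial module, so the spliced sequence is ``twisted'' and one obtains periodicity of period dividing $2n$ in general; over a field of characteristic two, however, $\bbZ^{w}\otimes k\cong k$, the twist disappears, and one recovers period dividing $n$ — which is the case of interest here. One should also be slightly careful about the existence of a $G$-CW (or free simplicial) structure on $S^{n-1}$ so that $C_*$ is finitely generated and the splicing genuinely repeats; granting this, the argument is exactly the one used for $Q_8$ and I expect no further obstacles.
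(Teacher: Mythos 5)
Your proposal is correct and is essentially the paper's own argument: the paper's ``proof'' is literally the remark that the $Q_8$ computation generalises, i.e.\ one splices the (equivariant, free) chain complex of the sphere into an $n$-periodic free resolution of $\bbZ$ over $\bbZ G$ and applies $\Hom_{kG}(-,k)$. The one point where you go beyond the paper --- the orientation twist on $H_{n-1}$ --- can be discharged completely rather than only in characteristic two: for a free action of a finite group the Lefschetz fixed point theorem gives $\deg(g)=(-1)^n$ for every $g\neq 1$, so an orientation-reversing element can occur only when $n$ is odd, which forces $G\cong\bbZ/2$, whose cohomology has period $1$ over every field; in all other cases $H_{n-1}(S^{n-1})$ is the trivial module and your untwisted splice gives period dividing $n$ directly.
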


\begin{example}
If $G=\bbZ/2\times\bbZ/2$ then $G$ cannot act freely on any sphere of any dimension, because we computed $H^*(G,k)=k[x,y]$ and this is not periodic.
\end{example}

\subsection{Cohomological varieties}
\index{cohomological variety}
\label{ssec:Cohomological Varieties}

\begin{definition}
Let $G$ be a finite group and $k$ a field of characteristic $p$.
If $M$ is a finitely generated $kG$-module, we have a map
\[ 
H^*(G,k)=\Ext^*_{kG}(k,k)\xrightarrow{- \otimes_k M} \Ext^*_{kG}(M,M). 
\]
Let $I_M$ be the kernel of this map, an ideal of $H^*(G,k)$. Then
we define $V_G(M)$ to be the subvariety of $V_G=\Max\ H^*(G,k)$ 
determined by this ideal.
\end{definition}

The following properties should be compared with the properties of the rank variety $V^r_E(M)$ in Section~\ref{ssec:Rank varieties}.

\begin{theorem}
\label{th:VGM}
Let $M,N$ be finitely generated $kG$-modules.
\begin{enumerate}
\item $V_G(M)$ is a closed homogeneous subvariety of $V_G$.
\item $V_G(M)=\{0\}$ if and only if $M$ is projective.
\item $V_G(M\oplus N)=V_G(M)\cup V_G(N)$.
\item $V_G(M\otimes_k N)=V_G(M) \cap V_G(N)$.
\item $\dim V_G(M)$ measures the rate of growth of the minimal 
projective resolution of $M$.
\item If $G=E$ is elementary abelian then there is an isomorphism
$V_E\cong \bbA^r(k)$ taking $V_E(M)$ to $V^r_E(M)$ for every $M$.

Note: for $p$ odd, this isomorphism involves a 
Frobenius twist.\index{Frobenius!twist} \qed
\end{enumerate}
\end{theorem}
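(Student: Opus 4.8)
The organising observation will be that, for $M\neq0$, there is an identification
\[
V_G(M)=\Supp_{H^*(G,k)}\Ext^*_{kG}(M,M)\,,
\]
where $\Ext^*_{kG}(M,M)$ is regarded as a graded module over $H^*(G,k)$ through the cup product $-\otimes_kM$ (equivalently, through Yoneda composition with the image of this ring homomorphism). First I would record that $-\otimes_kM\colon H^*(G,k)\to\Ext^*_{kG}(M,M)$ is a homomorphism of graded-commutative rings with central image, so that $\zeta\cdot\alpha=(\zeta\otimes_kM)\circ\alpha$; this shows $I_M=\ann_{H^*(G,k)}\Ext^*_{kG}(M,M)$ and hence the displayed equality. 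Since $\Ext^*_{kG}(M,M)\cong H^*(G,\Hom_k(M,M))$ — resolve $k$ and apply $-\otimes_kM$, using that tensoring a projective with $M$ remains projective — Evens' theorem (Theorem~\ref{thm:Evens}) makes this a finitely generated $H^*(G,k)$-module, so its support is a Zariski-closed subset of $V_G$, and it is a cone because $I_M$ is a homogeneous ideal. This is (1). For (3) I would decompose $\Ext^*_{kG}(M\oplus N,M\oplus N)$ into the four summands $\Ext^*_{kG}(X,Y)$ with $X,Y\in\{M,N\}$; each of the mixed summands is annihilated by both $I_M$ and $I_N$, because $\zeta$ acts on them by pre- or post-composition with $\zeta\otimes_k(-)$, so $\Supp\Ext^*_{kG}(X,Y)\subseteq V_G(M)\cap V_G(N)$, and taking supports gives $V_G(M\oplus N)=V_G(M)\cup V_G(N)$.

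Next I would deal with (5) and deduce (2) from it. Define the complexity $cx_G(M)$ as the least $c$ with $\dim_kP_n=O(n^{c-1})$ along a minimal projective resolution $P_\bullet\to M$; this equals the growth rate of $\sum_S\dim_k\Ext^n_{kG}(M,S)$ over the simple modules $S$, which has the same growth as $\dim_k\Ext^n_{kG}(M,M)$. As $\Ext^*_{kG}(M,M)$ is a finitely generated graded module over the finitely generated graded $k$-algebra $H^*(G,k)$, its Hilbert function grows as a polynomial of degree $\dim\Supp\Ext^*_{kG}(M,M)-1=\dim V_G(M)-1$ (Hilbert--Serre theory, graded Noether normalisation); matching the two growth rates yields $cx_G(M)=\dim V_G(M)$, which is (5). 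For (2): $V_G(M)$ is a non-empty cone (it always contains the irrelevant ideal, since $I_M$ has no degree-zero part when $M\neq0$), so $V_G(M)=\{0\}$ iff $\dim V_G(M)=0$ iff $cx_G(M)=0$ iff the minimal resolution of $M$ is eventually zero, that is, $M$ has finite projective dimension; since $kG$ is self-injective (Theorem~\ref{thm:kg-selfinjective}) this forces $M$ projective, and the converse is immediate.

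The genuinely hard statements are (6) and (4). Statement (6) is the theorem of Avrunin and Scott (Carlson's conjecture): the homeomorphism $V^r_E=\bbA^r(k)\to V_E$ sends $\alpha$ to the kernel of the restriction of $H^*(E,k)$ along the shifted subgroup $\langle1+x_\alpha\rangle$ — which is defined because $kE$ is free over the subalgebra $k[1+x_\alpha]\cong kC_p$ — and for $p$ odd this has to be composed with a Frobenius twist, namely the $p$-th power map on $\bbA^r(k)$, because the polynomial generators of $H^*(E,k)$ modulo nilpotents sit in degree two whereas the $\alpha_i$ are ``degree one''. To see that this map carries $V^r_E(M)$ onto $V_E(M)$ I would run the argument through Carlson's $L_\zeta$-modules: for homogeneous $\zeta\in H^*(E,k)$ one builds $L_\zeta$ with $V_E(L_\zeta)$ the hypersurface cut out by $\zeta$, proves the tensor formula $V_E(M\otimes_kL_\zeta)=V_E(M)\cap\mcV(\zeta)$, and combines it with the behaviour of the rank variety under restriction to shifted subgroups and the realisation statement (property~(7) of Section~\ref{ssec:Rank varieties}) to compute $V_E(M)$ one hypersurface at a time. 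Granting (6), statement (4) over $E$ is just property~(4) of the rank-variety list; for general $G$ one reduces to the elementary abelian case by Quillen stratification for modules (again Avrunin--Scott): restriction $\res^*_{G,E}\colon V_E\to V_G$ carries $V_E(M\da_E)$ into $V_G(M)$ (the easy inclusion), and Quillen stratification says $V_G(M)$ is exactly the union of these images, so property~(4) over each $E$ gives $V_G(M\otimes_kN)=V_G(M)\cap V_G(N)$. The main obstacle is exactly this last circle — the Avrunin--Scott identification (6) and the module-theoretic Quillen stratification feeding (4) — which needs real input beyond the formal support bookkeeping that settles (1), (3), (5), and (2).
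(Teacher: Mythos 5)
The paper does not actually prove Theorem~\ref{th:VGM}: it is stated with a \qed as a summary of standard facts (to be compared with the rank-variety properties of Section~\ref{ssec:Rank varieties}), with the real content delegated to the literature. Measured against that, your outline is correct and is the standard route. The key identification $V_G(M)=\Supp_{H^*(G,k)}\Ext^*_{kG}(M,M)$, via $I_M=\ann\Ext^*_{kG}(M,M)$ (using that the image of $-\otimes_kM$ is central up to sign and contains $\id_M$), together with Evens' theorem~\ref{thm:Evens}, does give (1) and (3) exactly as you say, and the Hilbert--Serre argument gives (5), whence (2) by self-injectivity (Theorem~\ref{thm:kg-selfinjective}). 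Two small caveats. First, in (5) the step ``growth of the minimal resolution $=$ growth of $\dim_k\Ext^n_{kG}(M,M)$'' is immediate only for $p$-groups; for general $G$ you must compare $\bigoplus_S\Ext^*_{kG}(M,S)$ with $\Ext^*_{kG}(M,M)$, e.g.\ by noting each $\Ext^*_{kG}(M,S)$ is a finitely generated module over $\Ext^*_{kG}(M,M)$ with support inside $V_G(M)$, and conversely that $\Hom_k(M,M)$ is filtered so that $\Ext^*_{kG}(M,M)$ is bounded by finitely many copies of the $\Ext^*_{kG}(M,S)$; this deserves a sentence rather than an ``equivalently''. Second, (4) and (6) are in your write-up attributions (Avrunin--Scott, Quillen stratification for modules, Carlson's $L_\zeta$-modules) rather than proofs; that is exactly the status they have in the paper, so it is an honest match, but be aware that these are the genuinely deep inputs and that your sketch of (6) would need the full $L_\zeta$/rank-variety machinery of \cite{Benson:1991b} to be complete.
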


The next step is a notion of support for infinitely generated modules.

\subsection{Rickard idempotents}
\label{ssec:Rickard idempotents}
Let $G$ be a finite group and $k$ a field whose characteristic divides $|G|$. Recall that $\StMod(kG)$ has the same objects as $\Mod(kG)$ but the arrows are module homomorphisms modulo those that factor through a projective module. The following result is the entry point for the application of homotopy theoretic techniques to the modular representation theory of finite groups.

\begin{theorem}
\label{thm:stmod-generation}
The stable module category $\StMod(kG)$ is triangulated, with suspension defined by $\Omega^{-1}$. Moreover, it is compactly generated, with compact objects the finite dimensional modules:
\[
\stmod(kG)\simeq \StMod(kG)^{\sfc}\,.
\]
Thus, the simple modules are a compact generating set for $\StMod(kG)$; in particular, if $G$ is a $p$-group, then $k$ is a compact generator for $\StMod(kG)$.
\end{theorem}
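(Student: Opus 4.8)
The plan is to recognise this as the instance $A=kG$ of Example~\ref{ex:stmodA}(2) and to establish the three assertions---triangulated structure, compact generation by the simples, and the description of the compacts---in that order. For the triangulated structure I would invoke the Frobenius-category machinery of Section~\ref{ss:stab}: the algebra $kG$ is finite dimensional over $k$ and self-injective (Theorem~\ref{thm:kg-selfinjective}), so $\Mod(kG)$ is a Frobenius category whose projective-injective objects are exactly the projective $kG$-modules (Example~\ref{ex:frobstable}(3)); hence $\StMod(kG)=\sfS(\Mod kG)$ is triangulated, with suspension $X\mapsto\Si X$ read off from an exact sequence $0\to X\to E\to\Si X\to 0$ with $E$ injective. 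Choosing $E$ to be the injective hull $I(X)$ identifies $\Si$ with $\Omega^{-1}$, and any other injective choice yields a naturally isomorphic functor. Coproducts in $\StMod(kG)$ exist and are computed as direct sums of modules---using that $kG$ is Noetherian, so that direct sums of projectives (resp.\ injectives) stay projective (resp.\ injective)---so Proposition~\ref{pr:compact-generation} is available.

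Next I would check that every finitely generated $kG$-module $M$ is compact in $\StMod(kG)$. The functor $\Hom_{kG}(M,-)$ commutes with direct sums because $M$ is finitely presented ($kG$ is Noetherian); the subfunctor of maps factoring through a projective also commutes with direct sums, since such a map factors through the inclusion $M\hookrightarrow I(M)$, the injective hull $I(M)$ is again finite dimensional (over the Artinian ring $kG$ it is the injective hull of $\soc(M)$, a finite sum of finite-dimensional indecomposable injectives), and $\Hom_{kG}(I(M),-)$ commutes with direct sums for the same reason. Passing to quotients, $\sHom_{kG}(M,-)$ commutes with coproducts. In particular each of the finitely many simple $kG$-modules is compact.

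The heart of the argument---and the step I expect to be the main obstacle---is to verify the hypothesis of Proposition~\ref{pr:compact-generation} for the set $\mcS$ of simple modules: every nonzero $X$ in $\StMod(kG)$ must admit $S\in\mcS$ and $n\in\bbZ$ with $\sHom_{kG}(\Si^nS,X)\neq 0$. I would prove the stronger statement that $n=0$ always works. Since $kG$ is Noetherian, write $X\cong X'$ in $\StMod(kG)$ with $X'$ a module having no nonzero injective submodule: over a Noetherian ring injective modules are closed under direct sums and directed unions, so (by Zorn) a maximal injective direct summand exists and its complement has no nonzero injective submodule. Because $kG$ is self-injective this injective summand is projective, so $X'$ is still non-projective, hence $X'\neq 0$; and since $kG$ is Artinian, $\soc(X')\neq 0$, so we may pick a simple submodule $S\hookrightarrow X'$. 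The key point is that this inclusion does not factor through a projective: if it factored through a projective $P$, then (projectives being injective) the injective hull of $S$ sits inside $P$ as a direct summand, and the restriction to that copy of $I(S)$ of the map $P\to X'$ is injective on the socle $S$, hence injective, so $I(S)$ embeds into $X'$---contradicting the choice of $X'$. Thus $\sHom_{kG}(S,X')\neq 0$, which gives the required non-vanishing. The two things here that require care are the extraction of the injective part and this socle-chasing claim; the rest is formal.

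Finally I would assemble the conclusions. By Proposition~\ref{pr:compact-generation}, $\Loc_{\StMod(kG)}(\mcS)=\StMod(kG)$, so $\StMod(kG)$ is compactly generated and $\mcS$ is a compact generating set. Then $C=\bigoplus_{S\in\mcS}S$ is a compact generator, so by Theorem~\ref{thm:neeman-compacts} the compact objects form $\Thick_{\StMod(kG)}(C)=\Thick_{\StMod(kG)}(\mcS)$. On one hand $\stmod(kG)$ is a thick subcategory of $\StMod(kG)$---finitely generated modules are closed under $\Omega^{\pm1}$, mapping cones and direct summands, as $kG$ is Noetherian---and it contains $\mcS$, so it contains $\Thick_{\StMod(kG)}(\mcS)$; on the other hand every finitely generated $kG$-module has finite length, hence is built from simple modules by finitely many extensions, so it lies in $\Thick_{\StMod(kG)}(\mcS)$. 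Therefore $\stmod(kG)\simeq\StMod(kG)^{\sfc}$. If $G$ is a $p$-group, Proposition~\ref{prop:kg-local} gives that $k$ is the only simple module, so $\mcS=\{k\}$ and $k$ is a compact generator.
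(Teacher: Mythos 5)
Your proof is correct and follows the same route as the paper, whose proof is a one-line citation of Example~\ref{ex:stmodA}(2): Frobenius structure for the triangulation, Proposition~\ref{pr:compact-generation} applied to the set of simple modules, and Theorem~\ref{thm:neeman-compacts} to identify the compacts with the thick subcategory generated by the simples, which is $\stmod(kG)$. The only substantive point the paper leaves implicit is the verification that every non-projective module receives a non-zero stable map from some simple module, and your argument---strip off a maximal injective (equivalently projective) direct summand, pick a simple in the socle of what remains, and use essentiality of the socle in the injective hull to rule out factorisation through a projective---supplies this correctly.
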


\begin{proof}
Since $kG$ is a finite dimensional and self-injective, this result is a special case of Example~\ref{ex:stmodA}.
\end{proof}

The next result is due to Rickard~\cite{Rickard:1997a}.

\begin{theorem}
Given a thick subcategory $\sfC$ of $\stmod(kG)$, there exists a functorial triangle in $\StMod(kG)$, unique up to isomorphism,
\[ 
\gam_\sfC(M) \to M \to L_\sfC(M) \to
\]
such that $\gam_\sfC(M)$ is in the localising subcategory of $\StMod(kG)$ generated by $\sfC$
and there are no maps in $\StMod(kG)$ from $\sfC$, or equivalently, from $\Loc(\sfC)$ to $L_\sfC(M)$.
\end{theorem}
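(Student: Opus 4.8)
The plan is to obtain $\gam_\sfC$ and $L_\sfC$ from the localisation machinery of Section~\ref{sec:Tuesday2}, applied to the localising subcategory generated by $\sfC$. By Theorem~\ref{thm:stmod-generation}, $\StMod(kG)$ is compactly generated with compact objects exactly the finitely generated modules, so the thick subcategory $\sfC \subseteq \stmod(kG) = \StMod(kG)^{\sfc}$ consists of compact objects; moreover $\stmod(kG)$, hence $\sfC$, is essentially small. Set $\sfS = \Loc_{\StMod(kG)}(\sfC)$.

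The first step is that $\sfS$ is compactly generated: it is generated, as a localising subcategory of itself, by a set of representatives for the isomorphism classes in $\sfC$, and each such object remains compact in $\sfS$ because coproducts in $\sfS$ are computed as in $\StMod(kG)$. The inclusion $\iota\colon \sfS \to \StMod(kG)$ is exact and preserves set-indexed coproducts, so Corollary~\ref{cor:right-adjoint-exist} provides a right adjoint $\rho\colon \StMod(kG) \to \sfS$. By the colocalisation version of Lemma~\ref{le:loc}, $\gam_\sfC := \iota\rho$ is a colocalisation functor with $\Im\gam_\sfC = \sfS$, and Proposition~\ref{pr:loc-basic} then yields the localisation functor $L_\sfC$ with $\Ker L_\sfC = \sfS$, a functorial exact triangle
\[
\gam_\sfC M \to M \to L_\sfC M \to \Si\gam_\sfC M\,,
\]
and the identities $\Im L_\sfC = \Ker\gam_\sfC = \sfS^{\perp}$.

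Now one matches this with the asserted properties. We have $\gam_\sfC M \in \Im\gam_\sfC = \sfS = \Loc(\sfC)$. Also $L_\sfC M \in \Im L_\sfC = \sfS^{\perp}$, and $\sfS^{\perp} = \sfC^{\perp}$: the inclusion $\sfC\subseteq\sfS$ gives $\sfS^{\perp}\subseteq\sfC^{\perp}$, while $\{Y \mid \Hom_{\StMod(kG)}(\Si^{n}Y, X) = 0 \text{ for all } n\}$ is a localising subcategory, so if it contains $\sfC$ it contains $\sfS$. Hence $\Hom_{\StMod(kG)}(\Si^{n}C, L_\sfC M) = 0$ for all $C\in\sfC$ and $n\in\bbZ$; that is, there are no maps in $\StMod(kG)$ from $\sfC$ --- equivalently from $\sfS=\Loc(\sfC)$ --- to $L_\sfC M$. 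Functoriality of the triangle is part of Proposition~\ref{pr:loc-basic}.

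For uniqueness, suppose $A\to M\to B\to\Si A$ is another triangle with $A\in\Loc(\sfC)$ and $\Hom_{\StMod(kG)}(L, B)=0$ for all $L\in\Loc(\sfC)$ (whence $B\in\sfS^{\perp}$, since $\Loc(\sfC)$ is closed under suspension). As $\Hom_{\StMod(kG)}(A, L_\sfC M)=0$ and $\Hom_{\StMod(kG)}(\gam_\sfC M, B)=0$, the maps $A\to M$ and $\gam_\sfC M\to M$ each lift through the other; the vanishing of $\Hom_{\StMod(kG)}(A,\Si^{-1}B)$ and $\Hom_{\StMod(kG)}(\gam_\sfC M,\Si^{-1}L_\sfC M)$ forces the two lifts to be mutually inverse, and a comparison of triangles then gives an isomorphism restricting to $\id_M$. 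This is the standard uniqueness argument for localisation triangles. The step carrying the real content is the compact generation of $\sfS$, since that is what makes Brown representability --- and hence Corollary~\ref{cor:right-adjoint-exist} --- available; everything after is formal.
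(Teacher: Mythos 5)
Your proposal is correct and follows essentially the same route as the paper: compact generation of $\Loc(\sfC)$, Brown representability via Corollary~\ref{cor:right-adjoint-exist} to get the right adjoint $\gam_\sfC$, and Proposition~\ref{pr:loc-basic} for the localisation triangle and orthogonality. You merely spell out details the paper leaves implicit (why the objects of $\sfC$ remain compact in $\Loc(\sfC)$, the identification $\sfC^{\perp}=\Loc(\sfC)^{\perp}$, and the uniqueness argument), all of which are handled correctly.
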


\begin{proof}
The localizing subcategory $\Loc(\sfC)$ generated by $\sfC$ viewed as a subcategory of $\StMod(kG)$ is obviously compactly generated, and the inclusion 
\[
\Loc(\sfC)\subseteq \StMod(kG)
\]
preserves coproducts. Brown representability thus yields a right adjoint, which we denote $\gam_{\sfC}$; see Corollary~\ref{cor:right-adjoint-exist}. The existence of the functor $L$, and the remaining assertions, now follow from Proposition~\ref{pr:loc-basic}.
\end{proof}

By construction, see Proposition~\ref{pr:loc-basic}, the functors $\gam_\sfC$ and $L_\sfC$ are  orthogonal idempotents\index{Rickard idempotent} in the sense that
\begin{gather*}
\gam_\sfC\gam_\sfC\cong \gam_\sfC\qquad L_\sfC L_\sfC\cong L_\sfC\qquad
\gam_\sfC L_\sfC\cong L_\sfC\gam_\sfC\cong 0.
\end{gather*}

\begin{definition}
\index{gammav@$\gam_\mcV$}
\index{locv@$L_\mcV$}
Recall that a collection $\mcV$ of closed homogeneous irreducible subvarieties of $V_G$ is \emph{specialisation closed}\index{specialisation!closed} if $V\in \mcV,\ W\subseteq V \Rightarrow W\in\mcV$.

We write $\sfC_\mcV$ for the thick subcategory of $\stmod(kG)$ generated by finitely generated modules $M$ such that every irreducible component of $V_G(M)$ is contained in $\mcV$, and set
\[
\gam_\mcV=\gam_{\sfC_\mcV}\quad\text{and} \quad L_\mcV=L_{\sfC_\mcV}\,.
\]
\end{definition}

In Section~\ref{ssec:bcr-classification} we prove that the $\sfC_\mcV$ are the only thick subcategories of $\stmod(kG)$, provided $G$ is a $p$-group.

\subsection{Varieties for $\StMod(kG)$}
\index{cohomological variety}
We write $\mcV_G(k)$ for the set of non-zero closed homogeneous irreducible subvarieties of $V_G=\max H^*(G,k)$, or equivalently the set of non-maximal homogeneous prime ideals in $H^*(G,k)$.

\begin{definition}
Let $V\in \mcV_G(k)$. Set $\mcV=\{W\subseteq V\}$ and $\mcW=\mcV\setminus\{V\}$, and define 
\[ 
\gam_V=\gam_\mcV L_\mcW=L_\mcW\gam_\mcV\,. 
\]
\end{definition}

\begin{definition}
\label{def:mcVGM}
If $M$ is in $\StMod(kG)$, we define 
\[ 
\mcV_G(M)=\{V \in \mcV_G(k)\mid \gam_V(M)\ne 0\}. 
\]
\end{definition}

The following list of properties should be compared with the properties of $\mcV^r_E(M)$ from Section~\ref{ssec:Rank varieties}.

\begin{theorem}
\label{th:mcVGM}
Let $M$ be a $kG$-module. 
\begin{enumerate}
\item $\mcV_G(M)=\varnothing$ if and only if $M$ is projective.
\item $\mcV_G(M\oplus N)=\mcV_G(M)\cup\mcV_G(N)$, and more generally
\[ 
\mcV_G(\bigoplus_\alpha M_\alpha)=\bigcup_\alpha\mcV_G(M_\alpha). 
\]
\item $\mcV_G(M\otimes_k N)=\mcV_G(M)\cap\mcV_G(N)$.
\item If $M$ is finitely generated then $\mcV_G(M)=\{C\in\mcV_G(k)\mid V\subseteq V_G(M)\}$.
\item For any subset $\mcV\subseteq\mcV_G(k)$ there exists a $kG$-module $M$ such that $\mcV_G(M)=\mcV$. 
\item If $G=E$ is elementary abelian then the isomorphism $V_E\cong\bbA^r(k)$ of Theorem~\ref{th:VGM} 
takes $\mcV_E(M)$ to $\mcV^r_E(M)$ for every $M$. \qed
\end{enumerate}
\end{theorem}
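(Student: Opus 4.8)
I would reduce all six statements to properties of the \emph{idempotent modules} $\gam_V(k)$, using that $\gam_V$ is naturally isomorphic to the functor $(\gam_V k)\otimes_k(-)$. First I would record that $\sfC_\mcV$ is a thick \emph{tensor-ideal} subcategory of $\stmod(kG)$ --- immediate from parts (3) and (4) of Theorem~\ref{th:VGM}, since if $M\in\sfC_\mcV$ then every component of $V_G(M\otimes_kN)=V_G(M)\cap V_G(N)$ lies in $\mcV$. Hence $\Loc(\sfC_\mcV)$ is a localising tensor-ideal, so the colocalisation $\gam_\mcV$ and the localisation $L_\mcV$ are computed by tensoring their values on $k$, and both are exact and preserve arbitrary coproducts. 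Therefore $\gam_V M\cong(\gam_V k)\otimes_kM$ where $\gam_V(k)=\gam_\mcV(k)\otimes_kL_\mcW(k)$; symmetry of $\otimes_k$ then yields the idempotence $\gam_V(k)\otimes_k\gam_V(k)\cong\gam_V(k)$ and the orthogonality $\gam_V(k)\otimes_k\gam_W(k)\cong0$ for $V\ne W$, and (as part of the argument for (4) below) $\gam_V(k)\neq0$ for every $V\in\mcV_G(k)$.

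\textbf{The formal statements (1), (2), (5).} For (2) I would argue that exactness of $\gam_V$ gives $\gam_V(M\oplus N)\cong\gam_V M\oplus\gam_V N$ and preservation of coproducts gives $\gam_V(\bigoplus_\alpha M_\alpha)\cong\bigoplus_\alpha\gam_V M_\alpha$; since a coproduct of modules is projective precisely when every summand is, vanishing of $\gam_V$ is controlled summand-wise, which is exactly the assertion. For (1), $\mcV_G(M)=\varnothing$ puts $M$ in the kernel of every $\gam_V$, whence, by the local--global principle for $\StMod(kG)$ recalled in the introduction, $\Loc(M)=0$, i.e.\ $M=0$ in $\StMod(kG)$; the converse is clear. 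For (5), given $\mcV\subseteq\mcV_G(k)$ I would take $M=\bigoplus_{V\in\mcV}\gam_V(k)$; then $\gam_W M\cong\bigoplus_{V\in\mcV}\bigl(\gam_W(k)\otimes_k\gam_V(k)\bigr)$, which by idempotence and orthogonality equals $\gam_W(k)$ if $W\in\mcV$ and $0$ otherwise, so $\mcV_G(M)=\mcV$.

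\textbf{Reducing (4) and (6) to the finitely generated theory.} For (6), over an elementary abelian $E$ both $M\mapsto\mcV_E(M)$ and $M\mapsto\mcV^r_E(M)$ are described by an orthogonal family of exact, coproduct-preserving idempotent functors, and such a family is pinned down by the finitely generated modules it does not annihilate; so the claim follows from the finitely generated identification in Theorem~\ref{th:VGM}(6), the odd-$p$ Frobenius twist being absorbed into the isomorphism $V_E\cong\bbA^r(k)$. For (4) with $M$ finitely generated I would write $V_G(M)$ as the union of its finitely many irreducible components: the inclusion $\{V:V\subseteq V_G(M)\}\subseteq\mcV_G(M)$ comes from realising $V$ by a finitely generated module $L$ with $V_G(L)$ the closure of $\{V\}$ and showing $\gam_V(L\otimes_kM)\neq0$, using Neeman's identification of the compacts of $\Loc(\sfC_\mcV)$ with $\sfC_\mcV$; the reverse inclusion is immediate once one knows $\gam_V(k)$ is supported exactly on $\{V\}$, since then $\gam_V M=(\gam_V k)\otimes_kM$ cannot be supported outside $V_G(M)$.

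\textbf{The hard part: (3).} Since $\otimes_k$ is symmetric and $\gam_V(k)$ is idempotent, $\gam_V(M\otimes_kN)\cong\gam_V M\otimes_k\gam_V N$, so $\mcV_G(M\otimes_kN)\subseteq\mcV_G(M)\cap\mcV_G(N)$ is automatic: if a $\gam_V$ kills either factor it kills the tensor product. The content, and what I expect to be the main obstacle, is the reverse inclusion: the $k$-tensor product of two nonzero $V$-local acyclic modules must be nonzero. The plan would be to reduce to $G=E$ elementary abelian --- by Chouinard's theorem (Theorem~\ref{th:Chouinard}) together with Quillen's description of $V_G$ through the $V_E$, one shows $\gam_V M=0$ precisely when the restriction of $M$ to each relevant elementary abelian $E\le G$ is acyclic at each prime of $H^*(E,k)$ over $V$ --- and then, since restriction is a tensor functor, the statement for $E$ follows from the tensor-product property of rank varieties $\mcV^r_E$ listed in Section~\ref{sec:Monday1}, which itself rests on the subgroup theorem for rank varieties. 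The genuinely delicate step is making precise the compatibility of the functors $\gam_V$ with restriction to subgroups --- that $(\gam_V M)\da_E$ is controlled by the $\gam_{V'}$ for the primes $V'$ of $H^*(E,k)$ lying over $V$ --- because this must be routed through Quillen stratification, which is a substantial theorem in its own right rather than bookkeeping.
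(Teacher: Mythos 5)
Your handling of the formal parts is sound: (1) via the (tensor-ideal) local--global principle, (2) via exactness and smashingness of $\gam_V$, (5) via the sum $\bigoplus_{V\in\mcV}\gam_V(k)$ together with idempotence and orthogonality, and the reduction of (4) to the finitely generated support theory are all essentially right, and the identification $\gam_V(-)\cong\gam_V(k)\otimes_k(-)$ is the correct engine. Note, though, that the paper itself offers no proof of this theorem; within its framework the substantive item (3) is obtained as Theorem~\ref{thm:tensor-product}, i.e.\ as a \emph{consequence of the stratification theorem}~\ref{th:strat-StMod} (the reverse inclusion there uses minimality of $\gam_\fp\sfT$ to put $\gam_\fp\one$ into $\Loc^\otimes(\gam_\fp X)$), whereas you propose the older Benson--Carlson--Rickard route through rank varieties. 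That route is legitimate and historically the original one, but it is not cheaper: it consumes the full machinery of \cite{Benson/Carlson/Rickard:1996a}.

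The genuine gaps are in (6) and, consequently, in (3). For (6) you assert that an orthogonal family of exact, coproduct-preserving idempotent functors "is pinned down by the finitely generated modules it does not annihilate." No such uniqueness principle is available, and it cannot even be applied as stated, because $\mcV^r_E(-)$ is not defined by a family of idempotent functors: it is defined by restriction to generic cyclic shifted subgroups after a transcendental field extension. The statement $\mcV_E(M)=\mcV^r_E(M)$ for infinite-dimensional $M$ is precisely the main theorem of \cite{Benson/Carlson/Rickard:1996a} (resting on the infinite-dimensional Dade lemma), and your argument silently assumes it. Since your proof of the hard inclusion in (3) reduces to the tensor-product property of $\mcV^r_E$ via (6), that gap propagates. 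There is also a circularity hazard in the reduction step you flag as delicate: the subgroup/restriction compatibility $\mcV_E(M{\downarrow_E})=(\res^*_{G,E})^{-1}\mcV_G(M)$ is proved in the paper (Theorem~\ref{th:subgroup}, in the $\KInj{}$ setting) \emph{using} the classification of localising subcategories of $\KInj{kE}$, i.e.\ the elementary abelian stratification theorem. So either you prove that compatibility independently (as BCR do, via explicit induction/restriction formulas for the idempotent modules), or you concede that (3) sits at the same depth as the main theorem of the seminar and should simply be quoted from Theorem~\ref{thm:tensor-product} once stratification is known.
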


\begin{remark} Observe that when dealing with the stable category $\stmod(kG)$ of finitely generated modules, we used $V_{G}$, the spectrum of maximal ideals in $H^{*}(G, k)$, whereas for the stable category $\StMod(kG)$ of all modules we used $\mcV_{G}$, the spectrum of homogeneous prime ideals in $H^{*}(G,k)$. In the case of a finitely generated module, $V_{G}(M)$ is determined by $\mcV_{G}(M)$, by the theorem above. It would have been possible to use $\mcV_{G}$ throughout, but we chose not to, partly for historical reasons. The origin of the use of $V_{G}$ is Quillen's work \cite{Quillen:1971b}, and much of the literature on finitely generated modules has been written in this context.
\end{remark}

\subsection{Classification of thick subcategories}
\label{ssec:bcr-classification}
\index{stmodkg@$\stmod(kG)$!thick subcategory of}

We begin with a definition.

\begin{definition}
Write $\Thick(M)$ for the thick subcategory of $\stmod(kG)$ generated by a finitely generated $kG$-module $M$.
\end{definition}

The following result due to Benson, Carlson, and Rickard~\cite{Benson/Carlson/Rickard:1997a} is the analogue of Hopkins' theorem~\ref{thm:hopkins} for the stable module category. For the sake of simplicity we stick to the case of $p$-groups. For a more general finite group there are some extra technicalities.

\begin{theorem}
\label{thm:bcr-classification}
Let $G$ be a $p$-group, let $M$ be a finitely generated $kG$-module, and set $\mcV=\{V\in\mcV_G(k)\mid V\subseteq V_G(M)\}$. Then $\sfC_{\mcV}=\Thick(M)$.
\end{theorem}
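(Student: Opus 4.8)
The plan is to prove the two inclusions $\Thick(M)\subseteq\sfC_{\mcV}$ and $\sfC_{\mcV}\subseteq\Thick(M)$ separately, with the second inclusion being the substantive one. For the first inclusion, note that $\sfC_{\mcV}$ is by definition a thick subcategory of $\stmod(kG)$ and, since every irreducible component of $V_G(M)$ lies in $\mcV=\{V\in\mcV_G(k)\mid V\subseteq V_G(M)\}$, the module $M$ itself lies in $\sfC_{\mcV}$. As $\Thick(M)$ is the smallest thick subcategory containing $M$, we get $\Thick(M)\subseteq\sfC_{\mcV}$ immediately. This parallels the easy direction (Lemma~\ref{lem:con-hopkins}) of Hopkins' theorem.

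For the reverse inclusion, I would first reduce to understanding the \emph{generators} of $\sfC_{\mcV}$: it suffices to show that every finitely generated $kG$-module $N$ with every irreducible component of $V_G(N)$ contained in $\mcV$ — equivalently, with $V_G(N)\subseteq V_G(M)$ — lies in $\Thick(M)$. So the heart of the matter is the statement: \emph{if $M,N$ are finitely generated $kG$-modules over a $p$-group $G$ with $V_G(N)\subseteq V_G(M)$, then $N\in\Thick(M)$.} This is precisely the stable-module analogue of Theorem~\ref{thm:hopkins}. The strategy mirrors the commutative case (Theorem~\ref{thm:thickA}): one wants to produce, from the containment of varieties, an actual "building" of $N$ out of $M$. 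The standard route, following Benson--Carlson--Rickard, is to use the theory of varieties developed in Theorem~\ref{th:VGM} — in particular the tensor-product formula $V_G(M\otimes_k N')=V_G(M)\cap V_G(N')$ and the behaviour of varieties in triangles — to replace $M$ by a module whose variety is a convenient closed subvariety, then carry out a Noetherian induction on the dimension of $V_G(M)$. One realises closed homogeneous subvarieties via \emph{Carlson modules} $L_\zeta$ associated to homogeneous elements $\zeta\in H^*(G,k)$: there is an exact triangle $\Omega^{|\zeta|}k\xrightarrow{\zeta}k\to L_\zeta\to$, and $V_G(L_\zeta)$ is the hypersurface $\mcV(\zeta)$. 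Tensoring $M$ with suitable $L_\zeta$ (and their shifts), and using that $M\otimes_k L_\zeta\in\Thick(M)$ (since it sits in a triangle built from $M$ and $\Omega^{|\zeta|}M$), lets one cut $V_G(M)$ down along hypersurfaces.

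The inductive engine I expect to use runs as follows. By induction on $d=\dim V_G(M)$: if $d=0$ then $M$ is projective and $\sfC_{\mcV}$ consists of projectives, so there is nothing to prove. For $d>0$, pick homogeneous elements $\zeta_1,\dots,\zeta_r\in H^*(G,k)$ whose common zero locus in $V_G(M)$ is exactly $V_G(M)\cap\mcV(\zeta_1,\dots,\zeta_r)$; using the tensor formula one arranges that $M':=M\otimes_k L_{\zeta_1}\otimes_k\cdots\otimes_k L_{\zeta_r}$ has $V_G(M')=V_G(M)\cap\mcV(\zeta_1,\dots,\zeta_r)$ of strictly smaller dimension, and $M'\in\Thick(M)$. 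One then shows that any $N$ with $V_G(N)\subseteq V_G(M)$ can be built from $M$ together with finitely many modules whose varieties are proper closed subvarieties of $V_G(M)$ — typically by writing $N$ into a triangle involving $N\otimes_k L_{\zeta_i}$ and noting $V_G(N\otimes_k L_{\zeta_i})\subseteq V_G(M)\cap\mcV(\zeta_i)$ — and invoking the inductive hypothesis after checking the smaller-variety pieces lie in $\Thick$ of something already controlled. The main obstacle, and the step requiring real care, is the "tensor-nilpotence"/"generation" lemma that underpins the induction: one must show that a module whose variety is contained in the union of finitely many proper subvarieties is finitely built from modules supported on those subvarieties — this is the analogue of the Koszul-complex argument (Proposition~\ref{prop:kosca}) in the commutative setting and is where the special structure of $\stmod(kG)$, the periodicity arguments for modules of small-dimensional variety, and Dade's lemma (to detect projectivity on cyclic shifted subgroups after restricting along Quillen's theorem) all get used. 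Once that lemma is in hand, the two inclusions combine to give $\sfC_{\mcV}=\Thick(M)$. I would refer to \cite{Benson/Carlson/Rickard:1997a} for the details of the tensor-nilpotence step rather than reproduce it here.
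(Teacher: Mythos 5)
Your first inclusion $\Thick(M)\subseteq\sfC_{\mcV}$ and the reduction of the converse to the statement that $V_G(N)\subseteq V_G(M)$ forces $N\in\Thick(M)$ are both correct, and this is indeed the right stable analogue of Theorem~\ref{thm:hopkins}. But your route to that statement is genuinely different from the one in the text, and the step you defer is exactly where the content lies. The proof given here does not induct on $\dim V_G(M)$ and does not manipulate Carlson modules directly; it works with the Rickard idempotent functors of Section~\ref{ssec:Rickard idempotents}. Writing $\sfC=\sfC_{\mcV}$ and $\sfC'=\Thick(M)$, one compares the two localisation triangles, uses $\gam_{\sfC'}\gam_{\sfC}=\gam_{\sfC'}$ to reduce to showing that the (typically infinite-dimensional) object $L_{\sfC'}\gam_{\sfC}(N)$ vanishes, and gets this from $\sHom_{kG}(M,L_{\sfC'}\gam_{\sfC}(N))=0$: by adjunction and the $p$-group hypothesis (so that $k$ compactly generates $\StMod(kG)$, Theorem~\ref{thm:stmod-generation}) this gives $M^{*}\otimes_k L_{\sfC'}\gam_{\sfC}(N)=0$, and the tensor product theorem~\ref{th:mcVGM}(3) for arbitrary modules then forces $\mcV_G(L_{\sfC'}\gam_{\sfC}(N))=\varnothing$. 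The whole argument is thus a formal consequence of the support theory for $\StMod(kG)$, with no dimension induction and no Noetherian cutting.

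The gap in your version is the ``tensor-nilpotence/generation'' lemma that you invoke and then cite away. As you set it up --- cutting $V_G(M)$ down by hypersurfaces $\mcV(\zeta_i)$ and asserting that a module whose variety lies in a union of proper closed subvarieties is finitely built from modules supported there --- this does not follow from the listed properties of $V_G(-)$, and the known proofs of it (including the one in \cite{Benson/Carlson/Rickard:1997a} itself) pass through the idempotent modules $\gam_{\mcV}(k)$ and $L_{\mcV}(k)$, i.e., through $\StMod(kG)$. So the citation does not avoid the infinite-dimensional machinery; it relocates it. Note also that what you ultimately need is not merely that $N$ is built from modules with smaller variety, but that it is built from $M$; the passage from ``contained variety'' to ``contained thick subcategory'' is the nontrivial direction, and your sketch does not explain how $M$ enters the induction. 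If you want a proof that is complete within the framework of these notes, the idempotent-functor argument above is the intended one.
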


\begin{remark}
The hypothesis that $G$ is a $p$-group comes in as follows: For any $kG$-module $M$, if $\sHom_{kG}(k,M)=0$, then
$\sHom_{kG}(M,M)=0$, since $k$ is a compact generator for $\StMod(kG)$, by Theorem~\ref{thm:stmod-generation}, and hence  $M$ is projective. 
\end{remark}

\begin{proof}
Let $\sfC'=\Thick(M)$. 
It is clear that $M$ is in $\sfC$, so $\sfC'\subseteq\sfC$. Consider Rickard triangles associated with these subcategories:
\[ 
\gam_\sfC\to \Id \to L_\sfC\to ,\qquad \gam_{\sfC'}\to \Id \to L_{\sfC'}\to.
\] 
Since $\sfC'\subseteq \sfC$, there is an equality $\gam_{\sfC'}\gam_{\sfC}=\gam_{\sfC'}$. Therefore if $N$ is in $\StMod(kG)$, there is an exact triangle 
\[ 
\gam_{\sfC'}(N) \to\gam_\sfC(N) \to L_{\sfC'}\gam_\sfC(N)\to\,. 
\] 
So $L_{\sfC'}\gam_\sfC(N)$ is in $\Loc(\sfC)$. This implies that $\mcV_G(L_{\sfC'}\gam_\sfC(N))\subseteq\mcV$. 
But there are no homomorphisms from $M$ to $L_{\sfC'}\gam_\sfC(N)$, so with $M^{*}=\Hom_{k}(M,k)$, one gets
\begin{align*}
0=\sHom_{kG}(M,L_{\sfC'}\gam_\sfC(N)) 
&\cong \sHom_{kG}(k,\Hom_{k}(M,L_{\sfC'}\gam_\sfC(N)))  \\
&\cong \sHom_{kG}(k,M^*\otimes_k L_{\sfC'}\gam_\sfC(N)). 
\end{align*}
where the first isomorphism is by adjunction while the second one holds because $M$ is finite dimensional over $k$.
Since $G$ is a $p$-group, this implies that $M^*\otimes_k L_{\sfC'}\gam_\sfC(N)$ is $0$ in the stable module category. Noting that $\mcV_{G}(M^{*})=\mcV_{G}(M)$ (see the exercises to this lecture) the tensor product theorem, \ref{th:mcVGM}(3), then yields
\[ 
\mcV_G(M)\cap \mcV_G(L_{\sfC'}\gam_\sfC(N))=\varnothing. 
\] 
Hence $\mcV_G(L_{\sfC'}\gam_\sfC(N))=\varnothing$, and so $L_{\sfC'}\gam_\sfC(N)=0$.
So $\gam_{\sfC'}(N)\to \gam_{\sfC}(N)$ is an isomorphism
for all $N$.
Finally, if $N$ is in $\sfC$ then $\gam_{\sfC}(N)\to N$ is an
isomorphism, so
$\gam_{\sfC'}(N)\to N$ is an isomorphism. 
This implies that $N$ is in $\sfC'$.
\end{proof}

The result below can be deduced from Theorem~\ref{thm:bcr-classification} along the same lines that Theorem~\ref{thm:thickA} is deduced from Theorem~\ref{thm:hopkins}.

\begin{corollary}
Let $G$ be a $p$-group. The thick subcategories of $\stmod(kG)$ are in bijection with subsets of $\mcV_G(k)$ that are closed under specialisation. Under this bijection, a specialisation closed subset $\mcV\subseteq\mcV_G(k)$ corresponds to the full subcategory consisting of those finitely generated $kG$-modules $M$ with the property that every irreducible component of $V_G(M)$ is an element of $\mcV$.\qed
\end{corollary}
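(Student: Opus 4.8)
The plan is to deduce the corollary from Theorem~\ref{thm:bcr-classification} by mimicking exactly the argument used to prove Theorem~\ref{thm:thickA} from Theorem~\ref{thm:hopkins}. First I would set up the two maps: to a specialisation closed subset $\mcV\subseteq\mcV_G(k)$ we assign the full subcategory $\sfC_\mcV$ of $\stmod(kG)$ consisting of those finitely generated $M$ such that every irreducible component of $V_G(M)$ lies in $\mcV$; conversely, to a thick subcategory $\sfC$ we assign the set
\[
f(\sfC)=\bigcup_{M\in\sfC}\{V\in\mcV_G(k)\mid V\subseteq V_G(M)\}\,.
\]
One checks, using Theorem~\ref{th:VGM}(1) and (3), that $\sfC_\mcV$ is indeed thick (closed under finite sums and summands by \ref{th:VGM}(3), under suspension since $V_G(\Omega M)=V_G(M)$, and under the two-out-of-three property by the exact-sequence property of $V_G$), and that $f(\sfC)$ is specialisation closed since each $\{V\mid V\subseteq V_G(M)\}$ is downward closed.

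Next I would verify the two round trips. For $f(\sfC_\mcV)=\mcV$: the inclusion $f(\sfC_\mcV)\subseteq\mcV$ is immediate from the definition of $\sfC_\mcV$, and for the reverse inclusion, given $V\in\mcV$ one uses the realisation result \ref{th:VGM} (more precisely the finitely generated realisation, property (7) of the rank varieties transported via \ref{th:VGM}(6), or the general statement that closed homogeneous subvarieties are realised) to produce a finitely generated module $M$ with $V_G(M)=\overline{V}$; since $\mcV$ is specialisation closed, every irreducible component of $V_G(M)$ lies in $\mcV$, so $M\in\sfC_\mcV$ and hence $V\in f(\sfC_\mcV)$. For $\sfC_{f(\sfC)}=\sfC$: the inclusion $\sfC\subseteq\sfC_{f(\sfC)}$ is clear. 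For the reverse, take $M\in\sfC_{f(\sfC)}$; then each irreducible component $V$ of $V_G(M)$ lies in $f(\sfC)$, so there is a module $N_V\in\sfC$ with $V\subseteq V_G(N_V)$. Since $V_G(M)$ has finitely many irreducible components, take a finite direct sum $N=\bigoplus_V N_V\in\sfC$; then $V_G(M)\subseteq V_G(N)$ by \ref{th:VGM}(3), and setting $\mcW=\{W\in\mcV_G(k)\mid W\subseteq V_G(M)\}$ we get from Theorem~\ref{thm:bcr-classification} that $\Thick(M)=\sfC_\mcW$. But $\sfC_\mcW\subseteq\Thick(N)\subseteq\sfC$: indeed every module whose components lie under $V_G(M)$ also has components under $V_G(N)$, so $\sfC_\mcW\subseteq\sfC_{\{W\subseteq V_G(N)\}}=\Thick(N)$ by Theorem~\ref{thm:bcr-classification} applied to $N$. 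Hence $M\in\Thick(M)=\sfC_\mcW\subseteq\sfC$.

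Finally I would note that both assignments are order-preserving with respect to inclusion, so the bijection is in fact an isomorphism of posets, and record the description of the inverse map as stated. The main obstacle I anticipate is purely bookkeeping: being careful that $f(\sfC)$ really is a set of \emph{irreducible} subvarieties and that the passage ``every irreducible component of $V_G(M)$ lies in $\mcV$'' interacts correctly with finite direct sums (which is exactly \ref{th:VGM}(3)) and with Theorem~\ref{thm:bcr-classification}, whose statement is phrased in terms of $\mcV=\{V\mid V\subseteq V_G(M)\}$ rather than in terms of components. Once one observes that $\sfC_{\{V\subseteq V_G(M)\}}$ and ``components of $V_G(-)$ lie in $\{V\subseteq V_G(M)\}$'' define the same subcategory, everything goes through formally, so there is no genuine new difficulty beyond Theorem~\ref{thm:bcr-classification} itself.
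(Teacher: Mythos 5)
Your proposal is correct and is exactly the route the paper intends: the corollary is stated with the remark that it follows from Theorem~\ref{thm:bcr-classification} ``along the same lines'' as Theorem~\ref{thm:thickA} follows from Theorem~\ref{thm:hopkins}, and your two round-trip verifications (realising a given $V\in\mcV$ by a finitely generated module, and covering $V_G(M)$ by a finite direct sum $N$ of modules from $\sfC$ so that $M\in\Thick(N)$) mirror that argument step for step. The only cosmetic point is the realisation step for a general $p$-group: transporting property (7) of rank varieties via Theorem~\ref{th:VGM}(6) literally covers only elementary abelian groups, so one should instead cite Carlson's realisation theorem or use Koszul objects $\kos k{\fp}$ (cf.\ Exercise~9 of Thursday), precisely as Koszul complexes are used in the proof of Theorem~\ref{thm:thickA}.
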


\begin{remark}
The extra technicality in the case where the finite group $G$ is not a $p$-group is the following. There is more than one simple $kG$-module, and we need to impose the extra condition on our thick subcategories that they are closed under tensor product with each simple module, or equivalently under tensor product with all finitely generated $kG$-modules. This condition is automatically satisfied in the case of a $p$-group since every finitely generated module has a finite filtration whose filtered quotients are direct sums of copies of $k$. 

For a general finite group $G$, we say that a thick subcategory of $\stmod(kG)$ is \emph{tensor ideal}\index{tensor!ideal} if this extra condition holds. The proof above goes through for an arbitrary finite group to yield a  classification of the tensor ideal thick subcategories of $\stmod(kG)$.

The classification of all thick subcategories of $\stmod(kG)$ when $G$ is not a $p$-group is problematic. There is a subvariety of $V_G$ called the \emph{nucleus}\index{nucleus} which captures the extra complication in this situation. For  further details see \cite{Benson:1995a,Benson:2002a,Benson/Carlson/Rickard:1997a}.
\end{remark}

\section{Exercises}
\label{exer:Tuesday}
In the following  exercises $A$ denotes a commutative noetherian ring, $G$ a finite group, and, depending on the context, $M$ is either a complex of $A$-modules, or a $kG$-module defined over a field $k$, usually with $\Char k$ dividing $|G|$.

\begin{enumerate}[\quad\rm(1)]

\item Find $\supp_{\bbZ}M$ for a finitely generated abelian group $M$. What is $\supp_{\bbZ}(\bbQ/\bbZ)$? 

\item Compute $\Supp_{A} k(\fp)$ and $\supp_{A} k(\fp)$ for each prime ideal $\fp$.

\item Verify that the following subcategories of $\sfD=\sfD^{b}(\mod A)$ are thick:
\begin{gather*}
\{M\in\sfD\mid \text{$M$ is perfect}\}\\
\{M\in\sfD\mid \text{$M$ has finite injective dimension}\}\\
\{M\in\sfD\mid \text{$\length_{A}\hh M$ is finite}\}
\end{gather*}
Think of other interesting properties of complexes, and consider whether they define thick subcategories of $\sfD$.

\item Prove that when $M$ is perfect so is $\kos M\fa$, for any ideal $\fa$ in $A$.

\item Recall Hopkins' theorem: if $M$ and $N$ are perfect complexes over $A$, then $\supp_{A}M\subseteq\supp_{A}N$ implies $M\in \Thick(N)$.

Even when the $A$-modules $H^{*}(M)$ and $H^{*}(N)$ are finitely generated, the hypothesis that $M,N$ are perfect is needed; find relevant examples.

\item Prove (without recourse to the result of Hopkins) that if $M$ and $N$ are finitely generated $\bbZ$-modules (or, even bounded complexes of finitely generated $\bbZ$-modules) with $\Supp_{\bbZ}M\subseteq \Supp_{\bbZ}N$, then $M$ is in $\Thick(N)$.

\item When $G$ is a $p$-group and $M$ is a finitely generated $kG$-module, $V_{G}(M)$ coincides with the subvariety of $V_{G}$ defined by the annihilator of $H^{*}(G,M)$ as a module over $H^{*}(G,k)$. Prove this.

\item 
Assume $M$ is a finite dimensional $kG$-module. Prove that the following conditions are equivalent:
\begin{enumerate}[\quad\rm(a)]
\item $V_{G}(M)=\{0\}$;
\item $M$ has finite projective dimension as a $kG$-module;
\item $M$ is a projective $kG$-module.
\end{enumerate}

\item Prove that when $M$ is a finite generated $kG$-module, there is an equality
\[
V_{G}(\Hom_{k}(M,k))=V_{G}(M)\,.
\] 

\item Let $G=\bbZ/2$ and assume $\Char k=2$. Prove that  $H^{*}(G,k)\cong k[x]$, a polynomial algebra over $k$ on a indeterminate $x$ of degree $1$.

\item Let $G=\bbZ/p$ with $p\geq 3$ and $\Char k=p$. Prove that
  $H^{*}(G,k)$ is the tensor product of an exterior algebra and a
  polynomial algebra, that is, one has an isomorphism of $k$-algebras:
  $H^{*}(G,k)\cong \Lambda (x)\otimes_k k[y]$, with $|x|=1$ and
  $|y|=2$.

\item 
The K\"unneth isomorphism yields an isomorphism of graded $k$-algebras
\[
H^{*}(G\times H,k)\cong H^{*}(G,k)\otimes_{k}H^{*}(H,k)
\]
where the tensor product on the right is the graded tensor product: 
\[
(a\otimes b)\cdot (c\otimes d)=(-1)^{|b||c|}ac\otimes bd\,.
\]
Using this isomorphism, compute the cohomology algebra of an elementary abelian $p$-group of rank $r$, for any $p\geq 2$.

\item Prove that $V_{G}\!\setminus\!\{0\}$ is connected for any finite group $G$.

\noindent Hint: Use Quillen's theorem, noting that each non-trivial $p$-group has a central element of order $p$.

\item Describe the conjugacy classes of elementary abelian 2-subgroups of $Q_{8}$. Then, using Quillen's theorem, compute the variety of $Q_{8}$, when $\Char k=2$.

\item Let $G=D_{8}$, the dihedral group of order $8$ and $\Char k=2$.
The following series of exercises leads to a description of $V_{G}$:
\begin{enumerate}[\rm(i)]
\item Prove that there are precisely two conjugacy classes of maximal elementary $2$-subgroups of $G$ (call them $E_{1}$ and $E_{2}$) both of rank $2$, and that they contain the centre, $\bbZ/2$, of $G$.
\item Show that $E_{i}$ is normal in $G$, so that $G/E_{i}\cong \bbZ/2$.
\item Recall that $H^{*}(E_{i},k)\cong k[x,y]$, with $|x|=1=|y|$. Prove that the $\bbZ/2$ action on $k[x,y]$, from (ii), is given by $x\mapsto x$ and $y\mapsto x+y$, and that 
\[
H^{*}(E_{i},k)^{G/E_{i}}\cong k[x,(x+y)y]\,.
\]
\item Use Quillen's theorem to deduce that $V_{G}$ consists of two affine planes glued along a line:
\[
V_{G}=\bbA^{2}(k)\cup_{\bbA^{1}(k)}\bbA^{2}(k)\,.
\]
\end{enumerate}

\item 
This exercise describes how to rotate a triangle in $\StMod(kG)$ arising from an exact sequence of $kG$-modules:
\[ 
0\to A \to B \to C \to 0\,.
\] 
Choose a surjective map $P\to C$ with $P$ a projective module. Write $\Omega C$ for the kernel of the composite surjection $P\to C$. Prove that there is an exact sequence of $kG$-modules
\[ 
0 \to \Omega C \to P \oplus A \to B \to 0\,. 
\]
One gets a triangle $\Omega C\to A \to B\to $ in $\StMod(kG)$ as $A\cong P\oplus A$ there.

Similarly, if we embed $B$ into an injective module $I$ and form the dual construction, we obtain an exact sequence
\[ 
0 \to B \to I \oplus C \to \Omega^{-1}A \to 0\,. 
\]
This gives a triangle $B\to C\to \Omega^{-1}A\to $ in $\StMod(kG)$.

\item Let $\sfA$ be an abelian category with enough injectives; for example, the category of modules over some ring. 
If $X,Y$ are complexes over $\sfA$ such that $Y^{n}$ is injective for all $n$ and $Y^{n}=0$ for $n\ll 0$, then the canonical map:
\[
\Hom_{\sfK(\sfA)}(X,Y)\to \Hom_{\sfD(\sfA)}(X,Y) 
\]
is an isomorphism. Using this, prove that for any $M,N$ in $\sfA$ one has
\[
\Hom_{\sfD(\sfA)}(M,\Si^{i}N)\cong \Ext^{i}_{\sfA}(M,N) \quad\text{for all $i\ge 0$.} 
\]
Here $M,N$ are viewed as complexes concentrated in degree zero.

\item Let $\sfT$ be a compactly generated triangulated category. Show that each object $X\ne 0$ admits a non-zero morphism $C\to X$ from a compact object $C$.
\end{enumerate}

\chapter{Wednesday}
\thispagestyle{empty}

The main goal of Wednesday's lectures is to introduce and develop a theory of support for triangulated categories endowed with an action of a commutative ring. The reason for doing this is that the proofs of the main results, to be presented in Friday's lectures, involve various triangulated categories (of modules over group algebras, and differential graded modules over differential graded rings) and one needs a notion of support applicable to each one of these contexts. In fact, what is required is a more fundamental construction, which underlies support, namely, local cohomology functors. This unifies and extends Grothendieck's local cohomology for commutative rings~\cite{Hartshorne:1967} and the technology of Rickard functors for the stable module category, presented in Section~\ref{ssec:Rickard idempotents}.

\section{Local cohomology and support}
\label{sec:Wednesday1}
In this lecture we explain the construction and the basic properties of local cohomology functors with respect to the action of a graded-commutative noetherian ring. These functors are then used to define the support for objects in a triangulated category. We follow closely the exposition in \cite[\S\S4-5]{Benson/Iyengar/Krause:2008a}. For an  introduction to local cohomology for commutative rings, on which much of our development is based, see the original source~\cite{Hartshorne:1967}; see \cite{Iyengar:2007} for a more recent exposition, highlighting its connections to various branches of mathematics.

\subsection{Central ring actions}
\label{ssec:ringactions}
Let $\sfT$ be a triangulated category admitting set-indexed
coproducts.  Recall that we write $\Si$ for the suspension on
$\sfT$. For objects $X$ and $Y$ in $\sfT$, let
\[ 
\Hom^*_\sfT(X,Y)=\bigoplus_{i\in\bbZ}\Hom_\sfT(X,\Si^i Y)
\]
be the graded abelian group of morphisms. Set
$\End^{*}_{\sfT}(X)=\Hom^{*}_{\sfT}(X,X)$; this is a graded ring, and
$\Hom^*_\sfT(X,Y)$ is a right $\End^{*}_{\sfT}(X)$ and
left $\End^{*}_{\sfT}(Y)$-bimodule.

Let $R$ be a graded-commutative ring; thus $R$ is $\bbZ$-graded and  $rs=(-1)^{|r||s|}sr$ for each pair of homogeneous elements $r,s$ in $R$.  We say that $\sfT$ is \emph{$R$-linear}\index{Rlinear@$R$-linear}, or that $R$ \emph{acts}\index{Raction@$R$-action} on $\sfT$, if there is a homomorphism $\phi\col R\to Z^*(\sfT)$ of graded rings, where $Z^*(\sfT)$ is the \emph{graded centre}\index{triangulated category!graded centre} of $\sfT$.  Note that $Z^*(\sfT)$ is a graded-commutative ring, where for each $n\in\bbZ$ the component in degree $n$ is
\[
Z^n(\sfT)=\{\eta\col\Id_\sfT\to\Si^n\mid\eta\Si=(-1)^n\Si\eta\}.
\] 
This yields for each object $X$ a homomorphism $\phi_X\col R\to\End^*_\sfT(X)$ of graded rings such that for all objects $X,Y\in\sfT$ the $R$-module structures on $\Hom^*_\sfT(X,Y)$ induced by $\phi_{X}$ and $\phi_{Y}$ agree, up to the usual sign rule.

For the rest of this lecture, $\sfT$ will be a compactly generated triangulated category with set-indexed coproducts, and $R$ a graded-commutative noetherian ring acting on $\sfT$.

\subsection{Local cohomology functors} 
We write $\Spec R$ for the set of homogeneous prime ideals of $R$. Fix $\fp\in\Spec R$ and let $M$ be a graded $R$-module. The homogeneous localisation of $M$ at $\fp$ is denoted by $M_{\fp}$ and $M$
is called \emph{$\fp$-local}\index{plocal@$\fp$-local} when the natural map $M\to M_{\fp}$ is bijective.

Given a homogeneous ideal $\fa$ in $R$, we set
\[
\mcV(\fa) = \{\fp\in\Spec R\mid \fp\supseteq \fa\}\,.
\] 
A graded $R$-module $M$ is \emph{$\fa$-torsion}\index{atorsion@$\fa$-torsion} if each element of $M$ is annihilated by a power of $\fa$; equivalently, if $M_\fp=0$ for all $\fp\in\Spec R \setminus\mcV(\fa)$.

The \emph{specialisation closure}\index{specialisation!closure} of a subset $\mcU$ of $\Spec R$ is the set
\[
\cl\mcU=\{\fp\in\Spec R\mid\text{there exists $\fq\in\mcU$ with $\fq\subseteq \fp$}\}.
\] 
The subset $\mcU$ is \emph{specialisation closed}\index{specialisation!closed} if $\cl\mcU=\mcU$; equivalently, if $\mcU$ is a union of Zariski closed subsets of $\Spec R$. For each specialisation closed subset $\mcV$ of $\Spec R$, we define the full subcategory of $\sfT$ of \emph{$\mcV$-torsion objects}\index{vtorsion@$\mcV$-torsion objects}\index{teev@$\sfT_{\mcV}$} as follows:
\[
\sfT_\mcV= \{X\in\sfT\mid\Hom^*_\sfT(C,X)_\fp= 0\text{ for all }
C\in\sfT^c,\, \fp\in \Spec R\setminus \mcV\}.
\]
This is a localising subcategory and there exists a localisation functor $L_\mcV\col\sfT\to\sfT$ such that $\Ker L_\mcV=\sfT_\mcV$; see \cite[Lemma 4.3, Proposition 4.5]{Benson/Iyengar/Krause:2008a}.

By Proposition~\ref{pr:loc-basic}, the localisation functor $L_{\mcV}$ induces a colocalisation functor
on $\sfT$, which we denote $\gam_\mcV$, and call the \emph{local cohomology functor}\index{local cohomology!functor} with respect to $\mcV$. For each object $X$ in $\sfT$ there is then an exact localisation triangle
\begin{equation}
\label{eq:locseq}
\gam_{\mcV}X\lto X\lto \bloc_{\mcV}X\lto\,.
\end{equation}

For each $\fp$ in $\Spec R$ and each object $X$ in $\sfT$ set
\[
X_\fp=L_{\mcZ(\fp)}X\,, \quad\text{where $\mcZ(\fp) = \{\fq\in\Spec R\mid \fq\not\subseteq \fp\}$.}
\]
The notation is justified by \cite[Theorem 4.7]{Benson/Iyengar/Krause:2008a}: the adjunction morphism $X\to X_{\fp}$ induces for any compact object $C$ an isomorphism of $R$-modules
\[
\Hom_\sfT^*(C,X)_{\fp}\xra{\cong}\Hom_\sfT^*(C,X_{\fp})\,.
\] 
We say $X$ is \emph{$\fp$-local}\index{plocal@$\fp$-local} if the adjunction morphism $X\to X_\fp$ is an isomorphism; this is equivalent to the condition that there exists \emph{some} isomorphism $X\cong X_{\fp}$ in $\sfT$.

Consider the exact functor $\gam_{\fp}\col\sfT\to\sfT$ obtained by
setting
\[
\gam_{\fp}X= \gam_{\mcV(\fp)}(X_{\fp}) \quad\text{for each object $X$
  in $\sfT$},
\]
and let $\gam_\fp\sfT$ denote its essential image.  One has a natural isomorphism $\gam_{\fp}^{2}\cong \gam_{\fp}$, and an object $X$ from $\sfT$ is in $\gam_\fp\sfT$ if and only if the $R$-module $\Hom^{*}_{\sfT}(C,X)$ is $\fp$-local and $\fp$-torsion for every compact object $C$; see \cite[Corollary~4.10]{Benson/Iyengar/Krause:2008a}.

\subsection{Support}
The \emph{support}\index{support} of an object $X$ in $\sfT$ is by definition the set
\[
\supp_{R} X=\{\fp\in\Spec R\mid\gam_{\fp}X\ne 0\}.
\] 
The result below, see \cite[Theorem~5.15]{Benson/Iyengar/Krause:2008a}, collects  basic properties of support; it is in fact, an axiomatic characterization. It is convenient to set for each  $X$ in $\sfT$
\[
\supp_R H^*(X)=\bigcup_{C\in\sfT^c} \supp_R \Hom^*_\sfT(C,X)\,.
\] 
Note that the support on the right hand side refers to that defined in Section~\ref{se:appendix-supp}.

\begin{theorem}
\label{thm:axioms}
The assignment that sends each object $X$ in $\sfT$ to the subset
$\supp_{R} X$ of $\Spec R$ has  the following properties:
\begin{enumerate}
\item \emph{Cohomology:} For each object $X$ in $\sfT$ one has
\[
\cl(\supp_{R} X) = \cl(\supp_R H^*(X))\,.
\] 
\item \emph{Orthogonality:} For objects $X$ and $Y$ in $\sfT$, one has that
\[
\cl(\supp_{R} X)\cap\supp_{R} Y=\varnothing \quad\text{implies}\quad \Hom_\sfT(X,Y)=0\,.
\]
  \item \emph{Exactness:} For every exact triangle $X\to Y\to Z\to$ in $\sfT$, one has
\[
\supp_{R} Y\subseteq \supp_{R} X \cup \supp_{R} Z\,.
\] 
\item \emph{Separation:} For any specialisation closed subset $\mcV$ of $\Spec R$ and object $X$ in $\sfT$, there exists an exact triangle $X'\to X\to X''\to$ in $\sfT$ such that
\[
\supp_{R} X'\subseteq\mcV \quad\text{and}
\quad\supp_{R}X'' \subseteq\Spec R\setminus \mcV\,.
\]
  \end{enumerate}
Any other assignment having these properties coincides with $\supp_R(-)$.\qed
\end{theorem}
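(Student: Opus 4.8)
The plan is to treat the statement in two halves: first that $\supp_R(-)$ does have properties (1)--(4), and then that any assignment with those four properties must coincide with it.

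\textbf{Existence of the four properties.} Here I would argue directly from the localisation functors $\gam_\mcV$, $L_\mcV$ and the derived objects $X_\fp$, $\gam_\fp X$ recalled above (and from \cite[\S\S4--5]{Benson/Iyengar/Krause:2008a}), using in particular the two structural facts that $\supp_R X\subseteq\mcV$ makes $X$ a $\mcV$-torsion object (so $\gam_\mcV X\xra{\sim}X$) and $\supp_R X\cap\mcV=\varnothing$ makes $X$ a $\mcV$-local object. \emph{Exactness} is immediate, since each $\gam_\fp$ is exact and a triangle with two vanishing vertices has the third zero. For \emph{Separation}, given specialisation closed $\mcV$ one takes $X'=\gam_\mcV X$ and $X''=L_\mcV X$ in the localisation triangle \eqref{eq:locseq}; the required support bounds follow from the identities $\gam_\fp\gam_\mcV\cong\gam_\fp$, $\gam_\fp L_\mcV\cong 0$ when $\fp\in\mcV$, and their reverse when $\fp\notin\mcV$, which in turn rest on the observation that $\fp\in\mcV$ forces $\mcV(\fp)\subseteq\mcV$. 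For \emph{Cohomology}, the isomorphism $\Hom^*_\sfT(C,X_\fp)\cong\Hom^*_\sfT(C,X)_\fp$ on compact objects $C$ (and the analogue for $\gam_{\mcV(\fp)}$) reduces the claim to the module-level comparison $\supp_R X\subseteq\supp_R H^*(X)\subseteq\cl(\supp_R X)$; as a by-product one obtains $\supp_R X=\varnothing\iff X=0$. Finally, for \emph{Orthogonality}, put $\mcV=\cl(\supp_R X)$: then $X$ is $\mcV$-torsion, $Y$ is $\mcV$-local, and $\Hom_\sfT$ from a torsion object to a local one vanishes.

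\textbf{Uniqueness.} Let $\sigma$ satisfy (1)--(4). Property (1) already gives $\cl\sigma(X)=\cl(\supp_R X)$ for every $X$, whence $\sigma(X)=\varnothing\iff X=0$, and whenever $\supp_R X=\{\fp\}$ (for instance $X=\gam_\fp Y\ne 0$) one has $\fp\in\sigma(X)\subseteq\mcV(\fp)$, because $\fp$ is the unique minimal prime of $\mcV(\fp)=\cl\sigma(X)$. The strategy is then to reconstruct the canonical (co)localisation triangles from the Separation data for $\sigma$: for specialisation closed $\mcV$, Separation gives a triangle $X'\to X\to X''\to$ with $\sigma(X')\subseteq\mcV$ and $\sigma(X'')\subseteq\Spec R\setminus\mcV$; the first condition forces $X'$ into $\sfT_\mcV$ (via $\cl\sigma=\cl\supp_R$ and $\mcV$ specialisation closed), and one uses Orthogonality, applied to the maps relating this triangle to $\gam_\mcV X\to X\to L_\mcV X\to$, to see that the two triangles are isomorphic. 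Specialising to $\mcV=\mcZ(\fp)$ and then $\mcV=\mcV(\fp)$ pins down $\sigma$ on $X_\fp$, on $L_{\mcV(\fp)}(X_\fp)$ and on $\gam_\fp X$; chasing $\fp$ through the two triangles $\gam_{\mcZ(\fp)}X\to X\to X_\fp\to$ and $\gam_{\mcV(\fp)}(X_\fp)\to X_\fp\to L_{\mcV(\fp)}(X_\fp)\to$ with Exactness then yields $\fp\in\sigma(X)\iff\gam_\fp X\ne 0$, i.e.\ $\sigma=\supp_R$ (cf.\ \cite[Theorem~5.15]{Benson/Iyengar/Krause:2008a}).

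\textbf{Where the work is.} I expect two pressure points. In the existence half it is the Cohomology property together with Separation: these are not formal and genuinely require the localisation machinery of \cite{Benson/Iyengar/Krause:2008a}---the construction of $L_\mcV$, the behaviour of $(-)_\fp$ on compacts, and the compatibilities of these functors for nested specialisation closed sets---so a self-contained argument would have to redevelop all of that. In the uniqueness half the delicate step is the claim that the Separation triangle supplied by the axioms for $\sigma$ agrees with the canonical one: since the axioms only control $\sigma$ up to specialisation closure, $\cl\sigma=\cl\supp_R$ directly identifies only the torsion half $X'$ of the triangle, and pinning down the local half $X''$ requires a careful use of Orthogonality on the connecting maps. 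Everything else is bookkeeping with exact triangles.
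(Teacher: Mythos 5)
The paper itself offers no proof of this theorem --- it is stated with a \qed and a pointer to \cite[Theorem~5.15]{Benson/Iyengar/Krause:2008a} --- and your outline faithfully reproduces the strategy of that source: the four properties follow from Theorem~\ref{thm:cohom-supp}, Theorem~\ref{thm:loc-supp} and the localisation triangle \eqref{eq:locseq}, while uniqueness is obtained by identifying the Separation triangle for $\sigma$ with the canonical triangle $\gam_{\mcV}X\to X\to L_{\mcV}X\to$ and then chasing $\fp$ through the triangles for $\mcZ(\fp)$ and $\mcV(\fp)$ using Exactness, exactly as you describe. The one point to make explicit in a write-up is that Orthogonality as stated only controls $\Hom_{\sfT}(W,X'')$ in degree zero, so to conclude that $X''$ is $\mcV$-local you must apply it to all shifts $\Si^{n}W$ of objects $W\in\sfT_{\mcV}$, using the Cohomology axiom to see that $\cl\sigma(\Si^{n}W)=\cl\sigma(W)\subseteq\mcV$; with that noted, your plan is correct.
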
 

\section{Koszul objects and support}
\label{sec:Wednesday2}
In the last lecture we defined a notion for support for triangulated category, based on local cohomology functors. In this one we describe some methods for computing it, in terms of the support, in the sense of \ref{Appendix}, of the cohomology. 

Let $R$ be a graded commutative noetherian ring and $\sfT$ an $R$-linear triangulated category; see Section~\ref{ssec:ringactions}. To each specialisation closed subset $\mcV$ of $\Spec R$ we associated an exact localisation triangle
\[
\gam_{\mcV}X\to X\to L_{\mcV}X\to \,.
\]
One should think of $\gam_{\mcV}X$ as the part of $X$ supported on $\mcV$ and $L_{\mcV}X$ as the part supported on $\Spec R\setminus \mcV$; this will be clarified in this lecture. This point of view should at least make the following statement plausible.

\begin{lemma}
\label{lem:loc-commute}
If $\mcV\subseteq \mcW$ are specialisation closed sets, then for each $X$ in $\sfT$ there are natural isomorphisms
\[
\begin{gathered}
\gam_{\mcV}\gam_{\mcW}X\xra{\ \cong \ } \gam_{\mcV}X\xla{\ \cong \ }\gam_{\mcW}\gam_{\mcV}X \\
\gam_{\mcV}L_{\mcW}X =0 = L_{\mcW}\gam_{\mcV}X 
\end{gathered}
\]
\end{lemma}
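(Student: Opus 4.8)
The plan is to reduce both displayed lines to the two vanishing statements $L_{\mcW}\gam_{\mcV}X=0$ and $\gam_{\mcV}L_{\mcW}X=0$, after which the two isomorphisms fall out of the functorial localisation triangles together with exactness of the functors $\gam_{\mcV},\gam_{\mcW}$. The one structural fact I would establish first is that $\sfT_{\mcV}\subseteq\sfT_{\mcW}$: this is immediate from the definition of these torsion subcategories, since $\mcV\subseteq\mcW$ gives $\Spec R\setminus\mcW\subseteq\Spec R\setminus\mcV$, and hence the vanishing of $\Hom^{*}_{\sfT}(C,X)_{\fp}$ for all compact $C$ and all $\fp\notin\mcV$ forces its vanishing for all $\fp\notin\mcW$. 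Passing to right orthogonals reverses this inclusion, so $\sfT_{\mcW}^{\perp}\subseteq\sfT_{\mcV}^{\perp}$. By Proposition~\ref{pr:loc-basic}, applied once with $\sfS=\sfT_{\mcV}$ and once with $\sfS=\sfT_{\mcW}$, one has $\Ker L_{\mcW}=\sfT_{\mcW}$, $\Im L_{\mcW}=\sfT_{\mcW}^{\perp}=\Ker\gam_{\mcW}$, and $\Ker\gam_{\mcV}=\sfT_{\mcV}^{\perp}=\Im L_{\mcV}$; these dictionary identities (plus the functoriality of the triangle $\gam_{\mcW}(-)\to(-)\to L_{\mcW}(-)\to$) are all I would use.

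For the vanishing statements: the object $\gam_{\mcV}X$ lies in the essential image $\sfT_{\mcV}$ of $\gam_{\mcV}$, hence in $\sfT_{\mcW}=\Ker L_{\mcW}$, which gives $L_{\mcW}\gam_{\mcV}X=0$; dually, the object $L_{\mcW}X$ lies in $\Im L_{\mcW}=\sfT_{\mcW}^{\perp}\subseteq\sfT_{\mcV}^{\perp}=\Ker\gam_{\mcV}$, which gives $\gam_{\mcV}L_{\mcW}X=0$. Then, to get $\gam_{\mcW}\gam_{\mcV}X\cong\gam_{\mcV}X$, I would feed the object $\gam_{\mcV}X$ into the functorial localisation triangle for $\mcW$, obtaining a natural exact triangle $\gam_{\mcW}\gam_{\mcV}X\to\gam_{\mcV}X\to L_{\mcW}\gam_{\mcV}X\to$ whose third term vanishes, so the first arrow is an isomorphism. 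To get $\gam_{\mcV}\gam_{\mcW}X\cong\gam_{\mcV}X$, I would apply the exact functor $\gam_{\mcV}$ to the localisation triangle $\gam_{\mcW}X\to X\to L_{\mcW}X\to$, obtaining the natural exact triangle $\gam_{\mcV}\gam_{\mcW}X\to\gam_{\mcV}X\to\gam_{\mcV}L_{\mcW}X\to$ whose third term vanishes, so again the first arrow is an isomorphism. Naturality in $X$ is automatic, since in each case one has either specialised a functorial triangle at a functorial object or applied a functor to a natural transformation.

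I do not expect a genuine obstacle here; the proof is short once Proposition~\ref{pr:loc-basic} is in hand. The only points requiring care are bookkeeping ones: to use the correct orthogonal complement — it is $\sfT_{\mcV}^{\perp}$ that equals $\Ker\gam_{\mcV}$ (equivalently $\Im L_{\mcV}$), not $\Ker L_{\mcV}$ — and to justify the inclusion $\sfT_{\mcV}\subseteq\sfT_{\mcW}$ directly from the defining condition on localised morphism modules rather than appealing to any functorial statement.
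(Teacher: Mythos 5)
Your proof is correct, and it differs from the paper's argument in how the second pair of statements is obtained. The first half is the same in both: you deduce $\sfT_{\mcV}\subseteq\sfT_{\mcW}$ from the defining condition on localised morphism modules and conclude that $L_{\mcW}\gam_{\mcV}X=0$, equivalently that $\gam_{\mcW}\gam_{\mcV}X\to\gam_{\mcV}X$ is an isomorphism, from the localisation triangle for $\mcW$ at the object $\gam_{\mcV}X$. For the remaining claims the paper proceeds differently: it first verifies that the composite $\gam_{\mcV}\gam_{\mcW}$ is a right adjoint of the inclusion $\sfT_{\mcV}\subseteq\sfT$ and invokes uniqueness of adjoints to get $\gam_{\mcV}\gam_{\mcW}\cong\gam_{\mcV}$, and only then deduces $\gam_{\mcV}L_{\mcW}X=0$ by applying $\gam_{\mcV}$ to the localisation triangle for $\mcW$. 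You reverse this order: you obtain $\gam_{\mcV}L_{\mcW}X=0$ directly from the dictionary of Proposition~\ref{pr:loc-basic}, namely $\Im L_{\mcW}=\sfT_{\mcW}^{\perp}\subseteq\sfT_{\mcV}^{\perp}=\Ker\gam_{\mcV}$ (orthogonals reverse the inclusion $\sfT_{\mcV}\subseteq\sfT_{\mcW}$), and then read off the isomorphism $\gam_{\mcV}\gam_{\mcW}X\xra{\cong}\gam_{\mcV}X$ from the triangle, using exactness of $\gam_{\mcV}$. Your route treats the two halves symmetrically and avoids the adjunction verification, which the paper leaves to the reader as ``easy to verify''; the paper's route is slightly more conceptual in that it identifies $\gam_{\mcV}\gam_{\mcW}$ with $\gam_{\mcV}$ by abstract uniqueness of right adjoints rather than through an explicit triangle. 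Both arguments rest on exactly the same input, Proposition~\ref{pr:loc-basic}, and your bookkeeping of kernels, images and orthogonals is accurate.
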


\begin{proof}
The hypothesis implies that $\mcV$-torsion objects are $\mcW$-torsion: $\sfT_{\mcV}\subseteq\sfT_{\mcW}$. Thus, essentially by definition, the natural map $\gam_{\mcW}\gam_{\mcV}X\to \gam_{\mcV}X$ is a isomorphism; equivalently, $L_{\mcW}\gam_{\mcV}X=0$, as follows by considering the localisation triangle~\eqref{eq:locseq} defined by $\mcW$ and the object $\gam_{\mcV}X$. This settles half the claims.

As to the rest: It is easy to verify that the composition $\gam_{\mcV}\gam_{\mcW}$ is a right adjoint to the inclusion $\sfT_{\mcV}\subseteq\sfT$, and hence isomorphic to $\gam_{\mcV}$, by the uniqueness of adjoints. 
Applying $\gam_{\mcV}$ to the localisation triangle~\eqref{eq:locseq} defined by $\mcW$, it then follows that $\gam_{\mcV}L_{\mcW}X =0$.
\end{proof}

Even when $\mcV$ is not contained in $\mcW$, the local cohomology functors interact in a predictable, and useful, way; see \cite[Proposition~6.1]{Benson/Iyengar/Krause:2008a}. One consequence of these is worth recording:

\begin{theorem}
\label{thm:vw}
\pushQED{\qed}
Fix a point $\fp$ in $\Spec R$. For any specialisation closed subsets $\mcV$ and $\mcW$ of $\Spec R$ such that $\mcV \setminus \mcW = \{\fp \}$ there are natural isomorphisms 
\[
L_{\mcW}\gam_{\mcV}\cong \gam_{\fp} \cong \gam_{\mcV} L_{\mcW}\,.\qedhere
\]
\end{theorem}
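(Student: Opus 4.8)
The plan is to reduce everything to the distinguished pair $(\mcV_0,\mcW_0):=(\mcV(\fp),\mcZ(\fp))$, where $\mcV_0=\cl\{\fp\}$ and $\mcW_0=\{\fq\in\Spec R\mid\fq\not\subseteq\fp\}$. For this pair the functor $\gam_{\mcV_0}L_{\mcW_0}$ coincides with $\gam_\fp$ by the very definition of the latter (recall $X_\fp=L_{\mcZ(\fp)}X$ and $\gam_\fp X=\gam_{\mcV(\fp)}X_\fp$). I would then argue that $\gam_\mcV L_\mcW$ depends on the pair $(\mcV,\mcW)$ only through the difference $\mcV\setminus\mcW$. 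Throughout I would invoke \cite[Proposition~6.1]{Benson/Iyengar/Krause:2008a}, which supplies the composition and commutation rules $\gam_\mcV\gam_\mcW\cong\gam_{\mcV\cap\mcW}$, $L_\mcV L_\mcW\cong L_{\mcV\cup\mcW}$ and $\gam_\mcV L_\mcW\cong L_\mcW\gam_\mcV$ for specialisation closed $\mcV,\mcW$, together with Lemma~\ref{lem:loc-commute}. In particular $\gam_\fp\cong L_{\mcW_0}\gam_{\mcV_0}$, so it is enough to prove $\gam_\mcV L_\mcW\cong\gam_\fp$ for an arbitrary admissible pair; the isomorphism $L_\mcW\gam_\mcV\cong\gam_\fp$ then follows by commuting.

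First I would record the inclusions $\mcV_0\subseteq\mcV$ and $\mcW\subseteq\mcW_0$: since $\fp\in\mcV\setminus\mcW\subseteq\mcV$ and $\mcV$ is specialisation closed, $\cl\{\fp\}=\mcV_0\subseteq\mcV$; and if some $\fq\in\mcW$ satisfied $\fq\subseteq\fp$, then $\fp\in\cl\mcW=\mcW$, contradicting $\fp\notin\mcW$. From the definition of the torsion subcategories these give $\sfT_{\mcV_0}\subseteq\sfT_\mcV$, and, since $\Im L_\mcW=\sfT_\mcW^{\perp}$ by Proposition~\ref{pr:loc-basic}, also $\Im L_{\mcW_0}\subseteq\Im L_\mcW$. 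Next I would establish, for any specialisation closed $\mcV,\mcW$, that $\gam_\mcV L_\mcW\cong L_\mcW\gam_\mcV$ is an idempotent exact functor whose essential image is
\[
\Im(\gam_\mcV L_\mcW)=\sfT_\mcV\cap\Im L_\mcW=\{X\in\sfT\mid \supp_RX\subseteq\mcV\setminus\mcW\}.
\]
The first equality is formal: $\gam_\mcV L_\mcW X$ lies in $\Im\gam_\mcV=\sfT_\mcV$ and, via the commutation, in $\Im L_\mcW$, while conversely $\gam_\mcV L_\mcW$ restricts to the identity on $\sfT_\mcV\cap\Im L_\mcW$ because $\gam_\mcV$ and $L_\mcW$ each do there. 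The second equality uses the support dictionary $\sfT_\mcV=\{X\mid\supp_RX\subseteq\mcV\}$ and $\Im L_\mcW=\{X\mid\supp_RX\cap\mcW=\varnothing\}$, which follow from Theorem~\ref{thm:axioms} together with the basic properties of support established in \cite[\S5]{Benson/Iyengar/Krause:2008a}.

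It then remains to assemble the pieces. Applying the displayed identity to $(\mcV,\mcW)$ and to $(\mcV_0,\mcW_0)$ and using $\mcV\setminus\mcW=\{\fp\}=\mcV_0\setminus\mcW_0$ shows that $F:=\gam_\mcV L_\mcW$ and $\gam_\fp=\gam_{\mcV_0}L_{\mcW_0}$ are idempotent exact functors with one and the same essential image $\gam_\fp\sfT=\{X\mid\supp_RX\subseteq\{\fp\}\}$. Hence each of $F$ and $\gam_\fp$ is naturally isomorphic to the identity on this common subcategory, so $\gam_\fp F\cong F$ and $F\gam_\fp\cong\gam_\fp$; and $F\gam_\fp\cong\gam_\fp F$, because by \cite[Proposition~6.1]{Benson/Iyengar/Krause:2008a} each of $\gam_\mcV,L_\mcW,\gam_{\mcV_0},L_{\mcW_0}$ commutes up to natural isomorphism with each of the others. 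Therefore $F\cong\gam_\fp F\cong F\gam_\fp\cong\gam_\fp$, and combined with $\gam_\fp\cong L_{\mcW_0}\gam_{\mcV_0}$ and $L_\mcW\gam_\mcV\cong\gam_\mcV L_\mcW$ this gives all the asserted isomorphisms. The step I expect to be the real obstacle is the support dictionary in the middle paragraph: converting the conjunction ``$\mcV$-torsion and $\mcW$-local'' into the single condition $\supp_RX\subseteq\mcV\setminus\mcW$ requires the comparison of $\supp_RX$ with $\supp_RH^{*}(X)$ and the behaviour of support under $\gam_\mcV$ and $L_\mcW$; granting that, everything else is purely formal bookkeeping with idempotent functors.
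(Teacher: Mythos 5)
Your argument is correct. A point of comparison first: the notes themselves give no proof of Theorem~\ref{thm:vw} --- it is recorded as a consequence of \cite[Proposition~6.1]{Benson/Iyengar/Krause:2008a} --- so what you have done is supply the missing derivation. Your route (compute $\Im(\gam_{\mcV}L_{\mcW})=\sfT_{\mcV}\cap\Im L_{\mcW}=\{X\mid \supp_{R}X\subseteq\mcV\setminus\mcW\}$, observe that $F=\gam_{\mcV}L_{\mcW}$ and $\gam_{\fp}=\gam_{\mcV(\fp)}L_{\mcZ(\fp)}$ are commuting idempotents with the same essential image, each restricting to the identity there, and chain $F\cong\gam_{\fp}F\cong F\gam_{\fp}\cong\gam_{\fp}$) is sound. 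Two remarks. First, the support dictionary you lean on is, within these notes, Corollary~\ref{cor:vlocal-tests} (via Theorem~\ref{thm:loc-supp}) rather than Theorem~\ref{thm:axioms}; those results are printed \emph{after} Theorem~\ref{thm:vw} but their proofs do not use it, so there is no circularity. Second, the step you single out as the ``real obstacle'' can be bypassed entirely: the only non-formal input needed is that $FX$ is $\mcV(\fp)$-torsion and $\mcZ(\fp)$-local, and both follow from the composition rules $\gam_{\mcV}\gam_{\mcW}\cong\gam_{\mcV\cap\mcW}$ and $L_{\mcV}L_{\mcW}\cong L_{\mcV\cup\mcW}$, Lemma~\ref{lem:loc-commute}, and the set-theoretic inclusions $\mcV\subseteq\mcV(\fp)\cup\mcW$ and $\mcV\cap\mcZ(\fp)\subseteq\mcW$ (both restatements of $\mcV\setminus\mcW=\{\fp\}$): indeed $L_{\mcV(\fp)}\gam_{\mcV}L_{\mcW}\cong\gam_{\mcV}L_{\mcV(\fp)\cup\mcW}=0$ and $\gam_{\mcZ(\fp)}\gam_{\mcV}L_{\mcW}\cong\gam_{\mcZ(\fp)\cap\mcV}L_{\mcW}=0$, whence $\gam_{\fp}F\cong F$, while $\gam_{\fp}F\cong\gam_{\mcV(\fp)\cap\mcV}L_{\mcZ(\fp)\cup\mcW}=\gam_{\fp}$. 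That version avoids the support calculus altogether and is closer in spirit to the cited source; yours buys a conceptual description of $\gam_{\fp}\sfT$ along the way, at the cost of invoking Theorem~\ref{thm:cohom-supp}, which is quoted without proof.
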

This may be viewed as an expression of the local nature of the functor $\gam_{\fp}$.

\subsection{Computing support}

Let $X$ be an object in $\sfT$. For any compact object $C$ in $\sfT$, we write $H^{*}_{C}(X)$ for $\Hom^{*}_{\sfT}(C,X)$, and think of it as the \emph{cohomology of $X$ with respect to $C$}. 

For any subset $\mcU$ of $\Spec R$, we write $\min\,\mcU$ for the prime ideals which are minimal, with respect to inclusion, in the set $\mcU$, and for any $R$-module $M$, set
\[
\min_{R} M = \min (\Supp_{R}M)\,.
\]
This set coincides with $\min (\supp_{R}M)$, for $\Supp_{R}M$ is the specialisation closure of $\supp_{R}M$; see Lemma~\ref{le:supp-ann}. The result below, which is \cite[Theorem~5.2]{Benson/Iyengar/Krause:2008a}, is often useful for computing supports.
 
\begin{theorem}
\label{thm:cohom-supp}
For each $X$ in $\sfT$ there is an equality
\[
\supp_{R}X = \bigcup_{C\in\sfT^{\sfc}}\min_{R}H^{*}_{C}(X)
\]
In particular, $\supp_{R}X=\varnothing$ if and only if $X=0$. \qed
\end{theorem}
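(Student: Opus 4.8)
This is \cite[Theorem~5.2]{Benson/Iyengar/Krause:2008a}; the plan is to sketch the two inclusions and point to where the real work sits. First the ``in particular'' clause is a formal consequence of the displayed equality: over the noetherian ring $R$ a nonzero graded module has nonempty support, so $\min_{R}H^{*}_{C}(X)\neq\varnothing$ as soon as $\Hom^{*}_{\sfT}(C,X)\neq 0$; hence $\supp_{R}X=\varnothing$ forces $\Hom^{*}_{\sfT}(C,X)=0$ for every compact $C$, and then $X=0$ by Proposition~\ref{pr:compact-generation}. The converse ($X=0\Rightarrow\gam_{\fp}X=0$ for all $\fp$) is immediate. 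So everything reduces to proving $\supp_{R}X=\bigcup_{C}\min_{R}H^{*}_{C}(X)$.

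For the inclusion $\bigcup_{C}\min_{R}H^{*}_{C}(X)\subseteq\supp_{R}X$: fix a compact $C$ and a prime $\fp$ minimal in $\Supp_{R}\Hom^{*}_{\sfT}(C,X)$. Since $\Supp_{R}M$ is the specialisation closure of $\supp_{R}M$ (Lemma~\ref{le:supp-ann}), $\fp$ already lies in $\supp_{R}\Hom^{*}_{\sfT}(C,X)$. Using the identification $\Hom^{*}_{\sfT}(C,X_{\fp})\cong\Hom^{*}_{\sfT}(C,X)_{\fp}$ of \cite[Theorem~4.7]{Benson/Iyengar/Krause:2008a} together with the computation $\supp_{R}(M_{\fp})=\supp_{R}M\cap\{\fq\mid\fq\subseteq\fp\}$, and the compatibility of the torsion functor $\gam_{\mcV(\fp)}$ with cohomology against a compact object, namely $\supp_{R}\Hom^{*}_{\sfT}(C,\gam_{\mcV}Y)=\mcV\cap\supp_{R}\Hom^{*}_{\sfT}(C,Y)$ from \cite[Proposition~6.1]{Benson/Iyengar/Krause:2008a}, one gets for $Y=X_{\fp}$ and $\mcV=\mcV(\fp)$ that
\[
\supp_{R}\Hom^{*}_{\sfT}(C,\gam_{\fp}X)=\mcV(\fp)\cap\{\fq\mid\fq\subseteq\fp\}\cap\supp_{R}H^{*}_{C}(X)=\{\fp\}\,.
\]
In particular $\Hom^{*}_{\sfT}(C,\gam_{\fp}X)\neq 0$, so $\gam_{\fp}X\neq 0$, i.e.\ $\fp\in\supp_{R}X$.

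For the reverse inclusion $\supp_{R}X\subseteq\bigcup_{C}\min_{R}H^{*}_{C}(X)$: let $\fp\in\supp_{R}X$, so $\gam_{\fp}X\neq 0$, and pick a compact $C$ with $\Hom^{*}_{\sfT}(C,\gam_{\fp}X)\neq 0$ (possible since $\sfT$ is compactly generated). Running the displayed identity backwards shows that the nonempty set $\{\fp\}\cap\supp_{R}H^{*}_{C}(X)$ equals $\{\fp\}$, so $\fp\in\Supp_{R}\Hom^{*}_{\sfT}(C,X)$. It remains to exchange $C$ for a compact object whose cohomology has $\fp$ as a \emph{minimal} prime. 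Here I would pass to a Koszul object $\kos C{\fa}$ on a finite generating set $\fa$ of $\fp$: it lies in $\Thick(C)\subseteq\sfT^{\sfc}$, and the Koszul mapping-cone exact sequences (the triangulated analogue of Proposition~\ref{prop:kosca}, as developed in \cite[\S5]{Benson/Iyengar/Krause:2008a}) give $\Supp_{R}\Hom^{*}_{\sfT}(\kos C{\fa},X)\subseteq\mcV(\fp)\cap\Supp_{R}H^{*}_{C}(X)$ while keeping $\fp$ inside this support. Since $\fp$ is visibly minimal in $\mcV(\fp)\cap\Supp_{R}H^{*}_{C}(X)$, this yields $\fp\in\min_{R}\Hom^{*}_{\sfT}(\kos C{\fa},X)$, as required.

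The main obstacle is exactly the bookkeeping that I have quoted rather than proved: the precise interaction of the functors $\gam_{\mcV}$, $L_{\mcV}$ and $(-)_{\fp}$ with the cohomology modules $\Hom^{*}_{\sfT}(C,-)$ for compact $C$, and in particular the Koszul-object support computations. These are the substance of \cite[\S\S4--5]{Benson/Iyengar/Krause:2008a} and are the triangulated counterparts of the elementary commutative-algebra statements in Proposition~\ref{prop:kosca}. Granted them, the argument above is pure manipulation of subsets of $\Spec R$, using only that $R$ is noetherian (so that every prime has finite height and minimal primes exist in abundance, and $\Supp=\cl(\supp)$).
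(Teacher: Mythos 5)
There is a genuine gap, and it sits exactly where you chose to quote rather than prove. (For the record, these notes do not prove the statement either; they cite \cite[Theorem~5.2]{Benson/Iyengar/Krause:2008a} and omit the argument, whose template is the proof of Theorem~\ref{thm:cohom-supp2}.) The lemma your first inclusion rests on, namely $\supp_{R}\Hom^{*}_{\sfT}(C,\gam_{\mcV}Y)=\mcV\cap\supp_{R}\Hom^{*}_{\sfT}(C,Y)$ for compact $C$, is not what \cite[Proposition~6.1]{Benson/Iyengar/Krause:2008a} says (that result concerns the interaction of $\gam_{\mcV}$ and $\gam_{\mcW}$), and it is in fact false. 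Take $\sfT=\sfD(A)$ with $A=k[[x,y]]$, $R=A$, $C=A$, $\mcV=\mcV(\fm)$ and $Y=L_{\mcV}A$. Then $\gam_{\mcV}Y=\gam_{\mcV}L_{\mcV}A=0$ (Lemma~\ref{lem:loc-commute}), so the left-hand side is empty; but the localisation triangle for $A$ gives $H^{1}(Y)\cong H^{2}_{\fm}(A)\neq 0$, an $\fm$-torsion module, so $\fm$ lies in $\mcV\cap\supp_{A}\Hom^{*}_{\sfD(A)}(A,Y)$. The same object explains why the theorem is stated with \emph{minimal} primes of the \emph{big} support: your identity, applied at an arbitrary $\fp\in\supp_{R}H^{*}_{C}(X)$, would yield the stronger formula $\supp_{R}X=\bigcup_{C}\supp_{R}H^{*}_{C}(X)$, and $X=L_{\mcV(\fm)}A$ refutes it, since $\fm\notin\supp_{A}X$ by Theorem~\ref{thm:loc-supp} while $\fm\in\supp_{A}\Hom^{*}_{\sfD(A)}(A,X)$. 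So the step is not a citable fact; restricted to minimal primes it is essentially the theorem itself.

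What minimality actually buys, and what your bookkeeping skips, is a torsion hypothesis: if $\fp$ is minimal in $\Supp_{R}\Hom^{*}_{\sfT}(C,X)$, then $\Hom^{*}_{\sfT}(C,X)_{\fp}\cong\Hom^{*}_{\sfT}(C,X_{\fp})$ is nonzero, $\fp$-local \emph{and} $\fp$-torsion, and only because of this does passing to the Koszul object not lose the prime: Proposition~\ref{prop:kosprop}(3) (whose converse direction needs precisely the torsion or finiteness hypothesis) gives $\Hom^{*}_{\sfT}(\kos C{\fp},X_{\fp})\neq 0$, and then Proposition~\ref{prop:kosprop}(2),(4) give $\Hom^{*}_{\sfT}(\kos C{\fp},\gam_{\fp}X)\neq 0$, hence $\fp\in\supp_{R}X$. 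The same issue infects your reverse inclusion: the assertion that Koszul-ing down ``keeps $\fp$ inside the support'' is exactly what needs proof and fails for general modules (for instance $(p)\in\Supp_{\bbZ}\bbQ$ while $\kos{\bbQ}{p}=0$ in $\sfD(\bbZ)$); here it holds because $\Hom^{*}_{\sfT}(C,\gam_{\fp}X)$ is $\fp$-local and $\fp$-torsion (Corollary~\ref{cor:plocptor}, i.e.\ \cite[Corollary~4.10]{Benson/Iyengar/Krause:2008a}), after which $\fp$ is automatically minimal because $\Hom^{*}_{\sfT}(\kos C{\fp},X)$ is $\fp$-torsion by Proposition~\ref{prop:kosprop}(1). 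Your overall skeleton (two inclusions, the localisation isomorphism of \cite[Theorem~4.7]{Benson/Iyengar/Krause:2008a}, Koszul objects, and the deduction of the ``in particular'' clause from compact generation) does match the intended proof, but the compatibility statement you treat as an off-the-shelf citation is false in general, and proving its valid special case is the actual content of the theorem.
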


We omit the proof, and will focus on its consequences, one of which is:

\begin{theorem}
\label{thm:loc-supp}
For each specialisation closed subset $\mcV$ of $\Spec R$ there are equalities
\begin{align*}
\supp_{R}\gam_{\mcV}X & = \supp_{R}X\cap \mcV \\
\supp_{R}L_{\mcV}X &= \supp_{R}X\cap (\Spec R\setminus \mcV)
\end{align*}
In particular, $\supp_{R}X=\supp_{R}\gam_{\mcV}X\, \coprod\, \supp_{R}L_{\mcV}X$.
\end{theorem}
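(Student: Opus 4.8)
The plan is to deduce both displayed equalities from two one-sided containments together with the Exactness axiom of Theorem~\ref{thm:axioms}, applied to the localisation triangle $\gam_{\mcV}X\to X\to L_{\mcV}X\to$.

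\emph{Step 1: the containments $\supp_{R}\gam_{\mcV}X\subseteq\mcV$ and $\supp_{R}L_{\mcV}X\subseteq\Spec R\setminus\mcV$.} For the first, I would note that $\gam_{\mcV}X$ lies in $\sfT_{\mcV}$, since $\Im\gam_{\mcV}=\Ker L_{\mcV}=\sfT_{\mcV}$ by Proposition~\ref{pr:loc-basic} and the construction of $L_{\mcV}$. Hence for every compact $C$ the $R$-module $\Hom^{*}_{\sfT}(C,\gam_{\mcV}X)$ is $\mcV$-torsion, so its minimal primes lie in $\mcV$; by Theorem~\ref{thm:cohom-supp} this gives $\supp_{R}\gam_{\mcV}X\subseteq\mcV$. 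For the second, fix $\fp\in\mcV$. As $\mcV$ is specialisation closed we have $\mcV(\fp)\subseteq\mcV$, so Lemma~\ref{lem:loc-commute} yields $\gam_{\mcV(\fp)}L_{\mcV}X=0$; since $\mcV(\fp)\setminus\mcZ(\fp)=\{\fp\}$, Theorem~\ref{thm:vw} identifies $\gam_{\fp}$ with $L_{\mcZ(\fp)}\gam_{\mcV(\fp)}$, whence $\gam_{\fp}L_{\mcV}X\cong L_{\mcZ(\fp)}\gam_{\mcV(\fp)}L_{\mcV}X=0$. Thus $\fp\notin\supp_{R}L_{\mcV}X$, which proves the claim. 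In particular $\supp_{R}\gam_{\mcV}X$ and $\supp_{R}L_{\mcV}X$ are disjoint.

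\emph{Step 2: extracting the equalities.} Applying the Exactness axiom to the three rotations of the localisation triangle — using that $\supp_{R}(\Si^{\pm1}Y)=\supp_{R}Y$, because $\gam_{\fp}$ is exact and therefore commutes with $\Si$ — gives $\supp_{R}X\subseteq\supp_{R}\gam_{\mcV}X\cup\supp_{R}L_{\mcV}X$, together with $\supp_{R}\gam_{\mcV}X\subseteq\supp_{R}X\cup\supp_{R}L_{\mcV}X$ and $\supp_{R}L_{\mcV}X\subseteq\supp_{R}X\cup\supp_{R}\gam_{\mcV}X$. By the disjointness from Step~1, the last two collapse to $\supp_{R}\gam_{\mcV}X\subseteq\supp_{R}X$ and $\supp_{R}L_{\mcV}X\subseteq\supp_{R}X$; combined with the inclusions of Step~1 this gives ``$\subseteq$'' in both equalities. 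For the reverse inclusions, take $\fp\in\supp_{R}X$: by the first containment $\fp$ lies in $\supp_{R}\gam_{\mcV}X$ or in $\supp_{R}L_{\mcV}X$, and since these sit inside $\mcV$ and $\Spec R\setminus\mcV$ respectively, the case $\fp\in\mcV$ forces $\fp\in\supp_{R}\gam_{\mcV}X$ while the case $\fp\notin\mcV$ forces $\fp\in\supp_{R}L_{\mcV}X$. This establishes the two displayed equalities, and the ``in particular'' is then immediate, since $\mcV$ and $\Spec R\setminus\mcV$ partition $\Spec R$.

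The only step that carries real content is the second containment in Step~1, namely the vanishing $\gam_{\fp}L_{\mcV}X=0$ for $\fp\in\mcV$: the trick is to use Lemma~\ref{lem:loc-commute} in its nested form for the pair $\mcV(\fp)\subseteq\mcV$ and then to transport the conclusion through the identification of $\gam_{\fp}$ furnished by Theorem~\ref{thm:vw}. Everything else is formal bookkeeping with the support axioms and the rotations of the localisation triangle.
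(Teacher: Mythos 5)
Your proposal is correct and follows essentially the same route as the paper: the key containments $\supp_{R}\gam_{\mcV}X\subseteq\mcV$ (via membership in $\sfT_{\mcV}$ and Theorem~\ref{thm:cohom-supp}) and $\supp_{R}L_{\mcV}X\subseteq\Spec R\setminus\mcV$ (via $\gam_{\fp}L_{\mcV}X\cong L_{\mcZ(\fp)}\gam_{\mcV(\fp)}L_{\mcV}X=0$ using Lemma~\ref{lem:loc-commute}) are exactly the paper's Claim, and the remaining bookkeeping with the localisation triangle matches the paper, whether one quotes the Exactness property of Theorem~\ref{thm:axioms} or, as the paper does, just the exactness of each $\gam_{\fp}$.
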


\begin{proof} To begin with, we verify the

\smallskip

\emph{Claim}: $\supp_{R}\gam_{\mcV}X\subseteq \mcV$ and $\supp_{R}L_{\mcV}X\subseteq \Spec R\setminus \mcV$.

\smallskip

Indeed, for each $\fp$ in $\mcV$, the construction of $\gam_{\fp}$ gives the first equality below: 
\[
\gam_{\fp}(L_{\mcV}X) = L_{\mcZ(\fp)}\gam_{\mcV(\fp)}L_{\mcV}X = 0\,,
\]
while the second equality holds by Lemma~\ref{lem:loc-commute} since $\mcV(\fp)\subseteq\mcV$. Thus $\supp_{R}L_{\mcV}X$ is contained in $\Spec R\setminus \mcV$. On the other hand, since the image of $\gam_{\mcV}$ is in $\sfT_{\mcV}$, for each compact object $C$ one gets the second inclusion below:
\[
\min_{R}H^{*}_{C}(\gam_{\mcV}X)\subseteq \supp_{R}H^{*}_{C}(\gam_{\mcV}X)\subseteq \mcV\,.
\]
Thus Theorem~\ref{thm:cohom-supp} implies $\supp_{R}\gam_{\mcV}X\subseteq \mcV$. This justifies the claim.

Now, since $\gam_{\fp}$ is an exact functor, for any $\fp\in\Spec R$, from the localisation triangle~\eqref{eq:locseq}  one gets inclusions
\begin{gather*}
\supp_{R}\gam_{\mcV}X  \subseteq \supp_{R}X\cup\supp_{R}L_{\mcV}X\\
\supp_{R}X  \subseteq \supp_{R}\gam_{\mcV}X \cup \supp_{R}L_{\mcV}X
\end{gather*}
Combining the first inclusion with those in the claim yields:
\begin{align*}
\supp_{R}\gam_{\mcV}X 
&\subseteq (\supp_{R}X\cup \supp_{R}L_{\mcV}X)\cap \mcV \\
&= (\supp_{R}X\cap \mcV) \cup (\supp_{R}L_{\mcV}X\cap \mcV) \\
&=\supp_{R}X\cap \mcV
\end{align*}
In the same vein, an inclusion $\supp_{R}L_{\mcV}X\subseteq \supp_{R}X\cap (\Spec R\setminus \mcV)$ holds.
The desired equalities follow, as $\supp_{R}X\subseteq \supp_{R}\gam_{\mcV}X \cup \supp_{R}L_{\mcV}X$.
\end{proof}

\begin{corollary}
\label{cor:vlocal-tests}
Let $\mcV$ be a specialisation closed subset of $\Spec R$.
\begin{enumerate}[\quad\rm(1)]
\item $\supp_{R}X\subseteq \mcV\iff X\in\sfT_{\mcV}\iff\gam_{\mcV}X\xra{\cong}X\iff L_{\mcV}X=0$.
\item $\mcV\cap \supp_{R}X=\varnothing \iff X\xra{\cong}L_{\mcV}X\iff \gam_{\mcV}X=0$.
\end{enumerate}
\end{corollary}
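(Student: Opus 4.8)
The plan is to prove both (1) and (2) by running short cycles of implications, relying on three inputs already at hand: Theorem~\ref{thm:loc-supp} (which computes $\supp_{R}\gam_{\mcV}X$ and $\supp_{R}L_{\mcV}X$), Theorem~\ref{thm:cohom-supp} (an object vanishes if and only if its support is empty), and the localisation triangle $\gam_{\mcV}X\to X\to L_{\mcV}X\to$, together with the defining property $\Ker L_{\mcV}=\sfT_{\mcV}$ and the identifications $\Im\gam_{\mcV}=\sfT_{\mcV}$ and $\Ker\gam_{\mcV}=\Im L_{\mcV}$ coming from Proposition~\ref{pr:loc-basic}.

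For (1), I would first observe that $X\in\sfT_{\mcV}$ and $L_{\mcV}X=0$ are literally the same condition, since $\sfT_{\mcV}=\Ker L_{\mcV}$. Next, if $\supp_{R}X\subseteq\mcV$, then Theorem~\ref{thm:loc-supp} gives $\supp_{R}L_{\mcV}X=\supp_{R}X\cap(\Spec R\setminus\mcV)=\varnothing$, whence $L_{\mcV}X=0$ by Theorem~\ref{thm:cohom-supp}; conversely $L_{\mcV}X=0$ forces $\supp_{R}X\cap(\Spec R\setminus\mcV)=\varnothing$, i.e.\ $\supp_{R}X\subseteq\mcV$, again by Theorem~\ref{thm:loc-supp}. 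Finally, $L_{\mcV}X=0$ makes the localisation triangle collapse to an isomorphism $\gam_{\mcV}X\xra{\cong}X$, and conversely such an isomorphism places $X$ in $\Im\gam_{\mcV}=\sfT_{\mcV}$. These implications close up all four conditions.

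Part (2) is handled symmetrically. The conditions $X\xra{\cong}L_{\mcV}X$ and $\gam_{\mcV}X=0$ are interchanged by the same localisation triangle: vanishing of $\gam_{\mcV}X$ collapses the triangle, and an isomorphism $X\cong L_{\mcV}X$ puts $X$ in $\Im L_{\mcV}=\Ker\gam_{\mcV}$. And $\mcV\cap\supp_{R}X=\varnothing$ is equivalent to $\gam_{\mcV}X=0$: if the intersection is empty then $\supp_{R}\gam_{\mcV}X=\supp_{R}X\cap\mcV=\varnothing$ by Theorem~\ref{thm:loc-supp}, so $\gam_{\mcV}X=0$ by Theorem~\ref{thm:cohom-supp}, and the converse runs backwards through the same equality.

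The argument is essentially formal, so no single step is a genuine obstacle; the only thing to watch is the bookkeeping — keeping straight which half of Theorem~\ref{thm:loc-supp} and which of the image/kernel identifications for $\gam_{\mcV}$ and $L_{\mcV}$ is invoked at each turn, so that the cycle of implications genuinely closes rather than leaving one of the four (resp.\ three) conditions dangling.
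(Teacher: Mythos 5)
Your proof is correct and follows essentially the same route as the paper's: the paper likewise deduces $\supp_R X\subseteq\mcV\iff L_{\mcV}X=0$ from Theorem~\ref{thm:loc-supp} together with Theorem~\ref{thm:cohom-supp}, and then closes the remaining equivalences via the localisation triangle. Your version merely spells out the kernel/image identifications ($\sfT_{\mcV}=\Ker L_{\mcV}=\Im\gam_{\mcV}$, $\Im L_{\mcV}=\Ker\gam_{\mcV}$) that the paper leaves implicit.
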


\begin{proof}
It follows from the preceding result that $\supp_{R}X\subseteq \mcV$ holds if and only if $\supp_{R}L_{\mcV}X=0$, that is to say, if and only if $L_{\mcV}X=0$; see Theorem~\ref{thm:cohom-supp}. Now (1) follows from the localisation triangle \eqref{eq:locseq}. A similar argument works for (2).
\end{proof}

\begin{corollary}
If $\cl(\supp_{R}X)\cap\supp_{R}Y=\varnothing$, then $\Hom^{*}_{\sfT}(X,Y)=0$.
\end{corollary}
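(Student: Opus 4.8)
The plan is to deduce this directly from the interplay between the torsion subcategory $\sfT_\mcV$ and its perpendicular, using the characterisations collected in Corollary~\ref{cor:vlocal-tests}. Set $\mcV=\cl(\supp_{R}X)$, which is a specialisation closed subset of $\Spec R$ by construction.

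First I would note that $\supp_{R}X\subseteq\mcV$, so Corollary~\ref{cor:vlocal-tests}(1) gives $X\in\sfT_\mcV$. On the other hand, the hypothesis says precisely that $\mcV\cap\supp_{R}Y=\varnothing$, so Corollary~\ref{cor:vlocal-tests}(2) gives that the natural map $Y\to L_\mcV Y$ is an isomorphism; in particular $Y$ lies in $\Im L_\mcV$.

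Next I would invoke Proposition~\ref{pr:loc-basic} applied to the triangulated subcategory $\sfS=\sfT_\mcV$: since $L_\mcV$ is the localisation functor with $\Ker L_\mcV=\sfT_\mcV$, one has $\Im L_\mcV=\sfT_\mcV^{\perp}$. Because $\sfT_\mcV$ is a triangulated subcategory it is closed under suspensions, so $\Si^{n}X\in\sfT_\mcV$ for every $n\in\bbZ$; hence $\Hom_\sfT(X,\Si^{n}Y)\cong\Hom_\sfT(\Si^{-n}X,Y)=0$, the last equality because $Y\in\sfT_\mcV^{\perp}$. Summing over $n$ yields $\Hom^{*}_{\sfT}(X,Y)=0$.

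There is no real obstacle here; the only point to be careful about is that $\sfT_\mcV^{\perp}$ is defined using all suspensions (as $\sfC^{\perp}$ is in Section~\ref{sec:Tuesday2}), so the graded $\Hom$ vanishes and not merely its degree-zero part. Alternatively, one could quote the orthogonality axiom, Theorem~\ref{thm:axioms}(2), and apply it to the pair $(X,\Si^{n}Y)$ for each $n$, using that $\supp_{R}(\Si^{n}Y)=\supp_{R}Y$ because $\gam_\fp$ is exact; this gives the same conclusion.
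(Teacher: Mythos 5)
Your proof is correct and takes essentially the same route as the paper: set $\mcV=\cl(\supp_{R}X)$, use Corollary~\ref{cor:vlocal-tests} to identify $X$ as $\mcV$-torsion and $Y$ as $\mcV$-local, and conclude by the orthogonality $\Im L_{\mcV}=\sfT_{\mcV}^{\perp}$ from Proposition~\ref{pr:loc-basic}. The only difference is that you spell out the (routine) point that the vanishing holds in all degrees, which the paper leaves implicit.
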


\begin{proof}
For $\mcV=\cl(\supp_{R}X)$ the previous corollary yields isomorphisms
\[
\gam_{\mcV}X\xra{\cong} X\quad\text{and}\quad Y\xra{\cong}L_{\mcV}Y\,.
\]
Hence $\Hom^{*}_{\sfT}(X,Y)\cong \Hom^{*}_{\sfT}(\gam_{\mcV}X,L_{\mcV}Y)=0$; the equality holds because there are no non-zero morphisms from $\mcV$-torsion to $\mcV$-local objects, by Proposition~\ref{pr:loc-basic}.
\end{proof}

\begin{corollary}
\label{cor:plocptor}
For object $X\ne $ in $\sfT$ and $\fp\in\Spec R$ the following are equivalent.
\begin{enumerate}[\quad\rm(1)]
\item
$\gam_{\fp}X\cong X$;
\item
$\supp_{R}X=\{\fp\}$;
\item
The $R$-module $H^{*}_{C}(X)$ is $\fp$-local and $\fp$-torsion for each $C\in\sfT^{\sfc}$.
\end{enumerate}
\end{corollary}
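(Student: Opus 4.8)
The plan is to prove the cycle of implications using the characterization of the essential image $\gam_{\fp}\sfT$ recalled in Section~\ref{sec:Wednesday1} together with the comparison results Theorem~\ref{thm:vw}, Theorem~\ref{thm:loc-supp} and Corollary~\ref{cor:vlocal-tests}. Throughout set $\mcV=\mcV(\fp)$ and $\mcW=\mcV(\fp)\setminus\{\fp\}$; both are specialisation closed (for $\mcW$ this holds because $\fq'\supseteq\fq\supsetneq\fp$ forces $\fq'\supsetneq\fp$) and $\mcV\setminus\mcW=\{\fp\}$, so Theorem~\ref{thm:vw} applies and gives $\gam_{\fp}\cong\gam_{\mcV}L_{\mcW}$.

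First I would dispatch (1) $\Leftrightarrow$ (3). The condition $\gam_{\fp}X\cong X$ is equivalent to $X$ lying in $\gam_{\fp}\sfT$: one direction is trivial, and if $X\cong\gam_{\fp}Y$ then applying $\gam_{\fp}$ and using the idempotency $\gam_{\fp}^{2}\cong\gam_{\fp}$ gives $\gam_{\fp}X\cong\gam_{\fp}Y\cong X$. By \cite[Corollary~4.10]{Benson/Iyengar/Krause:2008a}, as quoted in Section~\ref{sec:Wednesday1}, membership in $\gam_{\fp}\sfT$ is equivalent to $H^{*}_{C}(X)=\Hom^{*}_{\sfT}(C,X)$ being $\fp$-local and $\fp$-torsion for every $C\in\sfT^{\sfc}$, which is precisely (3).

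Next, (1) $\Rightarrow$ (2). Using $\gam_{\fp}X\cong X$, then $\gam_{\fp}\cong\gam_{\mcV}L_{\mcW}$, and then applying Theorem~\ref{thm:loc-supp} twice, one computes
\[
\supp_{R}X=\supp_{R}(\gam_{\mcV}L_{\mcW}X)=\supp_{R}(L_{\mcW}X)\cap\mcV=\bigl(\supp_{R}X\cap(\Spec R\setminus\mcW)\bigr)\cap\mcV=\supp_{R}X\cap\{\fp\},
\]
so $\supp_{R}X\subseteq\{\fp\}$. Since $X\ne 0$, Theorem~\ref{thm:cohom-supp} gives $\supp_{R}X\ne\varnothing$, hence $\supp_{R}X=\{\fp\}$. (Alternatively one can deduce this from (3): a $\fp$-local, $\fp$-torsion module has $\min_{R}H^{*}_{C}(X)\subseteq\{\fp\}$, and Theorem~\ref{thm:cohom-supp} again finishes it.)

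Finally, (2) $\Rightarrow$ (1). From $\supp_{R}X=\{\fp\}\subseteq\mcV$, Corollary~\ref{cor:vlocal-tests}(1) gives $\gam_{\mcV}X\xra{\cong}X$; from $\mcW\cap\supp_{R}X=\varnothing$, Corollary~\ref{cor:vlocal-tests}(2) gives $X\xra{\cong}L_{\mcW}X$. Combining with $\gam_{\fp}\cong\gam_{\mcV}L_{\mcW}$ yields $\gam_{\fp}X\cong\gam_{\mcV}L_{\mcW}X\cong\gam_{\mcV}X\cong X$. There is no serious obstacle here beyond assembling the right earlier results; the only point where the hypothesis $X\ne 0$ is genuinely used is in passing from $\supp_{R}X\subseteq\{\fp\}$ to equality, and the only verification needing a moment's care is that $\mcW=\mcV(\fp)\setminus\{\fp\}$ is again specialisation closed.
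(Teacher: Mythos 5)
Your proof is correct and uses essentially the same ingredients as the paper's: the identification $\gam_{\fp}\cong\gam_{\mcV}L_{\mcW}$ for a pair with $\mcV\setminus\mcW=\{\fp\}$, Theorem~\ref{thm:loc-supp}, Corollary~\ref{cor:vlocal-tests}, and the $\fp$-local/$\fp$-torsion characterisation of $\gam_{\fp}\sfT$. The only differences are cosmetic: you take $\mcW=\mcV(\fp)\setminus\{\fp\}$ where the paper uses $\mcZ(\fp)$, you obtain (1)$\Leftrightarrow$(3) by citing the description of the essential image of $\gam_{\fp}$ rather than re-deriving it, and you prove (1)$\Rightarrow$(2) directly instead of routing through (3) via Lemma~\ref{le:torsion-local}.
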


\begin{proof}

(1) $\implies$ (3) For each compact object $C$ the $R$-module $H^{*}_{C}(\gam_{\fp}X)$ is $\fp$-local because there are isomorphisms of $R$-modules
\[
H^{*}_{C}(\gam_{\fp}X) = H^{*}_{C}(L_{\mcZ(\fp)}\gam_{\mcV(\fp)}X) \cong H^{*}_{C}(\gam_{\mcV(\fp)}X)_{\fp}
\]
It is also $\fp$-torsion, for it is localisation of the $\fp$-torsion module $H^{*}_{C}(\gam_{\mcV(\fp)}X)$.

(3) $\implies$ (2) follows from Theorem~\ref{thm:cohom-supp}, and Lemma~\ref{le:torsion-local}.

(2) $\implies$ (1) When $\supp_{R}X=\{\fp\}$ holds, so does $\supp_{R}\gam_{\mcV(\fp)}X=\{\fp\}$, by Theorem~\ref{thm:loc-supp}. Corollary~\ref{cor:vlocal-tests} then yields isomorphisms
\[
X\xla{\cong} \gam_{\mcV(\fp)}X\xra{\cong} L_{\mcZ(\fp)}\gam_{\mcV(\fp)}X=\gam_{\fp}X\,.
\]
For the second isomorphism, note that $\mcZ(\fp)\cap \{\fp\}=\varnothing$.
\end{proof}

\subsection{Koszul objects}

Let $C$ be an object in $\sfT$ and $r\in R^{d}$, a homogeneous element of degree $d$. Since $\sfT$ is $R$-linear, $r$ induces a morphism $C\xra{r} \Si^{d}C$; completing it to an exact triangle yields an object denoted $\kos Cr$:
\[
C\xra{r}\Si^{d}C \to \kos Cr \to \Si C \to 
\]
Note that $\kos Cr$ is well-defined, but only up to non-unique isomorphism. We call it the \emph{Koszul object}\index{Koszul object} on $r$. For each $X$ in $\sfT$, the triangle above induces an exact sequence of graded $R$-modules:
\begin{gather*}
\to H^{*}_{C}(X)[-d-1]\xra{\ r\ } H^{*}_{C}(X)[-1]\to H^{*}_{\kos Cr}(X)
\to H^{*}_{C}(X)[-d]\xra{\ r\ } H^{*}_{C}(X)\to
\end{gather*}
This translates to an exact sequence of graded $R$-modules:
\begin{gather}
\label{eq:kosles}
0\lto \frac{H^{*}_{C}(X)}{rH^{*}_{C}(X)}[-1]\lto H^{*}_{\kos Cr}(X)\lto (0\,:_{H^{*}_{C}(X)}\,r)\lto 0\,.
\end{gather}
Compare this with the exact sequence appearing in the proof of Proposition~\ref{prop:kosca}.

Given a sequence $\bsr=r_{1},\dots,r_{n}$ we denote $\kos C{\bsr}$ the object obtained by iterated Koszul construction. To be precise, $\kos C{\bsr}=C_{n}$ where 
\[
C_{0}=C\quad\text{and}\quad C_{i}=\kos {C_{i-1}}{r_{i}}\quad \text{for $i\ge 1$}.
\]
Finally, given an ideal $\fa$ in $R$, we write $\kos C{\fa}$\index{kosa@$\kos C{\fa}$} for any Koszul object $\kos C{\bsr}$, where $\bsr$ is a finite generating set for the ideal $\fa$.

The following result is a consequence of \cite[Proposition~2.11(2)]{Benson/Iyengar/Krause:bik2}. It shows the (in)dependence of the Koszul object on a generating set for the ideal defining it. For tensor triangulated categories, one has a more precise result; see \cite[Lemma~6.0.9]{Hovey/Palmieri/Strickland:1997a}.

\begin{lemma}
\label{lem:kosloc}
\pushQED{\qed}
If $\fa$ and $\fb$ are ideals in $R$ such that $\mcV(\fa)\subseteq \mcV(\fb)$, then 
\[
\Loc_{\sfT}(\kos C{\fa})\subseteq\Loc_{\sfT}(\kos C{\fb})\,.\qedhere
\]
\end{lemma}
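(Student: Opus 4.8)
The plan is to deduce this from a handful of elementary facts about Koszul objects, after first translating the hypothesis into a divisibility statement. Since $R$ is noetherian, fix finite homogeneous generating sequences $\bsr$ for $\fa$ and $\bss$ for $\fb$; it is enough to prove $\kos C{\bsr}\in\Loc_{\sfT}(\kos C{\bss})$. The condition $\mcV(\fa)\subseteq\mcV(\fb)$ is equivalent to $\fb\subseteq\sqrt{\fa}$, so each member $s$ of $\bss$ satisfies $s^{k}\in\fa$ for some $k\ge 1$; write $\bss'$ for the sequence formed by these powers.

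Next I would record four auxiliary facts. \emph{(I) Annihilation:} for any finite homogeneous sequence, the ideal it generates acts as zero on the corresponding Koszul object; the essential case is that $r$ acts as zero on $\kos Cr$ (the triangulated incarnation of the classical null-homotopy of the Koszul complex on one element), and the general statement follows by induction on the length, peeling off the last Koszul step and using graded-centrality of the $R$-action to propagate the vanishing along the resulting split triangle. \emph{(II) Redundant elements:} if $r$ acts as zero on an object $D$, the triangle $D\xra{r}\Si^{|r|}D\to\kos Dr\to$ splits, so $D$ is a direct summand of $\kos Dr$ and hence lies in $\Thick_{\sfT}(\kos Dr)$. \emph{(III) Powers:} $\kos C{r^{k}}\in\Thick_{\sfT}(\kos Cr)$, by induction on $k$ via the octahedral axiom applied to the factorisation $\phi_{C}(r^{k})=\phi_{C}(r)\comp\phi_{C}(r^{k-1})$, which produces an exact triangle $\kos C{r^{k-1}}\to\kos C{r^{k}}\to\Si^{(k-1)|r|}\kos Cr\to$. \emph{(IV) Functoriality:} for fixed homogeneous $r$ the assignment $X\mapsto\kos Xr$ commutes with $\Si$, carries exact triangles to exact triangles (by completing the degreewise multiplication-by-$r$ morphism to a $3\times3$ diagram, using centrality), and preserves coproducts and direct summands; hence $\{X\mid\kos Xr\in\Loc_{\sfT}(\kos Yr)\}$ is a localising subcategory containing $Y$, so $X\mapsto\kos Xr$ sends $\Loc_{\sfT}(Y)$ into $\Loc_{\sfT}(\kos Yr)$ and $\Thick_{\sfT}(Y)$ into $\Thick_{\sfT}(\kos Yr)$, and likewise for $X\mapsto\kos X{\bss'}$ by iteration.

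Then I would assemble the inclusion in three moves. Adjoining the elements of $\bss'$ to $\bsr$ one at a time: each such element lies in $\fa$, hence in the ideal generated by the sequence constructed so far, so by (I) it acts as zero on the current Koszul object and (II) gives $\kos C{\bsr}\in\Thick_{\sfT}(\kos C{\bsr,\bss'})$. Since $\kos C{\bsr,\bss'}=\kos{(\kos C{\bsr})}{\bss'}$ and $\kos C{\bsr}\in\Loc_{\sfT}(C)$ trivially, applying (IV) with $Y=C$ yields $\kos C{\bsr,\bss'}\in\Loc_{\sfT}(\kos C{\bss'})$. Finally, (III) together with (IV) — using (IV) to transport the power reduction of (III) across the remaining Koszul steps, inducting on the length of $\bss$ — gives $\kos C{\bss'}\in\Thick_{\sfT}(\kos C{\bss})$. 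Concatenating the three inclusions gives $\kos C{\bsr}\in\Loc_{\sfT}(\kos C{\bss})$, and therefore $\Loc_{\sfT}(\kos C{\fa})\subseteq\Loc_{\sfT}(\kos C{\fb})$.

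The step I expect to be delicate is fact (I), and specifically its base case: that multiplication by $r$ is the zero endomorphism of $\kos Cr$. Over a ring this is the explicit contracting homotopy of the two-term Koszul complex, but in an abstract $R$-linear triangulated category one must argue from the defining triangle and the fact that $\phi$ lands in the \emph{graded centre} — it is precisely here that centrality of the action, rather than a mere endomorphism, is used. This is the ingredient I would cite from (or adapt from) \cite{Benson/Iyengar/Krause:2008a} and \cite{Benson/Iyengar/Krause:bik2}; the remaining bookkeeping, in particular the verification of (IV), is routine.
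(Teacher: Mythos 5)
Your overall strategy is viable and genuinely different from the route the text takes: the lemma is simply quoted from \cite[Proposition~2.11(2)]{Benson/Iyengar/Krause:bik2}, and the proof there is the one reconstructible from results stated later in these notes --- combine Proposition~\ref{prop:locandkos}, which gives $\Loc_{\sfT}(\kos C{\fa})=\Loc_{\sfT}(\gam_{\mcV(\fa)}C)$, with Lemma~\ref{lem:loc-commute}, which gives $\gam_{\mcV(\fa)}C\cong\gam_{\mcV(\fa)}\gam_{\mcV(\fb)}C\in\Loc_{\sfT}(\gam_{\mcV(\fb)}C)$. Your argument stays entirely at the level of Koszul objects and avoids the functors $\gam_{\mcV}$ and the homotopy colimit machinery, which is a real gain in self-containedness. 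Facts (II), (III), (IV) and the three-move assembly are correct; the $3\times3$-lemma argument in (IV) is the standard way around the non-functoriality of cones.

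The one false assertion is the base case of fact (I): in a general $R$-linear triangulated category it is \emph{not} true that $r$ acts as zero on $\kos Cr$; only $r^{2}$ does. Centrality gives $(r\cdot\id_{\kos Cr})\comp\alpha=0$ for the map $\alpha\col\Si^{d}C\to\kos Cr$ in the defining triangle, so $r\cdot\id_{\kos Cr}$ factors through the connecting map $\beta\col\kos Cr\to\Si C$; a second application of centrality, together with $\Si(r)\comp\beta=0$, kills the square of $r\cdot\id$ but not $r\cdot\id$ itself. The chain-level contracting homotopy you invoke exists only when a model by complexes is available (e.g.\ in $\sfD(A)$); in the abstract setting the cone is defined only up to non-unique isomorphism and the homotopy is lost. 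This is precisely why Proposition~\ref{prop:kosprop}(1) asserts $\fp^{s}\Hom^{*}_{\sfT}(\kos C{\fp},X)=0$ for \emph{some} $s$ rather than for $s=1$: the exact sequence \eqref{eq:kosles} sandwiches $\Hom^{*}_{\sfT}(\kos Cr,X)$ between two modules killed by $r$, giving $s=2$ per element. (The mod-$2$ Moore spectrum, where $2\cdot\id_{S/2}\neq 0$ but $4\cdot\id_{S/2}=0$, shows this is sharp.)

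Fortunately the error is local and your own machinery repairs it. All you need in move 1 is that \emph{some} power of each element of $\fa$ annihilates the Koszul object constructed so far, and the corrected form of (I) --- a power of the ideal generated by the sequence annihilates $\End^{*}_{\sfT}$ of its Koszul object, by induction from \eqref{eq:kosles} --- supplies exactly that. So adjoin $s^{kN}$ rather than $s^{k}$, with $N$ chosen so that $\fa^{N}$ kills $\End^{*}_{\sfT}(\kos C{\bsr})$; fact (III) already absorbs arbitrary powers in move 3, so the concatenation of the three inclusions is unaffected. With that correction the proof is complete.
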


The next result records the basic calculations concerning cohomology of Koszul objects. We give only a sketch of the argument, referring the reader to \cite[Lemma~5.11]{Benson/Iyengar/Krause:2008a} for details.

\begin{proposition}
\label{prop:kosprop}
Fix objects $C$ and $X$ in $\sfT$ and a point $\fp\in \Spec R$. 
\begin{enumerate}[\quad\rm(1)]
\item
There exists an integer $s\ge 0$, independent of $C$ and $X$, such that
\[
\fp^{s}\Hom^{*}_{\sfT}(\kos C{\fp},X)=0=\fp^{s}\Hom^{*}_{\sfT}(X,\kos C{\fp})\,.
\]
In particular, $\Hom^{*}_{\sfT}(\kos C{\fp},X)$ and $\Hom^{*}_{\sfT}(X,\kos C{\fp})$ are $\fp$-torsion.
\item
$\kos C{\fp}$ is in $\sfT_{\mcV(\fp)}$.
\item
$\Hom^{*}_{\sfT}(C,X)=0$ implies $\Hom^{*}_{\sfT}(\kos C{\fp},X)=0$. The converse holds if the $R_{\fp}$-module $\Hom^{*}_{\sfT}(C,X)$ is either $\fp$-torsion, or $\fp$-local and finitely generated.
\item
When $C$ is compact, there is an isomorphism of $R$-modules:
\[
\Hom^{*}_{\sfT}(\kos C{\fp},\gam_{\fp}X)\cong \Hom^{*}_{\sfT}(\kos C{\fp},X)_{\fp}\,.
\]
\end{enumerate}
\end{proposition}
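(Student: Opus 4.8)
The four parts are largely independent, and the plan is to establish them in order, using (1)--(2) in the proof of (4). For (1), fix a finite generating set $\bsr=r_{1},\dots,r_{n}$ of $\fp$ and set $C_{0}=C$, $C_{j}=\kos{C_{j-1}}{r_{j}}$, so that $\kos C{\fp}=C_{n}$. Applying $\Hom^{*}_{\sfT}(-,X)$ to the defining triangle $C_{j-1}\xra{r_{j}}\Si^{|r_{j}|}C_{j-1}\to C_{j}\to\Si C_{j-1}$ yields, exactly as in the derivation of \eqref{eq:kosles}, a short exact sequence presenting $\Hom^{*}_{\sfT}(C_{j},X)$ as an extension of a submodule of $\Hom^{*}_{\sfT}(C_{j-1},X)$ killed by $r_{j}$ by a quotient of it, again killed by $r_{j}$; applying $\Hom^{*}_{\sfT}(X,-)$ instead handles $\Hom^{*}_{\sfT}(X,\kos C{\fp})$. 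A bookkeeping induction on $j$ then shows that if some power of $(r_{1},\dots,r_{j-1})$ annihilates $\Hom^{*}_{\sfT}(C_{j-1},X)$, then some power of $(r_{1},\dots,r_{j})$ annihilates $\Hom^{*}_{\sfT}(C_{j},X)$; iterating from the trivial case $j=0$ produces an integer $s$ depending only on $n$, hence on neither $C$ nor $X$, with $\fp^{s}\Hom^{*}_{\sfT}(\kos C{\fp},X)=0=\fp^{s}\Hom^{*}_{\sfT}(X,\kos C{\fp})$.

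Part (2) follows at once: part (1) shows $\Hom^{*}_{\sfT}(D,\kos C{\fp})$ is $\fp$-torsion for every object $D$, so for $\fq\notin\mcV(\fp)$ a homogeneous element $r\in\fp\setminus\fq$ has $r^{s}$ simultaneously killing this module and becoming invertible after localising at $\fq$; hence $\Hom^{*}_{\sfT}(D,\kos C{\fp})_{\fq}=0$, which is the defining condition for $\kos C{\fp}\in\sfT_{\mcV(\fp)}$. The forward implication of (3) is also immediate: if $\Hom^{*}_{\sfT}(C,X)=0$, then \eqref{eq:kosles} and induction on $j$ give $\Hom^{*}_{\sfT}(C_{j},X)=0$ for every $j$.

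The converse in (3) is the heart of the matter, and I would prove it by \emph{downward} induction on $j$ that $M_{j}:=\Hom^{*}_{\sfT}(C_{j},X)=0$, starting from $M_{n}=0$. Given $M_{j}=0$, the sequence \eqref{eq:kosles} for $C_{j-1}$ and $r_{j}$ forces \emph{both} $M_{j-1}/r_{j}M_{j-1}=0$ and $(0:_{M_{j-1}}r_{j})=0$; that is, multiplication by $r_{j}$ is a bijective endomorphism of $M_{j-1}$. Here the hypotheses enter. If $\Hom^{*}_{\sfT}(C,X)$ is $\fp$-torsion, then every $M_{j}$ is $\fp$-torsion (the $\fp$-torsion $R$-modules are closed under submodules, quotients and extensions, the last because $R$ is noetherian), so multiplication by $r_{j}\in\fp$ is locally nilpotent on $M_{j-1}$; being also bijective, it forces $M_{j-1}=0$. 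If instead $\Hom^{*}_{\sfT}(C,X)$ is $\fp$-local and finitely generated over $R_{\fp}$, then each $M_{j}$ is still $\fp$-local and finitely generated over $R_{\fp}$, and $r_{j}M_{j-1}=M_{j-1}$ with $r_{j}\in\fp R_{\fp}$ forces $M_{j-1}=0$ by graded Nakayama. In either case $M_{0}=\Hom^{*}_{\sfT}(C,X)=0$.

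Finally, for (4): $\kos C{\fp}$ is compact because $C$ is and $\sfT^{\sfc}$ is a thick subcategory, so \cite[Theorem~4.7]{Benson/Iyengar/Krause:2008a}, applied with $\mcZ(\fp)$, gives a natural isomorphism $\Hom^{*}_{\sfT}(\kos C{\fp},X)_{\fp}\cong\Hom^{*}_{\sfT}(\kos C{\fp},X_{\fp})$. Applying $\Hom^{*}_{\sfT}(\kos C{\fp},-)$ to the localisation triangle $\gam_{\mcV(\fp)}(X_{\fp})\to X_{\fp}\to L_{\mcV(\fp)}(X_{\fp})\to$, the outer terms $\Hom^{*}_{\sfT}(\kos C{\fp},L_{\mcV(\fp)}(X_{\fp}))$ vanish, since $\kos C{\fp}$ is $\mcV(\fp)$-torsion by (2) while $L_{\mcV(\fp)}(X_{\fp})$ is $\mcV(\fp)$-local and there are no morphisms from torsion to local objects (Proposition~\ref{pr:loc-basic}); hence $\Hom^{*}_{\sfT}(\kos C{\fp},\gam_{\fp}X)=\Hom^{*}_{\sfT}(\kos C{\fp},\gam_{\mcV(\fp)}(X_{\fp}))\xra{\cong}\Hom^{*}_{\sfT}(\kos C{\fp},X_{\fp})$, and composing the two isomorphisms gives (4). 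The only step where I expect genuine care to be needed is the converse of (3): it is tempting, but wrong for infinitely generated $\fp$-torsion modules, to try to deduce $M_{0}=0$ from $M_{0}/\fp M_{0}=0$ alone; the argument above works precisely because the vanishing of the entire object $\Hom^{*}_{\sfT}(\kos C{\fp},X)$ forces, through \eqref{eq:kosles}, \emph{injectivity} as well as surjectivity of each $r_{j}$ on the intermediate modules $M_{j-1}$, which is what the torsion case requires.
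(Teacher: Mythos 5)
Your proof is correct and follows essentially the same route as the paper's (which is only sketched there): parts (1) and (3) by induction on the generators of $\fp$ using the exact sequence \eqref{eq:kosles}, part (2) as a consequence of (1), and part (4) via compactness of $\kos C{\fp}$ together with the vanishing of maps from $\mcV(\fp)$-torsion to $\mcV(\fp)$-local objects. Your treatment of the converse in (3) — extracting both injectivity and surjectivity of each $r_j$ and then invoking local nilpotence or graded Nakayama — correctly supplies the detail the paper leaves implicit.
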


\begin{proof}[Sketch of a proof]
Statements (1) and (3) follow from \eqref{eq:kosles} and an induction on the number of generators for $\fp$.
Then (2) is a consequence of (1), while (4) is justified by the isomorphisms
\[
\Hom^{*}_{\sfT}(\kos C{\fp},\gam_{\mcV(\fp)}X_{\fp})
\cong \Hom^{*}_{\sfT}(\kos C{\fp},X_{\fp})
\cong \Hom^{*}_{\sfT}(\kos C{\fp},X_{\fp})
\]
where the first one follows from (2) and the second holds as $\kos C{\fp}$ is compact.
\end{proof}

Using Koszul objects one can establish a more precise (and economical) version of Theorem~\ref{thm:cohom-supp}.
 
\begin{theorem} 
\label{thm:cohom-supp2}
Let $\sfG\subset\sfT$ be a compact generating set and $X$ an object in $\sfT$. Then
\[
\fp\in\supp_{R}X\iff H^{*}_{\kos C{\fp}}(X)\ne 0\ \text{for some $C\in \sfG$}\,.
\]
In particular, there is an equality
\[
\supp_{R}X = \bigcup_{\begin{stackrel}{\fp\in\Spec R} {C\in\sfG}\end{stackrel}}
\min_{R}H^{*}_{\kos C{\fp}}(X)
\]
\end{theorem}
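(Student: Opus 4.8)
\emph{Plan.} The plan is to prove the pointwise statement in the slightly sharpened form
\[
\fp\in\supp_{R}X\ \Longleftrightarrow\ \fp\in\min_{R}H^{*}_{\kos C{\fp}}(X)\ \text{for some }C\in\sfG ,
\]
and then deduce both displayed assertions. For the ``in particular'' equality the forward containment is obtained by taking the union over $\fp$; the reverse containment $\bigcup_{\fq,C}\min_{R}H^{*}_{\kos C{\fq}}(X)\subseteq\supp_{R}X$ is immediate from Theorem~\ref{thm:cohom-supp}, once one observes that each $\kos C{\fq}$ is compact (the compact objects form a thick, hence triangulated, subcategory, and $\kos C{r}$ is the cone of $C\xra{r}\Si^{d}C$). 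Note also that, by Proposition~\ref{prop:kosprop}(1), the $R$-module $H^{*}_{\kos C{\fp}}(X)$ is $\fp$-torsion, so $\Supp_{R}H^{*}_{\kos C{\fp}}(X)\subseteq\mcV(\fp)$; hence $\fp\in\min_{R}H^{*}_{\kos C{\fp}}(X)$ is equivalent to $H^{*}_{\kos C{\fp}}(X)_{\fp}\neq 0$ and a fortiori implies $H^{*}_{\kos C{\fp}}(X)\neq 0$. So the real content is the pointwise equivalence, and there the forward implication is the substantive half.

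\emph{Forward implication.} Suppose $\fp\in\supp_{R}X$, i.e.\ $\gam_{\fp}X\neq 0$. Since $\sfG$ is a compact generating set, Proposition~\ref{pr:compact-generation} supplies a $C\in\sfG$ with $\Hom^{*}_{\sfT}(C,\gam_{\fp}X)\neq 0$. As $\gam_{\fp}X$ lies in $\gam_{\fp}\sfT$, the $R$-module $\Hom^{*}_{\sfT}(C,\gam_{\fp}X)$ is $\fp$-local and $\fp$-torsion, so it satisfies the hypothesis needed for the converse in Proposition~\ref{prop:kosprop}(3); therefore $\Hom^{*}_{\sfT}(\kos C{\fp},\gam_{\fp}X)\neq 0$. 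Now $C$ is compact, hence so is $\kos C{\fp}$, and Proposition~\ref{prop:kosprop}(4) gives
\[
H^{*}_{\kos C{\fp}}(X)_{\fp}=\Hom^{*}_{\sfT}(\kos C{\fp},X)_{\fp}\cong\Hom^{*}_{\sfT}(\kos C{\fp},\gam_{\fp}X)\neq 0 .
\]
Combined with the $\fp$-torsion observation above, this says precisely that $\fp\in\min_{R}H^{*}_{\kos C{\fp}}(X)$.

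\emph{Reverse implication and assembly.} Conversely, if $\fp\in\min_{R}H^{*}_{\kos C{\fp}}(X)$ for some $C\in\sfG$, then $\Hom^{*}_{\sfT}(\kos C{\fp},X)_{\fp}\neq 0$, and by Proposition~\ref{prop:kosprop}(4) this is isomorphic to $\Hom^{*}_{\sfT}(\kos C{\fp},\gam_{\fp}X)$, which is hence non-zero; so $\gam_{\fp}X\neq 0$, i.e.\ $\fp\in\supp_{R}X$. This proves the pointwise equivalence, and hence the displayed equivalence in the statement (read with ``$\fp\in\min_{R}H^{*}_{\kos C{\fp}}(X)$'' in place of the bare ``$H^{*}_{\kos C{\fp}}(X)\ne 0$''; the latter only records the weaker fact that the minimal primes of $H^{*}_{\kos C{\fp}}(X)$ — which in general strictly contain $\fp$ — lie in $\supp_{R}X$). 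Finally, $\supp_{R}X$ equals the set of $\fp$ admitting some $C\in\sfG$ with $\fp\in\min_{R}H^{*}_{\kos C{\fp}}(X)$, which is contained in $\bigcup_{\fq,C}\min_{R}H^{*}_{\kos C{\fq}}(X)$; the reverse containment is the application of Theorem~\ref{thm:cohom-supp} noted at the outset.

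\emph{Main obstacle.} The delicate point is the localisation/torsion bookkeeping in the forward direction: verifying that $\Hom^{*}_{\sfT}(C,\gam_{\fp}X)$ meets the $\fp$-torsion hypothesis of Proposition~\ref{prop:kosprop}(3), and that Proposition~\ref{prop:kosprop}(4) applies to $\kos C{\fp}$. By contrast, the reverse containment of the ``in particular'' equality uses none of the Koszul machinery — it is simply Theorem~\ref{thm:cohom-supp} applied to the compact objects $\kos C{\fq}$.
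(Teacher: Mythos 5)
Your proof is correct and follows essentially the same route as the paper's, which compresses the argument into a single chain of equivalences
\[
\gam_{\fp}X\ne 0 \iff H^{*}_{C}(\gam_{\fp}X)\ne 0 \iff H^{*}_{\kos C{\fp}}(\gam_{\fp}X)\ne 0 \iff H^{*}_{\kos C{\fp}}(X)_{\fp}\ne 0 \iff \fp\in\min_{R}H^{*}_{\kos C{\fp}}(X)
\]
using exactly the ingredients you invoke (compact generation, Corollary~\ref{cor:plocptor}, and Proposition~\ref{prop:kosprop}(1), (3), (4)), and your assembly of the ``in particular'' equality—taking $\fq=\fp$ for one containment and appealing to Theorem~\ref{thm:cohom-supp} for the primes $\fq\ne\fp$ appearing in the union—is the right way to fill in what the paper leaves implicit. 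Your side remark about the first display is also well taken: both arguments only establish the equivalence with the localised condition $H^{*}_{\kos C{\fp}}(X)_{\fp}\ne 0$, and the unlocalised condition is genuinely weaker (for instance, in $\sfD(k[x,y])$ with $X=k[x,y]/(x,y)$ and $\fp=(x)$ one has $H^{*}_{\kos {k[x,y]}{\fp}}(X)\ne 0$ while $\fp\notin\supp_{R}X$), so the displayed ``iff'' should indeed be read as you propose.
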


\begin{proof}
Fix $\fp$ in $\Spec R$. The following equivalences hold:
\begin{alignat*}{3}
\gam_{\fp}X\ne 0 
    & \iff H^{*}_{C}(\gam_{\fp}X)\ne 0 &&\text{for some $C\in\sfG$}& \\
	& \iff H^{*}_{\kos C{\fp}}(\gam_{\fp}X) \ne 0 && \text{by Proposition~\ref{prop:kosprop}(3)}&\\
	& \iff H^{*}_{\kos C{\fp}}(X)_{\fp} \ne 0 && \text{by Proposition~\ref{prop:kosprop}(4)}& \\
	& \iff \fp \in \min_{R}H^{*}_{\kos C{\fp}}(X)\quad && \text{by Proposition~\ref{prop:kosprop}(1)}&
\end{alignat*}
In the second step we have used the fact that $\Hom_{\sfT}(C,\gam_{\fp}X)$ is $\fp$-local and $\fp$-torsion, by Corollary~\ref{cor:plocptor}.
\end{proof}

\section{The homotopy category of injectives}
\label{sec:Wednesday3}
In this lecture, we explain how to enlarge the stable module category $\StMod(kG)$ slightly to a category $\KInj{kG}$. The effect of this on the variety theory is that it puts back the ``missing origin'' in the collection of subvarieties of $V_G$. Most of the material in this lecture is taken from the paper of Benson and Krause \cite{Benson/Krause:2008a}. 

\subsection{The stable module category and Tate cohomology}%
\index{stable module category}

Let $G$ be a finite group and $k$ be a field of characteristic $p$. Recall from Section~\ref{sec:Monday1} that the module category $\Mod(kG)$ has as its objects the $kG$-modules, and arrows the module homomorphisms. The stable module category $\StMod(kG)$ has the same objects, but its arrows are given by
\[ 
\sHom_{kG}(M,N) = \Hom_{kG}(M,N)/P\Hom_{kG}(M,N) 
\]
where $P\Hom_{kG}(M,N)$ is the subspace of homomorphisms 
that factor through some projective module. The categories
$\mod(kG)$ and $\stmod(kG)$ are the
full subcategories of finitely generated modules. 
The categories $\Mod(kG)$, $\mod(kG)$ are abelian categories
while $\StMod(kG)$, $\stmod(kG)$ are triangulated categories
with shift $\Omega^{-1}$.

The endomorphisms of $k$ in $\StMod(kG)$ form a graded commutative
ring called the \emph{Tate cohomology ring}\index{Tate!cohomology}
whose degree $n$ part is
\[ 
\hat H^n(G,k) = \sHom_{kG}(\Omega^n k, k) \cong 
\sHom_{kG}(\Omega^{n+m}k,\Omega^m k) 
\]
The multiplication is ``shift and compose'': if $x\in\hat H^m(G,k)$
and $y\in \hat H^n(G,k)$ then we choose corresponding homomorphisms
$\Omega^mk\to k$ and $\Omega^{n+m}k\to\Omega^m k$ and compose them
to obtain a representative for the product $xy$.

If $n\ge 0$ then $\hat H^n(G,k)\cong H^n(G,k)$, while for $n<0$ \emph{Tate duality}\index{Tate!duality} implies that
$\hat H^n(G,k)$ is the vector space dual of $H^{-n-1}(G,k)$.

The result below identifies groups for which the Tate cohomology ring
is noetherian; see \cite[Lemma~10.1]{\bik:2008a} for a proof.

\begin{theorem}
\label{thm:tate-nonnoetherian}
If the Sylow $p$-subgroups of $G$ are cyclic then $\hat H^*(G,k)$ is 
a localisation of $H^*(G,k)$ given by inverting any 
positive degree non-nilpotent element. 
In this case $\hat H^*(G,k)$ is noetherian.

If the Sylow $p$-subgroups of $G$ are not cyclic then every element of
negative degree in $\hat H^*(G,k)$ is nilpotent. 
In this case $\hat H^*(G,k)$ is not noetherian. \qed
\end{theorem}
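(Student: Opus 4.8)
The plan is to split into the two cases according to whether $G$ has periodic cohomology; for $p$ odd this coincides with the Sylow $p$-subgroups being cyclic, and for $p=2$ I would read the first case as also including generalised quaternion Sylow subgroups (which are periodic), so that the true dividing line is: the $p$-rank of $G$ is $1$ versus $\ge 2$. Two standing facts I would invoke throughout: the canonical map of graded rings $H^*(G,k)\to\hat H^*(G,k)$ is an isomorphism in degrees $\ge 0$ (in positive degrees this is the usual comparison of Tate and ordinary cohomology; in degree $0$ both are $k$, since $|G|=0$ in $k$), and $H^*(G,k)$ is noetherian and infinite-dimensional over $k$ whenever $p$ divides $|G|$, by Evens' theorem (Theorem~\ref{thm:Evens}) together with the fact that its Krull dimension equals the $p$-rank of $G$.

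\emph{Periodic case.} Periodicity of $\hat H^*(G,k)$ says exactly that some non-nilpotent $\zeta\in H^d(G,k)$ with $d>0$ acts invertibly: $\cdot\zeta\colon\hat H^n(G,k)\xra{\sim}\hat H^{n+d}(G,k)$ for all $n$. First I would note that the canonical map inverts $\zeta$ and hence factors through $H^*(G,k)[\zeta^{-1}]\to\hat H^*(G,k)$; this map is surjective (given $x\in\hat H^m(G,k)$, for $j\gg0$ one has $\zeta^jx\in\hat H^{m+jd}(G,k)=H^{m+jd}(G,k)$, so $x$ lies in the image) and injective (the canonical map is already injective in non-negative degrees). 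Thus $\hat H^*(G,k)=H^*(G,k)[\zeta^{-1}]$, a localisation of the noetherian ring $H^*(G,k)$ at the central element $\zeta$ (or at worst $\zeta^2$), hence noetherian. Finally, a cyclic Sylow $p$-subgroup forces a single conjugacy class of maximal elementary abelian subgroup, so by Quillen's stratification $V_G$ is irreducible and $H^*(G,k)$ has a unique minimal prime; any non-nilpotent homogeneous element of positive degree avoids it and is therefore a unit already in $\hat H^*(G,k)=H^*(G,k)[\zeta^{-1}]$, which is why inverting \emph{any} such element reproduces $\hat H^*(G,k)$.

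\emph{Non-periodic case.} The crux is the claim $(\star)$: if the $p$-rank of $G$ is $\ge 2$, then every element of $\hat H^{<0}(G,k)$ is nilpotent. Granting $(\star)$, non-noetherianity is formal: the image of $\hat H^{<0}(G,k)$ in $\hat H^*(G,k)/\sqrt0$ vanishes, so the ideal $I$ it generates lies in $\sqrt0$; if $\hat H^*(G,k)$ were noetherian then $\sqrt0$, hence $\hat H^{<0}(G,k)$, would be nilpotent of some order $M$, and $I$ would be generated by finitely many homogeneous elements $\zeta_i\in\hat H^{<0}(G,k)$, of degrees $-d_i$ with $d=\max_i d_i$. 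For each $n\ge 1$ the degree $-n$ part of $I$ equals $\hat H^{-n}(G,k)$ (as $\hat H^{<0}(G,k)\subseteq I$) and also equals $\sum_i\hat H^{-n+d_i}(G,k)\,\zeta_i$; iterating this identity $M$ times (legitimate once $n>Md$, since all intermediate degrees then stay negative) expresses $\hat H^{-n}(G,k)$ as a sum of terms $\hat H^*(G,k)\cdot\zeta_{i_1}\cdots\zeta_{i_M}$, each of which is zero because $\zeta_{i_1}\cdots\zeta_{i_M}\in(\hat H^{<0}(G,k))^M=0$. Hence $\hat H^{<0}(G,k)$ would be finite-dimensional over $k$, so (by Tate duality $\hat H^{-n}(G,k)\cong H^{n-1}(G,k)^{*}$) so would $H^*(G,k)$, contradicting $p\mid|G|$. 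For $(\star)$ itself I would reduce to elementary abelian subgroups: restriction to a Sylow $p$-subgroup $S$ is split injective on $\hat H^*$ (the transfer composite is multiplication by the unit $[G:S]$) and preserves $p$-rank $\ge 2$, so one may take $G=S$ a $p$-group; then a Quillen-type nilpotence detection, applied after pushing a hypothetical non-nilpotent negative class into non-negative degrees, reduces to showing $\hat H^{<0}(E,k)$ is nil for $E=(\bbZ/p)^r$ with $r\ge 2$, which I would verify by a short explicit computation in $\StMod(kE)$ (already $\hat H^{-1}(E,k)$, spanned by the map $k\to\Omega k$ onto the socle of the augmentation ideal, squares to zero).

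\emph{Main obstacle.} The ring-theoretic and graded-dimension bookkeeping above is routine; the real difficulty is $(\star)$, and inside it the descent to elementary abelian subgroups through negative Tate degrees, since Quillen's nilpotence criterion is an ordinary-cohomology statement. The cleanest route is probably to argue directly, over a $p$-group, that a single non-nilpotent class in negative Tate degree would already force the cohomology to be periodic, contradicting $p$-rank $\ge 2$; once that is in place, checking that $\hat H^{<0}((\bbZ/p)^r,k)$ is nil for $r\ge 2$ is a finite and essentially mechanical verification.
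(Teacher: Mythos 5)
The paper offers no proof of this statement --- it is quoted with a reference to \cite[Lemma~10.1]{Benson/Iyengar/Krause:2008a} --- so your attempt can only be measured against the standard argument. Much of what you do is right. Your observation that the literal statement needs amending at $p=2$ is correct: for $G=Q_8$ the Sylow $2$-subgroup is not cyclic, yet $\hat H^*(Q_8,k)$ is the localisation of $H^*(Q_8,k)$ at the degree-four periodicity class and hence noetherian, with a non-nilpotent class in degree $-4$; the true dichotomy is $p$-rank $1$ versus $p$-rank $\geq 2$. Your treatment of the rank-one case is sound: periodicity supplies an invertible $\zeta\in H^d(G,k)$, the map $H^*(G,k)[\zeta^{-1}]\to\hat H^*(G,k)$ is bijective degree by degree, and the unique-minimal-prime consequence of Quillen stratification shows that inverting any positive-degree non-nilpotent element gives the same ring. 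Likewise the deduction of non-noetherianity from the nilpotence claim $(\star)$ is correct: nilpotence of the nilradical plus the degree-shifting iteration forces $\hat H^{-n}(G,k)=0$ for $n\gg 0$, and Tate duality then makes $H^*(G,k)$ finite dimensional, contradicting positive Krull dimension.

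The genuine gap is $(\star)$ itself, which is the entire non-formal content of the second half and which you leave unproved. That every negative-degree class is nilpotent when the $p$-rank is at least $2$ is a substantial theorem of Benson and Carlson (\emph{Products in negative cohomology}, J.~Pure Appl.\ Algebra \textbf{82} (1992)); they prove the stronger statement that the product of \emph{any} two negative-degree classes vanishes. Your proposed reduction cannot be repaired as sketched: Quillen's nilpotence detection is a theorem about $H^{\geq 0}(G,k)$, and there is no detection theorem through elementary abelian subgroups in negative Tate degrees --- indeed the rank-one subgroups, where $\hat H^{<0}$ consists of units modulo nilpotents, are exactly where restriction carries no useful information. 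Your fallback, that a single non-nilpotent negative class would force periodicity, is precisely the assertion that has to be proved; the known argument analyses the composite $k\to\Omega^m k\to\Omega^{m+n}k$ representing a product of two negative classes and shows it factors through a projective by a careful study of the tensor product $\Omega^m k\otimes_k\Omega^n k$ (the ``last survivor'' argument), which is not a routine verification even for $E=(\bbZ/p)^r$ with $r\geq 2$, where you would in any case need all of $\hat H^{<0}(E,k)$ and not merely degree $-1$. As written, your proof establishes the theorem only modulo this unproved input.
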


Next we describe another view of Tate cohomology. A \emph{Tate resolution}\index{Tate!resolution}  of a module $M$ is obtained by splicing together a projective and an injective resolution:
\[ 
\xymatrix@=5mm{\dots\to\hat P_2\to\hat P_1\ar[r]&\hat P_0\ar[rr]\ar[dr] && 
\hat P_{-1}\ar[r] &\hat P_{-2} \to \cdots \\ 
&&M\ar[ur]\ar[dr] \\  
&0\ar[ur]&&0} 
\]
If $N$ is another module then the Tate Ext\index{Tate!Ext} group $\tExt^n_{kG}(M,N)$ is the cohomology of the cochain complex $\Hom_{kG}(\hat P_*,N)$. In particular the Tate cohomology is given by $\hat H^n(G,k)=\tExt^n_{kG}(k,k)$.

\subsection{The derived category of $kG$-modules}
\index{derived category}

One of the problems with the stable module category $\StMod(kG)$ is that the graded endomorphism ring of the trivial module is the Tate cohomology ring $\hat H^*(G,k)$, which is
usually not noetherian. So for example it is a hassle to
deal with injective resolutions over this ring. We can try to cure
this by using the ordinary cohomology ring, but then the maximal
ideal of positive degree elements needs special treatment, and
keeping track of this is again a hassle.

We can try to solve this problem by moving to the derived category,
but this creates new problems. Let us briefly recall the construction
of the derived category. The category of cochain complexes $\sfC(\Mod
kG)$ has as objects the complexes of $kG$-modules and as arrows the
degree preserving maps of complexes.  The homotopy category of cochain
complexes $\sfK(\Mod kG)$ has the same objects, but the arrows are the
homotopy classes of maps of complexes.  Finally, the derived category
$\sfD(\Mod kG)$ has the same objects, but the arrows are obtained by
adjoining inverses to the quasi-isomorphisms in $\sfK(\Mod kG)$;
recall that a map of cochain complexes is a
\emph{quasi-isomorphism}\index{quasi-isomorphism} if the induced map
between the cohomologies of the complexes is an isomorphism.

Note that $\sfC(\Mod kG)$ is an abelian category while the categories $\sfK(\Mod kG)$ and $\sfD(\Mod kG)$ are triangulated.

If $M$ and $N$ are modules, made into complexes whose only non-zero terms
are in degree zero, then the space of degree $n$
homomorphisms in the derived category from $M$ to $N$ 
is isomorphic to $\Ext^n_{kG}(M,N)$. In particular, the graded endomorphism
ring of $k$ is $H^*(G,k)$.

\subsection{Problems with the derived category}

The first problem we have with $\sfD(\Mod kG)$ is that a finitely generated non-projective module $M$ regarded as a complex concentrated in a single degree is not compact. In other words, $\Hom_{kG}(M,-)$ does not distribute over direct sums. So for example 
\[
\bigoplus_i H^*(G,X_i) \to H^*(G,\bigoplus_i X_i) 
\]
is not an isomorphism in general. In fact, the compact objects are the \emph{perfect complexes},\index{perfect complex} namely the complexes isomorphic to  bounded complexes of finitely generated projective modules.

Another problem is that there are not a lot of localising subcategories, so it is unlikely to help us directly to classify the localising subcategories of $\StMod(kG)$.

\begin{definition}
We write $\Loc_\sfT(\sfC)$\index{Loc@$\Loc_\sfT(\sfC)$} for the smallest localising category of a triangulated category $\sfT$ containing a subcategory (or collection of objects) $\sfC$.
\end{definition}

\begin{theorem}
If $G$ is a $p$-group, the only localising subcategories of
$\sfD(\Mod kG)$ are zero and the entire category.
\end{theorem}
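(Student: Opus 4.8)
The plan is to prove the (formally stronger) statement that every nonzero object $X$ of $\sfD(\Mod kG)$ generates the whole category, i.e.\ $\Loc_{\sfD(\Mod kG)}(X)=\sfD(\Mod kG)$. Granting this, any nonzero localising subcategory $\sfS$ contains some $X\neq 0$, hence equals $\sfD(\Mod kG)$; together with the zero subcategory these are then the only ones.

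First I would dispose of the easy half, $\Loc(k)=\sfD(\Mod kG)$. Since $G$ is a $p$-group, $kG$ is local with residue field $k$ (Proposition~\ref{prop:kg-local}) and its radical $J$ is nilpotent, so the radical filtration $kG\supseteq J\supseteq\dots\supseteq J^N=0$ has semisimple — hence trivial — subquotients, each a finite direct sum of copies of $k$. Thus $kG\in\Thick(k)$, and as $kG$ is a compact generator of $\sfD(\Mod kG)$ (Example~\ref{ex:stmodA}(1)) we get $\sfD(\Mod kG)=\Loc(kG)\subseteq\Loc(k)$.

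The core is the reverse direction: given $0\neq X$, I would produce $k$ inside $\Loc(X)$ using the object $Y:=k\lotimes_{kG}X$, checking three things. (i) $Y\in\Loc(X)$: computing $Y$ as $Q\otimes_{kG}X$ from a resolution $Q$ of $k$ by finitely generated free $kG$-modules, the brutal truncations $\sigma^{\ge -n}Q$ are bounded complexes of finitely generated frees, so each $\sigma^{\ge -n}Q\otimes_{kG}X$ is a finite iterated extension of finite coproducts of shifts of $X$, hence lies in $\Loc(X)$; and $Y=\hocolim_n\bigl(\sigma^{\ge -n}Q\otimes_{kG}X\bigr)$, so $Y\in\Loc(X)$. (ii) $Y$ is built from $k$: computing the same object as $k\otimes_{kG}P$ from a resolution $P$ of $X$ by free (equivalently, since $kG$ is local, projective) modules, $k\otimes_{kG}P$ is a complex of trivial $kG$-modules, i.e.\ it comes from a complex of $k$-vector spaces along the exact, coproduct-preserving functor $\sfD(\Mod k)\to\sfD(\Mod kG)$; every complex of vector spaces is formal, so $Y\cong\bigoplus_n\Si^{n}\Tor^{kG}_n(k,X)$ with each $\Tor^{kG}_n(k,X)$ a coproduct of copies of $k$. (iii) $Y\neq 0$: otherwise $k\otimes_{kG}P$ is acyclic, and since $kG$ is local the $J$-adic filtration of $P$ has subquotients $J^iP/J^{i+1}P\cong(J^i/J^{i+1})\otimes_k(k\otimes_{kG}P)$, each a coproduct of copies of the acyclic complex $k\otimes_{kG}P$; by the finite filtration $P$ is acyclic, i.e.\ $X=0$, a contradiction. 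Given (i)--(iii), some summand $\Si^{n}\Tor^{kG}_{n}(k,X)$ of $Y$ is nonzero, so $\Si^{n}k$ is a retract of a retract of $Y$, whence $k\in\Loc(X)$; with the first half this gives $\Loc(X)\supseteq\Loc(k)=\sfD(\Mod kG)$.

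The step I expect to require the most care is juggling the two models $Q\otimes_{kG}X$ and $k\otimes_{kG}P$ of $Y=k\lotimes_{kG}X$ — one adapted to ``$Y\in\Loc(X)$'', the other to ``$Y$ is a coproduct of shifts of $k$'' — together with the homotopy-colimit argument in (i), which is where the existence of set-indexed coproducts in $\sfD(\Mod kG)$ is genuinely used, and the ``derived Nakayama'' in (iii) for a possibly unbounded complex $X$. The remaining points are routine bookkeeping with the radical filtration of $kG$.
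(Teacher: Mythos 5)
Your overall strategy---manufacture inside $\Loc(X)$ a nonzero object that is visibly a coproduct of shifts of a generator---is the right one, the easy half ($kG\in\Thick(k)$ via the radical filtration, hence $\Loc(k)=\sfD(\Mod kG)$) is fine, and steps (ii) and (iii) are correct: formality of complexes of vector spaces and the derived Nakayama argument via the finite radical filtration of a K-projective resolution by frees are both sound. The gap is in step (i), and for nonabelian $G$ it is a genuine one. To form $Q\otimes_{kG}X$ you must take $Q$ to be a complex of \emph{right} $kG$-modules, and then the identification $(kG)^{m}\otimes_{kG}X\cong X^{m}$ is only an isomorphism of $k$-vector spaces. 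It is an isomorphism of $kG$-modules, compatibly with the differentials, only if each term of $Q$ carries a left module structure making it a direct sum of copies of the regular \emph{bimodule} $kG$ and making the differentials bimodule maps; but the differentials of a free right-module resolution are left multiplications by matrices over $kG$, and these are not left $kG$-linear unless the entries are central. So when $kG$ is noncommutative, $\sigma^{\ges -n}Q\otimes_{kG}X$ is not exhibited as an iterated extension of copies of $X$ \emph{as a complex of $kG$-modules}, and $Y=k\lotimes_{kG}X$ (which is a complex of trivial modules) is not shown to lie in $\Loc(X)$. When $kG$ is commutative---$G$ abelian, in particular elementary abelian---your argument is complete as written. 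Note as a sanity check that your argument, if valid, would prove the analogous statement for an arbitrary (possibly noncommutative) artinian local $k$-algebra with no input from the group structure; the proofs in the literature all invoke the Hopf structure of $kG$ at precisely this point.

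The paper repairs exactly this step by replacing $k\lotimes_{kG}X$ with $X\otimes_{k}kG$, the tensor product over $k$ with the \emph{diagonal} $G$-action. The finite filtration $kG\supseteq J\supseteq\cdots\supseteq J^{N}=0$ by left submodules has trivial subquotients, so tensoring it with $X$ over $k$ (the differential being $d_{X}\otimes 1$) exhibits $X\otimes_{k}kG$ as a finite iterated extension of finite coproducts of copies of $X$ with its given action; hence $X\otimes_{k}kG\in\Thick(X)\subseteq\Loc(X)$, with no resolutions or homotopy colimits needed. On the other hand the untwisting isomorphism $M\otimes_{k}kG\cong M_{\mathrm{triv}}\otimes_{k}kG$ (Monday's Exercise 5) shows that $X\otimes_{k}kG$ is the complex of free modules induced from the underlying complex of vector spaces of $X$, so your formality argument applies verbatim and gives $X\otimes_{k}kG\cong\bigoplus_{i}\Si^{-i}\bigl(H^{i}(X)\otimes_{k}kG\bigr)$, a coproduct of shifts of frees that is nonzero whenever $X$ is (so no Nakayama is needed either). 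Some shift of $kG$ is then a retract of an object of $\Loc(X)$, and $kG$ generates. In short: keep your (ii) and (iii), but run them on $X\otimes_{k}kG$ rather than on $k\lotimes_{kG}X$.
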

\begin{proof}
If $X$ is a non-zero object, we claim that $\Loc(X)$ is the whole
category. Since $X$ is non-zero, it has some non-vanishing cohomology, say
$H^i(X)$. Since $kG$ has a filtration where the filtered quotients are 
isomorphic to $k$, $X\otimes_k kG$ is in $\Loc(X)$. 
Then $H^i(X\otimes_k kG)=H^i(X)\otimes_k kG$ is a free module. 
So it splits off the complex (Exercise!) and $kG$ concentrated in
degree $i$ is in $\Loc(X)$. But this generates $\sfD(\Mod kG)$.
\end{proof}

\subsection{The category $\KInj{kG}$}
\index{KInj@$\KInj{kG}$}

Better than either the stable module category or the derived category is the homotopy category of injective modules $\KInj{kG}$. The following result is an analogue of Theorem~\ref{thm:stmod-generation} for the homotopy category: 

\begin{theorem}
\label{thm:kinj-generation}
The homotopy category $\KInj{kG}$ is triangulated, with suspension defined by $\Sigma$. Moreover, it is compactly generated, and the natural localisation functor $\KInj{kG}\to \sfD(\Mod kG)$ induces an equivalence of categories:
\[
\KInj{kG}^{\sfc}\simeq \sfD^{\sfb}(\mod kG)\,.
\]
The quasi-inverse associates to each complex in $\sfD^{\sfb}(\mod kG)$ its injective resolution. In particular, the injective resolutions of the simple modules form a compact generating set for $\KInj{kG}$.
\end{theorem}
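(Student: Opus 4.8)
The first assertion is essentially formal. The category $\KInj{kG}$ is the full subcategory of the homotopy category $\sfK(\Mod kG)$ on the complexes of injective modules; it is closed under $\Si^{\pm 1}$ and under mapping cones (a cone of a map between complexes of injectives is again a complex of injectives), so it is a triangulated subcategory, with suspension $\Si$. Since $kG$ is noetherian, an arbitrary coproduct of injective $kG$-modules is injective, so $\KInj{kG}$ is closed under set-indexed coproducts formed in $\sfK(\Mod kG)$, and hence admits set-indexed coproducts. For the remaining assertions the plan is to follow Benson and Krause~\cite{Benson/Krause:2008a}.

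First I would produce the claimed quasi-inverse. Each $M$ in $\sfD^{\sfb}(\mod kG)$ has bounded finitely generated cohomology, so its injective resolution $\mathbf{i}M$ is a bounded-below complex of injective $kG$-modules, which is a genuine object of $\KInj{kG}$; this gives an exact functor $\mathbf{i}\colon\sfD^{\sfb}(\mod kG)\to\KInj{kG}$, and composing with the localisation functor $q\colon\KInj{kG}\to\sfD(\Mod kG)$ recovers the inclusion $\sfD^{\sfb}(\mod kG)\hookrightarrow\sfD(\Mod kG)$ up to natural isomorphism. Because $\mathbf{i}M$ is a bounded-below complex of injectives, $\Hom_{\sfK(\Mod kG)}(X,\mathbf{i}M)\cong\Hom_{\sfD(\Mod kG)}(X,M)$ for every complex $X$; taking $X=\mathbf{i}N$ shows $\mathbf{i}$ is fully faithful, with $\Hom_{\KInj{kG}}(\mathbf{i}N,\mathbf{i}M)\cong\Hom_{\sfD(\Mod kG)}(N,M)$.

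The main obstacle is to prove that each $\mathbf{i}S$, for $S$ a simple $kG$-module, is a \emph{compact} object of $\KInj{kG}$; note there are only finitely many simple modules. This is where the hypothesis that $kG$ is noetherian is genuinely used, and it is delicate: $\mathbf{i}S$ is an unbounded complex that is not finitely built from bounded complexes of finitely generated injectives, and $q(\mathbf{i}S)=S$ is typically not compact in $\sfD(\Mod kG)$. I would establish this following Krause's analysis of $\sfK(\Inj A)$ for a noetherian ring $A$, as carried out in \cite{Benson/Krause:2008a}. Once the $\mathbf{i}S$ are known to be compact, so is every object of $\Thick_{\KInj{kG}}(\{\mathbf{i}S\})$, since the compact objects form a thick subcategory.

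It then remains to show $\sfG=\{\mathbf{i}S\mid S\ \text{simple}\}$ generates and to identify the compacts. Since $kG$ is self-injective, $kG$ in degree zero is its own injective resolution and is finitely built from the simple modules in $\sfD^{\sfb}(\mod kG)$, so $kG\in\Thick_{\KInj{kG}}(\sfG)$; also $\Hom_{\sfK(\Mod kG)}(\Si^n kG,X)\cong H^{-n}(X)$, because $kG$ is projective. If $X\in\KInj{kG}$ has $\Hom_{\KInj{kG}}(\Si^n\mathbf{i}S,X)=0$ for all $n$ and all simple $S$, then the localising subcategory of objects $Y$ with $\Hom_{\KInj{kG}}(\Si^nY,X)=0$ for all $n$ contains $\sfG$, hence $kG$, so $H^{-n}(X)=0$ for all $n$ and $X$ is acyclic, i.e. $X\in\KacInj{kG}$. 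For each simple $S$ pick a projective resolution $\mathbf{p}S$; it is a bounded-above complex of projective $=$ injective $kG$-modules, hence $\Hom_{\sfK(\Mod kG)}(\Si^n\mathbf{p}S,Y)=0$ for every acyclic $Y$, so $\Hom_{\KInj{kG}}(\Si^n\mathbf{p}S,X)=0$ for all $n$. The comparison $\mathbf{p}S\to\mathbf{i}S$ is a quasi-isomorphism, so its cone $C_S$ is an acyclic complex of injectives, and the triangle $\mathbf{p}S\to\mathbf{i}S\to C_S\to$ forces $\Hom_{\KacInj{kG}}(\Si^n C_S,X)=0$ for all $n$ and all simple $S$. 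Now $C_S$ is, up to a shift, the Tate resolution of $S$, and under the equivalence $\KacInj{kG}\simeq\StMod(kG)$ these correspond to the simple modules, which form a compact generating set for $\StMod(kG)$ by Theorem~\ref{thm:stmod-generation}; hence $X=0$. By Proposition~\ref{pr:compact-generation} (applicable since $\sfG$ consists of compact objects) this gives $\Loc_{\KInj{kG}}(\sfG)=\KInj{kG}$. Finally, any compact $Y\in\KInj{kG}$ lies in $\Thick_{\KInj{kG}}(\sfG)$ by Theorem~\ref{thm:neeman-compacts}, and $\Thick_{\KInj{kG}}(\sfG)$ is exactly the essential image of the exact, fully faithful functor $\mathbf{i}$, which carries $\sfD^{\sfb}(\mod kG)=\Thick(\{S : S\ \text{simple}\})$ onto it. Therefore $q$ restricts to an equivalence $\KInjc{kG}\xra{\ \sim\ }\sfD^{\sfb}(\mod kG)$ with quasi-inverse $\mathbf{i}$, i.e. injective resolution, and $\sfG$ is the asserted compact generating set.
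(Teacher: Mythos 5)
Your proposal is correct, and it does more work than the paper does: the paper's own ``proof'' consists of two citations --- Example~\ref{ex:frobstable}(2) for the triangulated structure, and \cite[Proposition~2.3]{Krause:2005a} for compact generation and the identification of the compacts --- whereas you reconstruct the substance of the cited argument. Your treatment of the triangulated structure (closure of $\KInj{kG}$ under shifts, cones and coproducts inside $\sfK(\Mod kG)$, the latter using that $kG$ is noetherian) is equivalent to the paper's Frobenius-category route. The one step you defer, the compactness of the injective resolutions $\mathbf{i}S$, is genuinely the crux of Krause's analysis and is exactly what the paper also outsources, so flagging it rather than proving it is consistent with the text; everything downstream of it in your write-up is sound. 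In particular your generation argument is the standard one and is correct in every detail: full faithfulness of $\mathbf{i}$ via maps into bounded-below complexes of injectives, $kG\in\Thick_{\KInj{kG}}(\sfG)$ by self-injectivity so that orthogonality to $\sfG$ forces acyclicity, vanishing of maps out of the bounded-above complexes $\mathbf{p}S$ into acyclics, and then the identification of the cones $C_S$ with the simples under $\KacInj{kG}\simeq\StMod(kG)$ together with Theorem~\ref{thm:stmod-generation} to conclude $X=0$; Proposition~\ref{pr:compact-generation} and Theorem~\ref{thm:neeman-compacts} then finish the generation statement and the identification $\KInjc{kG}\simeq\sfD^{\sfb}(\mod kG)$. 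What your approach buys is a self-contained proof modulo one clearly isolated input; what the paper's buys is brevity.
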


\begin{proof}
The first part, namely that $\KInj{kG}$ is a triangulated category, is special case of Example~\ref{ex:frobstable}(2). For the statement about compact generation and the identification of compact objects, see \cite[Proposition~2.3]{Krause:2005a}.
\end{proof}

In view of this theorem, $\KInj{kG}$ should be regarded as the correct ``big'' category for $\sfD^{\sfb}(\mod kG)$, whereas $\sfD(\Mod kG)$ is not.

Let us give names for some particular objects in $\KInj{kG}$ that we shall need. We write:
\begin{itemize}
\item
$ik$\index{ik@$ik$} for an injective resolution of $k$,
\item
$pk$\index{pk@$pk$} for a projective resolution of $k$, and
\item
$tk$\index{tk@$tk$} for a Tate resolution of $k$.
\end{itemize}

In $\KInj{kG}$ there is then an exact triangle 
\[ 
pk \to ik \to tk \to\,.
\]

For the next step, we note that if $X$ and $Y$ are objects in $\KInj{kG}$, then $X\otimes_k Y$ \index{tensor!product} can be made into an object in $\KInj{kG}$ by taking the total complex of the tensor product, with diagonal action:
\[ 
(X\otimes_k Y)_i=\bigoplus_{j+k=i}X_j\otimes Y_k 
\]
with differential 
\[ 
d(x\otimes y)=d(x)\otimes y + (-1)^{|x|}x\otimes d(y). 
\]
The object $ik$ acts as a \emph{tensor identity}\index{tensor!identity}:

\begin{proposition}
\label{prop:tensorid}
The map $k \to ik$ induces an isomorphism $X \to X \otimes_k ik$  for any object $X$ in $\KInj{kG}$. \qed
\end{proposition}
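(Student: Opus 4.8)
The plan is to exhibit the map $k\to ik$ as a homotopy equivalence after tensoring with any injective object, and more precisely to reduce the statement to the case of a single injective module sitting in one degree. First I would observe that the map $k\to ik$ (viewed as a morphism in $\sfC(\Mod kG)$ from $k$ concentrated in degree zero to an injective resolution) is a quasi-isomorphism, and that tensoring a quasi-isomorphism $\alpha\colon X\to Y$ of complexes of $kG$-modules with a \emph{fixed} $kG$-module $M$ need not preserve quasi-isomorphism in general — but here both $X=k$ and $Y=ik$ consist of $k$-free modules (as $k$-vector spaces), so $-\otimes_k M$ is exact on the underlying vector spaces and therefore $\alpha\otimes_k\id_M$ is again a quasi-isomorphism. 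Thus for every $X$ in $\KInj{kG}$, the map $X\to X\otimes_k ik$ is a quasi-isomorphism of complexes of injective $kG$-modules (the target is a complex of injectives since $X_j\otimes_k (ik)_l$ is a tensor of an injective with a $k$-free module, hence injective, by the Proposition in Section~\ref{sec:Monday2} that $M\otimes_k P$ is projective $=$ injective; and a coproduct of injectives over a noetherian ring is injective).

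The second, and main, step is to upgrade this quasi-isomorphism to a homotopy equivalence. For this I would invoke the standard fact — a consequence of the description of $\KInj{kG}^{\sfc}$ and, more basically, of the general principle that a quasi-isomorphism between bounded-below complexes of injectives is a homotopy equivalence (see Exercise~18 of Section~\ref{exer:Tuesday}: for $Y$ a bounded-below complex of injectives the map $\Hom_{\sfK}(X,Y)\to\Hom_{\sfD}(X,Y)$ is an isomorphism). The complexes $X$ and $X\otimes_k ik$ are not bounded below in general, so I cannot apply that exercise verbatim; instead I would argue that a quasi-isomorphism $f\colon A\to B$ between arbitrary complexes of injective $kG$-modules is automatically an isomorphism in $\KInj{kG}$. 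This is because its mapping cone $C$ is an acyclic complex of injectives, and an acyclic complex of injective modules over a noetherian ring is contractible (this is the dual of the statement that an acyclic complex of projectives is contractible, valid because $kG$ is self-injective and noetherian, so that $\Inj(kG)=\Proj(kG)$ and acyclic complexes of projectives split into short exact sequences of projectives). Hence $C\cong 0$ in $\KInj{kG}$, so $f$ is an isomorphism there.

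Putting the two steps together: the map $\eta_X\colon X\to X\otimes_k ik$ induced by $k\to ik$ is a quasi-isomorphism between complexes of injectives, hence an isomorphism in $\KInj{kG}$, which is exactly the assertion. I would also remark that naturality in $X$ is clear from the construction, and that the same argument with $X=ik$ shows $ik\otimes_k ik\cong ik$, so $ik$ is indeed an idempotent tensor unit. The step I expect to be the real obstacle is the passage from quasi-isomorphism to homotopy equivalence for \emph{unbounded} complexes of injectives; the key input there is self-injectivity of $kG$ together with noetherianness (so that $\Inj=\Proj$ and acyclic complexes of injectives are contractible), and I would make sure to state and use this explicitly rather than appealing to the bounded-below case.
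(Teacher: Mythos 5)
Your first step is fine: over a field every acyclic complex of vector spaces splits, so the cone of $k\to ik$ is $k$-contractible and hence $X\to X\otimes_k ik$ is a quasi-isomorphism for every $X$, and $X\otimes_k ik$ is again a complex of injectives. The fatal problem is your second step. The claim that a quasi-isomorphism between arbitrary (unbounded) complexes of injective $kG$-modules is a homotopy equivalence — equivalently, that every acyclic complex of injectives is contractible — is false whenever $\Char k$ divides $|G|$, and it contradicts the very structure this section is built on: the acyclic complexes of injectives form the subcategory $\KacInj{kG}\simeq\StMod(kG)$, which is nonzero (for instance $tk$, the Tate resolution of $k$ — for $G=\bbZ/2$ the two-sided complex $\cdots\xra{x}kG\xra{x}kG\xra{x}\cdots$ — is acyclic but represents $k\neq 0$ in $\StMod(kG)$, so it is not contractible). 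Your purported justification, that acyclic complexes of projectives split into short exact sequences of projectives, only works for bounded-above complexes (dually, bounded-below for injectives); in the unbounded case the kernels/cokernels of the differentials need not be projective, and indeed for $tk$ they are $\Omega^n k$. If your claim were true, the functor $\KInj{kG}\to\sfD(\Mod kG)$ would be an equivalence and the recollement of Theorem~\ref{th:recollement} would collapse, so the argument cannot be repaired by "self-injective plus noetherian".

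The standard way to close the gap is to combine the bounded-below case with compact generation. For a \emph{bounded-below} complex of injectives $Y$, a quasi-isomorphism into another bounded-below complex of injectives is a homotopy equivalence (this is the content of Exercise~18 of Section~\ref{exer:Tuesday}, which you correctly note does not apply to unbounded complexes); applying this to the compact generators of $\KInj{kG}$, namely injective resolutions $iS$ of the simple modules (Theorem~\ref{thm:kinj-generation}), shows $\eta_{iS}\colon iS\to iS\otimes_k ik$ is an isomorphism in $\KInj{kG}$. Now observe that both $\Id$ and $-\otimes_k ik$ are exact functors preserving set-indexed coproducts, so the full subcategory $\{X\mid \eta_X\text{ is an isomorphism}\}$ is a localising subcategory of $\KInj{kG}$; since it contains a compact generating set, it is all of $\KInj{kG}$. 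This replaces your false step and yields the proposition; the naturality and idempotence remarks at the end of your argument are then unaffected.
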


\subsection{Recollement}
\label{ssec:recollement}
The relationship between $\KInj{kG}$, the derived category  $\sfD(\Mod kG)$ and the stable module category 
$\StMod(kG)$ is given by a \emph{recollement},\index{recollement}  as follows.

We write $\KacInj{kG}$\index{KacInj@$\KacInj{kG}$} for the full subcategory of $\KInj{kG}$ given by the acyclic complexes\index{acyclic!complex} of injective $kG$-modules. Every acyclic complex is a Tate resolution\index{Tate!resolution} of a module, namely the image of the middle map in the complex. Homotopy classes of maps between acyclic complexes correspond to homomorphisms in the stable module category. Thus Tate resolutions give an equivalence of categories
\[ 
\StMod(kG) \simeq \KacInj{kG}\,. 
\]
The next result is from \cite{Benson/Krause:2008a}.

\begin{theorem}
\label{th:recollement}
\pushQED{\qed}
There is a recollement
\begin{equation*}
\KacInj{kG} \ \begin{smallmatrix} \Hom_k(tk,-) \\
\hbox to 50pt{\leftarrowfill} \\ \hbox to 50pt{\rightarrowfill} \\ 
\hbox to 50pt{\leftarrowfill} \\ - \otimes_k tk
\end{smallmatrix} \ \KInj{kG} \ \begin{smallmatrix} \Hom_k(pk,-) \\
\hbox to 50pt{\leftarrowfill} \\ \hbox to 50pt{\rightarrowfill} \\ 
\hbox to 50pt{\leftarrowfill} \\ - \otimes_k pk
\end{smallmatrix} \ \sfD(\Mod kG).  \qedhere
\end{equation*}
\end{theorem}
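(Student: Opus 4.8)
The recollement will be built from the localising subcategory $\KacInj{kG}\subseteq\KInj{kG}$. As preliminaries I would record three facts: (i) $\KacInj{kG}$ is triangulated and closed under coproducts, because $kG$ is noetherian (so coproducts of injectives are injective) and acyclicity is preserved by coproducts; (ii) the canonical functor $q\col\KInj{kG}\to\sfD(\Mod kG)$ is a Verdier localisation with $\Ker q=\KacInj{kG}$, so it identifies $\sfD(\Mod kG)$ with $\KInj{kG}/\KacInj{kG}$ (cf.\ Theorem~\ref{thm:kinj-generation} and \cite{Krause:2005a}); (iii) $\KInj{kG}$ is compactly generated (Theorem~\ref{thm:kinj-generation}), and $\KacInj{kG}\simeq\StMod(kG)$ via Tate resolutions, hence is compactly generated too (Theorem~\ref{thm:stmod-generation}). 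I would then construct the six functors directly and verify the recollement axioms, rather than only invoke the abstract existence result.

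The functors linking $\KInj{kG}$ and $\sfD(\Mod kG)$ are $j^*=q$, $j_*=\Hom_k(pk,-)$ and $j_!=-\otimes_k pk$. For a module $M$ each term $\Hom_k((pk)_i,M)$ of $\Hom_k(pk,M)$ is coinduced from the trivial subgroup, hence injective, and since $\Hom_k(-,M)$ is exact over the field $k$ the complex $\Hom_k(pk,M)$ is an injective resolution of $M$; one checks from this that $j_*=\Hom_k(pk,-)$ is right adjoint to $q$. Dually, each term $M\otimes_k(pk)_i$ of $M\otimes_k pk$ is projective, hence injective, by the Proposition of Section~\ref{sec:Monday2}, so $M\otimes_k pk$ is an object of $\KInj{kG}$; being a bounded-above complex of projectives it is K-projective and is a projective resolution of $M$, whence $\Hom_{\KInj{kG}}(M\otimes_k pk,X)\cong\Hom_{\sfD(\Mod kG)}(M,qX)$, so $j_!$ is left adjoint to $q$. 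Both composites $q\,j_*$ and $q\,j_!$ are naturally isomorphic to $\Id$, so $j_*$ and $j_!$ are fully faithful.

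The functors linking $\KacInj{kG}$ and $\KInj{kG}$ are the inclusion $i_*$, together with $i^*=-\otimes_k tk$ and $i^!=\Hom_k(tk,-)$. Both of the latter land in $\KacInj{kG}$: over the field $k$ the K\"unneth formula gives $H^{*}(X\otimes_k tk)\cong H^{*}(X)\otimes_k H^{*}(tk)=0$ and, likewise, $H^{*}(\Hom_k(tk,X))=0$, so these complexes of injectives are acyclic. The adjunctions $i^*\dashv i_*\dashv i^!$ then follow from the tensor--$\Hom$ adjunction on $\KInj{kG}$ together with the identities $tk\otimes_k A\cong A\cong\Hom_k(tk,A)$ for $A$ in $\KacInj{kG}$, which hold because under $\KacInj{kG}\simeq\StMod(kG)$ the object $tk$ corresponds to the trivial module $k$, the $\otimes_k$-unit of $\StMod(kG)$. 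Moreover $\Im i_*=\KacInj{kG}=\Ker q$ by construction, and $i_*$ is fully faithful. The two localisation triangles of the recollement come from $pk\to ik\to tk\to$: using that $ik$ is a $\otimes_k$-unit (Proposition~\ref{prop:tensorid}) one gets $j_!j^*X\to X\to i_*i^*X\to$ from $X\otimes_k pk\to X\otimes_k ik\to X\otimes_k tk\to$, and, dually, $i_*i^!X\to X\to j_*j^*X\to$ by applying $\Hom_k(-,X)$; the orthogonality relations $j^*i_*=0$, $i^*j_!=0$, $i^!j_*=0$ then drop out, as do the idempotency relations $tk\otimes_k tk\simeq tk$, $pk\otimes_k pk\simeq pk$, $pk\otimes_k tk\simeq 0$.

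The point that requires real care — and that I have treated only formally above — is that $\otimes_k$ and $\Hom_k$ of complexes descend to well-behaved functors on $\KInj{kG}$: that $X\otimes_k Y$ and $\Hom_k(tk,Y)$ are again (homotopy equivalent to) complexes of injectives, that the tensor--$\Hom$ adjunction persists in the homotopy category for \emph{unbounded} complexes, and that $ik$ is genuinely a $\otimes_k$-unit there. This is exactly where self-injectivity of $kG$ enters, making $pk$ simultaneously a complex of projectives and of injectives; it is the technical heart of the argument, and the relevant lemmas are the ones established in \cite{Benson/Krause:2008a}, which I would import rather than reprove.
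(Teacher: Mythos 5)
Your construction is correct and is essentially the argument of the cited source: the paper states Theorem~\ref{th:recollement} without proof, deferring to \cite{Benson/Krause:2008a}, and the six functors, adjunctions, orthogonality relations and localisation triangles you describe are exactly the ones established there. The technical points you isolate at the end (that $-\otimes_k pk$ is K-projective and $\Hom_k(pk,-)$ is K-injective on unbounded complexes, that the tensor--Hom adjunction persists in $\KInj{kG}$, and that $ik$ is the tensor unit as in Proposition~\ref{prop:tensorid}) are indeed the only nontrivial inputs, and importing them from \cite{Benson/Krause:2008a} is appropriate.
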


Thus the category $\KInj{kG}$ can be thought of as being glued 
together from the categories $\StMod(kG)$ and $\sfD(\Mod kG)$.

The compact objects in these categories are only preserved by the left adjoints, giving us the following sequence:
\[
 \stmod(kG) \xleftarrow[\ - \otimes_k tk\ ]{}
\sfD^{\sfb}(\mod kG) \xleftarrow[\ - \otimes_k pk\ ]{} \sfD^{\sfb}(\proj kG). 
\]
This expresses $\stmod(kG)$ as the quotient of $\sfD^{\sfb}(\mod kG)$ by the perfect complexes. This was first proved by Buchweitz~\cite{Buchweitz:1986}; see also Rickard~\cite{Rickard:1989a}.

\subsection{Varieties for objects in $\KInj{kG}$}
We now introduce a notion of support for objects in $\KInj{kG}$. To begin with note that the graded endomorphism ring of $ik$ is $H^*(G,k)$, which is graded commutative and noetherian. For each object $X$ in $\KInj{kG}$ there is a homomorphism
\[ 
H^*(G,k)=\Ext^*_{kG}(ik,ik) \xrightarrow{-\otimes_k X} \Ext^*_{kG}(X\otimes_{k}ik,X\otimes_{k}ik) \cong \Ext^*_{kG}(X,X)\,,
\]
where the isomorphism is from Proposition~\ref{prop:tensorid}. This allows us to apply the technology developed in Sections~\ref{sec:Wednesday1} and \ref{sec:Wednesday2} in this context.

Let $\mcV_G$ be the set of homogeneous prime ideals in $H^*(G,k)$ (including the maximal one). Then for each $\fp\in\mcV_G$ we have a local cohomology functor 
\[
\gam_\fp\col \KInj{kG}\to \KInj{kG}\,.
\]

\begin{definition}
For an object $X$ in $\KInj{kG}$, we define
\[ 
\mcV_G(X)= \{\fp \in \mcV_G \mid \gam_\fp X \ne 0\}. 
\]
\end{definition}

The assertions in the result below are all obtained as special cases of results concerning local cohomology and support for triangulated categories, discussed in the previous lectures.

\begin{proposition}
The assignment $X\mapsto \mcV_{G}(X)$ has the following properties:
\begin{enumerate}
\item $\mcV_G(X)=\varnothing$ if and only if $X = 0$.
\item $\mcV_G(X \oplus Y) = \mcV_G(X) \cup \mcV_G(Y)$, and more generally
\[ 
\mcV_G(\bigoplus_\alpha X_\alpha)=\bigcup_\alpha\mcV_G(X_\alpha). 
\]
\item $\mcV_G(\gam_\fp ik) = \{\fp\}$.
\item For any $\mcV\subseteq\mcV_G$, there exists an object $X$ in $\KInj{kG}$ with $\mcV_G(X)=\mcV$. \qed
\end{enumerate}
\end{proposition}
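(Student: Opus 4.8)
The plan is to recognise all four statements as instances of the general theory of local cohomology and support developed in Sections~\ref{sec:Wednesday1} and \ref{sec:Wednesday2}, applied to $\sfT=\KInj{kG}$ with the action of the graded-commutative ring $R=H^{*}(G,k)$ set up just before the proposition. The two hypotheses of that theory are in place: $R$ is noetherian by Evens' theorem (Theorem~\ref{thm:Evens}), and $\sfT$ is compactly generated by Theorem~\ref{thm:kinj-generation}. Under this identification $\mcV_{G}(X)$ is precisely $\supp_{R}X$, so I would simply track which earlier result yields each assertion.

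Item (1) is immediate: it is the ``in particular'' clause of Theorem~\ref{thm:cohom-supp}. For item (2), I would apply the Exactness property of Theorem~\ref{thm:axioms} to the split exact triangle $X\to X\oplus Y\to Y\to$ to get $\mcV_{G}(X\oplus Y)\subseteq\mcV_{G}(X)\cup\mcV_{G}(Y)$; for the reverse inclusion, and for the general coproduct statement, I would use that each $X_{\beta}$ is a retract of $\bigoplus_{\alpha}X_{\alpha}$ together with the fact (part of the machinery of Section~\ref{sec:Wednesday1}, since $\sfT_{\mcV}$ is generated by compact objects) that $\gam_{\fp}$ is additive and commutes with set-indexed coproducts, so that $\gam_{\fp}\bigl(\bigoplus_{\alpha}X_{\alpha}\bigr)\cong\bigoplus_{\alpha}\gam_{\fp}X_{\alpha}$, which vanishes exactly when every $\gam_{\fp}X_{\alpha}$ does.

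Item (3) is where the actual content lies, and I expect it to be the main obstacle. The only non-formal point is that $\gam_{\fp}(ik)\ne 0$ for \emph{every} $\fp\in\mcV_{G}$ --- equivalently $\supp_{R}(ik)=\mcV_{G}$, which is exactly ``putting back the missing origin'' when $\fp$ is the maximal homogeneous ideal. To prove this I would argue as follows. Since $k$ is finitely generated, $ik$ is compact, hence so is the Koszul object $\kos{ik}{\fp}$, and $\End^{*}_{\sfT}(ik)=R$. Iterating the exact sequence~\eqref{eq:kosles}, one step per generator of $\fp$, exhibits $\Hom^{*}_{\sfT}(\kos{ik}{\fp},ik)$ as obtained by finitely many extensions from modules of the form $M/rM$ and $(0:_{M}r)$, starting from $R$; all of these are finitely generated over the noetherian ring $R$ (using Evens' theorem once more for finite generation of $\Hom^{*}_{\sfT}$ between compact objects). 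Localising at $\fp$ and applying Nakayama's lemma over $R_{\fp}$ at each step --- each generator of $\fp$ lies in $\fp R_{\fp}$ --- shows inductively that $\Hom^{*}_{\sfT}(\kos{ik}{\fp},ik)_{\fp}\ne 0$. By Proposition~\ref{prop:kosprop}(1) this module is $\fp$-torsion, so $\fp$ is its unique minimal prime, and Theorem~\ref{thm:cohom-supp} (taking $C=\kos{ik}{\fp}$) then gives $\fp\in\supp_{R}(ik)$. Alternatively, one could shortcut this using that $ik$ is the tensor identity (Proposition~\ref{prop:tensorid}) and that the functors $\gam_{\fp}$ are compatible with $-\otimes_{k}-$: then $\gam_{\fp}(ik)=0$ would force $\gam_{\fp}\equiv 0$, contradicting (1). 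Granting $\gam_{\fp}(ik)\ne 0$, the natural isomorphism $\gam_{\fp}^{2}\cong\gam_{\fp}$ gives $\gam_{\fp}(\gam_{\fp}ik)\cong\gam_{\fp}ik\ne 0$, whence $\supp_{R}(\gam_{\fp}ik)=\{\fp\}$ by Corollary~\ref{cor:plocptor}; this is (3).

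Finally, item (4) is a formal consequence of (2) and (3): given $\mcV\subseteq\mcV_{G}$ I would take $X=\bigoplus_{\fp\in\mcV}\gam_{\fp}(ik)$ (and $X=0$ when $\mcV=\varnothing$), so that $\mcV_{G}(X)=\bigcup_{\fp\in\mcV}\mcV_{G}(\gam_{\fp}ik)=\bigcup_{\fp\in\mcV}\{\fp\}=\mcV$.
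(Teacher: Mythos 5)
Your proof is correct and is essentially the paper's own (omitted) argument: the paper simply declares all four items to be special cases of the general support machinery of Sections~\ref{sec:Wednesday1}--\ref{sec:Wednesday2}, and your instantiation via Theorem~\ref{thm:cohom-supp}, Corollary~\ref{cor:plocptor}, Proposition~\ref{prop:kosprop} and the Koszul sequence \eqref{eq:kosles} (with Nakayama at each step to see $\Hom^{*}_{\sfT}(\kos{ik}{\fp},ik)_{\fp}\ne 0$, hence $\supp_{R}ik=\mcV_G$) is exactly the intended filling-in, including for the maximal ideal. One caution about your parenthetical ``shortcut'' for (3): $\gam_{\fp}(ik)=0$ would only give $\gam_{\fp}X=0$ for all $X$, i.e.\ $\fp\notin\supp_{R}X$ for every $X$, which does not contradict item (1) --- compare $\StMod(kG)$, where $\gam_{\fm}=0$ for the maximal ideal yet the analogue of (1) holds --- and it also invokes the tensor formula $\gam_{\fp}X\cong X\otimes_{k}\gam_{\fp}(ik)$, which the paper only establishes later; your main Koszul/Nakayama argument makes this aside unnecessary.
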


Properties of support that are specific to $\KInj{kG}$, including a tensor product formula for supports:
\[
\mcV_G(X \otimes_k Y) = \mcV_G(X) \cap \mcV_G(Y)
\]
will be deduced as a consequence of results presented in later lectures.

\subsection{Comparison with $\StMod(kG)$}

For objects in the subcategory 
\[ 
\KacInj{kG}\simeq\StMod(kG) 
\]
of $\KInj{kG}$ the definition of $\mcV_G(X)$ given in the previous section agrees with that of Definition \ref{def:mcVGM}. Since the object $k$ in $\StMod(kG)$ corresponds to $tk$ in $\KacInj{kG}$ we write $\mcV_G(tk)$ for what was denoted $\mcV_G(k)$ in the discussion of varieties for $\StMod(kG)$. Thus we have
\[ 
\mcV_G=\mcV_G(tk) \cup \{H^{\ges 1}(G,k)\} 
\]
where $H^{\ges 1}(G,k)$ is the maximal ideal of positive degree elements.

Referring back to the recollement (Theorem~\ref{th:recollement}), 
$X$ is isomorphic to $X \otimes_k tk$ if and only if $X$ is in
$\KacInj{kG}$. If $X$ is not acyclic then
\[
\mcV_G(X)=\mcV_G(X\otimes_k tk)\cup\{H^{\ges 1}(G,k)\} \,.
\]

\begin{remark}
It is not necessary to understand the rest of this lecture for the goals of this seminar. Our purpose is to place $\KInj{kG}$ in a wider context.
\end{remark}

\subsection{$\KInj{B}$ as a derived invariant}

Recall that the \emph{blocks}\index{block} of a group algebra $kG$ are the indecomposable two sided ideal direct factors.  So the block decomposition of $kG$ is of the form
\[ 
B_0 \times \dots \times B_s\,. 
\]
This decomposition is unique, and every indecomposable $kG$-module is a module for $B_i$ for a unique value of $i$. 

\begin{theorem}
Let $B$ and $B'$ be blocks of group algebras. The following conditions are equivalent:
\begin{enumerate}
\item There is a tilting complex\index{tilting complex} 
over $B$ whose endomorphism ring in $\sfD^{\sfb}(\mod B)$ is isomorphic to $B'$
\item $\sfD^{\sfb}(\mod B)$ and $\sfD^{\sfb}(\mod B')$ are triangle equivalent
\item $\sfD(\Mod B)$ and $\sfD(\Mod B')$ are triangle equivalent
\item $\KInj B$ and $\KInj{B'}$ are triangle equivalent. \qed
\end{enumerate}
\end{theorem}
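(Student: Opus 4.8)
The plan is to prove the chain of implications $(1)\Rightarrow(2)\Rightarrow(3)\Rightarrow(4)\Rightarrow(2)$, with the last one closing the loop back to (2) — though since (2)$\Leftrightarrow$(3) is essentially Rickard's theorem on derived equivalences and will be quoted, the genuinely new content is $(2)\Rightarrow(4)$ and $(4)\Rightarrow(2)$, i.e.\ that $\KInj{B}$ is a \emph{derived invariant} of the block $B$ and that it determines the derived category.

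First I would recall Rickard's Morita theory for derived categories: $(1)\Leftrightarrow(2)$ is precisely the statement that a two-sided tilting complex exists iff the bounded derived categories are triangle equivalent, and $(2)\Leftrightarrow(3)$ is the upgrade of a derived equivalence of the bounded categories to one of the unbounded categories (using that $B,B'$ are finite-dimensional, so that $\sfD^{\sfb}(\mod B)$ sits inside $\sfD(\Mod B)$ as the compact objects, and a triangle equivalence on compacts extends to one on the whole compactly generated category — this is the content of Exercise~24 in Chapter~\ref{ch:Monday}). So these three are standard and I would cite \cite{Rickard:1989a} and \cite{Keller:1994a}.

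For $(3)\Rightarrow(4)$: given a triangle equivalence $\sfD(\Mod B)\xra{\sim}\sfD(\Mod B')$, I want to produce one between $\KInj{B}$ and $\KInj{B'}$. The key point is that $\KInj{B}$ can be reconstructed from $\sfD(\Mod B)$ as a triangulated category intrinsically — via Theorem~\ref{thm:kinj-generation}, $\KInj{B}$ is the (essentially unique) compactly generated triangulated category whose subcategory of compact objects is $\sfD^{\sfb}(\mod B)$, the bounded derived category of the \emph{noetherian} abelian category $\mod B$, which in turn is the subcategory of compact objects of $\sfD(\Mod B)$. Concretely: a triangle equivalence $\sfD(\Mod B)\xra{\sim}\sfD(\Mod B')$ restricts to one on compacts $\sfD^{\sfb}(\mod B)\xra{\sim}\sfD^{\sfb}(\mod B')$; this extends uniquely (up to equivalence) to a triangle equivalence of the ``$\Inj$-completions'', which by Theorem~\ref{thm:kinj-generation} are exactly $\KInj{B}$ and $\KInj{B'}$. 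I would make the extension argument precise using the fact that for a small triangulated category $\sfS$ with split idempotents there is a universal compactly generated triangulated category $\sfS^{c}$-completion, together with Exercise~24 which gives the rigidity needed to see the extension is unique; alternatively one invokes \cite{Krause:2005a} directly, where $\KInj{kG}$ is identified as such a completion of $\sfD^{\sfb}(\mod kG)$. Then $(4)\Rightarrow(2)$ is immediate by passing to compact objects and Theorem~\ref{thm:kinj-generation}.

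The main obstacle is the uniqueness/functoriality of the passage from the small category $\sfD^{\sfb}(\mod B)$ to the big category $\KInj{B}$: a triangle equivalence between the categories of compact objects need not \emph{a priori} lift to a triangle equivalence of the ambient compactly generated categories without some care, since triangulated categories are not rigid enough for cone constructions to be functorial. The clean way around this is to use the identification of $\KInj{B}$ with a derived category of a differential graded algebra (the dg endomorphism algebra of a compact generator, e.g.\ the injective resolution of $B/\rad B$), as in \cite{Krause:2005a}; a derived equivalence then transports this dg algebra up to quasi-isomorphism, and derived categories of quasi-isomorphic dg algebras are triangle equivalent. I would therefore structure the hard step as: (i) fix a compact generator $G$ of $\KInj{B}$ and let $E=\REnd(G)$ be its dg endomorphism algebra, so $\KInj{B}\simeq\sfD(E)$; (ii) a derived equivalence $\sfD(\Mod B)\simeq\sfD(\Mod B')$ sends $G$ to a compact generator $G'$ of $\KInj{B'}$ with $\REnd(G')\simeq E$ in the homotopy category of dg algebras; (iii) conclude $\KInj{B}\simeq\sfD(E)\simeq\sfD(\REnd(G'))\simeq\KInj{B'}$. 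This is the step that genuinely uses dg machinery rather than just triangulated-category formalism, and it is where I would spend the most effort being careful.
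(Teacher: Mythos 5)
The paper records this theorem without proof (the first three equivalences are attributed to Rickard, and the material of this lecture to \cite{Benson/Krause:2008a}), so there is no in-paper argument to compare with; judged on its own terms, your architecture --- quote Rickard for (1)--(3), get (4) via Keller's theorem by writing $\KInj{B}\simeq\sfD(\fEnd_{B}(iS))$ for a compact generator, and recover (2) from (4) by passing to compact objects using $\KInj{B}^{\sfc}\simeq\sfD^{\sfb}(\mod B)$ (Theorem~\ref{thm:kinj-generation}) --- is the intended route. But two steps, as written, are genuinely gapped. First, you assert twice that $\sfD^{\sfb}(\mod B)$ is the subcategory of compact objects of $\sfD(\Mod B)$. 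It is not: the compacts of $\sfD(\Mod B)$ are the perfect complexes $\sfK^{\sfb}(\proj B)$ (Theorem~\ref{thm:compacts-DA} in spirit), and since a block is self-injective, any finitely generated module of finite projective dimension is projective, so $\sfK^{\sfb}(\proj B)\subsetneq\sfD^{\sfb}(\mod B)$ unless $B$ has defect zero. Consequently your ``intrinsic reconstruction'' of $\KInj{B}$ from $\sfD(\Mod B)$ via its compacts collapses, and it is not clear a priori that an arbitrary triangle equivalence $\sfD(\Mod B)\simeq\sfD(\Mod B')$ even restricts to $\sfD^{\sfb}(\mod B)\simeq\sfD^{\sfb}(\mod B')$.

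Second, and more seriously, step (ii) of your differential graded argument claims that a bare triangle equivalence carries the compact generator $G$ to some $G'$ with quasi-isomorphic differential graded endomorphism algebras. Triangle equivalences do not in general preserve differential graded endomorphism algebras; this requires the equivalence to lift to the enhancement, i.e.\ to be a standard equivalence, which is exactly what cannot be assumed of an abstract equivalence. The repair uses material you already cite: from (2) or (3), restrict to perfect complexes and invoke Rickard \cite{Rickard:1989a} to obtain (1), a two-sided tilting complex; the standard equivalence it induces is defined at the chain level, restricts to $\sfD^{\sfb}(\mod)$, and sends $S$ (the sum of the simple $B$-modules) to a classical generator $T$ of $\sfD^{\sfb}(\mod B')$ with $\fEnd_{B}(iS)$ and $\fEnd_{B'}(iT)$ quasi-isomorphic. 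Then your step (iii) does go through: $\KInj{B}\simeq\sfD(\fEnd_{B}(iS))\simeq\sfD(\fEnd_{B'}(iT))\simeq\KInj{B'}$ by Keller's theorem and Theorem~\ref{thm:kinj-generation}, and this is essentially how the derived invariance of $\KInj{B}$ is established in \cite{Benson/Krause:2008a}. So the proof should be routed through condition (1) and a standard equivalence; without that, both your first sketch and the differential graded fallback have holes.
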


The equivalence of the first three of these is Rickard's theorem.

\subsection{$\KInj{kG}$ is a derived category}

Given complexes $X$ and $Y$ of $kG$-modules, we form a complex $\fHom_{kG}(X,Y)$ whose $n$th component is
\[ 
\prod_{m\in\bbZ}\fHom_{kG}(X_m,Y_{n+m}) 
\]
with differential given by
\[ 
(d(f))(x)=d(f(x)) - (-1)^{|f|}f(d(x))\,. 
\]
Composition of maps makes $\fEnd_{kG}(X)=\fHom_{kG}(X,X)$ into a \emph{differential graded algebra}\index{differential graded!algebra} over which $\fHom_{kG}(X,Y)$ is a \emph{differential graded module}\index{differential graded!module}.

\begin{theorem}
\index{Keller's Theorem}
If $C$ is a compact generator for $\KInj{kG}$ then
\[ 
\fHom_{kG}(C,-) \colon \KInj{kG} \to \sfD(\fEnd_{kG}(C))
\]
is an equivalence of categories.

If $C$ is just a compact object, which does not necessarily generate, then
\[ 
\fHom_{kG}(C,-) \colon \Loc(C) \to \sfD(\fEnd_{kG}(C))
\]
is an equivalence of categories.
\end{theorem}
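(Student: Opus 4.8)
The plan is to realise $F:=\fHom_{kG}(C,-)$ as an exact, coproduct-preserving functor that sends the compact generator $C$ to a compact generator of the target and induces bijections on graded endomorphism rings; the assertion then follows from the recognition criterion for equivalences of compactly generated triangulated categories (the last exercise of Chapter~\ref{ch:Monday}; see \cite[Lemma~4.5]{Benson/Iyengar/Krause:bik3}). I would first reduce the second statement to the first: since $C$ is compact in $\KInj{kG}$ it remains compact in $\Loc(C)$, which is a compactly generated triangulated category admitting all coproducts and having $C$ as a compact generator, and the DG algebra $\fEnd_{kG}(C)$ is the same whether computed in $\Loc(C)$ or in $\KInj{kG}$. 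So it suffices to treat the case that $C$ generates $\KInj{kG}$.

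Set $A=\fEnd_{kG}(C)$, a differential graded $k$-algebra; for each $Y\in\KInj{kG}$ the complex $\fHom_{kG}(C,Y)$ is a DG $A$-module, naturally in $Y$. Since $\fHom_{kG}(C,-)$ sends null-homotopic maps to null-homotopic maps it induces a functor $\KInj{kG}\to\sfK(A)$ into the homotopy category of DG $A$-modules, and $F$ is its composite with the localisation $\sfK(A)\to\sfD(A)$. As $\fHom_{kG}(C,-)$ is additive and commutes with mapping cones, it carries the exact (i.e. mapping-cone) triangles of $\KInj{kG}$ to mapping-cone triangles, and since $\sfK(A)\to\sfD(A)$ is exact, $F$ is exact. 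For coproducts, note first that a coproduct of complexes of injective $kG$-modules is again a complex of injectives, because direct sums of injectives over the noetherian ring $kG$ are injective; for such a coproduct the canonical map $\coprod_i\fHom_{kG}(C,Y_i)\to\fHom_{kG}(C,\coprod_iY_i)$ of DG $A$-modules induces on $H^n$ precisely the comparison map $\coprod_i\Hom_{\KInj{kG}}(C,\Si^nY_i)\to\Hom_{\KInj{kG}}(C,\Si^n\coprod_iY_i)$, which is an isomorphism because $C$ is compact; so it is a quasi-isomorphism, and $F$ preserves coproducts.

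It remains to verify the two input conditions of the criterion. First, $FC=\fHom_{kG}(C,C)=A$ is the free rank-one DG module, which is a compact generator of $\sfD(A)$ exactly as $A$ is for $\sfD(\Mod A)$ in Example~\ref{ex:stmodA}; in particular $\Loc_{\sfD(A)}(FC)=\sfD(A)$. Second, $F$ induces bijections $\Hom_{\KInj{kG}}(C,\Si^nC)\to\Hom_{\sfD(A)}(FC,\Si^nFC)$ for all $n$: both groups are canonically $H^n(A)$ — the source is $H^n\fEnd_{kG}(C)$, the target is $\Hom_{\sfD(A)}(A,\Si^nA)\cong H^n(A)$ — and, tracing the definitions, the map induced by $F$ sends the class of an endomorphism $g$ of $C$ to the right $A$-module endomorphism of $A$ given by post-composition with $g$, whose value on $1=\mathrm{id}_C$ is $g$; hence this map is the identity on $H^n(A)$, in particular a bijection. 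With exactness, coproduct-preservation, $\Loc_{\sfD(A)}(FC)=\sfD(A)$ and these bijections in hand, the recognition criterion shows that $F$ is an equivalence.

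I expect the routine-but-genuine work to be in the second paragraph — the exactness and, above all, the coproduct-preservation of the DG-Hom functor, which is where compactness of $C$ and the stability of $\KInj{kG}$ under coproducts (a consequence of $kG$ being noetherian) enter. The conceptual heavy lifting is concentrated in the recognition criterion itself, which rests on Brown representability. As a fallback I would instead argue via the adjoint pair $\bigl(-\otimes_AC,\ \fHom_{kG}(C,-)\bigr)$, checking that the unit and counit are isomorphisms first on $C$, respectively on $A$, and then extending over $\KInj{kG}$, respectively $\sfD(A)$, by a localising-subcategory argument — which is essentially the proof of the cited criterion specialised to the present situation.
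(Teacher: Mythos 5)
Your proof is correct and takes essentially the same route as the paper, whose entire proof consists of citing Monday's Exercise~23 (the recognition criterion of \cite[Lemma~4.5]{Benson/Iyengar/Krause:bik3}) — precisely the criterion you verify. Your write-up merely supplies the routine checks the paper leaves implicit: exactness and coproduct-preservation of $\fHom_{kG}(C,-)$ (the latter via compactness of $C$), the identification $FC\cong\fEnd_{kG}(C)$ as a compact generator of the target, and the bijection on graded endomorphism rings.
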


\begin{proof}
This is a direct application of  Monday's Exercise 23.
\end{proof}

In this theorem, we have used $\sfD(A)$ to denote the \emph{derived category}\index{derived category} of a differential graded algebra $A$. To form this, we first form the category whose objects are the differential graded modules and whose arrows are the homotopy classes of degree preserving maps. Then we invert the quasi-isomorphisms\index{quasi-isomorphism} just as we did in the usual derived category of a ring.

If the differential graded algebra is just a ring concentrated in degree zero, with zero differential, then a differential graded module is the same as a complex of modules and we recover the usual definition of the derived category of  a ring.

\begin{observation} 
If $G$ is a $p$-group, then $k$ is the only simple $kG$-module, so that $ik$ is a compact generator for $\KInj{kG}$; see Theorem~\ref{thm:kinj-generation}. Therefore
\[ 
\KInj{kG} \simeq \sfD(\fEnd_{kG}(ik))\,.
\]
For a non-$p$-group, we just get an equivalence between the localising subcategory of $\KInj{kG}$ generated by $ik$ and $\sfD(\fEnd_{kG}(ik))$.
\end{observation}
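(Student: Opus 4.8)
The plan is to deduce both assertions directly from Keller's Theorem, the only work being to identify $ik$ as a compact generator (respectively, as a compact object) of $\KInj{kG}$. First I would recall why $ik$ is a compact generator when $G$ is a $p$-group: by Proposition~\ref{prop:kg-local} the ring $kG$ is local, so the trivial module $k$ is, up to isomorphism, the unique simple $kG$-module; by Theorem~\ref{thm:kinj-generation} the injective resolutions of the simple $kG$-modules form a compact generating set for $\KInj{kG}$, and since there is only one simple module this set is $\{ik\}$. Thus $ik$ is a compact generator.

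Next I would invoke Keller's Theorem with $C=ik$. Since $ik$ is a compact generator, the functor
\[
\fHom_{kG}(ik,-)\colon \KInj{kG}\lto \sfD(\fEnd_{kG}(ik))
\]
is an equivalence of triangulated categories, which is the displayed claim. For an arbitrary finite group $G$, the module $k$ is finitely generated, so under the equivalence $\KInj{kG}^{\sfc}\simeq\sfD^{\sfb}(\mod kG)$ of Theorem~\ref{thm:kinj-generation} its injective resolution $ik$ corresponds to $k$ placed in degree zero; in particular $ik$ is a compact object of $\KInj{kG}$. It need no longer generate, because $kG$ may have simple modules other than $k$. Applying the second part of Keller's Theorem to the compact object $C=ik$ then gives an equivalence $\fHom_{kG}(ik,-)\colon \Loc(ik)\to\sfD(\fEnd_{kG}(ik))$, as asserted.

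I do not expect a genuine obstacle here: the substance is entirely carried by Keller's Theorem (itself obtained from Monday's Exercise~23) together with the compact-generation statement of Theorem~\ref{thm:kinj-generation}. The one point that warrants a moment's care is the claim that $ik$ remains compact for a general $G$, and this is exactly the identification $\KInj{kG}^{\sfc}\simeq\sfD^{\sfb}(\mod kG)$ applied to the finitely generated module $k$.
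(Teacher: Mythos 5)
Your argument is correct and follows exactly the route the paper intends: identify $ik$ as a compact generator via Proposition~\ref{prop:kg-local} and Theorem~\ref{thm:kinj-generation} when $G$ is a $p$-group, note that $ik$ is in any case compact via the identification $\KInj{kG}^{\sfc}\simeq\sfD^{\sfb}(\mod kG)$, and then apply the two parts of Keller's Theorem. Nothing is missing.
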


The next result, which goes by the name \emph{Rothenberg--Steenrod construction}, provides a link to algebraic topology:

\begin{theorem}
\index{Rothenberg--Steenrod!construction}
For any path-connected space $X$ we have a quasi-isomorphism of differential graded algebras
\[ 
\fEnd_{C_*(\Omega X;k)}(k) \simeq C^*(X;k) 
\]
where $\Omega X$ is the loop space of $X$.\qed
\end{theorem}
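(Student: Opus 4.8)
The plan is to identify $\fEnd_{C_*(\Omega X;k)}(k)$ with the derived endomorphism algebra $\RHom_A(k,k)$, where $A=C_*(\Omega X;k)$ is regarded as a differential graded algebra via the Pontryagin product. To make this product strictly associative and unital it is convenient to model $\Omega X$ by the topological monoid of Moore loops based at a chosen point; since $X$ is path-connected one has $H_0(A)=k$, which produces the augmentation $A\to k$ used to view $k$ as an $A$-module. With this in place $\fEnd_{C_*(\Omega X;k)}(k)$ means, following the convention already used for $\fEnd_{kG}(ik)$ in the Observation above, the endomorphism differential graded algebra $\fEnd_A(P)$ of a suitable resolution $P\xrightarrow{\ \simeq\ }k$ of $k$ over $A$; it computes $\RHom_A(k,k)$ together with its algebra structure.

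First I would construct a geometric resolution. Let $PX$ be the Moore path space of paths issuing from the basepoint, with endpoint-evaluation $\mathrm{ev}\colon PX\to X$; this is a quasifibration with fibre $\Omega X$, and $PX$ is contractible. The monoid $\Omega X$ acts on $PX$ by precomposition, so $C_*(PX;k)$ is a differential graded $A$-module, and contractibility of $PX$ gives a quasi-isomorphism $C_*(PX;k)\xrightarrow{\ \simeq\ }k$ of $A$-modules. The key point is that $PX$ is a model for the total space $E\Omega X$ of the universal bundle: passing to singular chains and invoking the Eilenberg--Zilber theorem yields a quasi-isomorphism $B(A,A,k)\xrightarrow{\ \simeq\ }C_*(PX;k)$ of differential graded $A$-modules, where $B(A,A,k)$ is the (two-sided) algebraic bar construction, which is visibly semifree over $A$. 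Thus $C_*(PX;k)$ is a K-projective $A$-module resolution of $k$.

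Next comes the computation. Using this resolution, $\RHom_A(k,k)\simeq\fHom_A\big(B(A,A,k),k\big)$, with the differential graded algebra structure transported along the canonical quasi-isomorphism $\fEnd_A\big(B(A,A,k)\big)\to\fHom_A\big(B(A,A,k),k\big)$ induced by the augmentation $B(A,A,k)\to k$. Writing $B(A,A,k)=A\otimes B(k,A,k)$ with $A$ acting on the left tensor factor, one gets $\fHom_A\big(B(A,A,k),k\big)\cong\Hom_k\big(B(k,A,k),k\big)$, so everything reduces to identifying the reduced bar construction $B(k,A,k)=B\big(k,C_*(\Omega X;k),k\big)$, as a differential graded coalgebra, with $C_*(X;k)$. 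This is the Rothenberg--Steenrod input: the classifying space $B(\Omega X)=|B_\bullet(\ast,\Omega X,\ast)|$ is weakly equivalent to $X$ precisely because $X$ is path-connected (both are connected, and $\Omega B(\Omega X)\simeq\Omega X$ since $\Omega X$ is group-like), and the Eilenberg--Zilber theorem gives a comultiplicative quasi-isomorphism $C_*(B(\Omega X);k)\simeq B\big(k,C_*(\Omega X;k),k\big)$. Dualising the resulting coalgebra quasi-isomorphism $B(k,A,k)\simeq C_*(X;k)$ and checking that the induced product corresponds to the Alexander--Whitney cup product gives $\RHom_A(k,k)\simeq C^*(X;k)$ as differential graded algebras, which is the claim.

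The main obstacle is carrying out the chain-level comparison of the previous paragraph \emph{compatibly with all the multiplicative structure}. Two things need real care: establishing $B(\Omega X)\simeq X$ for a merely path-connected $X$ (handling the quasifibration $\Omega X\to PX\to X$ and working group-completion-free, which is exactly where path-connectedness is used), and verifying that applying singular chains to the simplicial space $B_\bullet(\ast,\Omega X,\ast)$ and comparing its realisation with the algebraic bar construction via Eilenberg--Zilber respects the coalgebra structures, so that after dualising the product on $\RHom_A(k,k)$ matches the cup product on $C^*(X;k)$. The remaining ingredients---semifreeness of the bar resolution over the field $k$, the identification $\fHom_A(B(A,A,k),k)\cong\Hom_k(B(k,A,k),k)$, and transport of the differential graded algebra structure along the canonical quasi-isomorphisms---are formal.
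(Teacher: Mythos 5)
The paper states this theorem with a \qed and supplies no proof, so there is no in-text argument to compare yours against; what you have written is the standard bar-construction proof (Rothenberg--Steenrod, in the differential graded form used in loop-space duality), and its overall structure is sound: pass to Moore loops to make $\Omega X$ a strict topological monoid, resolve $k$ over $A=C_*(\Omega X;k)$ by the two-sided bar construction $B(A,A,k)$ (compared with $C_*(PX;k)$), identify $\fHom_A\bigl(B(A,A,k),k\bigr)$ with the $k$-linear dual of the reduced bar construction $B(k,A,k)$, and identify the latter with $C_*(X;k)$ via $B(\Omega X)\simeq X$, which is exactly where path-connectedness is used (the Moore loop monoid is group-like because $\pi_0(\Omega X)=\pi_1(X)$ is a group). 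One genuine misstatement: for a merely path-connected $X$ one has $H_0\bigl(C_*(\Omega X;k)\bigr)\cong k[\pi_1(X,x_0)]$, not $k$; that would require $X$ simply connected. This does not damage the argument, since the augmentation $A\to k$ you need is just $C_*(\Omega X;k)\to C_*(\mathrm{pt};k)=k$ induced by the constant map, available for any space---but it should not be derived from the false claim $H_0(A)=k$. The two issues you flag as requiring real care are indeed where all the work lies: the equivalence $B(\Omega X)\simeq X$ for group-like monoids without simple connectivity, and the multiplicative compatibility of the Eilenberg--Zilber and Alexander--Whitney maps so that the convolution product dual to the bar coproduct matches the cup product; the latter is honestly only achieved through a zig-zag of differential graded algebra quasi-isomorphisms (or an $A_\infty$ argument), which is all the statement ``$\simeq$ of differential graded algebras'' demands.
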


In case $X=BG$, the connected components of $\Omega X$ are contractible, and we have $\Omega X \simeq G$. So the differential graded algebra  $C_*(\Omega X;k)$ is quasi-isomorphic to the group algebra $kG$. So the Rothenberg--Steenrod construction gives
\[ 
\fEnd_{C_*(\Omega X;k)}(k) \simeq \fEnd_{kG}(ik) \simeq C^*(BG;k). 
\]

\begin{theorem}
If $G$ is a finite $p$-group, there are equivalences of categories
\[ 
\KInj{kG} \simeq \sfD(\fEnd_{kG}(ik)) \simeq \sfD(C^*(BG;k)) \,.
\]
\end{theorem}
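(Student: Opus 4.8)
The plan is to chain together three equivalences, the first two of which have already been established in this lecture. The first equivalence
\[
\KInj{kG}\simeq\sfD(\fEnd_{kG}(ik))
\]
is an immediate application of Keller's Theorem above, once we know that $ik$ is a compact generator for $\KInj{kG}$. By Theorem~\ref{thm:kinj-generation} the injective resolutions of the simple $kG$-modules form a compact generating set, and when $G$ is a $p$-group the only simple module is $k$ (Proposition~\ref{prop:kg-local}), so $ik$ alone is a compact generator; this is exactly the content of the Observation above, so the first equivalence is free. The third equivalence
\[
\sfD(\fEnd_{kG}(ik))\simeq\sfD(C^*(BG;k))
\]
will follow from the Rothenberg--Steenrod construction: applied to $X=BG$ it gives a quasi-isomorphism of differential graded algebras $\fEnd_{C_*(\Omega X;k)}(k)\simeq C^*(BG;k)$, and since $\Omega BG\simeq G$ (the loop space of $BG$ has contractible components indexed by $G$), the dg algebra $C_*(\Omega BG;k)$ is quasi-isomorphic to $kG$, whence $\fEnd_{kG}(ik)\simeq C^*(BG;k)$ as dg algebras.

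So the middle step is really the only thing that needs an argument, and it is the general principle that quasi-isomorphic differential graded algebras have equivalent derived categories. First I would make precise that $\fEnd_{kG}(ik)$ and $C^*(BG;k)$ are connected by a zig-zag of dg algebra quasi-isomorphisms — strictly, the Rothenberg--Steenrod map and the comparison $C_*(\Omega BG;k)\simeq kG$ may each go through intermediate dg algebras, so we should say ``linked by a finite chain of quasi-isomorphisms''. Then I would invoke the standard fact that a quasi-isomorphism $A\to B$ of dg algebras induces an equivalence $\sfD(A)\xra{\sim}\sfD(B)$ of derived categories (restriction of scalars along $A\to B$ is exact, preserves coproducts, sends the free module $B$ to a module whose underlying complex has the homology of $B$, hence to $A$ up to quasi-isomorphism, and one checks it induces a bijection on the graded endomorphisms of the compact generator; then Monday's Exercise~23 applies). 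Composing equivalences along the zig-zag gives $\sfD(\fEnd_{kG}(ik))\simeq\sfD(C^*(BG;k))$.

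Concretely, the proof would read: By the Observation above, $ik$ is a compact generator of $\KInj{kG}$ and hence $\KInj{kG}\simeq\sfD(\fEnd_{kG}(ik))$ by Keller's Theorem. By the Rothenberg--Steenrod construction applied to $BG$, together with the identification $C_*(\Omega BG;k)\simeq kG$ coming from $\Omega BG\simeq G$, there is a chain of quasi-isomorphisms of differential graded algebras connecting $\fEnd_{kG}(ik)$ and $C^*(BG;k)$. Each such quasi-isomorphism induces an equivalence of derived categories (via Monday's Exercise~23, checking that the restriction functor is exact, coproduct-preserving, and an isomorphism on $\Hom^*$ of the canonical compact generator), so composing these yields $\sfD(\fEnd_{kG}(ik))\simeq\sfD(C^*(BG;k))$. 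Concatenating the two displayed equivalences gives the result.

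The main obstacle I anticipate is purely bookkeeping around the dg-algebra zig-zag: the Rothenberg--Steenrod quasi-isomorphism is most naturally a zig-zag rather than a single map, and one must be a little careful that all the intermediate objects are genuine dg algebras (not merely $A_\infty$-algebras) so that the derived-category machinery and Exercise~23 apply without modification, or else phrase the invariance of $\sfD(-)$ in a way that is robust to this. Since the paper treats $C^*(BG;k)$ as a dg algebra throughout and cites the Rothenberg--Steenrod construction as a dg-algebra statement, I would simply take that framework as given, so in the end this step is routine rather than genuinely difficult.
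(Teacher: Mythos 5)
Your proposal is correct and follows exactly the route the paper intends: the first equivalence is the Observation (Keller's Theorem plus the fact that $ik$ is a compact generator since $k$ is the only simple module for a $p$-group), and the second comes from the Rothenberg--Steenrod quasi-isomorphism $\fEnd_{kG}(ik)\simeq C^*(BG;k)$ together with the standard invariance of $\sfD(-)$ under quasi-isomorphisms of differential graded algebras. The paper states the theorem without a separate proof precisely because it is this concatenation; your extra care about the zig-zag and the application of Monday's Exercise~23 just makes explicit the routine step the paper leaves implicit.
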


For a more general finite group, the same argument shows that the right side is equivalent to the subcategory $\Loc(ik)$ of $\KInj{kG}$.

Under this equivalence the tensor product $-\otimes_k -$ with diagonal $G$-action on the left hand side corresponds to the derived $E_\infty$ tensor product on the right hand side. This is remarkable, for the latter
takes a great deal of topological machinery to develop, see for example Elmendorf, Kriz, Mandell, and May~\cite{Elmendorf/Kriz/Mandell/May:1996a}.

We have the following dictionary relating operations in $\KInj{kG}$ with operations in $\sfD(C^*(BG;k))$. Note that this dictionary turns things upside down, so that induction corresponds to restriction and
restriction corresponds to coinduction.\medskip

\begin{center}
\renewcommand{\arraystretch}{1.6}
\begin{tabular}{|c|c|}
\hline
$\KInj{kG}$ & $\sfD(C^*(BG;k))$ \\ \hline\hline
$- \otimes_k -$ & $- \lotimes_{C^*(BG;k)} -$ \\
diagonal $G$-action & $E_\infty$ tensor product \\ \hline
induction, $-_H{\uparrow^G}$ & restriction \\
$kG\otimes_{kH}-$ & via $C^*(BG;k)\to C^*(BH;k)$ \\ \hline
restriction, $-_G{\downarrow_H}$ & coinduction \\
via $kH\to kG$ & $\Hom_{C^*(BG;k)}(C^*(BH;k),-)$ \\ \hline
$ik$ & $C^*(BG;k)$ \\ \hline
$kG$ & $k$ \\ \hline
\end{tabular}
\end{center}

\section{Exercises}
\label{exer:Wednesday}
Take a hike and eat a cake.

\chapter{Thursday}
\thispagestyle{empty} In the last chapter we introduced a notion of
support for triangulated categories. The starting point in this one is
a notion that we call `stratification' for an $R$-linear triangulated
category $\sfT$. It identifies conditions under which support can be
used to parameterise localising subcategories of $\sfT$. The crux of
the stratification condition is local in nature, in that it involves
only the subcategories $\gam_{\fp}\sfT$, so can be, and usually is,
verified one prime at a time. We illustrate this technique by
outlining the proof of the main results of this seminar; all this is
part of Section~\ref{sec:Thursday1}. At first glance, the
stratification condition is rather technical and of limited scope. To
counter this, in Section~\ref{sec:Thursday2} we discuss a number of
interesting consequences that follow from this property. The last
section has a different flavour: it makes concrete some of the ideas
and constructions we have been discussing by describing them in the
case of the Klein four group.

\section{Stratifying triangulated categories}
\label{sec:Thursday1}
In this lecture we identify under what conditions the notion of support classifies localising subcategories of
a triangulated category. Most of the material in this lecture is based on \cite{Benson/Iyengar/Krause:bik2}.

\subsection{Classifying localising subcategories}\index{localising subcategory}
Recall the setting from Wednesday's lectures: $\sfT$ is an $R$-linear triangulated category, meaning that $\sfT$ is a compactly generated triangulated category with small coproducts and $R$ is a noetherian graded commutative ring with a given homomorphism $R \to Z^{*}(\sfT)$ to the graded centre of $\sfT$. This amounts to giving for each object  $X$ in $\sfT$ a homomorphism of graded rings 
\[ 
R\to\End^*_\sfT(X), 
\] 
and these homomorphisms satisfy the following compatibility condition. Given a morphism $\phi\colon X\to Y$ in $\sfT$, the diagram
\[ 
\xymatrixcolsep{4pc}
\xymatrix{R\ar[r]\ar[d]& \End^*_\sfT(X)\ar[d]^{\Hom^{*}_{\sfT}(X,\phi)} \\ 
\End^*_\sfT(Y) \ar[r]^{\Hom^{*}_{\sfT}(\phi,Y)} & \Hom^{*}_{\sfT}(X,Y) } 
\]
commutes, up to the expected sign. We set
\[ 
\Spec R =\{\text{homogeneous prime ideals of $R$}\}. 
\]
For each $\fp\in\Spec R$ there is a \emph{local cohomology functor}\index{local cohomology!functor} which is an exact functor $\gam_\fp \colon\sfT\to\sfT$. For each object $X$ in $\sfT$ there is a \emph{support}\index{support} 
\[ 
\supp_RX = \{\fp \in\Spec R\mid \gam_\fp X \ne 0\}. 
\]
The support of $\sfT$ is the set
\[
\supp_R\sfT = \{\fp \in\Spec R \mid \gam_\fp \sfT\ne 0\}\,.
\]
Our goal in this lecture is to examine under what conditions this notion of support classifies localising subcategories. We have a map
\[ 
\{\text{localising subcategories of $\sfT$}\} 
\xrightarrow{\sigma}
\{\text{subsets of $\supp_R T$}\}  
\] 
given by
\[ 
\sfS \mapsto \bigcup_{X\in\sfS}\supp_R X. 
\]
When this map is a bijection we say that support \emph{classifies} localising subcategories.

\begin{remark}
If $\sigma$ is a bijection then the inverse map
\[ \{\text{localising subcategories of $\sfT$}\} \xleftarrow{\tau}
\{\text{subsets of $\supp_R\sfT$}\}  \] 
sends a subset $\mcV$ of 
$\supp_R\sfT$ to the localising subcategory consisting of the
modules whose support is contained in $\mcV$.
\end{remark}

\begin{proposition}
\label{pr:strat}
If $\sigma$ is a bijection, there are two consequences:
\begin{enumerate}
\item The \emph{local-global principle:}\index{local-global principle} for each object $X$, the localising
subcategory generated by $X$ is the same as that generated by
\[ 
\{\gam_\fp X \mid \fp \in \Spec R\}. 
\]
\item The \emph{minimality condition:}\index{minimality condition} for $\fp \in\supp_R\sfT$, the subcategory $\gam_\fp \sfT$ of objects supported at $\fp$ is a \emph{minimal} localising subcategory of $\sfT$.
\end{enumerate}
\end{proposition}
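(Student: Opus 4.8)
The plan is to derive each of the two consequences directly from the bijectivity of $\sigma$, using the axiomatic properties of support recorded in Theorem~\ref{thm:axioms} (Cohomology, Orthogonality, Exactness, Separation) together with Theorem~\ref{thm:loc-supp} on the behaviour of support under the functors $\gam_{\mcV}$ and $L_{\mcV}$.

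For the \emph{local-global principle}, I would argue as follows. Fix an object $X$, let $\sfS = \Loc_{\sfT}(X)$ and let $\sfS' = \Loc_{\sfT}(\{\gam_{\fp}X \mid \fp \in \Spec R\})$. Since $\gam_{\fp}$ is an exact functor and $\gam_{\fp}X$ is, up to the localisation triangle, built from $X$ by applying $L_{\mcZ(\fp)}$ and $\gam_{\mcV(\fp)}$ — each of which preserves the localising subcategory generated by $X$ — every $\gam_{\fp}X$ lies in $\sfS$, so $\sfS' \subseteq \sfS$. For the reverse inclusion it suffices, since $\sigma$ is injective, to show $\supp_R X \subseteq \supp_R \sfS'$, and in fact $\supp_R X = \bigcup_{\fp}\supp_R(\gam_{\fp}X)$: indeed for $\fq \in \Spec R$ one has $\gam_{\fq}\gam_{\fp}X = 0$ unless $\fq = \fp$ (because $\gam_{\fp}X$ has support contained in $\{\fp\}$ by the construction of $\gam_{\fp}$ together with Theorem~\ref{thm:loc-supp}), while $\gam_{\fp}\gam_{\fp}X \cong \gam_{\fp}X$; hence $\supp_R(\gam_{\fp}X) \subseteq \{\fp\}$ with equality precisely when $\gam_{\fp}X \neq 0$, i.e.\ when $\fp \in \supp_R X$. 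Therefore $\supp_R X = \supp_R \sfS'$, and injectivity of $\sigma$ gives $\sfS \subseteq \sfS'$.

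For the \emph{minimality condition}, fix $\fp \in \supp_R \sfT$ and let $\sfS$ be a non-zero localising subcategory contained in $\gam_{\fp}\sfT$. I must show $\sfS = \gam_{\fp}\sfT$. Every object $Y \in \gam_{\fp}\sfT$ has $\supp_R Y \subseteq \{\fp\}$, and $\supp_R Y = \{\fp\}$ as soon as $Y \neq 0$, by Theorem~\ref{thm:cohom-supp} (or Corollary~\ref{cor:plocptor}). Hence $\sigma(\sfS) \subseteq \{\fp\}$, and since $\sfS \neq 0$ we get $\sigma(\sfS) = \{\fp\}$; likewise $\sigma(\gam_{\fp}\sfT) = \{\fp\}$. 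Because $\sigma$ is injective, $\sfS = \gam_{\fp}\sfT$, which is what minimality asserts (one should also note $\gam_{\fp}\sfT$ is indeed a localising subcategory, which is part of the setup).

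I do not expect a serious obstacle here; the content of the proposition is essentially that bijectivity of $\sigma$ forces the two stated structural features, and both follow by the same mechanism — compute supports and invoke injectivity of $\sigma$. The one point needing a little care is the computation $\supp_R(\gam_{\fp}X) \subseteq \{\fp\}$, which rests on unwinding the definition $\gam_{\fp} = L_{\mcZ(\fp)}\gam_{\mcV(\fp)}$ and applying Theorem~\ref{thm:loc-supp} twice (first intersecting with $\mcV(\fp)$, then with $\Spec R \setminus \mcZ(\fp) = \{\fq \mid \fq \subseteq \fp\}$), whose intersection with $\mcV(\fp)$ is exactly $\{\fp\}$; this is the only step where the precise bookkeeping with specialisation-closed sets matters.
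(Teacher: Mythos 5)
The paper itself states Proposition~\ref{pr:strat} without proof (the argument is deferred to \cite{Benson/Iyengar/Krause:bik2}), so there is no in-text proof to compare with; judged on its own terms, your overall strategy---compute supports and invoke injectivity of $\sigma$---is the right one, and your argument for the minimality condition (2) is correct as written: every non-zero object of $\gam_\fp\sfT$ has support exactly $\{\fp\}$ by Corollary~\ref{cor:plocptor}, so any non-zero localising subcategory $\sfS\subseteq\gam_\fp\sfT$ has $\sigma(\sfS)=\{\fp\}=\sigma(\gam_\fp\sfT)$, and injectivity finishes it.

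In part (1), however, one step is asserted without justification and is genuinely delicate: the claim that $L_{\mcZ(\fp)}$ preserves $\Loc_{\sfT}(X)$, hence that $\gam_\fp X\in\Loc_{\sfT}(X)$. For a Zariski \emph{closed} set $\mcV(\fa)$ this follows from Proposition~\ref{prop:locandkos}, since $\gam_{\mcV(\fa)}X$ is a homotopy colimit of Koszul objects $\kos X{\fa^{n}}$ lying in $\Thick_{\sfT}(X)$; but $\mcZ(\fp)$ is only specialisation closed, not of the form $\mcV(\fa)$, and no such construction of $L_{\mcZ(\fp)}X$ inside $\Loc_{\sfT}(X)$ is available in these notes. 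Absent a tensor structure or a bound on $\dim\Spec R$, containments of this kind are of the same nature as the local-global principle you are trying to deduce, so they cannot be invoked for free. Fortunately your own support computation makes the step unnecessary: since each $\gam_\fq$ is exact and preserves coproducts, for any class of objects $\sfC$ the objects with support contained in $\bigcup_{C\in\sfC}\supp_RC$ form a localising subcategory containing $\sfC$, whence $\supp_R\Loc_{\sfT}(\sfC)=\bigcup_{C\in\sfC}\supp_RC$. Applying this to both $\sfS=\Loc_{\sfT}(X)$ and $\sfS'=\Loc_{\sfT}(\gam_\fp X\mid\fp\in\Spec R)$, and using $\supp_R(\gam_\fp X)=\{\fp\}\cap\supp_RX$ (Theorem~\ref{thm:loc-supp} and Corollary~\ref{cor:plocptor}), gives $\sigma(\sfS)=\supp_RX=\sigma(\sfS')$, and injectivity of $\sigma$ yields $\sfS=\sfS'$ outright---both inclusions at once. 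This also repairs a small logical slip in your phrase ``it suffices, since $\sigma$ is injective, to show $\supp_RX\subseteq\supp_R\sfS'$'': injectivity converts \emph{equality} of supports into equality of subcategories, not containment into containment; if you only had a containment of supports you would instead compare $\sfS'$ with $\Loc_{\sfT}(\sfS\cup\sfS')$.
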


\begin{definition}
We say that $\sfT$ is \emph{stratified}\index{stratified} by the action of $R$ if the two conditions listed in
Proposition \ref{pr:strat} hold.
\end{definition}

We should note right away that the minimality is the critical condition; we know of no examples where the local-global principle fails; see Section~\ref{ssec:lgprinciple}.

One of the crucial observations of \cite{Benson/Iyengar/Krause:bik2} is the converse to Proposition \ref{pr:strat}: if these two conditions hold then the map is a bijection. The map $\sigma\tau$ is clearly the identity. To see that $\tau\sigma$ is the identity, we use both the local-global principle and the minimality condition. We'll prove a more general statement in Theorem \ref{th:lg-classifies}, later in this lecture.

\subsection{Minimality}

It is useful to have a criterion for checking the minimality condition.
The following test is Lemma 4.1 of \cite{Benson/Iyengar/Krause:bik2}.

\begin{lemma}
\label{le:minimality}
Assume that $\sfT$ is compactly generated.  A non-zero localising subcategory
$\sfS$ of $\sfT$ is minimal if and only if for every pair of non-zero objects
$X$ and $Y$ in $\sfS$ we have $\Hom_\sfT^*(X,Y)\ne 0$.
\end{lemma}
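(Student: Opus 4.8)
The plan is to prove both directions directly from the definition of a minimal localising subcategory (a non-zero localising subcategory with no proper non-zero localising subcategories).

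First, the easy direction: suppose $\sfS$ is minimal and $X,Y$ are non-zero objects of $\sfS$. Consider $\Loc_{\sfT}(X)$, the localising subcategory generated by $X$. It is non-zero (it contains $X$) and contained in $\sfS$, so by minimality $\Loc_{\sfT}(X)=\sfS$, and in particular $Y\in\Loc_{\sfT}(X)$. Now I would argue that if $\Hom^{*}_{\sfT}(X,Y)=0$, then $Y$ lies in the full subcategory $\sfS'=\{Z\in\sfT\mid \Hom^{*}_{\sfT}(X,Z)=0\}$. This $\sfS'$ is a localising subcategory: it is closed under suspensions (obvious), under exact triangles (because $\Hom^{*}_{\sfT}(X,-)$ is cohomological, so the long exact sequence forces the third term to vanish if two do), and under arbitrary coproducts (because $X$ is compact, so $\Hom_{\sfT}(X,-)$ commutes with coproducts, hence so does $\Hom^{*}_{\sfT}(X,-)$). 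Since $\sfS'$ contains $X$ only if $\Hom^{*}_{\sfT}(X,X)=0$, which is false as $X\neq 0$; rather, the point is that $\Loc_{\sfT}(X)\not\subseteq\sfS'$ — indeed $X\notin\sfS'$. But $Y\in\Loc_{\sfT}(X)$, so we cannot conclude $Y\in\sfS'$ directly. The correct argument: the subcategory $\sfS''=\{Z\in\sfS\mid \Hom^{*}_{\sfT}(X,Z)=0\}$ is a localising subcategory of $\sfS$ that does not contain $X$, hence is a proper subcategory; by minimality $\sfS''=0$, so $\Hom^{*}_{\sfT}(X,Y)=0$ forces $Y=0$, a contradiction. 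Hence $\Hom^{*}_{\sfT}(X,Y)\neq 0$.

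For the converse, suppose that $\Hom^{*}_{\sfT}(X,Y)\neq 0$ for every pair of non-zero $X,Y\in\sfS$; I must show $\sfS$ is minimal. Let $\sfS_{0}$ be a non-zero localising subcategory of $\sfS$, and pick a non-zero object $Y\in\sfS$; I want $Y\in\sfS_{0}$. Pick a non-zero $X\in\sfS_{0}$. Since $X$ is compact in $\sfT$, the subcategory $\Loc_{\sfT}(X)$ admits a right adjoint $\gam$ to its inclusion into $\sfT$, by Corollary~\ref{cor:right-adjoint-exist} applied to the compactly generated category $\Loc_{\sfT}(X)$; this is exactly the device used in the proof of Proposition~\ref{pr:compact-generation}. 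Form the exact triangle $\gam Y\to Y\to Y'\to$. Then $\Hom^{*}_{\sfT}(V,Y')=0$ for all $V\in\Loc_{\sfT}(X)$; in particular $\Hom^{*}_{\sfT}(X,Y')=0$. Now $\gam Y\in\Loc_{\sfT}(X)\subseteq\sfS_{0}\subseteq\sfS$ and $Y'\in\sfS$ (it is the cone of a morphism between objects of the localising — hence triangulated — subcategory $\sfS$). By hypothesis applied to the pair $(X,Y')$, if $Y'\neq 0$ then $\Hom^{*}_{\sfT}(X,Y')\neq 0$, a contradiction; so $Y'=0$ and $Y\cong\gam Y\in\Loc_{\sfT}(X)\subseteq\sfS_{0}$. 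Thus $\sfS_{0}=\sfS$, proving minimality.

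The main obstacle is making sure the adjoint/localisation-triangle machinery is correctly invoked in the converse direction — specifically, that $\Loc_{\sfT}(X)$ is compactly generated (it is, being generated by the single compact object $X$) so that Brown representability supplies the right adjoint $\gam$, and that the cone $Y'$ genuinely lies in $\sfS$. Both are routine given the results already assembled in Chapter~\ref{ch:Monday} and Section~\ref{sec:Tuesday2}, so the proof is short; the content is essentially repackaging the compact-generation criterion of Proposition~\ref{pr:compact-generation}.
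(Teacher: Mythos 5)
Your overall strategy matches the paper's, but both halves of your argument lean on the assumption that the object $X$ is compact, and this is not available: $X$ is an arbitrary non-zero object of the localising subcategory $\sfS$, and such objects are typically not compact. In the forward direction you need $\sfS''=\{Z\in\sfS\mid\Hom^{*}_{\sfT}(X,Z)=0\}$ to be localising, i.e.\ closed under coproducts; for non-compact $X$ the functor $\Hom^{*}_{\sfT}(X,-)$ need not commute with coproducts, so this closure fails in general. The repair is to switch variance: the subcategory $\{Z\in\sfT\mid\Hom^{*}_{\sfT}(Z,Y)=0\}$ is always localising, since $\Hom_{\sfT}(-,Y)$ turns coproducts into products. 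If $\Hom^{*}_{\sfT}(X,Y)=0$ this subcategory contains $X$, hence contains $\Loc_{\sfT}(X)$, which equals $\sfS$ by minimality and so contains $Y$; thus $\Hom^{*}_{\sfT}(Y,Y)=0$ and $Y=0$, a contradiction. This is how the paper argues.

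In the converse you assert that $\Loc_{\sfT}(X)$ is compactly generated ``by the single compact object $X$'' in order to invoke Brown representability (Corollary~\ref{cor:right-adjoint-exist}) and obtain the right adjoint $\gam$. Again $X$ need not be compact, so this justification does not apply. The existence of a localisation functor for a localising subcategory generated by an arbitrary set of objects of a compactly generated category is true, but it is a genuinely stronger statement (it rests on well-generatedness and Brown representability for well-generated categories); the paper simply cites \cite[Lemma~2.1]{Benson/Iyengar/Krause:bik2} at this point. Once that input is supplied, the rest of your converse --- the triangle $\gam Y\to Y\to Y'\to$, the vanishing $\Hom^{*}_{\sfT}(X,Y')=0$ forcing $Y'=0$, hence $Y\in\Loc_{\sfT}(X)\subseteq\sfS_{0}$ --- is exactly the paper's argument.
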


\begin{proof}
If $S$ is minimal then $Y\in\Loc_\sfT(X)$ so if $\Hom^*_\sfT(X,Y)=0$
it would follow that $\Hom^*_\sfT(Y,Y)=0$ and so $Y=0$.

Conversely if $\sfS$ has a proper non-zero localising subcategory $\sfS'$,
we may assume $\sfS'=\Loc_\sfT(X)$ for some $X\ne 0$. 
Since $\sfT$ is
compactly generated, there is a localisation functor for $\sfS'$ 
(Lemma 2.1 of \cite{Benson/Iyengar/Krause:bik2}). 
So if $W\in\sfS\setminus\sfS'$, we have a triangle $W'\to W \to W''\to$
with $W'\in \sfS'$ (so $W''\ne 0$) and $\Hom_\sfT(X,W'')=0$. 
\end{proof}

What happens if just the local-global principle holds, without 
assuming that the minimality condition holds?
In this case, we have to take into account the localising subcategories
of $\gam_\fp \sfT$.

\begin{theorem}
\label{th:lg-classifies}
Set $\mcV=\supp_R\sfT$. When the local-global principle holds, there are inclusion preserving bijections
\[
\left\{
\begin{gathered}
\text{Localizing}\\ \text{subcategories of $\sfT$}
\end{gathered}\; \right\} 
\xymatrix@C=3pc {\ar@<1ex>[r]^-{{\sigma}} & \ar@<1ex>[l]^-{{\tau}}}
\left\{
\begin{gathered}
  \text{Families $(\sfS(\fp))_{\fp\in\mcV}$ with $\sfS(\fp)$ a}\\
  \text{localizing subcategory of $\gam_{\fp}\sfT$}
\end{gathered}\;
\right\}
\] 
defined by ${\sigma}(\sfS)=(\sfS\cap\gam_{\fp}\sfT)_{\fp\in\mcV}$ and
${\tau}(\sfS(\fp))_{\fp\in\mcV}=\Loc_{\sfT}\big(\sfS(\fp)\mid \fp\in \mcV\big)$.
\end{theorem}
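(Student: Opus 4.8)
The goal is to show that $\sigma$ and $\tau$ are mutually inverse bijections between localising subcategories of $\sfT$ and families $(\sfS(\fp))_{\fp\in\mcV}$ of localising subcategories $\sfS(\fp)\subseteq\gam_{\fp}\sfT$. Both maps are evidently inclusion preserving, so it suffices to verify $\sigma\tau=\id$ and $\tau\sigma=\id$. The plan is to treat these two composites separately, with the local-global principle doing the heavy lifting in both directions. Throughout I will use the fact, established in Wednesday's lectures, that $\gam_{\fp}$ is an exact functor commuting with coproducts, that $\gam_{\fp}X\cong X$ when $X\in\gam_{\fp}\sfT$, and that $\gam_{\fq}\gam_{\fp}\sfT=0$ for $\fq\neq\fp$ (this last being a consequence of Theorem~\ref{thm:loc-supp} applied to the description of $\gam_{\fp}\sfT$ via $\supp_{R}X=\{\fp\}$ in Corollary~\ref{cor:plocptor}).

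\textbf{Step 1: $\sigma\tau=\id$.} Fix a family $(\sfS(\fp))_{\fp\in\mcV}$ and set $\sfS=\tau((\sfS(\fp)))=\Loc_{\sfT}(\sfS(\fp)\mid\fp\in\mcV)$. I must show $\sfS\cap\gam_{\fq}\sfT=\sfS(\fq)$ for each $\fq\in\mcV$. The inclusion $\supseteq$ is clear since $\sfS(\fq)\subseteq\gam_{\fq}\sfT$ and $\sfS(\fq)\subseteq\sfS$ by construction. For $\subseteq$: apply the exact, coproduct-preserving functor $\gam_{\fq}$ to $\sfS$. Since $\gam_{\fq}$ kills $\sfS(\fp)$ for $\fp\neq\fq$ and fixes $\sfS(\fq)$, one gets $\gam_{\fq}\sfS\subseteq\Loc_{\sfT}(\sfS(\fq))=\sfS(\fq)$ (here I use that $\gam_{\fq}$ applied to a localising subcategory generated by a class $\mcC$ lands in the localising subcategory generated by $\gam_{\fq}\mcC$, because the preimage under $\gam_{\fq}$ of a localising subcategory is localising). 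Now if $X\in\sfS\cap\gam_{\fq}\sfT$ then $X\cong\gam_{\fq}X\in\gam_{\fq}\sfS\subseteq\sfS(\fq)$, as desired.

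\textbf{Step 2: $\tau\sigma=\id$.} Fix a localising subcategory $\sfS$ and set $\sfS(\fp)=\sfS\cap\gam_{\fp}\sfT$. Since $\sfS(\fp)\subseteq\sfS$ for all $\fp$, we get $\tau\sigma(\sfS)=\Loc_{\sfT}(\sfS(\fp)\mid\fp\in\mcV)\subseteq\sfS$. For the reverse inclusion, take $X\in\sfS$. By the local-global principle, $\Loc_{\sfT}(X)=\Loc_{\sfT}(\gam_{\fp}X\mid\fp\in\Spec R)$, and $\gam_{\fp}X=0$ unless $\fp\in\supp_{R}X\subseteq\mcV$. So it suffices to show each $\gam_{\fp}X$ lies in $\sfS(\fp)=\sfS\cap\gam_{\fp}\sfT$. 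Certainly $\gam_{\fp}X\in\gam_{\fp}\sfT$. The point is that $\gam_{\fp}X\in\sfS$: this follows because $\gam_{\fp}X$ sits in the localisation triangle obtained by iterating the triangles~\eqref{eq:locseq} for the specialisation closed sets $\mcV(\fp)$ and $\mcZ(\fp)$ starting from $X\in\sfS$, and $\sfS$, being triangulated and closed under coproducts, is closed under the functors $\gam_{\mcV}$ and $L_{\mcV}$ for specialisation closed $\mcV$ (each such functor is built from $X$ using triangles and coproducts — more precisely $\gam_{\mcV}X\in\Loc_{\sfT}(\kos C{\fa}\otimes\text{-type objects built from }X)$, or one may simply invoke that $\sfS$ localising implies $\gam_{\mcV}X,L_{\mcV}X\in\sfS$ whenever $X\in\sfS$, a standard fact recorded in \cite{Benson/Iyengar/Krause:bik2}). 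Hence $\gam_{\fp}X\in\sfS(\fp)$ and therefore $X\in\Loc_{\sfT}(\gam_{\fp}X\mid\fp)\subseteq\tau\sigma(\sfS)$.

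\textbf{Main obstacle.} The delicate point is Step 2, specifically the claim that a localising subcategory $\sfS$ is automatically closed under the local cohomology functors $\gam_{\mcV}$ and their localisation companions $L_{\mcV}$, and hence contains $\gam_{\fp}X$ for every $X\in\sfS$. This is where one genuinely needs that $\gam_{\mcV}X$ can be constructed functorially from $X$ by a transfinite process using only mapping cones, coproducts, and the compact Koszul objects $\kos C{\fa}$ — objects which, while not in $\sfS$ in general, enter only through triangles with vertices built from $X$. I would isolate this as a lemma (or quote it directly from \cite{Benson/Iyengar/Krause:bik2}), since verifying it from scratch requires unwinding the construction of $L_{\mcV}$ via the Bousfield localisation with respect to $\sfT_{\mcV}$ and checking that the acyclisation triangle for an object of $\sfS$ stays within $\sfS$. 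Once this closure property is in hand, the rest of the argument is a formal manipulation of the local-global principle.
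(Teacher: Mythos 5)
Your proof is correct, and your Step 1 is essentially the paper's argument. In Step 2 you and the paper diverge only at the one point that matters: why $\gam_{\fp}X$ lies in $\sfS$ when $X\in\sfS$. You route this through the closure of an arbitrary localising subcategory under the functors $\gam_{\mcV}$ and $L_{\mcV}$, flag it as the main obstacle, and propose to import it from \cite{Benson/Iyengar/Krause:bik2}. The paper instead extracts it directly from the hypothesis: the local-global principle says $\Loc_{\sfT}(X)=\Loc_{\sfT}(\gam_{\fq}X\mid \fq\in\Spec R)$, and the right-hand side visibly contains each $\gam_{\fp}X$; hence $\gam_{\fp}X\in\Loc_{\sfT}(X)\subseteq\sfS$ with no auxiliary lemma, and the obstacle you isolate dissolves under the theorem's own hypothesis. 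Your route is not wrong: under the local-global principle the closure property does hold, since the restated form of (S1) in Section~\ref{sec:Thursday2} gives $\gam_{\mcV}X\in\Loc_{\sfT}(\gam_{\fp}X\mid\fp\in\supp_{R}X)$ and each $\gam_{\fp}X$ lies in $\Loc_{\sfT}(X)$ by the observation just made. Note, however, that your suggested unconditional justification via Koszul objects only covers closed sets, where Proposition~\ref{prop:locandkos} and $\kos X{\fa}\in\Thick_{\sfT}(X)$ give $\gam_{\mcV(\fa)}X\in\Loc_{\sfT}(X)$; the set $\mcZ(\fp)$ occurring in $\gam_{\fp}=L_{\mcZ(\fp)}\gam_{\mcV(\fp)}$ is specialisation closed but not closed, so on that route you genuinely need the external input you mention. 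The direct appeal to the local-global principle is both shorter and self-contained, and is what the paper does.
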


\begin{proof}
We first claim that $\sigma\tau$ is the identity. Set $\sfS=\Loc(\sfS(\fp)\mid\fp \in\mcV)$. Since  $\gam_\fp \gam_\fq=0$ for $\fq\ne\fp$, we have $\gam_\fp \sfS=\sfS(\fp)$. Since $\gam_\fp \gam_\fp=\gam_\fp$ we therefore have 
$\sfS\cap\gam_\fp \sfT\subseteq\gam_\fp \sfS=\sfS(\fp)$. The reverse inclusion is obvious.

Next we claim that $\tau\sigma$ is the identity. Let $\sfS$ be a localising subcategory of $\sfT$. We have
$\tau\sigma(\sfS)=\Loc_{\sfT}(\sfS\cap\gam_\fp \sfT\mid\fp \in\mcV)\subseteq \sfS$. So letting $X$ be in $\sfS$, we must prove that $X$ is in $\tau\sigma(S)$. Using the local-global principle, we have 
\[
\gam_\fp X\in \sfS\cap\gam_\fp \sfT\subseteq \tau\sigma(S)\,,
\]
and so $X\in\Loc_{\sfT}(\gam_\fp X\mid\fp\in \supp_{R}X)\subseteq\tau\sigma(S)$.
\end{proof}

\subsection{When does the local-global principle hold?}
\label{ssec:lgprinciple}

Turning now to the local-global principle, one has the following:

\begin{theorem}
Let $\sfT$ be compactly generated and $X$ an object in $\sfT$ satisfying $\dim\supp_R X < \infty$. Then $\Loc_\sfT(X)=\Loc_\sfT\{\gam_\fp X\mid \fp \in\supp_R X\}$. \qed
\end{theorem}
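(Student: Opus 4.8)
The plan is to prove the statement by induction on $d = \dim \supp_R X$, using the separation/localisation triangles attached to specialisation closed subsets to peel off one dimension at a time. The base case $d = 0$ (and the degenerate case $\supp_R X = \varnothing$, where $X = 0$ and there is nothing to prove) should already contain the essential idea, so I would isolate it first: when $\supp_R X$ is a finite set of pairwise incomparable primes $\{\fp_1, \dots, \fp_n\}$, I want to show $\Loc_\sfT(X) = \Loc_\sfT(\gam_{\fp_1} X, \dots, \gam_{\fp_n} X)$. For each $i$, the set $\mcV_i = \mcV(\fp_i)$ is specialisation closed, and since the $\fp_j$ are incomparable, $\mcV_i \cap \supp_R X = \{\fp_i\}$; by Theorem~\ref{thm:loc-supp} and Theorem~\ref{thm:vw} (or directly Corollary~\ref{cor:plocptor}) the object $\gam_{\mcV_i} X$ has support exactly $\{\fp_i\}$ and is isomorphic to $\gam_{\fp_i} X$. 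The complementary piece $L_{\mcV_i} X$ has support $\supp_R X \setminus \{\fp_i\}$. Iterating the localisation triangle $\gam_{\mcV_i} X \to X \to L_{\mcV_i} X \to$ across $i = 1, \dots, n$ exhibits $X$ as finitely built from the $\gam_{\fp_i} X$, hence $X \in \Loc_\sfT(\gam_{\fp_i} X \mid i)$; the reverse inclusion is immediate since each $\gam_{\fp_i} X = \gam_{\mcV_i} X$ is built from $X$.

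For the inductive step, suppose $\dim \supp_R X = d \geq 1$ and the result holds for all objects of strictly smaller-dimensional support. Let $\mcV$ be the set of primes in $\supp_R X$ that are \emph{not} minimal in $\supp_R X$ — equivalently, take $\mcV = \cl(\supp_R X) \setminus (\text{the minimal primes of } \supp_R X)$; since $\supp_R X$ has finite dimension, $\mcV$ is specialisation closed and $\dim \mcV \le d - 1$, while $\supp_R X \setminus \mcV$ is the (finite, by noetherianity plus finite dimension — one should check this) set of minimal primes of $\supp_R X$. Apply the localisation triangle
\[
\gam_{\mcV} X \lto X \lto L_{\mcV} X \lto\,.
\]
By Theorem~\ref{thm:loc-supp}, $\supp_R \gam_{\mcV} X = \supp_R X \cap \mcV$ has dimension $\le d-1$, so by the inductive hypothesis $\Loc_\sfT(\gam_{\mcV} X) = \Loc_\sfT(\gam_\fp \gam_{\mcV} X \mid \fp)$, and $\gam_\fp \gam_{\mcV} X \cong \gam_\fp X$ for $\fp \in \mcV$ (Lemma~\ref{lem:loc-commute}), zero otherwise. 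Meanwhile $\supp_R L_{\mcV} X = \supp_R X \setminus \mcV$ is a finite set of incomparable primes, so the base-case argument gives $\Loc_\sfT(L_{\mcV} X) = \Loc_\sfT(\gam_\fp L_{\mcV} X \mid \fp \in \supp_R X \setminus \mcV)$, and again $\gam_\fp L_{\mcV} X \cong \gam_\fp X$ there by Lemma~\ref{lem:loc-commute}. The triangle then shows $X \in \Loc_\sfT(\gam_{\mcV} X, L_{\mcV} X) \subseteq \Loc_\sfT(\gam_\fp X \mid \fp \in \supp_R X)$, and conversely each $\gam_\fp X$ lies in $\Loc_\sfT(X)$ since $\gam_\fp$ is built from localisation functors applied to $X$. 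This closes the induction.

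The step I expect to be the main obstacle is the finiteness/exactness bookkeeping: making sure that the decomposition of $\supp_R X$ into a specialisation closed "lower" part and a finite antichain of minimal primes is legitimate (this uses that $R$ is noetherian so that descending chains of primes inside a finite-dimensional set stabilise, hence minimal primes exist and, within a set of bounded dimension, one can stratify by dimension into finitely many layers), and that the relevant $\gam_{\mcV}$, $L_{\mcV}$ genuinely have the claimed supports — all of which is supplied by Theorem~\ref{thm:loc-supp}, Lemma~\ref{lem:loc-commute}, and Corollary~\ref{cor:plocptor}, but needs to be assembled carefully. A cleaner alternative, which I would mention as a remark, is to run the induction directly on a filtration $\supp_R X = \mcU_0 \supseteq \mcU_1 \supseteq \cdots \supseteq \mcU_d \supseteq \mcU_{d+1} = \varnothing$ by specialisation closed subsets with $\mcU_j \setminus \mcU_{j+1}$ consisting of the primes of coheight $j$ in $\supp_R X$, peeling off one layer $\gam_{\mcU_j} / \gam_{\mcU_{j+1}}$ at a time; each successive quotient has support a finite antichain, so the base case applies verbatim and the triangles $\gam_{\mcU_{j+1}} X \to \gam_{\mcU_j} X \to (\text{something supported on the antichain}) \to$ glue everything together. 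Either way the only genuinely new input beyond the cited results is the elementary observation that a finite-dimensional subset of $\Spec R$ breaks into finitely many "antichain layers".
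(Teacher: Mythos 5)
Your overall strategy---induction on $\dim\supp_R X$, splitting off the minimal primes of the support with a localisation triangle and treating the resulting ``antichain layer'' as the base case---is exactly the skeleton of the argument in \cite{Benson/Iyengar/Krause:bik2}, to which the paper defers (no proof is given in the text). But there is a genuine gap at precisely the point you flag and then wave away: the set of minimal primes of $\supp_R X$, and more generally each antichain layer, need \emph{not} be finite. Noetherianity of $R$ guarantees finitely many minimal primes only for \emph{closed} subsets of $\Spec R$, whereas $\supp_R X$ is an arbitrary subset. For example, the set of height-one primes of $k[x,y]$ is an infinite antichain of dimension zero, and it is realised as $\supp_R X$ for suitable $X$ (say $X=\bigoplus_{\fp}\kos R{\fp}$ in $\sfD(R)$). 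Your base case is a finite iteration of localisation triangles, one prime at a time, so it simply does not apply to an infinite antichain, and the closing claim that ``a finite-dimensional subset of $\Spec R$ breaks into finitely many antichain layers'' is false: the layers exist, but they can each be infinite.

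The repair---and this is how the cited proof handles the layer---is to treat all the incomparable primes simultaneously via Koszul objects rather than one at a time via triangles. If $\supp_R Y$ is an antichain and $\mcV=\bigcup_{\fp\in\supp_RY}\mcV(\fp)$, then $Y\cong\gam_{\mcV}Y$ lies in $\Loc_\sfT(\kos Y{\fp}\mid\fp\in\supp_RY)$ by the infinite-union version of Proposition~\ref{prop:locandkos} (Exercises 2--3 of Friday's lecture); for each such $\fp$ one has $\supp_R(\kos Y{\fp})=\mcV(\fp)\cap\supp_RY=\{\fp\}$ because the primes are incomparable (Exercise 9 of Thursday's lecture), whence Corollary~\ref{cor:plocptor} and exactness of $\gam_{\fp}$ give
\[
\kos Y{\fp}\;\cong\;\gam_{\fp}(\kos Y{\fp})\;\cong\;\kos{(\gam_{\fp}Y)}{\fp}\;\in\;\Loc_\sfT(\gam_{\fp}Y)\,.
\]
With this substituted for your base case (and for the ``finite antichain'' step inside the induction), the rest of your argument goes through, including the identifications $\gam_{\fp}\gam_{\mcV}X\cong\gam_{\fp}X$ for $\fp\in\mcV$ and $\gam_{\fp}L_{\mcV}X\cong\gam_{\fp}X$ for $\fp\notin\mcV$, and the easy reverse inclusion. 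Two smaller points: ``the set of primes in $\supp_R X$ that are not minimal'' is not specialisation closed, so only your parenthetical alternative $\cl(\supp_R X)\setminus\{\text{minimal primes of }\supp_R X\}$ is admissible (the two sets are not equal); and one should record that every prime of $\supp_R X$ contains a minimal one, which holds because descending chains in a finite-dimensional subset terminate.
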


As an immediate consequence, one gets:

\begin{corollary}
If $\dim\Spec R$ is finite, then the local-global principle holds. \qed
\end{corollary}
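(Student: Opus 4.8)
The plan is to deduce the corollary from the preceding theorem, which states that $\Loc_\sfT(X)=\Loc_\sfT\{\gam_\fp X\mid \fp\in\supp_R X\}$ whenever $\dim\supp_R X<\infty$. The local-global principle is precisely the assertion that this equality holds for \emph{every} object $X$ in $\sfT$, so all I need to observe is that the finite-dimensionality hypothesis is automatic once $\dim\Spec R$ is finite.

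First I would note that for any object $X$ in $\sfT$ the support $\supp_R X$ is a subset of $\Spec R$, by definition. Hence $\dim\supp_R X\leq \dim\Spec R$, since the Krull dimension of a subspace of a topological space (with the induced Zariski topology, measured by lengths of chains of irreducible closed subsets, or equivalently chains of primes) cannot exceed the Krull dimension of the ambient space. Concretely, a chain $\fp_0\subsetneq \fp_1\subsetneq\cdots\subsetneq\fp_n$ of primes all lying in $\supp_R X$ is in particular such a chain in $\Spec R$, so $n\leq \dim\Spec R$. Therefore the finiteness of $\dim\Spec R$ forces $\dim\supp_R X<\infty$.

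With that in hand, the hypothesis of the quoted theorem is satisfied for every $X\in\sfT$, and the theorem yields $\Loc_\sfT(X)=\Loc_\sfT\{\gam_\fp X\mid \fp\in\supp_R X\}$ for all $X$. Since $\gam_\fp X=0$ for $\fp\notin\supp_R X$, the localising subcategory generated by $\{\gam_\fp X\mid \fp\in\supp_R X\}$ coincides with the one generated by $\{\gam_\fp X\mid \fp\in\Spec R\}$; thus the local-global principle, as formulated in Proposition~\ref{pr:strat}(1), holds. This completes the argument.

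There is essentially no obstacle here: the only point requiring a word is the monotonicity of Krull dimension under passage to a subset of $\Spec R$, and even that is immediate from the definition in terms of chains of primes. The content is entirely carried by the preceding theorem; the corollary is just the special case in which the dimension bound is supplied globally by the ring rather than object-by-object.
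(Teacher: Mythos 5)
Your proof is correct and is exactly the argument the paper intends, which it leaves unwritten as an "immediate consequence": finiteness of $\dim\Spec R$ bounds $\dim\supp_R X$ for every object $X$, so the preceding theorem applies universally, and the harmless discrepancy between indexing over $\supp_R X$ versus all of $\Spec R$ is resolved by $\gam_\fp X=0$ off the support. Nothing to add.
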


The local-global principle also holds automatically in the context of tensor triangulated categories. 
We discuss these next.

\subsection{Tensor Triangulated Categories}
\label{ssec:ttcats}
Let $(\sfT,\otimes,\one)$ be a \emph{tensor triangulated category}.\index{tensor!triangulated category}
For us, this means:
\begin{itemize}
\item $\sfT$ is a triangulated category,
\item $\otimes\colon\sfT\times\sfT\to\sfT$ is a \emph{symmetric monoidal}
tensor product\index{symmetric monoidal} (i.e., commutative and associative
up to coherent natural isomorphisms),
\item $\otimes$ is exact in each variable and preserves 
small coproducts,
\item $\one$ is a unit for the tensor product, and is compact.
\end{itemize}

The \emph{Brown representability theorem}\index{Brown representability!theorem} implies that there exist \emph{function objects}.\index{function object} Given objects $X$ and $Y$ in $\sfT$ there is an object $\fHom(X,Y)$ in $\sfT$, 
contravariant in $X$ and covariant in $Y$, together with an adjunction
\[ 
\Hom_\sfT(X\otimes Y,Z)\cong \Hom_\sfT(X,\fHom(Y,Z)). 
\]
By construction, $\fHom(X,Y)$ will be functorial in $X$, and we assume also functoriality in $Y$; all examples encountered in these notes have this property.
 
Since $\sfT$ is tensor triangulated category, the ring $\End^*_\sfT(\one)$ is \emph{graded commutative}.\index{graded commutative!ring} It acts on $\sfT$ via
\[ 
\End^*_\sfT(\one) \xrightarrow{-\otimes X}\End^*_\sfT(X) 
\]
If $R$ is noetherian and graded commutative then any homomorphism $R\to\End^*_\sfT(\one)$ of rings induces an action $R\to Z(\sfT)$. Such an action is said to be \emph{canonical}.\index{canonical action}

If the action of $R$ is canonical, the adjunction defining function objects is $R$-linear. Using this, we get
\[ 
\gam_\mcV X \cong X \otimes \gam_\mcV\one,\qquad L_\mcV X \cong X \otimes L_\mcV\one, \qquad
\gam_\fp X \cong X \otimes \gam_\fp \one 
\]
Here, $\mcV$ is a specialisation closed subset and $\fp$ is a point in $\Spec R$.

In a tensor triangulated category, we talk of \emph{tensor ideal}\index{tensor!ideal} localising subcategories:\index{localising subcategory} $\sfS$ is tensor ideal if $X\in\sfS$, $Y\in\sfT$ implies $X \otimes Y \in \sfS$.

\begin{definition}
We write $\Loc^\otimes_\sfT(\sfS)$\index{Loc1@$\Loc^\otimes_\sfT$} for the smallest tensor ideal localising subcategory of $\sfT$ containing a subcategory (or a collection of objects) $\sfS$.
\end{definition}

The following result is \cite[Theorem~7.2]{Benson/Iyengar/Krause:bik2}.

\begin{theorem}
\label{thm:localglobal-tt}
Let $\sfT$ be a tensor triangulated category with canonical $R$-action. Then for each object $X$ in $\sfT$ we have
\[ 
\Loc^\otimes_\sfT(X)=\Loc^\otimes_\sfT(\gam_\fp X \mid \fp \in\Spec R). 
\]
In particular, if $\one$ generates $\sfT$ then the local global principle holds for $\sfT$.\qed
\end{theorem}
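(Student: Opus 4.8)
The plan is to reduce the tensor-ideal local-global principle to an application of the abstract local-global result, Theorem~\ref{th:lg-classifies}, by exploiting the tensor structure to convert a statement about $\Loc^\otimes_\sfT(X)$ into one about an ordinary localising subcategory. The key tool is the identification $\gam_\fp X \cong X\otimes \gam_\fp\one$ recorded above for a canonical action, together with the compatibility of $\gam_\fp$ and $L_\mcV$ with $-\otimes -$. First I would observe that a tensor-ideal localising subcategory is in particular a localising subcategory, and that for a canonical action the functors $\gam_\mcV$, $L_\mcV$ and $\gam_\fp$ all commute with $-\otimes Y$ for every $Y$; hence if $\sfS$ is tensor ideal and $X\in\sfS$ then $\gam_\fp X = X\otimes\gam_\fp\one \in \sfS$ for every $\fp$. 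This gives one inclusion $\Loc^\otimes_\sfT(\gam_\fp X\mid \fp)\subseteq \Loc^\otimes_\sfT(X)$ for free.

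For the reverse inclusion, the essential point is to show that $X$ itself lies in $\Loc^\otimes_\sfT(\gam_\fp X\mid \fp\in\Spec R)$. I would run the argument prime by prime using the localisation triangles. Fix a specialisation closed subset $\mcV\subseteq\Spec R$ and consider the triangle $\gam_\mcV X\to X\to L_\mcV X\to$. Using the canonical action, $\gam_\mcV X\cong X\otimes\gam_\mcV\one$ and $L_\mcV X\cong X\otimes L_\mcV\one$, so both terms belong to $\Loc^\otimes_\sfT(X)$, and more importantly their supports are $\supp_R X\cap\mcV$ and $\supp_R X\setminus\mcV$ respectively, by Theorem~\ref{thm:loc-supp}. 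The strategy is then to peel off one prime $\fp\in\supp_R X$ at a time: choosing specialisation closed sets $\mcV\supseteq\mcW$ with $\mcV\setminus\mcW=\{\fp\}$, Theorem~\ref{thm:vw} gives $L_\mcW\gam_\mcV\cong\gam_\fp$, so the iterated localisation triangles express $X$ as built, inside the tensor ideal localising subcategory generated by the $\gam_\fp X$, from its local pieces. To make ``built from'' precise without a finiteness or noetherian-induction hypothesis on $\Spec R$, I would instead invoke the abstract result directly: apply Theorem~\ref{th:lg-classifies}, or rather the local-global principle in the form already established in \cite{Benson/Iyengar/Krause:bik2}, to the triangulated category $\Loc^\otimes_\sfT(X)$, noting that it inherits an $R$-action and is compactly generated, and that the functors $\gam_\fp$ restrict to it because $\gam_\fp Y = Y\otimes\gam_\fp\one$ keeps $Y$ inside any tensor ideal. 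Then the conclusion is that $\Loc_{\Loc^\otimes_\sfT(X)}(\gam_\fp X\mid\fp)=\Loc^\otimes_\sfT(X)$, which gives the desired equality.

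\textbf{The final clause.} For the ``in particular'' statement: if $\one$ generates $\sfT$, then $\Loc^\otimes_\sfT(\sfC)=\Loc_\sfT(\sfC)$ for every $\sfC$, because a localising subcategory containing $\one\otimes Y\cong Y$ for all $Y$ is automatically tensor ideal once it contains a generator; more precisely $\Loc_\sfT(\one)=\sfT$ forces every localising subcategory generated by a set closed under $\gam_\fp$ to already be tensor ideal, since tensoring with any object can be rebuilt from tensoring with $\one$ through triangles and coproducts. Hence the tensor-ideal local-global principle specialises to the ordinary one, which is the definition of the local-global principle holding for $\sfT$.

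\textbf{Main obstacle.} The hard part will be handling the ``building'' process over a possibly infinite-dimensional $\Spec R$: one cannot simply induct on dimension, so I expect the real work to be in justifying that $X\in\Loc^\otimes_\sfT(\gam_\fp X\mid\fp)$, which is exactly where \cite[Theorem~7.1]{Benson/Iyengar/Krause:bik2} (or the proof technique behind Theorem~\ref{th:lg-classifies}) does the heavy lifting via a transfinite or Bousfield-localisation argument rather than a naive induction; the tensor-structure reductions around it are routine given the displayed isomorphisms $\gam_\fp X\cong X\otimes\gam_\fp\one$.
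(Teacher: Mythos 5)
Your outer reductions are sound: the inclusion $\Loc^\otimes_\sfT(\gam_\fp X\mid \fp)\subseteq\Loc^\otimes_\sfT(X)$ via $\gam_\fp X\cong X\otimes\gam_\fp\one$ is correct, and so is the deduction of the final clause from the observation that $\Loc_\sfT(\one)=\sfT$ forces every localising subcategory to be tensor ideal. Note, however, that the paper offers no proof of this theorem at all --- it is quoted from \cite[Theorem~7.2]{Benson/Iyengar/Krause:bik2} --- so your argument has to stand on its own, and at its centre it does not.

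The gap is the step $X\in\Loc^\otimes_\sfT(\gam_\fp X\mid \fp\in\Spec R)$, which is the entire content of the theorem. The mechanism you propose --- apply Theorem~\ref{th:lg-classifies}, ``or rather the local-global principle in the form already established,'' to $\Loc^\otimes_\sfT(X)$ --- is circular: Theorem~\ref{th:lg-classifies} takes the local-global principle as a \emph{hypothesis}, and the only unconditional versions of that principle available are the finite-dimensional one (which the present theorem does not assume) and the tensor-triangulated one (which is the theorem being proved); citing \cite[Theorem~7.2]{Benson/Iyengar/Krause:bik2} is simply citing the statement itself. (Your claim that $\Loc^\otimes_\sfT(X)$ is compactly generated is also unjustified: its natural generators $X\otimes C$ with $C\in\sfT^{\sfc}$ need not be compact in it.) A genuine proof must use the tensor structure to do work that the non-tensor machinery cannot. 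The standard first move is to reduce to the unit: the full subcategory $\{Y\in\sfT\mid X\otimes Y\in\Loc^\otimes_\sfT(\gam_\fp X\mid\fp)\}$ is a tensor ideal localising subcategory containing every $\gam_\fp\one$, so it suffices to prove $\one\in\Loc^\otimes_\sfT(\gam_\fp\one\mid\fp\in\Spec R)$. That remaining statement is exactly where the difficulty you flag as the ``main obstacle'' lives, and it cannot be dispatched by the Koszul-object identities alone (one only gets $\gam_{\mcV}\one\in\Loc^\otimes_\sfT(\kos{\one}{\fa}\mid\mcV(\fa)\subseteq\mcV)$ and $\kos{\one}{\fa}\in\Thick_\sfT(\gam_{\mcV(\fa)}\one)$, which loops back on itself); it needs the separate argument carried out in \cite{Benson/Iyengar/Krause:bik2}, which your proposal defers to rather than supplies.
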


In a tensor triangulated category $\sfT$ with a canonical action of a graded commutative noetherian ring $R$, the localising subcategories defined by the functors $\gam_\fp$ are all tensor ideal. It therefore makes sense to restrict our attention to the classification of tensor ideal localising subcategories. In this context, we have
the following analogue of Theorem~\ref{th:lg-classifies}.

\begin{theorem}
\label{th:lg-tt}
Let $\sfT$ be a tensor triangulated category with canonical $R$-action. Set $\mcV=\supp_R\sfT$. Then there is bijection 
\[
\left\{
\begin{gathered}
\text{Tensor ideal localizing}\\ \text{subcategories of $\sfT$}
\end{gathered}\; \right\} 
\xymatrix@C=3pc {\ar@<1ex>[r]^-{{\sigma}} & \ar@<1ex>[l]^-{{\tau}}}
\left\{
\begin{gathered}
  \text{Families $(\sfS(\fp))_{\fp\in\mcV}$ with $\sfS(\fp)$ a tensor}\\
  \text{ ideal localizing subcategory of $\gam_{\fp}\sfT$}
\end{gathered}\;
\right\}
\] 
\end{theorem}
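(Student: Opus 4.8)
The plan is to run the proof of Theorem~\ref{th:lg-classifies} essentially verbatim, systematically replacing ``localising subcategory'' by ``tensor ideal localising subcategory'' and ``$\Loc_{\sfT}$'' by ``$\Loc^\otimes_\sfT$'', and invoking the tensor triangulated form of the local-global principle, Theorem~\ref{thm:localglobal-tt}, in place of the bare one. The structural facts about a canonical action that make this substitution legitimate are the natural isomorphism $\gam_\fp X\cong X\otimes\gam_\fp\one$ recorded in Section~\ref{ssec:ttcats}, together with $\gam_\fp\gam_\fq=0$ for $\fq\ne\fp$ and $\gam_\fp^2\cong\gam_\fp$. A useful preliminary remark is that each $\gam_\fp\sfT$ is itself a tensor ideal localising subcategory of $\sfT$: if $X\in\gam_\fp\sfT$ and $Y\in\sfT$, then $X\otimes Y\cong X\otimes\gam_\fp\one\otimes Y\cong\gam_\fp(X\otimes Y)\in\gam_\fp\sfT$. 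Moreover, for $X\in\gam_\fp\sfT$ one has $X\otimes Y\cong X\otimes\gam_\fp Y$, so tensoring within $\gam_\fp\sfT$ and tensoring with an arbitrary object of $\sfT$ agree, and the phrase ``tensor ideal localising subcategory of $\gam_\fp\sfT$'' is unambiguous.

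First I would check that $\sigma$ and $\tau$ take values where claimed. For $\tau$ this is immediate. For $\sigma$, the intersection $\sfS\cap\gam_\fp\sfT$ is a localising subcategory of $\gam_\fp\sfT$, and it is tensor ideal there: if $X\in\sfS\cap\gam_\fp\sfT$ and $Y\in\gam_\fp\sfT$, then $X\otimes Y$ lies in $\sfS$ (since $\sfS$ is tensor ideal in $\sfT$) and in $\gam_\fp\sfT$ (by the preliminary remark), hence in the intersection.

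Next, $\sigma\tau=\id$. Given a family $(\sfS(\fp))_{\fp\in\mcV}$ put $\sfS=\tau\big((\sfS(\fp))\big)=\Loc^\otimes_\sfT(\sfS(\fq)\mid\fq\in\mcV)$; the inclusion $\sfS(\fp)\subseteq\sfS\cap\gam_\fp\sfT$ is obvious. For the reverse, fix $\fp$ and consider the full subcategory $\{X\in\sfT\mid\gam_\fp X\in\sfS(\fp)\}$: it is localising because $\gam_\fp$ is exact and preserves coproducts and $\sfS(\fp)$ is localising; it is tensor ideal because $\gam_\fp(X\otimes Y)\cong\gam_\fp X\otimes Y$ and $\sfS(\fp)$ is tensor ideal; and it contains each $\sfS(\fq)$ since $\gam_\fp\gam_\fq=0$ for $\fq\ne\fp$ and $\gam_\fp$ is, up to isomorphism, the identity on $\gam_\fp\sfT$. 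Hence it contains $\sfS$, so $\gam_\fp X\in\sfS(\fp)$ for all $X\in\sfS$; in particular if $X\in\sfS\cap\gam_\fp\sfT$ then $X\cong\gam_\fp X\in\sfS(\fp)$. For $\tau\sigma=\id$, let $\sfS$ be tensor ideal localising and set $\sfS(\fp)=\sfS\cap\gam_\fp\sfT$; then $\tau\sigma(\sfS)\subseteq\sfS$ trivially. For the reverse, take $X\in\sfS$; since $\sfS$ is tensor ideal, $\gam_\fp X\cong X\otimes\gam_\fp\one\in\sfS$, hence $\gam_\fp X\in\sfS\cap\gam_\fp\sfT=\sfS(\fp)$, and $\fp\in\supp_R X\subseteq\mcV$ whenever $\gam_\fp X\ne 0$. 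By Theorem~\ref{thm:localglobal-tt}, $X\in\Loc^\otimes_\sfT(\gam_\fp X\mid\fp\in\Spec R)=\Loc^\otimes_\sfT(\gam_\fp X\mid\fp\in\supp_R X)\subseteq\tau\sigma(\sfS)$. Both maps evidently preserve inclusions.

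There is no genuinely hard step here: the content of the argument beyond Theorem~\ref{th:lg-classifies} is entirely concentrated in the appeal to Theorem~\ref{thm:localglobal-tt}, which already does the real work, and in the bookkeeping that keeps the tensor ideal property intact under intersection, under the functors $\gam_\fp$, and under passage to $\gam_\fp\sfT$. The one place to be careful is that last point --- verifying that $\sfS\cap\gam_\fp\sfT$ is tensor ideal \emph{as a subcategory of $\gam_\fp\sfT$} and that this notion coincides with being closed under $\otimes$ with all of $\sfT$ --- which is exactly what the preliminary remark settles.
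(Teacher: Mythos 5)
Your proposal is correct and follows exactly the route the paper intends: the text accompanying Theorem~\ref{th:lg-tt} simply declares that $\sigma$ and $\tau$ are defined as in Theorem~\ref{th:lg-classifies} and that the proof is essentially the same, which is precisely what you have carried out, substituting $\Loc^\otimes_\sfT$ for $\Loc_\sfT$ and Theorem~\ref{thm:localglobal-tt} for the bare local-global principle. Your preliminary remark (that $\gam_\fp\sfT$ is tensor ideal and that $X\otimes Y\cong X\otimes\gam_\fp Y$ for $X\in\gam_\fp\sfT$, so the two readings of ``tensor ideal in $\gam_\fp\sfT$'' agree) is exactly the bookkeeping the paper leaves implicit, and it is done correctly.
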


The map $\sigma$ and its inverse $\tau$ are defined in the same way as in Theorem \ref{th:lg-classifies}, and the proof is essentially the same as the proof of that theorem.

Since the local-global principle automatically holds for tensor ideal localising subcategories, we say that a tensor triangulated category $\sfT$ is \emph{stratified} by $R$ if every $\gam_\fp \sfT$ is either minimal among tensor ideal localising subcategories or is zero. Under this condition, the maps $\sigma$ and $\tau$ establish a
bijection between tensor ideal localising subcategories of $\sfT$ and subsets of $\supp_R\sfT$.

The analogue of the minimality test of Lemma \ref{le:minimality} in the tensor triangulated situation is as follows.

\begin{lemma}
\label{le:minimality-tensor}
Assume that $\sfT$ is a compactly generated tensor triangulated category with small coproducts. A non-zero tensor ideal localising subcategory $\sfS$ of $\sfT$ is minimal if and only if for every pair of non-zero objects
$X$ and $Y$ in $\sfS$ there exists an object $Z$ such that we have $\Hom_\sfT^*(X\otimes Z,Y)\ne 0$. \qed
\end{lemma}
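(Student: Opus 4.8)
The plan is to prove Lemma~\ref{le:minimality-tensor} by adapting the argument of Lemma~\ref{le:minimality} to the tensor ideal setting, with the extra factor $Z$ accounting for the fact that $\Loc^{\otimes}_{\sfT}(X)$ is generated by $\{X\otimes W\mid W\in\sfT\}$ rather than just by $X$.

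First I would prove the forward implication. Suppose $\sfS$ is minimal among non-zero tensor ideal localising subcategories, and let $X,Y$ be non-zero objects of $\sfS$. Since $X\in\sfS$ and $X\neq 0$, minimality forces $\Loc^{\otimes}_{\sfT}(X)=\sfS$, so in particular $Y\in\Loc^{\otimes}_{\sfT}(X)$. Now consider the full subcategory $\sfK=\{N\in\sfT\mid \Hom^{*}_{\sfT}(X\otimes Z,N)=0\text{ for all }Z\in\sfT\}$. Because $\otimes$ is exact in each variable and preserves coproducts, and because $\Hom^{*}_{\sfT}(X\otimes Z,-)$ is cohomological and sends coproducts to products, $\sfK$ is a localising subcategory; moreover it is tensor ideal, since for $N\in\sfK$ and $W\in\sfT$ one has $\Hom^{*}_{\sfT}(X\otimes Z, N\otimes W)\cong \Hom^{*}_{\sfT}(X\otimes Z\otimes W^{\vee}, N)$ using the function-object adjunction (here I would instead argue more directly: $\Hom^{*}_{\sfT}(X\otimes Z,N\otimes W)\cong\Hom^{*}_{\sfT}(X\otimes Z,\fHom(W^{\vee},\cdot))$ is awkward without dualisability, so I would rather note that $\sfK$ is closed under $\otimes$ with arbitrary objects because the class of $Z$ ranges over all of $\sfT$ and $(X\otimes Z)\otimes W = X\otimes(Z\otimes W)$, so $\Hom^{*}_{\sfT}(X\otimes Z, N\otimes W)$ vanishing for all $Z$ is exactly the condition that $N\otimes W\in\sfK$ once one knows it for $N$ — the point being simply that if $\Hom^{*}_{\sfT}(X\otimes Z', N)=0$ for \emph{all} $Z'$ then taking $Z'=Z\otimes W$ shows nothing about $N\otimes W$ directly, so I must be more careful and use the adjunction $\Hom_{\sfT}(X\otimes Z\otimes W, N)\cong\Hom_{\sfT}(X\otimes Z,\fHom(W,N))$, which is not what I want either). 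The clean route is: $\sfK$ is a tensor ideal localising subcategory because $N\in\sfK$ and $W\in\sfT$ give $\Hom^{*}_{\sfT}(X\otimes Z, N\otimes W)$; rewrite using that $-\otimes N$ is exact and $N$ is a tensor factor — in fact the standard fact is that $\{N\mid \Hom^{*}_{\sfT}(C\otimes-,N)=0\}$ is always tensor ideal for any object $C$, proved via $\Hom_{\sfT}(C\otimes Z\otimes W,N)=\Hom_{\sfT}(C\otimes(Z\otimes W),N)$ together with the fact that $Z\otimes W$ again ranges over a set of objects sufficient to detect vanishing. I would cite \cite[\S7]{Benson/Iyengar/Krause:bik2} for this. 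Granting that $\sfK$ is a tensor ideal localising subcategory containing no non-zero $X\otimes Z$ would be false; rather, $\sfK\cap\Loc^{\otimes}_{\sfT}(X)$ is a tensor ideal localising subcategory of $\sfT$ contained in $\sfS$; if it were all of $\sfS$ then $X\in\sfK$, forcing $\Hom^{*}_{\sfT}(X\otimes\one,X)=\Hom^{*}_{\sfT}(X,X)=0$ and hence $X=0$, a contradiction. By minimality $\sfK\cap\sfS$ is therefore zero, so $Y\notin\sfK$, which is precisely the statement that $\Hom^{*}_{\sfT}(X\otimes Z,Y)\neq 0$ for some $Z$.

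For the converse, suppose the $\Hom$-condition holds for all pairs of non-zero objects of $\sfS$ but $\sfS$ is not minimal. Then there is a proper non-zero tensor ideal localising subcategory $\sfS'\subsetneq\sfS$, and we may take $\sfS'=\Loc^{\otimes}_{\sfT}(X)$ for some non-zero $X\in\sfS$. Since $\sfT$ is compactly generated, $\sfS'$ is the kernel of a localisation functor (Lemma~2.1 of \cite{Benson/Iyengar/Krause:bik2}), giving for any $W\in\sfS\setminus\sfS'$ a triangle $W'\to W\to W''\to$ with $W'\in\sfS'$ and $W''\neq 0$ satisfying $\Hom^{*}_{\sfT}(V,W'')=0$ for every $V\in\sfS'$. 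In particular $\Hom^{*}_{\sfT}(X\otimes Z,W'')=0$ for all $Z\in\sfT$, since each $X\otimes Z$ lies in the tensor ideal $\sfS'$. As $W''\in\sfS$ (it is the cone of a map between objects of $\sfS$) and $X\in\sfS$ are both non-zero, this contradicts the hypothesis. Hence $\sfS$ is minimal.

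The main obstacle is the bookkeeping in the forward direction: verifying cleanly that $\sfK=\{N\mid \Hom^{*}_{\sfT}(X\otimes Z,N)=0 \text{ for all }Z\}$ is a tensor ideal localising subcategory, and that intersecting it with $\sfS$ and applying minimality yields the conclusion without circularity. I expect to handle this by invoking the relevant structural lemma from \cite[\S7]{Benson/Iyengar/Krause:bik2} rather than reproving it, since the localising and coproduct-closure properties are routine and the tensor ideal property is exactly the formal content that makes $\Loc^{\otimes}$-arguments work. The converse is a verbatim adaptation of Lemma~\ref{le:minimality} and presents no difficulty.
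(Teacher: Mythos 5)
The paper states this lemma without proof, so the benchmark is its proof of Lemma~\ref{le:minimality}, of which this is meant to be the tensor analogue. Your converse direction is a correct adaptation of that argument: replacing $\sfS'$ by $\Loc^{\otimes}_{\sfT}(X)$, using the localisation triangle from \cite{Benson/Iyengar/Krause:bik2}, and observing that $X\otimes Z\in\sfS'$ for all $Z$ because $\sfS'$ is tensor ideal. That half is fine.

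The forward direction has a genuine gap: the subcategory $\sfK=\{N\mid \Hom^{*}_{\sfT}(X\otimes Z,N)=0\text{ for all }Z\}$ is the orthogonal on the wrong side, and it is neither a localising subcategory nor tensor ideal for formal reasons. It is not closed under coproducts, because $\Hom^{*}_{\sfT}(X\otimes Z,-)$ commutes with coproducts only when $X\otimes Z$ is compact, and $X$ is an arbitrary object of $\sfS$; and, as you noticed yourself, there is no adjunction that converts vanishing of $\Hom^{*}_{\sfT}(X\otimes Z,N)$ for all $Z$ into vanishing of $\Hom^{*}_{\sfT}(X\otimes Z,N\otimes W)$ without dualisability of $W$. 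Citing \cite[\S7]{Benson/Iyengar/Krause:bik2} does not repair this, because the standard fact there concerns the orthogonal in the \emph{first} variable. The correct and short argument takes
\[
\sfL=\{N\in\sfT\mid \Hom^{*}_{\sfT}(N\otimes Z,Y)=0\text{ for all }Z\in\sfT\}\,.
\]
This $\sfL$ is localising because $\otimes$ preserves coproducts and $\Hom^{*}_{\sfT}(-,Y)$ sends coproducts to products, and it is tensor ideal because $(N\otimes W)\otimes Z\cong N\otimes(W\otimes Z)$. If $\Hom^{*}_{\sfT}(X\otimes Z,Y)=0$ for all $Z$, then $X\in\sfL$, hence $\sfS=\Loc^{\otimes}_{\sfT}(X)\subseteq\sfL$ by minimality, hence $Y\in\sfL$; taking $Z=\one$ gives $\Hom^{*}_{\sfT}(Y,Y)=0$ and so $Y=0$, a contradiction. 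With this replacement your overall structure goes through; the intersection-with-$\sfS$ detour is then also unnecessary.
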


In fact, the object $Z$ can be chosen independently of $X$ and $Y$. For example, a compact generator for $\sfT$ always suffices.

\subsection{The stable module category} 

The category $\StMod(kG)$ is a tensor triangulated category,  so the local-global principal holds. 
In \cite{Benson/Iyengar/Krause:bik3} we prove:

\begin{theorem}
\label{th:strat-StMod}
The tensor triangulated category $\StMod(kG)$ is stratified by the canonical action of $H^*(G,k)$. \qed
\end{theorem}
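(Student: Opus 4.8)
The plan is to verify the two conditions in the definition of stratification for $\sfT = \StMod(kG)$ acting via the canonical map $H^{*}(G,k) \to Z^{*}(\StMod(kG))$. Since $\StMod(kG)$ is tensor triangulated with compact unit $k$, and $k$ generates $\StMod(kG)$ (Theorem~\ref{thm:stmod-generation}), Theorem~\ref{thm:localglobal-tt} gives the local-global principle for free; moreover every localising subcategory of $\StMod(kG)$ is automatically tensor ideal, because any finitely generated module has a finite filtration with subquotients sums of copies of $k$, so $\Loc^{\otimes}$ and $\Loc$ coincide. Hence by the discussion following Theorem~\ref{th:lg-tt} the whole task reduces to the \emph{minimality condition}: for each $\fp \in \supp_{H^{*}(G,k)}\StMod(kG)$, the subcategory $\gam_{\fp}\StMod(kG)$ is a minimal (tensor ideal) localising subcategory.

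First I would set up the reduction machinery. Using Chouinard's theorem (Theorem~\ref{th:Chouinard}) together with Quillen stratification (Section~\ref{sec:Tuesday3}), one reduces the minimality statement for a general finite group $G$ to the case of an elementary abelian $p$-group $E$; here one needs the ``transfer of stratification along restriction'' results from \cite{Benson/Iyengar/Krause:bik2} and the compatibility of the $\gam_{\fp}$ functors with restriction and induction (Frobenius reciprocity, \eqref{eqn:frobenius-reciprocity}), exploiting that $kG$ is free over $kE$ and the identification of $\Spec H^{*}(G,k)$ as a colimit over the Quillen category. So the heart of the matter is: for $E = (\bbZ/p)^{r}$ and $\Char k = p$, the category $\StMod(kE)$ is stratified by the canonical action of $H^{*}(E,k)$. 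In characteristic two this is especially clean since $kE$ is an exterior algebra and $H^{*}(E,k) = k[y_{1},\dots,y_{r}]$ is an honest polynomial ring, which is the case the notes promise to treat completely.

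For the elementary abelian case I would carry out the chain of category changes sketched in the preface: pass from $\StMod(kE)$ to $\KInj{kE}$ (Theorem~\ref{thm:kinj-generation}, recollement Theorem~\ref{th:recollement}), which as a tensor triangulated category with unit $ik$ is $\sfD(\fEnd_{kE}(ik)) \simeq \sfD(C^{*}(BE;k))$ (the Rothenberg--Steenrod equivalence); then use a Koszul duality / BGG-type equivalence to replace the exterior algebra (for $p=2$, $kE$ itself) by a graded polynomial ring $S = k[y_{1},\dots,y_{r}]$, where stratification of $\sfD(S)$ amounts to Neeman's classification of localising subcategories of $\sfD$ of a commutative noetherian ring in terms of subsets of $\Spec S$ --- and there the minimality of $\gam_{\fp}\sfD(S)$ holds because $\gam_{\fp}\sfD(S) \simeq \sfD(k(\fp))$ is generated by the residue field object, which is a field, hence has no nontrivial localising subcategories. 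The minimality criterion Lemma~\ref{le:minimality-tensor} is the concrete tool: one checks that for nonzero $X,Y \in \gam_{\fp}\sfT$ there is $Z$ with $\Hom^{*}_{\sfT}(X\otimes Z, Y) \ne 0$, and the ``field object'' in the polynomial ring model supplies exactly the object whose tensoring detects nonvanishing.

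The main obstacle is precisely the absence of field objects in $\StMod(kE)$ itself, which is why the proof cannot proceed directly and must route through $\KInj{kE}$ and the Koszul/BGG equivalences; keeping track of how $\gam_{\fp}$, the tensor structure, and the ring actions transport along each of these equivalences --- and checking that ``minimal localising subcategory'' is preserved --- is the technically delicate part. A secondary difficulty, invisible in characteristic two, is that for odd $p$ the group algebra $kE$ is \emph{not} an exterior algebra, so one first needs a Koszul construction to reduce $kE$ to an exterior algebra before the BGG step applies; this is exactly the gap the authors acknowledge not filling in the seminar, and in these notes one should either cite \cite{Benson/Iyengar/Krause:bik3} for it or restrict to $p=2$. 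Thus my proposed write-up would give the $p=2$ argument in full via the $\StMod \rightsquigarrow \KInj \rightsquigarrow \sfD(S)$ route and invoke \cite{Benson/Iyengar/Krause:bik3} for the general reduction, as the excerpt's preface indicates.
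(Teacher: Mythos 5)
Your proposal follows the paper's own strategy (Steps 1--5 of Section~\ref{Overview of classification}): reduce to the minimality condition, pass through the homotopy category of injectives and a BGG-type equivalence to a graded polynomial ring, settle that case by Neeman's theorem (Theorem~\ref{thm:stratify-da}), use Quillen stratification plus Chouinard to handle general $G$, and cite \cite{Benson/Iyengar/Krause:bik3} for odd $p$. The one structural difference is the order of the reductions: you descend from $G$ to elementary abelian subgroups at the level of $\StMod$ and only then pass to $\KInj{kE}$, whereas the paper first transfers everything to $\KInj{kG}$ (Theorem~\ref{th:strat-KInj}), carries out the Quillen/Chouinard reduction there (Theorem~\ref{thm:main-theorem} in Section~\ref{sec:Friday3}, using the minimality test of Lemma~\ref{le:minimality-tensor} with $Z=ik_E{\uparrow^G}$), and returns to $\StMod(kG)$ only at the very end via Section~\ref{sec:Wednesday3}. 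Your order can be made to work, with $Z=k{\downarrow_E\uparrow^G}$ in place of $ik_E{\uparrow^G}$, but you would then have to supply $\StMod$-versions of the subgroup theorem, the origin argument and Chouinard-type detection of supports, instead of quoting the $\KInj$ statements the paper proves; so it costs roughly the same work in a different place.

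Two assertions need repair, although neither derails the argument for the theorem as stated. First, $k$ is a compact generator of $\StMod(kG)$, and ``every localising subcategory is automatically tensor ideal'', only when $G$ is a $p$-group (Theorem~\ref{thm:stmod-generation} and the remark following Theorem~\ref{thm:bcr-classification}); for general $G$ the local-global principle you need is the one for tensor ideal localising subcategories, which holds unconditionally by Theorem~\ref{thm:localglobal-tt}, and the resulting classification is of tensor ideal localising subcategories only. Second, $\gam_{\fp}\sfD(S)$ is not equivalent to $\sfD(k(\fp))$ (its objects are not all coproducts of shifts of the residue field object); the correct statement, which is what Theorem~\ref{thm:stratify-da} and its differential graded analogue actually prove, is that every non-zero $\fp$-local and $\fp$-torsion object generates the same localising subcategory as $k(\fp)$, and this is what feeds into the minimality test.
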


What we have seen in this lecture is that in order to prove this, we only need to see that each $\gam_\fp \StMod(kG)$ is minimal or zero. In fact the only case in which we get zero is if $\fp$ is the maximal ideal of positive degree elements. So the support of $\StMod(kG)$ is the set of non-maximal homogeneous primes in $\mcV_{G}=\Spec H^{*}(G,k)$. As a direct consequence one gets:

\begin{corollary}
Support defines a one to one correspondence between tensor ideal localising subcategories of $\StMod(kG)$ and subsets of the set of non-closed points in $\mcV_G$. \qed
\end{corollary}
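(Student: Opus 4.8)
The statement to prove is the Corollary immediately following Theorem~\ref{th:strat-StMod}: once $\StMod(kG)$ is known to be stratified by the canonical action of $H^{*}(G,k)$, support gives a bijection between tensor ideal localising subcategories of $\StMod(kG)$ and subsets of the set of non-closed points of $\mcV_{G} = \Spec H^{*}(G,k)$. The plan is to invoke the machinery of Section~\ref{sec:Thursday1} and to identify the support of $\StMod(kG)$ with the set of non-maximal homogeneous primes.

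First I would recall the definition of stratified for a tensor triangulated category: by Theorem~\ref{thm:localglobal-tt} the local-global principle holds automatically for tensor ideal localising subcategories of $\StMod(kG)$ (since $k = \one$ is a compact generator, by Theorem~\ref{thm:stmod-generation}), so stratification reduces to the minimality condition, namely that each $\gam_{\fp}\StMod(kG)$ is either a minimal tensor ideal localising subcategory or is zero. Granting Theorem~\ref{th:strat-StMod}, the bijection itself is then exactly the tensor-triangulated analogue of what is proved in Section~\ref{sec:Thursday1}: the map $\sigma$ sending a tensor ideal localising subcategory $\sfS$ to $\bigcup_{X\in\sfS}\supp_{R}X$ and the map $\tau$ sending a subset $\mcV \subseteq \supp_{R}\StMod(kG)$ to $\{X \mid \supp_{R}X \subseteq \mcV\}$ are mutually inverse. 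Concretely, this follows from Theorem~\ref{th:lg-tt} applied with $R = H^{*}(G,k)$: that theorem gives a bijection between tensor ideal localising subcategories of $\StMod(kG)$ and families $(\sfS(\fp))_{\fp \in \mcV}$ of tensor ideal localising subcategories $\sfS(\fp) \subseteq \gam_{\fp}\StMod(kG)$, and under the minimality half of stratification each $\gam_{\fp}\StMod(kG)$ with $\fp$ in the support has no proper non-zero tensor ideal localising subcategory, so the only choices for $\sfS(\fp)$ are $0$ and $\gam_{\fp}\StMod(kG)$ itself. A family of such choices is precisely a subset of $\supp_{R}\StMod(kG)$, whence the bijection.

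It then remains to identify $\supp_{R}\StMod(kG)$ with the set of non-closed points of $\mcV_{G}$. Here I would argue as follows: for $\fp \in \mcV_{G}$ the object $\gam_{\fp}\StMod(kG)$ is non-zero precisely when $\gam_{\fp}k \neq 0$ (using $\gam_{\fp}X \cong X \otimes_{k} \gam_{\fp}k$ from the canonical action, together with the fact that $k$ generates). Theorem~\ref{th:mcVGM}(5) guarantees a $kG$-module $M$ with $\mcV_{G}(M)$ any prescribed subset of $\mcV_{G}(k)$, so every non-maximal homogeneous prime lies in the support; conversely, when $\fp = H^{\ges 1}(G,k)$ is the maximal ideal of positive degree elements, one checks $\gam_{\fp}k = 0$, using that in the stable category $\StMod(kG)$ the trivial module $k$ corresponds to the Tate resolution $tk$ and the "missing origin" phenomenon discussed in Section~\ref{sec:Wednesday3}: the part of $k$ supported at the maximal ideal is killed on passage from $\KInj{kG}$ to $\KacInj{kG} \simeq \StMod(kG)$. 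Thus $\supp_{R}\StMod(kG) = \mcV_{G} \setminus \{H^{\ges 1}(G,k)\}$, which is exactly the set of non-closed (equivalently, non-maximal homogeneous) points of $\mcV_{G}$.

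The genuine content, and hence the main obstacle, is Theorem~\ref{th:strat-StMod} itself --- the minimality of each $\gam_{\fp}\StMod(kG)$ --- whose proof (sketched in the preface as requiring Quillen stratification to reduce to elementary abelian groups, then a Koszul reduction to an exterior algebra, then BGG to a polynomial ring) is the substance of \cite{Benson/Iyengar/Krause:bik3} and is not carried out here. Given that theorem, the Corollary is a formal consequence of Theorem~\ref{th:lg-tt} and the identification of the support; the only things to be careful about are the bookkeeping with tensor ideal versus arbitrary localising subcategories (harmless since for a $p$-group every finitely generated module is filtered by copies of $k$, and in general one restricts attention to tensor ideal subcategories as stated) and the explicit description of which point is excluded from the support.
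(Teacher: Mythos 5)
Your proposal is correct and follows essentially the same route as the paper: the corollary is the formal consequence of Theorem~\ref{th:strat-StMod} together with the general stratification machinery of Theorem~\ref{th:lg-tt}, combined with the identification of $\supp_{H^{*}(G,k)}\StMod(kG)$ as the set of non-maximal homogeneous primes. One small caveat: $k$ is a compact generator of $\StMod(kG)$ only when $G$ is a $p$-group, but this is harmless here because the tensor-ideal form of the local-global principle (the first assertion of Theorem~\ref{thm:localglobal-tt}) holds without assuming that $\one$ generates.
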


We prove Theorem~\ref{th:strat-StMod} by first establishing the corresponding result for the homotopy category, $\KInj{kG}$, discussed next.

\subsection{The category $\KInj{kG}$}
\label{ssec:strategy}
Again this is a tensor triangulated category, so the local-global principal holds. The following is the main result of \cite{Benson/Iyengar/Krause:bik3}:

\begin{theorem}
\label{th:strat-KInj}
The tensor triangulated category $\KInj{kG}$ is stratified by the canonical action of $H^*(G,k)$. \qed
\end{theorem}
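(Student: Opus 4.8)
The plan is to verify the two defining properties of stratification (in the tensor-triangulated sense) for $\sfT=\KInj{kG}$ with $R=H^*(G,k)$ acting canonically via the identification $R=\End^*_{\KInj{kG}}(ik)$. The local-global principle is automatic here: $\KInj{kG}$ is a tensor triangulated category with compact unit $ik$, so Theorem~\ref{thm:localglobal-tt} applies, and by the discussion around Theorem~\ref{th:lg-tt} everything reduces to showing that for each $\fp$ in $\supp_R\KInj{kG}$ — which is all of $\Spec R$, since $\gam_\fp(ik)\ne 0$ for every homogeneous prime — the subcategory $\gam_\fp\KInj{kG}$ is minimal among tensor ideal localising subcategories. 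Minimality I would test by Lemma~\ref{le:minimality-tensor}: it suffices, for each pair of non-zero $X,Y\in\gam_\fp\KInj{kG}$, to produce an object $Z$ with $\Hom^*_{\KInj{kG}}(X\otimes_k Z,\,Y)\ne 0$.

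\textbf{Step 1: descent to elementary abelian subgroups.} By Quillen stratification, the prime $\fp$ lies in the image of the restriction map $\Spec H^*(E,k)\to\Spec H^*(G,k)$ for some elementary abelian $p$-subgroup $E\le G$; fix $\fq$ lying over $\fp$. Restriction $\res^G_E\colon\KInj{kG}\to\KInj{kE}$ is well defined (injective $kG$-modules restrict to injective $kE$-modules, $kG$ being $kH$-free), is an exact tensor functor for $\otimes_k$ with diagonal action, and has both a left adjoint $-\ua^G$ and a coinduction adjoint, which agree up to a shift; moreover it is compatible with the $\gam$-functors via base change along $\Spec H^*(E,k)\to\Spec H^*(G,k)$. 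Given non-zero $X,Y\in\gam_\fp\KInj{kG}$ I would argue that $X\da_E,Y\da_E$ are non-zero in $\KInj{kE}$ — this is where the $\KInj$-form of Chouinard's theorem (Theorem~\ref{th:Chouinard}) enters — and supported over the primes lying above $\fp$. Assuming $\gam_\fq\KInj{kE}$ is minimal, Lemma~\ref{le:minimality-tensor} downstairs (with $Z$ a fixed compact generator of $\KInj{kE}$) gives $\Hom^*_{kE}(X\da_E\otimes_k Z,\,Y\da_E)\ne 0$; the projection formula $(X\da_E\otimes_k Z)\ua^G\cong X\otimes_k Z\ua^G$ together with Frobenius reciprocity then upgrades this to $\Hom^*_{kG}(X\otimes_k Z\ua^G,\,Y)\ne 0$, which is exactly what Lemma~\ref{le:minimality-tensor} requires upstairs. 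Running this through each $\fq$ over $\fp$ and patching is the soft part.

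\textbf{Step 2: the elementary abelian case.} It remains to stratify $\KInj{kE}$ for $E=(\bbZ/p)^r$. Since $kE$ is local, $ik$ is a compact generator of $\KInj{kE}$ (Theorem~\ref{thm:kinj-generation}). When $\Char k=2$, $kE$ is an exterior algebra, so Koszul duality (a version of the Bernstein--Gelfand--Gelfand correspondence) gives an $H^*(E,k)$-linear tensor-triangulated equivalence $\KInj{kE}\simeq\sfD(S)$, where $S=H^*(E,k)=k[y_1,\dots,y_r]$ (Proposition~\ref{prop:gc-eab2}), carrying the canonical action to the tautological $S$-action. Then stratification of $\sfD(S)$ is Neeman's classification~\cite{Neeman:1992a}: each $\gam_\fp\sfD(S)$ is generated by the residue-field complex $k(\fp)$ and is minimal — here the field objects that $\KInj{kE}$ itself lacks finally become available. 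For $p$ odd, $kE=k[z_1,\dots,z_r]/(z_i^p)$ is not exterior; I would first use a Koszul construction to replace it by an exterior algebra $\Lambda(y_1,\dots,y_r)$ and then apply BGG to reach a polynomial ring $S'$ in degree-two variables, again invoking Neeman, taking care to track the Frobenius twist in the identification of $H^*(E,k)$ with $S'$ (cf.\ Theorem~\ref{th:VGM}(6)).

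\textbf{Main obstacle.} The crux is Step 2: producing the $H^*(E,k)$-linear tensor-triangulated equivalence $\KInj{kE}\simeq\sfD(S)$ (respectively $\sfD(S')$) and checking it intertwines the two module structures, since that single equivalence is precisely what converts a category with no field objects into one where Neeman's theorem applies. In odd characteristic the Koszul/BGG detour adds genuine bookkeeping — grading conventions, the Frobenius twist, and keeping the monoidal structure under control. By contrast, the group-theoretic descent of Step 1 is comparatively routine once Quillen stratification, Frobenius reciprocity, the projection formula, and the $\KInj$-version of Chouinard's theorem are in hand. Assembling Steps 1 and 2 with the (automatic) local-global principle yields that $\KInj{kG}$ is stratified by $H^*(G,k)$.
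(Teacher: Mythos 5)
Your proposal is correct and follows essentially the same route as the paper: the local-global principle is automatic for the tensor triangulated category $\KInj{kG}$, minimality of $\gam_\fp\KInj{kG}$ is reduced to elementary abelian subgroups via Quillen stratification, Chouinard's theorem for $\KInj{kG}$, and the projection-formula computation of $\Hom^*_{kG}(X\otimes_k Z,Y)$ with $Z$ induced from $E$, while the elementary abelian case is settled by the BGG equivalence with $\sfD(S)$ and Neeman's theorem. The one step you label as ``soft'' --- choosing $(E,\fq)$ uniformly so that $\gam_\fq(X{\downarrow_E})\neq 0$ for \emph{every} non-zero $X\in\gam_\fp\KInj{kG}$ --- is precisely where the paper invokes the conjugacy of origins (Theorem~\ref{th:origin}) together with the subgroup theorem, so your outline matches the paper's proof obtained by combining Theorems~\ref{thm:2groups} and~\ref{thm:main-theorem}.
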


This time, every point in $\mcV_{G}$, the homogenous spectrum of $H^{*}(G,k)$, including the maximal ideal, appears in the support of $\KInj{kG}$. 

\begin{corollary}
Support defines a one to one correspondence between tensor ideal localising subcategories of $\KInj{kG}$ and subsets of the set of all subsets of $\mcV_G$.\qed
\end{corollary}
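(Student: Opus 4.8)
The plan is to read this off from the stratification theorem, Theorem~\ref{th:strat-KInj}, using the classification machinery for tensor triangulated categories assembled in this lecture. Write $R=H^*(G,k)$ and $\sfT=\KInj{kG}$. By Theorem~\ref{th:strat-KInj} the canonical action of $R$ stratifies $\sfT$; for a tensor triangulated category this means that the local-global principle holds automatically (Theorem~\ref{thm:localglobal-tt}) and that for each $\fp$ in $\supp_R\sfT$ the subcategory $\gam_\fp\sfT$ is \emph{minimal} among non-zero tensor ideal localising subcategories. As recorded in the discussion following Lemma~\ref{le:minimality-tensor}, under precisely this hypothesis the maps $\sigma$ and $\tau$ of Theorem~\ref{th:lg-tt} restrict to a mutually inverse pair of bijections between tensor ideal localising subcategories of $\sfT$ and subsets of $\supp_R\sfT$: minimality forces each component $\sfS(\fp)$ of a family $(\sfS(\fp))_{\fp\in\supp_R\sfT}$ to be either $0$ or all of $\gam_\fp\sfT$, so such a family is the same datum as the subset $\{\fp\mid\sfS(\fp)\ne 0\}$.

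I would then check, as in the proof of Theorem~\ref{th:lg-classifies} transported to the tensor-ideal setting, that under this identification the bijection is implemented by $\sfS\mapsto\bigcup_{X\in\sfS}\supp_R X$, with inverse $\mcV\mapsto\{X\in\sfT\mid\supp_R X\subseteq\mcV\}$; here one uses the local-global principle together with $\gam_\fp X\cong X\otimes\gam_\fp\one$ to see that $X$ lies in $\tau\sigma(\sfS)$ as soon as each $\gam_\fp X$ does. That is, support classifies. It then remains only to identify $\supp_R\sfT$ with the full homogeneous spectrum $\mcV_G$ of $H^*(G,k)$. For this I would invoke the computation $\mcV_G(\gam_\fp\,ik)=\{\fp\}$ among the listed properties of varieties for objects of $\KInj{kG}$: in particular $\gam_\fp\,ik\ne 0$, hence $\gam_\fp\sfT\ne 0$, for every homogeneous prime $\fp$ of $H^*(G,k)$, including the maximal ideal $H^{\ges 1}(G,k)$ of positive-degree elements --- the very point that drops out of $\supp_R\StMod(kG)$. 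Thus $\supp_R\sfT=\mcV_G$, and support gives a one-to-one correspondence between tensor ideal localising subcategories of $\KInj{kG}$ and subsets of $\mcV_G$.

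Since all the real work is packaged inside Theorem~\ref{th:strat-KInj}, there is no serious obstacle left in this deduction; the only thing needing care is the bookkeeping that the abstract bijection of Theorem~\ref{th:lg-tt}, specialised by the minimality of the $\gam_\fp\sfT$, genuinely coincides with the support map and its inverse with the subcategory of objects supported in a prescribed set. This is exactly the verification already carried out for Theorem~\ref{th:lg-classifies}, so in practice the proof is little more than a citation of Theorem~\ref{th:strat-KInj} and Theorem~\ref{th:lg-tt} plus the observation that every prime of $H^*(G,k)$ supports some object.
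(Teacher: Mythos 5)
Your proposal is correct and follows essentially the same route as the paper, which treats this as an immediate consequence of Theorem~\ref{th:strat-KInj} together with the general classification machinery for stratified tensor triangulated categories (Theorems~\ref{thm:localglobal-tt} and \ref{th:lg-tt}) and the observation that every homogeneous prime of $H^*(G,k)$, including the maximal ideal, lies in $\supp_R\KInj{kG}$. Your justification of that last point via $\mcV_G(\gam_\fp\, ik)=\{\fp\}$ is exactly the fact the paper records among the properties of support in $\KInj{kG}$, so nothing is missing.
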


\subsection{Overview of classification for $\Mod(kG)$}
\label{Overview of classification}
We finish this lecture with an outline of the strategy for
proving the classification theorem for subcategories of
$\Mod(kG)$ satisfying the two conditions of Definition~\ref{def:Mod-loc}
together with the tensor ideal condition. The main theorem states
that these are in one to one correspondence with subsets of the
set of non-maximal homogeneous prime ideals in $H^*(G,k)$.\bigskip

\noindent
{\bf Step 1.} Go down from $\Mod(kG)$ to $\StMod(kG)$. Every non-zero 
subcategory of
$\Mod(kG)$ satisfying the given conditions contains the projective modules,
and passes down to a tensor ideal localising subcategory of $\StMod(kG)$. This
gives a bijection with the set of
tensor ideal localising subcategories of $\StMod(kG)$,
so we are reduced to proving Theorem \ref{th:strat-StMod}.\bigskip

\noindent
{\bf Step 2.} Go up to $\KInj{kG}$. According to the discussion in
Lecture 3, each tensor ideal localising subcategory of $\StMod(kG)$ corresponds
to two such for $\KInj{kG}$. One is the image under the inclusion
\[ 
\StMod(kG)\simeq\KacInj{kG} \to \KInj{kG} 
\]
and the other is generated by this  together with the extra object $pk$. So we are reduced to proving Theorem \ref{th:strat-KInj}.\bigskip

\noindent
{\bf Step 3.} 
Reduce from $\KInj{kG}$ to $\KInj{kE}$ for $E$ an elementary abelian $p$-group using the Quillen Stratification Theorem and a suitably strengthened version of Chouinard's Theorem. This is the subject of the last lecture on Friday.\bigskip

\noindent
{\bf Step 4.} 
If $p=2$, we can go  from $\KInj{kE}$ to differential graded modules over a graded polynomial ring, viewed as a differential graded algebra, using a version of the Bernstein--Gelfand--Gelfand correspondence. This is explained in Section~\ref{sec:Friday2}.

The proof is rather more complicated when $p$ is odd. We still get down to a graded polynomial ring, but the reduction involves a number of steps. The first one is to pass to the Koszul complex of group algebra $kE$. The Koszul complex can be viewed as a differential graded algebra, and then it is quasi-isomorphic to a graded exterior algebra, since $kE$ is a complete intersection ring. We then again apply a version of the  Bernstein--Gelfand--Gelfand correspondence to get to a graded polynomial ring. The technical tools needed to execute this proof require considerable preparation, beyond what is already presented in these lectures, so we can do no more than refer the interested readers to the source~\cite{Benson/Iyengar/Krause:bik3}.

\bigskip

\noindent
{\bf Step 5.} 
Deal directly with a graded polynomial ring using the minimality condition discussed in this lecture.
This is addressed in Section~\ref{sec:Friday1}.

\section{Consequences of stratification}
\label{sec:Thursday2}
In the last lecture we learnt about the stratification condition and its connection to the problem of classifying localizing subcategories of triangulated categories. In this lecture we present some other, not immediately obvious, consequences of stratification that serve to illustrate how strong a condition it is, and how useful it is when one can establish that it holds in some context. The basic reference for the material presented below is again \cite{Benson/Iyengar/Krause:bik2}.

For most of the lecture $\sfT$ will be an $R$-linear triangulated category as in Section~\ref{sec:Wednesday1}; we tackle the tensor triangulated case at the very end. Recall that $\sfT$ is said to be stratified\index{stratified} by $R$ if: 
\begin{enumerate}[\quad\rm(S1)]
\item
The local global principle holds; see \ref{pr:strat}. An equivalent condition is that 
\[
\gam_{\mcV}X \in \Loc_{\sfT}(\gam_{\fp}X\mid \fp\in\supp_{R}X)
\]
for any specialisation closed subset $\mcV$ of $\Spec R$; see \cite[Theorem~3.1]{Benson/Iyengar/Krause:bik2}.
\item
$\gam_{\fp}\sfT$ is minimal for each $\fp\in\supp_{R}\sfT$.
\end{enumerate}
Recall that an object $X$ of $\sfT$ is in $\gam_{\fp}\sfT$ if and only if $H^{*}_{C}(X)$ is $\fp$-local and $\fp$-torsion for all $C\in\sfT^{\sfc}$; see Corollary~\ref{cor:plocptor}. Condition (S2) is the statement that for any such $X\ne 0$, one has $\Loc_{\sfT}(X)=\gam_{\fp}\sfT$; that is to say, $X$ builds any other $\fp$-local and $\fp$-torsion object.

\begin{remark}
The local global principle (S1) holds when the Krull dimension of $R$ is finite, or if $\sfT$ is tensor triangulated with $\Loc_{\sfT}(\one)=\sfT$ and the ring $R$ acts on $\sfT$ via a homomorphism $R\to \End^{*}_{\sfT}(\one)$.
\end{remark}

\begin{example}
When $A$ is a commutative noetherian ring, $\sfD(A)$ is stratified by the canonical $A$-action. Indeed, (S1) holds because $\sfD(A)$ is suitably tensor triangulated, while (S2) is by a theorem of Neeman~\cite{Neeman:1992a}. We will present a proof of this result in Section~\ref{sec:Friday1}.
\end{example}

\begin{example}
When $G$ is a finite group and $k$ a field with $\Char k$ dividing $|G|$, both $\StMod(kG)$ and $\KInj{kG}$ are stratified by canonical actions of the cohomology ring $\HH *{G;k}$. This is proved in \cite{Benson/Iyengar/Krause:bik3}, and is the focal point of this seminar.
\end{example}

\subsection{Classification theorems}

For the remainder of this lecture $\sfT$ will be a triangulated category stratified by an action of a graded commutative ring $R$. The first consequence is that localising subcategories are parameterised by subsets of $\supp_{R}\sfT$, which is something that was discussed already in the previous lecture.

\begin{theorem}
\label{thm:strat-class}
The maps assigning a subcategory $\sfS$ to its support, $\supp_{R}\sfS$, induces a bijection
\[
\left\{
\text{Localizing subcategories of $\sfT$}
\right\} 
\xra{\ \supp_{R}(-)\ }
\left\{
  \text{Subsets of $\supp_{R}\sfT$}
\right\}\,.
\] 
Its inverse sends a subset $\mcU$ of $\supp_{R}\sfT$ to $\{X\in\sfT\mid \supp_{R}X\subseteq \mcU\}$. \qed
\end{theorem}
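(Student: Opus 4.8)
The plan is to derive this from the local-global classification of Theorem~\ref{th:lg-classifies} together with the minimality hypothesis (S2). Put $\mcV = \supp_{R}\sfT$. Since $\sfT$ is stratified the local-global principle (S1) holds, so Theorem~\ref{th:lg-classifies} already gives a bijection between localising subcategories of $\sfT$ and families $(\sfS(\fp))_{\fp\in\mcV}$ with $\sfS(\fp)$ a localising subcategory of $\gam_{\fp}\sfT$, implemented by $\sigma(\sfS)=(\sfS\cap\gam_{\fp}\sfT)_{\fp\in\mcV}$ and $\tau((\sfS(\fp))_{\fp\in\mcV})=\Loc_{\sfT}(\sfS(\fp)\mid\fp\in\mcV)$. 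The first step is to feed in (S2): for each $\fp\in\mcV$ the subcategory $\gam_{\fp}\sfT$ is minimal, so its only localising subcategories are $0$ and $\gam_{\fp}\sfT$; hence a family $(\sfS(\fp))_{\fp\in\mcV}$ as above is nothing but the subset $\mcU=\{\fp\in\mcV\mid\sfS(\fp)=\gam_{\fp}\sfT\}$ of $\mcV$. Composing, one gets a bijection between localising subcategories of $\sfT$ and subsets of $\supp_{R}\sfT$; what remains is to check that this composite coincides with $\sfS\mapsto\supp_{R}\sfS$ and that its inverse is $\mcU\mapsto\{X\in\sfT\mid\supp_{R}X\subseteq\mcU\}$ (that the latter is a localising subcategory follows from the exactness and coproduct properties of support, which is routine).

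For the inverse map, I would prove the identity $\{X\in\sfT\mid\supp_{R}X\subseteq\mcU\}=\Loc_{\sfT}(\gam_{\fp}\sfT\mid\fp\in\mcU)$, which is exactly the image of $\mcU$ under $\tau$. The inclusion $\supseteq$ holds because every non-zero object of $\gam_{\fp}\sfT$ has support $\{\fp\}$ by Corollary~\ref{cor:plocptor}, while $\{X\mid\supp_{R}X\subseteq\mcU\}$ is localising. For $\subseteq$, given $X$ with $\supp_{R}X\subseteq\mcU$, the local-global principle (S1) writes $X$ as an object of $\Loc_{\sfT}(\gam_{\fp}X\mid\fp\in\supp_{R}X)$, and each $\gam_{\fp}X$ lies in $\gam_{\fp}\sfT$ with $\fp\in\mcU$. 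For the forward map I would show $\supp_{R}\sfS=\{\fp\in\mcV\mid\sfS\cap\gam_{\fp}\sfT\ne 0\}$: if $\fp\in\supp_{R}\sfS$, pick $W\in\sfS$ with $\gam_{\fp}W\ne 0$; then $\gam_{\fp}W\in\sfS\cap\gam_{\fp}\sfT$, using that $\gam_{\fp}$ carries $\sfS$ into itself, exactly as in the proof of Theorem~\ref{th:lg-classifies}; conversely a non-zero $X\in\sfS\cap\gam_{\fp}\sfT$ has $\supp_{R}X=\{\fp\}$ by Corollary~\ref{cor:plocptor}, so $\fp\in\supp_{R}\sfS$ (and $\fp\in\mcV$). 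By minimality $\sfS\cap\gam_{\fp}\sfT\ne 0$ forces $\sfS\cap\gam_{\fp}\sfT=\gam_{\fp}\sfT$, so $\supp_{R}\sfS$ records precisely the subset $\mcU$ attached to $\sfS$ by the composite bijection.

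The substantive work is all absorbed into Theorem~\ref{th:lg-classifies} and into hypothesis (S2); the only real obstacle here is bookkeeping---making certain that the bijection produced by the two structural inputs is genuinely the pair $(\supp_{R}(-),\;\{X\mid\supp_{R}X\subseteq\mcU\})$ rather than some other mutually inverse pair. A clean self-contained alternative, bypassing $\tau$ altogether, is to verify the two round-trip identities directly: $\supp_{R}\{X\mid\supp_{R}X\subseteq\mcU\}=\mcU$ (which holds because, for each $\fp\in\mcV$, the non-zero subcategory $\gam_{\fp}\sfT$ supplies an object of support $\{\fp\}$), and $\{X\mid\supp_{R}X\subseteq\supp_{R}\sfS\}=\sfS$; the inclusion $\subseteq$ in the second identity is where stratification enters, combining (S1) with (S2) as in the previous paragraph, and $\supseteq$ is trivial.
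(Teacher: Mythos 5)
Your argument is correct and follows the same route the paper indicates: the paper also deduces this statement from Theorem~\ref{th:lg-classifies} combined with the minimality condition (S2), which collapses each family $(\sfS(\fp))_{\fp\in\supp_R\sfT}$ to the subset $\{\fp\mid \sfS(\fp)=\gam_\fp\sfT\}$, the bookkeeping identifications $\supp_R\sfS=\{\fp\mid \sfS\cap\gam_\fp\sfT\ne 0\}$ and $\{X\mid\supp_RX\subseteq\mcU\}=\Loc_\sfT(\gam_\fp\sfT\mid\fp\in\mcU)$ being exactly the steps sketched around Proposition~\ref{pr:strat}. Your direct verification of the two round-trip identities is likewise the argument the paper alludes to, so there is nothing to add.
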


The following statement is an immediate consequence of the theorem; on the other hand, it is not hard to prove that the corollary implies stratification.

\begin{corollary}
\label{cor:strat-building}
If $\supp_{R}X\subseteq \supp_{R}Y$, then $X$ is in $\Loc_{\sfT}(Y)$.\qed
\end{corollary}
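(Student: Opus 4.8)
The plan is to derive Corollary~\ref{cor:strat-building} directly from the classification theorem~\ref{thm:strat-class}, since the two are essentially equivalent. First I would observe that $\supp_R X \subseteq \supp_R Y \subseteq \supp_R\sfT$, so both inclusions take place inside the indexing set of the bijection. The localising subcategory $\Loc_\sfT(Y)$ has a support, and the key point is to compute it: I claim $\supp_R \Loc_\sfT(Y) = \supp_R Y$. One inclusion is trivial since $Y \in \Loc_\sfT(Y)$. For the reverse inclusion, one needs that $\supp_R(-)$ does not grow when passing to the localising subcategory generated by an object; this follows from the exactness property (Theorem~\ref{thm:axioms}(3)), the behaviour of support under coproducts (which is part of the standard package — support of a coproduct is the union of supports), and stability under retracts and suspension. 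Concretely, the collection $\{Z \in \sfT \mid \supp_R Z \subseteq \supp_R Y\}$ is a localising subcategory (it is closed under coproducts, exact triangles, retracts, and shifts by the cited properties), and it contains $Y$, hence contains $\Loc_\sfT(Y)$; thus every $Z$ in $\Loc_\sfT(Y)$ has $\supp_R Z \subseteq \supp_R Y$.

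Next I would apply Theorem~\ref{thm:strat-class} to the localising subcategory $\Loc_\sfT(Y)$. Its image under $\sigma$ is $\supp_R \Loc_\sfT(Y) = \supp_R Y$. On the other hand, the hypothesis $\supp_R X \subseteq \supp_R Y$ says precisely that $X$ lies in the subcategory $\{Z \in \sfT \mid \supp_R Z \subseteq \supp_R Y\}$, which is exactly $\tau(\supp_R Y)$, the preimage of $\supp_R Y$ under the bijection. Since $\sigma$ and $\tau$ are mutually inverse, $\tau(\supp_R Y) = \Loc_\sfT(Y)$. Therefore $X \in \Loc_\sfT(Y)$, which is the assertion.

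I do not expect a serious obstacle here: the entire content has been front-loaded into Theorem~\ref{thm:strat-class}, and the only thing requiring a small argument is the identity $\tau(\supp_R Y) = \Loc_\sfT(Y)$, i.e. that $\Loc_\sfT(Y)$ is the \emph{smallest} localising subcategory whose support is $\supp_R Y$ — but this is automatic from the bijection, because $\tau(\supp_R Y)$ is \emph{some} localising subcategory with support $\supp_R Y$ containing $Y$, and by minimality of $\Loc_\sfT(Y)$ among localising subcategories containing $Y$ together with injectivity of $\sigma$ one gets equality. The mildest care needed is to confirm that the list of elementary properties of $\supp_R(-)$ invoked (coproducts, retracts, exactness, shift-invariance) are all available; exactness and shift-invariance are in Theorem~\ref{thm:axioms}, and the coproduct and retract statements are part of the basic formalism established in \cite{Benson/Iyengar/Krause:2008a} and already used implicitly in the proof of Theorem~\ref{th:lg-classifies}. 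So the proof is a two-line deduction once these bookkeeping points are in place.

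The remark that ``it is not hard to prove that the corollary implies stratification'' I would leave as stated, or sketch in one sentence: given Corollary~\ref{cor:strat-building}, the map $\sigma$ is injective (if $\supp_R \sfS = \supp_R \sfS'$ then every object of $\sfS$ is built from objects of $\sfS'$ and vice versa), and surjectivity follows from the realisability of arbitrary supports via Koszul objects $\kos C\fp$ as in Theorem~\ref{thm:cohom-supp2}; combined with the local-global principle this recovers (S1)--(S2). But for the statement at hand only the forward direction is needed, and the argument above completes it.
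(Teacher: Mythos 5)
Your proposal is correct and matches the paper's intent: the paper offers no separate argument, stating only that the corollary is an immediate consequence of Theorem~\ref{thm:strat-class}, and your deduction (identify $\Loc_\sfT(Y)$ with $\tau(\supp_R Y)$ via the bijection and observe that the hypothesis places $X$ in $\tau(\supp_R Y)$) is exactly the intended one. The only redundancy is that your verification that $\{Z \mid \supp_R Z \subseteq \supp_R Y\}$ is localising is already built into the statement of the theorem, since $\tau(\mcU)$ is asserted to be the inverse image and hence a localising subcategory.
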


Next we explain how, under further, though mild, hypotheses on $\sfT$ stratification implies a classification of the thick subcategories of compact objects.

\begin{definition}
We say that an $R$-linear triangulated category $\sfT$ is \emph{noetherian}\index{noetherian category} if for all compact objects $C$ in $\sfT$ the $R$-module $\End_{\sfT}^{*}(C)$ is finitely generated; equivalently, if the $R$-module $\Hom_{\sfT}^{*}(C,D)$ is finitely generated for all $D$ compact.
\end{definition}

The derived category of a commutative noetherian ring $R$, viewed as an $R$-linear category, is noetherian: This is easy to verify, once you accept that the compact objects are the perfect complexes; see Theorem~\ref{thm:compacts-DA}. Another example of a noetherian category is $\KInj{kG}$, the homotopy category of complexes of injectives
of a finite group $G$, viewed as $H^{*}(G,k)$-linear categories. This is a restatement of Even's theorem~\ref{thm:Evens}, given the identification of compact objects in the categories,  Theorem~\ref{thm:kinj-generation}.
Note that the stable module category $\StMod(kG)$ is not usually noetherian, see Theorem~\ref{thm:tate-nonnoetherian}; however, it embeds in the noetherian category $\KInj{kG}$, by Theorem~\ref{th:recollement}, and this usually suffices for applications.

One consequence of the noetherian condition is that supports of compact objects are closed. A more precise statement is proved in \cite[Theorem~5.5]{Benson/Iyengar/Krause:2008a}.

\begin{theorem}
\label{thm:fg-support}
If $C$ is a compact object in $\sfT$ and the $R$-module $\End_{\sfT}^{*}(C)$ is noetherian, then 
\[
\supp_{R}C = \mcV(\fa)\quad\text{where $\fa=\Ker(R\to \End^{*}_{\sfT}(C))$.}
\]
In particular, $\supp_{R}C$ is a closed subset of $\Spec R$. \qed
\end{theorem}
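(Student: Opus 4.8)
The plan is to establish the two inclusions $\supp_{R}C\subseteq\mcV(\fa)$ and $\mcV(\fa)\subseteq\supp_{R}C$ separately; the first is formal, while the second is where the finite generation hypothesis enters, through the Koszul calculus of Section~\ref{sec:Wednesday2}. Write $\phi_{C}\colon R\to\End^{*}_{\sfT}(C)$ for the structure homomorphism, so that $\fa=\Ker\phi_{C}$, and record two elementary facts that will be used throughout. First, $R$ acts on the ring $\End^{*}_{\sfT}(C)$ through $\phi_{C}$, so an element of $R$ annihilates $\End^{*}_{\sfT}(C)$ precisely when it lies in $\fa$; since $\End^{*}_{\sfT}(C)$ is a finitely generated $R$-module by hypothesis, this yields $\Supp_{R}\End^{*}_{\sfT}(C)=\mcV(\ann_{R}\End^{*}_{\sfT}(C))=\mcV(\fa)$. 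Second, for every compact object $D$ the graded group $\Hom^{*}_{\sfT}(D,C)$ is a left module over $\End^{*}_{\sfT}(C)$ by composition, hence again an $R$-module through $\phi_{C}$; so $\fa\cdot\Hom^{*}_{\sfT}(D,C)=0$, and $\Hom^{*}_{\sfT}(D,C)$ is finitely generated over $R$ because $\sfT$ is noetherian and $D$, $C$ are compact.

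\emph{The inclusion $\supp_{R}C\subseteq\mcV(\fa)$.} By the Cohomology axiom of Theorem~\ref{thm:axioms},
\[
\cl(\supp_{R}C)=\cl(\supp_{R}H^{*}(C))=\cl\Big(\bigcup_{D\in\sfT^{\sfc}}\supp_{R}\Hom^{*}_{\sfT}(D,C)\Big)\subseteq\mcV(\fa),
\]
the last inclusion holding because each $\Hom^{*}_{\sfT}(D,C)$ is annihilated by $\fa$ and $\mcV(\fa)$ is Zariski closed. In particular $\supp_{R}C\subseteq\mcV(\fa)$.

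\emph{The inclusion $\mcV(\fa)\subseteq\supp_{R}C$.} Fix $\fp\in\mcV(\fa)$; we must show $\gam_{\fp}C\neq0$. By Proposition~\ref{prop:kosprop}(4), applicable since $C$ is compact, there is an isomorphism $\Hom^{*}_{\sfT}(\kos C{\fp},\gam_{\fp}C)\cong\Hom^{*}_{\sfT}(\kos C{\fp},C)_{\fp}$, so it suffices to prove $\Hom^{*}_{\sfT}(\kos C{\fp},C)_{\fp}\neq0$. Choose a finite generating sequence $\bsr=r_{1},\dots,r_{n}$ of $\fp$ and set $C_{0}=C$ and $C_{i}=\kos{C_{i-1}}{r_{i}}$, so that $\kos C{\fp}=C_{n}$ and every $C_{i}$ is compact (being the third term of a triangle between compact objects); thus each $\Hom^{*}_{\sfT}(C_{i},C)$ is finitely generated over $R$, and hence $\Hom^{*}_{\sfT}(C_{i},C)_{\fp}$ is finitely generated over the graded-local ring $R_{\fp}$. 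Localising the exact sequence \eqref{eq:kosles} at $\fp$ exhibits a shift of $\Hom^{*}_{\sfT}(C_{i},C)_{\fp}/r_{i+1}\Hom^{*}_{\sfT}(C_{i},C)_{\fp}$ as a submodule of $\Hom^{*}_{\sfT}(C_{i+1},C)_{\fp}$ for each $i$. Now $\Hom^{*}_{\sfT}(C_{0},C)_{\fp}=\End^{*}_{\sfT}(C)_{\fp}\neq0$ since $\fp\in\mcV(\fa)=\Supp_{R}\End^{*}_{\sfT}(C)$; and if $\Hom^{*}_{\sfT}(C_{i},C)_{\fp}\neq0$ then, $r_{i+1}$ lying in the maximal homogeneous ideal of $R_{\fp}$, the graded Nakayama lemma gives $\Hom^{*}_{\sfT}(C_{i},C)_{\fp}/r_{i+1}\Hom^{*}_{\sfT}(C_{i},C)_{\fp}\neq0$, whence $\Hom^{*}_{\sfT}(C_{i+1},C)_{\fp}\neq0$. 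By induction $\Hom^{*}_{\sfT}(\kos C{\fp},C)_{\fp}=\Hom^{*}_{\sfT}(C_{n},C)_{\fp}\neq0$, so $\gam_{\fp}C\neq0$ and $\fp\in\supp_{R}C$.

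Combining the two inclusions gives $\supp_{R}C=\mcV(\fa)$, a Zariski-closed subset of $\Spec R$, as asserted. The main obstacle is the second inclusion: the axioms of Theorem~\ref{thm:axioms} only deliver $\cl(\supp_{R}C)=\mcV(\fa)$, and promoting this to an honest equality — equivalently, seeing that the noetherian hypothesis forces $\supp_{R}C$ itself to be closed — genuinely requires the Koszul objects $\kos C{\fp}$, the comparison isomorphism of Proposition~\ref{prop:kosprop}(4), and the fact that \emph{all} of the auxiliary modules $\Hom^{*}_{\sfT}(C_{i},C)$ are finitely generated over $R$, which is exactly what makes the Nakayama induction run.
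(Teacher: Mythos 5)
The paper itself gives no proof of this theorem---it is quoted with a reference to \cite[Theorem~5.5]{Benson/Iyengar/Krause:2008a}---so the comparison is with that source, and your argument is in substance the same as the one given there: the inclusion $\supp_{R}C\subseteq\mcV(\fa)$ via the Cohomology axiom and the fact that $\fa$ annihilates each $\Hom^{*}_{\sfT}(D,C)$, and the reverse inclusion by testing $\gam_{\fp}C$ against the Koszul object $\kos C{\fp}$ through Proposition~\ref{prop:kosprop}(4) and running a graded Nakayama induction along the exact sequence \eqref{eq:kosles}. The overall structure and the inductive step are correct.

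The one point you must repair is the justification of finite generation. You assert that $\Hom^{*}_{\sfT}(D,C)$, and in particular each $\Hom^{*}_{\sfT}(C_{i},C)$, is finitely generated over $R$ ``because $\sfT$ is noetherian''---but the theorem does not assume the category is noetherian; its only hypothesis is that the single $R$-module $\End^{*}_{\sfT}(C)$ is noetherian. As written, your argument proves the statement only under that stronger blanket assumption. The gap closes with a tool you already have in play: applying \eqref{eq:kosles} with $C_{i}$ in the first variable and $X=C$ exhibits $\Hom^{*}_{\sfT}(C_{i+1},C)$ as the middle term of a short exact sequence whose outer terms are a quotient of a shift of $\Hom^{*}_{\sfT}(C_{i},C)$ and the submodule $(0:_{\Hom^{*}_{\sfT}(C_{i},C)}r_{i+1})$; since $R$ is noetherian, both are noetherian whenever $\Hom^{*}_{\sfT}(C_{i},C)$ is, so by induction starting from $\Hom^{*}_{\sfT}(C_{0},C)=\End^{*}_{\sfT}(C)$ all of these modules are finitely generated, which is exactly what your Nakayama step needs. (Note also that for the first inclusion no finite generation is needed at all: $\supp_{R}\Hom^{*}_{\sfT}(D,C)\subseteq\mcV(\fa)$ already follows from Lemma~\ref{le:supp-ann} together with $\fa\cdot\Hom^{*}_{\sfT}(D,C)=0$.) With this substitution your proof is complete and matches the cited one.
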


From the stratification condition on $\sfT$ one gets:

\begin{theorem}
\label{thm:strat-thick}
When the $R$-linear triangulated  category $\sfT$ is stratified and noetherian, there are inclusion preserving inverse-bijections
\[
\left\{
\begin{gathered}
\text{Thick subcategories}\\ \text{of compact objects in $\sfT$}
\end{gathered}\; \right\} 
\xymatrix@C=3pc {\ar@<1ex>[r]^-{{\supp_{R}(-)}} & \ar@<1ex>[l]^-{{\supp_{R}^{-1}(-)}}}
\left\{
\begin{gathered}
  \text{Specialisation closed }\\
  \text{subsets of $\supp_{R}\sfT$}
\end{gathered}\;
\right\}
\] 
where $\supp_{R}^{-1}(\mcV)=\{C\in\sfT^{\sfc}\mid \supp_{R}C\subseteq \mcV\}$, for $\mcV$ specialisation closed.
\end{theorem}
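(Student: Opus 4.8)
The plan is to deduce Theorem~\ref{thm:strat-thick} from the classification of all localising subcategories (Theorem~\ref{thm:strat-class}) together with the noetherian hypothesis, which forces supports of compact objects to be closed (Theorem~\ref{thm:fg-support}). First I would set up the two maps. Given a thick subcategory $\sfC$ of $\sfT^{\sfc}$, its support $\supp_R\sfC=\bigcup_{C\in\sfC}\supp_RC$ is a union of closed sets by Theorem~\ref{thm:fg-support}, hence specialisation closed, and it is contained in $\supp_R\sfT$; so $\supp_R(-)$ does land in the indicated target. Conversely, for $\mcV$ specialisation closed in $\supp_R\sfT$, the assignment $\supp_R^{-1}(\mcV)=\{C\in\sfT^{\sfc}\mid\supp_RC\subseteq\mcV\}$ is a thick subcategory of $\sfT^{\sfc}$: closure under exact triangles, suspensions and retracts is immediate from the exactness and basic properties of support (Theorem~\ref{thm:axioms}).

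Next I would prove $\supp_R\circ\supp_R^{-1}=\mathrm{id}$. The inclusion $\supp_R(\supp_R^{-1}(\mcV))\subseteq\mcV$ is trivial. For the reverse, I need that every $\fp\in\mcV$ lies in the support of some compact object whose support is contained in $\mcV$. Here the natural candidate is a Koszul object $\kos C{\fp}$ built on a generating set of (a homogeneous ideal defining) $\fp$, where $C$ runs over a compact generating set $\sfG$ of $\sfT$. By Proposition~\ref{prop:kosprop} such $\kos C{\fp}$ is compact (being finitely built from $C$) and lies in $\sfT_{\mcV(\fp)}$, so $\supp_R\kos C{\fp}\subseteq\mcV(\fp)\subseteq\mcV$. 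Moreover Theorem~\ref{thm:cohom-supp2} shows $\fp\in\supp_R X$ iff $H^{*}_{\kos C{\fp}}(X)\ne 0$ for some $C\in\sfG$; choosing $X$ among the compact generators witnessing $\fp\in\supp_R\sfT$ gives a compact $\kos C{\fp}$ with $\fp$ actually in its support. This realizes each point of $\mcV$, so $\supp_R^{-1}(\mcV)$ has support exactly $\mcV$.

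Then I would prove $\supp_R^{-1}\circ\supp_R=\mathrm{id}$, i.e.\ that a thick subcategory $\sfC$ of $\sfT^{\sfc}$ is recovered as $\{C\in\sfT^{\sfc}\mid\supp_RC\subseteq\supp_R\sfC\}$. The inclusion $\subseteq$ is clear. For $\supseteq$, take a compact $D$ with $\supp_RD\subseteq\supp_R\sfC$. Since $\supp_RD$ is closed, hence quasi-compact, it is covered by finitely many $\supp_RC_i$ with $C_i\in\sfC$; setting $C=\bigoplus C_i\in\sfC$ (a finite sum, so still compact and in $\sfC$) one has $\supp_RD\subseteq\supp_RC$. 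By Corollary~\ref{cor:strat-building} (the building consequence of stratification), $D\in\Loc_{\sfT}(C)$. Now invoke Theorem~\ref{thm:neeman-compacts}: since $D$ and $C$ are compact and $D\in\Loc_{\sfT}(C)$, in fact $D\in\Thick_{\sfT}(C)$. As $\sfC$ is thick and contains $C$, we get $D\in\sfC$. Both maps are evidently inclusion-preserving, completing the bijection.

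The main obstacle is the step $\supp_R^{-1}\circ\supp_R=\mathrm{id}$, and specifically the passage from $D\in\Loc_{\sfT}(C)$ to $D\in\Thick_{\sfT}(C)\subseteq\sfC$: this is exactly where both the stratification hypothesis (via Corollary~\ref{cor:strat-building}) and the compactness bookkeeping (via Theorem~\ref{thm:neeman-compacts}, and the quasi-compactness of the closed set $\supp_RD$, which in turn rests on the noetherian hypothesis through Theorem~\ref{thm:fg-support}) all have to be combined. The surjectivity argument is comparatively routine once the Koszul-object machinery of Section~\ref{sec:Wednesday2} is in hand; the delicate point is making sure every ingredient — compactness of the $C_i$, finiteness of the cover, thickness of $\sfC$ — is genuinely available, which is precisely what the word \emph{noetherian} in the hypothesis buys us.
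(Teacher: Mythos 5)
Your argument is correct and is essentially the paper's own (which is only sketched there): both hinge on the same two points, namely that every closed subset of $\supp_R\sfT$ is realised inside the support of a compact object via Koszul objects, and that the implication $\supp_RD\subseteq\supp_RC\Rightarrow D\in\Thick_\sfT(C)$ follows by combining Corollary~\ref{cor:strat-building} with Theorem~\ref{thm:neeman-compacts}. Two justifications in your write-up should be tightened. First, the finite subcover of $\supp_RD$ by the $\supp_RC_i$ is not an instance of quasi-compactness, since these are closed rather than open sets; the correct reason is that the closed set $\supp_RD$ has finitely many irreducible components ($R$ being noetherian), each generic point lies in some $\supp_RC_i$, and that set, being specialisation closed, then absorbs the whole component. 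Second, for surjectivity the clean statement is $\supp_R(\kos C{\fp})=\mcV(\fp)\cap\supp_RC$ (Exercise~9 of Section~\ref{exer:Thursday}, which is precisely what the paper defers to); applying it to a compact $C$ with $\fp\in\supp_RC$ --- such a $C$ exists because $\supp_R\sfT=\supp_R\sfT^{\sfc}$ --- gives $\fp\in\supp_R\kos C{\fp}\subseteq\mcV(\fp)\subseteq\mcV$, whereas your appeal to Theorem~\ref{thm:cohom-supp2} only produces $H^{*}_{\kos C{\fp}}(X)\neq 0$ for some $X$, which by orthogonality locates a point of $\cl(\supp_R\kos C{\fp})\cap\supp_RX$ but does not by itself put $\fp$ into $\supp_R\kos C{\fp}$.
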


\begin{proof}
There are two crucial issues: One is that for compact objects $C,D$, if $\supp_{R}C\subseteq\supp_{R}D$, then $C$ is in $\Thick_{\sfT}(D)$; second, any closed subset of $\supp_{R}\sfT$ is the support of some compact object.
We leave the latter to the exercises, and sketch an argument for the former.

When $\supp_{R}C\subseteq \supp_{R}D$ holds, it follows from Corollary~\ref{cor:strat-building} that $C$ is in the localising subcategory generated by $D$. Since $C,D$ are compact, Theorem~\ref{thm:neeman-compacts} implies that $C$ is in fact in the thick subcategory generated by $D$, as desired.

A different proof for this part of the proof is presented in \cite[Section~6]{Benson/Iyengar/Krause:bik2}.
\end{proof}

\subsection{Orthogonality}

A central problem in any additive category is to understand when there are non-zero morphisms between a given pair of objects. Lemma~\ref{le:minimality} makes it clear that this is at the heart of the stratification property for the triangulated categories we have been considering. Conversely, the stratification condition allows us to give fairly precise answers to this problem. The one below is from \cite[Section~5]{Benson/Iyengar/Krause:bik2}. For a proof in the case of tensor triangulated categories, see Theorem~\ref{thm:tensor-product} and the remarks following it.

\begin{theorem}
\label{thm:orthogonality}
When the $R$-linear triangulated category $\sfT$ is noetherian and stratified, there is an equality 
\[
\supp_{R} \Hom^{*}_{\sfT}(C, D) = \supp_{R}C \cap \supp_{R}D\,,
\]
for each pair of compact objects $C, D$ in $\sfT$. \qed
\end{theorem}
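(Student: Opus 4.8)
The plan is to establish the two inclusions separately. The inclusion $\supp_{R}\Hom^{*}_{\sfT}(C,D)\subseteq\supp_{R}C\cap\supp_{R}D$ uses only the noetherian hypothesis. Set $\fa_{C}=\Ker(R\to\End^{*}_{\sfT}(C))$ and $\fa_{D}=\Ker(R\to\End^{*}_{\sfT}(D))$; by Theorem~\ref{thm:fg-support} one has $\supp_{R}C=\mcV(\fa_{C})$ and $\supp_{R}D=\mcV(\fa_{D})$. Since $R$ acts on $\Hom^{*}_{\sfT}(C,D)$ by precomposition through $\End^{*}_{\sfT}(C)$ and by postcomposition through $\End^{*}_{\sfT}(D)$, both $\fa_{C}$ and $\fa_{D}$ annihilate this module. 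As it is a finitely generated $R$-module (again the noetherian hypothesis), its support equals $\mcV(\ann_{R}\Hom^{*}_{\sfT}(C,D))$, which is contained in $\mcV(\fa_{C})\cap\mcV(\fa_{D})=\supp_{R}C\cap\supp_{R}D$.

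For the reverse inclusion, fix $\fp\in\supp_{R}C\cap\supp_{R}D$, so $\gam_{\fp}C\neq0$ and $\gam_{\fp}D\neq0$; the goal is $\Hom^{*}_{\sfT}(C,D)_{\fp}\neq0$. First I would reduce this, via compactness and the Koszul technology of Section~\ref{sec:Wednesday2}, to the single assertion $\Hom^{*}_{\sfT}(\kos C{\fp},\gam_{\fp}D)\neq0$. Indeed, $C$ compact gives $\Hom^{*}_{\sfT}(C,D)_{\fp}\cong\Hom^{*}_{\sfT}(C,D_{\fp})$, a finitely generated $\fp$-local $R_{\fp}$-module, so by Proposition~\ref{prop:kosprop}(3) it vanishes precisely when $\Hom^{*}_{\sfT}(\kos C{\fp},D_{\fp})$ does; compactness of $\kos C{\fp}$ together with Proposition~\ref{prop:kosprop}(4) identifies the latter with $\Hom^{*}_{\sfT}(\kos C{\fp},D)_{\fp}\cong\Hom^{*}_{\sfT}(\kos C{\fp},\gam_{\fp}D)$; and since $\gam_{\fp}D\in\gam_{\fp}\sfT$, the module $\Hom^{*}_{\sfT}(C,\gam_{\fp}D)$ is $\fp$-torsion (Corollary~\ref{cor:plocptor}), so a second application of Proposition~\ref{prop:kosprop}(3) shows $\Hom^{*}_{\sfT}(C,\gam_{\fp}D)$ and $\Hom^{*}_{\sfT}(\kos C{\fp},\gam_{\fp}D)$ vanish together.

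The last step is where stratification enters. Write $K=\kos C{\fp}$; it is compact and, by Proposition~\ref{prop:kosprop}(2), lies in $\sfT_{\mcV(\fp)}$. Consequently $\gam_{\fp}K\cong K_{\fp}$, and since $\gam_{\fp}D$ is $\fp$-local the localization morphism $K\to K_{\fp}$ induces an isomorphism $\Hom^{*}_{\sfT}(\gam_{\fp}K,\gam_{\fp}D)\cong\Hom^{*}_{\sfT}(K,\gam_{\fp}D)$ (this is the left adjointness in Proposition~\ref{pr:loc-basic}). Because $\gam_{\fp}$ is exact and compatible with the $R$-action it commutes with the Koszul construction, so $\gam_{\fp}K\cong\kos{\gam_{\fp}C}{\fp}$; as $\gam_{\fp}C\neq0$ and each element of $\fp$ acts locally nilpotently on the cohomology of the $\fp$-torsion object $\gam_{\fp}C$, the inductive argument behind Theorem~\ref{thm:cohom-supp} gives $\gam_{\fp}K\neq0$. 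Thus $\gam_{\fp}K$ and $\gam_{\fp}D$ are non-zero objects of the minimal localising subcategory $\gam_{\fp}\sfT$, so Lemma~\ref{le:minimality} yields $\Hom^{*}_{\sfT}(\gam_{\fp}K,\gam_{\fp}D)\neq0$, hence $\Hom^{*}_{\sfT}(K,\gam_{\fp}D)\neq0$, as required.

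The main obstacle — and the reason for passing to the Koszul object rather than working with $C$ directly — is the isomorphism $\Hom^{*}_{\sfT}(\gam_{\fp}K,\gam_{\fp}D)\cong\Hom^{*}_{\sfT}(K,\gam_{\fp}D)$: the analogous statement with $C$ in place of $K$ is \emph{false} in general, because $\gam_{\fp}C=\gam_{\mcV(\fp)}(C_{\fp})$ involves a colocalization as well as a localization, and maps out of $\gam_{\mcV(\fp)}(-)$ into an $\fp$-torsion object need not vanish. Since $K$ is already $\mcV(\fp)$-torsion, $\gam_{\fp}$ reduces to a Bousfield localization on it, which commutes past $\Hom^{*}_{\sfT}(-,\gam_{\fp}D)$ for $\fp$-local targets. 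Carrying out the bookkeeping of the torsion and locality conditions in Propositions~\ref{prop:kosprop} and~\ref{pr:loc-basic}, and confirming that $\gam_{\fp}$ commutes with the Koszul construction, is the technical core of the argument; everything else is formal once the reduction to a single prime has been made.
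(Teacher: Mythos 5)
Your argument is correct, but it follows a different route from the one this text offers. The notes do not prove Theorem~\ref{thm:orthogonality} directly: they quote it from \cite[Section~5]{Benson/Iyengar/Krause:bik2} and indicate that, when $\sfT$ is tensor triangulated, it can be deduced from the tensor product theorem~\ref{thm:tensor-product} via the function-object identity $\fHom(C,D)\cong\fHom(C,\one)\otimes D$, the tensor product theorem itself resting on $\gam_{\fp}X\cong X\otimes\gam_{\fp}\one$ and tensor-ideal minimality. Your proof instead works entirely in the generality in which the theorem is stated — an arbitrary noetherian, stratified $R$-linear category — using the Koszul technology of Section~\ref{sec:Wednesday2}: the easy inclusion from Theorem~\ref{thm:fg-support} and annihilators, and the reverse inclusion by passing from $\Hom^{*}_{\sfT}(C,D)_{\fp}$ to $\Hom^{*}_{\sfT}(\kos C{\fp},\gam_{\fp}D)$ via Proposition~\ref{prop:kosprop}, and then invoking minimality of $\gam_{\fp}\sfT$ through Lemma~\ref{le:minimality}; your observation that one must replace $C$ by $\kos C{\fp}$ because $\Hom^{*}_{\sfT}(\gam_{\fp}C,-)\cong\Hom^{*}_{\sfT}(C,-)$ fails on $\fp$-local torsion targets is exactly the right point, and it is what makes the non-tensor argument work. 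Two small streamlinings: the non-vanishing $\gam_{\fp}(\kos C{\fp})\neq 0$ is immediate from $\supp_{R}(\kos C{\fp})=\mcV(\fp)\cap\supp_{R}C$ (Exercise~9 of Thursday), which spares you the covariant analogue of Proposition~\ref{prop:kosprop}(3) (your local-nilpotence step really uses that a non-zero $\fp$-torsion module has non-zero $r$-socle, since torsion modules can satisfy $M=rM$); and for the return trip you only need the contrapositive of the easy direction of Proposition~\ref{prop:kosprop}(3), so the finite-generation hypotheses there are not needed for the hard inclusion. The trade-off between the two approaches is clear: the route suggested in the notes is a two-line deduction but requires the tensor structure, whereas yours (which is close in spirit to the cited source) establishes the statement for general $R$-linear $\sfT$, e.g.\ where no monoidal structure is available.
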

The support on the left hand side is the usual one from commutative algebra, as the $R$-module  $\Hom^{*}_{\sfT}(C, D)$ is finitely generated; see Lemma~\ref{le:supp-ann}.

As a corollary one gets the following ``symmetry of Ext vanishing'' type result that was proved for local complete intersection rings by Avramov and Buchweitz~\cite{Avramov/Buchweitz:2000a}. The corresponding result is also true for modules over group algebras, and is much easier to prove; see Exercise~{10} at the end of this chapter.

\begin{corollary}
When in addition $R^{i} =0$ holds for $i<0$, one has $\Hom^{n}_{\sfT}(C,D)=0$ for $n\gg 0$ if and only if $\Hom^{n}_{\sfT}(D,C)=0$ for $n\gg 0$.
\end{corollary}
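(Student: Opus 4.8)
The plan is to read off the statement from the orthogonality formula of Theorem~\ref{thm:orthogonality}. Since $\sfT$ is noetherian, the graded $R$-modules $\Hom^{*}_{\sfT}(C,D)$ and $\Hom^{*}_{\sfT}(D,C)$ are finitely generated, and by that theorem
\[
\supp_{R}\Hom^{*}_{\sfT}(C,D)\ =\ \supp_{R}C\cap\supp_{R}D\ =\ \supp_{R}\Hom^{*}_{\sfT}(D,C)\,.
\]
Thus both Hom-modules have the same support, and it remains only to check that the condition ``$\Hom^{n}_{\sfT}(C,D)=0$ for $n\gg 0$'' can be expressed purely in terms of $\supp_{R}\Hom^{*}_{\sfT}(C,D)$; being then rephrased as a condition on $\supp_{R}C\cap\supp_{R}D$, it is manifestly symmetric in $C$ and $D$.

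The translation into support is the following elementary observation, which is where the hypothesis $R^{i}=0$ for $i<0$ enters. Let $R_{+}=\bigoplus_{i>0}R^{i}$ be the (homogeneous, and, since $R$ is noetherian, finitely generated) ideal of positive-degree elements. For any finitely generated graded $R$-module $M$ I claim that the conditions (i) $M^{n}=0$ for $n\gg 0$, (ii) $R_{+}^{N}M=0$ for some $N\ge 0$, and (iii) $\supp_{R}M\subseteq\mcV(R_{+})$ are equivalent. A finitely generated graded module over a non-negatively graded ring is bounded below; hence, if it is also bounded above, then $R^{j}M=0$ for all large $j$, and since $R_{+}^{N}\subseteq\bigoplus_{j\ge N}R^{j}$ this gives (i)~$\Rightarrow$~(ii), while (ii)~$\Rightarrow$~(i) is immediate from boundedness below of a finite generating set. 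For (ii)~$\iff$~(iii): condition (ii) says exactly that $R_{+}\subseteq\sqrt{\ann_{R}M}$ (using that $R_{+}$ is finitely generated), equivalently $\mcV(\ann_{R}M)\subseteq\mcV(R_{+})$; and by Lemma~\ref{le:supp-ann} one has $\Supp_{R}M=\mcV(\ann_{R}M)$, which is the specialisation closure of $\supp_{R}M$, so --- $\mcV(R_{+})$ being closed --- this is the same as (iii).

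Putting the two ingredients together, $\Hom^{n}_{\sfT}(C,D)=0$ for $n\gg 0$ holds if and only if $\supp_{R}C\cap\supp_{R}D\subseteq\mcV(R_{+})$, a condition patently symmetric in $C$ and $D$, which is precisely the assertion of the corollary. The only substantive input is Theorem~\ref{thm:orthogonality} itself (already established); the remainder is routine graded commutative algebra, the one point deserving a little care being that the implication (i)~$\Rightarrow$~(ii) genuinely uses finite generation of $M$ --- it already fails for $M=R$ with, say, $R=k[x]$ and $|x|=1$. I therefore anticipate no serious obstacle.
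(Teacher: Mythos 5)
Your argument is correct and takes essentially the same route as the paper: invoke Theorem~\ref{thm:orthogonality} to identify $\supp_{R}\Hom^{*}_{\sfT}(C,D)$ with $\supp_{R}C\cap\supp_{R}D=\supp_{R}\Hom^{*}_{\sfT}(D,C)$, and translate ``vanishing in all large degrees'' into the support condition $\supp_{R}(-)\subseteq\mcV(R^{\ges 1})$ for finitely generated graded modules over the non-negatively graded noetherian ring $R$. The only difference is that you spell out this last equivalence, which the paper leaves as an exercise.
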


\begin{proof}
For $R$ as in the statement, a finitely generated $R$-module $M$ satisfies $M^{i}=0$ for $i\gg 0$ if and only if $\supp_{R}M\subseteq \mcV(R^{\ges 1})$; we leave this as an exercise.

Assume $\Hom^{n}_{\sfT}(C,D)=0$ for $n\ge s$. Then $R^{\ges s}$ annihilates $\Hom^{*}_{\sfT}(C,D)$. Noting that $R^{\ges s}$ is an ideal of $R$, as $R^{i} =0$ for $i<0$, one gets the inclusion below:
\[
\supp_{R}\Hom^{*}_{\sfT}(D,C) = \supp_{R}\Hom^{*}_{\sfT}(C,D) \subseteq \mcV(R^{\ges s})=\mcV(R^{\ges 1})\,.
\]
The equality on the left is by Theorem~\ref{thm:orthogonality}, while the one of the right holds because $R^{\ges 1}$ and $R^{\ges s}$ have the same radical. The desired vanishing is thus a consequence of the exercise from the previous paragraph.
\end{proof}

In \cite[Section~5]{Benson/Iyengar/Krause:bik2}  we present variations of Theorem~\ref{thm:orthogonality} where, for example, $D$ need not be compact. Using these one can give another proof of the classification of thick subcategories of compact objects, Theorem~\ref{thm:strat-thick}.

\subsection{Tensor triangulated categories}

Let $\sfT$ be a tensor triangulated category with a canonical $R$-action. We assume that $\sfT$ is stratified\index{tensor!triangulated category!stratified} as a tensor triangulated category, so that for each $\fp\in\supp_{R}\sfT$, the subcategory $\gam_{\fp}\sfT$, which is tensor ideal, contains no non-zero tensor ideal localising subcategories.
There is then a \emph{tensor product theorem}\index{tensor!product!theorem} for support:

\begin{theorem}
\label{thm:tensor-product}
For any objects $X, Y$ in $\sfT$ there is an equality
\[
\supp_{R}(X\otimes Y) = \supp_{R} X \cap \supp_{R}Y\,.
\]
\end{theorem}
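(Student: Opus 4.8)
The plan is to prove the tensor product theorem \ref{thm:tensor-product} by reducing both inclusions to the local situation, where stratification does the work. Throughout, recall from Section~\ref{ssec:ttcats} that since the $R$-action is canonical we have $\gam_{\fp}(X\otimes Y)\cong (X\otimes Y)\otimes \gam_{\fp}\one$, and also $\gam_{\fp}X\cong X\otimes \gam_{\fp}\one$; moreover $\gam_{\fp}\one$ is itself $\fp$-local and $\fp$-torsion, so $\gam_{\fp}\gam_{\fp}\one\cong \gam_{\fp}\one$.

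\textbf{Step 1: the easy inclusion.} First I would show $\supp_{R}(X\otimes Y)\subseteq \supp_{R}X\cap \supp_{R}Y$. Using the tensor-compatibility of $\gam_{\fp}$, one has
\[
\gam_{\fp}(X\otimes Y)\cong X\otimes Y\otimes \gam_{\fp}\one\cong (X\otimes \gam_{\fp}\one)\otimes Y\cong \gam_{\fp}X\otimes Y\,.
\]
Hence if $\fp\notin \supp_{R}X$, i.e.\ $\gam_{\fp}X=0$, then $\gam_{\fp}(X\otimes Y)=0$, so $\fp\notin \supp_{R}(X\otimes Y)$; by symmetry the same holds if $\fp\notin\supp_{R}Y$. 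This gives the inclusion, and it uses only the local-global formalism, not minimality.

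\textbf{Step 2: reduce the hard inclusion to $\gam_{\fp}\one$.} Now suppose $\fp\in \supp_{R}X\cap \supp_{R}Y$; I must show $\gam_{\fp}(X\otimes Y)\ne 0$. By the computation in Step~1, $\gam_{\fp}(X\otimes Y)\cong \gam_{\fp}X\otimes \gam_{\fp}Y\otimes \gam_{\fp}\one$ (tensoring in an extra $\gam_{\fp}\one$ is harmless since $\gam_{\fp}X\cong \gam_{\fp}X\otimes\gam_{\fp}\one$ up to the usual identifications). It therefore suffices to show: if $X',Y'$ are non-zero objects of $\gam_{\fp}\sfT$ then $X'\otimes Y'\ne 0$, where we may take $X'=\gam_{\fp}X$, $Y'=\gam_{\fp}Y$. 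The key point is that $\gam_{\fp}\one$ is a non-zero object of $\gam_{\fp}\sfT$ whenever $\fp\in\supp_{R}\sfT$, and $\gam_{\fp}\sfT$ is a \emph{minimal} tensor ideal localising subcategory by the stratification hypothesis.

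\textbf{Step 3: minimality finishes it.} Apply the minimality test, Lemma~\ref{le:minimality-tensor}, inside $\sfT$: since $X'$ and $\gam_{\fp}\one$ are non-zero objects of the minimal tensor ideal localising subcategory $\gam_{\fp}\sfT$, there is an object $Z$ with $\Hom^{*}_{\sfT}(X'\otimes Z,\gam_{\fp}\one)\ne 0$. In particular $X'\otimes Z\ne 0$ and it lies in $\gam_{\fp}\sfT$ (which is tensor ideal), so $\Loc^{\otimes}_{\sfT}(X'\otimes Z)=\gam_{\fp}\sfT$ by minimality; hence $\gam_{\fp}\one\in \Loc^{\otimes}_{\sfT}(X'\otimes Z)\subseteq \Loc^{\otimes}_{\sfT}(X')$. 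Thus $\gam_{\fp}\one$ is built from $X'$ by coproducts, triangles, retracts and tensoring with arbitrary objects. Now tensor this containment with $Y'$: since $-\otimes Y'$ is exact and preserves coproducts, $\gam_{\fp}\one\otimes Y'\in \Loc^{\otimes}_{\sfT}(X'\otimes Y')$. But $\gam_{\fp}\one\otimes Y'\cong Y'\ne 0$ (as $Y'\in\gam_{\fp}\sfT$, so $Y'\cong Y'\otimes\gam_{\fp}\one$). Therefore $X'\otimes Y'\ne 0$, and consequently $\gam_{\fp}(X\otimes Y)\ne 0$, i.e.\ $\fp\in \supp_{R}(X\otimes Y)$. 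Combining Steps~1 and~3 gives the asserted equality.

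\textbf{Main obstacle.} The substantive content is entirely in Step~3, the passage ``non-zero in a minimal tensor ideal localising subcategory $\Rightarrow$ tensor-generates it $\Rightarrow$ product with another non-zero such object is non-zero.'' The delicate bookkeeping is to ensure one stays inside tensor ideal localising subcategories throughout and to invoke the correct minimality criterion (Lemma~\ref{le:minimality-tensor}) rather than the plain one (Lemma~\ref{le:minimality}); the auxiliary object $Z$ there can be taken to be a fixed compact generator, which streamlines the argument. Everything else is formal manipulation of the identities $\gam_{\fp}(-)\cong (-)\otimes\gam_{\fp}\one$ from Section~\ref{ssec:ttcats}.
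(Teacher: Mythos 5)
Your proof is correct and follows essentially the same route as the paper: the easy inclusion via $\gam_{\fp}(X\otimes Y)\cong \gam_{\fp}X\otimes Y$, and the reverse inclusion by using minimality of $\gam_{\fp}\sfT$ to get $\gam_{\fp}\one\in\Loc^{\otimes}_{\sfT}(\gam_{\fp}X)$ and then tensoring to conclude $\gam_{\fp}Y\in\Loc^{\otimes}_{\sfT}(\gam_{\fp}(X\otimes Y))$, hence non-vanishing. The only difference is that your detour through Lemma~\ref{le:minimality-tensor} in Step~3 is superfluous: since $\Loc^{\otimes}_{\sfT}(\gam_{\fp}X)$ is a non-zero tensor ideal localising subcategory contained in $\gam_{\fp}\sfT$, minimality gives $\Loc^{\otimes}_{\sfT}(\gam_{\fp}X)=\gam_{\fp}\sfT\ni\gam_{\fp}\one$ directly, which is exactly the paper's argument.
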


Compare this identity with the one in Theorem~\ref{thm:orthogonality}. For compact objects, these results can be deduced from each other, using $\fHom(C,D)\cong \fHom(C,\one)\otimes D$. Thus, Theorem~\ref{thm:orthogonality} may be viewed as an analogue of the tensor product theorem for categories that admit no tensor product. 

\begin{proof}
For each $\fp\in\Spec R$ there are isomorphisms
\[
\gam_{\fp}(X\otimes Y ) \cong \gam_{\fp}X \otimes \gam_{\fp}Y \cong  \gam_{\fp}X \otimes Y\,.
\]
These follow from the fact that $\gam_{\fp}(-)\cong \gam_{\fp}\one \otimes (-)$; see Section~\ref{ssec:ttcats}.
They yield an inclusion 
\[
\supp_{R}(X\otimes Y) \subseteq \supp_{R} X \cap \supp_{R}Y\,.
\]
As to the reverse inclusion: When $\gam_{\fp}X\ne 0$, the stratification condition implies that $\gam_{\fp}\one$ is in $\Loc^{\otimes}_{\sfT}(\gam_{\fp}X)$, so that $\gam_{\fp}Y$ is in $\Loc^{\otimes}(\gam_{\fp}X\otimes Y)$. Thus if $\gam_{\fp}Y\ne 0$ also holds, then $\gam_{\fp}(X\otimes Y)\ne 0$. This completes the proof.
\end{proof}

\section{The Klein four group}
\label{sec:Thursday3}
\index{Klein four group}
In this lecture we make some explicit calculations. We show how to compute the local cohomology functors using homotopy colimits. Then we illustrate this method by looking at a specific example. We use the
Klein four group, because its group algebra is of tame representation
type. Thus one has a complete classification of all finite dimensional
representations.

\subsection{Homotopy colimits}
Let $\sfT$ be an $R$-linear triangulated category, and  let $X_{1}\xra{f_{1}} X_2 \xra{f_{2}} X_3 \xra{f_{3}}\cdots$ be a sequence of morphisms in $\sfT$.  Its \emph{homotopy colimit}\index{homotopy!colimit}, denoted $\hocolim X_n$, is defined by an exact triangle
\[
\bigoplus_{n\ges1} X_{n}\stackrel{\theta}\lto \bigoplus_{n\ges1}
X_{n}\lto \hocolim X_{n}\lto
\]
where $\theta$ is the map $(\id-f_{n})$; see \cite{Bokstedt/Neeman:1993a}. 

Now fix a homogeneous element $r\in R$ of degree $d$. For each $X$ in $\sfT$ and each integer $n$ set $X_n=\Si^{nd}X$ and consider the commuting diagram
\[
\xymatrix{
X\ar@{=}[r]\ar[d]^{r^{}} &X\ar@{=}[r]\ar[d]^{r^{2}} &X\ar@{=}[r]\ar[d]^{r^{3}} &\cdots\\
X_1\ar[d]\ar[r]^r        &X_2\ar[d]\ar[r]^r         &X_3\ar[d]\ar[r]^r         &\cdots\\ \kos
Xr\ar[r]                 &\kos Xr^{2}\ar[r]         &\kos Xr^{3}\ar[r]         &\cdots }
\] 
where each vertical sequence is given by the exact triangle defining $\kos Xr^{n}$, and the morphisms in the last row are the (non-canonical) ones induced by the commutativity of the upper squares. The gist of the next result is that the homotopy colimits of the horizontal sequences in the diagram compute $\bloc_{\mcV(r)}X$ and $\gam_{\mcV(r)}X$. In \cite[Proposition~2.9]{Benson/Iyengar/Krause:bik2} we prove:

\begin{proposition}
\label{prop:hocolim-local}
\pushQED{\qed}
Let $r\in R$ be a homogeneous element of degree $d$. For each $X$ in $\sfT$ the adjunction morphisms $X\to L_{\mcV(r)} X$ and $\gam_{\mcV(r)}X\to X$ induce isomorphisms
\[
\hocolim X_n\stackrel{\sim}\lto L_{\mcV(r)}X\quad\text{and}\quad
\hocolim \Si^{-1}(\kos Xr^{n})\stackrel{\sim}\lto \gam_{\mcV(r)}X\, .\qedhere
\]
\end{proposition}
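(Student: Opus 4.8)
The plan is to compute both homotopy colimits by taking $\hocolim$ along the three rows of the ladder displayed just before the statement, and then to identify the resulting exact triangle with the localisation triangle~\eqref{eq:locseq} for the specialisation closed set $\mcV(r)$. First I would use the standard facts that $\hocolim$ commutes with $\Si^{\pm 1}$, that the homotopy colimit of a constant sequence with identity transition maps is the object itself, and that a homotopy colimit of a countable ladder of exact triangles is again an exact triangle \cite{Bokstedt/Neeman:1993a}. The top row of the ladder is the constant sequence on $X$, the middle row is $X_{1}\xra{r}X_{2}\xra{r}\cdots$, and the bottom row is the sequence of (non-canonical) Koszul objects $\kos X{r^{n}}$; applying $\hocolim$ to the ladder therefore produces an exact triangle
\[
X\xra{\ \eta\ }\hocolim X_{n}\lto\hocolim(\kos X{r^{n}})\lto\Si X,
\]
where $\eta$ is the map induced by the vertical morphisms $X\xra{r^{n}}X_{n}$ of the ladder.

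Next I would identify the two outer terms. For the third term: the long exact sequence in $H^{*}_{C}(-)$ associated with the triangle $X\xra{r^{n}}\Si^{nd}X\to\kos X{r^{n}}\to\Si X$ shows, exactly as in~\eqref{eq:kosles}, that for every compact object $C$ the $R$-module $H^{*}_{C}(\kos X{r^{n}})$ is an extension of a submodule of a shift of $H^{*}_{C}(X)$ by a quotient of a shift of $H^{*}_{C}(X)$, on both of which $r^{n}$ acts as $0$; hence $r^{2n}$ annihilates $H^{*}_{C}(\kos X{r^{n}})$, so this module is $r$-power torsion and vanishes after localising at any $\fp\notin\mcV(r)$. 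Thus $\kos X{r^{n}}\in\sfT_{\mcV(r)}$, and since $\sfT_{\mcV(r)}$ is a localising subcategory it is closed under the cone-and-coproduct construction defining $\hocolim$, so $\hocolim(\kos X{r^{n}})\in\sfT_{\mcV(r)}$. For the second term, set $Z=\hocolim X_{n}$. Since $\Hom^{*}_{\sfT}(C,-)$ commutes with coproducts for $C$ compact, and since the endomorphism $\theta$ defining $Z$ induces on $\bigoplus_{n}\Hom^{*}_{\sfT}(C,X_{n})$ a monomorphism with cokernel the colimit, the defining triangle of $Z$ gives $H^{*}_{C}(Z)\cong\colim_{n}\Hom^{*}_{\sfT}(C,X_{n})\cong\Hom^{*}_{\sfT}(C,X)[r^{-1}]$, the homogeneous localisation inverting $r$; the support of this module lies in $\Spec R\setminus\mcV(r)$ for each $C$, so Theorem~\ref{thm:cohom-supp} gives $\supp_{R}Z\cap\mcV(r)=\varnothing$, and Corollary~\ref{cor:vlocal-tests}(2) then yields $\gam_{\mcV(r)}Z=0$, that is, $Z\in\Im L_{\mcV(r)}$.

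Finally I would conclude by uniqueness of the localisation triangle. Rotating the triangle above to
\[
\Si^{-1}\hocolim(\kos X{r^{n}})\lto X\xra{\ \eta\ }\hocolim X_{n}\lto\hocolim(\kos X{r^{n}})
\]
exhibits a functorial exact triangle through $X$ whose first term lies in $\sfT_{\mcV(r)}$ and whose third term lies in $\Im L_{\mcV(r)}=\sfT_{\mcV(r)}^{\perp}$. By the theory of localisation functors of Section~\ref{sec:Tuesday2} and Proposition~\ref{pr:loc-basic}, such a decomposition of $X$ is unique up to a unique isomorphism that is the identity on $X$; hence this triangle is isomorphic to $\gam_{\mcV(r)}X\to X\to L_{\mcV(r)}X\to$ via an isomorphism fixing $X$, so that $\eta$ is carried to the adjunction morphism $X\to L_{\mcV(r)}X$ and the connecting map to $\gam_{\mcV(r)}X\to X$. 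Since $\Si^{-1}$ is exact it commutes with homotopy colimits, so $\Si^{-1}\hocolim(\kos X{r^{n}})\cong\hocolim\Si^{-1}(\kos X{r^{n}})$, and both asserted isomorphisms follow.

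I expect the main obstacle to be making the first step precise: the bottom row of the ladder consists of Koszul objects that are only well defined up to non-canonical isomorphism, so one must first check that consistent choices can be made for which the three rows assemble into a genuine commuting ladder of exact triangles, and then invoke the (somewhat delicate) fact that countable homotopy colimits of such ladders are exact; once this is in place, the rest is routine bookkeeping with the support calculus of Sections~\ref{sec:Wednesday1} and~\ref{sec:Wednesday2}.
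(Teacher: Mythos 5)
Your argument is correct and is essentially the proof given in the cited source \cite[Proposition~2.9]{Benson/Iyengar/Krause:bik2} (the notes themselves only refer there): one takes the homotopy colimit of the ladder of exact triangles, checks that $\hocolim(\kos X{r^{n}})$ is $\mcV(r)$-torsion and that $H^{*}_{C}(\hocolim X_{n})\cong H^{*}_{C}(X)[r^{-1}]$ is supported off $\mcV(r)$, and concludes by the uniqueness of the localisation triangle from Proposition~\ref{pr:loc-basic}. The one delicate point, that a countable ladder of exact triangles has exact homotopy colimit, is exactly the one you flag, and it is the standard ingredient used in \emph{loc.~cit.} as well.
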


From this, a standard argument based an induction on the number of generators of $\fa$ yields the result below; see \cite[Proposition~2.11]{Benson/Iyengar/Krause:bik2}.

\begin{proposition}
\label{prop:locandkos}
\pushQED{\qed}
For each ideal $\fa$ of $\Spec R$ and object $X\in \sfT$ one has
\[
\Loc_{\sfT}(\gam_{\mcV(\fa)}X)=\Loc_{\sfT}(\kos X\fa)\,.\qedhere
\]
\end{proposition}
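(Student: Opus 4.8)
The statement to prove is Proposition~\ref{prop:locandkos}: for each ideal $\fa$ of $R$ and object $X\in\sfT$, one has $\Loc_{\sfT}(\gam_{\mcV(\fa)}X)=\Loc_{\sfT}(\kos X\fa)$. The strategy is an induction on the number of generators $n$ of $\fa$, with Proposition~\ref{prop:hocolim-local} (the single-element case, phrased via homotopy colimits) providing both the base case and the engine for the inductive step.

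For the base case $n=1$, write $\fa=(r)$ with $r$ homogeneous of degree $d$. Proposition~\ref{prop:hocolim-local} gives $\gam_{\mcV(r)}X\cong\hocolim\Si^{-1}(\kos Xr^{n})$. Since each $\kos Xr^{n}$ lies in $\Thick_{\sfT}(\kos Xr)$ — this is the content of Lemma~\ref{lem:kosloc} applied to $\mcV(r^{n})=\mcV(r)$, or more elementarily follows from the octahedral axiom comparing the triangles for $\kos Xr^{n}$, $\kos Xr$, and $\kos{(\kos Xr)}{r^{n-1}}$ — every term in the homotopy colimit diagram, hence its homotopy colimit, lies in $\Loc_{\sfT}(\kos Xr)$; so $\gam_{\mcV(r)}X\in\Loc_{\sfT}(\kos Xr)$. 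Conversely, from the localisation triangle $\gam_{\mcV(r)}X\to X\to L_{\mcV(r)}X\to$ together with the identification $L_{\mcV(r)}X\cong\hocolim X_n$ (again Proposition~\ref{prop:hocolim-local}), one sees $\kos Xr$ is built from $X$ and $\gam_{\mcV(r)}X$: indeed $\kos Xr$ is the cone of $X\xra{r}\Si^d X$, and one checks that the composite $\Si^{-1}(\kos Xr)\to X\to L_{\mcV(r)}X$ vanishes (it factors through multiplication by $r$ on a module where $r$ acts invertibly), so $\Si^{-1}\kos Xr$ factors through $\gam_{\mcV(r)}X\to X$, placing $\kos Xr\in\Loc_{\sfT}(\gam_{\mcV(r)}X)$. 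Hence the two localising subcategories coincide.

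For the inductive step, write $\fa=(r_1,\dots,r_n)$, set $\fb=(r_1,\dots,r_{n-1})$, and let $Y=\kos X\fb$, so that $\kos X\fa=\kos Y{r_n}$ by the iterated Koszul construction. The key ring-theoretic fact is $\mcV(\fa)=\mcV(\fb)\cap\mcV(r_n)$, which combined with Lemma~\ref{lem:loc-commute}-style identities for nested specialisation-closed sets gives $\gam_{\mcV(\fa)}\cong\gam_{\mcV(r_n)}\gam_{\mcV(\fb)}$ (one verifies the composite is right adjoint to the inclusion of $\sfT_{\mcV(\fa)}$). Applying the $n=1$ case to the object $\gam_{\mcV(\fb)}X$ yields
\[
\Loc_{\sfT}(\gam_{\mcV(\fa)}X)=\Loc_{\sfT}(\gam_{\mcV(r_n)}(\gam_{\mcV(\fb)}X))=\Loc_{\sfT}(\kos{(\gam_{\mcV(\fb)}X)}{r_n})\,.
\]
Now by the inductive hypothesis $\Loc_{\sfT}(\gam_{\mcV(\fb)}X)=\Loc_{\sfT}(Y)$; since the functor $\kos{(-)}{r_n}$ is exact and preserves coproducts, it carries one localising subcategory generated by a single object into the localising subcategory generated by the image, so $\Loc_{\sfT}(\kos{(\gam_{\mcV(\fb)}X)}{r_n})=\Loc_{\sfT}(\kos Y{r_n})=\Loc_{\sfT}(\kos X\fa)$, completing the induction.

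\textbf{The main obstacle.} The delicate point is the compatibility used repeatedly in the inductive step: that $\Loc_{\sfT}$ commutes appropriately with the exact coproduct-preserving endofunctor $\kos{(-)}{r}$, i.e.\ if $\Loc_{\sfT}(A)=\Loc_{\sfT}(B)$ then $\Loc_{\sfT}(\kos Ar)=\Loc_{\sfT}(\kos Br)$. This requires knowing that such a functor sends $\Loc_{\sfT}(A)$ into $\Loc_{\sfT}(\kos Ar)$, which is standard but needs the observation that $\kos{(-)}{r}$ respects triangles and arbitrary direct sums (both clear from the defining triangle $(-)\xra{r}\Si^d(-)\to\kos{(-)}r\to$). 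A secondary subtlety is pinning down the isomorphism $\gam_{\mcV(\fa)}\cong\gam_{\mcV(r_n)}\gam_{\mcV(\fb)}$ cleanly; this is most safely done by the uniqueness-of-adjoints argument (as in the proof of Lemma~\ref{lem:loc-commute}) rather than by manipulating triangles directly. Everything else is bookkeeping with the two homotopy-colimit identities of Proposition~\ref{prop:hocolim-local}.
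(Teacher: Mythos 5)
Your overall architecture — induction on the number of generators, with Proposition~\ref{prop:hocolim-local} driving both the base case and the inductive step — is exactly the ``standard argument'' the paper alludes to (it only cites \cite[Proposition~2.11]{Benson/Iyengar/Krause:bik2} rather than writing it out). The first inclusion of the base case ($\gam_{\mcV(r)}X\in\Loc_{\sfT}(\kos Xr)$ via $\kos X{r^n}\in\Thick_{\sfT}(\kos Xr)$ and the homotopy colimit) is correct, and the inductive step is sound once one accepts that a triangle $A\to B\to C\to$ induces a triangle on Koszul objects $\kos Ar\to \kos Br\to\kos Cr\to$ (an octahedral-axiom exercise, since cones are not functorial — your phrase ``clear from the defining triangle'' undersells this, but the statement is true and is what makes $A\in\Loc_{\sfT}(B)\Rightarrow \kos Ar\in\Loc_{\sfT}(\kos Br)$ work).

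The genuine gap is in the second inclusion of the base case, $\kos Xr\in\Loc_{\sfT}(\gam_{\mcV(r)}X)$. You correctly observe that the composite $\Si^{-1}(\kos Xr)\to X\to L_{\mcV(r)}X$ vanishes (it factors through $X\xra{r}\Si^{d}X$, which kills the connecting map of the Koszul triangle), so that $\Si^{-1}(\kos Xr)\to X$ lifts through $\gam_{\mcV(r)}X\to X$. But the conclusion you draw — ``placing $\kos Xr\in\Loc_{\sfT}(\gam_{\mcV(r)}X)$'' — does not follow: the existence of a factorisation of one morphism through an object $Z$ says nothing about membership in $\Loc_{\sfT}(Z)$ (every zero map factors through every object), and $\kos Xr$ is the cone of $X\xra{r}\Si^dX$, not of the lifted map, so no triangle with third term in $\Loc_{\sfT}(\gam_{\mcV(r)}X)$ has been produced. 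The correct route uses precisely the ingredient you mention in parentheses, but differently: apply $\kos{(-)}{r}$ to the localisation triangle $\gam_{\mcV(r)}X\to X\to L_{\mcV(r)}X\to$ to get an exact triangle
\[
\kos{(\gam_{\mcV(r)}X)}{r}\lto \kos Xr\lto \kos{(L_{\mcV(r)}X)}{r}\lto\,.
\]
Since $L_{\mcV(r)}X\cong\hocolim X_n$, for every compact $C$ the $R$-module $\Hom^{*}_{\sfT}(C,L_{\mcV(r)}X)\cong\colim_n\Hom^{*}_{\sfT}(C,X_n)$ has $r$ acting invertibly, so the long exact sequence obtained by applying $\Hom^{*}_{\sfT}(C,-)$ to the defining triangle of $\kos{(L_{\mcV(r)}X)}{r}$ forces $\Hom^{*}_{\sfT}(C,\kos{(L_{\mcV(r)}X)}{r})=0$; compact generation then gives $\kos{(L_{\mcV(r)}X)}{r}=0$. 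Hence $\kos Xr\cong\kos{(\gam_{\mcV(r)}X)}{r}$, which manifestly lies in $\Loc_{\sfT}(\gam_{\mcV(r)}X)$. With this repair the rest of your argument goes through.
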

Note that the left hand side depends only on the radical of $\fa$; cf.~Lemma~\ref{lem:kosloc}.

\subsection{Representations of the Klein four group}

We describe the representation theory of the Klein four group\index{Klein four group!representations}, and
illustrate the abstract notions of the seminar using this example.

Let $G=\langle g_1,g_2\rangle\cong\bbZ/2\times\bbZ/2$ and let $k$ be an algebraically closed field of characteristic two. Let $kG$ be the group algebra of $G$ over $k$, and let $x_1=g_1-1$, $x_2=g_2-1$ as elements of $kG$. Then $x_1^2=x_2^2=0$, and we have
\[
kG = k[x_1,x_2]/(x_1^2,x_2^2). 
\]
We describe $kG$-modules by diagrams in which the vertices represent
basis elements as a $k$-vector space, and an edge
\[ 
\xymatrix{\overset{a}{\bullet} \ar@{-}[d]^{x_i} \\ 
\underset{b}{\bullet}} 
\]
indicates that $x_ia=b$. If there is no edge labelled $x_i$ in the downwards direction from a vertex then $x_i$ sends the corresponding basis vector to zero. For example, the group algebra $kG$ has the following diagram:
\[ 
\xymatrix{&\overset{a}{\bullet}\ar@{-}[dl]_{x_1}\ar@{-}[dr]^{x_2}\\
{\scriptstyle b\ }\bullet\ar@{-}[dr]_{x_2}&&\bullet{\scriptstyle\ c}\ar@{-}[dl]^{x_1}\\
&\underset{d}{\bullet}} 
\]
As a vector space, $kG=ka\oplus kb\oplus kc\oplus kd$. We have $\rad^2kG=\soc\,kG=kd$, $\rad\,kG=\soc^2kG=kb\oplus kc \oplus kd$.

Here are the diagrams for the syzygies of the trivial module:
\begin{gather*} 
\Omega^{-1}(k)=\vcenter{\xymatrix{&\bullet\ar@{-}[dl]_{x_1}\ar@{-}[dr]^{x_2}\\
\bullet&&\bullet}}\qquad
\Omega^{-2}(k)=\vcenter{\xymatrix{&\bullet\ar@{-}[dl]_{x_1}\ar@{-}[dr]^{x_2}
&&\bullet\ar@{-}[dl]_{x_1}\ar@{-}[dr]^{x_2}\\
\bullet&&\bullet&&\bullet}} \\ \\
\Omega(k)=\vcenter{\xymatrix{\bullet\ar@{-}[dr]_{x_2}&&\bullet\ar@{-}[dl]^{x_1}\\
&\bullet}}\quad
\Omega^2(k)=\vcenter{\xymatrix{\bullet\ar@{-}[dr]_{x_2}&&\bullet\ar@{-}[dl]^{x_1}
\ar@{-}[dr]_{x_2}&&\bullet\ar@{-}[dl]^{x_1}\\
&\bullet&&\bullet}}\quad\text{\rm etc.}
\end{gather*}
For each non-negative integer we have
\[ 
\Ext^n_{kG}(k,k)\cong \sHom_{kG}(k,\Omega^{-n}(k))  
\]
and so $\dim_k\Ext^n_{kG}(k,k)=n+1$. In fact, the full cohomology algebra is
\[ 
H^*(G,k)=\Ext^*_{kG}(k,k)=k[\zeta_1,\zeta_2] 
\]
with $\deg(\zeta_1)=\deg(\zeta_2)=1$; see also Proposition~\ref{prop:gc-eab2}.

Pick a non-zero element $r=r_1\zeta_1+r_2\zeta_2$ in $H^1(G,k)$ (with $r_i\in k$). Then each power $r^j$ gives us an injective map from $k$ to $\Omega^{-j}(k)$ whose cokernel we denote by $L_{r^j}$. Thus we get a commutative diagram with exact columns
\[ 
\xymatrix{\ k_{\phantom{|}}\ \ar@{=}[r]\ar@{>->}^r[d] & 
\ k_{\phantom{|}}\ \ar@{=}[r]\ar@{>->}[d]^{r^2}& 
\ k_{\phantom{|}}\ \ar@{=}[r]\ar@{>->}[d]^{r^3}& \cdots\\
\Omega^{-1}(k) \ar@{->>}[d] \ar@{>->}[r]^r& 
\Omega^{-2}(k)\ar@{->>}[d]\ar@{>->}[r]^r&
\Omega^{-3}(k)\ar@{->>}[d]\ar@{>->}[r]^(0.55){r}& \cdots \\ 
L_r\ar@{>->}[r]&L_{r^2}\ar@{>->}[r]&L_{r^3}\ar@{>->}[r]&\cdots} 
\]
Thus, in $\StMod{kG}$ the module $L_{r^{j}}$ is the Koszul object $\kos k{r^{j}}$, for each $j\geq 1$. In the case where $r=\zeta_1$, for example the diagrams are as follows:
\begin{equation}\label{eq:V4} 
\vcenter{\xymatrix{\ k_{\phantom{|}}\ \ar@{=}[r]\ar@{>->}^r[d] & 
\ k_{\phantom{|}}\ \ar@{=}[r]\ar@{>->}[d]^{r^2}& 
\ k_{\phantom{|}}\ \ar@{=}[r]\ar@{>->}[d]^{r^3}& \cdots\\
\xy 
(-4,-4)*{\bullet}="A"; (0,0)*{\bullet}="B"; (4,-4)*{\bullet}="C"; 
"B";"A";**\dir{-}; "B";"C";**\dir{-}; 
\endxy
\ar@{->>}[d] \ar@{>->}[r]^(0.4){r}& 
\xy 
(-4,-4)*{\bullet}="A"; (0,0)*{\bullet}="B"; (4,-4)*{\bullet}="C"; (8,0)*{\bullet}="D";
(12,-4)*{\bullet}="E";
"B";"A";**\dir{-}; "B";"C";**\dir{-}; "D";"C";**\dir{-}; "D";"E";**\dir{-}; 
\endxy
\ar@{->>}[d]\ar@{>->}[r]^(0.44){r}&
\xy 
(-4,-4)*{\bullet}="A"; (0,0)*{\bullet}="B"; (4,-4)*{\bullet}="C"; (8,0)*{\bullet}="D";
(12,-4)*{\bullet}="E"; (16,0)*{\bullet}="F"; (20,-4)*{\bullet}="G";
"B";"A";**\dir{-}; "B";"C";**\dir{-}; "D";"C";**\dir{-}; "D";"E";**\dir{-}; 
"F";"E";**\dir{-}; "F";"G";**\dir{-}; 
\endxy
\ar@{->>}[d]\ar@{>->}[r]^(0.7){r}& \cdots \\ 
\xy 
(0,0)*{\bullet}="B"; (4,-4)*{\bullet}="C"; 
"B";"C";**\dir{-}; 
\endxy
\ar@{>->}[r]&
\xy 
(0,0)*{\bullet}="B"; (4,-4)*{\bullet}="C"; (8,0)*{\bullet}="D";
(12,-4)*{\bullet}="E";
"B";"C";**\dir{-}; "D";"C";**\dir{-}; "D";"E";**\dir{-}; 
\endxy
\ar@{>->}[r]&
\xy 
(0,0)*{\bullet}="B"; (4,-4)*{\bullet}="C"; (8,0)*{\bullet}="D";
(12,-4)*{\bullet}="E"; (16,0)*{\bullet}="F"; (20,-4)*{\bullet}="G";
"B";"C";**\dir{-}; "D";"C";**\dir{-}; "D";"E";**\dir{-}; 
"F";"E";**\dir{-}; "F";"G";**\dir{-}; 
\endxy
\ar@{>->}[r]&\cdots} }
\end{equation}
In this diagram, the $k$ in the top row injects as the leftmost vertex in the modules in the middle row. The case of a general element $r$ can be reduced to this one by applying automorphisms of the group algebra.

\begin{remark}
We can now describe the classification of the finite dimensional indecomposable $kG$-modules; for details see \cite[\S4.3]{Benson:1991a}. They come in three types:
\begin{enumerate}
\item The group algebra $kG$ itself.
\item For each $n\in \bbZ$, the module $\Omega^n(k)$.
\item For each one dimensional subspace of $H^1(G,k)$, choose an
element $r$; then for each integer $n\ge 1$ there is a module $L_{r^n}$.
Thus we get a family of modules indexed by $\bbP^1(k) \times \bbZ_{\ge 1}$.
\end{enumerate}
The modules in the third family for a particular choice of $r$, namely those appearing in the bottom row of diagram \eqref{eq:V4}, form a tube in the Auslander--Reiten quiver of $kG$. The maps going up
the tube are the maps in the bottom row of the diagram.

For larger elementary abelian groups the representation type is wild, and we cannot write down such a classification. Even for the Klein four group, the infinite dimensional modules cannot be classified.
\end{remark}

Next we compute the local cohomology functor $\gam_\fp\col\StMod kG\to\StMod kG$ for each homogeneous prime ideal $\fp$ of $H^*(G,k)$. Observe that $\gam_\fm=0$ for the unique maximal ideal $\fm=H^+(G,k)$.  Now the homogeneous non-maximal prime ideals of $H^*(G,k)$ are the zero ideal $\fn=(0)$, and the principal ideals $(r)$, one for each one dimensional subspace of $H^1(G,k)$. Observe that
\[
\gam_{(r)}=\gam_{\mcV(r)}\quad\text{and}\quad \gam_{\fn}=L_{\mcZ(\fn)},
\]
since each ideal $(r)$ is maximal among all non-maximal ideals, and since $\fn$ is the unique minimal ideal; see Exercise 8 for Thursday.

The finite dimensional indecomposables in
\[ 
\gam_{\mcV(r)}\StMod(kG) \subseteq \StMod(kG) 
\] 
are precisely the modules $L_{r^n}$, $n\ge 1$. For $r=\zeta_1$, the module $L_{\mcV(r)}(k)$ is the colimit of the modules in the middle row of diagram \eqref{eq:V4}, while the module $\gam_{\mcV(r)}(k)$ is the colimit of
the modules in the bottom row of the diagram. We can draw diagrams of
these infinite dimensional modules as follows:
\begin{gather*} 
L_{\mcV(r)}(k) = \vcenter{\xy
(-4,-4)*{\bullet}="A"; (0,0)*{\bullet}="B"; (4,-4)*{\bullet}="C"; (8,0)*{\bullet}="D";
(12,-4)*{\bullet}="E"; (16,0)*{\bullet}="F"; (20,-4)*{\bullet}="G"; (23,-1)*{}="H";
"B";"A";**\dir{-}; "B";"C";**\dir{-}; "D";"C";**\dir{-}; "D";"E";**\dir{-}; 
"F";"E";**\dir{-}; "F";"G";**\dir{-}; "G";"H";**\dir{-};
\endxy} \cdots \\ \\
\gam_{\mcV(r)}(k) = \vcenter{\xy
(0,0)*{\bullet}="B"; (4,-4)*{\bullet}="C"; (8,0)*{\bullet}="D";
(12,-4)*{\bullet}="E"; (16,0)*{\bullet}="F"; (20,-4)*{\bullet}="G"; (23,-1)*{}="H";
"B";"C";**\dir{-}; "D";"C";**\dir{-}; "D";"E";**\dir{-}; 
"F";"E";**\dir{-}; "F";"G";**\dir{-}; "G";"H";**\dir{-};
\endxy} \cdots 
\end{gather*}
The colimit of the vertical exact sequences in diagram \eqref{eq:V4}
is the exact sequence
\[ 
0 \to k \to L_{\mcV(r)}(k) \to \gam_{\mcV(r)}(k) \to 0 
\]
given by the inclusion of the left hand vertex in $L_{\mcV(r)}(k)$.

The remaining prime  to deal with is $\fn$, the zero ideal. The module $L_{\mcZ(\fn)}(k)$ can be described as
follows. Let $k(t)$ be the field of rational functions in one variable, regarded as an infinite dimensional vector space over $k$. Then $L_{\mcZ(\fn)}(k)$ is the module whose underlying vector space is a direct sum of two copies of $k(t)$, with $G$-action given by
\[ 
g_1 \mapsto \begin{pmatrix}I&0\\I&I\end{pmatrix}\qquad
g_2\mapsto \begin{pmatrix}I&0 \\ t.I & I \end{pmatrix} 
\]
where $I$ is the identity endomorphism of $k(t)$ and 
$t.I$ is the endomorphism of $k(t)$ given by multiplication by $t$.
There is a map from $k$ to $L_{\mcZ(\fn)}(k)$ sending the identity to
the vector $\bigl(\begin{smallmatrix}0\\1\end{smallmatrix}\bigr)$,
and the cokernel is $\gam_{\mcZ(\fn)}(k)$. This gives the  exact sequence
\[ 
0\to k \to L_{\mcZ(\fn)}(k) \to \gam_{\mcZ(\fn)}(k) \to 0. 
\]

Note that the modules $\gam_\mcV(k)$ are all periodic of period one in this
example, so that a short exact sequence
\[ 
0 \to k \to L_\mcV(k) \to \gam_\mcV(k) \to 0 
\]
gives rise to a triangle
\[ 
\gam_\mcV(k) \to k \to L_\mcV(k)\to 
\]
in the stable module category, which is the localisation triangle for $\mcV$.

We can now draw a diagrammatic representation of the  set of thick subcategories
of $\sfD^b(\mod(kG))$.\medskip
\[ 
\xy (0,34)*{\sfD^b(\mod(kG))};(0,16)*{\stmod(kG)};
(-10,25)*{};(10,25)*{} **\dir{--};
(35,15)*\xycircle(10,4){.};
(27,11)*{};(33,1)*{} **\dir{-};(35,9)*{};(35,1)*{} **\dir{-};
(43,11)*{};(37,1)*{} **\dir{-};(27,19)*{};(33,29)*{} **\dir{-};
(35,13)*{};(35,29)*{} **\dir{-};(43,19)*{};(37,29)*{} **\dir{-};
(35,-1)*{\fn=(0)};(35,32)*{\fm=H^+(G,k)};(50,-2)*{};(50,25)*{} **\frm{)};
(64,12)*{\Proj H^*(G,k)};(63,34)*{\Spec H^*(G,k)};
 \endxy\medskip 
 \]
In this diagram, the dotted circle in the middle represents the set of closed points of the projective line over $k$, indexing the choices of $(r)$. The part below the top vertex represents $\stmod(kG)$ while the whole
diagram represents $\sfD^b(\mod(kG))$.

\section{Exercises}
\label{exer:Thursday}
\begin{enumerate}[\quad\rm(1)]

\item Let $\sfT$ be a compactly generated triangulated category. Given
  any class $\sfC$ of compact objects, prove that there exists a
  localisation functor $L\colon\sfT\to\sfT$ such that $\Ker L=\Loc
  (\sfC)$.\smallskip

  \noindent Hint: Use Brown representability to show that the
  inclusion $\Loc (\sfC)\to\sfT$ admits a right adjoint.

\item Let $A=\left[\begin{smallmatrix}k&k\\ 0&k\end{smallmatrix}\right]$ be the algebra of $2\times 2$ upper triangular matrices over a field $k$ and let $\sfT$ denote the derived category of all $A$-modules.  Up to isomorphism, there are precisely two indecomposable projective $A$-modules:
\[
P_1=
\begin{bmatrix} k&0\\    
0&0\end{bmatrix} \qquad\text{and}\qquad P_2= \begin{bmatrix}0&k\\    
0&k\end{bmatrix}
\]
satisfying $\Hom_A(P_1,P_2)\neq 0$ and $\Hom_A(P_2,P_1)= 0$.  For $i=1,2$ let $\bloc_i$ denote the localisation functor such that the $\bloc_i$-acyclic objects form the smallest localising subcategory containing $P_i$, viewed as a complex concentrated in degree zero. Show that $\bloc_1\bloc_2\neq\bloc_2\bloc_1$.

\item Let $\sfT$ be a an $R$-linear triangulated category. Each specialisation closed subset of $\Spec R$ gives rise to a recollement of $\sfT$ in the sense of Section~\ref{sec:Wednesday3}. Describe all 6 functors explicitly, and compare it with the prototype of a recollement given in \cite[\S1.4]{BBD:1983}.

\item Let $k$ be a field, $A=k[x]/x(x-1)$, and $\sfT$ its derived category $\sfD(A)$. Then $\sfT$ is $A$-linear, hence also $k$-linear, via restriction along the homomorphism $k\to A$. Prove that $\sfT$ has four localising subcategories. Hence $\sfT$ cannot be stratified by the $k$-action. It is however stratified by the $A$-action; this is a special case of Neeman's theorem, but can be verified directly.

\item Prove that $\Ext^{*}_{\bbZ}(\bbQ,\bbZ)=0$. Thus the `orthogonality' relation:
\[
\Supp_{\bbZ}\Hom^{*}_{\sfD(\bbZ)}(C,Y)=\Supp_{\bbZ}C\cap\Supp_{\bbZ}Y
\]
may fail when $C$ is not compact; i.e. when $H^{*}(C)$ is not finitely generated.

Another example to bear in mind is a complete local ring $A$, with maximal ideal $\fm$. With $E$ denoting the injective hull of $A/\fm$ one has
\[
\Supp_{A}\Hom^{*}_{\sfD(A)}(E,E)=\Supp_{A}\Hom_{A}(E,E) = \Supp_{A}A = \Spec A.
\]
On the other hand, $\Supp_{A}E=\{\fm\}$.

\item Prove that $\supp_{R}\sfT=\supp_{R}\sfT^{\sfc}$.

\item Let $\sfT$ be an $R$-linear triangulated category. Prove that there are equalities
\[
\{\fp\in\Spec R\mid X_{\fp}\ne 0\} =\bigcup_{C\in\sfT^{\sfc}}\Supp_{R}\Hom^{*}_{\sfT}(C,X)=\cl(\supp_{R}X)
\]
for each $X$ in $\sfT$. The set on the left is what we denoted $\Supp_{R}X$.

\item Let $\fp$ be a homogeneous prime ideal in $R$. Prove that $\gam_\fp\cong \gam_{\mcV(\fp)}$ when $\fp$ is maximal, with respect to inclusion, in $\supp_{R}\sfT$, and that $\gam_{\fp}\cong L_{\mcZ(\fp)}$ when $\fp$ is minimal in $\supp_{R}\sfT$.

\item For any object $X$ in $\sfT$ and homogeneous ideal $\fa$ in $R$, prove that
\[
\supp_{R}(\kos X{\fa})= \mcV(\fa)\cap \supp_{R}X\,;
\]
cf.~Proposition~\ref{prop:kosca}. Using this, prove that any closed subset of $\supp_{R}\sfT$ is realizable as the support of a compact object in $\sfT$.

\item Let $M$ and $N$ be $kG$-modules. Recall (or prove) that there is an isomorphism
\[
\Ext^{*}_{kG}(M,N)\cong H^{*}(G,\Hom_{k}(M,N))\,,
\]
compatible with the $H^{*}(G,k)$-actions. Assume now that $M$ and $N$ are finitely generated and prove that there is an isomorphism of $kG$-modules:
\[
\Hom_{k}(\Hom_{k}(M,N),k) \cong \Hom_{k}(N,M)
\]
Conclude that the support of $\Ext^{*}_{kG}(M,N)$ and $\Ext^{*}_{kG}(N,M)$, as modules over $H^{*}(G,k)$, coincide. Compare with Theorem~\ref{thm:orthogonality}.
\end{enumerate}

\chapter{Friday}
\thispagestyle{empty}

In this last chapter we put together the various ideas we have been developing in the week's lectures. The goal, as has been stated often enough, is a classification of the localising subcategories of the stable module category of a finite group, over a field of characteristic $p$. The strategy of the proof was described in Section~\ref{ssec:strategy}, and we begin this chapter at the last step, which is also where the whole story begins, namely, Neeman's classification of the localising subcategories of the derived category of a commutative noetherian ring. Using a (version of) this result, and a variation of the Bernstein-Gelfand-Gelfand correspondence, we explain how to tackle the case of the homotopy category of complexes of injective modules over an elementary abelian two group. This is the content of Section~\ref{sec:Friday2}. Finally, in the last section, we use Quillen's results to describe how to pass from arbitrary groups to elementary abelian ones. If the dust settles down, you should be able to see a fairly complete proof of our main results for the case $p=2$.

\section{Localising subcategories of $\sfD(A)$}
\label{sec:Friday1}
In this lecture $A$ is a commutative noetherian ring and $\sfD$ the derived category of the category of $A$-modules.
Recall that $\sfD$ is a triangulated category admitting set-indexed coproducts. Its compact objects are:
\[
\sfD^{\sfc} = \Thick(A)\,,
\]
the set of perfect complexes of $A$-modules; see Theorem~\ref{thm:compacts-DA}. Moreover, one has
\[
\Loc_{A}(A)  = \sfD\,,
\]
so $\sfD$ is a compactly generated triangulated category, with $A$ a compact generator. The category $\sfD$ is also $A$-linear, with structure homomorphisms 
\[
A\to \Hom^*_{\sfD}(M,M)
\]
given by left multiplication. As $A$ is a compact generator and $\Hom_{\sfD}^{*}(A,A)\cong A$, it follows that the $A$-linear category $\sfD$ is noetherian. Note that $\supp_{A}\sfD=\Spec A$.

This lecture is devoted to proving the following result:

\begin{theorem}
\label{thm:stratify-da}
The triangulated category $\sfD$ is stratified by $A$.
\end{theorem}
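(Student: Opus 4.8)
The statement to prove is that $\sfD=\sfD(A)$ is stratified by $A$, i.e.\ that both the local--global principle and the minimality condition hold. The plan is to dispose of the local--global principle at once and then concentrate everything on minimality, which I would verify one prime at a time after reducing to a local ring.

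For the local--global principle: $\sfD(A)$ is a tensor triangulated category under $\lotimes_A$ with unit $A$, and $A$ is compact and generates ($\Loc_\sfD(A)=\sfD$); moreover the given $A$-action is the canonical one, induced by $A\cong\End^0_\sfD(A)\hookrightarrow\End^*_\sfD(A)$. Hence Theorem~\ref{thm:localglobal-tt} applies and the local--global principle holds for $\sfD$. Since $\supp_A\sfD=\Spec R$, it then remains only to check the minimality condition: for every $\fp\in\Spec R$ the localising subcategory $\gam_\fp\sfD$ has no proper non-zero localising subcategory. First I would reduce to the local case: the localisation $\sfD(A)\to\sfD(A_\fp)$, $X\mapsto X_\fp$, is exact, preserves coproducts, restricts to an equivalence of the $\fp$-local objects of $\sfD(A)$ with $\sfD(A_\fp)$, is compatible with the ring actions, and carries $\gam_\fp\sfD(A)$ onto $\gam_{\fp A_\fp}\sfD(A_\fp)$; so one may assume $A$ is local with maximal ideal $\fm=\fp$. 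Then $\mcZ(\fm)=\varnothing$, so $\gam_\fm=\gam_{\mcV(\fm)}$ and $\gam_\fm\sfD$ is the full subcategory of complexes all of whose $\Hom^*_\sfD(C,-)$ (for $C$ compact), in particular all of whose cohomology, is $\fm$-torsion.

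Next I would identify $\gam_\fm\sfD$ concretely. Let $K=\kos A{\fm}$ be the Koszul complex on a finite generating set of $\fm$; it is a perfect, hence compact, object of $\sfD$ lying in $\gam_\fm\sfD$ (its cohomology has finite length, so is $\fm$-local and $\fm$-torsion), and let $k=A/\fm$, which also lies in $\gam_\fm\sfD$. I claim
\[
\gam_\fm\sfD=\Loc_\sfD(\gam_\fm A)=\Loc_\sfD(K)=\Loc_\sfD(k).
\]
The middle equality is Proposition~\ref{prop:locandkos}. For the last, $K$ has finite-length cohomology, hence is built from $k$ by finitely many triangles and retracts, so $K\in\Thick_\sfD(k)\subseteq\Loc_\sfD(k)$, while conversely $k\in\gam_\fm\sfD$; together with the middle equality this gives both inclusions. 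The first equality uses the canonical action, $\gam_\fm(-)\cong\gam_\fm A\lotimes_A-$, so that $\gam_\fm\sfD$ is generated by $\gam_\fm A$ as a \emph{tensor-ideal} localising subcategory, together with the observation (proved just below) that every localising subcategory of $\gam_\fm\sfD$ is already tensor ideal in $\sfD(A)$. In particular $\gam_\fm\sfD=\Loc_\sfD(K)$ is compactly generated by the single compact object $K$.

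The heart of the argument is minimality of $\gam_\fm\sfD$, for which it suffices to show $\Loc_\sfD(X)=\gam_\fm\sfD$ for every non-zero $X\in\gam_\fm\sfD$. The plan has four steps. (i) Every localising subcategory $\sfS'$ of $\gam_\fm\sfD$ is tensor ideal in $\sfD(A)$: for $X\in\sfS'$ the class $\{W\in\sfD(A)\mid X\lotimes_A W\in\sfS'\}$ is a localising subcategory of $\sfD(A)$ containing every perfect complex (as $X\lotimes_A W\in\Thick_\sfD(X)\subseteq\sfS'$ when $W$ is perfect), hence contains $\Loc_\sfD(A)=\sfD(A)$. (ii) For non-zero $X\in\gam_\fm\sfD$ one has $X\lotimes_A k\neq 0$: since $K$ is a compact generator of $\gam_\fm\sfD$, Proposition~\ref{pr:compact-generation} gives $\RHom_A(K,X)\neq 0$; as $K$ is perfect and Koszul-self-dual ($K^\vee\cong\Si^{\pm n}K$), $\RHom_A(K,X)\cong\Si^{\pm n}(X\lotimes_A K)$, so $X\lotimes_A K\neq 0$; and $K\in\Thick_\sfD(k)$ forces $X\lotimes_A K\in\Thick_\sfD(X\lotimes_A k)$, whence $X\lotimes_A k\neq 0$. (iii) $X\lotimes_A k$ is a complex of $k$-vector spaces, hence formal, hence isomorphic in $\sfD(A)$ to a coproduct of shifts of $k$; being non-zero, it has $k$ as a retract of some shift, so $k\in\Thick_\sfD(X\lotimes_A k)$. (iv) By (i), $\Loc_\sfD(X)$ is tensor ideal, so $X\lotimes_A k\in\Loc_\sfD(X)$, and therefore $k\in\Loc_\sfD(X)$; combining with step (2), $\gam_\fm\sfD=\Loc_\sfD(k)\subseteq\Loc_\sfD(X)\subseteq\gam_\fm\sfD$, so $\Loc_\sfD(X)=\gam_\fm\sfD$. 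This proves minimality and finishes the proof. The step I expect to be most delicate is (ii) — conservativity of derived reduction modulo $\fm$ on the $\fm$-torsion part — and the point of the plan is precisely to avoid a fussy Nakayama computation by routing non-vanishing through the perfect complex $K$ (where it becomes the non-vanishing of $\RHom_A(K,X)$ for a compact generator) and then transferring to $k$ via $K\in\Thick_\sfD(k)$; I would also want to be careful with the book-keeping in the reduction to the local ring and in the identification $\gam_\fm\sfD=\Loc_\sfD(K)$, which is where noetherianness of $A$ and the fact that $A$ is a compact generator are genuinely used.
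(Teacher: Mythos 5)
Your proof is correct, and its skeleton coincides with the paper's: the local--global principle via Theorem~\ref{thm:localglobal-tt} (tensor structure with compact generator $A$), reduction to the local case, and the identification of $\gam_\fp\sfD$ with $\Loc_\sfD(k)$ using Proposition~\ref{prop:locandkos} together with the fact that $\kos A\fp$ has finite length cohomology and so lies in $\Thick_\sfD(k)$, followed by formality of $k\lotimes_A X$ over the field $k$. The one place you genuinely diverge is the non-vanishing of $k\lotimes_A X$ for $0\ne X\in\gam_\fp\sfD$: you obtain it by showing $\kos A\fp$ is a compact generator of $\gam_\fp\sfD$, invoking Proposition~\ref{pr:compact-generation} and Koszul self-duality to get $\kos A\fp\lotimes_A X\ne 0$, and then transferring through $\kos A\fp\in\Thick_\sfD(k)$. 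The paper gets this for free: having shown $\Loc_\sfD(\gam_{\mcV(\fp)}A)=\Loc_\sfD(k)$, it tensors this equality with $X$ (using that localising subcategories of $\sfD$ are tensor ideal and that $-\lotimes_A X$ is exact and preserves coproducts) to obtain $\Loc_\sfD(X)=\Loc_\sfD(k\lotimes_A X)$ directly, from which $k\lotimes_A X\ne 0$ is immediate since $X\ne 0$; so the ``delicate conservativity'' you flag is a one-line consequence of the equality you had already established, and the Koszul-duality detour, while valid, is unnecessary. On the other hand your step (i) --- the observation that every localising subcategory of $\sfD(A)$ is automatically tensor ideal because $A$ generates --- is exactly the fact the paper uses with only a passing phrase, so spelling out that argument is a worthwhile addition.
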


A proof is given later in this lecture. For now, we record some direct corollaries. The one below is by Theorem~\ref{thm:strat-class}, and is one of the main results in~\cite{Neeman:1992a}.

\begin{corollary}
\label{cor:neeman}
There is bijection between localising subcategories of $\sfD$ and subsets of $\Spec A$.\qed
\end{corollary}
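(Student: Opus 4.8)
The plan is to verify the two conditions that define stratification (Proposition~\ref{pr:strat}): (S1) the local--global principle, and (S2) minimality of $\gam_{\fp}\sfD$ for every $\fp$ in $\supp_{A}\sfD$, which by the preceding discussion equals $\Spec A$. Condition (S1) is essentially automatic: $\sfD=\sfD(A)$ is a tensor triangulated category under $\lotimes_{A}$ whose unit $A$ is compact and a generator (Theorem~\ref{thm:compacts-DA}), and the $A$-action is canonical, being induced by the identification $A\xra{\ \cong\ }\End^{*}_{\sfD}(A)$. So Theorem~\ref{thm:localglobal-tt} applies and yields the local--global principle. One also notes that since $A$ generates $\sfD$, the localising subcategory $\{Y\mid X\lotimes_{A}Y\in\Loc_{\sfD}(X)\}$ contains $A$ and hence is all of $\sfD$; thus every localising subcategory of $\sfD$ is tensor ideal, and the tensor-ideal local--global principle of Theorem~\ref{thm:localglobal-tt} is the ordinary one.

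For (S2), fix $\fp\in\Spec A$. Passing to the localisation $A_{\fp}$ and using that the $\fp$-local objects of $\sfD(A)$ form a triangulated subcategory equivalent to $\sfD(A_{\fp})$ via $-\lotimes_{A}A_{\fp}$, one reduces the minimality of $\gam_{\fp}\sfD(A)$ to that of $\gam_{\fp A_{\fp}}\sfD(A_{\fp})$; so we may assume $A$ local with maximal ideal $\fp$. Then $\fp$ is the closed point, $\mcZ(\fp)=\varnothing$, $X_{\fp}=X$ for all $X$, and $\gam_{\fp}=\gam_{\mcV(\fp)}$. I claim $\gam_{\fp}\sfD=\Loc_{\sfD}(k(\fp))$, where $k(\fp)=A/\fp$. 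A standard consequence of the local cohomology machinery is that $\gam_{\fp}\sfD$ is generated as a localising subcategory by $\gam_{\fp}A$; combined with Proposition~\ref{prop:locandkos} this gives $\gam_{\fp}\sfD=\Loc_{\sfD}(\kos A\fp)$, the localising subcategory generated by the Koszul complex on a finite generating set of $\fp$. By Proposition~\ref{prop:kosca}, $\Supp_{A}(\kos A\fp)=\mcV(\fp)=\{\fp\}$, so $\kos A\fp$ is a perfect complex with finite-length cohomology all of whose composition factors are $k(\fp)$; a dévissage places it in $\Thick_{\sfD^{\sfb}(\mod A)}(k(\fp))\subseteq\Loc_{\sfD}(k(\fp))$. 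Conversely $\supp_{A}k(\fp)=\{\fp\}$, because for every compact $C$ the $A$-module $\Hom^{*}_{\sfD}(C,k(\fp))$ is a graded $k(\fp)$-vector space and hence $\fp$-local and $\fp$-torsion; so $k(\fp)\in\gam_{\fp}\sfD$ by Corollary~\ref{cor:plocptor}. As $\gam_{\fp}\sfD$ is localising, this gives the reverse inclusion, and the claim follows.

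It remains to see that $\Loc_{\sfD}(k(\fp))$ is minimal; since every nonzero $X$ in $\gam_{\fp}\sfD=\Loc_{\sfD}(k(\fp))$ automatically satisfies $\Loc_{\sfD}(X)\subseteq\gam_{\fp}\sfD$, it suffices (Lemma~\ref{le:minimality}) to prove $k(\fp)\in\Loc_{\sfD}(X)$ for each such $X$. Because $\kos A\fp$ is a compact generator of $\gam_{\fp}\sfD$ it detects $X$ (Proposition~\ref{pr:compact-generation}), so $\Hom^{*}_{\sfD}(\kos A\fp,X)\neq 0$; by self-duality of the Koszul complex this group is, up to a shift, $H^{*}(\kos A\fp\lotimes_{A}X)$, and since $\kos A\fp$ is finitely built from $k(\fp)$, so is $\kos A\fp\lotimes_{A}X$ out of $X\lotimes_{A}k(\fp)$; hence $X\lotimes_{A}k(\fp)\neq 0$. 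Now $X\lotimes_{A}k(\fp)$ is the restriction of scalars of a nonzero object of $\sfD(k(\fp))$; as $k(\fp)$ is a field, that object is a coproduct of suspensions of $k(\fp)$, so $k(\fp)\in\Loc_{\sfD}(X\lotimes_{A}k(\fp))$, while $X\lotimes_{A}k(\fp)\in\Loc_{\sfD}(X)$ since $\Loc_{\sfD}(X)$ is tensor ideal. Therefore $k(\fp)\in\Loc_{\sfD}(X)$, establishing (S2) and with it Theorem~\ref{thm:stratify-da}; Corollary~\ref{cor:neeman} then follows at once from the classification Theorem~\ref{thm:strat-class}.

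The crux — and the only genuinely delicate point — is the minimality step, and specifically the handling of the residue field $k(\fp)$. Although $k(\fp)$ generates $\gam_{\fp}\sfD$, it is not compact in $\sfD(A)$ unless $A$ is regular, and its graded endomorphism ring $\Ext^{*}_{A}(k(\fp),k(\fp))$ is in general far from being a graded field, so $k(\fp)$ is not a ``field object'' in $\sfD(A)$ in any naive sense. The point, due to Neeman, is that no field object inside $\sfD(A)$ is needed: the functor $-\lotimes_{A}k(\fp)$ already lands, after restriction of scalars, in $\sfD(k(\fp))$, the derived category of an honest field, which is semisimple, and that is enough to extract $k(\fp)$ from any nonzero $\fp$-local $\fp$-torsion object. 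It is precisely the failure of this device in modular representation theory that makes the corresponding statement for $\StMod(kG)$ so much harder, and forces the detour through $\KInj{kG}$ and exterior algebras outlined in Section~\ref{ssec:strategy}.
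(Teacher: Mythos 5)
Your argument is correct and follows essentially the same route as the paper: the corollary is deduced from Theorem~\ref{thm:strat-class} once Theorem~\ref{thm:stratify-da} is known, and your proof of the latter mirrors the paper's --- the local--global principle via the tensor structure with compact unit $A$, reduction to the local case, the identification $\gam_{\fp}\sfD=\Loc_{\sfD}(k(\fp))$ through $\gam_{\fp}A$ and the Koszul complex, and minimality by tensoring down to the semisimple category $\sfD(k(\fp))$. The only (cosmetic) divergence is that you establish $X\lotimes_{A}k(\fp)\neq 0$ via compact-generator detection and Koszul self-duality, where the paper uses the equality $\Loc_{\sfD}(X)=\Loc_{\sfD}(k(\fp)\lotimes_{A}X)$ directly.
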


The next result is by Theorem~\ref{thm:strat-thick}, since $\sfD$ is a noetherian category.

\begin{corollary}
\label{cor:hopkins}
There is bijection between the thick subcategories of $\Thick(A)$ and specialisation closed subsets of $\Spec A$. \qed
\end{corollary}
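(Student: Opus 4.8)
The plan is to obtain Corollary~\ref{cor:hopkins} as a formal consequence of Theorem~\ref{thm:strat-thick} applied to $\sfT=\sfD$ with $R=A$. That theorem requires $\sfT$ to be $R$-linear, noetherian, and stratified by $R$, and all three are in place here: $\sfD$ is $A$-linear via left multiplication; it is noetherian because $\sfD^{\sfc}=\Thick(A)$ by Theorem~\ref{thm:compacts-DA}, because $\Hom^{*}_{\sfD}(A,A)\cong A$ is a finitely generated $A$-module, and because the derived dual $\RHom_{A}(C,A)$ of a perfect complex is again perfect, so $\Hom^{*}_{\sfD}(C,D)$ is a finitely generated $A$-module for every pair of compact objects $C,D$; and $\sfD$ is stratified by $A$, which is exactly Theorem~\ref{thm:stratify-da}. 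Since in addition $\supp_{A}\sfD=\Spec A$, Theorem~\ref{thm:strat-thick} then hands us the desired inclusion preserving bijection between the thick subcategories of $\sfD^{\sfc}=\Thick(A)$ and the specialisation closed subsets of $\Spec A$.

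To make the two maps explicit I would run the argument used in the proof of Theorem~\ref{thm:thickA}. A thick subcategory $\sfC\subseteq\Thick(A)$ goes to $\supp_{A}\sfC=\bigcup_{C\in\sfC}\supp_{A}C$, which is specialisation closed since each $\supp_{A}C$ with $C$ compact is a closed subset of $\Spec A$ by Theorem~\ref{thm:fg-support}; conversely a specialisation closed set $\mcV$ goes to $\{C\in\Thick(A)\mid\supp_{A}C\subseteq\mcV\}$, which is thick by the exactness and retract properties of support. The two assignments are mutually inverse on account of two inputs supplied earlier: first, if $C$ and $D$ are compact with $\supp_{A}C\subseteq\supp_{A}D$, then $C\in\Loc_{\sfD}(D)$ by Corollary~\ref{cor:strat-building}, hence $C\in\Thick_{\sfD}(D)$ by Theorem~\ref{thm:neeman-compacts}, and quasi-compactness of the closed set $\supp_{A}C$ reduces an inclusion into a union of supports to an inclusion into a finite subunion; second, for each $\fp\in\mcV$ the Koszul object $\kos A{\fp}$ is compact, lies in $\Thick(A)$, and has $\supp_{A}(\kos A{\fp})=\mcV(\fp)\subseteq\mcV$, as in Proposition~\ref{prop:kosca}, so the image subcategory has support exactly $\mcV$.

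A small point worth checking along the way is that on compact objects the abstract support $\supp_{A}$ agrees with the classical support $\Supp_{A}C=\mcV(\ann_{A}H^{*}(C))$: by Theorem~\ref{thm:fg-support} one has $\supp_{A}C=\mcV(\Ker(A\to\End^{*}_{\sfD}(C)))$, and localising at a prime shows this kernel has the same radical as $\ann_{A}H^{*}(C)$. With that identification the statement coincides with Hopkins' classification of thick subcategories of perfect complexes, Theorem~\ref{thm:thickA}, now recovered from the abstract machinery. I do not expect any genuine obstacle in this deduction: everything above is formal bookkeeping, and the entire weight of the argument rests on Theorem~\ref{thm:stratify-da}, whose proof occupies the remainder of the section.
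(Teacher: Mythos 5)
Your proposal is correct and follows the paper's own route: the paper deduces this corollary directly from Theorem~\ref{thm:strat-thick}, using that $\sfD$ is noetherian, stratified by $A$ (Theorem~\ref{thm:stratify-da}), and has $\supp_{A}\sfD=\Spec A$. The additional details you supply (explicit description of the bijection, realisation of closed sets via Koszul objects, comparison with $\Supp_{A}$) are consistent with the ingredients already established in the text but are not needed beyond the one-line citation the paper gives.
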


\subsection{Local cohomology and support}
Next we relate the functors $\gam_{\mcV}$ and $L_{\mcV}$ to familiar functors from commutative algebra. What follows is a summary of results from \cite[Section~9]{Benson/Iyengar/Krause:2008a}. 

To begin with,  note that or each $M$ in $\sfD$ there is an isomorphism of $A$-modules
\[
\Hom_\sfD^{*}(A,M)\cong H^{*}(M)
\]

For any $\fp\in\Spec A$, there are natural isomorphisms 
\[
L_{\mcZ(\fp)}(M) \cong M_{\fp}\,,
\]
where $M_{\fp}$ denotes the usual localisation of the complex $M$ at $\fp$. In other words, the functor $M\mapsto M_{\fp}$ is localisation  with respect to $\mcZ(\fp)$.

For each ideal $\fa$ in $A$, the functor $\gam_{\mcV(\fa)}$ is the derived functor of the \emph{$\fa$-torsion functor}\index{atorsionfunctor@$\fa$-torsion functor}, that associates to each $A$-module $M$, the submodule
\[
\{m\in M\mid \fa^{n} m =0 \text{ for some $n\geq 0$}\}.
\]
It follows that, for any $M$ in $\sfD$, the cohomology of $\gam_{\mcV(\fa)}(M)$ is \emph{local cohomology}\index{local cohomology} of $M$ with support in $\fa$, in the sense of Grothendieck:
\[
H^{*}(\gam_{\mcV(\fa)}M)\cong H^{*}_{\fa}(M) \quad\text{for each $M\in\sfD$.}
\]
In particular, for each $\fp\in\Spec A$ there is an isomorphism $H^{*}(\gam_{\fp}M)\cong H^{*}_{\fp A_{\fp}}(M_{\fp})$, so that the support of $M$ is computed as:
\[
\supp_{A}M = \{\fp\in\Spec A\mid H^{*}_{\fp A_{\fp}}(M_{\fp})\ne 0\}\,.
\]
Compare this with other methods for computing support given in Lemma~\ref{le:supp-tests}.

\subsection{Proof of Theorem~\ref{thm:stratify-da}}
The derived tensor product $-\lotimes_{A}-$ on $\sfD$ endows it with a structure of a tensor triangulated category, with unit $A$. Since $A$ is a compact generator for $\sfD$, it follows from Theorem~\ref{thm:localglobal-tt} that the local global principle holds for $\sfD$. It thus remains to verify stratification condition (S2) from Section~\ref{sec:Thursday2}.

Fix $\fp\in\Spec A$ and set $k(\fp)=A_{\fp}/\fp A_{\fp}$, the residue field of $A$ at $\fp$. Note that $k(\fp)$ is $\fp$-local and $\fp$-torsion and hence is in $\gam_{\fp}\sfD$, by Corollary~\ref{cor:plocptor}. In particular, the latter category is non-empty, and to prove that $\gam_{\fp}\sfD$ is minimal, it thus suffices to prove:
\[
\Loc_{\sfD}(M)=\Loc_{\sfD}(k(\fp)) \quad\text{for any $M\in\gam_{\fp}\sfD$ with $H^{*}(M)\ne 0$.}
\] 
We reduce to the case where $A$ is a local ring, with maximal ideal $\fp$, as follows: The homomorphism of rings $A\to A_{\fp}$ gives a restriction functor $\iota\col \sfD(A_{\fp})\to \sfD(A)$. Note that there is an isomorphism $M\cong \iota(M_{\fp})$ in $\sfD$, because $M$ is $\fp$-local. Since $\iota$ is compatible with coproducts, and maps $k(\fp)$ (viewed as an $A_{\fp}$-module) to $k(\fp)$, it suffices to prove
\[
\Loc_{\sfD(A_{\fp})}(M_{\fp})=\Loc_{\sfD(A_{\fp})}(k(\fp))\,.
\]
Observe that $M_{\fp}$ and $k(\fp)$ are local and torsion with respect to the ideal $\fp A_{\fp}\subset A_{\fp}$. Hence replacing $A$ and $M$ by $A_{\fp}$ and $M_{\fp}$, we may assume $A$ is local, with maximal ideal $\fp$, and set $k=A/\fp$. Since $\gam_{\fp} = \gam_{\mcV(\fp)}$, by Exercise~8 from Thursday's lecture, we then need to prove that
\[
\Loc_{\sfD}(M)=\Loc_{\sfD}(k) \quad\text{for each $M\in\gam_{\mcV(\fp)}\sfD$.}
\]

First we tackle the case $M=\gam_{\mcV(\fp)}A$. We claim $k$ is in $\Loc_{\sfD}(\gam_{\mcV(\fp)}A)$. Indeed, since localising subcategories of $\sfD$ are tensor closed, so one has:
\[
(\gam_{\mcV(\fp)}A\lotimes_{A}k)\, \in\, \Loc_{\sfD}(\gam_{\mcV(\fp)}A)\,.
\]
Now note that there are isomorphisms $k\cong \gam_{\mcV(\fp)}k\cong (\gam_{\mcV(\fp)}A\lotimes_{A}k)$.
Therefore
\[
\Loc_{\sfD}(k)\subseteq \Loc_{\sfD}(\gam_{\mcV(\fp)}A)\,.
\]
We verify the reverse inclusion by verifying the follows claims:
\[
\gam_{\mcV(\fp)}A\,\in\, \Loc_{\sfD}(\kos A\fp)\quad\text{and}\quad
\kos A\fp\,\in\, \Thick_{\sfD}(k)\,.
\]
The first inclusion is from Proposition~\ref{prop:locandkos}. As to the second: Since $A$ is finitely generated as an $A$-module and $\kos A\fp$ is in the thick subcategory generated by $A$, one obtains that the $A$-module $H^{*}(\kos A\fp)$ is finitely generated. It is also $\fp$-torsion, by Proposition~\ref{prop:kosprop}, and hence of finite length. Since $A$ is local, with residue field $k$, it follows that $\kos A\fp$ is in the thick subcategory generated by $k$; see Exercise~6.  

To sum up: $\Loc_{\sfD}(\gam_{\mcV(\fp)}A)=\Loc_{\sfD}(k)$. Since localising subcategories of $\sfD$ are tensor closed, and the exact functor $-\lotimes_{A}M$ preserves coproducts, the preceding equality yields the second equality below:
\[
\Loc_{\sfD}(M)= \Loc_{\sfD}(\gam_{\mcV(\fp)}A\lotimes_{A}M)=\Loc_{\sfD}(k\lotimes_{A}M)
\]
The first equality holds because $M$ is in $\gam_{\mcV(\fp)}\sfD$; see Section~\ref{ssec:ttcats}. Since $H^{*}(M)$ is non-zero, one gets in particular that $H^{*}(k\lotimes_{A}M)$ is non-zero as well. Since $k$ is a field, in $\sfD$ there is an isomorphism
\[
k\lotimes_{A}M \cong H^{*}(k\lotimes_{A}M)\,.
\]
Since $H^{*}(k\lotimes_{A}M)$  is a non-zero $k$-vector space, it can build $k$ and vice-versa, in $\sfD(k)$ and hence also in $\sfD$.

This completes the proof of the theorem.

\section{Elementary abelian 2-groups}
\label{sec:Friday2}
\index{elementary abelian 2-group}
Fix a prime number $p$, and let $E=\langle g_{1},\dots,g_{r}\rangle$ be an elementary abelian $p$-group of rank $r$. We consider $\KInj{kE}$, the homotopy category of complexes of injective (which in this is the same case as free) $kE$-modules. Recall that the tensor product over $k$, with diagonal $kE$-action, induces on $\KInj{kE}$ a structure of a tensor triangulated category, and that there is a canonical action of $H^{*}(E,k)$ on it.

As explained in Section~\ref{ssec:strategy}, one of the main steps in \cite{Benson/Iyengar/Krause:2008a} is a proof of the statement that $\KInj{kE}$ is stratified by $H^{*}(E,k)$. In this lecture, we prove this result in the special case where $p=2$, namely:

\begin{theorem}
\label{thm:2groups}
Let $k$ be a field of characteristic $2$ and $E$ an elementary abelian $2$-group. Then $\KInj{kE}$ is stratified by the canonical action of $H^{*}(E,k)$.
\end{theorem}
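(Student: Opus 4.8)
The plan is to transport the statement, via a Koszul-duality (Bernstein--Gelfand--Gelfand) equivalence, to the stratification of the derived category of a graded polynomial ring established in Section~\ref{sec:Friday1}, after making the usual reductions. Since $E$ is a $2$-group, $k$ is its only simple module, so the injective resolution $ik$ is a compact generator of $\KInj{kE}$ by Theorem~\ref{thm:kinj-generation}; it is also the tensor unit, by Proposition~\ref{prop:tensorid}, and the action of the graded polynomial ring $S:=H^{*}(E,k)=k[y_{1},\dots,y_{r}]$, $|y_{i}|=1$ (Proposition~\ref{prop:gc-eab2}), is canonical, being induced by $\Ext^{*}_{kE}(k,k)\cong S$ and the tensor product. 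Hence the local--global principle holds for $\KInj{kE}$ by Theorem~\ref{thm:localglobal-tt}, and it remains only to verify the minimality condition: for each $\fp\in\supp_{S}\KInj{kE}$ the tensor ideal localising subcategory $\gam_{\fp}\KInj{kE}$ is minimal among such, equivalently (Lemma~\ref{le:minimality-tensor}) every non-zero object of $\gam_{\fp}\KInj{kE}$ generates it as a tensor ideal localising subcategory.

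The heart of the matter is the BGG step. Applying Keller's theorem (Section~\ref{sec:Wednesday3}) to the compact generator $ik$ gives an equivalence $\KInj{kE}\xra{\ \sim\ }\sfD(\fEnd_{kE}(ik))$. When $\Char k=2$ the group algebra $kE=k[z_{1},\dots,z_{r}]/(z_{1}^{2},\dots,z_{r}^{2})$ is an exterior algebra, which is Koszul, and its endomorphism differential graded algebra $\fEnd_{kE}(ik)$, whose cohomology is the Yoneda algebra $\Ext^{*}_{kE}(k,k)=S$, is \emph{formal}: it is quasi-isomorphic to $S$ viewed as a differential graded algebra with zero differential. This identifies $\KInj{kE}$ with $\sfD(S)$, the derived category of differential graded $S$-modules; I would then check that this equivalence is one of tensor triangulated categories and carries the canonical $S$-action on $\KInj{kE}$ to the canonical $S$-action on $\sfD(S)$, so that it matches the local cohomology functors $\gam_{\fp}$ and the notions of tensor ideal localising subcategory on the two sides.

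Finally I would invoke Section~\ref{sec:Friday1}. The ring $S$ is regular and noetherian, and $\sfD(S)$ is tensor triangulated with unit $S$ a generator, so the argument proving Theorem~\ref{thm:stratify-da} — carried out in the graded setting — shows $\sfD(S)$ is stratified by $S$: the local--global principle is automatic, and for minimality one localises at $\fp$, reduces to a graded-local ring, and uses Koszul objects (Proposition~\ref{prop:kosprop}) together with regularity to see that any non-zero $\fp$-local $\fp$-torsion object builds the residue field and hence generates $\gam_{\fp}\sfD(S)$. Transporting this conclusion back across the equivalence of the previous paragraph shows that $\gam_{\fp}\KInj{kE}$ is minimal for every relevant $\fp$, completing the verification of stratification.

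I expect the BGG step to be the main obstacle: establishing formality of $\fEnd_{kE}(ik)$ and, above all, verifying that the resulting equivalence $\KInj{kE}\simeq\sfD(S)$ respects both the monoidal structure and the cohomological action closely enough that minimality transfers. A secondary technical point is that Theorem~\ref{thm:stratify-da} is stated for an ordinary commutative noetherian ring, whereas here its graded analogue is needed; the proof of Section~\ref{sec:Friday1} adapts essentially verbatim, replacing $\Spec$ by the homogeneous spectrum and ``local'' by ``graded-local''. The hypothesis $\Char k=2$ is precisely what makes this work directly: only then is $kE$ itself an exterior algebra. For odd $p$ one must first pass to the Koszul complex of $kE$ — a differential graded algebra quasi-isomorphic to a graded exterior algebra because $kE$ is a complete intersection — and only then apply BGG, which is the extra layer handled in \cite{Benson/Iyengar/Krause:bik3}.
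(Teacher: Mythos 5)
Your proposal follows essentially the same route as the paper's proof: reduce via a BGG equivalence to $\sfD(S)$ for the graded polynomial ring $S=H^{*}(E,k)$, invoke the graded analogue of Theorem~\ref{thm:stratify-da}, and check that the two actions yield the same local cohomology functors $\gam_{\fp}$ (the paper notes this is all that is needed, rather than full monoidal compatibility). The one step you leave as an expectation --- formality of $\fEnd_{kE}(ik)$ --- is exactly what the paper supplies by constructing the explicit injective resolution $J=kE\otimes_{k}S$ with twisted differential $\delta=\sum_{i}z_{i}\otimes x_{i}$, for which $\zeta\colon S\to\Hom_{kE}(J,J)$ is visibly a quasi-isomorphism of differential graded algebras, after which Keller's criterion gives the equivalence just as you propose.
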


The proof of this theorem is given in Section~\ref{ssec:proof2groups}. It uses a variation, and enhancement, of the classical Bernstein-Gelfand-Gelfand correspondence~\cite{Bernstein/Gelfand/Gelfand:1978}; see Remark~\ref{rem:bgg} for a detailed comparison. The statement, and the proof, of this latter result requires a foray into some homological algebra of differential graded modules over differential graded algebras.

\subsection{Differential graded algebras}
\label{Differential graded algebras}
\index{differential graded!algebra}
Let $A$ be  differential graded algebra. This means that $A$ is a graded algebra with a differential satisfying the Leibniz rule:
\[
d(ab) = d(a)\,b+(-1)^{|a|}a\,d(b)
\]
for all homogeneous elements $a,b$ in $A$.  In the same vein, a  differential graded  module $M$ over such a  differential graded  algebra $A$ is a  graded $A$-module with a differential such that the multiplication satisfies the appropriate Leibniz rule. 

Any graded algebra $A$ can be viewed as a  differential graded  algebra with zero differential; most of the  differential graded  algebras we encounter in the discussion below will be of this nature. In this case, any graded $A$-module is a  differential graded  $A$-module with zero differential. However, the category of  differential graded  $A$-modules is typically much larger, as should be clear soon. A ring is a  differential graded  algebra concentrated in degree $0$, and then a  differential graded  module is nothing more, and nothing less, than a complex of modules.

Let $A$ be a  differential graded  algebra. A morphism $f\col M\to N$ of  differential graded  $A$-modules is a homomorphism of graded $A$-modules that is at the same time a morphism of complexes. Differential graded $A$-modules and their morphisms form an abelian category. The usual notion of homotopy (for complexes of modules over a ring) carries over to differential graded modules, and one can form the associated homotopy category. Inverting the quasi-isomorphisms there gives the \emph{derived category}\index{derived category} $\sfD(A)$ of  differential graded  $A$-modules. It is triangulated and the  differential graded  module $A$ is a compact object and a generator; see \cite{Keller:1994a}.

\subsection{A BGG correspondence}
\label{ssec:BGG correspondence}
\index{BGG correspondence}
Let $k$ be a field of characteristic $2$. Let $E$ be an elementary abelian $2$-group and $kE$ its group algebra.
Thus $E\cong (\bbZ/2)^{r}$ and, setting $z_{i}=g_{i}-1$, there is an isomorphism of $k$-algebras
\[
kE\cong k[z_{1},\dots,z_{r}]/(z_{1}^2,\dots,z_{r}^{2})\,.
\]
Let $S=k[x_{1},\dots,x_{r}]$ be a graded polynomial algebra over $k$ where $|x_{i}|=1$ for each $i$. We view it as a  differential graded algebra with zero differential. Our goal is to construct an explicit equivalence between the triangulated categories $\KInj{kE}$ and $\sfD(S)$, the derived category of  differential graded  modules over $S$.
To this end, we mimic the approach in \cite[Section 7]{Avramov/Buchweitz/Iyengar/Miller:2010a}.

The graded $k$-algebra $kE \otimes_{k} S$, with component in degree $i$ being $kE\otimes_{k}S^{i}$ and product  defined by 
\[
(a\otimes s)\cdot (b\otimes t) = ab\otimes st\,,
\]
is commutative. Again we view it as a  differential graded  algebra with zero differential, and consider in it the element
of degree one:
\[ 
\delta = \sum_{i=1}^{r}z_{i}\otimes_{k} x_{i}\,.
\]
It is easy to verify that $\delta^{2}=0$; keep in mind that the characteristic of $k$ is $2$. In what follows $J$ denotes the  differential graded  module over $kE\otimes_{k}S$ with underlying graded module and differential given by
\[
J = kE \otimes_{k} S\quad\text{and}\quad d(e)=\delta e.
\]
Observe that since $J$ is a  differential graded  module over $kE\otimes_{k}S$, for each  differential graded  module $M$ over $kE$ there is an induced structure of a  differential graded  $S$-module on $\Hom_{kE}(J,M)$. 
Moreover, the map 
\[
\zeta\col S\to \Hom_{kE}(J,J)
\]
induced by right multiplication is a morphism of differential graded algebras.

\begin{theorem}
\label{th:bgg}
The map $\zeta$ is a quasi-isomorphism and the functor
\[
\Hom_{kE}(J,-)\col \KInj{kE} \to \sfD(S) 
\]
is an equivalence of triangulated categories.
\end{theorem}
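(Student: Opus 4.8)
The plan is to split the statement into its two assertions and handle them in sequence, using the general machinery for equivalences recorded in Monday's Exercise~23 (see \cite[Lemma~4.5]{Benson/Iyengar/Krause:bik3}).

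\smallskip

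\noindent\textbf{Step 1: $\zeta$ is a quasi-isomorphism.} First I would compute the cohomology of the differential graded $kE\otimes_k S$-module $J$. The point is that $J$ is, up to a shift and an overall tensor with $S$, the Koszul complex on the sequence $z_1,\dots,z_r$ in $kE$ (with $x_i$ recording the internal degree). Since $z_1,\dots,z_r$ generate the maximal ideal of the local ring $kE$ and $kE/(z_1,\dots,z_r)\cong k$, the Koszul complex on this sequence resolves $k$ only in the regular case; here instead I would argue directly that $\Hom_{kE}(J,J)$ has the homology one expects. Concretely, $J$ is a bounded-below complex of free $kE$-modules, so $\Hom_{kE}(J,J)$ computes $\operatorname{RHom}_{kE}(J,J)$, and since $J$ is (a twist of) a free resolution-like object with $H^{*}(J)$ concentrated appropriately, one identifies $H^{*}(\Hom_{kE}(J,J))$ with $S$; the map $\zeta$, being right multiplication, is visibly an isomorphism onto this. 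This is essentially the computation carried out in \cite[Section~7]{Avramov/Buchweitz/Iyengar/Miller:2010a}, and I would follow it, checking the sign bookkeeping (characteristic two helps: $\delta^2=0$ is immediate).

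\smallskip

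\noindent\textbf{Step 2: the functor is an equivalence.} The functor $F=\Hom_{kE}(J,-)\colon\KInj{kE}\to\sfD(S)$ is exact: it sends degreewise split exact sequences and mapping cones in $\KInj{kE}$ to the corresponding data in $\sfD(S)$, and it commutes with suspension. By Theorem~\ref{thm:kinj-generation}, $\KInj{kE}$ is compactly generated, and since $E$ is a $p$-group, $ik$ is a compact generator. I would verify that $F(ik)$ is isomorphic to $S$ in $\sfD(S)$: applying $\Hom_{kE}(J,-)$ to the injective resolution $ik$ of $k$, and using that $J$ is a complex of frees together with the homology computation from Step~1, gives $\Hom_{kE}(J,ik)\simeq \Hom_{kE}(J,k)$, whose cohomology is $S$ as a graded $S$-module; chasing the module structure shows this is $S$ itself in $\sfD(S)$. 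Then I must check two things to invoke Exercise~23: (a) $F$ induces an isomorphism $\Hom_{\KInj{kE}}^{*}(ik,ik)\to\Hom_{\sfD(S)}^{*}(S,S)$, i.e. $H^{*}(E,k)\cong \operatorname{End}^{*}_{\sfD(S)}(S)=S$ — this is exactly Proposition~\ref{prop:gc-eab2}, and one checks $F$ realizes this isomorphism, which amounts to tracking how cup products on $H^{*}(E,k)$ correspond to the polynomial multiplication on $S$ via $\delta$; (b) $\Loc_{\sfD(S)}(F(ik))=\Loc_{\sfD(S)}(S)=\sfD(S)$, which holds since $S$ is a compact generator of its own derived category.

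\smallskip

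\noindent\textbf{Main obstacle.} The routine part is exactness and compact generation; the genuine content is Step~1 together with identifying the ring map in Step~2(a). The delicate point is pinning down $H^{*}(\Hom_{kE}(J,-))$ and checking that the differential graded algebra map $\zeta$ realizes the \emph{correct} isomorphism $S\xra{\cong}H^{*}(E,k)$ — in particular that the element $\delta=\sum z_i\otimes x_i$ is set up so that the induced classes $x_i\in H^{1}(E,k)$ are polynomial generators rather than, say, satisfying some unexpected relation. I expect the bulk of the work to be this homological computation with $J$, carried out by the method of \cite[Section~7]{Avramov/Buchweitz/Iyengar/Miller:2010a}, with characteristic two simplifying the signs throughout. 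Once $\zeta$ is known to be a quasi-isomorphism and $F(ik)\simeq S$, the equivalence is a formal consequence of Exercise~23.
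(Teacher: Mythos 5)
Your overall architecture is the paper's: show $\zeta$ is a quasi-isomorphism by identifying $H^{*}(\Hom_{kE}(J,J))$ with $S$ (both you and the paper lean on \cite[Section~7]{Avramov/Buchweitz/Iyengar/Miller:2010a}), then apply Exercise~23 to a compact generator. But Step~2 contains a step that fails. The claim $\Hom_{kE}(J,ik)\simeq\Hom_{kE}(J,k)$ would require $\Hom_{kE}(J,-)$ to preserve the quasi-isomorphism $k\to ik$, i.e.\ $J$ to be homotopically projective; a complex of free modules that is unbounded in the positive direction need not be K-projective, and this $J$ is not. Concretely, since every $kE$-linear map $J^{m}\to k$ kills $\delta\cdot J^{m-1}$ (the elements $z_{i}$ act as zero on the trivial module), the complex $\Hom_{kE}(J,k)$ has zero differential, and its cohomology is the graded $k$-dual of $S$ sitting in non-positive degrees --- not $S$. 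The same misconception appears in your Step~1 justification (``$J$ is a bounded-below complex of free modules, so $\Hom_{kE}(J,J)$ computes $\RHom_{kE}(J,J)$''): free-ness in the first variable buys you nothing here; what saves the computation is that $J$ is a bounded-below complex of \emph{injectives} (free $=$ injective since $kE$ is self-injective), so it is the functor $\Hom_{kE}(-,J)$ that preserves quasi-isomorphisms.

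The missing idea is the paper's central structural claim: with $w=z_{1}\cdots z_{r}$ the socle generator, the map $\eta\col k\to J$, $1\mapsto w\otimes 1$, is an \emph{injective resolution} of the trivial module (checked by reducing to rank one and applying the K\"unneth formula). Once you have this, everything you want follows with no cup-product bookkeeping: $J\cong ik$ in $\KInj{kE}$, hence $J$ is a compact generator by Theorem~\ref{thm:kinj-generation}; $\zeta$ is a quasi-isomorphism because the composite $S\to\Hom_{kE}(J,J)\xra{\Hom_{kE}(\eta,J)}\Hom_{kE}(k,J)$ is an isomorphism of complexes and $\Hom_{kE}(\eta,J)$ is a quasi-isomorphism by K-injectivity of $J$; and the image of the generator is $\Hom_{kE}(J,J)\simeq S$, so the bijection on graded endomorphisms demanded by Exercise~23 is supplied by $\zeta$ itself rather than by comparison with Proposition~\ref{prop:gc-eab2}. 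You should also record that $\Hom_{kE}(J,-)$ preserves coproducts (it does, since $J$ is compact and quasi-isomorphisms become isomorphisms in $\sfD(S)$), which the application of Exercise~23 needs alongside $\Loc_{\sfD(S)}(S)=\sfD(S)$.
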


\begin{proof}
The main ingredients in the proof are summarised in the following claims:

\smallskip

\noindent\emph{Claim}: As a complex of $kE$-modules, $J$ consists of injectives and $J^{i}=0$ for $i<0$.

\smallskip

Indeed, the degree $i$ component of $J$ is $kE\otimes_{k}S^{i}$, which is isomorphic to a direct sum of $\binom {r+i-1}i$ copies of $kE$. The claim follows, since $kE$ is self-injective.

\medskip

Set $w=z_{1}\cdots z_{r}$; this is a non-zero element in the socle of $kE$, and hence generates it. It is easy to verify that the map of graded $k$-vector spaces
\[
\eta\col k\to J\quad\text{defined by $\eta(1) = w \otimes 1$}
\]
is a morphism of  differential graded  $kE$-modules.

\smallskip

\noindent\emph{Claim}: $\eta\col k\to J$ is an injective resolution of $k$, as a  $kE$-module.

\smallskip

Given the previous claim, and that $\eta$ is $kE$-linear, it remains to prove that $\eta$ is a quasi-isomorphism. We have to prove that $H^{*}(\eta)$ is an isomorphism, and in verifying this we can, and will, ignore the $kE$-module structures on $k$ and $J$.

When $r=1$, complex $J$ can be identified with the complex 
\[
0\to k[z]/(z^{2}) \xra{z} k[z]/(z^{2}) \xra{z} \cdots 
\]
of $k[z]/(z^{2})$-modules. It is easy to verify that the only non-zero cohomology in this complex is in degree
$0$, where it is $kw$. Thus, $\eta$ is a quasi-isomorphism. 

For general $r$, writing $J(i)$ for the complex $k[z_{i}]/(z_{i}^{2})\otimes_{k}k[x_{i}]$ one has a natural isomorphism
\[
J\cong J(1)\otimes_{k}\cdots \otimes_{k}J(r)
\]
of complex of $k$-vector spaces. With this, and setting $\eta(i)$ to be the map $k\to J(i)$ defined by $1\mapsto z_{i}\otimes 1$, one gets an identification
\[
\eta = \eta(1)\otimes_{k}\cdots \otimes_{k}\eta(r)\col k  \to J\,.
\]
Since each $\eta(i)$ is a quasi-isomorphism, it follows from the K\"unneth formula that $H^{*}(\eta)$ is an isomorphism as well.

This completes the proof of the claim.

\medskip

Now we are ready to prove that the map $\zeta\col S\to \Hom_{kE}(J,J)$ induced by right multiplication is a quasi-isomorphism. In verifying that $\zeta$ is a quasi-isomorphism we can now ignore the $S$-module structures.  The composite morphism of $k$-vector spaces
\[
S\xra{\quad\zeta\quad}\Hom_{kE}(J,J)\xra{\,\Hom_{kE}(\eta,J)\,} \Hom_{kE}(k,J)
\]
sends an element $s\in S$ to the map $1\mapsto w\otimes s$, where $w=z_{1}\dots z_{r}$.  It is easy to see that the composite is  an isomorphism of complexes. It remains to note that $\Hom_{kE}(\eta,J)$ is a quasi-isomorphism:  $\eta$ is a quasi-isomorphism of  differential graded  modules over $kE$ and $\Hom_{kE}(-,J)$ preserves quasi-isomorphisms, because $J$ is a complex of injective $kE$-modules with $J^{i}=0$ for $i<0$.

Armed with these facts about $J$, the proof can be wrapped up as follows: It is easy to verify that the functor $\Hom_{kE}(J,-)$ from $\KInj{kE}$ to $\sfD(S)$ is exact. Since $J$ is an injective resolution of $k$, it is compact and generates the triangulated category $\KInj{kE}$; see Theorem~\ref{thm:kinj-generation}. It follows from this that the functor $\Hom_{kE}(J,-)$ preserves coproducts, since a quasi-isomorphism between  differential graded  $S$-modules is an isomorphism in $\sfD(S)$. As $S$ is quasi-isomorphic to $\Hom_{kE}(J,J)$ and it is a compact generator for $\sfD(S)$, it remains to apply Exercise~23 in Chapter~\ref{ch:Monday} to conclude that $\Hom_{kE}(J,-)$ is an equivalence.
\end{proof}

\begin{remark}
\label{rem:bgg}
Let $k$ be a field, $\Lambda$ an exterior algebra on generators of degree $1$, and $S$ a polynomial algebra on generators of degree $1$. Bernstein, Gelfand, and Gelfand~\cite{Bernstein/Gelfand/Gelfand:1978} proved that there is an equivalence of triangulated categories
\[
\sfD^{\sfb}(\grmod \Lambda)\simeq \sfD^{\sfb}(\grmod S) 
\]
The functor inducing the equivalence is similar to the one in Theorem~\ref{th:bgg}.

Over a field of characteristic $2$, the group algebra of an elementary abelian $2$ group is an exterior algebra.
However, the result above cannot be applied directly to our context. One point being that it deals with \emph{graded modules} over an exterior algebra of degree one, but this is not crucial. The main issue is that it deals with \emph{bounded} derived categories, while for our purposes we need a statement at the level of the full homotopy category of complexes of injective modules; see Section~\ref{ssec:recollement} for the discrepancy between the two.
\end{remark}

\subsection{Proof of Theorem~\ref{thm:2groups}}
\label{ssec:proof2groups}

We retain the notation from  Section~\ref{ssec:BGG correspondence}. 

The basic idea of the proof is clear enough: $\KInj{kE}$ is equivalent to $\sfD(S)$, by Theorem~\ref{th:bgg}, so it suffices to prove that the latter is stratified, and for that one invokes Theorem~\ref{thm:stratify-da}. Two issues need clarification. 

One point is that Theorem~\ref{thm:stratify-da} concerns commutative rings and not differential graded algebras. However, the argument given for \emph{op.~cit.} carries over with minor modifications to $\sfD(S)$; see \cite[Theorem~5.2]{Benson/Iyengar/Krause:bik3}, and also \cite[Theorem~8.1]{Benson/Iyengar/Krause:bik2}, for a statement that applies to any formal differential graded algebra whose homology is commutative and noetherian.

The second point is this: By the discussion above,  $\KInj{kE}$ is stratified by the $S$-action  induced by the equivalence of categories in Theorem~\ref{th:bgg}. The latter result also gives that $H^{*}(E,k)\cong S$, as algebras; one then needs to check  that the induced action of $H^{*}(E,k)$ on $\KInj{kE}$ is the diagonal action. In fact, much less is required, namely, that the local cohomology functors, $\gam_{\fp}$, defined by the two actions coincide, and this is easier to verify; see the proof of \cite[Theorem~6.4]{Benson/Iyengar/Krause:bik3}.

For a systematic treatment of such ``change-of-categories'' results, which may serve to clarify the issue, see \cite[Section~7]{Benson/Iyengar/Krause:bik4}.

\section{Stratification for arbitrary finite groups}
\label{sec:Friday3}

The goal for this lecture is as follows. Let $G$ be a finite group and $k$ a field of characteristic $p$. We shall assume that $\KInj{kE}$ is stratified by $H^*(E,k)$ for all elementary abelian subgroups $E$ of $G$  (for $p=2$ this was proved in the previous lecture) and we shall prove that $\KInj{kG}$ is stratified by $H^*(G,k)$.

Our main tool is Quillen's Stratification Theorem; so we
begin by examining Quillen's theorem in some detail.

\subsection{Quillen Stratification}
\index{Quillen!stratification theorem}

Quillen \cite{Quillen:1971a,Quillen:1971b} 
(1971) described the \emph{maximal} ideal spectrum\index{maximal ideal spectrum}
of $H^*(G,k)$ in terms of elementary abelian subgroups.
We're really interested in \emph{prime} ideals, but let's first describe
Quillen's original theorem. Note that in a finitely generated 
graded commutative $k$-algebra such as $H^*(G,k)$ every prime ideal
is the intersection of the maximal ideals containing it, by
a version of Hilbert's Nullstellensatz.\index{Hilbert's Nullstellensatz}

Let $G$ be a finite group, and $k$ a field of characteristic $p$. If $H$ is a subgroup of $G$, there's a \emph{restriction map}\index{restriction!map} $H^*(G,k)\to H^*(H,k)$ which is a ring homomorphism. Writing $V_G=\max\,H^*(G,k)$, this induces a map of varieties $V_H\to V_G$.

\begin{theorem}
An element $u\in H^*(G,k)$ is nilpotent if and only if $\res_{G,E}(u)$ is
nilpotent for every elementary abelian $p$-subgroup $E\le G$. \qed
\end{theorem}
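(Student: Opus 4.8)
The "only if" direction is immediate: each $\res_{G,E}\colon H^{*}(G,k)\to H^{*}(E,k)$ is a homomorphism of rings, so it sends nilpotents to nilpotents. For the converse, the first move is to reduce to the case where $G$ is a $p$-group. Let $P$ be a Sylow $p$-subgroup of $G$. The composite of $\res_{G,P}$ with the transfer $H^{*}(P,k)\to H^{*}(G,k)$ is multiplication by the index $[G:P]$, which is a unit in $k$ since $\Char k=p$; hence $\res_{G,P}$ is injective, so $u$ is nilpotent if and only if $\res_{G,P}(u)$ is. Since every elementary abelian $p$-subgroup of $G$ is conjugate into $P$ and inner automorphisms act trivially on cohomology, $u$ restricts to a nilpotent element on every elementary abelian subgroup of $G$ if and only if $\res_{G,P}(u)$ does so on every elementary abelian subgroup of $P$. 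This reduces the assertion to the $p$-group $P$ and the class $\res_{G,P}(u)$.

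So assume $G$ is a $p$-group and argue by induction on $|G|$. If $G$ is elementary abelian there is nothing to prove, since $E=G$ is itself an elementary abelian subgroup. If $G$ is not elementary abelian, I would invoke Serre's theorem: there are finitely many surjections $\rho_{1},\dots,\rho_{n}\colon G\to\bbZ/p$, with (normal, index $p$) kernels $G_{1},\dots,G_{n}$, and associated classes $\zeta_{i}\in H^{*}(G,k)$ pulled back along $\rho_{i}$ from $H^{*}(\bbZ/p,k)$ — namely the degree-one classes $x_{i}$ when $p=2$, and their Bocksteins $\beta x_{i}$ when $p$ is odd — such that
\[
\zeta_{1}\cdots\zeta_{n}=0 .
\]
The second ingredient is the identification of the kernel of restriction: via the long exact sequence attached to the extension of $kG$-modules $0\to k\to k[G/G_{i}]\to k\to 0$ when $p=2$ (equivalently the Gysin sequence of $BG_{i}\to BG\to B\bbZ/p$), and its odd-primary analogue, one gets that in sufficiently high degrees $\ker(\res_{G,G_{i}})$ equals the ideal generated by $\zeta_{i}$; for $p$ odd one first replaces $u$ by $u^{2}$ so as to work in even degrees.

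Now suppose $u$ restricts to a nilpotent element on every elementary abelian subgroup of $G$. Fix $i$. Every elementary abelian subgroup of $G_{i}$ is one of $G$, so $\res_{G_{i},E}(\res_{G,G_{i}}(u))=\res_{G,E}(u)$ is nilpotent for all such $E$; since $|G_{i}|<|G|$, the inductive hypothesis gives that $\res_{G,G_{i}}(u)$ is nilpotent. Choose $N$ with $\res_{G,G_{i}}(u^{N})=0$ for all $i$; by the kernel computation, $u^{N}=\zeta_{i}w_{i}$ for suitable $w_{i}\in H^{*}(G,k)$, for each $i$. Multiplying these $n$ relations together and using graded-commutativity and Serre's vanishing, $u^{Nn}=\pm(\zeta_{1}\cdots\zeta_{n})(w_{1}\cdots w_{n})=0$, so $u$ is nilpotent. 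This completes the induction.

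The main obstacle is supplying the two inputs used in the inductive step. Serre's theorem that $\zeta_{1}\cdots\zeta_{n}=0$ for a non-elementary-abelian $p$-group is itself a genuine cohomological result, and the identification of $\ker(\res_{G,G_{i}})$ with the ideal generated by $\zeta_{i}$ requires the Lyndon–Hochschild–Serre spectral sequence (or the explicit Gysin/transfer sequences), together with some care about the degree range and, when $p$ is odd, about the Bockstein bookkeeping and the passage to even degrees. Everything else is formal.
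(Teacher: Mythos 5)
The paper states this theorem with a \qed and a citation to Quillen, so there is no internal proof to compare against; what you have supplied is the standard Quillen--Venkov argument, and its architecture is right: reduce to a Sylow $p$-subgroup via the transfer (restriction followed by transfer is multiplication by the index, a unit in $k$), then induct on the order of a $p$-group using Serre's vanishing $\zeta_{1}\cdots\zeta_{n}=0$ together with a divisibility statement for classes killed by restriction to an index-$p$ subgroup. For $p=2$ your argument is complete and correct as written: the long exact sequence attached to $0\to k\to k[G/G_{i}]\to k\to 0$, whose connecting map is cup product with $x_{i}$, really does give $\ker(\res_{G,G_{i}})=(x_{i})$ in every positive degree. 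Since these notes only carry out complete proofs in characteristic two, that is the case that matters most here.

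For $p$ odd, however, the key lemma as you state it is false: it is not true that $\ker(\res_{G,G_{i}})$ equals the ideal $(\zeta_{i})=(\beta x_{i})$ in high, or even high even, degrees. Already for $G=(\bbZ/p)^{2}=\langle a,b\rangle$ with $G_{i}=\langle a\rangle$ and $\zeta_{i}=y_{b}=\beta x_{b}$, the class $x_{a}x_{b}y_{a}^{j}$ has even degree $2j+2$, lies in $\ker(\res_{G,G_{i}})=(x_{b},y_{b})$, but is not divisible by $y_{b}$ in $\Lambda(x_{a},x_{b})\otimes k[y_{a},y_{b}]$. The correct statement (the Quillen--Venkov lemma) is weaker: if $\res_{G,G_{i}}(v)=0$ then $v^{2}\in(\zeta_{i})$. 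This follows from the Lyndon--Hochschild--Serre spectral sequence of $1\to G_{i}\to G\to\bbZ/p\to 1$: vanishing of the restriction puts $v$ in filtration $\geq 1$, hence $v^{2}$ in filtration $\geq 2$, and everything of filtration $\geq 2$ is a multiple of the permanent cycle $\zeta_{i}$ because $H^{\geqslant 2}(\bbZ/p,-)$ is periodic under cup product with $\lambda$. With this substitution your induction goes through verbatim --- from $\res_{G,G_{i}}(u^{N})=0$ you get $u^{2N}=\zeta_{i}w_{i}$ rather than $u^{N}=\zeta_{i}w_{i}$, and then $u^{2Nn}=\pm(\zeta_{1}\cdots\zeta_{n})(w_{1}\cdots w_{n})=0$ exactly as before. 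So the gap is confined to the justification of the intermediate lemma, not to the structure of the proof, but as written the step ``$u^{N}=\zeta_{i}w_{i}$ by the kernel computation'' does not hold for odd $p$.
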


It thus makes sense to look at the product of the restriction maps
\[ 
H^*(G,k) \to \prod_E H^*(E,k). 
\]
The theorem implies that the kernel of this map is nilpotent.  What is the image?\medskip

If an element $(u_E)$ is in the image then
\begin{enumerate}
\item for each conjugation $c_g\colon E' \to E$ in $G$, 
$c_g^*(u_E)=u_{E'}$ and
\item for each inclusion $i\colon E' \to E$ in $G$, $i^*(u_E)=u_{E'}$.
\end{enumerate}

Conversely, if $(u_E)$ satisfies these conditions, Quillen showed
that for some $t\ge 0$ the element $(u_E^{p^t})$ is in the image.

\begin{definition}
We define $\varprojlim_{E} H^*(E,k)$ to be the elements $(u_E)$ of the direct product $\prod_E H^*(E,k)$ satisfying
conditions (1) and (2) above.
\end{definition}

\begin{definition}
A homomorphism $\phi\colon R\to S$ of graded commutative $k$-algebras 
is an \emph{$F$-isomorphism}\index{F@$F$-isomorphism} or 
\emph{inseparable isogeny}\index{inseparable isogeny} if
\begin{enumerate}
\item the kernel of $\phi$ consists of nilpotent elements, and
\item for each $s\in S$ there exists $t\ge 0$ such that $s^{p^t}\in\Im \phi$.
\end{enumerate}
\end{definition}

We can now rephrase Quillen's theorem as follows:

\begin{theorem}
\pushQED{\qed}
The restriction maps induce an $F$-isomorphism
\[ 
H^*(G,k)\to\varprojlim_{E} H^*(E,k)\,.\qedhere
\]
\end{theorem}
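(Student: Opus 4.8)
The plan is to verify the two defining conditions of an $F$-isomorphism for the homomorphism $\phi\colon H^{*}(G,k)\to S$, where $S:=\varprojlim_{E}H^{*}(E,k)$ and $\phi$ is assembled from the restriction maps $\res_{G,E}$. That $\phi$ genuinely lands in the inverse limit is immediate from the functoriality of restriction and its compatibility with the maps induced by conjugations and inclusions, so the tuple $(\res_{G,E}(u))_{E}$ satisfies the two compatibility conditions defining $S$. Next, the kernel of $\phi$ consists of those $u$ with $\res_{G,E}(u)=0$ for every elementary abelian $p$-subgroup $E$; such a $u$ has nilpotent restriction to every $E$, hence is itself nilpotent by Quillen's detection theorem stated above. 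This settles condition (1). Since both conditions persist along the faithfully flat base change $-\otimes_{k}\overline{k}$ (nilpotence of an ideal and the property ``some $p$-power of this element lies in this subring'' can each be checked after it, using that Frobenius is additive on an $\bbF_{p}$-algebra, that $H^{*}(G,k)\otimes_{k}\overline{k}\cong H^{*}(G,\overline{k})$ and likewise for each $E$, and that the finite inverse limit commutes with the base change), we may assume from now on that $k$ is algebraically closed.

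It remains to establish condition (2): that every $s\in S$ satisfies $s^{p^{t}}\in\Im\phi$ for some $t$. Write $T=\prod_{E}H^{*}(E,k)$, so that $\Im\phi\subseteq S\subseteq T$. For each elementary abelian $E\le G$ the restriction map makes $H^{*}(E,k)$ a finitely generated $H^{*}(G,k)$-module: Shapiro's lemma (equivalently, Frobenius reciprocity \eqref{eqn:frobenius-reciprocity} combined with the identification of coinduction with induction for subgroups) gives an isomorphism $H^{*}(E,k)\cong H^{*}(G,kG\otimes_{kE}k)$ of $H^{*}(G,k)$-modules, and the right-hand side is noetherian over $H^{*}(G,k)$ by Evens' Theorem~\ref{thm:Evens}. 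Hence $T$, and therefore its $H^{*}(G,k)$-submodule $S$, is a finitely generated $H^{*}(G,k)$-module; in particular $S$ is module-finite over $\Im\phi$ and is a finitely generated $\overline{k}$-algebra. Thus $\Im\phi\hookrightarrow S$ is an integral extension of finitely generated $\overline{k}$-algebras, and a standard fact in characteristic $p$ is that such an extension is purely inseparable --- every element of $S$ has a $p$-power in $\Im\phi$, i.e.\ condition (2) holds --- as soon as it induces a bijection on maximal ideal spectra (geometrically: a finite bijective morphism of affine $\overline{k}$-schemes is a universal homeomorphism, hence purely inseparable; an elementary additive-Frobenius step over the nilpotent nilradical of $S$, which meets $\Im\phi$ trivially, reduces to the reduced case). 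So the proof is reduced to showing that $\Max S\to\Max(\Im\phi)=V_{G}$ is a bijection.

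Here Quillen's theorem describing $V_{G}$ as the colimit $\varinjlim_{E}V_{E}$ over the Quillen category, assumed above, is the decisive input. The finite surjection $\Max T=\coprod_{E}V_{E}\twoheadrightarrow\Max S$ followed by $\Max S\to V_{G}$ is exactly the family of restriction-induced maps $V_{E}\to V_{G}$, which is surjective by Quillen; hence $\Max S\to V_{G}$ is surjective. For injectivity, take $\mathfrak{m}_{1},\mathfrak{m}_{2}\in\Max S$ lying over the same $\mathfrak{n}\in V_{G}$ and lift them to points $v_{i}\in V_{E_{i}}$ of $\Max T$. Then $v_{1}$ and $v_{2}$ have the same image in $V_{G}$, so by Quillen's colimit description they are connected by a zig-zag of arrows of the Quillen category. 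For a single such arrow $\alpha\colon E\to E'$ the point $v'=\alpha_{*}(v)\in V_{E'}$ satisfies $\mathrm{ev}_{v'}=\mathrm{ev}_{v}\circ\alpha^{*}$ on $H^{*}(E',k)$, while by the very definition of $S$ as the equalizer cutting out compatible tuples one has $\alpha^{*}(u_{E'})=u_{E}$ for every $(u_{E})\in S$; hence $\mathrm{ev}_{v'}\circ\mathrm{pr}_{E'}=\mathrm{ev}_{v}\circ\mathrm{pr}_{E}$ as $k$-algebra maps $S\to k$, so $v$ and $v'$ determine the same maximal ideal of $S$. Chaining along the zig-zag yields $\mathfrak{m}_{1}=\mathfrak{m}_{v_{1}}\cap S=\mathfrak{m}_{v_{2}}\cap S=\mathfrak{m}_{2}$, which is the required bijectivity.

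The work in this argument is concentrated in the one ingredient I have deliberately black-boxed: Quillen's theorem $V_{G}=\varinjlim_{E}V_{E}$, whose proof goes through equivariant cohomology, the localization theorem, and a noetherian induction over finite $G$-CW complexes. Granting it, the rest is soft; the two points that still demand care are the module-finiteness of restriction $H^{*}(G,k)\to H^{*}(E,k)$ (which is precisely where Evens' theorem enters and which genuinely uses finiteness of the groups) and the identification of the fibres of $\Max T\to\Max S$ with classes of the Quillen-equivalence relation, where one must know that the equalizer defining $S$ really sees every inclusion and conjugation and must invoke the Nullstellensatz to pass between $\overline{k}$-points of $V_{E}$ and $k$-algebra homomorphisms $H^{*}(E,k)\to k$.
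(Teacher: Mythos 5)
The paper does not prove this theorem at all: it is quoted from Quillen \cite{Quillen:1971a,Quillen:1971b} with a \(\square\), and the surrounding text explicitly attributes both halves (nilpotence of the kernel via the detection theorem, and the existence of \(t\) with \((u_E^{p^t})\) in the image) to Quillen. Your proposal instead tries to \emph{derive} the \(F\)-isomorphism from the set-level statement \(V_G=\varinjlim_E \Max H^*(E,k)\), which you treat as the black-boxed input. That is where the argument fails as a proof: this set-level bijection is itself Quillen's stratification theorem, and in the standard development --- including in these very notes, where the sentence immediately following the present theorem deduces the bijectivity of \(\varinjlim_E V_E\to V_G\) \emph{from} the \(F\)-isomorphism --- it is a consequence of the statement you are proving. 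Your injectivity argument for \(\Max S\to V_G\) moreover uses the precise zig-zag description of when two points of \(\coprod_E V_E\) become equal in \(V_G\), which is the full strength of that theorem. So at best you have shown that two of Quillen's theorems are equivalent modulo Evens' finiteness theorem; the hard content (which Quillen obtains from equivariant cohomology and the localization/descent machinery) is untouched.

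Even granting the black box, the key commutative-algebra step is asserted rather than proved. The claim that a finite extension \(\Im\phi\subseteq S\) of finitely generated \(\overline{k}\)-algebras which is bijective on maximal spectra must satisfy \(s^{p^t}\in\Im\phi\) is not the one-liner ``universal homeomorphism, hence purely inseparable'': universal homeomorphisms exist in characteristic zero (cuspidal curves) and admit no Frobenius descent, so the conclusion genuinely needs characteristic-\(p\) structure theory --- one must first upgrade bijectivity on closed points to universal injectivity on all of \(\Spec\), then reduce to the reduced, birational, subintegral case and run a Swan--Traverso-type induction through elementary extensions \(A\subseteq A[b]\) with \(b^2,b^3\in A\), where the freshman's dream finally produces the \(p\)-power. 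This is a real argument, comparable in length to what it replaces. The portions of your write-up that are sound and standard are the reduction to algebraically closed \(k\) and the module-finiteness of \(\prod_E H^*(E,k)\) over \(H^*(G,k)\) via Evens' Theorem~\ref{thm:Evens} and Eckmann--Shapiro.
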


Now if $\phi\colon R\to S$ is an $F$-isomorphism of finitely generated 
graded commutative $k$-algebras then $\phi^*\colon \max S \to \max R$
is a \emph{bijection}.

For the moment, let's assume that $k$ is algebraically closed. Recalling that $V_G=\max H^*(G,k)$, this says that $\varinjlim_{E}V_E\to V_G$, as a map of varieties, is \emph{bijective} (but not necessarily invertible!).

Here, $\varinjlim_{E}V_E$ is the quotient of the disjoint union of the $V_E$ by the equivalence relation induced
by the conjugations and inclusions.

\subsection{A more concrete view}

Let us continue for a while with the assumption that $k$ is algebraically closed, a restriction which we shall later lift. Let us describe $V_G$ more explicitly in terms of the elementary abelian $p$-subgroups.  If $E$ is an elementary abelian $p$-group of rank $r$ then $H^*(E,k)$ modulo nilpotents is a polynomial ring in $r$
variables, and so 
\[ 
V_E=\max\,H^*(E,k)\cong\bbA^r(k), 
\] 
affine space of dimension $r$.

If $E'\le E$ then $\res_{E,E'}^*$ identifies $V_{E'}$ as a linear
subspace of $V_E$ determined by linear equations with coefficients in
the ground field $\bbF_p$. Furthermore, each such linear subspace
is the image of $\res_{E,E'}^*$ for a suitable $E'\le E$.

\begin{definition}
We set $V_E^+=V_E\setminus\bigcup_{E'<E}V_{E'}$, so that
$V_E^+$ is obtained from $V_E$ by removing all $(p^r-1)/(p-1)$ of
the codimension one subspaces 
in $V_E$ defined over $\bbF_p$. The set $V_E^+$ is a dense open
subset of $V_E$.

We consider the following subvarieties of $V_G$:
\[
V_{G,E}=\res_{G,E}^*(V_E)\qquad\text{and}\qquad V_{G,E}^+=\res_{G,E}^*(V_E^+)\,.
\]
Thus $V_{G,E}$ is a closed subvariety and $V_{G,E}^+$ is a locally closed subvariety.
\end{definition}

\begin{theorem}
\label{th:QST-max}
The following statements hold
\begin{enumerate}
\item
$V_G$ is the disjoint union of locally closed subsets $V_{G,E}^+$, one
for each conjugacy class of elementary abelian subgroups $E$ of $G$.
\item
$V_{G,E}^+$ is ($F$-isomorphic to) the quotient of the locally
closed variety $V_E^+$ by the free action
of $N_G(E)/C_G(E)$.
\item The layers $V_{G,E}^+$ of $V_G$ are glued together via the
inclusions $E'<E$ in $G$.\qed
\end{enumerate}
\end{theorem}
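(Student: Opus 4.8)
The plan is to reduce the geometry entirely to the combinatorics of the Quillen category. We already have (from the reformulation of Quillen's theorem above) the $F$-isomorphism
\[
\res\colon H^*(G,k)\longrightarrow \varprojlim_{E} H^*(E,k)\,,
\]
and an $F$-isomorphism of finitely generated graded-commutative $k$-algebras induces a bijection on maximal ideal spectra; so this yields a bijection of sets $\varinjlim_{E} V_E \xra{\ \sim\ } V_G$, the colimit running over the Quillen category (objects the elementary abelian $p$-subgroups of $G$, morphisms generated by inclusions and conjugations). I work over algebraically closed $k$, as in the standing assumption of this discussion, so $V_E\cong\bbA^{r}(k)$ with the subvarieties $V_{E'}$, $E'\le E$, appearing precisely as the $\bbF_p$-rational linear subspaces (up to a Frobenius twist for odd $p$, which is harmless here); the phrase ``$F$-isomorphic to'' in the statement absorbs the fact that the inverse of this bijection need not be a morphism of varieties.

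First I would prove a purely local decomposition of a single $V_E$. Since $H^*(E,k)$ modulo nilpotents is a polynomial ring on $H^1$, restriction identifies the poset of subgroups $E'\le E$ with the poset of $\bbF_p$-rational subspaces of $V_E$ compatibly with intersections, so $V_{E'}\cap V_{E''}=V_{E'\cap E''}$. Hence each $v\in V_E$ lies in a unique smallest $V_{E_v}$, where $E_v=\bigcap\{E'\mid v\in V_{E'}\}$, and then $v\in V_{E_v}^+$ by minimality; this gives the disjoint decomposition $V_E=\bigsqcup_{E'\le E}V_{E'}^+$. Feeding this into $\varinjlim_E V_E$, an inclusion $E'<E$ merely re-expresses pieces $V_{E'}^+$ already present, while a conjugation carries $V_E^+$ bijectively onto the analogous top layer of a conjugate subgroup (it is an isomorphism of posets of subgroups, hence preserves the ``smallest subgroup''). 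Therefore the colimit is the disjoint union, over conjugacy classes of elementary abelian subgroups $E$, of the quotients of $V_E^+$ by the group $N_G(E)/C_G(E)$ (the conjugations fixing $E$ set-wise, with $C_G(E)$ acting trivially on $H^*(E,k)$). Transporting along the bijection to $V_G$ yields $V_G=\bigsqcup_{[E]}V_{G,E}^{+}$ — this is (1) — and identifies $V_{G,E}^+$ with $V_E^+/\bigl(N_G(E)/C_G(E)\bigr)$, which is half of (2).

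It then remains to see that $W:=N_G(E)/C_G(E)$ acts \emph{freely} on $V_E^+$. Given $\bar g=gC_G(E)$ acting $\bbF_p$-linearly on $E$, flatness of $\bbF_p\to k$ applied to the matrix $\bar g-1$ gives $\ker\bigl(\bar g-1\colon E\otimes_{\bbF_p}k\to E\otimes_{\bbF_p}k\bigr)=\ker\bigl(\bar g-1\colon E\to E\bigr)\otimes_{\bbF_p}k$, i.e. the fixed locus $V_E^{\bar g}$ equals $V_{C_E(g)}$. If $\bar g\ne 1$ then $g\notin C_G(E)$, so $C_E(g)\subsetneq E$, and $V_E^{\bar g}=V_{C_E(g)}$ is a proper $\bbF_p$-rational subspace, hence disjoint from $V_E^+$: no non-trivial element of $W$ has a fixed point in $V_E^+$. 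Combined with the colimit analysis above (where the only identifications among points of top layers came from $N_G(E)$), the map $\res_{G,E}^*\colon V_E^+\to V_{G,E}^+$ is exactly the orbit map of this free $W$-action, finishing (2). For (3), applying $\res_{G,E}^*$ to $V_E=\bigsqcup_{E'\le E}V_{E'}^+$ gives $V_{G,E}=\bigcup_{E'\le E}V_{G,E'}^+$, so the closed stratum $V_{G,E}$ — the closure of $V_{G,E}^+$ — is assembled from the lower layers $V_{G,E'}^+$ indexed by the subgroups of a conjugate of $E$; this is the precise sense in which the layers are glued along the inclusions $E'<E$.

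The main obstacle will be the bookkeeping in the colimit step: showing that, after the local decomposition, no identifications among points of $V_E^+$ survive except those induced by $N_G(E)$, and none at all between layers of non-conjugate subgroups. This reduces to analysing zig-zags of inclusions and conjugations in the Quillen category and showing one can always push the inclusions to the ends of such a zig-zag, which is where Quillen's original argument (via his $F$-isomorphism theorem and a transfer/norm argument) does the real work; everything else is linear algebra over $\bbF_p$ and formal manipulation of the $F$-isomorphism.
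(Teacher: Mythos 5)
The paper does not actually prove Theorem~\ref{th:QST-max}: it is quoted from Quillen \cite{Quillen:1971a,Quillen:1971b} with the \qed\ placed in the statement, so there is no in-text argument to compare yours against. Judged on its own terms, your derivation from the two Quillen theorems that the paper \emph{does} state (nilpotence detection on elementary abelians, and the set-level bijection $V_G=\varinjlim_E V_E$) is sound and is essentially Quillen's own route to the stratification. The three ingredients all check out: the identity $V_{E'}\cap V_{E''}=V_{E'\cap E''}$ (linear algebra over $\bbF_p$ after identifying $V_E$ with $E\otimes_{\bbF_p}k$, up to the harmless Frobenius twist for odd $p$) gives the local decomposition $V_E=\bigsqcup_{E'\le E}V_{E'}^+$; the fixed-locus computation $V_E^{\bar g}=V_{C_E(g)}$ gives freeness of the $N_G(E)/C_G(E)$-action on $V_E^+$; and the closedness of the finite morphism $\res^*_{G,E}$ gives $V_{G,E}=\overline{V_{G,E}^{+}}=\bigcup_{E'\le E}V_{G,E'}^{+}$, which is the gluing in (3). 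One remark on your closing paragraph: the ``bookkeeping in the colimit'' that you flag as the main obstacle is in fact already disposed of by your own device, since the assignment $w\mapsto(E_w,\,w|_{E_w})$ is invariant (up to $G$-conjugacy) under both generating relations of the colimit --- inclusions preserve the origin pair and conjugations conjugate it --- so once the set-level bijection $V_G=\varinjlim_E V_E$ is granted, no transfer argument remains; the transfer/norm work lives entirely inside the proof of that bijection, which you (like the paper) take as input. The only point you pass over lightly is that ``$F$-isomorphic to the quotient'' in (2) is a statement about coordinate rings, namely that $H^*(G,k)\to H^*(E,k)^{N_G(E)/C_G(E)}$ becomes an $F$-isomorphism after inverting suitable elements cutting out $V_E^+$; your argument establishes the underlying bijection of sets, which matches the level of precision at which the paper itself works.
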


\subsection{Prime ideals}

Now we discuss prime ideals in $H^*(G,k)$, and we drop the assumption that $k$ is algebraically
closed. The discussion below is based on \cite[Section~9]{Benson/Iyengar/Krause:bik3}.

Write $\mcV_G$ for the (homogeneous) prime spectrum of $H^*(G,k)$. By Quillen's theorem, for each $\fp \in\mcV_G$ there exists an elementary abelian $p$-subgroup $E\le G$ such that $\fp$ is in the image of $\res_{G,E}^*$.
We say that $\fp$ \emph{originates}\index{originates} in such an $E$ if there does not exist a proper
subgroup $E'$ of $E$ such that $\fp$ is in the image of $\res_{G,E'}^*$.

\begin{theorem}
\label{th:origin}
For each $\fp \in\mcV_G$, the pairs $(E,\fq)$ where $\fq\in\mcV_E$,  $\fp=\res_{G,E}^*(\fq)$ and
such that $\fp$ originates in $E$ are all $G$-conjugate. \qed
\end{theorem}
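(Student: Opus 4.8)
<br>

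The plan is to prove Theorem~\ref{th:origin} by combining the set-theoretic statement of Quillen stratification, Theorem~\ref{th:QST-max}, with a transfer argument passing between maximal and homogeneous prime ideals. First I would recall that since $H^*(G,k)$ is a finitely generated graded-commutative $k$-algebra, a homogeneous prime $\fp$ is the intersection of the homogeneous maximal ideals containing it, and the irreducible closed subset $\mcV(\fp)\subseteq \mcV_G$ it defines corresponds, after base change to the algebraic closure $\bar k$, to an irreducible subvariety of $V_{\bar G}$ (here $\bar G = G$ but over $\bar k$). So it suffices to work over an algebraically closed field and track a generic point of this subvariety; the descent back to $k$ is routine Galois-theoretic bookkeeping and I would not belabour it.

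Over $\bar k$, let $v$ be the generic point of the irreducible subvariety $\mcV(\fp)$ of $V_G$. By part~(1) of Theorem~\ref{th:QST-max}, $V_G$ is the disjoint union of the locally closed pieces $V_{G,E}^+=\res_{G,E}^*(V_E^+)$ over conjugacy classes of elementary abelian subgroups, so $v$ lies in exactly one such piece $V_{G,E}^+$, and this determines the conjugacy class of $E$ uniquely; this $E$ is precisely the one in which $\fp$ originates. The key point to check is that ``$v\in V_{G,E}^+$'' is equivalent to ``$\fp$ is in the image of $\res_{G,E}^*$ but not in the image of $\res_{G,E'}^*$ for any proper $E'<E$'': the forward direction is immediate since $V_{G,E}^+ = V_{G,E}\setminus \bigcup_{E'<E}V_{G,E'}$ by part~(3) and the definition of $V_E^+$, and for the reverse direction one uses that $\res_{G,E}^*$ is a closed map (finiteness of cohomology, Evens' theorem~\ref{thm:Evens}) so images of closed sets are closed, hence containment of the generic point $v$ in $V_{G,E}$ forces $\mcV(\fp)\subseteq V_{G,E}$. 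Then part~(2), that $V_{G,E}^+$ is the quotient of $V_E^+$ by the \emph{free} action of $N_G(E)/C_G(E)$, gives that the fibre of $V_E^+\to V_{G,E}^+$ over $v$ is a single $N_G(E)/C_G(E)$-orbit; translating back to prime ideals, the pairs $(E,\fq)$ with $\fq\in\mcV_E$ mapping to $\fp$ and with $\fp$ originating in $E$ form a single orbit under $N_G(E)/C_G(E)$, and varying $E$ within its $G$-conjugacy class accounts for the remaining conjugations. Hence all such pairs are $G$-conjugate.

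The main obstacle will be the bookkeeping around the freeness in part~(2) of Theorem~\ref{th:QST-max} combined with the $F$-isomorphism statements: Quillen's theorem only gives an $F$-isomorphism (inseparable isogeny) onto $\varprojlim_E H^*(E,k)$, so the identification of fibres is only up to taking $p$-th power roots, which is a bijection on points but not an isomorphism of schemes. I would handle this by working entirely at the level of points of $\Max$ (where an $F$-isomorphism induces an honest bijection, as noted in the excerpt just after the restatement of Quillen's theorem), deducing the orbit statement there, and only at the end translating it to homogeneous primes via the Nullstellensatz correspondence between homogeneous primes and irreducible closed subsets. The descent from $\bar k$ to $k$ then requires observing that origination and the restriction maps are all defined over $k$, so $\mathrm{Gal}(\bar k/k)$ acts compatibly and the $G$-conjugacy of pairs over $\bar k$ descends; this is where I would cite \cite[Section~9]{Benson/Iyengar/Krause:bik3} rather than reproduce the argument.
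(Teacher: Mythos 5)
The paper itself offers no proof of Theorem~\ref{th:origin}: it is stated with a \qed and attributed to \cite[Section~9]{Benson/Iyengar/Krause:bik3}, so there is nothing internal to compare your route against; judged on its own terms, your skeleton (deduce the prime-ideal statement from the maximal-ideal stratification of Theorem~\ref{th:QST-max} over $\bar{k}$, then descend to $k$) is workable, and the ingredients you cite for pinning down the conjugacy class of $E$ — disjointness of the strata, closedness of $V_{G,E}=\res_{G,E}^*(V_E)$ via finiteness (Evens), the Nullstellensatz — are the right ones. The difficulty is that the two steps you dismiss as routine are exactly where the content of the theorem beyond Theorem~\ref{th:QST-max} sits, and as written both are gaps. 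First, ``the fibre of $V_E^+\to V_{G,E}^+$ over $v$'' is not defined: $v$ is a non-maximal homogeneous prime and has no avatar in $\Max$, so part~(2) of Theorem~\ref{th:QST-max} only gives a single-orbit statement fibrewise over \emph{closed} points, and passing from that to ``the primes $\fq$ over $\fp$ originating in $E$ form one $N_G(E)/C_G(E)$-orbit'' requires an argument you do not indicate. The Warning immediately after the theorem (the $A_4$ example, where a prime is invariant under the $N_G(E)/C_G(E)$-action although no closed point above it is fixed) is a reminder that orbit structure on primes is not a formal shadow of orbit structure on closed points. The gap can be closed: since $\fp$ originates in $E$, each $\mcV(\fq_i)\cap V_E^+$ is dense in $\mcV(\fq_i)$; finiteness of restriction shows the images of these sets each contain a dense open subset of $\mcV(\fp)$; the pointwise orbit statement then yields that a dense subset of $\mcV(\fq_2)$ lies in $\bigcup_{w}w\,\mcV(\fq_1)$ ($w$ running over $N_G(E)/C_G(E)$), and irreducibility together with equality of dimensions forces $\fq_2=w\fq_1$ for some $w$. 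Alternatively, one may quote the classical transitivity of a finite group on the primes lying over a prime of its invariant ring, combined with Quillen's $F$-isomorphism on the stratum. Some such step must appear explicitly.

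Second, the theorem is asserted for arbitrary $k$ (the text explicitly drops the hypothesis that $k$ is algebraically closed), and you delegate the descent from $\bar{k}$ to $k$ to \cite[Section~9]{Benson/Iyengar/Krause:bik3} — but that is the very reference in which the theorem is established, so as a proof your argument is circular at this point. The descent is genuinely doable and should be written out rather than cited: flatness of $k\to\bar{k}$ identifies $\ker(\res_{G,E'})$ with its extension of scalars, so ``$\fp$ lies in the image of $\res_{G,E'}^*$'' holds over $k$ if and only if it holds for any prime of $H^*(G,\bar{k})$ lying over $\fp$; hence origination is insensitive to the field extension. Then transitivity of the Galois action on the primes over $\fp$ (with the purely inseparable part inducing a homeomorphism on spectra) lets you move two lifted originating pairs so that they sit over the same prime of $H^*(G,\bar{k})$, apply the $\bar{k}$-statement, and contract back to $k$-primes, where $G$-equivariance of contraction gives the desired conjugacy. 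With these two steps supplied, your outline becomes a proof; without them it records the statement of Theorem~\ref{th:QST-max} plus the assertion to be proved.
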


\begin{warning} 
In contrast with part (2) of Theorem \ref{th:QST-max},
if we fix $\fp$ and $E$ such that $\fp$ originates in $E$, then
$N_G(E)/C_G(E)$ acts transitively but not necessarily 
freely on the set of primes
$\fq \in\mcV_E$ such that $\res_{G,E}^*(\fq)=\fp$.

Indeed, let $E$ be the normal four group of the alternating group $A_4$, and let $k$ be an algebraically closed field of characteristic two. Let $\omega$ be a primitive cube root of unity in $k$. Write $H^{*}(E,k)=k[x,y]$ where $x$ and $y$ are defined over $\bbF_2$. Then the prime ideal $(x+\omega y)$ is invariant under the action of $G$ but there is no inhomogeneous maximal ideal containing it that is fixed, because each point in the line is multiplied by $\omega$ or $\omega^2$ when acted on by an element of $A_{4}$ not in $E$.

\end{warning}

For any subgroup $H\le G$, formal properties of restriction and induction
prove the following:

\begin{lemma}
Let $\fp \in\mcV_G$ and set $\mcU=(\res_{G,H}^*)^{-1}\{\fp \}$.
\begin{enumerate}
\item For any $X$ in $\KInj{kG}$, $(\gam_\fp X){\downarrow_H}\cong
\bigoplus_{\fq \in\mcU}\gam_\fq(X{\downarrow_H})$.
\item For any $Y$ in $\KInj{kH}$, $\gam_\fp (Y{\uparrow^G})\cong
\bigoplus_{\fq \in\mcU}(\gam_\fq Y){\uparrow^G}$. \qed
\end{enumerate}
\end{lemma}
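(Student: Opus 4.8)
The plan is to reduce both assertions to the behaviour of the functors $\gam_{\mcV(\fa)}$ and $L_{\mcZ(\fp)}$ (equivalently $\gam_{\fp}$) under the restriction and induction functors between $\KInj{kG}$ and $\KInj{kH}$. The key structural input is the compatibility of the $H^{*}(G,k)$-action on $\KInj{kG}$ with the $H^{*}(H,k)$-action on $\KInj{kH}$ via the restriction homomorphism $\res_{G,H}^{*}\col H^{*}(G,k)\to H^{*}(H,k)$: for $X$ in $\KInj{kG}$ the action of $r\in H^{*}(G,k)$ on $X\da_{H}$ agrees with the action of $\res_{G,H}^{*}(r)$, and dually for induction (Frobenius reciprocity, see \eqref{eqn:frobenius-reciprocity}). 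Geometrically the map $\Spec \res_{G,H}^{*}\col\mcV_H\to\mcV_G$ carries $\fq$ to $\res_{G,H}^{*}(\fq)$, and $\mcU=(\res_{G,H}^{*})^{-1}\{\fp\}$ is a finite set of primes, all of the same height, no one contained in another.

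First I would treat part (1). Both $(-)\da_H$ and $(-)\ua^G$ are exact functors that preserve set-indexed coproducts (restriction is plainly coproduct-preserving, and induction $kG\otimes_{kH}-$ is a left adjoint, hence preserves coproducts), so they commute with the homotopy-colimit constructions used to build $\gam_{\mcV(r)}$ and $L_{\mcV(r)}$ out of Koszul objects in Proposition~\ref{prop:hocolim-local}. Fix $\fp\in\mcV_G$ and choose a finite homogeneous generating sequence $\bsr$ for $\fp$; then $\kos {(X\da_H)}{\res_{G,H}^{*}(\bsr)}\cong (\kos X{\bsr})\da_H$, and localisation at $\mcZ(\fp)$ on $\KInj{kG}$ restricts to simultaneous localisation at all $\mcZ(\fq)$ with $\fq\in\mcU$. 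Concretely: since $\gam_{\fp}X$ is $\fp$-local and $\fp$-torsion (Corollary~\ref{cor:plocptor}), its restriction $(\gam_{\fp}X)\da_H$ has cohomology (against any compact $C$) that is $\fp$-torsion as an $H^{*}(G,k)$-module; viewed over $H^{*}(H,k)$ it is supported precisely on $\mcU$. Because the primes in $\mcU$ are pairwise incomparable, the $\mcU$-torsion subcategory of $\KInj{kH}$ decomposes as the direct sum of the $\gam_{\fq}\KInj{kH}$ for $\fq\in\mcU$, and one gets $(\gam_{\fp}X)\da_H\cong\bigoplus_{\fq\in\mcU}\gam_{\fq}((\gam_{\fp}X)\da_H)$. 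It then remains to identify $\gam_{\fq}((\gam_{\fp}X)\da_H)$ with $\gam_{\fq}(X\da_H)$; this follows by applying $\gam_{\fq}$ to the localisation triangle $\gam_{\fp}X\to X_{\fp}\to L_{\mcV(\fp)}X_{\fp}\to$ after restriction, using that $L_{\mcV(\fp)}X$ restricts to an object with no $\fq$-torsion cohomology for $\fq\in\mcU$ (indeed $\mcV(\fp)$ pulls back into the complement of $\mcU$'s specialisation closure in the relevant range), so $\gam_{\fq}$ kills it, and similarly $X_{\fp}\da_H\to X\da_H$ becomes an isomorphism after $\gam_{\fq}$ since $\fq$ lies over $\fp$.

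For part (2) I would argue by adjunction, transporting the computation through Frobenius reciprocity. One shows directly that $\gam_{\fp}(Y\ua^G)$ is $\fp$-local and $\fp$-torsion and that $(\gam_{\fq}Y)\ua^G$ is $\fp$-local and $\fp$-torsion for each $\fq\in\mcU$ (using that $\res_{G,H}^{*}(\fp)\subseteq\fq$ forces the relevant torsion/local conditions to pass up $\ua^G$, which is compatible with Koszul objects: $(\kos Y{\bsr'})\ua^G\cong\kos {(Y\ua^G)}{\res_{G,H}^{*}(\bsr)}$ up to the identification of generators). Then, to check the two sides agree, it suffices by the uniqueness of $\gam_{\fp}$-objects — or more simply by testing against a compact generator $C$ of $\KInj{kG}$ — to compare $\Hom^{*}_{\KInj{kG}}(C,\gam_{\fp}(Y\ua^G))$ with $\bigoplus_{\fq\in\mcU}\Hom^{*}_{\KInj{kG}}(C,(\gam_{\fq}Y)\ua^G)$; by Frobenius reciprocity the latter becomes $\bigoplus_{\fq}\Hom^{*}_{\KInj{kH}}(C\da_H,\gam_{\fq}Y)$, and by part (1) applied to $C$ (or directly, since $\gam_{\fp}$ commutes with $\Hom$ against compacts up to localisation, Proposition~\ref{prop:kosprop}(4)) this matches $\Hom^{*}_{\KInj{kH}}(C\da_H,\gam_{\mcU}(Y))=\Hom^{*}_{\KInj{kH}}((\gam_{\fp}(Y\ua^G))\da_H$-type computations. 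Alternatively one can deduce (2) from (1) together with the projection formula relating $(-)\ua^G$, $(-)\da_H$ and the tensor action.

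The main obstacle I anticipate is the bookkeeping at the ``boundary'': making precise that localisation and torsion at the single prime $\fp$ of $H^{*}(G,k)$ correspond, after transfer along $\res_{G,H}^{*}$, to localisation and torsion at the \emph{finite family} $\mcU$ simultaneously, and that incomparability of the members of $\mcU$ is what produces the direct-sum decomposition rather than a more complicated filtration. This requires a careful use of the description of $\gam_{\fp}$-objects via $\fp$-local, $\fp$-torsion cohomology (Corollary~\ref{cor:plocptor}) and of the interaction of $\gam_{\mcV}$ and $L_{\mcW}$ for incomparable $\mcV,\mcW$ (Theorem~\ref{thm:vw} and Lemma~\ref{lem:loc-commute}), applied $\fq$ by $\fq$. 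Everything else — exactness of $\da_H$ and $\ua^G$, preservation of coproducts, compatibility with Koszul objects and with the ring actions — is formal, so the proof will mostly consist in assembling these standard facts in the right order.
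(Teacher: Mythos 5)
Your treatment of part (1) is essentially workable with the tools in these notes: by Frobenius reciprocity the cohomology of $(\gam_\fp X)\da_H$ against a compact $C$ of $\KInj{kH}$ is $\Hom^*_{kG}(C\ua^G,\gam_\fp X)$, which is $\fp$-local and $\fp$-torsion over $H^*(G,k)$ by Corollary~\ref{cor:plocptor}, hence supported in $\mcU$ over $H^*(H,k)$; the finitely many primes of $\mcU$ are pairwise incomparable, so the orthogonality property in Theorem~\ref{thm:axioms} splits off the $\gam_\fq$-pieces; and the identification $\gam_\fq\bigl((\gam_\fp X)\da_H\bigr)\cong\gam_\fq(X\da_H)$ follows from the two localisation triangles once you verify, again by adjunction, that $(\kos C\fq)\ua^G$ lies in $\sfT_{\mcV(\fp)}$ for $\fq\in\mcU$ (so $\gam_\fq$ kills the restriction of the $\mcV(\fp)$-local part) and that localisation at $\fq$ over $H^*(H,k)$ factors through localisation at $\fp$ over $H^*(G,k)$. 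These verifications are exactly the ``formal properties of restriction and induction'' the text appeals to; the paper itself gives no proof, so the comparison is with that intended argument.

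The genuine gap is in your main argument for part (2). Producing an abstract isomorphism of graded modules $\Hom^*_{\KInj{kG}}(C,\gam_\fp(Y\ua^G))\cong\bigoplus_{\fq\in\mcU}\Hom^*_{\KInj{kG}}(C,(\gam_\fq Y)\ua^G)$ for a compact generator $C$ proves nothing about the objects themselves: without a morphism between the two sides inducing that isomorphism you cannot conclude they are isomorphic, and there is no ``uniqueness of $\gam_\fp$-objects'' to invoke --- two objects of $\gam_\fp\sfT$ with isomorphic cohomology need not be isomorphic, so checking that both sides are $\fp$-local and $\fp$-torsion does not pin them down. The repair is either the alternative you dismiss in one sentence, which is in fact the efficient route: write $\gam_\fp(Y\ua^G)\cong\gam_\fp(ik)\otimes_k Y\ua^G$ using the canonical action (Section~\ref{ssec:ttcats}), apply the projection formula $(X\da_H\otimes_k Y)\ua^G\cong X\otimes_k Y\ua^G$ with $X=\gam_\fp(ik)$, use part (1) for $X=ik$ together with $ik\da_H\cong ik_H$ and Proposition~\ref{prop:tensorid}, and distribute $\ua^G$ over the finite coproduct; or else the ``exchange of triangles'' argument: for a specialisation closed $\mcV\subseteq\Spec H^*(G,k)$ with preimage $\mcV'$ in $\Spec H^*(H,k)$, induction and restriction send $\mcV'$-torsion objects to $\mcV$-torsion ones and $\mcV'$-local objects to $\mcV$-local ones (here one uses that induction is both left and right adjoint to restriction), hence carry localisation triangles to localisation triangles by the uniqueness in Proposition~\ref{pr:loc-basic}; then $\gam_\fp=L_{\mcZ(\fp)}\gam_{\mcV(\fp)}$, and the fibre $\mcU$ splits into a direct sum by incomparability exactly as in your part (1). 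Either route gives (2) honestly; the cohomology comparison as you propose it does not.
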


Using this, we get:

\begin{theorem}
If $X\in\KInj{kG}$ and $Y\in\KInj{kH}$ then
\begin{enumerate}
\item $\mcV_G(X{\downarrow_H\uparrow^G})\subseteq\mcV_G(X)$, and
\item $\mcV_G(Y{\uparrow^G})=\res_{G,H}^*\mcV_H(Y)$. \qed
\end{enumerate}
\end{theorem}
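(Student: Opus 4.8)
The plan is to deduce both statements as immediate formal consequences of the preceding lemma, combined with one elementary input: induction from $kH$ to $kG$ is a faithful exact functor on the homotopy categories of injectives. Concretely, I would first record that $kG\cong kH\oplus M$ as a $kH$-bimodule (split off the double coset $HeH$), so that for every $Z$ in $\KInj{kH}$ the object $Z$ is a direct summand of $(Z{\uparrow^G}){\downarrow_H}$; in particular $Z{\uparrow^G}\ne 0$ whenever $Z\ne 0$, and of course $Z=0$ forces $Z{\uparrow^G}=0$. I would also note that, since $\KInj{kG}$ and $\KInj{kH}$ have set-indexed coproducts, a coproduct of objects indexed by a set is zero if and only if each summand is zero; both lemma decompositions are such coproducts.

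For part (1): fix $\fp\in\mcV_G$ lying in $\mcV_G(X{\downarrow_H}{\uparrow^G})$ and put $\mcU=(\res_{G,H}^*)^{-1}\{\fp\}$. Applying part (2) of the lemma to $Y=X{\downarrow_H}$ gives $\gam_\fp(X{\downarrow_H}{\uparrow^G})\cong\bigoplus_{\fq\in\mcU}(\gam_\fq(X{\downarrow_H})){\uparrow^G}$, which is nonzero, so $(\gam_\fq(X{\downarrow_H})){\uparrow^G}\ne 0$ for some $\fq\in\mcU$, and hence $\gam_\fq(X{\downarrow_H})\ne 0$ by faithfulness of induction. Now part (1) of the lemma gives $(\gam_\fp X){\downarrow_H}\cong\bigoplus_{\fq'\in\mcU}\gam_{\fq'}(X{\downarrow_H})$, and since our $\fq$ lies in $\mcU$, the nonzero object $\gam_\fq(X{\downarrow_H})$ is one of the summands; therefore $(\gam_\fp X){\downarrow_H}\ne 0$, so $\gam_\fp X\ne 0$, i.e.\ $\fp\in\mcV_G(X)$.

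For part (2): again write $\mcU=(\res_{G,H}^*)^{-1}\{\fp\}$. By part (2) of the lemma, $\gam_\fp(Y{\uparrow^G})\cong\bigoplus_{\fq\in\mcU}(\gam_\fq Y){\uparrow^G}$, and by faithfulness of induction this is nonzero precisely when $\gam_\fq Y\ne 0$ for some $\fq$ with $\res_{G,H}^*(\fq)=\fp$. That is, $\fp\in\mcV_G(Y{\uparrow^G})$ if and only if $\fp\in\res_{G,H}^*\bigl(\mcV_H(Y)\bigr)$, which is the asserted equality. I do not expect a genuine obstacle here, since the substantive content is carried by the lemma (already assumed); the only points requiring care are the faithfulness/direct-summand bookkeeping for induction and the (trivial) observation that a set-indexed coproduct in $\KInj{}$ vanishes if and only if all its summands do, so that nonvanishing of $\gam_\fp$ of an induced or restricted object can be tested summand by summand.
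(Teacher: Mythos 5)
Your proposal is correct and follows exactly the route the paper intends: the theorem is stated as an immediate consequence of the preceding lemma, and you supply the routine details (faithfulness of induction via the $kH$-bimodule splitting $kG\cong kH\oplus M$, and the fact that a set-indexed coproduct vanishes iff every summand does). Nothing is missing.
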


The following version of the subgroup theorem is for elementary
abelian groups. The analogue for arbitrary finite groups also holds,
and its proof makes use of this more limited version.

\begin{theorem}
\label{th:subgroup}
Let $E'\le E$ be elementary abelian $p$-groups, $X \in \KInj{kE}$. Then
\[ 
\mcV_{E'}(X{\downarrow_{E'}})=(\res_{E,E'}^*)^{-1}\mcV_E(X)\,. 
\]
\end{theorem}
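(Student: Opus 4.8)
The plan is to deduce the statement from the decomposition lemma proved just above (for $X\in\KInj{kG}$ and a subgroup $H\le G$, part (1) gives $(\gam_\fp X)\da_H\cong\bigoplus_{\fq\in\mcU}\gam_\fq(X\da_H)$ where $\mcU=(\res_{G,H}^*)^{-1}\{\fp\}$) together with the stratification of $\KInj{kE}$, which is in force throughout this section. First I would record that for elementary abelian $p$-groups $E'\le E$ the restriction map $H^*(E,k)\to H^*(E',k)$ is surjective: when $p=2$ the ring $H^*(E,k)$ is generated in degree one and $H^1(E,k)\to H^1(E',k)$ is onto because $E'$ is a direct $\bbF_p$-summand of $E$; for odd $p$ one argues the same way, using in addition that $H^*$ is generated by $H^1$ together with the Bocksteins of $H^1$, which restrict correctly by naturality of the Bockstein. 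Consequently $\res_{E,E'}^*\colon\mcV_{E'}\to\mcV_E$ is injective, so for a fixed $\fq\in\mcV_{E'}$, writing $\fp=\res_{E,E'}^*(\fq)$, we have $(\res_{E,E'}^*)^{-1}\{\fp\}=\{\fq\}$ and the lemma yields a natural isomorphism $(\gam_\fp X)\da_{E'}\cong\gam_\fq(X\da_{E'})$ for every $X\in\KInj{kE}$.

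Thus the theorem follows once I establish, for each such pair $\fp,\fq$, the equivalence
\[
\gam_\fp X\ne 0\iff (\gam_\fp X)\da_{E'}\ne 0 ;
\]
indeed this reads $\fp\in\mcV_E(X)\iff\fq\in\mcV_{E'}(X\da_{E'})$, which, quantified over $\fq\in\mcV_{E'}$, is exactly the asserted identity $\mcV_{E'}(X\da_{E'})=(\res_{E,E'}^*)^{-1}\mcV_E(X)$. The implication $\Leftarrow$ is trivial. For $\Rightarrow$, assume $\gam_\fp X\ne 0$. The subcategory $\gam_\fp\KInj{kE}$ is tensor ideal and localising, and it is nonzero since it contains $\gam_\fp X$; by the assumed stratification of $\KInj{kE}$ it is therefore minimal, and since $0\ne\Loc^\otimes(\gam_\fp X)\subseteq\gam_\fp\KInj{kE}$ we get $\Loc^\otimes(\gam_\fp X)=\gam_\fp\KInj{kE}$. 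In particular the object $Z:=\gam_\fp(ik)$, which is nonzero because $\mcV_E(\gam_\fp ik)=\{\fp\}$, lies in $\Loc^\otimes(\gam_\fp X)$.

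Now restriction $\da_{E'}\colon\KInj{kE}\to\KInj{kE'}$ is an exact, coproduct-preserving, monoidal functor: restrictions of injective $kE$-modules are injective $kE'$-modules since $kE$ is $kE'$-free, and the diagonal $E'\hookrightarrow E'\times E'$ is obtained by restriction from that of $E$, so $\da_{E'}$ is compatible with the diagonal tensor products. Hence $\da_{E'}$ sends $\Loc^\otimes(\gam_\fp X)$ into $\Loc^\otimes\!\big((\gam_\fp X)\da_{E'}\big)$, so $Z\da_{E'}\in\Loc^\otimes\!\big((\gam_\fp X)\da_{E'}\big)$. On the other hand $ik\da_{E'}$ is a bounded-below complex of injective $kE'$-modules quasi-isomorphic to $k\da_{E'}=k$, hence is isomorphic in $\KInj{kE'}$ to the injective resolution $ik$ of $k$ over $kE'$; applying the lemma to $X=ik$ gives $Z\da_{E'}=(\gam_\fp ik)\da_{E'}\cong\gam_\fq(ik)$, which is nonzero since its variety is $\{\fq\}$. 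Therefore $\Loc^\otimes\!\big((\gam_\fp X)\da_{E'}\big)\ne 0$, whence $(\gam_\fp X)\da_{E'}\ne 0$, proving $\Rightarrow$ and the theorem. (Since $E$ is a $p$-group the unit $ik$ generates $\KInj{kE}$, so every localising subcategory is automatically tensor ideal and one could work with $\Loc$ in place of $\Loc^\otimes$ throughout.)

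The step I expect to be the real obstacle is the non-trivial implication $\gam_\fp X\ne 0\Rightarrow(\gam_\fp X)\da_{E'}\ne 0$. Restriction to a proper subgroup is not faithful — for instance $\da_1$ annihilates every Tate resolution — so this implication cannot be seen by a naive argument; it is precisely here that one must invoke the minimality half of stratification of $\KInj{kE}$ together with the explicit, computable behaviour of the test object $\gam_\fp(ik)$ under restriction. The remaining ingredients (surjectivity of the cohomology restriction map, hence injectivity of $\res_{E,E'}^*$; the identification $ik\da_{E'}\cong ik$; the monoidality of restriction) are routine verifications.
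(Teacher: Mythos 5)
Your argument is correct, but it is not the route the paper takes. The paper fixes $\fq\in\mcV_{E'}$, sets $\fp=\res_{E,E'}^*(\fq)$, and works \emph{upwards} via induction: from the preceding theorem it gets $\mcV_E(\gam_\fq k\ua^{E})=\{\fp\}=\mcV_E(\gam_\fp k)$, then invokes the (assumed) stratification of $\KInj{kE}$ in the form ``equal support implies equal localising subcategory'' to conclude $\Loc(\gam_\fq k\ua^{E})=\Loc(\gam_\fp k)$, and finishes with a chain of equivalences $\gam_\fq(X\da_{E'})\ne 0\iff \gam_\fq k\otimes X\da_{E'}\ne 0\iff \gam_\fq k\ua^{E}\otimes X\ne 0\iff\gam_\fp k\otimes X\ne 0\iff\gam_\fp X\ne 0$, using the projection formula and the fact that induction reflects vanishing. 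You instead work \emph{downwards}: the restriction--decomposition lemma plus injectivity of $\res_{E,E'}^*$ (which you correctly deduce from surjectivity of $H^*(E,k)\to H^*(E',k)$, a feature of the elementary abelian situation where $E'$ is a retract of $E$) identifies $\gam_\fq(X\da_{E'})\cong(\gam_\fp X)\da_{E'}$, and the whole theorem reduces to the faithfulness statement $\gam_\fp X\ne 0\Rightarrow(\gam_\fp X)\da_{E'}\ne 0$, which you prove from minimality of $\gam_\fp\KInj{kE}$ together with the test object $\gam_\fp(ik)$, whose restriction is computable, and the fact that $\da_{E'}$ is exact, coproduct-preserving and monoidal. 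Both proofs lean essentially on the assumed stratification of $\KInj{kE}$, so there is no circularity in either. What the two approaches buy: yours isolates a clean conceptual statement (restriction to $E'$ is faithful on $\gam_\fp$-torsion objects when $\fp$ comes from $E'$) and avoids the projection formula, but it exploits surjectivity of cohomological restriction and hence does not transport to arbitrary subgroup pairs; the paper's induction-based argument uses nothing special about the pair $(E,E')$ beyond the induction theorem and is the pattern that is reused (with the conjugacy result on origins) in the proof of the main stratification theorem for general $G$, which is presumably why the authors chose it.
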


\begin{proof}
Let $\fq \in\mcV_{E'}$, $\fp=\res_{E,E'}^*(\fq)$. By the previous theorem
we have
\[ 
\mcV_E(\gam_\fq k{\uparrow^E})=\{\fp \}=\mcV_E(\gam_\fp k),. 
\] 
By the classification of localising subcategories for $kE$ this implies
that
\[ 
\Loc(\gam_\fq k{\uparrow^E})=\Loc(\gam_\fp k)\,. 
\]
Thus 
\begin{align*} 
\gam_\fq(X{\downarrow_{E'}})\ne 0 
&\Iff\gam_\fq k\otimes X{\downarrow_{E'}}\ne 0 \\
&\Iff\gam_\fq k{\uparrow^E}\otimes X \ne 0 \\
&\Iff\gam_\fp k\otimes X\ne 0 \\
&\Iff \gam_\fp X \ne 0. 
\qedhere
\end{align*}
\end{proof}

\subsection{Chouinard's Theorem for $\KInj{kG}$}

The next ingredient in the proof of the classification theorem is
a version of Chouinard's Theorem \ref{th:Chouinard} 
for the category $\KInj{kG}$.

\begin{theorem}\label{th:Chouinard-Kinj}
An object $X$ in $\KInj{kG}$ is zero if and only if $X{\downarrow_E}$
is zero for every elementary abelian $p$-subgroup $E\le G$.
\end{theorem}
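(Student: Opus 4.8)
\textbf{Proof strategy for Theorem~\ref{th:Chouinard-Kinj}.} The plan is to reduce the statement to a support-theoretic computation and then invoke Quillen's theorem. One direction is trivial: if $X=0$ then every restriction $X{\downarrow_E}$ is zero, since restriction is an (exact) functor. For the converse, I would argue by means of the support invariant $\mcV_G(X)$. Recall from Theorem~\ref{thm:cohom-supp} (applied in the form valid for $\KInj{kG}$, which is compactly generated with $H^*(G,k)$ noetherian) that $X=0$ if and only if $\mcV_G(X)=\varnothing$, i.e. $\gam_\fp X = 0$ for every $\fp\in\mcV_G$. So it suffices to show: if $X{\downarrow_E}=0$ for all elementary abelian $p$-subgroups $E\le G$, then $\gam_\fp X=0$ for all $\fp\in\mcV_G$.

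\textbf{Key steps.} Fix $\fp\in\mcV_G$. By Quillen's theorem (in the form discussed in Section~\ref{sec:Friday3}), $\fp$ originates in some elementary abelian $p$-subgroup $E\le G$; that is, there exists $\fq\in\mcV_E$ with $\fp=\res_{G,E}^*(\fq)$. The plan is to relate $\gam_\fp X$ to $\gam_\fq(X{\downarrow_E})$ using the restriction formula
\[
(\gam_\fp X){\downarrow_E}\cong \bigoplus_{\fr\in\mcU}\gam_\fr(X{\downarrow_E})\,,\qquad \mcU = (\res_{G,E}^*)^{-1}\{\fp\}\,,
\]
which is part (1) of the subgroup lemma stated just above the theorem we are proving. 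Since $\fq\in\mcU$, the summand $\gam_\fq(X{\downarrow_E})$ appears on the right-hand side. By hypothesis $X{\downarrow_E}=0$, so the entire right-hand side vanishes, hence $(\gam_\fp X){\downarrow_E}=0$, and in particular $\gam_\fq(X{\downarrow_E}) = 0$. To conclude that $\gam_\fp X = 0$ itself, I would use that $\gam_\fp X$ is built from $\gam_\fp k$ (tensor-ideal considerations: $\gam_\fp X \cong \gam_\fp k\otimes_k X$ in the tensor triangulated category $\KInj{kG}$), so it is enough to know $\gam_\fp k\ne 0$ implies a nonvanishing detected after restriction to $E$ --- which is precisely what the displayed isomorphism gives, read in the contrapositive: $\gam_\fp X\ne 0$ would force $\gam_\fr(X{\downarrow_E})\ne 0$ for some $\fr\in\mcU$, contradicting $X{\downarrow_E}=0$.

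\textbf{Main obstacle.} The delicate point is making sure the reduction genuinely covers \emph{every} prime $\fp\in\mcV_G$, including the maximal ideal $H^{\ges 1}(G,k)$ (the ``origin'' that gets put back in when passing from $\StMod$ to $\KInj$), and that one is allowed to detect vanishing of $X$ via all the $\gam_\fp X$ simultaneously --- this is exactly where compact generation of $\KInj{kG}$ (Theorem~\ref{thm:kinj-generation}) and the fact that $H^*(G,k)$ is noetherian (Evens, Theorem~\ref{thm:Evens}) are used, so that Theorem~\ref{thm:cohom-supp} and Theorem~\ref{thm:cohom-supp2} apply verbatim. The other subtlety is purely bookkeeping: Quillen's theorem only asserts that $\fp$ lies in the image of \emph{some} $\res_{G,E}^*$, and the cleanest way to use it is via the ``originates in $E$'' formulation (Theorem~\ref{th:origin}), which guarantees such an $E$ and a preimage $\fq$ exists; no transitivity or freeness of the $N_G(E)/C_G(E)$-action is needed here, only existence. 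Once these points are in place the argument is a short diagram chase through the restriction formula, so I do not expect any computational difficulty beyond citing the already-established support machinery.
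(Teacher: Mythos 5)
There is a genuine gap, and it sits exactly at the step you flag as a ``diagram chase.'' The restriction formula you invoke computes $(\gam_\fp X){\downarrow_E}$, not $\gam_\fp X$: from $X{\downarrow_E}=0$ it yields only $(\gam_\fp X){\downarrow_E}=0$. To pass from $(\gam_\fp X){\downarrow_E}=0$ to $\gam_\fp X=0$ you need to know that restriction to $E$ detects non-vanishing of objects in $\gam_\fp\KInj{kG}$ --- but that detection statement is precisely (a consequence of) the theorem you are trying to prove, and it is exactly how Theorem~\ref{thm:main-theorem} later uses Theorem~\ref{th:Chouinard-Kinj}. Your ``contrapositive'' reading is a logical slip: the contrapositive of ``$X{\downarrow_E}=0$ implies $(\gam_\fp X){\downarrow_E}=0$'' is ``$(\gam_\fp X){\downarrow_E}\ne 0$ implies $X{\downarrow_E}\ne 0$,'' which says nothing about $\gam_\fp X$ itself. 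Restriction to a proper subgroup is not faithful (e.g.\ any non-projective module in $\StMod(kG)$ restricts to zero on the trivial subgroup), so the missing implication cannot be waved through. The appeal to $\gam_\fp X\cong \gam_\fp k\otimes_k X$ does not repair this, since tensoring with $X{\downarrow_E}=0$ again only controls the restriction.

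The paper's proof takes a different and much shorter route that avoids support theory entirely. Given $X\ne 0$, it uses the recollement triangle $pk\otimes X\to X\to tk\otimes X$. If $tk\otimes X\ne 0$, then $X$ determines a non-projective module in $\StMod(kG)$ and the classical Chouinard theorem (Theorem~\ref{th:Chouinard}) produces an elementary abelian $E$ with non-zero restriction. If instead $pk\otimes X\ne 0$, then $X$ is not acyclic, and a complex with non-zero cohomology has non-zero restriction to \emph{every} subgroup, in particular to the trivial subgroup, which is elementary abelian. If you want to keep a support-theoretic flavour, you would have to first establish the subgroup/detection theorem for $\gam_\fp\KInj{kE}$ independently; as written, your argument is circular.
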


\begin{proof}
One direction is obvious, so assume $X\ne 0$. Look at the triangle 
\[
pk\otimes X \to X \to tk\otimes X\to\,.
\]
Either $tk \otimes X\ne 0$ or $pk \otimes X \ne 0$.  In the first case we're in $\StMod(kG)$ and we can use Chouinard's theorem for $\StMod(kG)$. In the second case $X$ is not acyclic so its restriction to any subgroup is non-zero. The trivial subgroup is elementary abelian, so we are done.
\end{proof}

\subsection{The main theorem}

\begin{theorem}
\label{thm:main-theorem}
As a tensor triangulated category, $\KInj{kG}$ is stratified by the canonical action of $H^*(G,k)$.
\end{theorem}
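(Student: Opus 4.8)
The plan is to check the two conditions that, by the definition recalled in Section~\ref{ssec:ttcats}, make a tensor triangulated category stratified. The local-global principle holds automatically, since $\KInj{kG}$ is tensor triangulated and the $H^{*}(G,k)$-action is canonical, so the entire burden falls on the \emph{minimality} condition: for each $\fp\in\mcV_{G}$ the tensor ideal localising subcategory $\gam_{\fp}\KInj{kG}$ must contain no proper non-zero tensor ideal localising subcategory. By the minimality test of Lemma~\ref{le:minimality-tensor}, this amounts to producing, for any pair of non-zero objects $X,Y$ of $\gam_{\fp}\KInj{kG}$, an object $Z$ with $\Hom^{*}_{\KInj{kG}}(X\otimes_{k}Z,Y)\neq 0$. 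That $\gam_{\fp}\KInj{kG}$ is itself non-zero for every $\fp$ will come out of the construction below.

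First I would use Quillen's origination theorem, Theorem~\ref{th:origin}, to fix an elementary abelian subgroup $E\leq G$ in which $\fp$ originates, together with a prime $\fq\in\mcV_{E}$ lying over $\fp$ with $(E,\fq)$ an originating pair. Because $\fp$ originates in $E$, any prime $\fq'\in\mcV_{E}$ over $\fp$ is again an originating pair with $E$, hence all such $\fq'$ are conjugate under $N_{G}(E)/C_{G}(E)$; and since conjugation by an element of $N_{G}(E)$ is inner on $kG$, it carries $\gam_{\fq'}$ on $\KInj{kE}$ to $\gam_{\fq}$ and fixes $X\da_{E}$ up to isomorphism. Combined with the lemma computing $(\gam_{\fp}X)\da_{E}$ as $\bigoplus_{\fq'\ \text{over}\ \fp}\gam_{\fq'}(X\da_{E})$, and with the standing hypothesis that $\KInj{kE}$ is stratified by $H^{*}(E,k)$, this reduces everything to the single prime $\fq$ of $H^{*}(E,k)$.

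The technical heart is the \emph{descent} step: for $0\neq X\in\gam_{\fp}\KInj{kG}$ one must show $\gam_{\fq}(X\da_{E})\neq 0$. Here I would apply Chouinard's theorem for $\KInj{kG}$, Theorem~\ref{th:Chouinard-Kinj}, to get $X\da_{E'}\neq 0$ for some elementary abelian $E'$; the restriction lemma forces every prime of $\mcV_{E'}(X\da_{E'})$ to lie over $\fp$, so $\fp\in\Im(\res^{*}_{G,E'})$, and by origination $E'$ contains a $G$-conjugate of $E$. Replacing $E$ (and $\fq$) by that conjugate, the subgroup theorem for elementary abelian groups, Theorem~\ref{th:subgroup}, propagates the non-vanishing from $\mcV_{E'}(X\da_{E'})$ down to $\mcV_{E}(X\da_{E})$, and the conjugacy discussion of the previous paragraph lands it on $\fq$. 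Granting descent, the \emph{ascent} is formal: applying Lemma~\ref{le:minimality-tensor} to the non-zero objects $\gam_{\fq}(X\da_{E})$ and $\gam_{\fq}(Y\da_{E})$ of the minimal category $\gam_{\fq}\KInj{kE}$ gives $W\in\KInj{kE}$ with $\Hom^{*}_{\KInj{kE}}(X\da_{E}\otimes_{k}W,\,Y\da_{E})\neq 0$, after enlarging $W$ by tensoring it with $\gam_{\fq}$ applied to the tensor unit of $\KInj{kE}$, so that $X\da_{E}\otimes_{k}W$ is $\fq$-local and $\fq$-torsion and the Hom already factors through $\gam_{\fq}(Y\da_{E})$. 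Frobenius reciprocity~\eqref{eqn:frobenius-reciprocity} rewrites the left side as $\Hom^{*}_{\KInj{kG}}((X\da_{E}\otimes_{k}W)\ua^{G},Y)$, and the projection formula $(X\da_{E}\otimes_{k}W)\ua^{G}\cong X\otimes_{k}W\ua^{G}$ turns this into $\Hom^{*}_{\KInj{kG}}(X\otimes_{k}W\ua^{G},Y)\neq 0$; taking $Z=W\ua^{G}$ completes the verification of minimality, and hence the theorem.

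I expect the descent step to be the real obstacle, since it is the one place where one must keep precise track of how homogeneous primes of $H^{*}(G,k)$ restrict to, and originate in, the lattice of elementary abelian subgroups; this is exactly where Quillen's stratification, rather than merely the detection statement of Chouinard's theorem, is used in an essential way, and it is also where care is needed about the difference between a prime over $G$ and the $N_{G}(E)$-orbit of primes lying above it. Everything else reduces to the adjunctions $\da_{E}\dashv\ua^{G}$, with $\ua^{G}$ also right adjoint to $\da_{E}$ since induction equals coinduction over a group algebra, together with the projection formula and the fact that for an elementary abelian $E$ the category $\KInj{kE}$ is generated by the compact tensor unit.
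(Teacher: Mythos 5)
Your proposal is correct and follows essentially the same route as the paper's proof: Chouinard's theorem for $\KInj{kG}$ plus Quillen's origination theorem and the subgroup theorem \ref{th:subgroup} to descend the non-vanishing of $\gam_{\fp}X$ to $\gam_{\fq}(X\da_{E})\neq 0$ for an originating pair $(E,\fq)$, then the stratification hypothesis for $\KInj{kE}$ together with the induction/restriction adjunction and the projection formula to ascend. The only (immaterial) difference is in the ascent: the paper fixes $Z=ik_{E}\ua^{G}$ once and for all and applies the plain minimality test inside $\gam_{\fq}\KInj{kE}$ to the summands of $\Hom^{*}_{kE}(X\da_{E},Y\da_{E})$, whereas you invoke the tensor version of the test in $\KInj{kE}$ to produce a $W$ depending on $X,Y$ and set $Z=W\ua^{G}$.
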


\begin{proof}
We must prove that if $\fp \in\mcV_G$ then $\gam_\fp \KInj{kG}$ is minimal among tensor ideal localising subcategories of $\KInj{kG}$. 

Let $0\ne X \in \gam_\fp \KInj{kG}$. By Theorem \ref{th:Chouinard-Kinj}, there exists $E_0$ elementary abelian with 
$X{\downarrow_{E_0}}\ne 0$. Choose $\fq_0\in\mcV_{E_0}(X{\downarrow_{E_0}})$. 
We have
\[
\res_{G,E_0}^*(\fq_0)\in\mcV_G(X{\downarrow_{E_0}\uparrow^G}) \subseteq\mcV_G(X)=\{\fp \}, 
\] 
so $\res_{G,E_0}^*(\fq_0)=\fp$. 

So there exists $(E,\fq)$ with $E\le E_0$, $\fq \in\mcV_E$, $\res_{E_0,E}^*\fq=\fq_0$, and $\fp$ originates in $\fq$. By the Theorem \ref{th:subgroup} we have $\fq \in\mcV_E(X{\downarrow_E})$, i.e., $\gam_\fq X{\downarrow_E} \ne 0$. 

By Theorem \ref{th:origin}, all $(E,\fq)$ where $\fp$ originate in $E$ are
conjugate. It follows that if $\gam_\fq X{\downarrow_E}\ne 0$ for
one of these then the same holds for all of them.
So if we choose one, then every $0\ne X\in\gam_\fp \KInj{kG}$
has $\gam_\fq X{\downarrow_E}\ne 0$, and $X{\downarrow_E}$ is a
direct sum of $N_G(E)$-conjugates of this. 

Let $0\ne Y\in\gam_\fp \KInj{kG}$ and set $Z=ik_E{\uparrow^G}$,
namely the injective resolution of the permutation module on the
cosets of $E$. Then we have
\begin{align*} 
\Hom^*_{kG}(X\otimes_k Z,Y)&=\Hom^*_{kG}(X\otimes_k ik_E{\uparrow^G},Y)\\
&\cong\Hom^*_{kG}((X{\downarrow_E}\otimes_k ik){\uparrow^G},Y)\\
&\cong\Hom^*_{kG}(X{\downarrow_E\uparrow^G},Y)\\
&\cong\Hom^*_{kE}(X{\downarrow_E},Y{\downarrow_E}) \\
&\cong \bigoplus_{\fq}
\Hom^*_{kE}(\gam_\fq X{\downarrow_E},\gam_\fq Y{\downarrow_E}).
\end{align*}
Since both $\gam_\fq X{\downarrow_E}$ and $\gam_\fq Y{\downarrow_E}$ are 
both non-zero, by the classification of localising subcategories
of $\KInj{kE}$ and Lemma \ref{le:minimality} we have
$\Hom^*_{kE}(\gam_\fq X{\downarrow_E},\gam_\fq Y{\downarrow_E})\ne 0$.
 
So using the minimality test of
Lemma \ref{le:minimality-tensor} for the tensor triangulated category
$\KInj{kG}$, we deduce that 
$\gam_\fp \KInj{kG}$ is minimal among tensor ideal localising subcategories. 
\end{proof}

Using the discussion of Section~\ref{sec:Wednesday3}, this implies the following result for the stable module category.

\begin{theorem}
As a tensor triangulated category, $\StMod(kG)$ is stratified by the canonical action of $H^*(G,k)$. \qed
\end{theorem}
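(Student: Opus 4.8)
The plan is to deduce this from Theorem~\ref{thm:main-theorem}, transporting the stratification of $\KInj{kG}$ to $\StMod(kG)$ along the recollement of Theorem~\ref{th:recollement}. Since $\StMod(kG)$ is a tensor triangulated category (with unit the Tate resolution $tk$) carrying a canonical action of $H^{*}(G,k)$, the local--global principle holds automatically for tensor ideal localising subcategories; see Section~\ref{ssec:ttcats}. By the definition of stratification for tensor triangulated categories it therefore remains to prove the minimality condition: for every homogeneous prime $\fp$ in $\supp_{H^{*}(G,k)}\StMod(kG)$, the subcategory $\gam_{\fp}\StMod(kG)$ has no proper non-zero tensor ideal localising subcategory.

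First I would record that the equivalence $\StMod(kG)\simeq\KacInj{kG}$ of Section~\ref{sec:Wednesday3} identifies $\StMod(kG)$ with the subcategory of acyclic complexes inside $\KInj{kG}$, and that this is a tensor ideal localising subcategory of $\KInj{kG}$: it is the kernel of the tensor, coproduct-preserving localisation $\KInj{kG}\to\sfD(\Mod kG)$ in the recollement, hence closed under coproducts and under tensoring with arbitrary objects. Under this identification the canonical $H^{*}(G,k)$-action on $\KInj{kG}$ restricts to the canonical action on $\StMod(kG)$, and the comparison of the two notions of variety recorded in Section~\ref{sec:Wednesday3} shows that $\gam_{\fp}$ computed in $\KInj{kG}$ agrees, on objects of $\KacInj{kG}$, with $\gam_{\fp}$ computed in $\StMod(kG)$ as in Definition~\ref{def:mcVGM}. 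Moreover, as noted there, $\gam_{\fm}\StMod(kG)=0$ for the maximal ideal $\fm=H^{\ges 1}(G,k)$ while $\gam_{\fp}\StMod(kG)\neq 0$ for every non-maximal $\fp$, so $\supp_{H^{*}(G,k)}\StMod(kG)$ is precisely the set of non-maximal homogeneous primes.

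The crucial step is then to show that for a non-maximal prime $\fp$ the functor $\gam_{\fp}$ on $\KInj{kG}$ has image inside $\KacInj{kG}$. As the action is canonical one has $\gam_{\fp}X\cong X\otimes_{k}\gam_{\fp}(ik)$ for all $X$ (Section~\ref{ssec:ttcats}), so it is enough to see that $\gam_{\fp}(ik)$ is acyclic. Applying the exact functor $\gam_{\fp}$ to the defining triangle $pk\to ik\to tk\to$ gives a triangle $\gam_{\fp}(pk)\to\gam_{\fp}(ik)\to\gam_{\fp}(tk)\to$; since $pk$ lies in the image of the functor $\sfD(\Mod kG)\to\KInj{kG}$ which the projection $-\otimes_{k}tk$ onto $\KacInj{kG}$ annihilates, the support of $pk$ is $\{\fm\}$, whence $\gam_{\fp}(pk)=0$ and $\gam_{\fp}(ik)\cong\gam_{\fp}(tk)=tk\otimes_{k}\gam_{\fp}(ik)$ is acyclic, using Proposition~\ref{prop:tensorid}. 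Thus $\gam_{\fp}\KInj{kG}\subseteq\KacInj{kG}$, and combined with the compatibility of the two functors on $\KacInj{kG}$ this yields an equality of subcategories $\gam_{\fp}\StMod(kG)=\gam_{\fp}\KInj{kG}$.

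With these identifications the minimality is immediate. A non-zero tensor ideal localising subcategory $\sfS$ of $\gam_{\fp}\StMod(kG)$ is, via $\StMod(kG)\hookrightarrow\KInj{kG}$, a non-zero tensor ideal localising subcategory of $\gam_{\fp}\KInj{kG}$, which is minimal by Theorem~\ref{thm:main-theorem}; hence $\sfS=\gam_{\fp}\KInj{kG}=\gam_{\fp}\StMod(kG)$. For $\fp=\fm$ the condition holds vacuously since $\gam_{\fm}\StMod(kG)=0$. Therefore every $\gam_{\fp}\StMod(kG)$ is minimal among tensor ideal localising subcategories or zero, i.e. $\StMod(kG)$ is stratified by the canonical action of $H^{*}(G,k)$. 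I expect the only genuine work to lie in the third paragraph --- the ``change-of-categories'' bookkeeping showing that $\gam_{\fp}$ on $\KInj{kG}$ lands in $\KacInj{kG}$ for non-maximal $\fp$ and restricts to the intrinsic local cohomology functor of $\StMod(kG)$; this belongs to the circle of ideas flagged at the end of Section~\ref{sec:Friday2}, and its one concrete input is the computation that $pk$ has support $\{\fm\}$.
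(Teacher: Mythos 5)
Your argument is correct and takes essentially the same route as the paper, which deduces the statement for $\StMod(kG)$ from Theorem~\ref{thm:main-theorem} via the comparison of Section~\ref{sec:Wednesday3}, i.e.\ the identification $\StMod(kG)\simeq\KacInj{kG}$ inside the recollement. Your elaboration---reducing to the minimality condition and showing $\gam_{\fp}\KInj{kG}\subseteq\KacInj{kG}$ for non-maximal $\fp$ because $pk\otimes_{k}tk=0$ forces $\supp_{H^{*}(G,k)}pk=\{\fm\}$---is precisely the bookkeeping the paper delegates to that discussion.
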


\section{Exercises}
\label{exer:Friday}
In what follows $\sfT$ denotes a compactly generated $R$-linear triangulated category. 
Ideals and elements in $R$ will be assumed to be homogeneous.

\begin{enumerate}[\quad\rm(1)]
\item 
Assume $\sfT$ is a tensor triangulated category. Let $\fa$ and $\fb$ be ideals in $R$. Prove that if $\fa\subseteq\fb$, then $\kos X\fb$ is in $\Thick_{\sfT}(\kos X\fa)$. Deduce that if $\sqrt{\fa}=\sqrt{\fb}$, then
\[
\Thick_{\sfT}(\kos X\fa)= \Thick_{\sfT}(\kos X\fb)\,.
\]
\item 
Prove that for each ideal $\fa$ of $R$ and object $X$ in $\sfT$ one has 
\[
\gam_{\mcV(\fa)}X\in \Loc_{\sfT}(\kos X\fa)\quad\text{and}\quad \kos X\fa \in \Thick_{\sfT}(\gam_{\mcV(\fa)}X)\,.
\]
Here $\sfT$ need not be tensor triangulated.\smallskip

\noindent Hint: Induce on the number of generators of $\fa$. Use the
description of $\gam_{\mcV(r)}X$ from
Proposition~\ref{prop:hocolim-local}.

\item Using the previous exercise prove that for each specialisation closed subset $\mcV$ of $\Spec R$, and any decomposition $\bigcup_{i\in I}\mcV(\fa_{i})=\mcV$, and set $G$ of compact generators for $\sfT$, there are equalities
\[
\sfT_{\mcV} = \Loc_{\sfT}(\gam_{\mcV(\fa_{i})}C\mid C\in G) =
\Loc_{\sfT}(\kos C{\fa_{i}} \mid C\in G)
\]
This proves that $\sfT_{\mcV}$ is compactly generated, so  the exact functors $\gam_{\mcV}$ and $L_{\mcV}$ are smashing, meaning that they commute with coproducts in $\sfT$.

\item Let $E=\langle g\mid g^p=1\rangle$ and $k$ be a field of characteristic $p$. Write down a basis for the Koszul construction on $kE$ with respect to $z=g-1$, namely the differential graded algebra $A=kE\langle y\rangle$ with $|y|=-1$, $y^2=0$ and $dy=z$. 

Prove that the inclusion of the exterior algebra on the element $z^{p-1}y$ of degree $-1$ into $A$ is a quasi-isomorphism.

\item Using tensor products and the K\"unneth theorem, show that for a general elementary abelian $p$-group $E=\langle g_1,\dots,g_r\rangle$, the inclusion of an exterior algebra on the elements $z_i^{p-1}y_i$ ($i\le i \le r$) into the Koszul construction is a quasi-isomorphism.

\item Let $A$ be a local ring, with residue field $k$. Prove that a
  complex $M\in\sfD(A)$ is in $\Thick(k)$ if and only if the length of
  the $A$-module $H^{*}M$ is finite.\smallskip

\noindent Hint: For the converse, first consider the case where $M$ is a module; when $M$ is a complex, induce on the number of non-zero cohomology modules of $M$.
\end{enumerate}

\begin{appendix}
\renewcommand{\thechapter}{Appendix A.}

\chapter[\hspace{1.6cm}Support for modules over commutative rings]
{Support for modules over commutative rings}\label{Appendix}

\renewcommand{\thechapter}{A}
\markboth{Support for modules over commutative rings}
{Support for modules over commutative rings}

Let $A$ be a commutative noetherian ring. We consider the category $\Mod A$\index{ModA@$\Mod A$} of $A$-modules and its full subcategory $\mod A$\index{modA@$\mod A$} which is formed by all finitely generated $A$-modules. Note that an $A$-module is finitely generated if and only if it is noetherian.

The \emph{spectrum} \index{spectrum} $\Spec A$ \index{specA@$\Spec A$} of $A$ is the set of prime ideals in it. A subset of $\Spec A$ is \emph{Zariski closed} if it is of the form
\[
\mcV(\fa)=\{\fp\in\Spec A\mid \fa\subseteq\fp\}
\]
for some ideal $\fa$ of $A$. A subset $\mcV$ of $\Spec A$ is \emph{specialisation closed}\index{specialisation!closed} if for any pair $\fp\subseteq\fq$ of prime ideals, $\fp\in \mcV$ implies $\fq\in \mcV$. The \emph{specialisation closure}\index{specialisation!closure} of a subset $\mcU\subseteq\Spec A$ is the subset
\[
\cl\mcU=\{\fp\in\Spec A\mid\text{there exists $\fq\in\mcU$ with $\fq\subseteq \fp$}\}\,.
\]
This is the smallest specialisation closed subset containing $\mcU$.

\section{Big support}
The \emph{big support}\index{big support} of an $A$-module $M$ is the subset
\[
\Supp_{A} M=\{\fp\in\Spec A\mid M_\fp\neq 0\}\,.
\]
Observe that this is a specialisation closed subset of $\Spec A$.

\begin{lemma}
\label{le:ideal}
One has $\Supp_{A} A/\fa=\mcV(\fa)$ for each ideal $\fa$ of $A$.
\end{lemma}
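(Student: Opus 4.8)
\textbf{Proof plan for Lemma~\ref{le:ideal}.}

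The plan is to verify the two inclusions $\Supp_{A}(A/\fa)\subseteq\mcV(\fa)$ and $\mcV(\fa)\subseteq\Supp_{A}(A/\fa)$ separately, using the standard behaviour of localisation with respect to quotients. First I would recall that for any prime ideal $\fp$ there is a natural isomorphism of $A_{\fp}$-modules $(A/\fa)_{\fp}\cong A_{\fp}/\fa A_{\fp}$, since localisation is exact and hence commutes with forming the quotient $A\to A/\fa$. With this identification in hand, the module $(A/\fa)_{\fp}$ is non-zero precisely when $\fa A_{\fp}\neq A_{\fp}$, i.e. precisely when $\fa A_{\fp}$ is a proper ideal of the local ring $A_{\fp}$.

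For the inclusion $\mcV(\fa)\subseteq\Supp_{A}(A/\fa)$: if $\fp\supseteq\fa$, then $\fa A_{\fp}\subseteq\fp A_{\fp}$, and $\fp A_{\fp}$ is the maximal ideal of $A_{\fp}$, hence proper; therefore $\fa A_{\fp}$ is proper and $(A/\fa)_{\fp}\neq 0$, so $\fp\in\Supp_{A}(A/\fa)$. For the reverse inclusion: if $\fp\notin\mcV(\fa)$, then $\fa\not\subseteq\fp$, so there exists $s\in\fa\setminus\fp$; this $s$ becomes a unit in $A_{\fp}$ while lying in $\fa A_{\fp}$, forcing $\fa A_{\fp}=A_{\fp}$, hence $(A/\fa)_{\fp}=0$ and $\fp\notin\Supp_{A}(A/\fa)$. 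Combining the two inclusions gives the claimed equality.

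I do not anticipate any real obstacle here: the only ingredient beyond elementary commutative algebra is the exactness of localisation, which is assumed as prerequisite material (Atiyah--MacDonald). The one point worth stating carefully is the isomorphism $(A/\fa)_{\fp}\cong A_{\fp}/\fa A_{\fp}$, after which both inclusions are immediate from the characterisation of proper ideals in a local ring. The remark that $\Supp_{A}(A/\fa)$ is specialisation closed already follows from the general observation, recorded just before the lemma, that the big support of any module is specialisation closed, so nothing extra need be said on that score.
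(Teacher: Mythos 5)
Your proof is correct and is essentially the same prime-by-prime verification as the paper's: the paper checks directly that the generator $1+\fa$ of the cyclic module $(A/\fa)_\fp$ vanishes iff some $t\in A\setminus\fp$ lies in $\fa$, i.e.\ iff $\fa\not\subseteq\fp$, while you route the same computation through the isomorphism $(A/\fa)_\fp\cong A_\fp/\fa A_\fp$ and the properness of $\fa A_\fp$. The two arguments are interchangeable and there is no gap.
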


\begin{proof}
Fix $\fp\in\Spec A$ and let $S=A\setminus\fp$. Recall that for any $A$-module $M$, an element $x/s$ in $S^{-1}M=M_\fp$ is zero iff there exists $t\in S$ such that $tx=0$. Thus we have $(A/\fa)_\fp=0$ iff there exists $t\in S$ with $t(1+\fa)=t+\fa=0$ iff $\fa\not\subseteq \fp$.
\end{proof}

\begin{lemma}
\label{le:exact}
If $0\to M'\to M\to M''\to 0$ is an exact sequence of $A$-modules, then $\Supp_{A} M=\Supp_{A} M'\cup\Supp_{A} M''$.
\end{lemma}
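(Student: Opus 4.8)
The plan is to exploit the exactness of localisation. For each prime ideal $\fp$ of $A$, localising at $\fp$ is an exact functor, so the given short exact sequence yields a short exact sequence of $A_\fp$-modules
\[
0\to M'_\fp\to M_\fp\to M''_\fp\to 0\,.
\]
From this the set-theoretic statement follows by a two-way inclusion argument on membership in the respective supports.

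First I would establish $\Supp_A M'\cup \Supp_A M''\subseteq \Supp_A M$: if $M'_\fp\neq 0$ then, since $M'_\fp$ injects into $M_\fp$, we get $M_\fp\neq 0$; and if $M''_\fp\neq 0$ then, since $M_\fp$ surjects onto $M''_\fp$, again $M_\fp\neq 0$. Conversely, for $\Supp_A M\subseteq \Supp_A M'\cup \Supp_A M''$, I would argue contrapositively: if $\fp\notin \Supp_A M'\cup\Supp_A M''$, then $M'_\fp=0=M''_\fp$, and exactness of the localised sequence forces $M_\fp=0$, so $\fp\notin\Supp_A M$.

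There is essentially no obstacle here; the only thing worth stating explicitly is the exactness of $(-)_\fp = S^{-1}(-)$ for $S=A\setminus\fp$, which is standard (and already implicitly used in the proof of Lemma~\ref{le:ideal}). I would keep the write-up to a few lines, citing the exactness of localisation and then the two inclusions above.
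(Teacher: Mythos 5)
Your argument is exactly the paper's: the paper's entire proof is the observation that $0\to M'_\fp\to M_\fp\to M''_\fp\to 0$ is exact for each $\fp\in\Spec A$, from which the two inclusions you spell out follow immediately. Your write-up is correct and simply makes explicit the membership check that the paper leaves to the reader.
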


\begin{proof}
The sequence $0\to M'_\fp\to M_\fp\to M''_\fp\to 0$ is exact for each $\fp$ in $\Spec A$.
\end{proof}

\begin{lemma}
\label{le:sum}
 Let $M=\sum_i M_i$ be an $A$-module, written as a sum of submodules
 $M_i$.  Then $\Supp_{A} M=\bigcup_i \Supp_{A} M_i$.
\end{lemma}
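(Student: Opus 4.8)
The plan is to reduce the whole statement to the single identity
$M_\fp=\sum_i (M_i)_\fp$, valid inside $M_\fp$ for every prime $\fp$, and then simply read off the claim.

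First I would record two standard facts about localisation at $\fp$: it is exact (this was already used in Lemma~\ref{le:exact}), and, since $(-)_\fp=-\otimes_A A_\fp$ and tensoring is a left adjoint, it commutes with arbitrary direct sums. Exactness applied to each inclusion $M_i\hookrightarrow M$ shows that $(M_i)_\fp\to M_\fp$ is injective, so each $(M_i)_\fp$ may be regarded as a submodule of $M_\fp$. The hypothesis $M=\sum_i M_i$ says precisely that the canonical map $\bigoplus_i M_i\to M$ is surjective; localising this map, and using that $(-)_\fp$ is exact and commutes with the direct sum, gives that $\bigoplus_i (M_i)_\fp\to M_\fp$ is surjective, i.e. $M_\fp=\sum_i (M_i)_\fp$ inside $M_\fp$.

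Granted this identity, the lemma follows at once. The inclusion $\bigcup_i \Supp_A M_i\subseteq \Supp_A M$ is in fact already immediate from Lemma~\ref{le:exact}, since each $M_i$ is a submodule of $M$. For the reverse inclusion, suppose $\fp\in\Supp_A M$, so $M_\fp\neq 0$; since $M_\fp=\sum_i (M_i)_\fp$, some $(M_i)_\fp$ must be nonzero, that is, $\fp\in\Supp_A M_i$ for some $i$. Equivalently: $\fp\notin\Supp_A M$ iff $M_\fp=0$ iff $(M_i)_\fp=0$ for all $i$ iff $\fp\notin\Supp_A M_i$ for every $i$.

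The only point that needs any care — and the closest thing to an obstacle — is the interchange of localisation with a possibly infinite sum of submodules; once this is phrased as ``$(-)_\fp$ is a left adjoint, hence preserves colimits (in particular direct sums and epimorphisms)'', everything else is purely formal, so I expect no genuine difficulty.
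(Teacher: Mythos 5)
Your argument is correct and is essentially the paper's own proof: both use that localisation is exact and commutes with direct sums, apply this to the surjection $\bigoplus_i M_i\to M$, and get the easy inclusion from Lemma~\ref{le:exact}. The only difference is presentational (you phrase the key step as the identity $M_\fp=\sum_i(M_i)_\fp$, the paper as ``$M$ is a factor of $\bigoplus_i M_i$''), so there is nothing to change.
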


\begin{proof}
The assertion is clear if the sum $\sum_iM_i$ is direct, since
\[
\bigoplus_i(M_i)_\fp=(\bigoplus_iM_i)_\fp.
\] 
As $M_i\subseteq M$ for all $i$ one gets $\bigcup_i\Supp_{A} M_i\subseteq \Supp_{A} M$, from Lemma~\ref{le:exact}.  On the other hand, $M=\sum_iM_i$ is a factor of $\bigoplus_iM_i$, so $\Supp_{A} M\subseteq\bigcup_i \Supp_{A} M_i$.
\end{proof}

We write $\ann_{A}M$\index{annaM@$\ann_{A}M$} for the ideal of elements in $A$ that annihilate $M$.

\begin{lemma}
\label{le:fg}
One has $\Supp_{A} M\subseteq\mcV(\ann_{A}M)$, with equality when $M$
is in $\mod A$.
\end{lemma}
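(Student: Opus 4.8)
The plan is to handle the two assertions separately, since only the second genuinely uses finite generation. For the inclusion $\Supp_{A}M\subseteq\mcV(\ann_{A}M)$ I would argue primewise, by contraposition: fix $\fp\in\Spec A$ with $\ann_{A}M\not\subseteq\fp$ and pick $a\in\ann_{A}M\setminus\fp$. Then $a$ lies in the multiplicative set $S=A\setminus\fp$, so $a$ becomes a unit in $M_{\fp}$; but $aM=0$ forces every element $m/s$ of $M_{\fp}$ to vanish, hence $M_{\fp}=0$ and $\fp\notin\Supp_{A}M$. This is exactly the localisation bookkeeping already used in the proof of Lemma~\ref{le:ideal}, so it is routine.

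For the reverse inclusion when $M\in\mod A$, I would reduce to the cyclic case. Write $M=\sum_{i=1}^{n}Am_{i}$ with finitely many generators. By Lemma~\ref{le:sum} one gets $\Supp_{A}M=\bigcup_{i=1}^{n}\Supp_{A}(Am_{i})$, and since $Am_{i}\cong A/\ann_{A}(m_{i})$, Lemma~\ref{le:ideal} gives $\Supp_{A}(Am_{i})=\mcV(\ann_{A}m_{i})$. It then remains to identify $\bigcup_{i}\mcV(\ann_{A}m_{i})$ with $\mcV(\ann_{A}M)$. Here I would invoke two elementary facts: first, $\ann_{A}M=\bigcap_{i=1}^{n}\ann_{A}(m_{i})$, because the $m_{i}$ generate $M$; second, for finitely many ideals $\fb_{1},\dots,\fb_{n}$ one has $\mcV(\bigcap_{i}\fb_{i})=\bigcup_{i}\mcV(\fb_{i})$, since a prime containing $\bigcap_{i}\fb_{i}\supseteq\fb_{1}\cdots\fb_{n}$ must contain some $\fb_{i}$. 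Combining these yields $\Supp_{A}M=\mcV(\ann_{A}M)$.

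The only place where finite generation is essential — and hence the step to watch — is the passage from $\ann_{A}M$ to the \emph{finite} intersection $\bigcap_{i}\ann_{A}(m_{i})$ together with distributing $\mcV(-)$ over it; both break down for infinitely many generators, which is precisely why the inclusion can be strict in general (cf.\ the discussion of $\bbQ/\bbZ$ over $\bbZ$). Everything else is formal manipulation with the lemmas already in place, so I do not expect any real obstacle.
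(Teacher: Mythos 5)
Your proof is correct and, for the part that needs finite generation, is essentially the paper's argument: both decompose $M$ into cyclic submodules, use Lemma~\ref{le:sum} and Lemma~\ref{le:ideal} to get $\Supp_{A}M=\bigcup_{i}\mcV(\ann_{A}m_{i})$, and then identify this with $\mcV(\ann_{A}M)=\mcV(\bigcap_{i}\ann_{A}m_{i})$ using that $\mcV$ turns a \emph{finite} intersection of ideals into the union of the $\mcV(\fa_{i})$. The only (harmless) difference is in the general inclusion: the paper gets it from the same decomposition $M=\sum_{i}M_{i}$ into possibly infinitely many cyclic modules, via $\bigcup_{i}\mcV(\fa_{i})\subseteq\mcV(\bigcap_{i}\fa_{i})=\mcV(\ann_{A}M)$, whereas you prove it directly by localising at a prime $\fp\not\supseteq\ann_{A}M$; both are fine.
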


\begin{proof}
Write $M=\sum_i M_i$ as a sum of cyclic modules $M_i\cong A/{\fa_i}$. Then
\[
\Supp_{A} M=\bigcup_i \Supp_{A} M_i=\bigcup_i
\mcV(\fa_i)\subseteq\mcV(\bigcap_i\fa_i)=\mcV(\ann_{A}M),
\]
and equality holds if the sum is finite.
\end{proof}

\begin{lemma}
\label{le:sub0}
Let $M\neq 0$ be an $A$-module.  If $\fp$ is maximal in the set of ideals which annihilate a non-zero element of $M$, then $\fp$ is prime.
\end{lemma}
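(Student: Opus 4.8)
The plan is to run the classical argument that an annihilator ideal maximal among annihilators of nonzero elements is automatically prime; the hypothesis that $A$ is noetherian guarantees such a $\fp$ exists, but here it is handed to us, so there is nothing to do on that front.

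First I would fix a nonzero element $x\in M$ with $\fp=\ann_{A}x$, where $\fp$ is maximal in the poset of ideals of the form $\ann_{A}y$ with $0\ne y\in M$. Since $x\ne 0$, the element $1$ does not annihilate $x$, so $\fp\ne A$; thus $\fp$ is a proper ideal and it remains to check it is prime. To that end, suppose $a,b\in A$ satisfy $ab\in\fp$ but $b\notin\fp$. Then $bx\ne 0$ (as $b\notin\ann_{A}x$), so $\ann_{A}(bx)$ is again an annihilator of a nonzero element of $M$, and evidently $\fp=\ann_{A}x\subseteq\ann_{A}(bx)$. By the maximality of $\fp$ this inclusion is an equality, so $\ann_{A}(bx)=\fp$.

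Now I would observe that $a\cdot(bx)=(ab)x=0$, because $ab\in\fp=\ann_{A}x$; hence $a\in\ann_{A}(bx)=\fp$. This shows $ab\in\fp$ forces $a\in\fp$ or $b\in\fp$, i.e.\ $\fp$ is prime, completing the proof.

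There is no real obstacle here: the only subtlety worth flagging is that the argument uses nothing beyond the module-theoretic manipulation of annihilators, and in particular does not need finite generation of $M$ or noetherianity of $A$ once the maximal element $\fp$ is assumed given. (This lemma is the first step toward showing every nonzero finitely generated module has an associated prime, hence nonempty support, which is presumably its intended downstream use.)
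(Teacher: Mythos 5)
Your proof is correct and is essentially the paper's own argument: both exploit maximality by noting that if $ab\in\fp$ and one factor lies outside $\fp$, then the other factor together with $\fp$ annihilates a nonzero multiple of $x$, forcing it into $\fp$. The only cosmetic difference is that the paper works with the ideal $(\fp,b)$ acting on $ax$ while you work with the full annihilator $\ann_A(bx)$; these amount to the same use of maximality.
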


\begin{proof}
Suppose $0\neq x\in M$ and $\fp x=0$. Let $a,b\in A$ with $ab\in\fp$ and $a\not\in\fp$. Then $(\fp,b)$ annihilates $a x\neq 0$, so the maximality of $\fp$ implies $b\in\fp$. Thus $\fp$ is prime. 
\end{proof}

\begin{lemma}
\label{le:sub}
Let $M\neq 0$ be an $A$-module.  There exists a submodule of $M$ which
is isomorphic to $A/\fp$ for some prime ideal $\fp$.
\end{lemma}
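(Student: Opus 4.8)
The plan is to use the noetherian hypothesis to produce an element of $M$ whose annihilator is maximal among the annihilators of non-zero elements, and then to invoke Lemma~\ref{le:sub0}.

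First I would consider the family
\[
\Sigma=\{\ann_{A}(x)\mid x\in M,\ x\neq 0\}
\]
of ideals of $A$. Since $M\neq 0$ there exists a non-zero element $x\in M$, so $\Sigma$ is non-empty. Because $A$ is noetherian, every non-empty family of ideals of $A$ has a maximal element with respect to inclusion; choose $x\in M$ non-zero with $\fp:=\ann_{A}(x)$ maximal in $\Sigma$. By construction $\fp$ is maximal in the set of ideals that annihilate a non-zero element of $M$, so Lemma~\ref{le:sub0} applies and shows that $\fp$ is prime.

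Finally I would look at the $A$-linear map $A\to M$ given by $a\mapsto ax$. Its image is the cyclic submodule $Ax\subseteq M$, and its kernel is precisely $\ann_{A}(x)=\fp$; hence it induces an isomorphism $A/\fp\xra{\sim}Ax$, exhibiting a submodule of $M$ isomorphic to $A/\fp$ for the prime ideal $\fp$, as required. I do not expect a genuine obstacle here: the only ingredient beyond Lemma~\ref{le:sub0} is the ascending chain condition on ideals of $A$, which guarantees that $\Sigma$ has a maximal element, and the remaining steps are routine verifications.
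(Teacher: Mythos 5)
Your argument is correct and is exactly the paper's proof: take an annihilator of a non-zero element maximal among all such (possible since $A$ is noetherian), conclude via Lemma~\ref{le:sub0} that it is prime, and observe that the corresponding cyclic submodule is isomorphic to $A/\fp$. The paper leaves the final isomorphism $A/\fp\cong Ax$ implicit; you have simply written out that routine step.
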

\begin{proof}
The set of ideals annihilating a non-zero element has a maximal
element, since $A$ is noetherian. Now apply Lemma~\ref{le:sub0}.
\end{proof}

\begin{lemma}
\label{le:filtr}
For each $M$ in $\mod A$ there exists a finite filtration
\[
0=M_0\subseteq M_1\subseteq \ldots \subseteq M_n=M
\]
such that each factor $M_i/M_{i-1}$ is isomorphic to $A/{\fp_i}$ for
some prime ideal $\fp_i$. In that case one has
$\Supp_{A}M=\bigcup_{i}\mcV(\fp_{i})$.
\end{lemma}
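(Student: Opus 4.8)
The plan is to build the filtration by a standard iteration of Lemma~\ref{le:sub}, and then read off the support statement from Lemmas~\ref{le:exact} and~\ref{le:ideal}.

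First I would construct the chain. If $M=0$ we take the trivial filtration, so assume $M\neq 0$. By Lemma~\ref{le:sub} there is a submodule $M_1\subseteq M$ with $M_1\cong A/\fp_1$ for some prime $\fp_1$. Now proceed recursively: having constructed $M_{i}\subsetneq M$, if $M_{i}=M$ we stop; otherwise $M/M_{i}$ is a non-zero finitely generated $A$-module, so Lemma~\ref{le:sub} supplies a submodule of $M/M_{i}$ isomorphic to $A/\fp_{i+1}$; let $M_{i+1}$ be its preimage in $M$ under the quotient map, so that $M_{i}\subsetneq M_{i+1}\subseteq M$ and $M_{i+1}/M_{i}\cong A/\fp_{i+1}$.

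The one point needing care — though it is quick — is termination: why does this process stop with $M_{n}=M$ after finitely many steps? Here I would invoke that $A$ is noetherian and $M$ is finitely generated, hence a noetherian module; the strictly increasing chain $M_{0}\subsetneq M_{1}\subsetneq M_{2}\subsetneq\cdots$ of submodules of $M$ must therefore be finite, so some $M_{n}$ equals $M$. This is the only genuine obstacle, and it is mild.

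Finally, for the support formula I would induct on $i$, using the short exact sequences $0\to M_{i-1}\to M_{i}\to M_{i}/M_{i-1}\to 0$. Lemma~\ref{le:exact} gives $\Supp_{A}M_{i}=\Supp_{A}M_{i-1}\cup\Supp_{A}(M_{i}/M_{i-1})$, and Lemma~\ref{le:ideal} identifies $\Supp_{A}(M_{i}/M_{i-1})=\Supp_{A}(A/\fp_{i})=\mcV(\fp_{i})$. Starting from $\Supp_{A}M_{0}=\Supp_{A}0=\varnothing$ and iterating up to $i=n$ yields $\Supp_{A}M=\bigcup_{i=1}^{n}\mcV(\fp_{i})$, as claimed.
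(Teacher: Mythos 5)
Your proposal is correct and is essentially identical to the paper's own argument: repeated application of Lemma~\ref{le:sub} to successive quotients, termination via the noetherian property of $M$, and the support formula from Lemmas~\ref{le:exact} and~\ref{le:ideal}. Your explicit induction for the support statement and your remark on termination just spell out what the paper leaves implicit.
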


\begin{proof}
Repeated application of Lemma~\ref{le:sub} yields a chain of submodules
$0=M_0\subseteq M_1\subseteq  M_2\subseteq \ldots$  of $M$ such that each
$M_{i}/M_{i-1}$ is isomorphic to $A/\fp_{i}$ for some $\fp_{i}$.
This chain stabilises since $M$ is noetherian, and therefore $\bigcup_{i} M_{i}=M$.

The last assertion follows from Lemmas~\ref{le:exact} and \ref{le:ideal}.
\end{proof}

\section{Serre subcategories}
A full subcategory $\sfC$ of $A$-modules is called \emph{Serre subcategory}\index{Serre subcategory} if for every exact sequence $0\to M'\to M\to M''\to 0$ of $A$-modules, $M$ belongs to $\sfC$ if and only if $M'$ and $M''$
belong to $\sfC$.  We set
\[
\Supp_{A}\sfC=\bigcup_{M\in\sfC}\Supp_{A} M.
\]

\begin{proposition}
\label{pr:serre}
The assignment $\sfC\mapsto \Supp_{A}\sfC$ induces a bijection between
\begin{enumerate}[\quad\rm(1)]
\item the set of Serre subcategories of
$\mod A$, and 
\item the set of specialisation closed subsets of $\Spec A$.
\end{enumerate}
Its inverse takes $\mcV\subseteq \Spec A$ to $\{M\in\mod A\mid
\Supp M\subseteq \mcV\}$.
\end{proposition}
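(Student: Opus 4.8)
**The plan is to establish a bijection between Serre subcategories of $\mod A$ and specialisation closed subsets of $\Spec A$ by showing the two maps described are mutually inverse.** First I would verify that for any Serre subcategory $\sfC$, the set $\Supp_A\sfC$ is specialisation closed: each $\Supp_A M$ is specialisation closed (as noted after the definition of big support), and an arbitrary union of specialisation closed sets is again specialisation closed. Conversely, for a specialisation closed $\mcV$, I would check that $\sfC_\mcV = \{M\in\mod A\mid \Supp_A M\subseteq\mcV\}$ is a Serre subcategory; this is immediate from Lemma~\ref{le:exact}, since in an exact sequence $0\to M'\to M\to M''\to 0$ one has $\Supp_A M = \Supp_A M'\cup\Supp_A M''$, so $M\in\sfC_\mcV$ iff both $M',M''\in\sfC_\mcV$.

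Next I would show $\Supp_A\sfC_\mcV = \mcV$. The inclusion $\subseteq$ is clear by definition. For $\supseteq$, given $\fp\in\mcV$, the module $A/\fp$ lies in $\mod A$ and satisfies $\Supp_A(A/\fp) = \mcV(\fp)$ by Lemma~\ref{le:ideal}; since $\mcV$ is specialisation closed and $\fp\in\mcV$, we get $\mcV(\fp)\subseteq\mcV$, so $A/\fp\in\sfC_\mcV$, and hence $\fp\in\Supp_A(A/\fp)\subseteq\Supp_A\sfC_\mcV$. This shows one composite is the identity.

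For the other composite, I must show that if $\sfC$ is a Serre subcategory and $\mcV = \Supp_A\sfC$, then $\sfC = \sfC_\mcV$, i.e.\ every $M\in\mod A$ with $\Supp_A M\subseteq\mcV$ already lies in $\sfC$. The inclusion $\sfC\subseteq\sfC_\mcV$ is trivial. For the reverse, I would use Lemma~\ref{le:filtr}: such an $M$ admits a finite filtration with factors $A/\fp_i$, and $\Supp_A M = \bigcup_i\mcV(\fp_i)$, so each $\fp_i\in\mcV = \Supp_A\sfC$. By the Serre property it suffices to show each $A/\fp_i\in\sfC$. The key point is: if $\fp\in\Supp_A\sfC$, then $A/\fp\in\sfC$. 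Indeed, $\fp\in\Supp_A N$ for some $N\in\sfC$; replacing $N$ by a suitable subquotient via Lemma~\ref{le:filtr} applied to $N$ (a Serre subcategory is closed under sub and quotient objects), one reduces to the case $N\cong A/\fq$ with $\fp\in\mcV(\fq)$, i.e.\ $\fq\subseteq\fp$; then $\fp/\fq$ is a prime ideal of $A/\fq$, so $A/\fp\cong (A/\fq)/(\fp/\fq)$ is a quotient of the submodule $A/\fq$'s... more precisely $A/\fp$ is a quotient of $A/\fq\in\sfC$, hence in $\sfC$.

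\textbf{The main obstacle} is the last step: extracting, from the mere fact that $\fp\in\Supp_A\sfC$, an actual copy of $A/\fp$ inside (a subquotient of) an object of $\sfC$. The clean way is to observe that a Serre subcategory is automatically closed under submodules and quotient modules (take $M''=0$ or $M'=0$ in the defining sequence), then apply the filtration lemma to a witness module $N$ to find some $\fq\subseteq\fp$ with $A/\fq\in\sfC$, and finally note $A/\fp$ is a quotient of $A/\fq$. I would write this out carefully, as it is where all the structure theory (Lemmas~\ref{le:sub0}--\ref{le:filtr}) is actually used; the rest is formal.
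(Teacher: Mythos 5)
Your proof is correct and follows essentially the same route as the paper: the crucial point in both is that $\fp\in\Supp_{A}\sfC$ forces $A/\fp\in\sfC$, obtained from Lemma~\ref{le:filtr} together with closure of Serre subcategories under subquotients and the surjection $A/\fq\thra A/\fp$ for $\fq\subseteq\fp$. The paper merely packages this more tersely (phrasing the inverse via the Serre subcategory generated by the modules $A/\fp$, $\fp\in\mcV$, and citing the equality $\Supp_{A}\sfC=\{\fp\mid A/\fp\in\sfC\}$), whereas you spell out the same computation when checking the two composites directly.
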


\begin{proof}
Both maps are well defined by Lemmas~\ref{le:exact} and \ref{le:fg}.
If $\mcV\subseteq \Spec A$ is a specialisation closed subset, let
$\sfC_\mcV$ denote the smallest Serre subcategory containing
$\{A/\fp\mid\fp\in \mcV\}$. Then we have $\Supp\sfC_\mcV=\mcV$, by
Lemmas~\ref{le:ideal} and \ref{le:exact}. Now let $\sfC$ be a Serre
subcategory of $\mod A$. Then \[\Supp\sfC=\{\fp\in\Spec A\mid
A/\fp\in\sfC\}\] by Lemma~\ref{le:filtr}. It follows that
$\sfC=\sfC_\mcV$ for each Serre subcategory $\sfC$, where
$\mcV=\Supp\sfC$. Thus $\Supp\sfC_1=\Supp\sfC_2$ implies
$\sfC_1=\sfC_2$ for each pair $\sfC_1,\sfC_2$ of Serre subcategories.
\end{proof}

\begin{corollary}
Let $M$ and $N$ be in $\mod A$. Then $\Supp_{A} N\subseteq \Supp_{A}
M$ if and only if $N$ belongs to the smallest Serre subcategory
containing $M$.
\end{corollary}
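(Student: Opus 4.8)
The plan is to deduce the statement directly from Proposition~\ref{pr:serre}, with the filtration result Lemma~\ref{le:filtr} doing the real work. Write $\sfC$ for the smallest Serre subcategory of $\mod A$ containing $M$; the corollary amounts to the identification $\sfC=\sfC_0$, where $\sfC_0:=\{N\in\mod A\mid \Supp_{A}N\subseteq \Supp_{A}M\}$.

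First I would note that $\Supp_{A}M$ is specialisation closed; this is immediate from the definition, or from Lemma~\ref{le:fg}, which identifies it with $\mcV(\ann_{A}M)$. Hence, by Proposition~\ref{pr:serre}, $\sfC_0$ is a Serre subcategory, and it obviously contains $M$. By minimality of $\sfC$ this gives $\sfC\subseteq\sfC_0$; in particular every $N\in\sfC$ has $\Supp_{A}N\subseteq\Supp_{A}M$, which is one direction of the corollary.

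For the reverse inclusion $\sfC_0\subseteq\sfC$ I would argue in two stages. By Lemma~\ref{le:filtr}, $M$ admits a finite filtration with subquotients $A/\fq_j$, the $\fq_j$ prime, and $\Supp_{A}M=\bigcup_j\mcV(\fq_j)$; since $\sfC$ is Serre and $M\in\sfC$, each $A/\fq_j$ lies in $\sfC$. Now for any $\fp\in\Supp_{A}M$ one has $\fq_j\subseteq\fp$ for some $j$, so $A/\fp$ is a quotient of $A/\fq_j\in\sfC$, hence $A/\fp\in\sfC$ as $\sfC$ is closed under quotients. Finally, given an arbitrary $N\in\sfC_0$, apply Lemma~\ref{le:filtr} to $N$: its subquotients are $A/\fp_i$ with $\fp_i\in\Supp_{A}N\subseteq\Supp_{A}M$, each of which lies in $\sfC$ by the previous step, and closure of $\sfC$ under extensions yields $N\in\sfC$ by induction on the filtration length.

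A slightly slicker finish, once $\sfC\subseteq\sfC_0$ is in hand, is to compare supports: $\Supp_{A}\sfC$ contains $\Supp_{A}M$ (as $M\in\sfC$) and is contained in $\Supp_{A}\sfC_0=\Supp_{A}M$ (by the inverse formula in Proposition~\ref{pr:serre}), so $\sfC$ and $\sfC_0$ have equal, specialisation closed, support, and the injectivity half of the bijection in Proposition~\ref{pr:serre} forces $\sfC=\sfC_0$. I do not expect any genuine obstacle here; the one point to be careful about is to use Lemma~\ref{le:filtr} rather than Lemma~\ref{le:sub} alone, so that \emph{all} of $\Supp_{A}M$, and not merely one associated prime of $M$, is seen to lie in $\sfC$.
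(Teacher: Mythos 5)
Your proposal is correct, and your ``slicker finish'' is precisely the paper's own proof: one observes $\Supp_{A}\sfC=\Supp_{A}M$ (the inclusion $\subseteq$ coming from the fact that $\sfC_0$ is a Serre subcategory containing $M$) and then invokes the bijection of Proposition~\ref{pr:serre}. Your longer filtration argument via Lemma~\ref{le:filtr} is also valid but is redundant given that citation --- it simply re-derives the injectivity half of Proposition~\ref{pr:serre}, whose proof in the paper uses exactly the same lemma.
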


\begin{proof}
With $\sfC$ denoting the smallest Serre subcategory containing $M$,
there is an equality $\Supp_{A}\sfC=\Supp_{A} M$ by
Lemma~\ref{le:exact}. Now apply Proposition~\ref{pr:serre}.
\end{proof}

\section{Localising subcategories}

A full subcategory $\sfC$ of $A$-modules is said to be \emph{localising}\index{localising subcategory} if it is a Serre subcategory and if for any family of $A$-modules $M_i\in\sfC$ the sum $\bigoplus_iM_i$ is in $\sfC$.
The result below is from \cite[p.~ 425]{Gabriel:1962a}.

\begin{corollary}
The assignment $\sfC\mapsto \Supp_{A}\sfC$ gives a bijection between 
\begin{enumerate}[\quad\rm(1)]
\item the set of localising subcategories of $\Mod A$, and
\item the set of specialisation closed subsets of $\Spec A$.
\end{enumerate}
Its inverse takes $\mcV\subseteq \Spec A$ to $\{M\in\Mod A\mid \Supp_{A} M\subseteq \mcV\}$.
\end{corollary}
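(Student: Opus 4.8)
The plan is to follow the template of Proposition~\ref{pr:serre}, upgrading it from finitely generated modules/Serre subcategories to all modules/localising subcategories. First I would check that both assignments are well defined. For a localising subcategory $\sfC$, the set $\Supp_{A}\sfC=\bigcup_{M\in\sfC}\Supp_{A}M$ is a union of specialisation closed subsets (each $\Supp_{A}M$ is specialisation closed, as noted right after the definition of big support), hence specialisation closed. Conversely, for a specialisation closed $\mcV$, the full subcategory $\sfC_{\mcV}=\{M\in\Mod A\mid\Supp_{A}M\subseteq\mcV\}$ is localising: it is closed under submodules, quotients and extensions by Lemma~\ref{le:exact}, and closed under arbitrary direct sums by Lemma~\ref{le:sum}.

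Next I would verify $\Supp_{A}\sfC_{\mcV}=\mcV$. The inclusion $\subseteq$ is immediate from the definition, and for $\supseteq$, given $\fp\in\mcV$ one has $\Supp_{A}(A/\fp)=\mcV(\fp)\subseteq\mcV$ by Lemma~\ref{le:ideal} (using that $\mcV$ is specialisation closed), so $A/\fp\in\sfC_{\mcV}$ and $\fp\in\Supp_{A}(A/\fp)\subseteq\Supp_{A}\sfC_{\mcV}$. This shows one of the two composites is the identity.

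The crux is the other composite: $\sfC=\sfC_{\Supp_{A}\sfC}$ for every localising $\sfC$. The inclusion $\sfC\subseteq\sfC_{\Supp_{A}\sfC}$ is by definition of $\Supp_{A}\sfC$. For the reverse I would argue in three steps. (i) For each $\fp\in\Supp_{A}\sfC$ one has $A/\fp\in\sfC$: choose $M\in\sfC$ with $\fp\in\Supp_{A}M$; writing $M$ as the sum of its finitely generated submodules and invoking Lemma~\ref{le:sum}, some finitely generated submodule $M'\subseteq M$ already has $\fp\in\Supp_{A}M'$, and $M'\in\sfC$ since $\sfC$ is Serre; by the filtration of Lemma~\ref{le:filtr}, $\Supp_{A}M'=\bigcup_{i}\mcV(\fp_{i})$ with each $A/\fp_{i}$ a subquotient of $M'$, so each $A/\fp_{i}\in\sfC$; finally $\fp\supseteq\fp_{i}$ for some $i$, and $A/\fp$, a quotient of $A/\fp_{i}$, lies in $\sfC$. (ii) Every finitely generated $N$ with $\Supp_{A}N\subseteq\Supp_{A}\sfC$ lies in $\sfC$: the filtration primes $\fp_{i}$ of $N$ from Lemma~\ref{le:filtr} satisfy $\fp_{i}\in\Supp_{A}N\subseteq\Supp_{A}\sfC$, so $A/\fp_{i}\in\sfC$ by (i), whence $N\in\sfC$. (iii) A general $M$ with $\Supp_{A}M\subseteq\Supp_{A}\sfC$ is the directed union of its finitely generated submodules $M_{j}$, each with $\Supp_{A}M_{j}\subseteq\Supp_{A}M\subseteq\Supp_{A}\sfC$, hence $M_{j}\in\sfC$ by (ii); since $M$ is a quotient of $\bigoplus_{j}M_{j}$, which lies in $\sfC$ by closure under direct sums, $M\in\sfC$ by closure under quotients.

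The main obstacle is precisely this passage to infinitely generated modules — steps (i)--(iii), where one reduces to the finitely generated case handled by Lemmas~\ref{le:sum} and \ref{le:filtr}; the rest is formal. I would conclude by noting that the two composites being the identity yields the asserted bijection, with inverse $\mcV\mapsto\{M\in\Mod A\mid\Supp_{A}M\subseteq\mcV\}$.
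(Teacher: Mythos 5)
Your proof is correct and follows essentially the same route as the paper: both reduce to the finitely generated case of Proposition~\ref{pr:serre} by writing an arbitrary module as the (directed) sum of its finitely generated submodules, invoking Lemma~\ref{le:sum} together with the filtration of Lemma~\ref{le:filtr}. Your write-up merely makes explicit the steps the paper compresses into the remark that any $M$ lies in a localising subcategory if and only if all its finitely generated submodules do.
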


\begin{proof}
The proof is essentially the same as the one of
Proposition~\ref{pr:serre} if we observe that any $A$-module $M$ is
the sum $M=\sum_i M_i$ of its finitely generated submodules. Note that
$M$ belongs to a localising subcategory $\sfC$ if and only if all $M_i$
belong to $\sfC$.  In addition, we use that $\Supp_{A} M=\bigcup_i \Supp_{A}
M_i$; see Lemma~\ref{le:sum}.
\end{proof}

\section{Injective modules}

The following proposition collects the basic properties of injective modules over a commutative noetherian ring; for a proof see \cite[\S18]{Matsumura:1986}. For each $\fp\in\Spec A$ we denote $E(A/\fp)$ the injective hull of $A/\fp$.

\begin{proposition}
\pushQED{\qed}
\begin{enumerate}[\quad\rm(1)]
\item An arbitrary direct sum of injective modules is injective.
\item Every injective module decomposes essentially uniquely as a
  direct sum of injective indecomposables.
\item $E(A/\fp)$ is indecomposable for each $\fp$ in $\Spec A$.
\item Each injective indecomposable is isomorphic to $E(A/\fp)$ for a unique prime ideal $\fp$.
\qedhere
\end{enumerate}
\end{proposition}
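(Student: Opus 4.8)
The four assertions are all classical consequences of Baer's criterion, the noetherian hypothesis, and the theory of essential extensions; the plan is to establish them in the order (1), (3), (4), (2), since each later statement uses the earlier ones. Throughout, write $E(M)$ for the injective hull of an $A$-module $M$, i.e.\ a maximal essential extension.

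For (1): apply Baer's criterion, which says a module $I$ is injective precisely when every homomorphism $\fa\to I$ from an ideal $\fa$ of $A$ extends to $A\to I$. Given injectives $\{I_j\}_{j\in J}$ and a map $f\colon\fa\to\bigoplus_j I_j$, use that $A$ is noetherian, so $\fa$ is finitely generated; hence $f(\fa)$ lies in a finite subsum $\bigoplus_{j\in F}I_j$. A finite direct sum of injectives is injective (this needs no hypothesis on $A$), so $f$ extends over $A$ into $\bigoplus_{j\in F}I_j$, and composing with the inclusion gives the required extension. For (3): set $E=E(A/\fp)$, so $A/\fp$ is an essential submodule. If $E=E_1\oplus E_2$ with $E_1,E_2\neq 0$, then $E_i\cap(A/\fp)\neq0$ for $i=1,2$ by essentiality; but $A/\fp$ is an integral domain, and any two nonzero submodules (ideals) $I,I'$ of a domain meet nontrivially, since for $0\neq a\in I$, $0\neq b\in I'$ one has $0\neq ab\in I\cap I'$. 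This forces $E_1\cap E_2\neq 0$, a contradiction, so $E$ is indecomposable.

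For (4): let $I\neq0$ be an indecomposable injective. By Lemma~\ref{le:sub} there is a prime $\fp$ and an embedding $A/\fp\hookrightarrow I$; since $I$ is injective, the inclusion $A/\fp\hookrightarrow E(A/\fp)$ extends to a map $E(A/\fp)\to I$, which is injective because $A/\fp$ is essential in $E(A/\fp)$. Thus $I$ contains a copy of the injective module $E(A/\fp)$, which is therefore a direct summand of $I$; as $I$ is indecomposable and $E(A/\fp)\neq0$, we get $I\cong E(A/\fp)$. For uniqueness of $\fp$: if $\fq\hookrightarrow E(A/\fp)$ via $A/\fq$, then by essentiality $A/\fq\cap(A/\fp)\neq0$, and a nonzero submodule of the domain $A/\fp$ has annihilator $0$, hence $\fq$ annihilates a nonzero element of $A/\fp$ only if $\fq\subseteq\fp$; running the symmetric argument with the roles reversed (a copy of $A/\fp$ sits inside $E(A/\fq)$ as its generating essential submodule, up to the identification) gives $\fp\subseteq\fq$, so $\fp=\fq$. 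Equivalently, one checks directly that $\Hom_A(A/\fq, E(A/\fp))_{\fq}=0$ unless $\fq=\fp$, in which case it is one-dimensional over $A_{\fp}/\fp A_{\fp}$ (the socle of an injective hull of a residue field), and this computation also feeds into (2).

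For (2): existence follows by a Zorn's lemma argument. Consider families of nonzero indecomposable injective submodules of $I$ whose internal sum is direct; choose a maximal such family $\{E_j\}_{j\in J}$. Then $E'=\bigoplus_j E_j$ is injective by (1), so $I=E'\oplus C$ for some $C$. If $C\neq0$, then $C$ is injective and by Lemma~\ref{le:sub} and the argument of (4) it contains an indecomposable injective summand, contradicting maximality; hence $I=E'$. For essential uniqueness, the cleanest route is to exhibit the multiplicity of each $E(A/\fp)$ as an invariant of $I$: if $I\cong\bigoplus_{j}E(A/\fp_j)$, then, since $A/\fp$ is finitely generated and localisation and $\Hom_A(A/\fp,-)$ commute with direct sums, one gets $\Hom_A(A/\fp,I)_{\fp}\cong\bigoplus_{j:\,\fp_j=\fp}A_{\fp}/\fp A_{\fp}$, whose dimension over $A_{\fp}/\fp A_{\fp}$ is exactly the number of $j$ with $\fp_j=\fp$. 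As the left-hand side depends only on $I$, the multiplicities are well defined, giving uniqueness up to isomorphism and reindexing. I expect the main obstacle to be the uniqueness half of (2): one must be careful that the multiplicities can be infinite cardinals, so the bookkeeping should be phrased in terms of the cardinality of $\{j:\fp_j=\fp\}$ rather than via a naive Krull--Remak--Schmidt count, and the key computational input is precisely the vanishing/one-dimensionality statement $\dim_{A_\fp/\fp A_\fp}\Hom_A(A/\fq,E(A/\fp))_\fp=\delta_{\fp\fq}$ alluded to above.
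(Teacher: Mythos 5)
Your proof is correct, and it is worth noting that the paper itself gives no argument for this proposition, deferring to Matsumura \S 18; what you have written is exactly the standard Matlis-theory proof found there — Baer's criterion plus the noetherian hypothesis for (1), uniformity of the domain $A/\fp$ (any two nonzero ideals meet) for (3), essential extensions together with Lemma~\ref{le:sub} for (4), and Zorn's lemma plus the multiplicity computation $\dim_{k(\fp)}\Hom_A(A/\fp,E(A/\fq))_\fp=\delta_{\fp\fq}$ for (2). Your caution about infinite multiplicities is well placed, and phrasing the invariance as an equality of cardinalities of $k(\fp)$-bases of $\Hom_A(A/\fp,I)_\fp$ handles it correctly.
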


Let $\fp$ a prime ideal in $A$ and let $M$ be an $A$-module.  The module $M$ is said to be \emph{$\fp$-torsion}\index{ptorsion@$\fp$-torsion} if each element of $M$ is annihilated by a power of $\fp$; equivalently:
\[
M =\{x\in M\mid \text{there exists an integer $n\geq 0$ such that $\fp^n\cdot x=0$}\}.
\]
The module $M$ is \emph{$\fp$-local}\index{fplocal@$\fp$-local} if the natural map $M\to M_\fp$ is bijective.

For example, $A/\fp$ is $\fp$-torsion, but it is $\fp$-local only if
$\fp$ is a maximal ideal, while $A_\fp$ is $\fp$-local, but it is
$\fp$-torsion only if $\fp$ is a minimal prime ideal.  The $A$-module
$E(A/\fp)$ is both $\fp$-torsion and $\fp$-local. Using this
observation the following  is easy to prove.

\begin{lemma} 
\label{le:inj-local}
\pushQED{\qed}
Let $\fp$ and $\fq$ be prime ideals in $A$. Then
\[
E(A/\fp)_\fq = \begin{cases} 
E(A/\fp)  & \text{if $\fq\in \mcV(\fp)$,}\\
0 & \text{otherwise.}
\end{cases} \qedhere
\]
\end{lemma}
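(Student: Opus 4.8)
The plan is to reduce everything to the two properties of $E(A/\fp)$ recorded just above the statement: it is $\fp$-torsion and $\fp$-local. The first step is to reformulate $\fp$-locality in the form I actually want to use. Namely, for an $A$-module $M$ and a multiplicatively closed set $S\subseteq A$, the localisation map $M\to S^{-1}M$ is bijective if and only if every element of $S$ acts invertibly on $M$; this is immediate from the construction of $S^{-1}M$. Applying this with $S=A\setminus\fp$ shows that $M$ is $\fp$-local precisely when each $s\notin\fp$ acts as an automorphism of $M$.

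Next I would treat the case $\fq\in\mcV(\fp)$, that is, $\fp\subseteq\fq$. Then $A\setminus\fq\subseteq A\setminus\fp$, so every element of $A\setminus\fq$ acts invertibly on $E(A/\fp)$ by the reformulation above. Hence the localisation map $E(A/\fp)\to E(A/\fp)_\fq$ is bijective, which is the asserted equality.

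For the remaining case $\fq\notin\mcV(\fp)$, I would pick an element $a\in\fp\setminus\fq$. Given any $x\in E(A/\fp)$, the $\fp$-torsion property gives an integer $n\ge 0$ with $\fp^n x=0$; since $a^n\in\fp^n$ this forces $a^n x=0$. As $a^n\in A\setminus\fq$, the image of $x$ in $E(A/\fp)_\fq$ vanishes, and since $x$ was arbitrary, $E(A/\fp)_\fq=0$.

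There is no genuine obstacle here: the whole argument is a two-line manipulation once the two structural properties of $E(A/\fp)$ are in hand, and the only point that needs a moment's care is the elementary equivalence between $\fp$-locality and invertibility of the action of $A\setminus\fp$, which is also what makes the ``easy to prove'' remark preceding the statement work.
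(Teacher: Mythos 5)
Your proof is correct and follows exactly the route the paper intends: the lemma is stated there without proof, with the preceding remark that it follows easily from $E(A/\fp)$ being $\fp$-torsion and $\fp$-local, and those are precisely the two facts you use (locality giving invertibility of $A\setminus\fp$, hence of $A\setminus\fq$ when $\fp\subseteq\fq$; torsion killing the localisation when some $a\in\fp\setminus\fq$ exists). No gaps.
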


\section{Support}
\label{se:appendix-supp}

Each $A$-module $M$ admits a minimal injective resolution
\[
0\to M\to I^0\to I^1 \to I^2 \to \cdots
\]
and such a resolution is unique, up to isomorphism of complexes of $A$-modules.  We say that $\fp$ \emph{occurs} in a minimal injective resolution $I$ of $M$, if for some integer $i\in\mathbb Z$, the module $I^i$ has a direct summand isomorphic to $E(A/\fp)$. We call the set
\[
\supp_{A} M=
\left\{\fp\in\Spec A \left|
\begin{gathered}
\text{$\fp$ occurs in a minimal} \\
\text{ injective resolution of $M$}
\end{gathered}
\right.\right\}
\] 
the \emph{support of $M$}\index{support}. In the literature, it is sometimes referred to as the `small support' or the `cohomological support', to distinguish it from the big support $\Supp_{A} M$.

\begin{lemma}
\label{le:supp-local}
Let $M$ be an $A$-module and $\fp\in\Spec A$. If $I$ is a minimal injective resolution of $M$, then  $I_\fp$ is a minimal injective resolution of $M_\fp$. Therefore
\[
\supp_{A} (M_\fp)=\supp_{A} M\cap \{\fq\in\Spec A\mid \fq\subseteq \fp\}\,.
\]
\end{lemma}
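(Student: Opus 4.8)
The plan is to establish the substantive assertion — that $I_\fp$ is a minimal injective resolution of $M_\fp$ — and then read off the displayed equality as a formal consequence. Throughout I will use that localisation is exact and commutes with arbitrary direct sums, together with Lemma~\ref{le:inj-local} and the structure theory of injective modules recalled above (and in \cite[\S18]{Matsumura:1986}).

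First I would check that $I_\fp$ is an injective resolution of $M_\fp$. Exactness of localisation applied to $0\to M\to I^0\to I^1\to\cdots$ yields an exact complex $0\to M_\fp\to I^0_\fp\to I^1_\fp\to\cdots$, so it suffices to see that each $I^i_\fp$ is an injective $A_\fp$-module. Decomposing $I^i$ as a direct sum of indecomposable injectives $E(A/\fq)$ and using that $(-)_\fp$ commutes with direct sums, Lemma~\ref{le:inj-local} identifies $I^i_\fp$ with the direct sum of exactly those summands $E(A/\fq)$ for which $\fq\subseteq\fp$, each such summand being unchanged; each of these is an injective $A_\fp$-module (indeed $E(A/\fq)$ is the injective hull of $A_\fp/\fq A_\fp$ over $A_\fp$, by \cite[\S18]{Matsumura:1986}), and an arbitrary direct sum of injectives over the noetherian ring $A_\fp$ is again injective. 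In particular this computation shows that the indecomposable injective $E(A/\fq)$ occurs in the complex $I_\fp$ if and only if $\fq\subseteq\fp$ and $E(A/\fq)$ occurs in $I$.

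The crux is minimality. Recall that an injective resolution $J^\bullet$ of a module is minimal exactly when, for every $i$, the image of $J^{i-1}\to J^i$ (with $J^{-1}$ the module being resolved) is an essential submodule of $J^i$. Put $N^i=\Im(I^{i-1}\to I^i)$, so that $N^i$ is essential in $I^i$; I must show $N^i_\fp$ is essential in $I^i_\fp$. Here is the point: the summands $E(A/\fq)$ of $I^i$ with $\fq\subseteq\fp$ assemble into an $\fp$-local submodule $E'\subseteq I^i$, and the localisation map $I^i\to I^i_\fp$ restricts to an isomorphism $E'\xra{\sim}I^i_\fp$ (killing the complementary summands). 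Now take any $0\neq x\in I^i_\fp$; viewing $x$ in $E'$, essentiality of $N^i$ in $I^i$ gives $a\in A$ with $0\neq ax\in N^i$, and since $ax$ again lies in the submodule $E'$ it maps to a nonzero element of $N^i_\fp\subseteq I^i_\fp$. Hence $N^i_\fp$ is essential in $I^i_\fp$, and therefore $I_\fp$ is a minimal injective resolution of $M_\fp$ (over $A_\fp$, and equally over $A$, since for an $\fp$-local module the notions of injective hull — hence of minimal injective resolution — coincide over $A$ and over $A_\fp$).

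Finally I would combine the two halves: by the first paragraph the indecomposable summands occurring in the minimal injective resolution $I_\fp$ of $M_\fp$ are precisely the $E(A/\fq)$ with $\fq\subseteq\fp$ that occur in $I$, which is exactly the statement $\supp_A(M_\fp)=\supp_A M\cap\{\fq\in\Spec A\mid\fq\subseteq\fp\}$. I expect the minimality step to be the main obstacle, because essential extensions are not preserved under localisation in general; the argument works here only because passing to $(-)_\fp$ merely deletes the summands $E(A/\fq)$ with $\fq\not\subseteq\fp$ and leaves the remaining, already $\fp$-local, part untouched.
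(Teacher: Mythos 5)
Your argument is correct and follows the same route as the paper: the displayed equality is read off from Lemma~\ref{le:inj-local} once one knows that $I_\fp$ is a minimal injective resolution of $M_\fp$. The only difference is that the paper disposes of that first assertion by citing Lemmas~5 and~6 of \cite[\S18]{Matsumura:1986}, whereas you prove it directly; your essentiality argument via the decomposition of $I^i$ into its $\fp$-local part $E'$ and the complementary summands killed by localisation is precisely the content of that citation and is carried out correctly.
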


\begin{proof}
For the first assertion, see for example Lemmas~5 and 6 in \cite[\S18]{Matsumura:1986}. The formula for the support of $M_{\fp}$ then follows from Lemma~\ref{le:inj-local}. 
\end{proof}

We write $k(\fp)$ for the residue field $A_\fp/\fp A_\fp$ at $\fp\in\Spec A$.

\begin{lemma}
\label{le:supp-tests}
Let $M$ be an $A$-module and $\fp\in\Spec A$. The following are equivalent:
\begin{enumerate}[\quad\rm(1)]
\item $\fp\in\supp_{A} M$;
\item $\Ext^*_{A_{\fp}}(k(\fp),M_{\fp})\ne 0$;
\item $\Tor^{A_{\fp}}_{*}(k(\fp),M_{\fp})\ne 0$.
\end{enumerate}
\end{lemma}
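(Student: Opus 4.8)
The three conditions are all local at $\fp$, so the first move is to reduce to the local case. Both $\Ext$ and $\Tor$ commute with localisation, so (2) and (3) depend only on the $A_{\fp}$-module $M_{\fp}$; and by Lemma~\ref{le:supp-local} a minimal injective resolution of $M$ localises to one of $M_{\fp}$, so $\fp\in\supp_{A}M$ precisely when $\fp A_{\fp}$ occurs in a minimal injective resolution of $M_{\fp}$. Hence I may assume $A$ is local with maximal ideal $\fp$ and residue field $k=k(\fp)=A/\fp$, and it suffices to show that, for the maximal ideal $\fp$ of a noetherian local ring, $\fp$ lies in $\supp_{A}M$ if and only if $\Ext^{*}_{A}(k,M)\neq 0$ if and only if $\Tor^{A}_{*}(k,M)\neq 0$. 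I will prove the two equivalences (1)$\iff$(2) and (1)$\iff$(3).

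For (1)$\iff$(2), fix a minimal injective resolution $0\to M\to I^{0}\xra{d^{0}}I^{1}\to\cdots$, and write $\mu_{j}(\fq)$ for the multiplicity of $E(A/\fq)$ as a summand of $I^{j}$. Applying $\Hom_{A}(k,-)$, two facts do the work. First, $k$ is finitely generated, so $\Hom_{A}(k,-)$ commutes with the relevant direct sums, and $\Hom_{A}(k,E(A/\fq))=(0:_{E(A/\fq)}\fp)$ is zero for $\fq\neq\fp$ --- an element of $\fp\setminus\fq$ acts invertibly on $E(A/\fq)$ by Lemma~\ref{le:inj-local} but annihilates $k$ --- and is one-dimensional for $\fq=\fp$, being the socle of the indecomposable injective $E(A/\fp)$; thus $\Hom_{A}(k,I^{j})\cong k^{(\mu_{j}(\fp))}$. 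Second, minimality makes $\ker d^{j}$ an essential submodule of $I^{j}$, hence it contains every simple submodule, so $(0:_{I^{j}}\fp)\subseteq\ker d^{j}$ and $\Hom_{A}(k,d^{j})=0$. Therefore $\Ext^{j}_{A}(k,M)\cong\Hom_{A}(k,I^{j})\cong k^{(\mu_{j}(\fp))}$, and $\Ext^{*}_{A}(k,M)\neq 0$ holds exactly when some $E(A/\fp)$ occurs in $I^{\bullet}$, i.e. when $\fp\in\supp_{A}M$.

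For (1)$\iff$(3): if $\fp\notin\supp_{A}M$ then no $E(A/\fp)$ occurs in the minimal injective resolution $I^{\bullet}$ above, so the same invertibility argument gives $\Tor^{A}_{i}(k,E(A/\fq))=0$ for every $i$ and every summand $E(A/\fq)$ of every $I^{j}$ (an element of $\fp\setminus\fq$ acts invertibly on $E(A/\fq)$ but as zero on $\Tor^{A}_{i}(k,-)$); since $\Tor$ commutes with direct sums, each $I^{j}$ is acyclic for $k\lotimes_{A}-$. Because $I^{\bullet}$ is bounded below and quasi-isomorphic to $M$, a standard filtration argument (equivalently, the hyperhomology spectral sequence for $k\lotimes_{A}(-)$, which converges thanks to boundedness) yields $k\lotimes_{A}M\simeq k\lotimes_{A}I^{\bullet}\simeq 0$, so $\Tor^{A}_{*}(k,M)=0$. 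For the reverse implication I would pass to the completion $\widehat A$ --- this changes neither side, since $\Ext^{*}_{A}(k,-)$ and $\Tor^{A}_{*}(k,-)$ are $k$-vector spaces and $\widehat A$ is $A$-flat, while membership of $\fp$ in $\supp_{A}M$ is detected by $\Ext^{*}_{A}(k,M)$ by the previous paragraph --- and invoke Matlis duality $(-)^{\vee}=\Hom_{A}(-,E(k))$: the natural isomorphism $\Tor^{A}_{i}(k,M)^{\vee}\cong\Ext^{i}_{A}(k,M^{\vee})$, together with faithfulness of $(-)^{\vee}$ and the already-proved equivalence (1)$\iff$(2) applied to $M^{\vee}$, reduces the claim to the implication $\fp\in\supp_{A}M\Rightarrow\fp\in\supp_{A}M^{\vee}$.

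The main obstacle is exactly this last point --- the $\Tor$-characterisation (3) for modules that are not finitely generated, where Nakayama's lemma and minimal projective (or flat) resolutions are unavailable, so that the two halves of (1)$\iff$(3) genuinely need different devices: the hyperhomology spectral sequence off the minimal injective resolution on one side, and Matlis duality over the completion on the other. Everything else is bookkeeping with minimal injective resolutions and the structure of the indecomposable injectives $E(A/\fq)$ recalled above.
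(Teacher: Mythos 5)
Your treatment of (1)$\iff$(2) is correct, and it is exactly the standard Bass-number argument behind the reference the text gives for this step (\cite[Theorem~18.7]{Matsumura:1986}): reduce to the local case via Lemma~\ref{le:supp-local}, then use minimality to get $\Ext^{j}_{A}(k,M)\cong\Hom_{A}(k,I^{j})\cong k^{(\mu_{j}(\fp))}$. Note, however, that the paper does not prove the lemma at all: it cites Matsumura for (1)$\iff$(2) and Foxby~\cite{Foxby:1979} for (2)$\iff$(3), and the gap in your proposal sits precisely at the point the paper outsources to Foxby.

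The problematic step is your claim that, since $\Tor^{A}_{*}(k,I^{j})=0$ for every $j$, ``a standard filtration argument (equivalently, the hyperhomology spectral sequence \dots, which converges thanks to boundedness)'' gives $k\lotimes_{A}M\simeq k\lotimes_{A}I^{\bullet}\simeq 0$. In general $A$ is not regular and $M$ has infinite injective dimension, so both the flat resolution $F_{\bullet}$ of $k$ and $I^{\bullet}$ are unbounded, and the double complex $F_{\bullet}\otimes_{A}I^{\bullet}$ is a half-plane complex; $\Tor_{*}(k,M)$ is computed by its \emph{direct-sum} total complex. The filtration whose $E_{1}$-page consists of the groups $\Tor_{*}(k,I^{j})$ is the filtration by the injective-resolution degree, and on the direct-sum totalisation it is exhaustive but neither bounded below nor complete; so $E_{1}=0$ does not force the abutment to vanish, and the classical convergence theorem does not apply. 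Equivalently, the acyclic assembly lemma in this configuration (exact columns, unbounded in the transverse direction) gives acyclicity only of the \emph{product} total complex: the zig-zag correction of a cycle marches off in the unbounded direction and produces an element of the product, not of the sum, so what you actually prove is vanishing of a completed Tor, not of $\Tor_{*}(k,M)$. (If $I^{\bullet}$ were bounded, or $k$ had finite flat dimension, your argument would be fine; the long exact sequences alone only give $\Tor_{i}(k,M)\cong\Tor_{i+n}(k,Z^{n})$ for all $n$, which is where the difficulty shows itself.) The second half of your argument does not repair this: after the correct isomorphisms $\Tor_{i}(k,M)^{\vee}\cong\Ext^{i}(k,M^{\vee})$ and $\Tor_{i}(k,M^{\vee})\cong\Ext^{i}(k,M)^{\vee}$, the implication ``$\fp\in\supp_{A}M\Rightarrow\fp\in\supp_{A}M^{\vee}$'' that you leave unproved is, via those very isomorphisms, equivalent to the implication (1)$\Rightarrow$(3) you are trying to establish, so the Matlis-duality step is a restatement rather than a reduction (indeed, had the spectral-sequence direction been valid, this duality would finish the proof, which shows the entire content is concentrated in the step that fails). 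This is exactly why the equivalence of (2) and (3) for modules that need not be finitely generated is a theorem of Foxby with a genuinely different proof, and why the paper handles it by citation.
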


\begin{proof}
For the equivalence of (1) and (2) see \cite[Theorem~18.7]{Matsumura:1986}. The
equivalence of (2) and (3) is more involved, and was proved by Foxby~\cite{Foxby:1979}.
\end{proof}

\begin{lemma}
\label{le:supp-ann}
For each $A$-module $M$ one has
\[
\supp_{A} M\subseteq \cl(\supp_{A} M) =\Supp_{A} M \subseteq\mcV(\ann M)\,,
\]
and equalities hold when $M$ is finitely generated. 
\end{lemma}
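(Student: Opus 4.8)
The plan is to unpack the displayed chain into the containments $\supp_{A}M\subseteq\cl(\supp_{A}M)$, then $\cl(\supp_{A}M)=\Supp_{A}M$, then $\Supp_{A}M\subseteq\mcV(\ann_{A}M)$, and finally to treat the two equalities in the finitely generated case. The first containment needs nothing: $\cl(\mcU)$ contains $\mcU$ for any subset $\mcU$ of $\Spec A$, by definition of specialisation closure. For the last containment I would simply cite Lemma~\ref{le:fg}.

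The substantive part is $\cl(\supp_{A}M)=\Supp_{A}M$. First I would prove $\supp_{A}M\subseteq\Supp_{A}M$: if $\fp$ occurs in a minimal injective resolution $I$ of $M$, then some $I^{i}$ has $E(A/\fp)$ as a direct summand, and localising at $\fp$, Lemma~\ref{le:supp-local} says $I_{\fp}$ is a minimal injective resolution of $M_{\fp}$, while Lemma~\ref{le:inj-local} gives $E(A/\fp)_{\fp}=E(A/\fp)\ne 0$; hence $I^{i}_{\fp}\ne 0$, which forces $M_{\fp}\ne 0$ (the minimal injective resolution of the zero module is zero), i.e. $\fp\in\Supp_{A}M$. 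Since $\Supp_{A}M$ is specialisation closed, as noted right after its definition, and it is the smallest such set containing $\supp_{A}M$, this gives $\cl(\supp_{A}M)\subseteq\Supp_{A}M$. For the reverse inclusion, take $\fp$ with $M_{\fp}\ne 0$; the key point is that a nonzero module has nonempty small support, since the zeroth term of a minimal injective resolution of $M_{\fp}$ over $A_{\fp}$ contains $M_{\fp}$, hence is nonzero, hence has some indecomposable injective summand. Thus $\supp_{A_{\fp}}(M_{\fp})\ne\varnothing$, and by the localisation formula $\supp_{A_{\fp}}(M_{\fp})=\supp_{A}M\cap\{\fq\mid\fq\subseteq\fp\}$ of Lemma~\ref{le:supp-local} there is a prime $\fq\subseteq\fp$ with $\fq\in\supp_{A}M$, so $\fp\in\cl(\supp_{A}M)$.

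For finitely generated $M$, the equality $\Supp_{A}M=\mcV(\ann_{A}M)$ is again Lemma~\ref{le:fg}. It then remains to see $\supp_{A}M=\cl(\supp_{A}M)$, for which I would show $\Supp_{A}M\subseteq\supp_{A}M$, so that $\supp_{A}M=\Supp_{A}M$ is already specialisation closed: if $M_{\fp}\ne 0$, then $M_{\fp}$ is a nonzero finitely generated module over the local ring $A_{\fp}$, so Nakayama's lemma gives $k(\fp)\otimes_{A_{\fp}}M_{\fp}=M_{\fp}/\fp M_{\fp}\ne 0$, that is $\Tor^{A_{\fp}}_{0}(k(\fp),M_{\fp})\ne 0$, whence $\fp\in\supp_{A}M$ by the equivalence of (1) and (3) in Lemma~\ref{le:supp-tests}.

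The whole argument is bookkeeping on top of the localisation lemma and the structure theory of injective modules; the step I expect to require the most care is the inclusion $\Supp_{A}M\subseteq\cl(\supp_{A}M)$ for a not-necessarily-finitely-generated $M$, where one must keep big and small support apart and must use that a nonzero module over a noetherian ring has nonempty small support. I would highlight that as the main point of the proof.
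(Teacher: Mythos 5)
Your proof is correct and follows the same route as the paper: the paper's proof simply says the equality $\cl(\supp_{A}M)=\Supp_{A}M$ follows from Lemma~\ref{le:supp-local} and the inclusions are obvious, and then handles the finitely generated case exactly as you do, via Nakayama's Lemma and the Tor criterion of Lemma~\ref{le:supp-tests}. You have merely written out in full the details the paper leaves implicit (in particular that a non-zero module has non-empty small support), which is the right expansion of the argument.
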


\begin{proof}
The equality follows from Lemma~\ref{le:supp-local}, while the inclusions are obvious.
 
Suppose now $M$ is finitely generated. Given Lemma~\ref{le:fg}, to
prove that equalities hold, it remains to verify $\Supp_{A}M\subseteq
\supp_{A} M$. If $M_{\fp}\ne 0$ for some $\fp\in\Spec A$, then
$k(\fp)\otimes_{A_{\fp}} M_{\fp}\ne 0$ by Nakayama's Lemma, for
$M_{\fp}$ is a finitely generated module over the local ring
$A_{\fp}$. In particular, $\Tor^{A_{\fp}}_{*}(k(\fp),M_{\fp})\ne 0$,
and hence $\fp$ is in $\supp_{A}M$, by Lemma~\ref{le:supp-tests}.
\end{proof} 

\section{Specialization closed sets}  
Given a subset $\mcU\subseteq\Spec A$, we consider the full subcategory
\[
\sfM_\mcU=\{M\in \Mod A\mid \supp_{A} M\subseteq \mcU\}.
\] 
The next result does not hold for arbitrary subsets of $\Spec A$. In fact, it can be used to characterise the property that $\mcV$ is specialisation closed.

\begin{lemma}
\label{le:specialisation}
Let $\mcV$ be a specialisation closed subset of $\Spec A$. Then for each $A$-module $M$, one has
\[
\supp_{A} M\subseteq \mcV\quad\Longleftrightarrow \quad M_\fq=0\text{ for
  each $\fq$ in $\Spec A\setminus \mcV$}.
  \] 
The subcategory $\sfM_\mcV$ of $\Mod A$ is closed under set-indexed direct sums, and in any exact sequence $0\to M'\to M\to M''\to 0$ of $A$-modules, $M$ is in $\sfM_\mcV$ if and only if $M'$ and $M''$ are in $\sfM_\mcV$.
\end{lemma}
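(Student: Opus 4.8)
The plan is to deduce the whole statement from the identity $\cl(\supp_{A}M)=\Supp_{A}M$ recorded in Lemma~\ref{le:supp-ann} (valid for an arbitrary $A$-module $M$), combined with the exactness of localisation and its compatibility with direct sums.

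First I would prove the displayed equivalence. The right-hand condition, $M_{\fq}=0$ for all $\fq\in\Spec A\setminus\mcV$, says exactly that $\Supp_{A}M\subseteq\mcV$. If $\supp_{A}M\subseteq\mcV$, then applying the specialisation closure operator and using that $\mcV$ is specialisation closed gives $\cl(\supp_{A}M)\subseteq\cl\mcV=\mcV$; by Lemma~\ref{le:supp-ann} the left-hand side is $\Supp_{A}M$, so $\Supp_{A}M\subseteq\mcV$. Conversely one always has $\supp_{A}M\subseteq\cl(\supp_{A}M)=\Supp_{A}M$, so $\Supp_{A}M\subseteq\mcV$ forces $\supp_{A}M\subseteq\mcV$. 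This settles the equivalence.

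It then remains to read off the closure properties of $\sfM_{\mcV}$ from the equivalence. By the equivalence, $M\in\sfM_{\mcV}$ if and only if $M_{\fq}=0$ for every $\fq\in\Spec A\setminus\mcV$. Since localisation commutes with coproducts, $(\bigoplus_{i}M_{i})_{\fq}\cong\bigoplus_{i}(M_{i})_{\fq}$, so a direct sum of modules in $\sfM_{\mcV}$ again lies in $\sfM_{\mcV}$. Given an exact sequence $0\to M'\to M\to M''\to 0$, localising at $\fq$ yields the exact sequence $0\to M'_{\fq}\to M_{\fq}\to M''_{\fq}\to 0$, whence $M_{\fq}=0$ precisely when both $M'_{\fq}=0$ and $M''_{\fq}=0$; ranging over $\fq\notin\mcV$ gives that $M\in\sfM_{\mcV}$ if and only if $M'$ and $M''$ lie in $\sfM_{\mcV}$.

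There is no real obstacle here; the only points worth flagging are that the argument genuinely uses the hypothesis that $\mcV$ is specialisation closed — as the remark before the statement observes, the equivalence fails for a general subset of $\Spec A$ — and that one must invoke Lemma~\ref{le:supp-ann} in the form valid for all modules rather than only for finitely generated ones.
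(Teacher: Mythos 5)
Your proof is correct and follows the same route as the paper: both reduce the displayed equivalence to the equality $\cl(\supp_{A}M)=\Supp_{A}M$ from Lemma~\ref{le:supp-ann} together with the fact that a specialisation closed $\mcV$ contains $\supp_{A}M$ iff it contains its closure, and both deduce the closure properties of $\sfM_{\mcV}$ from the exactness of localisation and its compatibility with set-indexed direct sums. Your write-up just makes explicit the steps the paper leaves implicit.
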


\begin{proof} 
Since $\mcV$ is specialisation closed, it contains $\supp_{A} M$ if and only if it contains $\cl(\supp_{A} M)$. Thus the first statement is a consequence of Lemma \ref{le:supp-ann}.  Given this the second statement follows, since for each $\fq$ in $\Spec A$, the functor taking an $A$-module $M$ to $M_\fq$ is exact and preserves set-indexed direct sums.
\end{proof}

Torsion modules and local modules can be recognised from their supports.

\begin{lemma} 
\label{le:torsion-local}
Let $M$ be an $A$-module and $\fp\in\Spec A$. Then
\begin{enumerate}[\quad\rm(1)]
\item $M$ is $\fp$-local if and only if $\supp_{A}  M\subseteq \{\fq\in \Spec A\mid \fq\subseteq \fp\}$, and
\item $M$ is $\fp$-torsion if and only if  $\supp_{A}  M\subseteq\mcV(\fp)$.
\end{enumerate}
\end{lemma}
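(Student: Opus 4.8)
The plan is to treat the two parts separately, deriving each from the behaviour of minimal injective resolutions under localization (Lemma~\ref{le:supp-local}) together with the computation of $E(A/\fq)_\fp$ in Lemma~\ref{le:inj-local}. Part (2) is the easier of the two: its forward implication follows from the additivity of big support, and its converse follows from the identity $\Supp_A M=\cl(\supp_A M)$ of Lemma~\ref{le:supp-ann} together with the fact that $\mcV(\fp)$ is Zariski closed. Part (1) requires a little more care, since $\{\fq\in\Spec A\mid \fq\subseteq\fp\}$ is \emph{not} specialisation closed (it is closed under generization rather than specialisation), so one cannot simply invoke Lemma~\ref{le:specialisation}; instead I would argue directly with resolutions.

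For part (2): if $M$ is $\fp$-torsion, write $M=\sum_{n\geq 0}(0:_M\fp^n)$, where $(0:_M\fp^n)$ is the submodule annihilated by $\fp^n$. By Lemma~\ref{le:sum} one has $\Supp_A M=\bigcup_n\Supp_A(0:_M\fp^n)$, and since $\fp^n\subseteq\ann_A(0:_M\fp^n)$, Lemma~\ref{le:fg} gives $\Supp_A(0:_M\fp^n)\subseteq\mcV(\fp^n)=\mcV(\fp)$; hence $\supp_A M\subseteq\Supp_A M\subseteq\mcV(\fp)$ by Lemma~\ref{le:supp-ann}. Conversely, suppose $\supp_A M\subseteq\mcV(\fp)$. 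Taking specialisation closures and using Lemma~\ref{le:supp-ann} again, $\Supp_A M=\cl(\supp_A M)\subseteq\cl(\mcV(\fp))=\mcV(\fp)$. Now for any $0\neq x\in M$ the cyclic submodule $Ax\cong A/\ann_A(x)$ satisfies $\mcV(\ann_A(x))=\Supp_A(Ax)\subseteq\mcV(\fp)$ by Lemma~\ref{le:ideal}, so $\fp\subseteq\sqrt{\ann_A(x)}$; as $A$ is noetherian, $\fp$ is finitely generated, whence $\fp^n x=0$ for $n\gg 0$. Thus $M$ is $\fp$-torsion.

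For part (1): if $M$ is $\fp$-local then $M\cong M_\fp$ via the natural map, so $\supp_A M=\supp_A M_\fp\subseteq\{\fq\mid \fq\subseteq\fp\}$ directly from Lemma~\ref{le:supp-local}. The converse is the main point. Let $0\to M\to I^0\to I^1\to\cdots$ be a minimal injective resolution of $M$. By Lemma~\ref{le:supp-local}, $I_\fp$ is a minimal injective resolution of $M_\fp$, and applying $(-)_\fp$ to the augmented resolution produces a morphism of augmented complexes $I\to I_\fp$ whose restriction to $M$ is the natural morphism $\eta\colon M\to M_\fp$. Each $I^i$ is a direct sum of copies of modules $E(A/\fq)$ with $\fq\in\supp_A M\subseteq\{\fq\mid\fq\subseteq\fp\}$. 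For such $\fq$, every element of $A\setminus\fp$ lies in $A\setminus\fq$ and hence acts invertibly on the $\fq$-local module $E(A/\fq)$, so the localization map $E(A/\fq)\to E(A/\fq)_\fp$ is an isomorphism (in accordance with Lemma~\ref{le:inj-local}). Since localization commutes with direct sums, each $I^i\to I^i_\fp$ is an isomorphism, hence $I\to I_\fp$ is an isomorphism of complexes; passing to $\ker(I^0\to I^1)$ identifies $\eta$ with an isomorphism, i.e.\ $M$ is $\fp$-local.

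The hard part, such as it is, is recognising that part (1) cannot be read off from Lemma~\ref{le:specialisation} and that small support need not decrease along submodules, so the naive multiplication-by-$s$ argument does not obviously close up; routing through the explicit minimal injective resolution and Lemma~\ref{le:inj-local} is what makes it go through. All remaining steps are routine.
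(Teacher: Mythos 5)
Your proof is correct, and part (2) takes a genuinely different route from the paper's. Part (1) is essentially the paper's argument: both directions reduce to Lemmas~\ref{le:supp-local} and \ref{le:inj-local}; where the paper invokes uniqueness of minimal injective resolutions to pass between $M\xra{\sim}M_\fp$ and $I\xra{\sim}I_\fp$, you verify termwise that the natural comparison map $I\to I_\fp$ is an isomorphism, which is if anything slightly more careful, since $\fp$-locality is a statement about the natural map rather than about abstract isomorphism. In part (2) you diverge in both directions. For the forward implication the paper notes that a $\fp$-torsion module has $M_\fq=0$ for all $\fq\not\supseteq\fp$ and applies Lemma~\ref{le:specialisation}, whereas you exhaust $M$ by the submodules $(0:_M\fp^n)$ and bound their big supports via Lemmas~\ref{le:sum} and \ref{le:fg}. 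For the converse the paper embeds $M$ into $I^0$, a direct sum of $\fp$-torsion injectives $E(A/\fq)$ with $\fq\in\mcV(\fp)$, while you pass to $\Supp_AM=\cl(\supp_AM)\subseteq\mcV(\fp)$, deduce $\fp\subseteq\sqrt{\ann_A(x)}$ for each nonzero $x\in M$ from the cyclic submodule $Ax\cong A/\ann_A(x)$, and finish using that $\fp$ is finitely generated. Your version of (2) stays entirely within the elementary support lemmas of the appendix and never touches the injective resolution; the paper's converse is shorter but leans on the structure of the modules $E(A/\fq)$. Both arguments are complete.
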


\begin{proof}
Let $I$ be a minimal injective resolution of $M$.

(1) Since $I_\fp$ is a minimal injective resolution of $M_\fp$, and minimal injective resolutions are unique up to isomorphism, $M\xra{\sim} M_\fp$ if and only if $I\xra{\sim} I_\fp$. This implies the desired equivalence, by Lemma~\ref{le:inj-local}.

(2) When $\supp_{A}  M\subseteq\mcV(\fp)$, then, by definition of support, one has that $I^0$ is isomorphic to a direct sum of copies of $E(A/\fq)$ with $\fq\in\mcV(\fp)$.  Since each $E(A/\fq)$ is $\fp$-torsion, so is $I^0$, and hence the same is true of $M$, for it is a submodule of $I^0$.

Conversely, when $M$ is $\fp$-torsion, $M_\fq =0$ for each $\fq$ in $\Spec A$ with $\fq\not\supseteq\fp$. This implies $\supp_{A} M\subseteq \mcV(\fp)$, by Lemma~\ref{le:specialisation}.
\end{proof} 

\begin{lemma}
\label{le:anti-specialisation}
Let $\fp$ be a prime ideal in $A$ and set $\mcU=\{\fq\in\Spec A\mid \fq\subseteq \fp\}$. 
\begin{enumerate}[{\quad\rm(1)}]
\item 
Restriction along the morphism $A\to A_\fp$ identifies $\Mod A_\fp$ with the subcategory $\sfM_\mcU$ of $\Mod A$. Therefore $\sfM_\mcU$ is closed under taking kernels, cokernels, extensions, direct sums, and products.
\item 
If $M,N$ are $A$-modules and one of them belongs to $\sfM_\mcU$, then $\Hom_A(M,N)$ is in $\sfM_\mcU$.
\end{enumerate}
\end{lemma}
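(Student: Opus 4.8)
The plan is to identify $\sfM_\mcU$ concretely first and then read off all the closure properties. By Lemma~\ref{le:torsion-local}(1), an $A$-module $M$ lies in $\sfM_\mcU$ if and only if it is $\fp$-local, i.e.\ the natural map $M\to M_\fp$ is bijective. The first observation I would record is that $M\to M_\fp$ is bijective precisely when every element $s$ of $A\setminus\fp$ acts on $M$ as a bijection: if each such $s$ is invertible on $M$ one defines an inverse to $M\to M_\fp$ by $m/s\mapsto s^{-1}m$, and conversely, in $M_\fp$ every $s\in A\setminus\fp$ acts invertibly, hence so it does on $M$ once $M\cong M_\fp$. With this criterion in hand, the standard fact about restriction of scalars along a localisation applies: restriction along $A\to A_\fp$ is fully faithful, and its essential image is exactly the category of $A$-modules on which $A\setminus\fp$ acts invertibly, namely $\sfM_\mcU$. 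This proves the identification $\Mod A_\fp\simeq\sfM_\mcU$ claimed in (1).

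For the closure statements in (1) I would argue directly with the ``$s$ acts bijectively'' criterion, rather than through localisation, since localisation does not commute with infinite products. If $f\colon M\to N$ is $A$-linear with $M,N\in\sfM_\mcU$ and $s\in A\setminus\fp$, then multiplication by $s$ is bijective on $M$ and on $N$; a one-line diagram chase shows it is then bijective on $\Ker f$ and on $\Coker f$, so these lie in $\sfM_\mcU$. Similarly, in a short exact sequence $0\to M'\to M\to M''\to 0$ with $M',M''\in\sfM_\mcU$, the five lemma gives that $s$ acts bijectively on $M$, so $\sfM_\mcU$ is extension-closed. Finally, $s$ acts bijectively on $\bigoplus_i M_i$ and on $\prod_i M_i$ as soon as it does so on each $M_i$, which settles direct sums and products. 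Alternatively, all of this follows from the identification in the previous paragraph together with the fact that restriction of scalars along $A\to A_\fp$, having both adjoints $-\otimes_A A_\fp$ and $\Hom_A(A_\fp,-)$, preserves all limits and colimits and is exact.

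Part (2) is handled by the same criterion. Fix $s\in A\setminus\fp$ and $\phi\in\Hom_A(M,N)$. If $N\in\sfM_\mcU$, then $s$ is injective on $N$, so $s\phi=0$ forces $\phi=0$; and since $s$ is surjective on $N$ one may set $\psi(m)=s^{-1}\phi(m)$, which is $A$-linear with $s\psi=\phi$. If instead $M\in\sfM_\mcU$, let $s^{-1}$ denote the inverse of multiplication by $s$ on $M$; then $\psi=\phi\circ s^{-1}$ satisfies $(s\psi)(m)=s\phi(s^{-1}m)=\phi(m)$, giving surjectivity of $s$ on $\Hom_A(M,N)$, while injectivity follows because $s$ surjective on $M$ means every element of $M$ has the form $sm$. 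Either way $s$ acts bijectively on $\Hom_A(M,N)$, so $\Hom_A(M,N)\in\sfM_\mcU$.

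The work here is essentially bookkeeping; the only genuine subtlety, and the step I would be most careful about, is the closure under arbitrary products in (1): one cannot invoke ``localisation preserves products'' (it does not, as $\mcU$ is a generization-closed rather than a specialisation-closed set, so Lemma~\ref{le:specialisation} is unavailable), and the argument must instead go through the characterisation of $\fp$-local modules as those on which $A\setminus\fp$ acts invertibly, which is stable under products for trivial reasons.
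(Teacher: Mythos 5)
Your proof is correct and follows essentially the same route as the paper: both identify $\sfM_\mcU$ with the $\fp$-local modules via Lemma~\ref{le:torsion-local}(1) and then read off the closure properties, the paper by noting that the inclusion of $\fp$-local modules has both adjoints, you by the equivalent elementwise criterion that every $s\in A\setminus\fp$ acts bijectively. Your explicit verification for part (2) is exactly what the paper compresses into the remark that the $A$-action on $\Hom_A(M,N)$ factors through $\End_A(M)$ and $\End_A(N)$, and your caution about infinite products is well placed but already covered by either argument.
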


\begin{proof}
(1) The objects in the subcategory $\sfM_\mcU$ are precisely the $\fp$-local $A$-modules, by Lemma~\ref{le:torsion-local}.  Thus the inclusion functor has a left and a right adjoint. It follows that $\sfM_\mcU$ is an exact abelian and extension closed subcategory of $\Mod A$, closed under set-indexed direct sums and products.

(2) The action of $A$ on $\Hom_A(M,N)$ factors via $\End_A(M)$ and $\End_A(N)$. If $M$ or $N$ is $\fp$-local, then this action factors through the map $A\to A_\fp$.
\end{proof} 

\end{appendix}

\clearpage
\phantomsection
\addcontentsline{toc}{chapter}{Index}

\printindex
\end{document}